\definecolor{darkblue}{rgb}{0,0,0.6}
\newtheorem*{rep@theorem}{\rep@title}
\newcommand{\newreptheorem}[2]{%
\newenvironment{rep#1}[1]{%
 \def\rep@title{#2 \ref{##1}}%
 \begin{rep@theorem}}%
 {\end{rep@theorem}}}
\newtheorem{proposition}{Proposition}[section]
\newtheorem{theorem}[proposition]{Theorem}
\newtheorem*{theorem*}{Theorem}
\newtheorem{corollary}[proposition]{Corollary}
\newtheorem{lemma}[proposition]{Lemma}
\newtheorem{thmx}{Theorem}
\crefname{thmx}{Theorem}{Theorems}
\theoremstyle{definition}
\newtheorem{definition}[proposition]{Definition}
\theoremstyle{remark}
\newtheorem{remark}[proposition]{Remark}
\newtheorem*{remark*}{Remark}
\newtheorem{assumptions}[proposition]{Hypothesis}
\numberwithin{equation}{section}
\newcommand{\C}{\mathbb{C}}
\newcommand{\Q}{\mathbb{Q}}
\newcommand{\Z}{\mathbb{Z}}
\newcommand{\F}{\mathbb{F}}
\renewcommand{\mod}{\,\,\text{\normalfont mod}\,}
\renewcommand{\l}{\Lambda}
\newcommand{\wh}{\widehat}
\newcommand{\wt}{\widetilde}
\newcommand{\ol}{\overline}
\newcommand{\M}{\mathcal{M}}
\newcommand{\CC}{\mathcal{C}}
\DeclareMathOperator{\Aut}{Aut}
\DeclareMathOperator{\Hom}{Hom}
\DeclareMathOperator{\id}{Id}
\DeclareMathOperator{\Id}{Id}
\DeclareMathOperator{\rk}{rk}
\DeclareMathOperator{\image}{Im}
\DeclareMathOperator{\rank}{rk}
\DeclareMathOperator{\im}{Im}
\DeclareMathOperator{\coker}{coker}
\DeclareMathOperator{\Wh}{Wh}
\DeclareMathOperator{\GL}{GL}
\DeclareMathOperator{\CAT}{CAT}
\DeclareMathOperator{\TOP}{TOP}
\DeclareMathOperator{\PL}{PL}
\DeclareMathOperator{\Diff}{Diff}
\DeclareMathOperator{\odd}{odd}
\DeclareMathOperator{\IM}{Im}
\DeclareMathOperator{\hAut}{hAut}
\DeclareMathOperator{\colim}{colim}
\DeclareMathOperator{\ev}{ev}
\DeclareMathOperator{\Map}{Map}
\DeclareMathOperator{\PD}{PD}
\DeclareMathOperator{\interior}{Int}
\DeclareMathOperator{\hCob}{hCob}
\DeclareMathOperator{\Frob}{Frob}
\DeclareMathOperator{\ord}{ord}
\DeclareMathOperator{\Gal}{Gal}
\DeclareMathOperator{\disc}{disc}
\DeclareMathOperator{\st}{st}
\DeclareMathOperator{\nr}{nr}
\DeclareMathOperator{\Cyl}{Cyl}
\DeclareMathOperator{\E}{E}
\newcommand{\xrightarrowdbl}[2][]{%
  \xrightarrow[#1]{#2}\mathrel{\mkern-14mu}\rightarrow
}
\newcommand{\smfrac}[2]{\mbox{\footnotesize$\displaystyle\frac{#1}{#2}$}} 
\newcommand{\bsm}{\left(\begin{smallmatrix}}
\newcommand{\esm}{\end{smallmatrix}\right)}
\def\benum{\begin{clist}{(a)}}
\def\eenum{\end{clist}}
\newenvironment{clist}[1]
{\begin{enumerate}[\normalfont #1]}
{\end{enumerate}}
\LetLtxMacro\Oldfootnote\footnote
\begin{document}

\title{Simple homotopy types of even dimensional manifolds}

\author{Csaba Nagy}
\address{Max-Planck-Institut f\"{u}r Mathematik, Bonn, Germany}
\email{nagy@mpim-bonn.mpg.de}

\author{John Nicholson}
\address{School of Mathematics and Statistics, University of Glasgow, U.K.}
\email{john.nicholson@glasgow.ac.uk}

\author{Mark Powell}
\address{School of Mathematics and Statistics, University of Glasgow, U.K.}
\email{mark.powell@glasgow.ac.uk}

\subjclass[2020]{Primary 57N65, 57Q10; Secondary 19A31, 19B28, 19J10.  }
\keywords{Manifolds, simple homotopy equivalence.}

\begin{abstract}
Given a closed $n$-manifold, we consider the set of simple homotopy types of $n$-manifolds within its homotopy type, called its simple homotopy manifold set. We characterise it in terms of algebraic K-theory, the surgery obstruction map, and the homotopy automorphisms of the manifold. 
We use this to construct the first examples, for all $n \ge 4$ even, of closed $n$-manifolds that are homotopy equivalent but not simple homotopy equivalent. 
In fact, we construct infinite families of manifolds that are all homotopy equivalent but pairwise not simple homotopy equivalent, and our examples can be taken to be smooth for $n \geq 6$.

Our examples are homotopy equivalent to the product of a circle and a lens space. 
We analyse the simple homotopy manifold sets of these manifolds, determining exactly when they are trivial, finite, or infinite, and investigating their asymptotic behaviour.
The proofs involve integral representation theory and class numbers of cyclotomic fields. 
We also compare with the relation of $h$-cobordism, and produce similar detailed  quantitative descriptions of the manifold sets that arise. 
\end{abstract}

\maketitle

\vspace{-5mm}

\section{Introduction}

\subsection{Background and main results}

One of the earliest triumphs in manifold topology was the classification of 3-dimensional lens spaces up to homotopy equivalence, and up to homeomorphism, due to Seifert-Threlfall, Reidemeister,  Whitehead, and Moise~\cites{Threlfall-Seifert,Reidemeister-lens-spaces,Whitehead-incidence,Moise}; see~\cite{Co73} for a self-contained treatment and \cite{Volkert-manifold-atlas} for a detailed history.   
The two classifications do not coincide e.g.\ $L(7,1)$ and $L(7,2)$ are famously homotopy equivalent but not homeomorphic. 

The homeomorphism classification made use of Reidemeister torsion to distinguish homotopy equivalent lens spaces. While trying to understand this more deeply, J.H.C.~Whitehead defined the notion of \emph{simple homotopy equivalence}~\cites{Whitehead-simplicial-spaces,Whitehead-incidence,Whitehead-combinatorial,Whitehead-she}. A homotopy equivalence $f \colon X \to Y$ between CW complexes is said to be \emph{simple} if it is homotopic to the composition of a sequence of elementary expansions and collapses.  
He also defined the \emph{Whitehead group}~\cite{Whitehead-she} $\Wh(G)$ of a group $G$, and  the \emph{Whitehead torsion} $\tau(f) \in \Wh(\pi_1(Y))$.  
 He proved the fundamental result that $\tau(f) =0$ if and only if $f$ is simple.
See \cref{section:prelims} for the precise definition.

Chapman~\cite{Ch74} showed that every homeomorphism $f \colon X \to Y$ between compact CW complexes is a simple homotopy equivalence, and so we have:
\[\mbox{homeomorphism} \Rightarrow \mbox{simple homotopy equivalence} \Rightarrow \mbox{homotopy equivalence}.\]
Whitehead showed that the homeomorphism classification of 3-dimensional lens spaces coincides with the classification up to simple homotopy equivalence, so there are many examples of homotopy but not simple homotopy equivalent lens spaces, e.g.\ $L(7,1)$ and $L(7,2)$. Higher dimensional lens spaces give rise to similar examples in all odd dimensions $\geq 5$~\cite{Co73}, and infinite such families of odd-dimensional manifolds were produced by Jahren-Kwasik~\cite{JK15}.  

This article constructs the first examples of closed manifolds, in all even dimensions $2k \geq 4$, that are homotopy but not simple homotopy equivalent. In fact, we produce infinite families. 
Writing~$\simeq$ for homotopy equivalence and~$\simeq_{\mathrm{s}}$ for simple homotopy equivalence, we note that if $M_1 \simeq N_1$ and $M_2 \simeq N_2$ are two pairs of homotopy equivalent (but not necessarily simple homotopy equivalent) odd dimensional manifolds, then $M_1 \times M_2 \simeq_{\mathrm{s}} N_1 \times N_2$ (\cref{cor:odd-product}), so even dimensional examples cannot be constructed in such a straightforward way from odd dimensional examples.

\begin{thmx}\label{main-theorem}
Let $n \geq 4$ be even. Then there exists an infinite collection of closed, connected, orientable $n$-manifolds that are all homotopy equivalent but are pairwise not simple homotopy equivalent. If $n > 4$, these manifolds can be taken to be smooth, and if $n=4$, they are topological.
\end{thmx}

Our examples are manifolds homotopy equivalent to $S^1 \times L$, for certain lens spaces $L$.

    It is currently open whether all topological 4-manifolds are homeomorphic to CW complexes. Thus, the definition of simple homotopy equivalence given previously does not apply.
    For a more general definition that applies in this case, see \cref{defn:KS-defn-simple}.

\subsection{Simple homotopy manifold sets}\label{ss:simple-hom-manifold-sets}

From now on, $n \ge 4$ will be an integer and all manifolds will be assumed closed and connected.
In order to quantify the difference between homotopy and simple homotopy equivalence, for a $\CAT$ $n$-manifold $M$ we introduce the \textit{simple homotopy manifold set} 
\[
\M^{\mathrm{h}}_{\mathrm{s}}(M)  :=  \{\CAT \text{ $n$-manifolds } N \mid N \simeq M\}/\simeq_{\mathrm{s}}
\]
where $\CAT \in \{\Diff, \PL, \TOP\}$ is the category of either smooth, piecewise linear, or topological manifolds.
 This is the set of manifolds homotopy equivalent to $M$ up to simple homotopy equivalence. 
The term ``manifold set" is used in contrast to structure sets: an element of $\M^{\mathrm{h}}_{\mathrm{s}}(M)$ is represented by a manifold without a chosen homotopy equivalence to $M$.

We will frequently restrict to the case of $\CAT$ $n$-manifolds where $\CAT$ satisfies the following hypothesis which depends on $n$.

\begin{assumptions}\label{assumptions}
If $n>4$, we take $\CAT \in \{\Diff, \PL, \TOP\}$.
If $n=4$, we take $\CAT$ to be the full subcategory of $\TOP$ with objects $4$-manifolds~$M$ such that $\pi_1(M)$ is good in the sense of Freedman.
\end{assumptions}

\begin{remark}
For good groups, the surgery sequence for 4-manifolds is defined and exact, and the $s$-cobordism theorem is available (see \cref{ss:simple-hom-manifold-sets}). The class of good groups contains finite groups and solvable groups, and is closed under subgroups, quotients, extensions and direct limits. For more details and references see \cites{FQ,Freedman-book-goodgroups}.
We have $\pi_1(S^1 \times L) \cong C_\infty \times C_m$ where $C_\infty$ is infinite cyclic and $C_m$ is cyclic of order $m \ge 2$. Since $C_\infty \times C_m$ is good, \cref{assumptions} applies in the case of \cref{main-theorem}.
\end{remark}

To study simple homotopy manifold sets, we will make use of a further equivalence relation, $h$-cobordism.
A cobordism $(W;M,N)$ of closed manifolds is an \emph{$h$-cobordism} if the inclusion maps $M \to W$ and $N \to W$ are homotopy equivalences. The notion of an $h$-cobordism is central to manifold topology.  
Smale's $h$-cobordism theorem~\cites{smale-pe,Smale-h-cob,Milnor-h-cob}, together with its extensions to other categories and dimension 4 in \cite{Stallings-Tata,Kirby-Siebenmann:1977-1,FQ}, states that under \cref{assumptions}, every simply-connected $h$-cobordism is $\CAT$-equivalent to the product $M \times I$. 
To generalise the $h$-cobordism theorem to non-simply connected manifolds, one also considers $s$-cobordisms. An \emph{$s$-cobordism} $(W;M,N)$ is a cobordism for which the inclusions $M \to W$ and $N\to W$ are simple homotopy equivalences. The $\CAT$ $s$-cobordism theorem~\cites{Barden,Mazur:1963-1,Stallings-Tata,Kirby-Siebenmann:1977-1,FQ} states that under \cref{assumptions},  $\CAT$ equivalence classes of $h$-cobordisms based on $M$ are in bijection with $\Wh(\pi_1(M))$, with the bijection given by taking the Whitehead torsion of the inclusion $M \to W$ (\cref{theorem:scob}). In particular every $s$-cobordism is a product.  This theorem underpins manifold classification in dimension at least $4$. 

The $s$-cobordism theorem is one of the tools that we can use to construct homotopy equivalent but not simple homotopy equivalent manifolds (see \cref{ss:realising-cobordisms,ss:class-man}), and the examples constructed this way are always $h$-cobordant. This leads us to consider two refined versions of the problem of finding homotopy but not simple homotopy equivalent manifolds, where the manifolds are also required to be $h$-cobordant, or required not to be $h$-cobordant. We introduce the corresponding variations of $\M^{\mathrm{h}}_{\mathrm{s}}(M)$: 
\[
\begin{aligned}
\M^{\hCob}_{\mathrm{s}}(M) & :=  \left\{\CAT \text{ $n$-manifolds } N \mid N \text{ is $h$-cobordant to } M \right\} / \simeq_{\mathrm{s}} \\
\M^{\mathrm{h}}_{\mathrm{s},\hCob}(M) & :=  \left\{ \CAT \text{ $n$-manifolds } N \mid N \simeq M \right\} / \langle \simeq_{\mathrm{s}}, \hCob \rangle
\end{aligned}
\]
where $\langle \simeq_{\mathrm{s}}, \hCob \rangle$ denotes the equivalence relation generated by simple homotopy equivalence and $h$-cobordism. These sets
arise naturally in the computation of $\M^{\mathrm{h}}_{\mathrm{s}}(M)$ in \cref{thmx:bijections-with-manifold-sets-intro} below. 

In \cref{thmx:bijections-with-manifold-sets-intro} below, we characterise the simple homotopy manifold sets $\M^{\mathrm{h}}_{\mathrm{s}}(M)$, $\M^{\hCob}_{\mathrm{s}}(M)$, and $\M^{\mathrm{h}}_{\mathrm{s},\hCob}(M)$, 
for a $\CAT$ $n$-manifold $M$, in terms of the Whitehead group, the homotopy automorphisms of $M$, and the surgery obstruction map. We then apply these characterisations to the manifolds $S^1 \times L$ and compute the objects that appear in \cref{thmx:bijections-with-manifold-sets-intro} for this case.

We first introduce notation which appears in the statement of the theorem.
An orientation character $w \colon G \rightarrow \left\{ \pm 1 \right\}$ determines an involution $x \mapsto \ol{x}$ on the Whitehead group $\Wh(G)$ (see \cref{ss:wh-group}), and we write $\Wh(G,w)$ to specify that $\Wh(G)$ is equipped with this involution. If $w \equiv 1$ is the trivial character, then we omit it from the notation. 
Define: 
\[
\begin{aligned}
\mathcal{J}_n(G,w) &= \left\{ y \in \Wh(G,w) \mid y = -(-1)^n\ol{y} \right\} \leq \Wh(G,w), \\
\mathcal{I}_n(G,w) &= \left\{ x - (-1)^n\ol{x} \mid x \in \Wh(G,w) \right\} \leq \mathcal{J}_n(G,w).
\end{aligned}
\] 
The Tate cohomology group $\wh{H}^{n+1}(C_2;\Wh(G,w))$ is canonically identified with $\mathcal{J}_n(G,w) / \mathcal{I}_n(G,w)$ (see \cref{prop:tate-C2}), and we denote the quotient map by 
\[
\pi \colon \mathcal{J}_n(G,w) \rightarrow \wh{H}^{n+1}(C_2;\Wh(G,w)) .
\]
Next let \[\varrho \colon \wh{H}^{n+1}(C_2;\Wh(G,w)) \rightarrow L_n^{\mathrm{s}}(\Z G, w)\] be the map from the Ranicki-Rothenberg exact sequence \eqref{eq:SRR-sequence} (see \cite{Shaneson-GxZ}, \cite[\S9]{Ranicki-80-I}). 

For $M$ a $\CAT$ $n$-manifold with fundamental group $G$ and orientation character $w$, let 
\[ \sigma_{\mathrm{s}} \colon \mathcal{N}(M) \rightarrow L_n^{\mathrm{s}}(\Z G, w)\] 
be the surgery obstruction map (see \cref{ss:wall}). 
The homotopy automorphisms $\hAut(M)$ of $M$ is the group of homotopy classes of self-homotopy equivalences of~$M$.
There is an action of $\hAut(M)$ on $\Wh(G,w)$ (as a set) such that if $f \colon N \rightarrow M$ is a homotopy equivalence and $g \in \hAut(M)$, then $\tau(f)^g=\tau(g \circ f)$. Denote the  quotient map by
\[
q \colon \Wh(G,w) \rightarrow \Wh(G,w) / \hAut(M).
\]
The subset $\mathcal{J}_n(G,w)$ is invariant under the action of $\hAut(M)$ on $\Wh(G,w)$, and there is an induced action on $\wh{H}^{n+1}(C_2;\Wh(G,w))$. The subsets
\[
(\varrho \circ \pi)^{-1}(\image \sigma_{\mathrm{s}}) \subseteq \mathcal{J}_n(G,w)  \quad\text{ and }\quad 
 \varrho^{-1}(\image \sigma_{\mathrm{s}}) \subseteq \wh{H}^{n+1}(C_2;\Wh(G,w))
\]
are both invariant under the induced actions of $\hAut(M)$. In general $\mathcal{I}_n(G,w)$ is not invariant under the action of $\hAut(M)$ on $\Wh(G,w)$, but when it is, then $q(\mathcal{I}_n(G,w)) = \mathcal{I}_n(G,w)/\hAut(M)$.

The following theorem is the basis of our main results (see \cref{theorem:man-class,theorem:man-bij}). 

\begin{thmx}\label{thmx:bijections-with-manifold-sets-intro} 
Let $M$ be a $\CAT$ $n$-manifold with fundamental group $G$ and orientation character $w \colon G \rightarrow \left\{ \pm 1 \right\}$, with $\CAT$ as in \cref{assumptions}. There is a commutative diagram
\[
\xymatrix{
\M^{\hCob}_{\mathrm{s}}(M) \ar[r] \ar[d]^{\cong} & \M^{\mathrm{h}}_{\mathrm{s}}(M) \ar[r] \ar[d]^{\cong} & \M^{\mathrm{h}}_{\mathrm{s},\hCob}(M) \ar[d]^{\cong} \\
q(\mathcal{I}_n(G,w)) \ar[r] & (\varrho \circ \pi)^{-1}(\image \sigma_{\mathrm{s}}) / \hAut(M) \ar[r] & \varrho^{-1}(\image \sigma_{\mathrm{s}}) / \hAut(M)
}
\] 
where each row is a short exact sequence of pointed sets, and each vertical arrow is a bijection. 
\end{thmx}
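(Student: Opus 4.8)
\emph{Overview.} The engine of the proof is the following invariant. For a $\CAT$ $n$-manifold $N \simeq M$ choose a homotopy equivalence $f \colon N \to M$ and set $t(N) := q(\tau(f)) \in \Wh(G,w)/\hAut(M)$. Since any two choices of $f$ differ by postcomposition with some $g \in \hAut(M)$, and $\tau(g\circ f) = \tau(f)^g$, the class $t(N)$ is independent of $f$; and if $N \simeq_s N'$ then precomposing $f$ with a simple homotopy equivalence $N' \to N$ changes nothing, by the composition formula $\tau(g\circ f) = \tau(g) + g_*\tau(f)$ together with the vanishing of torsion for simple equivalences. Hence $t$ descends to a map $t \colon \M^h_s(M) \to \Wh(G,w)/\hAut(M)$, and the three vertical maps of the diagram will be $t$ and two maps obtained from it by restriction and further quotient. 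The plan is: (1) show $t$ is injective with image $(\varrho\circ\pi)^{-1}(\image\sigma_s)/\hAut(M)$; (2) identify the images of the left and right columns; (3) deduce commutativity and exactness formally.

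\emph{Injectivity, and the two coarser relations.} If $f \colon N \to M$ and $f' \colon N' \to M$ are homotopy equivalences, then for $g \in \hAut(M)$ the composition formula gives $\tau\big((f')^{-1}\circ g \circ f\big) = (f'^{-1})_*\big(\tau(f)^g - \tau(f')\big)$, so $(f')^{-1}\circ g \circ f \colon N \to N'$ is simple for some $g$ if and only if $t(N) = t(N')$; thus $N \simeq_s N'$ \emph{if and only if} $t(N) = t(N')$. The same computation shows that gluing an $h$-cobordism onto $N$ adds an arbitrary element of $\mathcal{I}_n(G,w)$ to a representative of $t(N)$ and leaves it otherwise unchanged. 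Consequently $N$ is $h$-cobordant to a manifold simple homotopy equivalent to $M$ exactly when $t(N) \in q(\mathcal{I}_n(G,w))$, and $N$ and $N'$ are $\langle \simeq_s, \hCob\rangle$-equivalent exactly when their torsions agree modulo $\mathcal{I}_n(G,w)$ after applying the $\hAut(M)$-action.

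\emph{Computing the image: surgery theory.} Let $\mathcal{T}(M) \subseteq \Wh(G,w)$ be the set of torsions of homotopy equivalences from closed $\CAT$ $n$-manifolds to $M$; the core of the proof is that $\mathcal{T}(M) = (\varrho\circ\pi)^{-1}(\image\sigma_s)$. Poincaré duality forces $\tau(f) \in \mathcal{J}_n(G,w)$ (\cref{prop:WT-man}), so $\pi(\tau(f))$ is defined, and the containment $\mathcal{T}(M) \subseteq (\varrho\circ\pi)^{-1}(\image\sigma_s)$ follows from the compatibility formula $\varrho(\pi(\tau(f))) = \sigma_s(\eta(N,f))$, where $\eta(N,f) \in \mathcal{N}(M)$ is the normal invariant of $f$: this says the $L^s$-surgery obstruction of the degree-one normal map underlying a homotopy equivalence records the obstruction to making that equivalence simple. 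For the reverse containment, let $\tau \in \mathcal{J}_n(G,w)$ with $\varrho(\pi(\tau)) = \sigma_s(\eta)$ for some $\eta \in \mathcal{N}(M)$. As $\image\varrho = \ker\!\big(L_n^s(\Z G, w) \to L_n^h(\Z G, w)\big)$, the normal invariant $\eta$ has vanishing $h$-surgery obstruction, so by the $h$-surgery exact sequence $\eta = \eta(N_0, f_0)$ for some homotopy equivalence $f_0 \colon N_0 \to M$; by the compatibility formula, $\delta := \tau - \tau(f_0)$ lies in $\ker(\varrho\circ\pi) = \pi^{-1}(\ker\varrho)$. Now choose $\omega \in L_{n+1}^h(\Z G, w)$ with $\partial\omega = \pi(\delta)$, realize it by Wall realization on $N_0 \times I$ to get a homotopy equivalence $f_1 \colon N_1 \to M$ whose torsion satisfies $\pi(\tau(f_1)) = \pi(\tau)$ (using the formula for the torsion produced by Wall realization), and finally glue an $h$-cobordism onto $N_1$ to correct the remaining $\mathcal{I}_n(G,w)$-valued difference, using the $s$-cobordism theorem (\cref{theorem:scob}). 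This produces $N$ and $f \colon N \to M$ with $\tau(f) = \tau$, proving $\mathcal{T}(M) = (\varrho\circ\pi)^{-1}(\image\sigma_s)$; in particular the latter set is $\hAut(M)$-invariant (being equal to $\mathcal{T}(M)$, which manifestly is), so $t$ surjects onto its quotient by $\hAut(M)$, and combined with injectivity this is the middle vertical bijection.

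\emph{Assembling the diagram, and the main difficulty.} The left column is the restriction of $t$ to $h$-cobordism classes: if $W$ is an $h$-cobordism between $N$ and $M$ then $i_M^{-1}\circ i_N \colon N \to M$ has torsion $\tau(i_N) - \tau(i_M) = (-1)^n\overline{\tau(i_M)} - \tau(i_M) \in \mathcal{I}_n(G,w)$ by $h$-cobordism duality, and conversely every element of $\mathcal{I}_n(G,w)$ arises this way, so the image is $q(\mathcal{I}_n(G,w))$. The right column is $t$ followed by quotienting further by $h$-cobordism; by the second paragraph this amounts on torsions to reducing modulo $\mathcal{I}_n(G,w)$, and since $\pi$ is surjective with $(\varrho\circ\pi)^{-1}(\image\sigma_s) = \pi^{-1}\!\big(\varrho^{-1}(\image\sigma_s)\big)$, the image is $\varrho^{-1}(\image\sigma_s)/\hAut(M)$. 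The squares commute by construction. For exactness of the bottom row as pointed sets: the first map is a subset inclusion (hence injective), the second is surjective (induced by $\pi$), and the preimage of the basepoint under the second is $\pi^{-1}(0)/\hAut(M) = q(\mathcal{I}_n(G,w))$; exactness of the top row then transports across the vertical bijections. The one step with real content is the identity $\mathcal{T}(M) = (\varrho\circ\pi)^{-1}(\image\sigma_s)$: the inclusion $\subseteq$ requires the compatibility between the $K$-theoretic torsion data $(\pi,\varrho)$ and the surgery obstruction $\sigma_s$, and $\supseteq$ requires marrying the $h$- and $s$-surgery exact sequences, the $s$-cobordism theorem, and a precise description of how Wall realization changes Whitehead torsion, all executed in the category $\CAT$ (this is where \cref{assumptions} is used when $n = 4$). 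Everything else is formal manipulation of the composition formula.
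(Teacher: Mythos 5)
Your proposal is correct and follows essentially the same route as the paper: the torsion-orbit invariant with injectivity via the composition formula, realisation of $\mathcal{I}_n(G,w)$ via the $s$-cobordism theorem, and identification of the image as $(\varrho \circ \pi)^{-1}(\image \sigma_s)$ by combining the compatibility square $\varrho \circ \widehat{\tau} = \sigma_s \circ \eta_h$ with the effect of Wall realisation on Whitehead torsion, followed by an $h$-cobordism correction. The only caveat is that the two surgery-theoretic inputs you invoke without proof are precisely the paper's \cref{prop:psi-comm-diag,prop:braid}, which the paper proves in full (in both parities of $n$) exactly because no published proof of them exists.
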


It follows from the exact sequence \eqref{eq:SRR-sequence} that the image of the homomorphism $\psi \colon L_{n+1}^{\mathrm{h}}(\Z G, w) \rightarrow \wh{H}^{n+1}(C_2;\Wh(G,w))$ is $\varrho^{-1}(\{0\})$, hence it is contained in $\varrho^{-1}(\image \sigma_{\mathrm{s}})$, and so we obtain the following corollary. 

\begin{corollary} \label{cor:psi-surj-bij-intro}
Let $M$ be a $\CAT$ $n$-manifold with fundamental group $G$ and orientation character $w \colon G \rightarrow \left\{ \pm 1 \right\}$, with $\CAT$ as in \cref{assumptions}. If $\psi \colon L_{n+1}^{\mathrm{h}}(\Z G, w) \rightarrow \wh{H}^{n+1}(C_2;\Wh(G,w))$ is surjective, then we have
\[
\M^{\mathrm{h}}_{\mathrm{s}}(M) \cong \mathcal{J}_n(G,w) / \hAut(M), \quad \quad   \M^{\mathrm{h}}_{\mathrm{s},\hCob}(M) \cong \wh{H}^{n+1}(C_2;\Wh(G,w)) / \hAut(M).
\]
\end{corollary}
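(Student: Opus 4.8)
The plan is to deduce the corollary formally from \cref{thmx:bijections-with-manifold-sets-intro} together with the exactness of~\eqref{eq:SRR-sequence} at the term $\wh{H}^{n+1}(C_2;\Wh(G,w))$. Concretely, under the hypothesis that $\psi$ is surjective I would first identify the two subsets $(\varrho\circ\pi)^{-1}(\image\sigma_s)$ and $\varrho^{-1}(\image\sigma_s)$ appearing in that theorem, and then feed these identifications into the middle and right-hand vertical bijections.

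For the identification, recall (as noted above) that exactness of~\eqref{eq:SRR-sequence} gives $\image\psi = \ker\varrho$, and that $0 \in \image\sigma_s$ since $\sigma_s$ is a map of pointed sets; hence $\image\psi = \ker\varrho \subseteq \varrho^{-1}(\image\sigma_s)$. If $\psi$ is surjective then $\image\psi$ is all of $\wh{H}^{n+1}(C_2;\Wh(G,w))$, so this inclusion forces
\[
\varrho^{-1}(\image\sigma_s) = \wh{H}^{n+1}(C_2;\Wh(G,w)).
\]
Since $\pi \colon \mathcal{J}_n(G,w) \rightarrow \wh{H}^{n+1}(C_2;\Wh(G,w))$ is the (surjective) quotient map onto a Tate cohomology group, it follows in turn that
\[
(\varrho\circ\pi)^{-1}(\image\sigma_s) = \pi^{-1}\bigl(\varrho^{-1}(\image\sigma_s)\bigr) = \pi^{-1}\bigl(\wh{H}^{n+1}(C_2;\Wh(G,w))\bigr) = \mathcal{J}_n(G,w).
\]

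Finally I would substitute these two equalities into \cref{thmx:bijections-with-manifold-sets-intro}: the middle vertical bijection becomes $\M^h_s(M) \cong \mathcal{J}_n(G,w)/\hAut(M)$ and the right-hand one becomes $\M^h_{s,\hCob}(M) \cong \wh{H}^{n+1}(C_2;\Wh(G,w))/\hAut(M)$, with the $\hAut(M)$-invariance needed to form these quotients already supplied by that theorem. The argument is essentially formal; the only points to check are that $\image\sigma_s \ni 0$ and that $\pi$ is surjective, both immediate from the definitions, so I do not anticipate a genuine obstacle here.
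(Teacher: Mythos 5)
Your argument is correct and follows essentially the same route as the paper: the body proof also deduces $\varrho^{-1}(\image\sigma_s)=\wh{H}^{n+1}(C_2;\Wh(G,w))$ and $(\varrho\circ\pi)^{-1}(\image\sigma_s)=\mathcal{J}_n(G,w)$ from $\image\psi=\ker\varrho\subseteq\varrho^{-1}(\image\sigma_s)$ (there phrased via \cref{cor:braid}, with $0\in\image\sigma_s$ justified by the vanishing surgery obstruction of $\Id_M$) and then substitutes into \cref{thmx:bijections-with-manifold-sets-intro}. No gaps.
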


The advantage of \cref{cor:psi-surj-bij-intro} is that $\mathcal{J}_n(G,w)$ and $\wh{H}^{n+1}(C_2;\Wh(G,w))$ (and, more generally, $\image \psi$) only depend on $G$ and $w$, while $\image \sigma_{\mathrm{s}}$ depends on $M$ a priori. This allows us to apply \cref{cor:psi-surj-bij-intro} by separately analysing the involution on $\Wh(G,w)$ and the homotopy automorphisms of $M$. In cases where $M$ can be regarded as a manifold in multiple categories, the sets $q(\mathcal{I}_n(G,w))$, $\mathcal{J}_n(G,w) / \hAut(M)$ and $\wh{H}^{n+1}(C_2;\Wh(G,w)) / \hAut(M)$ do not depend on the choice of $\CAT$.

We can also use \cref{thmx:bijections-with-manifold-sets-intro} to describe when one of the simple homotopy manifold sets is nontrivial, see \cref{prop:hcob-not-she,prop:he-not-she+hcob,prop:he-not-she-equiv}. We introduce the following notation: 
\[
\begin{aligned}
T(M) &= \left\{ \tau(g) \mid g \in \hAut(M) \right\} \subseteq \mathcal{J}_n(G,w) \\
U(M) &= \left\{ \pi(\tau(g)) \mid g \in \hAut(M) \right\} \subseteq \wh{H}^{n+1}(C_2;\Wh(G,w)) 
\end{aligned}
\]

\begin{corollary} 
Let $M$ be a $\CAT$ $n$-manifold with fundamental group $G$ and orientation character $w \colon G \rightarrow \left\{ \pm 1 \right\}$, with $\CAT$ as in \cref{assumptions}. Then
\begin{clist}{(a)}
\item $|\M^{\hCob}_{\mathrm{s}}(M)| > 1$ if and only if $\mathcal{I}_n(G,w) \setminus T(M)$ is nonempty.
\item $|\M^{\mathrm{h}}_{\mathrm{s}}(M)| > 1$ if and only if $(\varrho \circ \pi)^{-1}(\image \sigma_{\mathrm{s}}) \setminus T(M)$ is nonempty.
\item $|\M^{\mathrm{h}}_{\mathrm{s},\hCob}(M)| > 1$ if and only if $\varrho^{-1}(\image \sigma_{\mathrm{s}}) \setminus U(M)$ is nonempty.
\end{clist}
\end{corollary}

To prove \cref{thmx:bijections-with-manifold-sets-intro}, first we observe that if $N$ is a manifold homotopy equivalent to $M$, then the set of Whitehead torsions of all possible homotopy equivalences $f \colon N \to M$ forms an orbit of the action of $\hAut(M)$ on $\Wh(G,w)$. By considering the different restrictions and equivalence relations on these manifolds $N$, we obtain maps from the various simple homotopy manifold sets of $M$ to subsets/subquotients of $\Wh(G,w) / \hAut(M)$. We then verify that these maps are injective, and the remaining task is to determine their images. 

For $\M^{\hCob}_{\mathrm{s}}(M)$, we use the $s$-cobordism theorem. For every $x \in \Wh(G,w)$ there is an $h$-cobordism $(W;M,N)$ with Whitehead torsion $x$, and then $\tau(f) = -x + (-1)^n \ol{x}$ for the induced homotopy equivalence $f \colon N \rightarrow M$ (see \cref{prop:WT-hcob}). This shows that every element of $\mathcal{I}_n(G,w)$ can be realised as the torsion of a homotopy equivalence $N \rightarrow M$ for some $N$ that is $h$-cobordant to $M$. 
For $\M^{\mathrm{h}}_{\mathrm{s},\hCob}(M)$, we need to describe the set of values of $\pi(\tau(f))$ for all homotopy equivalences $f \colon N \to M$. This set is the image of a natural map $\widehat{\tau} \colon \mathcal{S}^{\mathrm{h}}(M) \to \wh{H}^{n+1}(C_2;\Wh(G, w))$ from the homotopy structure set of $M$ (see \cref{def:hsset}). We show that $\widehat{\tau}$ fits into a commutative diagram involving the surgery exact sequence and the Ranicki-Rothenberg exact sequence~\eqref{eq:SRR-sequence} (see \cref{prop:braid}), and use that diagram to show that its image is $\varrho^{-1}(\image \sigma_{\mathrm{s}})$.
Finally, the characterisation of $\M^{\mathrm{h}}_{\mathrm{s}}(M)$ is obtained by combining the results on $\M^{\hCob}_{\mathrm{s}}(M)$ and~$\M^{\mathrm{h}}_{\mathrm{s},\hCob}(M)$.

\subsection{Computations of simple homotopy manifold sets}\label{ss:computations-of-s-h-man-sets}

We prove that for manifolds of the form $M = S^1 \times L$, the size of $\M^{\mathrm{h}}_{\mathrm{s}}(M)$ can be trivial,  finite and arbitrarily large, or infinite, and we determine precisely when each eventuality occurs. In particular the existence of the latter case implies \cref{main-theorem}. For an integer $m \ge 2$, let $C_m$ denote the cyclic group of order $m$, and let $C_\infty$ denote the infinite cyclic group. 

\begin{thmx} \label{theorem:main-S^1xL}
Let $n \geq 4$ be even and let $\CAT$ be as in \cref{assumptions}.  
Let $L$ be an $(n-1)$-dimensional lens space with $\pi_1(L) \cong C_m$, for some $m \geq 2$, and  let $M^n_m = S^1 \times L$.  Then 
\begin{clist}{(a)}
\item\label{item:thm-1-i} $|\M^{\mathrm{h}}_{\mathrm{s}}(M^n_m)|$ only depends on $n$ and $m$, and is independent of the choice of $L$ or $\CAT$;
\item\label{item:thm-1-ii} $|\M^{\mathrm{h}}_{\mathrm{s}}(M^n_m)| = 1$ if and only if $m \in \{2,3,5,6,7,10,11,13,14,17,19\}$;
\item\label{item:thm-1-iii} $|\M^{\mathrm{h}}_{\mathrm{s}}(M^n_m)| = \infty$ if and only if $m$ is not square-free;
\item\label{item:thm-1-iv} $|\M^{\mathrm{h}}_{\mathrm{s}}(M^n_m)| \to \infty$ as $m \to \infty$, uniformly in $n$.
\end{clist}
\end{thmx}

The proof is an application of \cref{thmx:bijections-with-manifold-sets-intro}.
This theorem identifies $\M^{\mathrm{h}}_{\mathrm{s}}(M)$ with the orbit set of an action of $\hAut(M)$ on a subgroup of $\Wh(\pi_1(M))$, which depends on the surgery obstruction map in the surgery exact sequence of $M$. 
However as in \cref{cor:psi-surj-bij-intro}, for some fundamental groups, including $C_{\infty} \times C_m$, the relevant subgroup of $\Wh(\pi_1(M))$ depends only on $\pi_1(M)$ and the orientation character of $M$. To study this subgroup for $S^1 \times L$, we use representation theory and number theory, as explained in \cref{ss:intro-k0}. 

The action of an element $g \in \hAut(M)$ on $\Wh(\pi_1(M))$ is determined by its Whitehead torsion $\tau(g) \in \Wh(\pi_1(M))$ and the induced homomorphism $\pi_1(g) \in \Aut(\pi_1(M))$ (see \cref{def:haut-act}).
It turns out that computing this action is particularly tractable for $S^1 \times L$; see \cref{theorem:haut-S1xL-intro} for more details. This explains why we are able to so accurately compute simple homotopy manifold sets for this class of manifolds. 

The next theorem gives examples where $\M^{\hCob}_{\mathrm{s}}(M)$ is nontrivial. Thus, even if we restrict to $h$-cobordant manifolds, many of the simple homotopy manifold sets we consider remain large. 

\begin{thmx}\label{theorem:hcob-S^1xL}
Let $n \geq 4$ be even and let $\CAT$ be as in \cref{assumptions}. 
Let $L$ be an $(n-1)$-dimensional lens space with $\pi_1(L) \cong C_m$, for some $m \geq 2$, and  let $M^n_m = S^1 \times L$. Then 
\begin{clist}{(a)}
\item\label{item:theorem-hcob-S1xL-a} $|\M^{\hCob}_{\mathrm{s}}(M^n_m)|$ only depends on $n$ and $m$, but it is independent of the choice of $L$ or $\CAT$;
\item\label{item:theorem-hcob-S1xL-b} $|\M^{\hCob}_{\mathrm{s}}(M^n_m)| = 1$ if and only if  $m \in \{2,3,5,6,7,10,11,13,14,15,17,19,29\}$;
\item\label{item:theorem-hcob-S1xL-c} $|\M^{\hCob}_{\mathrm{s}}(M^n_m)| = \infty$ if and only if $m$ is not square-free;
\item\label{item:theorem-hcob-S1xL-d} $|\M^{\hCob}_{\mathrm{s}}(M^n_m)| \to \infty$ as $m \to \infty$ uniformly in $n$.
\end{clist}
\end{thmx}

Finally, we discover a wealth of interesting behaviour when we also factor out by $h$-cobordism.

\begin{thmx}\label{theorem:s-hcob-S^1xL}
Let $n \geq 4$ be even and let $\CAT$ be as in \cref{assumptions}. 
Let $L$ be an $(n-1)$-dimensional lens space with $\pi_1(L) \cong C_m$, for some $m \geq 2$, and  let $M^n_m = S^1 \times L$. 
Then 
\begin{clist}{(a)}
\item\label{item-theorem:s-hcob-S^1xL-a} $|\M^{\mathrm{h}}_{\mathrm{s},\hCob}(M^n_m)|$ only depends on $n$ and $m$, but it is independent of the choice of $L$ or $\CAT$;
\item\label{item-theorem:s-hcob-S^1xL-b} $\displaystyle \liminf_{m \to \infty} \Bigl( \sup_n |\M^{\mathrm{h}}_{\mathrm{s},\hCob}(M^n_m)| \Bigr) = 1$;
\item\label{item-theorem:s-hcob-S^1xL-c} $|\M^{\mathrm{h}}_{\mathrm{s},\hCob}(M^n_m)| < \infty$ for all $n$ and $m$;
\item\label{item-theorem:s-hcob-S^1xL-d} $\displaystyle \limsup_{m \to \infty} \Bigl( \inf_n |\M^{\mathrm{h}}_{\mathrm{s},\hCob}(M^n_m)| \Bigr) = \infty$.
\end{clist}
\end{thmx}

Part \eqref{item-theorem:s-hcob-S^1xL-b} says that there are infinitely many $m$ with $|\M^{\mathrm{h}}_{\mathrm{s},\hCob}(M^n_m)| = 1$
for every $n$, while part \eqref{item-theorem:s-hcob-S^1xL-d} says that $\inf_n |\M^{\mathrm{h}}_{\mathrm{s},\hCob}(M^n_m)|$ is unbounded in $m$.

\subsection{Outline of the proof of \cref{main-theorem}}
\label{ss:intro-S1L}

Let $n = 2k \ge 4$ be even and let $M^n_m = S^1 \times L$, where $L$ is an $(n-1)$-dimensional lens space with $\pi_1(L) \cong C_m$, $m \geq 2$. It suffices to show that $|\M^{\mathrm{h}}_{\mathrm{s}}(M^n_m)| = \infty$ for some $m \ge 2$.
The proof is based on applying \cref{cor:psi-surj-bij-intro} to $M^n_m$. 
We show that $\psi \colon L_{n+1}^{\mathrm{h}}(\Z[C_\infty \times C_m]) \rightarrow \wh{H}^{n+1}(C_2;\Wh(C_\infty \times C_m))$ is surjective for $n$ even in \cref{prop:psi-surjective}, and so \cref{cor:psi-surj-bij-intro} implies that $\M^{\mathrm{h}}_{\mathrm{s}}(M^n_m) \cong \mathcal{J}_n(C_\infty \times C_m) / \hAut(M^n_m)$.  
For the homotopy automorphisms of $M^n_m$, we use the following (see \cref{theorem:wh-torsion-vanishes-of-he-s,theorem:aut-image}).

\begin{theorem}\label{theorem:haut-S1xL-intro}
\leavevmode
\begin{clist}{(a)}
\item\label{item:theorem-haut-S1xL-intro-a} Every homotopy automorphism $f \colon M^n_m \to M^n_m$ is simple.
\item\label{item:theorem-haut-S1xL-intro-b}  
If $\pi_1 \colon \hAut(M^n_m) \rightarrow \Aut(C_\infty \times C_m)$ is the map given by taking the induced automorphism on the fundamental group, then $\IM(\pi_1) = \left\{ \left(\begin{smallmatrix} a & b \\ 0 & c \end{smallmatrix}\right) \in \Aut(C_\infty \times C_m) \mid c^k \equiv \pm 1 \text{ \normalfont mod } m \right\}$.
\end{clist}
\end{theorem}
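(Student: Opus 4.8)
The plan is to prove the two parts together, obtaining (a) from (b). For (b): since $C_m$ is the torsion subgroup of $C_\infty\times C_m$ it is characteristic, so every automorphism has the stated upper-triangular form $\left(\begin{smallmatrix}a&b\\0&c\end{smallmatrix}\right)$, meaning $t\mapsto t^az^b$ and $z\mapsto z^c$. For the inclusion ``$\subseteq$'', given $f\in\hAut(S^1\times L)$ with $\pi_1(f)=\left(\begin{smallmatrix}a&b\\0&c\end{smallmatrix}\right)$, pass to the infinite cyclic cover $\R\times L\simeq L$ (the cover corresponding to $C_m$); since $\pi_1(f)$ preserves $C_m$, the map $f$ lifts to a self-homotopy-equivalence of $\R\times L$, and hence of $L$, inducing $z\mapsto z^c$, so by the homotopy classification of lens spaces \cite{Co73} we get $c^k\equiv\pm1\pmod m$. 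For ``$\supseteq$'', realise generators: $z\mapsto z^a$ on the $S^1$-factor realises $\left(\begin{smallmatrix}a&0\\0&1\end{smallmatrix}\right)$; writing $L=S^{2k-1}/C_m\subseteq\C^k/C_m$ with $C_m$ acting by the standard rotation, $\Phi_b(\theta,[v])=(\theta,[e^{2\pi i b\theta/m}v])$ is a self-homeomorphism of $S^1\times L$ with $\pi_1(\Phi_b)=\left(\begin{smallmatrix}1&b\\0&1\end{smallmatrix}\right)$; and when $c^k\equiv\pm1\pmod m$ the lens space classification provides $g_c\in\hAut(L)$ inducing $z\mapsto z^c$, so $\id_{S^1}\times g_c$ realises $\left(\begin{smallmatrix}1&0\\0&c\end{smallmatrix}\right)$. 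Composing these maps realises every matrix with $c^k\equiv\pm1$, which proves (b).

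For (a), let $f\in\hAut(S^1\times L)$. By (b) there is a homeomorphism $h$ assembled from the $a$- and $b$-generators above together with a $g_c\in\hAut(L)$ such that $\pi_1\big(h\circ(\id_{S^1}\times g_c)\big)=\pi_1(f)$, so $f\simeq h\circ(\id_{S^1}\times g_c)\circ f'$ with $\pi_1(f')=\id$. By the composition formula $\tau(pq)=\tau(p)+(\pi_1 p)_*\tau(q)$, by $\tau(h)=0$ (Chapman's theorem \cite{Ch74}), and by the product formula $\tau(\id_{S^1}\times g_c)=0$ (as $\chi(S^1)=0$), we get $\tau(f)=(\pi_1 h)_*(\pi_1(\id_{S^1}\times g_c))_*\,\tau(f')$; since the two operators are automorphisms of $\Wh(C_\infty\times C_m)$, it suffices to show $\tau(f')=0$ whenever $\pi_1(f')=\id$. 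The case $m=2$ is trivial since $\Wh(C_\infty\times C_2)=0$, so assume $m\ge3$. A difference-cochain computation --- using $\pi_i(S^1\times L)=0$ for $2\le i\le2k-2$, $H_{2k-1}(S^1\times L;\Z)\cong\Z$, and Poincaré duality --- shows $f'$ acts on $H_{2k-1}(S^1\times L;\Z)$ by multiplication by $1+sm$ for some $s\in\Z$, hence (being $\pm1$, with $m\ge3$) by $+1$; so $\deg f'=1$. As $H^1(L;\Z)=0$, the composite $\mathrm{pr}_{S^1}\circ f'$ is homotopic to $\mathrm{pr}_{S^1}$, so after a homotopy $f'$ is fibre-preserving over $S^1$; the resulting loop of fibre maps is based at some $g_0\in\hAut(L)$ with $\pi_1(g_0)=\id$, which is homotopic to $\id_L$ (for $m\ge3$, by \cite{Co73} and obstruction theory), so after a further homotopy $f'$ is the total-space map of a loop $\alpha\colon S^1\to\hAut(L)$ based at $\id_L$. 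Thus $\tau(f')$ depends only on $[\alpha]\in\pi_1(\hAut(L),\id_L)$, and $[\alpha]\mapsto\tau(f'_\alpha)$ is a homomorphism into $\Wh(C_\infty\times C_m)$.

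Since $\alpha$ is based at $\id_L$, this homomorphism takes values in the part of the Bass--Heller--Swan decomposition $\Wh(C_\infty\times C_m)\cong\Wh(C_m)\oplus\wt K_0(\Z C_m)\oplus\wt{\Nil}_0(\Z C_m)^{2}$ complementary to $\Wh(C_m)$, and the proof is finished once we show it vanishes. The two $\wt{\Nil}_0$-components vanish because the lift of $f'_\alpha$ to the cover $L\times\R$ is fibre-preserving over $\R$ and coincides with $\id_L$ near each end, so the nilpotent data attached to it is trivial. The $\wt K_0(\Z C_m)$-component is the main obstacle. The route I would take is to show that every class in $\pi_1(\hAut(L),\id_L)$ is represented by a loop of self-homeomorphisms of $L$ --- built from the circle of twist homeomorphisms, point-pushing along $\pi_1(L)$, and the standard generator of $\pi_{2k}(S^{2k-1})$ --- whence $f'_\alpha$ becomes isotopic to a self-homeomorphism of $S^1\times L$ and $\tau(f'_\alpha)=0$ by Chapman's theorem; alternatively, one can try to identify this component with the Siebenmann finiteness obstruction of an infinite cyclic cover of a closed manifold (which must vanish), or to evaluate $\tau(f'_\alpha)$ directly as the torsion of an explicit chain self-equivalence $\id+\nu$, with $\nu$ nilpotent, of the cellular $\Z[C_\infty\times C_m]$-chain complex of $S^1\times L$. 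Obtaining enough control on $\pi_1(\hAut(L),\id_L)$ --- or on that chain self-equivalence --- is where essentially all the work lies; the rest of the argument is formal.
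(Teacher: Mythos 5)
Your part (b) is correct and is essentially the paper's own argument: restrict $f$ to $L$ via the cover $\R\times L\to S^1\times L$ to get the constraint $c^k\equiv\pm1\pmod m$ from the homotopy classification of lens spaces, and realise the generators by explicit homeomorphisms ($r\times\id_L$, the twist map, and $\id_{S^1}\times g_c$). Part (a), however, has a genuine gap, and you acknowledge it yourself: after the (reasonable) reduction to a map $f'$ with $\pi_1(f')=\id$ that is fibre-preserving over $S^1$, i.e.\ comes from a loop $\alpha$ in $\hAut(L)$ based at $\id_L$, you never prove that $\tau(f'_\alpha)=0$. The vanishing of the $\wt K_0(\Z C_m)$-component is exactly where the theorem has content (the $\wt K_0$ summand is the source of all the nontrivial torsion in this paper), and you only list candidate strategies. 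The first one --- that every class in $\pi_1(\hAut(L),\id_L)$ is represented by a loop of homeomorphisms --- is itself a substantial unproved claim of pseudo-isotopy/structure-space type, not something that can be assembled from ``twist homeomorphisms, point-pushing and $\pi_{2k}(S^{2k-1})$'' without real work; the Siebenmann-obstruction and chain-level alternatives are likewise not carried out. In addition, two intermediate assertions are unjustified as stated: that the homomorphism $[\alpha]\mapsto\tau(f'_\alpha)$ lands in the Bass--Heller--Swan complement of $\Wh(C_m)$, and that the Nil components vanish because the lift to $L\times\R$ ``coincides with $\id_L$ near each end''. So as written this is a programme, not a proof of (a).

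For comparison, the paper's proof of (a) avoids any analysis of $\pi_1(\hAut(L))$. After composing with $\id_{S^1}\times\bar f_2^{-1}$ (torsion zero by the product formula) and a diffeomorphism realising the relevant automorphism of $\pi_1$ (torsion zero by Chapman), it homotopes the map, cell by cell, to be the identity on $L\cup\sk_{2k-2}(S^1\times L)$, using $\pi_i(S^1\times L)\cong\pi_i(\R\times S^{2k-1})=0$ for $2\le i\le 2k-2$. Then a direct computation with the standard CW structure shows that the induced $\Z[C_\infty\times C_m]$-chain map is the identity in every degree except $2k-1$, where (after a further homotopy) it is the elementary matrix $\left(\begin{smallmatrix}1&0\\ a\Sigma&1\end{smallmatrix}\right)$ with $\Sigma$ the norm element --- the off-diagonal entry being a multiple of $\Sigma$ because $\pi_{2k-1}(S^1\times L)\cong\Z$ is generated by the covering sphere, and the degree-$2k$ map being forced to be $1$ by the chain-map condition and injectivity of multiplication by $1-\alpha$. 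Hence $\tau=\sum_i(-1)^i[g_i]=0$. Your final fallback (``evaluate the torsion of an explicit chain self-equivalence'') is in exactly this spirit, and carrying it out along these lines is what your argument is missing.
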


Part \eqref{item:theorem-haut-S1xL-intro-a} implies that the action of $\hAut(M^n_m)$ on $\Wh(C_\infty \times C_m)$ factors through the action of $\Aut(C_\infty \times C_m)$, which acts on $\Wh(C_\infty \times C_m)$ via automorphisms, using the functoriality of $\Wh$ (see \cref{def:haut-act,rem:haut-act-spec}). Therefore every orbit has cardinality bounded above by $|\hspace{-0.5mm}\Aut(C_\infty \times C_m)| < 2m^2$, and $|\mathcal{J}_n(C_\infty \times C_m) / \hAut(M^n_m)|=1$ if and only if $\mathcal{J}_n(C_\infty \times C_m) = 0$. 

So to prove \cref{main-theorem}, and more generally \cref{theorem:main-S^1xL}, it remains to study the involution on $\Wh(C_\infty \times C_m)$ and prove the corresponding statements about $|\mathcal{J}_n(C_\infty \times C_m)|$. First, the fundamental theorem for $K_1(\Z C_m[t,t^{-1}])$ gives rise to a direct sum decomposition (see \cref{theorem:BHS}):
\[
\Wh(C_{\infty} \times C_m) \cong \Wh(C_m) \oplus \wt{K}_0(\Z C_m) \oplus NK_1(\Z C_m)^2
\]
where $\wt{K}_0$ is the reduced projective class group, and $NK_1$ is the so-called Nil group. All summands have natural involutions, which are compatible with this isomorphism. Using that $\mathcal{J}_n(C_m) = 0$ (see \cref{prop:j0cm}), we obtain the following decomposition for $\mathcal{J}_n(C_\infty \times C_m)$ (see \cref{prop:I(ZxG)}):
\[\mathcal{J}_n(C_\infty \times C_m) \cong \{ y \in \wt{K}_0(\Z C_m) \mid y=-\ol{y} \} \oplus NK_1(\Z C_m).\]

\begin{theorem}[Bass-Murthy, Martin, Weibel, Farrell] \label{theorem:NK_1=0}
If $m$ is square-free then $NK_1(\Z C_m)=0$. Otherwise $NK_1(\Z C_m)$ is infinite.  
\end{theorem}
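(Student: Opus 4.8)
The plan is to obtain the statement by assembling results from the literature: the vanishing for square-free $m$ is due to Bass and Murthy, the non-vanishing for the remaining $m$ to Martin and Weibel, and the passage from non-vanishing to infiniteness to Farrell. First suppose $m$ is square-free. Then $\Z C_m$ sits at one corner of a conductor (Milnor) square whose other three corners are regular rings --- finite products of rings of integers of cyclotomic fields and of finite fields. One sees this by an induction on the number of prime divisors of $m$ (as carried out in detail by Bass--Murthy), using at each stage the Rim square associated to a direct-factor splitting $C_m \cong C_p \times C_{m/p}$; here $p \nmid m/p$ since $m$ is square-free, so the residue ring $\F_p C_{m/p}$ is a product of finite fields and hence regular. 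As the $NK$-groups of regular rings vanish in every degree, the Mayer--Vietoris sequence in $NK$-theory for Milnor squares (equivalently, the computation of Bass--Murthy) forces $NK_1(\Z C_m) = 0$.

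Now suppose $m$ is not square-free, and fix a prime $p$ with $p^2 \mid m$. Let $p^k$ be the exact power of $p$ dividing $m$, so $k \ge 2$ and $C_m \cong C_{p^k} \times C_{m'}$ with $m' = m/p^k$. Projection onto, and inclusion of, the direct factor $C_{p^k}$ induce ring homomorphisms $\Z C_{p^k} \to \Z C_m$ and $\Z C_m \to \Z C_{p^k}$ whose composite, in that order, is the identity of $\Z C_{p^k}$, so functoriality of $NK_1$ exhibits $NK_1(\Z C_{p^k})$ as a direct summand of $NK_1(\Z C_m)$. It therefore suffices to prove, for every prime $p$ and every $k \ge 2$, that $NK_1(\Z C_{p^k})$ is nonzero and that it is then infinite. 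The non-vanishing is the substantive input, which we quote from the work of Martin and Weibel; it is established by working with the conductor squares $\Z C_{p^k} \cong \Z C_{p^{k-1}} \times_{\F_p[u]/(u^{p^{k-1}})} \Z[\zeta_{p^k}]$, whose fibre term $\F_p[u]/(u^{p^{k-1}})$ carries nilpotents, together with the Witt-vector module structure on $NK_*$. Given that $NK_1(\Z C_{p^k}) \neq 0$, Farrell's theorem that a nonzero Nil group is never finitely generated as an abelian group shows that $NK_1(\Z C_{p^k})$, and hence $NK_1(\Z C_m)$, is infinite.

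The only real difficulty here is the explicit non-vanishing $NK_1(\Z C_{p^k}) \neq 0$ for $k \ge 2$: establishing it from scratch needs either a direct computation with the conductor square of $\Z C_{p^k}$ and the $K$-theory of the truncated polynomial algebras $\F_p[u]/(u^{p^{k-1}})$, or Weibel's module-theoretic machinery, and for this we rely on the literature. The reduction to prime powers, the direct-summand argument, and the deduction of infiniteness from non-vanishing via Farrell are all formal.
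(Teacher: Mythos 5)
Your overall strategy is the same as the paper's: both arguments are assembled from Bass--Murthy (vanishing of $NK_1(\Z C_m)$ for $m$ square-free), Martin and Weibel (non-vanishing input at prime powers), and Farrell (a nonzero Nil group is not finitely generated, hence infinite). Your retract argument --- writing $C_m \cong C_{p^k} \times C_{m'}$ with $p^k$ the exact power of $p$ dividing $m$, so that functoriality of $NK_1$ on ring maps induced by the splitting exhibits $NK_1(\Z C_{p^k})$ as a direct summand of $NK_1(\Z C_m)$ --- is correct, and is an elementary substitute for part of what the paper cites.

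The one point that needs repair is the input you propose to quote. What is actually available off the shelf is $NK_1(\Z C_{p^2}) \neq 0$ for $p$ odd (Martin, Theorem B) and $NK_1(\Z C_4) \neq 0$ (Weibel); neither reference asserts non-vanishing of $NK_1(\Z C_{p^k})$ for \emph{all} $k \ge 2$, which is what your reduction requires (e.g.\ $C_8$, $C_{16}$, \dots\ when $8$ exactly divides $m$). Your retract trick does not close this gap, since $C_{p^2}$ is not a direct factor of $C_{p^k}$ for $k \ge 3$, so no coprime splitting is available there. The paper handles exactly this issue by citing Martin's Theorem 3.6: if $n \mid m$ and $NK_1(\Z C_n) \neq 0$, then $NK_1(\Z C_m) \neq 0$. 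With that divisibility result one reduces directly from $p^2 \mid m$ to the $p^2$ and $C_4$ computations (and it also renders your coprime-splitting step unnecessary). So either invoke that theorem, or carry out the inductive conductor-square argument you allude to in order to propagate non-vanishing from $p^2$ up to $p^k$. The remaining steps --- the square-free case via Bass--Murthy and the passage from nonzero to infinite via Farrell --- match the paper.
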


In particular $\mathcal{J}_n(C_\infty \times C_m)$ is infinite in the case where $m$ is not square-free. Since each orbit of $\hAut(M^n_m)$ is finite, there are infinitely many orbits. Since $\M^{\mathrm{h}}_{\mathrm{s}}(M^n_m) \cong \mathcal{J}_n(C_\infty \times C_m) / \hAut(M^n_m)$, we obtain \cref{main-theorem} by taking $m$ not square-free, e.g.\ $m=4$.

The proofs of Theorems \ref{theorem:hcob-S^1xL} and \ref{theorem:s-hcob-S^1xL} are similarly approached using \cref{cor:psi-surj-bij-intro} and \cref{theorem:haut-S1xL-intro}, as well as the following decompositions (see \cref{prop:I(ZxG),cor:tate-decomp}):
\begin{align*} &\mathcal{I}_n(C_\infty \times C_m) \cong \{x-\ol{x} \mid x \in \wt K_0(\Z C_m)\} \oplus NK_1(\Z C_m) \\
&\wh{H}^{n+1}(C_2;\Wh(C_\infty \times C_m)) \cong \wh{H}^{n+1}(C_2;\wt{K}_0(\Z C_m)) \cong \frac{\{y \in \wt K_0(\Z C_m) \mid \overline{y} = -(-1)^ny \}}{\{x-(-1)^n\ol{x} \mid x \in \wt K_0(\Z C_m)\}}.\end{align*}
The finiteness results follow from \cref{theorem:NK_1=0} combined with the fact that $\wt K_0(\Z C_m)$ is finite (see \cref{prop:proj=>LF}). However, the results on triviality and asymptotic behaviour of the manifold sets requires a deeper analysis of the involution on $\wt K_0(\Z C_m)$ (see \cref{theorem:main12-red-intro,theorem:main3-red-intro} below).

\subsection{The involution on $\wt{K}_0(\Z C_m)$} \label{ss:intro-k0}

The purpose of \cref{p:algebra} is to explore this involution on $\wt K_0(\Z C_m)$ in detail and to prove the following three results which are the key algebraic ingredients behind the proofs of \cref{theorem:main-S^1xL,theorem:hcob-S^1xL,,theorem:s-hcob-S^1xL}.

\begin{theorem} \label{theorem:main12-red-intro}
Let $m \ge 2$ be a square-free integer. Then 
\begin{clist}{(i)}
\item\label{item:theorem:main12-red-intro-i} 
$|\{ y \in \wt{K}_0(\Z C_m) \mid \overline{y}=-y\}|=1$ if and only if  $m \in \{2, 3, 5, 6, 7, 10, 11, 13, 14, 17, 19\}$;
\item\label{item:theorem:main12-red-intro-ii} 
$|\{ x - \overline{x} \mid x \in \wt{K}_0(\Z C_m)\}|=1$ if and only if  $m \in \{2, 3, 5, 6, 7, 10, 11, 13, 14, 15, 17, 19, 29\}$; and
\item\label{item:theorem:main12-red-intro-iii} 
$|\{ x - \overline{x} \mid x \in \wt{K}_0(\Z C_m)\}| \to \infty$ super-exponentially in $m$, and hence we also have that $|\{ y \in \wt{K}_0(\Z C_m) \mid \overline{y}=-y\}| \to \infty$ super-exponentially in $m$.
\end{clist}
\end{theorem}

\begin{theorem} \label{theorem:main3-red-intro}
Let $m \ge 2$ be an integer. Then
\begin{clist}{(i)}
\item\label{item:theorem:main3-red-intro-i} $|\{ y \in \wt{K}_0(\Z C_m) \mid \overline{y}=-y\}/\{ x - \overline{x} \mid x \in \wt{K}_0(\Z C_m)\}|=1$  for infinitely many $m$; and
\item\label{item:theorem:main3-red-intro-ii} $\displaystyle \sup_{k \le m} |\{ y \in \wt{K}_0(\Z C_k) \mid \overline{y}=-y\}/\{ x - \overline{x} \mid x \in \wt{K}_0(\Z C_k)\}| \to \infty$  exponentially in $m$. 
\end{clist}
\end{theorem}

We now explain the strategy of proof of these three results, as well as some of the key ingredients. 
Firstly, whilst $\wt K_0(R)$ is difficult to compute for an arbitrary ring, for a finite group $G$ we have that $\wt K_0(\Z G) \cong C(\Z G)$ where $C(\cdot)$ denotes the locally free class group (see \cref{ss:class-groups-basics}).
In the case $G = C_m$, this allows us to obtain a short exact sequence:
\begin{equation} 0 \to D(\Z C_m) \to \wt K_0(\Z C_m) \to \bigoplus_{d \mid m} C(\Z[\zeta_d])\to 0 \label{eq:SES-K_0} \end{equation}
where $D(\Z C_m)$ denotes the kernel group of $\Z C_m$ (see \cref{ss:kernel-groups}) and $C(\Z[\zeta_d])$ denotes the ideal class group of $\Z[\zeta_d]$, where $\zeta_d := e^{2\pi i/d} \in \C$. The standard involution on $\wt K_0(\Z C_m)$ preserves $D(\Z C_m)$ and induces the involution given by conjugation on each $C(\Z[\zeta_d])$ (see \cref{ss:induced-inv}).

To prove \cref{theorem:main12-red-intro}, we make use of \cref{lemma:useful,lemma:very-useful} which allow us to obtain information about the orders $|\{ x \in \wt{K}_0(\Z C_m) \mid \overline{x}=-x\}|$ and $|\{ x - \overline{x} \mid x \in \wt{K}_0(\Z C_m)\}|$ from the orders of $|\{ x \in A \mid \overline{x}=-x\}|$ and $|\{ x - \overline{x} \mid x \in A\}|$ for $A = D(\Z C_m)$ or $C(\Z[\zeta_d]))$ for some $d \mid m$. 
The key result making this approach possible is the following, which is \cref{prop:inv-ideal-class-1}. Here $h_m^{-}$ is a factor of the class number $h_m = |C(\Z[\zeta_m])|$ (see \cref{def:class-number}).
For an integer $m$, we let $\odd(m)$ denote the unique odd integer $r$ such that $m = 2^k r$ for some $k$.

\begin{proposition} \label{lemma:intro1}
The integer $\odd(h_m^{-})$ divides $|\{x-\ol{x} \mid x \in C(\Z[\zeta_m])\}|$.
\end{proposition}

It was shown by Horie \cite{Ho89}, using results from Iwasawa theory \cite{Fr82}, that there exists finitely many $m$ for which $\odd(h_m^{-})=1$ and $\odd(h_m^{-}) \to \infty$ (see \cref{prop:odd(h_m)=1}).
Since \cref{lemma:intro1} also gives a lower bound on $ |\{x \in C(\Z[\zeta_m]) \mid \ol{x} = -x \}|$, this is enough to prove part \eqref{item:theorem:main12-red-intro-iii} of \cref{theorem:main12-red-intro} and to reduce the proof of parts \eqref{item:theorem:main12-red-intro-i} and \eqref{item:theorem:main12-red-intro-ii} to checking finitely many cases. These cases are dealt with via a variety of methods such as analysing the group structure on $C(\Z[\zeta_m])$ (see the proof of \cref{prop:part1}) and the studying the involution on $D(\Z C_m)$ by relating it to maps between units groups (see \cref{ss:kernel-groups-ZC_m,ss:proofs-algebra-1/2}).

The proof of \cref{theorem:main3-red-intro} can similarly be approached by using \eqref{eq:SES-K_0} to relate the Tate cohomology group $\wh H^1(C_2 ; \wt K_0(\Z C_m))$ to the parity of class numbers. This can be found in \cref{ss:proofs-algebra-3}.

\subsection*{Organisation of the paper}

The paper is structured into three parts. 
In \cref{p:general} we develop the necessary background on simple homotopy equivalence, Whitehead torsion and $h$-cobordisms. We then prove \cref{thmx:bijections-with-manifold-sets-intro}, which is our main general result. 
In \cref{p:lens} we study the manifolds $L \times S^1$, leading to the proofs of \cref{theorem:main-S^1xL,theorem:hcob-S^1xL,,theorem:s-hcob-S^1xL} subject to results about $\wt K_0(\Z C_m)$. 
In \cref{p:algebra} we develop the necessary background on integral representation theory and algebraic number theory. We then study the involution of $\wt K_0(\Z C_m)$ and prove Theorems \ref{theorem:main12-red-intro} and \ref{theorem:main3-red-intro}.

\subsection*{Conventions} 

The following conventions apply throughout this article, unless otherwise specified.
As above, $n \ge 4$ will be an integer and an $n$-manifold will be a compact connected $\CAT$ $n$-manifold where $\CAT \in \{\Diff, \PL, \TOP\}$.
They will be assumed closed except where it is clear from the context that they are not, e.g.\ thickenings and cobordisms.
We frequently assume \cref{assumptions} but will state this as needed.
CW complexes will be assumed to be connected.
Rings will be assumed to be associative and have a multiplicative identity, but not necessarily commutative.

\subsection*{Acknowledgements}
We are grateful to Daniel Kasprowski and Arunima Ray for discussions on the $s$-cobordism theorem, to Henri Johnston, Rachel Newton, and Jack Shotton for advice on ideal class groups, and to Scott Schmieding for discussions on Nil groups. We are extremely grateful to the anonymous referees for their careful reading and insightful comments.

CsN was supported by EPSRC New Investigator grant EP/T028335/2.
JN was supported by the Heilbronn Institute for Mathematical Research. 
MP was partially supported by EPSRC New Investigator grant EP/T028335/2 and EPSRC New Horizons grant EP/V04821X/2.

\part{General results}
\label{p:general}

In this part we establish general results regarding simple homotopy equivalence. This is the basis for our applications in Part \ref{p:lens}. In \cref{section:prelims}, we recall the basic theory of simple homotopy equivalence, culminating in constraints on the Whitehead torsion of homotopy equivalences between manifolds. \cref{section:realisation} concerns the two methods which we use for constructing manifolds: via $h$-cobordisms (\cref{ss:realising-cobordisms}) and via the surgery exact sequence (\cref{ss:wall}).
In \cref{s:manifold-sets} we study the simple homotopy manifold sets and prove \cref{thmx:bijections-with-manifold-sets-intro}. 

\section{Preliminaries}
\label{section:prelims}

In this section we recall the definition of simple homotopy equivalence, the Whitehead group and the Whitehead torsion, as well as some of their basic properties. Our main sources are Milnor \cite{Mi66}, Cohen \cite{Co73}, and Davis-Kirk \cite{DK01}.

\subsection{Simple homotopy equivalence}

Let $X$ be a connected CW complex and let $\phi \colon D^n \to X$ be a cellular map. Divide the boundary of the closed $(n+1)$-cell $D^{n+1}$ into two $n$-discs, $\partial D^{n+1} \cong D^n \cup_{S^{n-1}} D^n$, and let $D^n \to \partial D^{n+1}$ be the inclusion of the first copy of $D^n$. Using this inclusion and $\phi$ we form the union $X \cup_{\phi} D^{n+1}$.
The inclusion $X \to X \cup_{\phi} D^{n+1}$ is called an \emph{elementary expansion}. There is a deformation retract $X \cup_{\phi} D^{n+1} \to X$ in the other direction, and this is called an \emph{elementary collapse}.

\begin{definition}\label{defn:simple-h-e}
A homotopy equivalence $f \colon X \to Y$ between finite CW complexes is \emph{simple} if $f$ is homotopic to a map that is a composition of finitely many elementary expansions and collapses
\[X = X_0 \to X_1 \to X_2 \to \cdots \to X_k = Y.\]
\end{definition}

For a homotopy equivalence $f \colon X \to Y$, Whitehead introduced an invariant $\tau(f)$, the \emph{Whitehead torsion} of $f$, which lies in the \emph{Whitehead group} $\Wh(\pi_1(Y))$ of $\pi_1(Y)$. We shall define both the Whitehead group and the Whitehead torsion shortly. The motivation for the Whitehead torsion is the following beautiful result, which completely characterises whether or not a homotopy equivalence is simple.

\begin{theorem}[Whitehead \cite{Whitehead-she}]\label{theorem:Whitehead}
A homotopy equivalence $f \colon X \to Y$ between CW complexes $X$, $Y$ is simple if and only if the element $\tau(f)$ of $\Wh(\pi_1(Y))$ is zero.
\end{theorem}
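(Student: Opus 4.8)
The plan is to follow the classical argument of J.H.C.~Whitehead, in the form presented by Cohen \cite{Co73} and Milnor \cite{Mi66}. First I would make precise the definition of $\tau(f)$. For a pair of finite CW complexes $L \subseteq K$ whose inclusion is a homotopy equivalence (equivalently, a deformation retract), the cellular chain complex $C_*(\wt K, \wt L)$ of the universal covers is a based, acyclic complex of finitely generated free $\Z[\pi_1(K)]$-modules, the basis coming from a choice of lift and orientation for each cell of $K \setminus L$; its torsion $\tau(K,L) \in \Wh(\pi_1(K))$ is independent of these choices, since changing them alters the bases only by elements of $\pm\pi_1(K)$. For a general homotopy equivalence $f \colon X \to Y$ one sets $\tau(f) := r_*\,\tau(M_f, X) \in \Wh(\pi_1(Y))$, where $M_f$ is the mapping cylinder, $X \hookrightarrow M_f$ is a deformation retract precisely because $f$ is a homotopy equivalence, and $r \colon M_f \to Y$ is the canonical retraction. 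I would then record the three formal properties that drive the proof: $\tau(f)$ depends only on the homotopy class of $f$; the composition formula $\tau(g \circ f) = \tau(g) + g_*\,\tau(f)$ holds; and there is a sum formula for torsions of pairs glued along subcomplexes. Each follows from the corresponding algebraic fact about torsions of based acyclic chain complexes.

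The easy implication is that a simple homotopy equivalence has vanishing torsion. An elementary expansion $K \hookrightarrow K \cup_\phi e^{n+1}$ has relative chain complex concentrated in degrees $n+1$ and $n$, with boundary map multiplication by a unit $\pm g$, $g \in \pi_1(K)$; hence it has zero torsion in $\Wh(\pi_1(K))$, and the same holds for collapses. If $f$ is homotopic to a composite of expansions and collapses $X = X_0 \to \cdots \to X_k = Y$, then homotopy invariance together with repeated use of the composition formula exhibits $\tau(f)$ as a sum of push-forwards of the $\tau(X_i \to X_{i+1})$, hence as $0$.

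For the converse I would first reduce, via the mapping cylinder and the composition formula, to the purely geometric statement: if $L \subseteq K$ is a deformation retract of finite CW complexes with $\tau(K,L) = 0$, then $K$ and $L$ are related by a finite sequence of elementary expansions and collapses. This has two stages. In the first, by performing expansions on $K$ — each trading a relative $p$-cell for a relative $p$-cell together with a relative $(p+1)$-cell — and using that $L \hookrightarrow K$ is $\infty$-connected, one arranges that all cells of $K \setminus L$ lie in two consecutive dimensions $q$ and $q+1$ with $q \ge 3$; the relative chain complex then reads $0 \to C_{q+1} \xrightarrow{\partial} C_q \to 0$ with $\partial$ an isomorphism of based free modules, and $\tau(K,L) = [\partial] \in \Wh(\pi_1(K))$. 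In the second stage, since $[\partial] = 0$ the matrix of $\partial$ is reduced to the identity by a finite sequence of stabilizations (enlarging by an identity block), multiplications by elementary matrices, and multiplications of a basis vector by $\pm g$; and each such algebraic move is realized, rel $L$, by expansions and collapses of $K$: a stabilization by introducing a cancelling $q$-/$(q+1)$-cell pair, an elementary operation by a handle slide of one $(q+1)$-cell's attaching map across another (an expansion followed by a collapse), and a unit multiplication by re-choosing a lift or orientation of a cell. Once $\partial$ is the identity, the $q$- and $(q+1)$-cells cancel in pairs, so $K$ collapses to $L$.

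The hard part will be the geometric realization step in the second stage — showing that the elementary matrix operations and stabilizations of the two-term relative chain complex can be carried out by honest expansions and collapses of the CW pair. This is exactly where the condition $q \ge 3$ is indispensable: it puts the attaching maps of the top cells in a range where they may be freely homotoped and where the handle slides and cancellations are performed by explicit moves. The first-stage reduction to a two-term complex is also somewhat delicate, but it is comparatively formal once one has the algebra of based acyclic chain complexes and enough homotopy-theoretic room to expand.
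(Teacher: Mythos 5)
The paper does not prove this statement: it is quoted as Whitehead's classical theorem, with \cite{Whitehead-she} and Cohen's book \cite{Co73} serving as the references, so there is no in-paper argument to compare against. Your sketch is the standard Whitehead--Cohen--Milnor proof and is essentially correct: torsion of the pair $(\Cyl_f,X)$ pushed forward to $\Wh(\pi_1(Y))$, vanishing of torsion for elementary expansions and collapses plus the composition and homotopy-invariance formulas for the easy direction, and for the converse the reduction to a pair $(K,L)$ with $\tau(K,L)=0$, cell trading into two adjacent dimensions $q,q+1$ with $q$ large, and geometric realisation of stabilisations and elementary row operations by expansions, handle slides and collapses, with the $\pm\pi_1$ ambiguity of the preferred bases absorbing unit multiplications. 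You correctly flag the genuinely delicate points: the final cancellation of a $q$-/$(q+1)$-cell pair requires homotoping the attaching sphere so that it runs exactly once over the free face, which is where the relative Hurewicz theorem applied in the universal cover and the dimension hypothesis $q\ge 3$ (or, after trading up, $q\ge\max(2,\dim L+1)$ as in Cohen) enter; making that step rigorous is the main content of the classical proof, exactly as in \cite[\S II]{Co73}.
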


In dimensions $n \neq 4$, every closed $n$-manifold admits an $n$-dimensional CW structure; see Kirby-Siebenmann~\cite[III.2.2]{Kirby-Siebenmann:1977-1} for $n \geq 5$, \cite{Ra26} for $n=2$ and \cite{Moise} for $n=3$. 
In dimension 4, it is an open question whether this holds.  Every smooth or PL $n$-manifold admits an $n$-dimensional triangulation, and hence a CW structure. 

We explain how the notion of simple homotopy equivalence makes sense for 4-manifolds, even those for which we do not know whether they admit a CW structure. The procedure, which is due to Kirby-Siebenmann~\cite[III,~\S4]{Kirby-Siebenmann:1977-1} works for any dimension, so we work in this generality.

Let $M$ be a topological $n$-manifold.
Embed $M$ in high-dimensional Euclidean space. By \cite{Kirby-Siebenmann:1977-1}*{III,~\S4}, there is a normal disc bundle $D(M) \to M$ that admits a triangulation and hence a CW structure. The inclusion map $z_M \colon M \to D(M)$ of the $0$-section is a homotopy equivalence.
Let $z_M^{-1}$ denote the homotopy inverse of $z_M$.

\begin{definition}\label{defn:KS-defn-simple}
 We say that a homotopy equivalence $f \colon M \to N$ between topological manifolds is \emph{simple} if the composition $z_N \circ f \circ z_M^{-1} \colon D(M) \to D(N)$ is simple.
\end{definition}

Kirby-Siebenmann~\cite{Kirby-Siebenmann:1977-1}*{III,~\S4} showed that whether or not the composition $z_N \circ f \circ z_M^{-1}$ is simple does not depend on the choice of normal disc bundle nor on the choice of triangulation. In particular, this leads to a well-defined equivalence relation on topological manifolds which extends the notion of simple homotopy equivalence on smooth manifolds.

\begin{remark}
Simple homotopy theory in fact extends beyond topological manifolds. 
By West's resolution of the Borsuk conjecture~\cite{We77}, a compact ANR has a canonical simple type.  
Hanner~\cite{Han51} showed that compact topological manifolds are compact ANRs. 
\end{remark}

If $M$, $N$ are smooth or PL, we can ask whether $f$ is simple using the canonical class of triangulations of $M$ and $N$, or by forgetting the smooth/PL structures and using the  Kirby-Siebenmann method from \cref{defn:KS-defn-simple}.

\begin{proposition}[Kirby-Siebenmann~{\cite[III.5.1]{Kirby-Siebenmann:1977-1}}]\label{prop:kirby-siebenmann-well-defined-simple}
    For $\CAT \in \{\Diff,\PL\}$, a homotopy equivalence $f \colon M \to N$ between $\CAT$ manifolds $M$ and $N$ is simple with respect to their canonical class of triangulations if and only if it is simple with respect to \cref{defn:KS-defn-simple}. 
\end{proposition}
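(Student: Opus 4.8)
The plan is to reduce the statement to the single geometric fact that the zero-section inclusion $z_M \colon M \to D(M)$ is a simple homotopy equivalence, and then to chase \cref{defn:KS-defn-simple} through the standard formalism of Whitehead torsion.

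First I would observe that, since $M$ is a $\CAT$ manifold with $\CAT \in \{\Diff,\PL\}$, the normal disc bundle $D(M) \to M$ may be chosen in the $\CAT$ category: a smooth tubular neighbourhood in the smooth case, a regular neighbourhood of the zero-section in the PL case. Then $D(M)$ is itself a $\CAT$ manifold, $z_M$ is a $\CAT$ embedding onto the zero-section, and the bundle projection $D(M) \to M$ is a $\CAT$ deformation retraction. By the uniqueness clause in \cref{defn:KS-defn-simple} (independence of the choice of normal disc bundle and of triangulation), we are free to test simplicity of $z_N \circ f \circ z_M^{-1}$ using these $\CAT$ disc bundles, equipped with their canonical triangulations.

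The key step is the claim that $z_M \colon M \to D(M)$ is simple with respect to the canonical triangulations of $M$ and $D(M)$. To see this, note that a $\CAT$ disc bundle over $M$ — equivalently, a $\CAT$ regular (or tubular) neighbourhood of its zero-section — $\CAT$-collapses onto that zero-section. Choosing the triangulation of $D(M)$ compatibly, so that $M$ appears as a subcomplex carrying its canonical triangulation and the collapse $D(M) \searrow M$ is simplicial, exhibits $z_M$ up to homotopy as a finite composition of elementary collapses, hence as a simple homotopy equivalence. That this triangulation of $D(M)$ lies in its canonical class follows from PL regular-neighbourhood theory in the PL case, and from Whitehead's triangulation theorem for smooth manifolds (which also supplies uniqueness of the underlying canonical PL structure) in the smooth case. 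I expect this compatibility — matching the collapse of the disc bundle with the canonical triangulation classes — to be the main technical point. The same applies to $z_N$; I would also record the standard facts that homotopy inverses and compositions of simple homotopy equivalences are again simple, and that being simple depends only on the homotopy class of the map.

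With the claim in hand the proposition is immediate in both directions. If $f$ is simple with respect to the canonical triangulations of $M$ and $N$, then the composite $D(M) \xrightarrow{z_M^{-1}} M \xrightarrow{f} N \xrightarrow{z_N} D(N)$ is a composition of simple homotopy equivalences, hence simple, so $f$ is simple in the sense of \cref{defn:KS-defn-simple}. Conversely, if $z_N \circ f \circ z_M^{-1}$ is simple, then the homotopy $f \simeq z_N^{-1} \circ (z_N \circ f \circ z_M^{-1}) \circ z_M$ exhibits $f$ up to homotopy as the composition of the simple maps $z_M \colon M \to D(M)$, $\ z_N \circ f \circ z_M^{-1} \colon D(M) \to D(N)$ and $z_N^{-1} \colon D(N) \to N$, so $f$ is simple with respect to the canonical triangulations. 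Equivalently, one may argue via additivity of Whitehead torsion: $\tau(z_M) = \tau(z_N) = 0$ gives $\tau(z_N \circ f \circ z_M^{-1}) = (z_N)_*\tau(f)$, and $(z_N)_*$ is an isomorphism of Whitehead groups, so one torsion vanishes if and only if the other does.
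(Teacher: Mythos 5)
Your argument is correct in outline, but it is worth noting that the paper does not prove this statement at all: it is quoted from Kirby--Siebenmann [III.5.1], with a remark that one could alternatively deduce it from Chapman's theorem (\cref{thm:chapman}). What you have written is a genuine, essentially self-contained geometric proof, and its key input is exactly the right one: choose the normal disc bundle in the $\CAT$ category (smooth tubular neighbourhood, respectively PL regular neighbourhood of the zero-section), triangulate the pair $(D(M),M)$ compatibly with the canonical classes, and use that a regular neighbourhood collapses to its spine to conclude $\tau(z_M)=0$ with respect to canonical triangulations; the rest is the composition formula and homotopy invariance of torsion. Two caveats keep this from being fully independent of the cited sources. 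First, your reduction to $\CAT$ disc bundles invokes the independence-of-choices clause of \cref{defn:KS-defn-simple}, which is itself a Kirby--Siebenmann theorem (alternatively, independence of the triangulation can be extracted from Chapman's theorem, which is the route the paper's remark has in mind). Second, in the smooth case the compatibility you flag — that a Whitehead triangulation of the pair $(D(M),M)$ restricts to the canonical classes on both $M$ and $D(M)$, and that the collapse can be made simplicial in that class — is precisely the technical content one is otherwise outsourcing to Kirby--Siebenmann; you correctly identify it as the main point, and it does hold, but a complete write-up would need to cite Whitehead's triangulation theorem together with PL regular neighbourhood theory there rather than treat it as routine. With those attributions made explicit, your proof is a valid alternative to the paper's citation, and arguably more illuminating, since it isolates the single fact that the zero-section inclusion is simple for the canonical triangulations.
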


Hence we have a coherent notion of simple homotopy equivalence in all three manifold categories. 
We remark that \cref{prop:kirby-siebenmann-well-defined-simple} can also be proven using the following theorem of Chapman. 

\begin{theorem}[Chapman~\cite{Ch74}]\label{thm:chapman}
    Let $f \colon X \to Y$ be a homeomorphism between compact, connected CW complexes. Then $f$ is a simple homotopy equivalence. 
\end{theorem}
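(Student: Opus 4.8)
By \cref{theorem:Whitehead} it suffices to show that $\tau(f) = 0 \in \Wh(\pi_1(Y))$ for any homeomorphism $f \colon X \to Y$ of finite (compact) CW complexes. The plan is to stabilise by the Hilbert cube $Q = \prod_{i=1}^{\infty} [0,1]$ and import the topology of $Q$-manifolds, i.e.\ spaces locally homeomorphic to $Q$.

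First I would assemble the standard inputs from infinite-dimensional topology: (i) by West's theorem, for a compact ANR $Z$ — in particular a finite CW complex — the product $Z \times Q$ is a compact $Q$-manifold; and (ii) Chapman's triangulation theorem, that every compact $Q$-manifold is homeomorphic to $K \times Q$ for some finite simplicial complex $K$. With these in hand one defines, for a homotopy equivalence $g \colon M \to N$ of compact $Q$-manifolds, a Whitehead torsion $\tau(g) \in \Wh(\pi_1(N))$: pick triangulations $M \cong K \times Q$ and $N \cong L \times Q$, transport $g$ to a homotopy equivalence $K \times Q \to L \times Q$, note that this is homotopic to $g_0 \times \id_Q$ for a homotopy equivalence $g_0 \colon K \to L$ of finite complexes, and set $\tau(g) = \tau(g_0)$ under the canonical identification $\Wh(\pi_1(L)) = \Wh(\pi_1(N))$. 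The necessary bookkeeping — that $\tau(g)$ is independent of the chosen triangulations, and that $\tau(f \times \id_Q) = \tau(f)$ for a map $f$ of finite complexes (stability of Whitehead torsion under $\times\, Q$) — follows from the triangulation theorem together with the elementary fact that crossing with $Q$ does not change torsion.

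The heart of the argument is Chapman's classification of compact $Q$-manifolds: two compact $Q$-manifolds $M$ and $N$ are homeomorphic if and only if they are simple homotopy equivalent; equivalently, a homotopy equivalence $g \colon M \to N$ is homotopic to a homeomorphism if and only if $\tau(g) = 0$. Granting this: given our homeomorphism $f \colon X \to Y$, the map $f \times \id_Q \colon X \times Q \to Y \times Q$ is a homeomorphism of compact $Q$-manifolds, hence $\tau(f \times \id_Q) = 0$; but $\tau(f \times \id_Q) = \tau(f)$ by construction, so $\tau(f) = 0$ and $f$ is simple.

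The main obstacle is precisely the $Q$-manifold classification theorem, whose proof is not formal: it rests on the full apparatus of Hilbert cube manifold topology — West's ANR theorem, the triangulation theorem, and the handle-straightening and engulfing techniques that serve as the $Q$-manifold analogue of the Kirby–Siebenmann programme (alternatively one can route through the controlled topology of Chapman–Ferry). Everything else in the argument — reducing to the vanishing of $\tau(f)$, stabilising with $Q$, and identifying $\tau(f \times \id_Q)$ with $\tau(f)$ — is routine once that machinery is available.
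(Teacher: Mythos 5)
The paper does not prove this statement at all --- it is imported as an external input with a citation to Chapman --- so there is no internal argument to compare against; your sketch is, in outline, exactly Chapman's original Hilbert-cube-manifold proof of topological invariance, and as such the overall strategy is sound: reduce to $\tau(f)=0$ via \cref{theorem:Whitehead}, stabilise by $Q$, and quote the $Q$-manifold classification.

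One point deserves a sharper statement than you give it. You describe the independence of $\tau(g)$ from the chosen triangulations as ``bookkeeping'' that ``follows from the triangulation theorem together with the elementary fact that crossing with $Q$ does not change torsion.'' That is not quite right: two triangulations $M \cong K \times Q \cong K' \times Q$ differ by a homeomorphism $K \times Q \to K' \times Q$, so well-definedness of the $Q$-manifold torsion is precisely the assertion that such a homeomorphism induces a \emph{simple} homotopy equivalence $K \to K'$ --- which is (a stabilised form of) the very invariance statement you are trying to prove. It therefore cannot be derived from the product formula for torsion; it has to be bundled into the deep black box (Chapman establishes it by infinite-dimensional handle straightening, in the same stroke as the classification theorem ``$g$ homotopic to a homeomorphism $\Leftrightarrow \tau(g)=0$''). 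Since you do acknowledge that the classification theorem is the non-formal heart resting on the full $Q$-manifold apparatus, the argument is not circular provided the well-definedness is explicitly counted as part of that input rather than as routine. A minor attribution remark: ``compact ANR $\times\, Q$ is a $Q$-manifold'' is Edwards' theorem; for finite CW complexes, which is all you need here, West's earlier mapping-cylinder results already suffice.
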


Thus for closed manifolds $M$ and $N$ that admit a CW structure, one can deduce from Chapman's \cref{thm:chapman} that the question of whether $f \colon M \xrightarrow{\simeq} N$ is simple does not depend on the CW structures. Thus the extra work using disc bundles in \cref{defn:KS-defn-simple} is only required for non-smoothable topological 4-manifolds.  

\subsection{The Whitehead group} \label{ss:wh-group}

\begin{definition}
For a ring $R$, let $\GL(R) = \colim_{n} \GL_n(R)$, where we take the colimit with respect to the inclusions $\GL_n(R) \hookrightarrow \GL_{n+1}(R)$, $A \mapsto \left(\begin{smallmatrix} A & 0 \\ 0 & 1 \end{smallmatrix}\right)$. Define \[K_1(R)  :=  \GL(R)^{\text{ab}}.\]
\end{definition}

The Whitehead lemma \cite[Lemma 1.1]{Mi66} states that the commutator subgroup of $\GL(R)$ is equal to the subgroup $\E(R)$ generated by elementary matrices (i.e.\ the matrices $E$ such that $A \mapsto EA$ is an elementary row operation). It follows that we can also write \[K_1(R) = \GL(R)/\E(R).\]

\begin{definition}
Define the \emph{Whitehead group} of a group $G$ to be $\Wh(G) = K_1(\Z G)/{\pm G}$, where the map $\pm G \to K_1(\Z G)$ is the composition $\pm G \subseteq \GL_1(\Z G) \subseteq \GL(\Z G) \to K_1(\Z G)$.
\end{definition}

Note that $\Wh$ is a functor from the category of groups to the category of abelian groups. The map induced by a homomorphism $\theta \colon G \rightarrow H$ is denoted by $\theta_* \colon \Wh(G) \rightarrow \Wh(H)$. Similarly, for a continuous map $f \colon X \rightarrow Y$ between topological spaces, the map induced by $\pi_1(f) \colon \pi_1(X) \rightarrow \pi_1(Y)$ is denoted by $f_* \colon \Wh(\pi_1(X)) \rightarrow \Wh(\pi_1(Y))$.
The isomorphism type of $\Wh(G)$ is known in some cases. For us the following examples are relevant.

\begin{proposition}[{\cite[(11.5)]{Co73}}] \label{prop:cyclic-wh}
If $C_m$ is the finite cyclic group of order $m$, then 
\[\Wh(C_m) \cong \Z^{\left\lfloor m / 2 \right\rfloor + 1 - \delta(m)}\] 
where $\delta(m)$ is the number of positive integers dividing $m$.
\end{proposition}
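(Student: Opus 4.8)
The plan is to reduce the problem to the unit group of $\Z C_m$ and then apply Dirichlet's unit theorem. Since $\Z C_m$ is commutative, the determinant $GL(\Z C_m)\to (\Z C_m)^\times$ is split by the inclusion $(\Z C_m)^\times = GL_1(\Z C_m)\hookrightarrow GL(\Z C_m)$, so $K_1(\Z C_m)\cong (\Z C_m)^\times\oplus \SK_1(\Z C_m)$, where $\SK_1(\Z C_m) = \ker\bigl(K_1(\Z C_m)\to (\Z C_m)^\times\bigr)$. The first ingredient I would invoke is the theorem of Bass--Milnor--Serre that $\SK_1(\Z C_m)=0$ for finite cyclic groups (see also Oliver's book on Whitehead groups of finite groups). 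Granting this, $\Wh(C_m)\cong (\Z C_m)^\times/(\pm C_m)$, since the subgroup $\pm C_m$ lies entirely in the $(\Z C_m)^\times$-summand; the task then becomes to show this quotient is free abelian of the stated rank.

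Next I would pin down the torsion of $(\Z C_m)^\times$. If $u = \sum_{g\in C_m} a_g\, g$ is a torsion unit, then $\chi(u)$ is a root of unity for every character $\chi$ of $C_m$, so $\sum_\chi|\chi(u)|^2 = |C_m|$; comparing with the orthogonality identity $\sum_\chi|\chi(u)|^2 = |C_m|\sum_g a_g^2$ forces $\sum_g a_g^2 = 1$, i.e.\ $u = \pm g$ for some $g\in C_m$. Hence the torsion subgroup of $(\Z C_m)^\times$ is exactly $\pm C_m$, so $\Wh(C_m) = (\Z C_m)^\times/(\pm C_m)$ is a torsion-free quotient of a finitely generated abelian group, hence free abelian, and only its rank remains to be computed.

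For the rank I would use the Wedderburn decomposition $\Q C_m = \Q[t]/(t^m-1)\cong \prod_{d\mid m}\Q(\zeta_d)$, under which $\Z C_m$ becomes an order in the maximal order $\mathcal{M} = \prod_{d\mid m}\Z[\zeta_d]$, with $\mathcal{M}/\Z C_m$ finite. Consequently $(\Z C_m)^\times$ is a finite-index subgroup of $\mathcal{M}^\times = \prod_{d\mid m}\Z[\zeta_d]^\times$, and therefore has the same rank. By Dirichlet's unit theorem $\Z[\zeta_d]^\times$ has rank $0$ for $d\in\{1,2\}$ and rank $\phi(d)/2 - 1$ for $d\ge 3$ (as $\Q(\zeta_d)$ is then totally imaginary of degree $\phi(d)$), so the rank of $\Wh(C_m)$ is $\sum_{d\mid m,\, d\ge 3}\bigl(\phi(d)/2 - 1\bigr)$; a short computation using $\sum_{d\mid m}\phi(d) = m$ and distinguishing the cases $m$ odd and $m$ even turns this into $\lfloor m/2\rfloor + 1 - \delta(m)$, as claimed.

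The one genuinely heavy input is the vanishing $\SK_1(\Z C_m)=0$ (equivalently, that $\Wh(C_m)$ is torsion-free); everything else is elementary character theory together with Dirichlet's unit theorem, so I expect this to be the main obstacle, or rather the only non-routine citation. As an alternative to the explicit unit-group argument for the rank, one could instead quote Bass's general formula that the rank of $\Wh(G)$ equals $r_\R(G)-r_\Q(G)$ and just count that $C_m$ has $\delta(m)$ irreducible rational representations and $\lfloor m/2\rfloor + 1$ irreducible real representations.
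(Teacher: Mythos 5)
Your proof is correct: the splitting $K_1(\Z C_m)\cong(\Z C_m)^\times\oplus \SK_1(\Z C_m)$ with $\SK_1(\Z C_m)=0$ (Bass--Milnor--Serre), the triviality of torsion units in $\Z C_m$ (your character-orthogonality argument is the standard proof of Higman's theorem), and the rank count via the maximal order $\prod_{d\mid m}\Z[\zeta_d]$ and Dirichlet's unit theorem (equivalently Bass's $r_\R-r_\Q$ formula) are exactly the ingredients needed, and your arithmetic $\sum_{d\mid m,\,d\ge 3}(\phi(d)/2-1)=\lfloor m/2\rfloor+1-\delta(m)$ checks out. The paper gives no proof of its own, citing Cohen (11.5), and the argument behind that citation runs along the same classical lines as yours, so there is nothing further to add.
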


The Whitehead group $\Wh(G)$ of a group $G$ is equipped with a natural involution, i.e.\ an automorphism $x \mapsto \ol{x}$ such that $\ol{\ol{x}} = x$. Equivalently, it has a $\Z C_2$-module structure, where the generator of $C_2$ acts by the involution. We describe this involution next. 

Let $R$ be a ring with involution, i.e.\ a ring equipped with a map $x \mapsto \ol{x}$ that is an involution on $R$ as an abelian group, and satisfies $\ol{xy} = \ol{y} \cdot \ol{x}$. This induces an involution on $\GL(R)$, sending $A = (A_{ij}) \in \GL_n(R)$ to $\ol{A} = (\ol{A_{ji}}) \in \GL_n(R)$, the conjugate transpose of $A$. Note that this is perhaps non-standard notation for the conjugate transpose. We use this convention so that the notation for involutions is consistent.
This involution preserves the subgroup $\E(R) \subseteq \GL(R)$ and so induces an involution $K_1(R)$.

If $G$ is a group and $w \colon G \to \{\pm 1\}$ is an orientation character, i.e.\ a homomorphism from $G$ to $\{\pm 1\}$, then the integral group ring $\Z G$ has an involution given by $\sum_{i=1}^k n_i g_i \mapsto \sum_{i=1}^k w(g_i) n_i g_i^{-1}$ for $n_i \in \Z$ and $g_i \in G$. The resulting involution on $K_1(\Z G)$ preserves $\pm G$ and so induces an involution on $\Wh(G)$. 

We write $\Wh(G,w)$ for the abelian group $\Wh(G)$ equipped with the involution determined by the orientation character $w \colon G \to \{\pm 1\}$. This can equivalently be regarded as a $\Z C_2$-module. When $w \equiv 1$ is the trivial homomorphism, we omit it from the notation and write $\Wh(G)$ for $\Wh(G,1)$.
Note that the choice of $w$ only affects the involution, and $\Wh(G,w)$ is equal to $\Wh(G)$ as an abelian group. 

Next we recall, from the introduction, the definition of two subgroups of $\Wh(G,w)$ which will play an important r\^{o}le in the rest of this article.

\begin{definition} \label{def:IJ}
For a group $G$, a homomorphism $w \colon G \to \{\pm 1\}$ and $n \in \Z$, define:
\[
\begin{aligned}
\mathcal{J}_n(G,w) &:= \{ y \in \Wh(G,w) \mid y = -(-1)^n\ol{y} \} \leq \Wh(G,w)\\
\mathcal{I}_n(G,w) & :=  \{ x - (-1)^n\ol{x} \mid x \in \Wh(G,w) \} \leq \Wh(G,w). \\
\end{aligned}
\]
\end{definition}

We have $\mathcal{I}_n(G,w) \leq \mathcal{J}_n(G,w)$, and it follows from \cref{prop:tate-C2} that there is a canonical isomorphism
\[
\mathcal{J}_n(G,w) / \mathcal{I}_n(G,w) \cong \wh{H}^{n+1}(C_2;\Wh(G,w)).
\]
We denote the quotient map by 
\[
\pi \colon \mathcal{J}_n(G,w) \rightarrow \wh{H}^{n+1}(C_2;\Wh(G,w)).
\]

\subsection{The Whitehead torsion of a chain homotopy equivalence} \label{ss:wh-che}

We now define the Whitehead torsion in the algebraic setting. 
A \textit{free, based $R$-module} is a free $R$-module $P$ together with a choice of isomorphism $P \cong R^k$ for some $k$. The canonical basis for $R^k$ determines a basis for $P$.

Let $G$ be a group and let $f \colon C_* \to D_*$ be a chain homotopy equivalence between finitely generated, free, based (left) $\Z G$-module chain complexes. Consider the algebraic mapping cone $(\mathscr{C}(f),\partial_*)$, which is also a finitely generated, free, based $\Z G$-module chain complex. Since $f$ is a chain homotopy equivalence, $\mathscr{C}(f)$ is chain contractible; see e.g.~\cite[Proposition~3.14]{Ranicki-AGS}. Choose a chain contraction $s \colon \mathscr{C}(f)_* \to \mathscr{C}(f)_{*+1}$. Now consider the modules
\[
\mathscr{C}(f)_{\operatorname{odd}}  :=  \bigoplus_{i=0}^{\infty} \mathscr{C}(f)_{2i+1} \quad \text{ and } \quad \mathscr{C}(f)_{\operatorname{even}}  :=  \bigoplus_{i=0}^{\infty} \mathscr{C}(f)_{2i}.
\]
These are finitely generated, free, based $\Z G$-modules. Since $\mathscr{C}(f)$ is contractible, its Euler characteristic vanishes, and so these modules are of equal, finite rank. The collection of boundary maps $\oplus_{i=0}^{\infty} \partial_{2i+1}$ with odd degree domain, and the collection of the maps in the chain contraction $\oplus_{i=0}^{\infty} s_{2i+1}$ with odd degree domain, both give rise to homomorphisms from $\mathscr{C}(f)_{\operatorname{odd}}$ to $\mathscr{C}(f)_{\operatorname{even}}$. Using the given bases, their sum is an element of $\GL(\Z G)$ ~\cite[(15.1)]{Co73}, and hence represents an element of the Whitehead group.

\begin{definition}\label{defn:whitehead-torsion}
The \emph{Whitehead torsion} of $f$ is the equivalence class
\[
\tau(f)  :=  \Bigl[ \bigoplus_{i=0}^{\infty} \bigl( \partial_{2i+1} + s_{2i+1} \bigr) \colon \mathscr{C}(f)_{\operatorname{odd}} \to \mathscr{C}(f)_{\operatorname{even}} \Bigr] \in \Wh(G).
\]
This equivalence class is independent of the choice of the chain contraction $s$~\cite[(15.3)]{Co73}.
If~$\tau(f)=0$ then we say that $f$ is a \emph{simple chain homotopy equivalence}.
\end{definition}

\begin{remark} \label{rem:tau-inv}
Since $\mathscr{C}(f)_{\operatorname{odd}}$ and $\mathscr{C}(f)_{\operatorname{even}}$ are finitely generated, the sum in the definition of $\tau(f)$ is a finite sum. 
The equivalence class $\tau(f)$ remains invariant under permutations of the bases of $C_*$ and $D_*$, or if a basis element is multiplied by $(-1)$ or an element of $G$~\cite[(15.2) and (10.3)]{Co73}.
\end{remark}

We now list some useful facts about $\tau$. We fix the group $G$, and assume all chain complexes are finitely generated, free, based, left $\Z G$-module chain complexes.

\begin{proposition}[{\cite[Theorem 11.27]{DK01}}] \label{prop:chain-hom-chain-equivs-same-torsion}
Let $f,g \colon C_* \to D_*$ be homotopic chain homotopy equivalences. Then $\tau(f) = \tau(g)$.
\end{proposition}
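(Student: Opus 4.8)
The plan is to pass from the homotopy $h \colon f \simeq g$ to an explicit based chain isomorphism between the two mapping cones, and then to track what it does to Whitehead torsion. Write $\mathscr{C}(f)_n = C_{n-1} \oplus D_n$ with differential $\partial^f_n(c,d) = (-\partial_C c,\, f(c) + \partial_D d)$, and similarly for $g$. Given a chain homotopy $h$ between $f$ and $g$ (with the sign chosen so that $g - f = \partial_D h + h \partial_C$, replacing $h$ by $-h$ if necessary to match the sign convention in the mapping-cone differential), define $\Phi_n \colon \mathscr{C}(f)_n \to \mathscr{C}(g)_n$ by $\Phi_n(c,d) = (c,\, h(c) + d)$. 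A direct computation shows $\partial^g \circ \Phi = \Phi \circ \partial^f$, so $\Phi$ is a chain map, and it is visibly an isomorphism with inverse $(c,d) \mapsto (c,\, -h(c) + d)$.

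The key point is that $\Phi$ is \emph{simple}: with respect to the given bases on $C_*$ and $D_*$, each $\Phi_n$ is represented by a block lower-triangular matrix with identity diagonal blocks, hence lies in $E(\Z G)$ and so represents $0$ in $K_1(\Z G)$, and in $\Wh(G)$. The same is therefore true of the direct sums $\Phi_{\operatorname{odd}} = \bigoplus_{i} \Phi_{2i+1} \colon \mathscr{C}(f)_{\operatorname{odd}} \to \mathscr{C}(g)_{\operatorname{odd}}$ and $\Phi_{\operatorname{even}} = \bigoplus_{i} \Phi_{2i} \colon \mathscr{C}(f)_{\operatorname{even}} \to \mathscr{C}(g)_{\operatorname{even}}$.

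Now choose a chain contraction $s$ of the contractible complex $\mathscr{C}(f)$, which by \cref{defn:whitehead-torsion} computes $\tau(f) = [\,\partial^f_{\operatorname{odd}} + s_{\operatorname{odd}}\,]$. Then $s' \coloneqq \Phi \circ s \circ \Phi^{-1}$ is a chain contraction of $\mathscr{C}(g)$, since $\partial^g s' + s' \partial^g = \Phi(\partial^f s + s\partial^f)\Phi^{-1} = \operatorname{id}$. Using $s'$ to compute $\tau(g)$ together with the relation $\partial^g = \Phi \partial^f \Phi^{-1}$, and comparing the odd/even summands, one gets
\[
\partial^g_{\operatorname{odd}} + s'_{\operatorname{odd}} = \Phi_{\operatorname{even}} \circ \bigl( \partial^f_{\operatorname{odd}} + s_{\operatorname{odd}} \bigr) \circ \Phi_{\operatorname{odd}}^{-1},
\]
so in $\Wh(G)$ we obtain $\tau(g) = [\Phi_{\operatorname{even}}] + \tau(f) - [\Phi_{\operatorname{odd}}] = \tau(f)$, using the additivity of $K_1$-classes under composition together with $[\Phi_{\operatorname{odd}}] = [\Phi_{\operatorname{even}}] = 0$. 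Finally, since $\tau(g)$ is independent of the choice of chain contraction (again by \cref{defn:whitehead-torsion}), the value computed via $s'$ is indeed $\tau(g)$, and the proof is complete.

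The only steps requiring care are routine bookkeeping: matching the sign of $h$ to the sign convention in $\partial^f$, checking that a block lower-triangular matrix with identity diagonal blocks is a product of elementary matrices (one for each off-diagonal entry) and hence trivial in $K_1(\Z G)$, and verifying the odd/even decomposition of $\Phi s \Phi^{-1}$ and $\Phi \partial^f \Phi^{-1}$. There is no genuine obstacle beyond these; the content of the argument is the explicit isomorphism $\Phi$.
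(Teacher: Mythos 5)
Your proof is correct. The paper gives no argument of its own here — it simply cites \cite[Theorem 11.27]{DK01} — and your construction (the chain homotopy yields a based isomorphism $\Phi$ of mapping cones that is block triangular with identity diagonal, hence trivial in $\Wh(G)$, and conjugating a chain contraction of $\mathscr{C}(f)$ by $\Phi$ shows the two torsions agree) is essentially the standard argument found in that reference, with the sign bookkeeping and the independence-of-contraction step handled correctly.
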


\begin{lemma}[{\cite[Theorem 11.28]{DK01}}] \label{lem:WT-ch-comp}
Let $f \colon C_* \rightarrow D_*$ and $g \colon D_* \rightarrow E_*$ be chain homotopy equivalences. Then $\tau(g \circ f) = \tau(f) + \tau(g)$. In particular $\tau(\Id)=0$. 
\end{lemma}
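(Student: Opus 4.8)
The plan is to derive the formula from the \emph{additivity of Whitehead torsion}: if $0 \to E'_* \to E_* \to E''_* \to 0$ is a short exact sequence of finitely generated free based \emph{acyclic} $\Z G$-module chain complexes whose bases are compatible --- meaning that in each degree the basis of $E_*$ is, up to reordering, the concatenation of the image of the basis of $E'_*$ and a lift of the basis of $E''_*$ --- then $\tau(E_*) = \tau(E'_*) + \tau(E''_*)$ in $\Wh(G)$. This is proved directly from the matrix definition of the torsion of an acyclic complex and is logically prior to the composition formula (for this and the facts below, see Cohen \cite{Co73}). Since $\tau(f) = \tau(\mathscr{C}(f))$ by construction (\cref{defn:whitehead-torsion}), everything becomes a statement about mapping cones, and I would record two preliminary facts. (i) $\tau(\mathscr{C}(\Id)) = 0$, because the cone of an identity map admits a chain contraction for which the matrix of \cref{defn:whitehead-torsion} is a signed permutation matrix, hence represents $0$ in $\Wh(G)$ by \cref{rem:tau-inv}; in particular $\tau(\Id) = 0$. (ii) If $\iota \colon A_* \hookrightarrow B_*$ is a chain homotopy equivalence which is also a based inclusion, then $B_*/A_*$ is acyclic and $\tau(\iota) = \tau(B_*/A_*)$; this follows by applying additivity to the short exact sequence $0 \to \mathscr{C}(\Id_{A_*}) \to \mathscr{C}(\iota) \to B_*/A_* \to 0$ and using (i). Dually, if $\pi \colon B_* \twoheadrightarrow C_*$ is a chain homotopy equivalence which is a based surjection, then $\ker \pi$ is acyclic and $\tau(\pi) = -\tau(\ker \pi)$, via $0 \to \Sigma(\ker \pi) \to \mathscr{C}(\pi) \to \mathscr{C}(\Id_{C_*}) \to 0$ together with the elementary identity $\tau(\Sigma E_*) = -\tau(E_*)$.

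Next I would establish the composition formula in the special case where one of the two maps is a based inclusion, and dually where one is a based surjection. Given a chain homotopy equivalence $q \colon B_* \to C_*$ and a based inclusion $\iota \colon A_* \hookrightarrow B_*$ which is a chain homotopy equivalence, the map $(\iota, \Id) \colon \mathscr{C}(q \circ \iota) \to \mathscr{C}(q)$ is a based inclusion of acyclic complexes with quotient $\Sigma(B_*/A_*)$; additivity and (ii) then give $\tau(q) = \tau(q \circ \iota) + \tau(\Sigma(B_*/A_*)) = \tau(q \circ \iota) - \tau(\iota)$, that is, $\tau(q \circ \iota) = \tau(\iota) + \tau(q)$. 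Symmetrically, given a chain homotopy equivalence $k \colon A_* \to B_*$ and a based surjection $\pi \colon B_* \twoheadrightarrow C_*$ which is a chain homotopy equivalence, the map $(\Id, \pi) \colon \mathscr{C}(k) \twoheadrightarrow \mathscr{C}(\pi \circ k)$ is a based surjection with kernel $\ker \pi$, and additivity together with the dual of (ii) yields $\tau(\pi \circ k) = \tau(k) + \tau(\pi)$.

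Finally I would reduce the general case to these two via the algebraic mapping cylinder: any chain homotopy equivalence $f \colon C_* \to D_*$ factors as a based inclusion $\iota_f \colon C_* \hookrightarrow \Cyl(f)_*$ followed by a based surjection $r_f \colon \Cyl(f)_* \to D_*$, both chain homotopy equivalences. Writing $g \circ f = r_g \circ \iota_g \circ r_f \circ \iota_f$ and applying the two special cases repeatedly --- each step peeling off the leftmost based inclusion or based surjection --- the right-hand side collapses to $\tau(\iota_f) + \tau(r_f) + \tau(\iota_g) + \tau(r_g)$, which equals $\tau(f) + \tau(g)$ by the inclusion case applied to the factorisations $f = r_f \circ \iota_f$ and $g = r_g \circ \iota_g$. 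At every step the identity $\tau(g \circ f) = \tau(f) + \tau(g)$ is invoked only in a case already proved, so the argument is not circular.

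I expect the main obstacle to be bookkeeping rather than conceptual: one must verify in each short exact sequence above that the natural bases are genuinely compatible, so that additivity applies with no correction term (this is where \cref{rem:tau-inv} is used repeatedly), and one must fix consistent sign conventions for $\mathscr{C}(-)$, the shift $\Sigma(-)$, and the differential of $\Cyl(f)_*$. A further point requiring care in the last step is that the middle composite $\iota_g \circ r_f$ is a based surjection followed by a based inclusion, so the reduction must remove the inner surjection $r_f$ before the outer inclusion $\iota_g$, keeping every step within the special cases already handled.
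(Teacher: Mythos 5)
The paper itself offers no argument for this lemma beyond the citation to Davis--Kirk [DK01, Theorem~11.28], so your proposal is measured against the standard literature proof; your architecture (additivity of torsion for based short exact sequences of acyclic complexes, the cone identities (i)--(ii), and a factorisation through the algebraic mapping cylinder) is indeed the classical route, and your preliminary facts (i), (ii), their duals, and the two special cases SC1 ($\tau(q\circ\iota)=\tau(\iota)+\tau(q)$ for $\iota$ a based inclusion) and SC2 ($\tau(\pi\circ k)=\tau(k)+\tau(\pi)$ for $\pi$ a based surjection) are all correct as you prove them. However, two steps fail as written. First, the cylinder retraction $r_f\colon \Cyl(f)\to D_*$, $(c,c',d)\mapsto f(c)+d$, is \emph{not} a based surjection with respect to the natural basis of $\Cyl(f)_n=C_n\oplus C_{n-1}\oplus D_n$: its kernel is $\{(c,c',-f(c))\}$, which is not spanned by basis vectors, and in fact no single choice of basis on $\Cyl(f)$ makes $\iota_f$ a based inclusion and $r_f$ a based surjection simultaneously (the images $f(c_i)$ of the $C_n$-basis vectors are in general neither zero nor basis vectors of $D_n$). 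So the applications of SC2 and of the dual of (ii) to $r_f$ and $r_g$ are unjustified as stated. This is repairable --- e.g.\ re-base $\Cyl(f)_n$ by the unipotent change replacing $(c_i,0,0)$ with $(c_i,0,-f(c_i))$, which is invisible in $\Wh(G)$ and makes $r_f$ based, or compute $\tau(r_f)=0$ directly from the based subcomplex of $\mathscr{C}(r_f)$ spanned by the back copy of $D$, whose quotient is a shifted cone of $\Id_{C}$ --- but the repair is a genuine extra argument, not bookkeeping, and your text asserts the false statement that $f$ factors as a based inclusion followed by a based surjection.

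Second, even granting basedness of $r_f$ and $r_g$, the four-fold collapse cannot be carried out with only SC1 and SC2. Peeling $\iota_f$ by SC1 and then $r_g$ by SC2 leaves $\tau(\iota_g\circ r_f)$, where the based surjection sits on the \emph{inside} and the based inclusion on the \emph{outside}; this configuration is covered by neither stated case, and your proposed fix (``remove the inner surjection $r_f$ before the outer inclusion $\iota_g$'') has no special case to invoke. You need the two mirror statements --- $\tau(\iota\circ q)=\tau(q)+\tau(\iota)$ for an \emph{outer} based inclusion, via the based subcomplex $(\Id,\iota)\colon\mathscr{C}(q)\hookrightarrow\mathscr{C}(\iota\circ q)$ with quotient $B'_*/B_*$, and $\tau(k\circ\pi)=\tau(\pi)+\tau(k)$ for an \emph{inner} based surjection, via the based surjection $(\pi,\Id)\colon\mathscr{C}(k\circ\pi)\twoheadrightarrow\mathscr{C}(k)$ with kernel the shifted $\ker\pi$ --- which follow by exactly your technique but must be stated and proved. (A very minor further point: in (i) the matrix $\partial+s$ for $\mathscr{C}(\Id)$ is block-triangular with identity diagonal blocks rather than a signed permutation matrix; its class vanishes because elementary matrices are trivial in $K_1$, not by \cref{rem:tau-inv} alone.)
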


We say that a short exact sequence $0 \rightarrow C'_* \rightarrow C_* \rightarrow C''_* \rightarrow 0$ of chain complexes is based if the basis of $C_*$ consists of the image of the basis of $C'_*$ and an element from the preimage of each basis element of $C''_*$.

\begin{lemma} \label{lem:WT-ch-add}
Let $0 \rightarrow C'_* \rightarrow C_* \rightarrow C''_* \rightarrow 0$ and $0 \rightarrow D'_* \rightarrow D_* \rightarrow D''_* \rightarrow 0$ be based short exact sequences of chain complexes, and let $(f',f,f'')$ be a morphism between them, where $f' \colon C'_* \rightarrow D'_*$, $f \colon C_* \rightarrow D_*$, and $f'' \colon C''_* \rightarrow D''_*$ are chain homotopy equivalences. Then $\tau(f) = \tau(f') + \tau(f'')$. 
\end{lemma}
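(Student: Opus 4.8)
The plan is to reduce the statement to the algebraic structure of the mapping cones. The key observation is that a morphism $(f',f,f'')$ of based short exact sequences of chain complexes induces a based short exact sequence of the algebraic mapping cones
\[
0 \longrightarrow \mathscr{C}(f')_* \longrightarrow \mathscr{C}(f)_* \longrightarrow \mathscr{C}(f'')_* \longrightarrow 0,
\]
where the bases are the evident ones coming from the given bases of the source and target complexes (recall $\mathscr{C}(f)_i = D_i \oplus C_{i-1}$, and the based hypothesis on the two input sequences guarantees the cone sequence is based as well). Since $f'$, $f$, $f''$ are chain homotopy equivalences, all three cones are chain contractible. So the statement reduces to the following purely algebraic fact: if $0 \to A_* \to B_* \to A''_* \to 0$ is a based short exact sequence of contractible finitely generated free based $\Z G$-module chain complexes, then the torsion of $B_*$ (in the sense of the torsion of a contractible complex, i.e.\ $\tau$ of the zero map, equivalently $[\partial + s]$) equals $\tau(A_*) + \tau(A''_*)$. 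This is the standard additivity/logarithmic property of torsion for contractible complexes; it is essentially \cite[(17.2)]{Co73}. One then notes $\tau(f) = \tau(\mathscr{C}(f))$ with this convention (this identification is built into \cref{defn:whitehead-torsion}).

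Concretely, the steps I would carry out are: (1) Recall the description of the mapping cone of a chain map and check that functoriality of the cone construction produces the short exact sequence of cones above; verify it is based using the hypothesis on the two given sequences. (2) Choose a chain contraction $s$ of $\mathscr{C}(f)$ and, using the splitting of the based short exact sequence in each degree (as based modules), write the matrix of $\partial + s$ for $\mathscr{C}(f)$ in block upper-triangular form with diagonal blocks the corresponding matrices $\partial' + s'$ for $\mathscr{C}(f')$ and $\partial'' + s''$ for $\mathscr{C}(f'')$, where $s'$ is the restricted contraction on the subcomplex and $s''$ the induced contraction on the quotient (one may need to adjust $s$ so that it is compatible with the sub/quotient structure, which is possible because the sequence splits degreewise). (3) Conclude from the fact that in $K_1(\Z G)$ a block upper-triangular matrix has the same class as the direct sum of its diagonal blocks (which has class the sum), so $\tau(f) = \tau(f') + \tau(f'')$ in $\Wh(G)$. (4) Invoke \cref{prop:chain-hom-chain-equivs-same-torsion} to see the answer is independent of the choices of contractions, and hence well-defined.

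The main obstacle I anticipate is step (2): arranging a chain contraction $s$ of the total cone $\mathscr{C}(f)$ that simultaneously restricts to a contraction of the subcomplex $\mathscr{C}(f')$ and descends to a contraction of the quotient $\mathscr{C}(f'')$. In general a chosen contraction need not respect the filtration, so one has to either build $s$ by first contracting the subcomplex, then the quotient, and lifting (using that each term splits as based modules so the degreewise splittings can be chosen compatibly), or argue that any two choices give the same torsion (by \cref{prop:chain-hom-chain-equivs-same-torsion} / \cite[(15.3)]{Co73}) and then just pick the convenient one. Alternatively, one can bypass the contraction bookkeeping entirely by citing the additivity of torsion for based short exact sequences of contractible complexes directly from \cite[\S17]{Co73} and only doing the (routine) identification of the cone sequence as based. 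Everything else — the block-triangular computation in $K_1$ and descent to $\Wh(G)$ modulo $\pm G$ — is formal.
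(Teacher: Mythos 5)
Your proposal is correct and follows essentially the same route as the paper: form the induced based short exact sequence of mapping cones and invoke the standard additivity of torsion for based short exact sequences of contractible based complexes (the paper cites \cite[Theorem 11.23]{DK01} where you cite Cohen's \S 17, but this is the same fact). The extra block-triangular/contraction bookkeeping you sketch is a valid way to prove that cited fact, but is not needed beyond the citation.
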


\begin{proof}
The given data determines a based short exact sequence $0 \rightarrow \mathscr{C}(f') \rightarrow \mathscr{C}(f) \rightarrow \mathscr{C''}(f) \rightarrow 0$ of mapping cones, so the statement follows from \cite[Theorem 11.23]{DK01}.
\end{proof}

For a chain complex $C_*$ and an integer $k$ we denote by $C_{k+*}$ the same chain complex with shifted grading $i \mapsto C_{k+i}$. Similarly, the cochain complex $C_{k-*}$ is defined by changing the grading to $i \mapsto C_{k-i}$. The notation $C^{k+i}$ and $C^{k-*}$ is defined analogously for a cochain complex $C^*$.

\begin{lemma} \label{lem:WT-ch-shift}
Let $f \colon C_* \rightarrow D_*$ be a chain homotopy equivalence. For every $k \in \Z$, it can also be regarded as a chain homotopy equivalence $f \colon C_{k+*} \rightarrow D_{k+*}$, and we have $\tau(f \colon C_{k+*} \rightarrow D_{k+*}) = (-1)^k \tau(f \colon C_* \rightarrow D_*)$.
\end{lemma}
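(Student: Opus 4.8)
The plan is to reduce to the single shift $k=1$ and then induct, exploiting the interplay between shifting a complex and forming mapping cones. First note that a chain homotopy, together with $f$, descends to the shifted complexes, so $f\colon C_{k+*}\to D_{k+*}$ is again a chain homotopy equivalence; also all of the torsion formalism of \cref{defn:whitehead-torsion} and the properties \cref{lem:WT-ch-comp,lem:WT-ch-add} go through verbatim for bounded (not necessarily non-negatively graded) based $\Z G$-complexes, with $\bigoplus_{i=0}^\infty$ replaced by a sum over integers of the appropriate parity; I use this without comment. Writing $\tau(E)\in\Wh(G)$ for the Whitehead torsion of a contractible based complex $E$ (so that $\tau(g)=\tau(\mathscr{C}(g))$ for a chain homotopy equivalence $g$, by \cref{defn:whitehead-torsion}), the key elementary observation is that the mapping cone of the shifted map is the shift of the mapping cone: $\mathscr{C}(f\colon C_{k+*}\to D_{k+*})$ has in degree $i$ the module $\mathscr{C}(f)_{k+i}$ with the same distinguished basis, and its differential agrees with that of $\mathscr{C}(f)_{k+*}$ up to signs of basis elements, hence has the same torsion by \cref{rem:tau-inv}. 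So it suffices to prove $\tau(E_{k+*})=(-1)^k\tau(E)$ for every bounded contractible based complex $E$, and by an easy induction (in both directions from $k=0$) this reduces to the case $k=1$.

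For $k=1$ the plan is to use the based short exact sequence
\[
0 \longrightarrow E_{1+*} \longrightarrow \mathscr{C}(\id_{E_{1+*}}) \longrightarrow E \longrightarrow 0,
\]
obtained by splitting the mapping cone of $\id_{E_{1+*}}$ into its canonical sub- and quotient complexes; the quotient $(E_{1+*})_{-1+*}$ is canonically $E$, up to a sign on the differential which is invisible to $\tau$ by \cref{rem:tau-inv}. All three complexes here are contractible, so mapping the zero short exact sequence into this one and applying \cref{lem:WT-ch-add} yields the additivity relation $\tau(\mathscr{C}(\id_{E_{1+*}})) = \tau(E_{1+*}) + \tau(E)$, where one uses that $\tau$ of the zero map from the zero complex into a contractible based complex $X$ is exactly $\tau(X)$. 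On the other hand $\mathscr{C}(\id_{E_{1+*}})$ is the mapping cone of an identity map, so its torsion vanishes by \cref{lem:WT-ch-comp}. Hence $\tau(E_{1+*}) = -\tau(E)$, and feeding this back in gives $\tau(E_{k+*}) = (-1)^k\tau(E)$ for all $k\in\Z$, which translates into the statement of the lemma.

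The argument is entirely formal once set up, so the only real obstacle is bookkeeping: one must check carefully that $\mathscr{C}(f\colon C_{k+*}\to D_{k+*})$ is genuinely identified with $\mathscr{C}(f)_{k+*}$ as a based complex, that the displayed sequence is a \emph{based} short exact sequence in the sense preceding \cref{lem:WT-ch-add}, and that every discrepancy produced by the various shift conventions is confined to signs of basis entries so that \cref{rem:tau-inv} applies. There is also the minor point of running the torsion formalism for bounded rather than non-negatively graded complexes, which is routine. No genuine algebraic difficulty arises beyond pinning down these conventions.
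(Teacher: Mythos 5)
Your proof is correct, but it takes a different route from the paper. The paper argues directly from \cref{defn:whitehead-torsion}: shifting the grading by an even integer leaves $\mathscr{C}(f)_{\operatorname{odd}}$ and $\mathscr{C}(f)_{\operatorname{even}}$ unchanged, while an odd shift swaps them, and the sign change then follows from \cite[(15.1)]{Co73}, which controls what happens when the roles of the odd and even parts are exchanged. You instead never touch the odd/even decomposition: you identify $\mathscr{C}(f\colon C_{k+*}\to D_{k+*})$ with $\mathscr{C}(f)_{k+*}$ as a based complex, reduce to the statement $\tau(E_{1+*})=-\tau(E)$ for a contractible based complex $E$, and extract the sign from the based short exact sequence $0\to E_{1+*}\to\mathscr{C}(\id_{E_{1+*}})\to E\to 0$ via \cref{lem:WT-ch-add} (applied against the zero sequence) together with $\tau(\mathscr{C}(\id))=0$ from \cref{lem:WT-ch-comp}, then induct on $k$ in both directions. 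Your bookkeeping is sound: the cone of the shifted map really is the shifted cone with the same preferred basis, the sequence is based in the sense preceding \cref{lem:WT-ch-add}, the sign ambiguity in the quotient differential is absorbed by \cref{rem:tau-inv}, and $\tau(0\to X)=\tau(X)$ for contractible based $X$. What your approach buys is independence from Cohen's (15.1): the sign is derived formally from the additivity and composition properties the paper has already established, at the cost of a longer argument; the paper's proof is essentially a one-line observation but leans on the external reference for the sign flip. Both arguments equally require the routine extension of the torsion formalism to bounded complexes supported in arbitrary degrees, which the paper itself already uses implicitly (e.g.\ for $C^{-*}$ in \cref{lem:WT-ch-dual}).
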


\begin{proof}
If we shift the grading by an even number, then $\mathscr{C}(f)_{\operatorname{odd}}$ and $\mathscr{C}(f)_{\operatorname{even}}$ remain unchanged. If we shift the grading by an odd number, then $\mathscr{C}(f)_{\operatorname{odd}}$ and $\mathscr{C}(f)_{\operatorname{even}}$ are swapped, and by \cite[(15.1)]{Co73}, $\tau(f)$ changes its sign.
\end{proof}

\begin{definition} \label{def:WT-ch-cochain}
Let $f \colon C^* \rightarrow D^*$ be a homotopy equivalence of cochain complexes of finitely generated, free, based, left $\Z G$-modules. It can be regarded as a homotopy equivalence of chain complexes $f \colon C^{-*} \rightarrow D^{-*}$, and we define $\tau(f \colon C^* \rightarrow D^*)  :=  \tau(f \colon C^{-*} \rightarrow D^{-*})$.
\end{definition}

Next we describe how we can define the dual of a left $R$-module as another left $R$-module, if $R$ is a ring with involution.

\begin{definition} \label{def:tensor-hom}
Suppose that $R$ is a ring with involution. Let $X$ and $Y$ be a left and a right $R$-module respectively. Then we define the abelian groups
\[
\begin{aligned}
Y \otimes_R X &= {Y \otimes_{\Z} X} / {(yr \otimes x = y \otimes rx)} \quad \forall r \in R, x \in X, y \in Y; \\
\Hom_R^{\mathrm{lr}}(X,Y) &= \left\{ \varphi \in \Hom_{\Z}(X, Y) \mid \forall r \in R, x \in X 
\colon \varphi(rx)=\varphi(x)\ol{r} \right\}.
\end{aligned}
\]
If $Y$ is moreover an $(S,R)$-bimodule for some ring $S$, then $Y \otimes_R X$ is a left $S$-module with $s(y \otimes x) = (sy) \otimes x$, and $\Hom_R^{\mathrm{lr}}(X,Y)$ is a left $S$-module with $(s\varphi)(x)=s\varphi(x)$.
\end{definition}

\begin{remark}
$\Hom_R^{\mathrm{lr}}(X,Y)$ is equal to $\Hom_R^r(\ol{X},Y)$, the group of right $R$-module homomorphisms $\ol{X} \rightarrow Y$, as a subgroup of $\Hom_{\Z}(X, Y)$ (or as a left $S$-module). Here $\ol{X}$ denotes the right $R$-module that is equal to $X$ as an abelian group, with its multiplication given by $xr= \ol{r} \cdot x$, where $\cdot$ denotes multiplication in $X$.
\end{remark}

\begin{definition} \label{def:dual}
Suppose that $R$ is a ring with involution. Let $X$ be a left $R$-module. Its \emph{dual} is the left $R$-module $X^* := \Hom_R^{\mathrm{lr}}(X,R)$.
\end{definition}

Now suppose that $G$ is equipped with a group homomorphism $w \colon G \to \{\pm 1\}$, i.e.\ an orientation character. This determines an involution on the group ring $\Z G$, and also on the Whitehead group $\Wh(G,w)$ (see \cref{ss:wh-group}). Since $\Hom_R^{\mathrm{lr}}$ depends on the involution of $R$, we will write $\Hom_{\Z G,w}^{\mathrm{lr}}$ when $\Z G$ is equipped with the involution determined by $w$, and $\Hom_{\Z G}^{\mathrm{lr}}$ will mean $\Hom_{\Z G,1}^{\mathrm{lr}}$. 

\begin{definition}
We introduce the following notation.
\begin{itemize}[$\bullet$]
\item Let $\Z^w$ be the right $\Z G$-module with underlying abelian group $\Z$ and multiplication $ng = w(g)n$, where $n \in \Z$, $g \in G$ and $w(g) \in  \{\pm 1\} \subseteq \Z$. 
\item For a right $\Z G$-module $Y$ we define the right $\Z G$-module $Y^w := \Z^w \otimes_{\Z} Y$, with underlying abelian group $\Z \otimes_{\Z} Y$ and multiplication $(n \otimes y)g = (ng) \otimes (yg)$, where $n \in \Z$, $y \in Y$ and $g \in G$. Equivalently, $Y^w$ has underlying abelian group $Y$ and multiplication $yg = w(g)y \cdot g$, where $\cdot$ is the multiplication in $Y$. If $Y$ is an $(S,\Z G)$-bimodule for some ring $S$, then so is $Y^w$.
\end{itemize}
\end{definition}

For example, $\Z G^w$ is a $\Z G$-bimodule, whose left multiplication is the same as that of $\Z G$, and whose right multiplication is twisted by $w$. 

If $X$ is a left $\Z G$-module, then by the definitions we have $\Hom_{\Z G,w}^{\mathrm{lr}}(X,Y) = \Hom_{\Z G}^{\mathrm{lr}}(X,Y^w)$ as abelian groups (or as left $S$-modules, when $Y$ is a left $S$-module). 

\begin{definition} 
For a group $G$, homomorphism $w \colon G \to \{\pm 1\}$ and left $\Z G$-module $X$, the left $\Z G$-module $\Hom_{\Z G,w}^{\mathrm{lr}}(X,\Z G) = \Hom_{\Z G}^{\mathrm{lr}}(X,\Z G^w)$ will be denoted by $X^{w*}$. 
\end{definition}

If $X$ is a finitely generated, free, based, left $\Z G$-module, then $X^{w*}$ is also a finitely generated, free, left $\Z G$-module, naturally equipped with a dual basis. If $f \colon X_1 \rightarrow X_2$ is a homomorphism between two such modules, then the matrix of its dual, $f^{w*} \colon X_2^{w*} \rightarrow X_1^{w*}$ is the conjugate transpose of the matrix of $f$, where conjugation means applying the involution determined by $w$.

If $C_*$ is a finitely generated, free, based, left $\Z G$-module chain complex, then $C^{w*}$ will denote the dual cochain complex, which also consists of finitely generated, free, based, left $\Z G$-modules, i.e.\ we have $C^{wi} = \Hom_{\Z G,w}^{\mathrm{lr}}(C_i,\Z G)$ for all $i \in \Z$, with the induced coboundary maps.

\begin{lemma} \label{lem:WT-ch-dual}
Let $f \colon C_* \rightarrow D_*$ be a chain homotopy equivalence and let $f^{w*} \colon D^{w*} \rightarrow C^{w*}$ be its dual. Then $\tau(f^{w*}) = \ol{\tau(f)} \in \Wh(G,w)$. 
\end{lemma}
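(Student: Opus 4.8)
The statement to prove is Lemma (\ref{lem:WT-ch-dual}): for a chain homotopy equivalence $f \colon C_* \rightarrow D_*$ of finitely generated, free, based $\Z G$-module chain complexes, the dual $f^* \colon D^* \rightarrow C^*$ satisfies $\tau(f^*) = \ol{\tau(f)} \in \Wh(G,w)$. The plan is to reduce everything to the level of the algebraic mapping cone and its chain contraction, and then to trace through the definition of the torsion in \cref{defn:whitehead-torsion}, keeping careful track of how dualisation affects bases, boundary maps, and chain contractions.

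First I would recall that the mapping cone $\mathscr{C}(f^*)$ of the dual is, up to a canonical (based, degree-shifting) isomorphism, the dual cochain complex of $\mathscr{C}(f)$ — more precisely, $\mathscr{C}(f^*)_i \cong \mathscr{C}(f)^{?-i}$ for an appropriate shift, with the boundary maps of one being the transposed-conjugate of the boundary maps of the other. This is a standard identity: if $f \colon C_* \to D_*$ has cone with underlying module $D_i \oplus C_{i-1}$ and boundary $\left(\begin{smallmatrix} \partial^D & f \\ 0 & -\partial^C \end{smallmatrix}\right)$, then applying $\Hom_{\Z G}^{lr}(-,\Z G)$ degreewise and reindexing produces precisely the cone of $f^* \colon D^* \to C^*$. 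So I would state and verify this identification as the first key step, being careful that it is a \emph{based} isomorphism (the dual basis of a based module is a based module, up to the permitted moves of \cref{rem:tau-inv}: permutations, multiplication by $\pm g$), and that it shifts degree by a fixed amount.

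The second step is to observe that if $s \colon \mathscr{C}(f)_* \to \mathscr{C}(f)_{*+1}$ is a chain contraction, then its dual $s^*$ is a chain contraction of $\mathscr{C}(f)^*$, hence (under the identification above) of $\mathscr{C}(f^*)$, now lowering degree by the appropriate amount. Then I would compute $\tau(f^*)$ directly from \cref{defn:whitehead-torsion} using this particular contraction. The map $\bigoplus (\partial_{2i+1}^{\mathscr{C}(f^*)} + s_{2i+1}^{\mathscr{C}(f^*)})$ becomes, under the identification, the conjugate transpose of the map $\bigoplus(\partial_{2i+1}^{\mathscr{C}(f)} + s_{2i+1}^{\mathscr{C}(f)})$ used to compute $\tau(f)$, possibly after swapping the roles of the odd and even parts (depending on the parity of the degree shift) and possibly after renumbering which summands are "odd" versus "even". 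Swapping odd and even parts replaces a matrix by (something conjugate to) its inverse, changing the class by a sign in $K_1$; combined with the fact that conjugate-transposition realises the involution on $K_1(\Z G)$ (as defined in \cref{ss:wh-group}), and with \cref{lem:WT-ch-shift}, I would reconcile all the signs and conclude $\tau(f^*) = \ol{\tau(f)}$. An alternative, possibly cleaner, route is to avoid sign bookkeeping by instead proving the identity first for the special case of an isomorphism chain map (using \cref{lem:WT-ch-sum}, where $\tau(f) = \sum (-1)^i [f_i]$ and dualising sends $[f_i]$ to $\ol{[f_i]}$ and reindexes by $i \mapsto -i$ or $i \mapsto k-i$), and then bootstrapping to the general case via the cone: $\tau(f)$ equals, up to a shift, the torsion of the identity-like contractibility datum on $\mathscr{C}(f)$, reducing the general statement to the isomorphism case plus \cref{lem:WT-ch-add} and \cref{lem:WT-ch-shift}.

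The main obstacle I expect is the sign and indexing bookkeeping: getting the degree shift in the cone-duality identity exactly right, correctly tracking whether the odd/even decomposition is preserved or swapped, and then verifying that the accumulated signs from \cref{lem:WT-ch-shift}, from the odd/even swap, and from the definition of the involution all cancel to give exactly $\ol{\tau(f)}$ with no stray sign. This is the kind of computation where the cleanest presentation chooses conventions (for the cone, for the dual cochain-to-chain reindexing in \cref{def:WT-ch-cochain}) so that the shifts cancel in pairs; I would set up those conventions carefully at the start and then the verification becomes a short, essentially formal, check using \cref{lem:WT-ch-add}, \cref{lem:WT-ch-shift}, and \cite[(15.1)]{Co73}.
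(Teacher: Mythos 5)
Your plan is essentially the paper's proof: identify $\mathscr{C}(f^*)_i \cong (\mathscr{C}(f)_{-i+1})^*$ as based modules, dualise the chain contraction $s$ to a contraction of $\mathscr{C}(f^*)$, and use that conjugate-transposition of matrices realises the involution on $K_1(\Z G)$. The sign/swap bookkeeping you anticipate does not actually materialise: because dualisation reverses arrows, the dual of $\bigoplus_i(\partial_{2i+1}+s_{2i+1})\colon \mathscr{C}(f)_{\operatorname{odd}}\to\mathscr{C}(f)_{\operatorname{even}}$ is already a map $\mathscr{C}(f^*)_{\operatorname{odd}}\to\mathscr{C}(f^*)_{\operatorname{even}}$ (the odd/even interchange is exactly compensated by the arrow reversal), so no inverse or extra sign enters and one reads off $\tau(f^*)=\ol{\tau(f)}$ directly.
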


\begin{proof}
Let $(\mathscr{C}(f),\partial^f_*)$ denote the mapping cone of $f$, so that $\mathscr{C}(f)_i = C_{i-1} \oplus D_i$. We regard $f^{w*} \colon D^{w*} \rightarrow C^{w*}$ as a homotopy equivalence $f^{w*} \colon D^{w(-*)} \rightarrow C^{w(-*)}$ of chain complexes (where $C^{w(-i)} = \Hom_{\Z G,w}^{\mathrm{lr}}(C_{-i},\Z G)$, etc.), and define its mapping cone $(\mathscr{C}(f^{w*}),\partial^{f^{w*}}_*)$. Then 
\[
\mathscr{C}(f^{w*})_i = (D_{-i+1})^{w*} \oplus (C_{-i})^{w*} \cong (\mathscr{C}(f)_{-i+1})^{w*},
\] 
and this implies that 
$\mathscr{C}(f^{w*})_{\operatorname{odd}} \cong (\mathscr{C}(f)_{\operatorname{even}})^{w*}$ and $\mathscr{C}(f^{w*})_{\operatorname{even}} \cong (\mathscr{C}(f)_{\operatorname{odd}})^{w*}$. 
Moreover, $\partial^{f^{w*}}_i \colon \mathscr{C}(f^{w*})_i \rightarrow \mathscr{C}(f^{w*})_{i-1}$ is the dual of $\partial^f_{-i+2} \colon \mathscr{C}(f)_{-i+2} \rightarrow \mathscr{C}(f)_{-i+1}$.

Let $s_* \colon \mathscr{C}(f)_* \rightarrow \mathscr{C}(f)_{*+1}$ be a chain contraction. We define 
$s'_* \colon \mathscr{C}(f^{w*})_* \rightarrow \mathscr{C}(f^{w*})_{*+1}$ by 
\[s'_i = (s_{-i})^{w*} \colon \mathscr{C}(f^{w*})_i \cong (\mathscr{C}(f)_{-i+1})^{w*} \rightarrow \mathscr{C}(f^{w*})_{i+1} \cong (\mathscr{C}(f)_{-i})^{w*}.\] 
Then it follows by dualising the formula $s_* \partial^f + \partial^f s_* = \Id$ that $s'_*$ is a chain contraction of $\mathscr{C}(f^{w*})$. 
Thus $\bigoplus_i \bigl( \partial^{f^{w*}}_{2i+1} + s'_{2i+1} \bigr) \colon \mathscr{C}(f^{w*})_{\operatorname{odd}} \to \mathscr{C}(f^{w*})_{\operatorname{even}}$ is the dual of $\bigoplus_i \bigl( \partial^f_{2i+1} + s_{2i+1} \bigr) \colon \mathscr{C}(f)_{\operatorname{odd}} \to \mathscr{C}(f)_{\operatorname{even}}$.  
Hence its matrix is the conjugate transpose of the matrix of the latter map, which implies that $\tau(f^{w*}) = \ol{\tau(f)}$.
\end{proof}

Finally we consider the effect of changing the underlying (group) ring.

\begin{definition} \label{def:twist-module}
Let $A$ and $B$ be groups, $X$ a left $\Z B$-module and $\theta \in \Hom(A,B)$. The left $\Z A$-module $X_{\theta}$ is defined as follows. The underlying abelian group of $X_{\theta}$ is the same as that of $X$. For every $a \in A$ and $x \in X_{\theta}$ let $ax = \theta(a) \cdot x$, where $\cdot$ denotes multiplication in $X$.

Similarly, if $Y$ is a right $\Z B$-module, then the right $\Z A$-module $Y^{\theta}$ is equal to $Y$ as an abelian group, and $ya = y \cdot \theta(a)$ for every $a \in A$ and $y \in Y^{\theta}$, where $\cdot$ denotes multiplication in $Y$.
\end{definition}

\begin{lemma} \label{lem:theta}
Let $A$ and $B$ be groups equipped with orientation characters $w_A \colon A \rightarrow \left\{ \pm 1 \right\}$ and $w_B \colon B \rightarrow \left\{ \pm 1 \right\}$ respectively. Let $X$ be a left $\Z A$-module and let $\theta \colon A \rightarrow B$ be an isomorphism such that $w_B \circ \theta = w_A$. Then we have the following isomorphisms of left $\Z B$-modules. 
\begin{clist}{(a)}
\item\label{item-lem-theta-a} 
$\Z B^{\theta} \otimes_{\Z A} X \cong X_{\theta^{-1}}$. 
\item\label{item-lem-theta-b}
$\Hom_{\Z B,w_B}^{\mathrm{lr}}(X_{\theta^{-1}},\Z B) = \Hom_{\Z A,w_A}^{\mathrm{lr}}(X,\Z B^{\theta}) \cong \Hom_{\Z A,w_A}^{\mathrm{lr}}(X,\Z A)_{\theta^{-1}}$, in particular we have $(X_{\theta^{-1}})^{w_B*} \cong (X^{w_A*})_{\theta^{-1}}$. 
\end{clist}
\end{lemma}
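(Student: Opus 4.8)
The plan is to prove both parts by writing down the evident $\Z$-linear maps between the underlying abelian groups and then checking, by a routine unwinding of \cref{def:twist-module,def:tensor-hom,def:dual}, that they are well-defined, bijective, and compatible with the relevant left $\Z B$-module structures. The hypothesis $w_B \circ \theta = w_A$ will be used only in part (b), and only to ensure that the linear extension $\theta \colon \Z A \to \Z B$ is an isomorphism of rings \emph{with involution}; everything else is formal.

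For part (a), I would consider the map $\Phi \colon \Z B^{\theta} \otimes_{\Z A} X \to X_{\theta^{-1}}$ determined by $b \otimes x \mapsto \theta^{-1}(b) \cdot x$ for $b \in B$, where $\cdot$ is the $\Z A$-action on $X$. It respects the balancing relation since $\theta^{-1}(b\,\theta(a)) = \theta^{-1}(b)\,a$, and it is left $\Z B$-linear straight from the definitions of the two structures. Its two-sided inverse is $x \mapsto 1 \otimes x$; the key identity making this work is $b \otimes x = 1 \otimes (\theta^{-1}(b) \cdot x)$, which follows from $1 \cdot \theta^{-1}(b) = b$ in $\Z B^{\theta}$ under the right $\Z A$-action. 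Conceptually this is just the statement that extension of scalars along the ring isomorphism $\theta \colon \Z A \to \Z B$ coincides with transport of structure.

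For part (b), the asserted equality is an equality of subgroups of $\Hom_{\Z}(X,\Z B)$: both $X_{\theta^{-1}}$ and $X$, and both $\Z B$ and $\Z B^{\theta}$, have the same underlying abelian groups. Unwinding the defining condition, $\varphi \in \Hom_{\Z B}^{lr}(X_{\theta^{-1}},\Z B)$ means $\varphi(\theta^{-1}(b) \cdot x) = \varphi(x)\,\ol{b}$ for $b \in B$; setting $a = \theta^{-1}(b)$ and using $\ol{b} = w_B(\theta(a))\,\theta(a)^{-1}$ together with $w_B \circ \theta = w_A$, this becomes exactly $\varphi(a \cdot x) = \varphi(x)\,\ol{a}$ computed with the right $\Z A$-action on $\Z B^{\theta}$, which is the condition for $\varphi \in \Hom_{\Z A}^{lr}(X,\Z B^{\theta})$; the left $\Z B$-actions agree verbatim. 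For the remaining isomorphism I would post-compose with the right $\Z A$-module isomorphism $\Theta \colon \Z A \to \Z B^{\theta}$, $a \mapsto \theta(a)$, which additionally satisfies $\Theta(a'v) = \theta(a')\,\Theta(v)$; this makes $\varphi \mapsto \Theta \circ \varphi$ a left $\Z B$-module isomorphism $(X^{*})_{\theta^{-1}} = \Hom_{\Z A}^{lr}(X,\Z A)_{\theta^{-1}} \xrightarrow{\cong} \Hom_{\Z A}^{lr}(X,\Z B^{\theta})$. Composing with the equality and with $\Hom_{\Z B}^{lr}(X_{\theta^{-1}},\Z B) = (X_{\theta^{-1}})^{*}$ (\cref{def:dual}) gives the ``in particular'' clause.

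There is no substantial obstacle here; the only thing requiring care is the bookkeeping — keeping straight at each step which module structure is being used (left versus right, over $\Z A$ versus $\Z B$, twisted or untwisted) and pinning down that the sole role of $w_B \circ \theta = w_A$ is to make $\theta$ intertwine the two involutions. Invoking the reformulation $\Hom_{R}^{lr}(X,Y) = \Hom_{R}^{r}(\ol{X},Y)$ from the remark after \cref{def:tensor-hom} would streamline the manipulation of defining conditions in part (b).
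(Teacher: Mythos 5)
Your proposal is correct and takes essentially the same route as the paper: in (a) your explicit map $b \otimes x \mapsto \theta^{-1}(b)\cdot x$ is exactly the composite the paper obtains from the identification $\Z B^{\theta} \otimes_{\Z A} X = \Z B \otimes_{\Z B} X_{\theta^{-1}}$ together with $x \mapsto 1 \otimes x$, and in (b) the paper likewise establishes the equality $\Hom_{\Z B}^{lr}(X_{\theta^{-1}},\Z B) = \Hom_{\Z A}^{lr}(X,\Z B^{\theta})$ by unwinding definitions (with $w_B\circ\theta = w_A$ ensuring $\theta$ intertwines the involutions) and then post-composes with $\Z\theta$, which is your map $\Theta$.
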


\begin{proof}
\eqref{item-lem-theta-a} If $Y$ is a right $\Z A$-module and a left $R$-module for some ring $R$, then it follows from the definition of the tensor product (see \cref{def:tensor-hom}) that $Y \otimes_{\Z A} X = Y^{\theta^{-1}} \otimes_{\Z B} X_{\theta^{-1}}$ (as left $R$-modules). By applying this for $Y = \Z B^{\theta}$ and $R = \Z B$, we get that $\Z B^{\theta} \otimes_{\Z A} X = \Z B \otimes_{\Z B} X_{\theta^{-1}}$. Of course $X_{\theta^{-1}} \cong \Z B \otimes_{\Z B} X_{\theta^{-1}}$ via the map $x \mapsto 1 \otimes x$. 

\eqref{item-lem-theta-b} 
This follows from \cite[Proposition 3.9]{Ni23}.
\end{proof}

\begin{lemma} \label{lem:tau-change}
Let $A$ and $B$ be groups and let $\theta \colon A \rightarrow B$ be an isomorphism. Let $f \colon C_* \rightarrow D_*$ be a chain homotopy equivalence of finitely generated, free, based, left $\Z B$-module chain complexes, which can also be regarded as a chain homotopy equivalence $f \colon (C_*)_{\theta} \rightarrow (D_*)_{\theta}$ of $\Z A$-module chain complexes. We have 
\[
\tau(f \colon (C_*)_{\theta} \rightarrow (D_*)_{\theta})  =  \theta^{-1}_*(\tau(f \colon C_* \rightarrow D_*)) \in \Wh(A).
\]
\end{lemma}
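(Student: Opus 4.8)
The statement to prove is \cref{lem:tau-change}: for an isomorphism $\theta \colon A \to B$ and a chain homotopy equivalence $f \colon C_* \to D_*$ of based free $\Z B$-module chain complexes, $\tau(f \colon (C_*)_\theta \to (D_*)_\theta) = \theta^{-1}_*(\tau(f \colon C_* \to D_*)) \in \Wh(A)$.

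The plan is to reduce to the level of a single isomorphism of modules and then to matrices. First I would recall from \cref{defn:whitehead-torsion} that $\tau(f)$ is computed from the mapping cone $\mathscr{C}(f)$: choosing a chain contraction $s$, one forms the isomorphism $\bigoplus_i (\partial_{2i+1} + s_{2i+1}) \colon \mathscr{C}(f)_{\mathrm{odd}} \to \mathscr{C}(f)_{\mathrm{even}}$, and $\tau(f)$ is its class in the Whitehead group. Now the key observation is that the base change functor $(-)_\theta$ from $\Z B$-modules to $\Z A$-modules is the identity on underlying abelian groups and on the underlying maps: it merely reinterprets scalar multiplication via $\theta$. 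So $\mathscr{C}(f \colon (C_*)_\theta \to (D_*)_\theta)$ is literally $(\mathscr{C}(f))_\theta$ with the same boundary maps, the same bases, and the same contraction $s$ works. Hence the two torsions are represented by the "same" isomorphism $g$ of abelian groups, equipped with the same (image of the) basis, but regarded once as a $\Z B$-module map and once as a $\Z A$-module map via $\theta$.

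So everything comes down to the following linear-algebra fact: if $g \colon X \to Y$ is an isomorphism of finitely generated based free $\Z B$-modules, and $g_\theta \colon X_\theta \to Y_\theta$ is the same map of $\Z A$-modules with the bases transported, then $[g_\theta] = \theta^{-1}_*([g]) \in \Wh(A)$. Here $\theta^{-1}_*$ is the map $\Wh(B) \to \Wh(A)$ induced by the ring isomorphism $\Z\theta^{-1} \colon \Z B \to \Z A$ (extending $\theta^{-1}$), on $K_1$ via applying $\Z\theta^{-1}$ entrywise to matrices. To see this, pick bases so that $g$ is represented by a matrix $P = (p_{ij}) \in \GL_N(\Z B)$, meaning $g(x_j) = \sum_i p_{ij} y_i$ with multiplication in $Y$. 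In $X_\theta, Y_\theta$ the same elements $x_j, y_i$ form the bases, and for $a \in A$ the action is $a \cdot y = \theta(a) \cdot y$; conversely an element $b \in B$ acts on $Y_\theta$ as $\theta^{-1}(b) \in A$ would — more precisely, writing $p_{ij} = \sum_g c_g g$ with $g \in B$, we have $p_{ij} \cdot y_i$ (multiplication in $Y$) equals $(\sum_g c_g \theta^{-1}(g)) \cdot y_i$ where the latter is the $\Z A$-action on $Y_\theta$. Thus the matrix of $g_\theta$ over $\Z A$ is exactly $(\Z\theta^{-1})(P)$, the entrywise image of $P$. Passing to $K_1$ and then quotienting by $\pm A$ (noting $\theta^{-1}$ carries $\pm B$ to $\pm A$) gives $[g_\theta] = \theta^{-1}_*[g]$ in $\Wh(A)$.

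Assembling: applying this to $g = \bigoplus_i(\partial_{2i+1} + s_{2i+1})$ and using that $\tau$ is additive over direct sums and independent of the choice of contraction (\cref{defn:whitehead-torsion}), we conclude $\tau(f \colon (C_*)_\theta \to (D_*)_\theta) = \theta^{-1}_*(\tau(f \colon C_* \to D_*))$. I would also remark that the sign/ordering conventions in the definition of $\tau$ via $\mathscr{C}(f)_{\mathrm{odd}}, \mathscr{C}(f)_{\mathrm{even}}$ (\cite[(15.1)]{Co73}) are preserved verbatim under $(-)_\theta$, so no extra signs appear. The only mildly delicate point — and the one I'd be most careful about — is bookkeeping the direction of the induced map: base change along $\theta$ on modules corresponds to the functor $\Wh(\theta^{-1}) = \theta^{-1}_*$ on Whitehead groups, i.e. the variance flips, which is exactly what the statement asserts; getting this straight is really the whole content, and it is a matter of carefully tracking which ring acts and how, rather than any hard argument.
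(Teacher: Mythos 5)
Your proof is correct and follows essentially the same route as the paper's: reduce to the fact that a based isomorphism of free $\Z B$-modules, reinterpreted over $\Z A$ via $\theta$, has matrix obtained by applying $\theta^{-1}$ entrywise, and then observe that the same chain contraction of the mapping cone works after base change, so $\tau$ is computed from the same map $\bigoplus_i(\partial_{2i+1}+s_{2i+1})$ viewed over $\Z A$. Your extra care with the variance (base change along $\theta$ inducing $\theta^{-1}_*$ on Whitehead groups) and with $\pm B$ mapping to $\pm A$ matches what the paper leaves implicit.
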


\begin{proof}
First note that if $g \colon X \rightarrow Y$ is an isomorphism between finitely generated, free, based, left $\Z B$-modules, and we regard it as an isomorphism $g \colon X_{\theta} \rightarrow Y_{\theta}$ of $\Z A$-modules, then its matrix changes by applying $\Z \theta^{-1}$ to each entry, hence $\tau(g \colon X_{\theta} \rightarrow Y_{\theta}) = \theta^{-1}_*(\tau(g \colon X \rightarrow Y)) \in \Wh(A)$. 

Now let $(\mathscr{C}(f),\partial_*)$ denote the mapping cone of $f$ and let $s_* \colon \mathscr{C}(f)_* \rightarrow \mathscr{C}(f)_{*+1}$ be the chain contraction used to define $\tau(f)$ over $\Z B$. Then $s_*$ is also a chain contraction for $(\mathscr{C}(f)_{\theta},\partial_*)$, the mapping cone over $\Z A$.  
So we can compute $\tau(f \colon (C_*)_{\theta} \rightarrow (D_*)_{\theta})$ from 
\[ 
\bigoplus_i \bigl( \partial_{2i+1} + s_{2i+1} \bigr) \colon (\mathscr{C}(f)_{\theta})_{\operatorname{odd}} = (\mathscr{C}(f)_{\operatorname{odd}})_{\theta} \to (\mathscr{C}(f)_{\theta})_{\operatorname{even}} = (\mathscr{C}(f)_{\operatorname{even}})_{\theta},
\] 
which is $\bigoplus_i \bigl( \partial_{2i+1} + s_{2i+1} \bigr) \colon \mathscr{C}(f)_{\operatorname{odd}} \to \mathscr{C}(f)_{\operatorname{even}}$ regarded as an isomorphism of $\Z A$-modules, hence $\tau(f \colon (C_*)_{\theta} \rightarrow (D_*)_{\theta})  =  \theta^{-1}_*(\tau(f \colon C_* \rightarrow D_*))$.
\end{proof}

\subsection{The Whitehead torsion of a homotopy equivalence}

Now let $X$ and $Y$ be finite CW complexes with universal covers $\widetilde{X}$ and $\widetilde{Y}$, and let $F := \pi_1(X)$ and $G := \pi_1(Y)$.  
For an $(R,\Z G)$-bimodule $P$, the cellular chain complex of $Y$ with $P$ coefficients is defined to be $C_*(Y;P) := P \otimes_{\Z G} C_*(\wt{Y})$, which is a finitely generated, free, left $R$-module chain complex. Similarly, the cellular cochain complex of $Y$ with $P$ coefficients is $C^*(Y;P) := \Hom_{\Z G}^{\mathrm{lr}}(C_*(\wt{Y}),P)$.
Thus the cellular chain complex of $Y$ with $\Z G$ coefficients is $C_*(Y; \Z G) = \Z G \otimes_{\Z G} C_*(\widetilde{Y}) \cong C_*(\widetilde{Y})$, which is a left $\Z G$-module chain complex. Choose a lift of each cell of $Y$ in $\widetilde{Y}$ to obtain a basis of $C_*(\widetilde{Y})$, which is well-defined up to ordering, signs and multiplication by elements of $G$. Similarly, the cellular chain complex of $X$ with $\Z F$ coefficients, $C_*(X; \Z F) \cong C_*(\widetilde{X})$, is a finitely generated, free, left $\Z F$-module chain complex with a basis well-defined up to ordering and multiplication by elements of $\pm F$.

Let $f \colon X \to Y$ be a cellular homotopy equivalence, and let $\theta = \pi_1(f) \colon F \rightarrow G$. The right $\Z G$-module $\Z G$ corresponds to a local coefficient system on $Y$, which is pulled back to the local coefficient system on $X$ corresponding to the right $\Z F$-module $\Z G^{\theta}$. Therefore $f$ induces a chain homotopy equivalence $f_* \colon C_*(X; \Z G^{\theta}) \rightarrow C_*(Y; \Z G)$ of left $\Z G$-module chain complexes. Note that by \cref{lem:theta}~\eqref{item-lem-theta-a} we have \[C_*(X; \Z G^{\theta}) = \Z G^{\theta} \otimes_{\Z F} C_*(\widetilde{X}) \cong C_*(\widetilde{X})_{\theta^{-1}} \cong C_*(X; \Z F)_{\theta^{-1}},\] so this is also a finitely generated, free chain complex with a basis that is well-defined up to ordering and multiplication by elements of $\pm G$.

\begin{definition}\label{defn:Whitehead-torsion-spaces}
The \emph{Whitehead torsion} $\tau(f) \in \Wh(\pi_1 Y)$ of the cellular homotopy equivalence $f \colon X \to Y$ is $\tau(f) :=\tau(f_*)$, where $f_* \colon C_*(X; \Z G^{\theta}) \rightarrow C_*(Y; \Z G)$ is the induced chain homotopy equivalence.
\end{definition}

By \cref{rem:tau-inv}, it follows that $\tau(f_*)$ is well-defined, even though the bases of $C_*(X; \Z G^{\theta})$ and $C_*(Y; \Z G)$ are well-defined only up to ordering and multiplication by elements of $\pm G$.

\begin{proposition}[{\cite[Statement~22.1]{Co73}}] \label{prop:WT-homotopic-maps}
Let $f, g \colon X \to Y$ be homotopic cellular homotopy equivalences between finite CW complexes. Then $\tau(f) = \tau(g) \in \Wh(G)$.
\end{proposition}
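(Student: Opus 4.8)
The plan is to reduce this to its purely algebraic counterpart, \cref{prop:chain-hom-chain-equivs-same-torsion}, which asserts that chain homotopic chain homotopy equivalences have equal Whitehead torsion. By \cref{defn:Whitehead-torsion-spaces} we have $\tau(f) = \tau(f_*)$ and $\tau(g) = \tau(g_*)$, where $f_*$ and $g_*$ are the induced chain homotopy equivalences on the cellular chain complexes with local coefficients; so it suffices to realise $f_*$ and $g_*$ as homotopic chain maps between the \emph{same} based source and target complexes, and then invoke \cref{prop:chain-hom-chain-equivs-same-torsion}.

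First I would deal with the comparison of source complexes. Writing $\theta_f = \pi_1(f)$ and $\theta_g = \pi_1(g)$, the source of $f_*$ is $C_*(X;\Z G^{\theta_f}) \cong C_*(X;\Z F)_{\theta_f^{-1}}$ (as in \cref{lem:theta}), and similarly for $g_*$, so a priori these differ. However, a homotopy $H \colon X \times I \to Y$ from $f$ to $g$ traces out a loop $t \mapsto H(x_0,t)$ in $Y$ representing some $a \in G$, and one checks that $\theta_g = c_a \circ \theta_f$, where $c_a$ is conjugation by $a$. Since $a \in G \subseteq \GL_1(\Z G)$, conjugation by $a$ is an inner automorphism of $\GL(\Z G)$ and hence induces the identity on $K_1(\Z G)$ and on $\Wh(G)$; combining this with \cref{lem:tau-change} (together with \cref{lem:WT-ch-comp}) shows that this discrepancy between the two sources does not affect the torsion. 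Equivalently, we may assume the homotopy $H$ is basepoint preserving, so that $\theta_f = \theta_g =: \theta$ and $f_*$, $g_*$ have the same based source $C_*(X;\Z G^\theta)$ and target $C_*(Y;\Z G)$.

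It then remains to prove $f_* \simeq g_*$ as $\Z G$-chain maps. By the cellular approximation theorem applied to $X \times I$, equipped with the product CW structure, relative to the subcomplex $X \times \{0,1\}$ on which $H$ already restricts to the cellular maps $f$ and $g$, we may assume $H$ is cellular. Pulling the coefficient system on $Y$ back to $X \times I$ along $\pi_1(H) = \theta$ (so that it restricts to $\Z G^\theta$ on each end), the product CW structure gives a $\Z F$-linear identification of the cellular chain complex of $X \times I$ with $C_*(X;\Z G^\theta) \otimes_{\Z} C_*(I)$, under which the two end inclusions $\iota_0,\iota_1 \colon X \hookrightarrow X \times I$ induce $\iota_{0*}(x) = x \otimes [0]$ and $\iota_{1*}(x) = x \otimes [1]$. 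The usual prism operator $s(x) = \pm\, x \otimes e^1$ then satisfies $\partial s + s \partial = \iota_{1*} - \iota_{0*}$, so composing with $H_*$ and using $H \circ \iota_0 = f$, $H \circ \iota_1 = g$ produces a $\Z G$-chain homotopy between $f_*$ and $g_*$. Hence $\tau(f_*) = \tau(g_*)$ by \cref{prop:chain-hom-chain-equivs-same-torsion}, which is the claim.

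The only real obstacle is bookkeeping rather than any genuine difficulty: one must verify that the Künneth-type decomposition of $C_*^{\mathrm{cell}}(X \times I)$ is $\Z F$-equivariant for the product CW structure, that the prism operator remains defined over $\Z G$ after tensoring up the coefficients, and — the one slightly subtle point — that the possible discrepancy between $\pi_1(f)$ and $\pi_1(g)$ is inner, which is precisely what makes the source complexes match up through an isomorphism invisible to $\Wh(G)$. Everything else is the standard argument that (cellular, basepoint-preserving) homotopic maps induce chain homotopic maps on cellular chains, carried out in the presence of local coefficients.
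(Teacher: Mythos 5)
Your proof is correct and follows essentially the same route as the paper, which deduces the statement immediately from \cref{prop:chain-hom-chain-equivs-same-torsion} on the grounds that homotopic maps induce chain homotopic chain homotopy equivalences. You simply supply the bookkeeping the paper leaves implicit (the prism operator with local coefficients, and the fact that the possible discrepancy between $\pi_1(f)$ and $\pi_1(g)$ is an inner automorphism, hence acts trivially on $\Wh(G)$), so there is nothing to change.
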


\begin{proof}
This follows immediately from  \cref{prop:chain-hom-chain-equivs-same-torsion}, because homotopic homotopy equivalences induce homotopic chain homotopy equivalences.
\end{proof}

We can now extend the definition of Whitehead torsion to arbitrary homotopy equivalences. If $f \colon X \to Y$ is a homotopy equivalence between finite CW complexes, then it is homotopic to a cellular homotopy equivalence $f' \colon X \to Y$, and we define $\tau(f)  :=  \tau(f')$. By \cref{prop:WT-homotopic-maps} this is independent of the choice of $f'$. Moreover, it follows that if $f, g \colon X \to Y$ are arbitrary homotopy equivalences and $f \simeq g$, then $\tau(f) = \tau(g)$.
Because of this, and as we are only interested in the Whitehead torsion of a homotopy equivalence, we will often not distinguish between homotopic maps.

We collect a few key properties of simple homotopy equivalences and Whitehead torsion.

\begin{proposition}
Let $f \colon X \to Y$ be a homeomorphism between finite CW complexes. Then $\tau (f)=0$.
\end{proposition}

\begin{proof}
By \cref{thm:chapman}, $f$ is a simple homotopy equivalence. Hence $\tau(f)=0$ by \cref{theorem:Whitehead}. 
\end{proof}

\begin{proposition}[{\cite[Corollary 5.1A]{Co73}}] \label{prop:map-cyl}
Let $X$ and $Y$ be finite CW complexes and let $f \colon X \to Y$ be a cellular map. Let $\Cyl_f$ denote the mapping cylinder of $f$. Then the inclusion $Y \rightarrow \Cyl_f$ is a simple homotopy equivalence.
\end{proposition}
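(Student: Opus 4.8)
For finite CW complexes $X$ and $Y$ and a cellular map $f\colon X\to Y$, the inclusion $Y\hookrightarrow \Cyl_f$ is a simple homotopy equivalence.

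The plan is to exhibit the inclusion $j\colon Y\to \Cyl_f$ as an honest composition of elementary expansions, which by \cref{theorem:Whitehead} (or directly from the definition) makes it a simple homotopy equivalence; alternatively, to compute $\tau(j)=0$ directly from the chain-level definition. I would first recall the CW structure on $\Cyl_f=(X\times I)\sqcup Y/(x,1)\sim f(x)$: its cells are the cells of $Y$, the cells of $X$ (sitting in $X\times\{0\}$), and for each cell $e$ of $X$ a cell $e\times(0,1)$ one dimension higher. The key geometric observation is that $\Cyl_f$ collapses onto $Y$: one can order the cells of $X$ by increasing dimension and perform, for each $k$-cell $e$ of $X$, an elementary collapse that removes the pair $\big(e\times\{0\},\ e\times(0,1)\big)$, the free face being $e\times\{0\}$ together with part of its boundary product cells. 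Running this in reverse gives a sequence of elementary expansions starting from $Y$ and ending at $\Cyl_f$. Since $j$ is homotopic to this composite (both are the inclusion), $j$ is simple by \cref{defn:simple-h-e}.

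An alternative, cleaner route avoids the explicit cell-by-cell bookkeeping. The strong deformation retraction $r\colon \Cyl_f\to Y$ satisfies $r\circ j=\id_Y$, so $\tau(j)+\tau(r)=\tau(\id_Y)=0$ by \cref{lem:WT-ch-comp}; hence it suffices to show $\tau(r)=0$, or equivalently $\tau(j)=0$. On the chain level, the inclusion $j$ induces $C_*(Y;\Z G)\to C_*(\Cyl_f;\Z G)$, and the mapping cone of this map is chain isomorphic (respecting bases up to sign and group elements) to the algebraic mapping cone of $\id_{C_*(\wt X)}\colon C_*(\wt X)\to C_*(\wt X)$ shifted by one, because the ``new'' cells of $\Cyl_f$ relative to $Y$ are exactly two copies of the cells of $X$ in adjacent dimensions, paired by the identity boundary map coming from the product structure $X\times I$. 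An acyclic complex of the form $0\to C_k\xrightarrow{\id}C_k\to 0$ has trivial Whitehead torsion, and this persists under the direct sum over all $k$; invariance of $\tau$ under the allowed basis changes (\cref{rem:tau-inv}) handles the signs introduced by the product orientation. Therefore $\tau(j)=0$ and, by \cref{theorem:Whitehead}, $j$ is simple.

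The main obstacle is purely organisational: verifying that the preferred cellular bases of $C_*(\Cyl_f;\Z G)$ decompose, compatibly with the differential, as the basis of $C_*(Y;\Z G)$ together with a ``doubled'' acyclic summand isomorphic to $\mathscr C(\id_{C_*(\wt X)})_{*+1}$ — in other words, pinning down precisely that the incidence numbers between the cells $e\times\{0\}$ and $e\times(0,1)$ are governed by $\pm\id$ plus lower-filtration terms, so that after a based change of basis the cone is literally a sum of elementary acyclic pieces. This is exactly the content of \cite[Corollary 5.1A]{Co73}, and I would either cite it or carry out this filtration argument; given the paper's conventions I would simply invoke the reference, as the authors do, since the mapping-cylinder-collapses-onto-target fact is a standard building block of simple homotopy theory.
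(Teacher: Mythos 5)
The paper offers no proof of this proposition at all — it simply cites \cite[Corollary 5.1A]{Co73} — so your final decision to invoke that reference matches the paper exactly, and your chain-level sketch (identifying $C_*(\Cyl_f,Y;\Z G)$ with the algebraic mapping cone of the identity on the chains of $X$ with induced $\Z G$-coefficients, whose torsion vanishes) is a correct rendering of the standard argument behind Cohen's result. One small slip in your first, geometric sketch: the elementary collapses of the pairs $\bigl(e\times(0,1),\,e\times\{0\}\bigr)$ must be performed over the cells $e$ of $X$ in order of \emph{decreasing} dimension — only after all higher-dimensional product cells are gone is $e\times\{0\}$ a free face — not increasing dimension as written.
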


The following is a consequence of \cref{lem:WT-ch-comp,lem:tau-change}.

\begin{proposition}[{\cite[Statement~22.4]{Co73}}] \label{prop:WT-composition}
Let $X$, $Y$ and $Z$ be finite CW complexes and let $f \colon X \to Y$ and $g \colon Y \to Z$ be homotopy equivalences. Then
\[ \tau(g \circ f) = \tau(g) + g_*(\tau(f)) .\]
\end{proposition}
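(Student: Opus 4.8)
The plan is to reduce this to the composition formula for Whitehead torsion of chain homotopy equivalences over a \emph{fixed} group ring, namely \cref{lem:WT-ch-comp}, combined with the change-of-rings formula \cref{lem:tau-change}. First I would observe that, since the Whitehead torsion of a homotopy equivalence depends only on its homotopy class (by \cref{prop:WT-homotopic-maps} and the subsequent discussion) and $g_*$ on Whitehead groups depends only on $\pi_1(g)$, we may assume $f$ and $g$, and hence $g \circ f$, are cellular. Write $F = \pi_1(X)$, $G = \pi_1(Y)$, $H = \pi_1(Z)$, and $\theta = \pi_1(f)$, $\varphi = \pi_1(g)$; these are isomorphisms, and $\pi_1(g \circ f) = \varphi\theta$.

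By \cref{defn:Whitehead-torsion-spaces} we have $\tau(f) = \tau(f_* \colon C_*(X; \Z G^\theta) \to C_*(Y; \Z G))$, $\tau(g) = \tau(g_* \colon C_*(Y; \Z H^\varphi) \to C_*(Z; \Z H))$ and $\tau(g \circ f) = \tau((g \circ f)_*)$. The heart of the argument is the observation that, upon choosing the lift of $g \circ f$ to universal covers to be the composite of the chosen lifts of $f$ and $g$, one has $(g \circ f)_* = g_* \circ \overline{f}_*$ at the chain level, where $\overline{f}_* \coloneqq \Z H^\varphi \otimes_{\Z G} f_*$ is the base change of $f_*$ along $\varphi \colon \Z G \to \Z H$. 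Here \cref{lem:theta}~\eqref{item-lem-theta-a}, applied with the isomorphism $\varphi$, identifies $\Z H^\varphi \otimes_{\Z G} C_*(X; \Z G^\theta) \cong C_*(X; \Z H^{\varphi\theta})$ and $\Z H^\varphi \otimes_{\Z G} C_*(Y; \Z G) \cong C_*(Y; \Z H^\varphi)$ compatibly with the preferred bases. (If one does not arrange compatible lifts, the factorisation holds only up to chain homotopy, which is enough by \cref{prop:chain-hom-chain-equivs-same-torsion}.) Since $\varphi$ is an isomorphism, base change along $\varphi$ coincides with restriction along $\varphi^{-1}$, so \cref{lem:tau-change}, applied with groups $H$, $G$ and the isomorphism $\varphi^{-1} \colon H \to G$, yields $\tau(\overline{f}_*) = \varphi_*(\tau(f_*)) = g_*(\tau(f))$.

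It then remains to apply \cref{lem:WT-ch-comp} to the composite $g_* \circ \overline{f}_*$ of chain homotopy equivalences of based, finitely generated, free $\Z H$-module chain complexes, which gives
\[
\tau(g \circ f) = \tau(g_* \circ \overline{f}_*) = \tau(\overline{f}_*) + \tau(g_*) = g_*(\tau(f)) + \tau(g),
\]
as required. I expect the only genuine obstacle to be the bookkeeping in the middle paragraph: verifying at the chain level that $(g \circ f)_*$ factors as $g_* \circ \overline{f}_*$, and that under the bimodule identifications above $\overline{f}_*$ is literally $f_*$ regarded as a $\Z H$-module map via $\varphi^{-1}$ with its basis unchanged, so that \cref{lem:tau-change} applies verbatim. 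Everything else is formal.
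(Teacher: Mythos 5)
Your proof is correct. Note that the paper does not prove this proposition at all: it is quoted from Cohen [Statement 22.4], whose own argument runs through the torsion of pairs and mapping cylinders. Your route instead stays entirely inside the paper's chain-level toolkit, and it works: with cellular representatives and compatible choices of lifts, $(g\circ f)_*$ does factor as $g_*\circ\overline{f}_*$ with $\overline{f}_* = \Z H^{\varphi}\otimes_{\Z G} f_*$, the identifications $\Z H^{\varphi}\otimes_{\Z G} C_*(X;\Z G^{\theta})\cong C_*(X;\Z H^{\varphi\theta})$ and $\Z H^{\varphi}\otimes_{\Z G} C_*(Y;\Z G)\cong C_*(Y;\Z H^{\varphi})$ from \cref{lem:theta}~\eqref{item-lem-theta-a} send preferred bases to preferred bases (each is $x\mapsto 1\otimes x$ on lifted cells), and since $\varphi$ is an isomorphism the base change is restriction along $\varphi^{-1}$, so \cref{lem:tau-change} gives $\tau(\overline{f}_*)=\varphi_*(\tau(f_*))=g_*(\tau(f))$, after which \cref{lem:WT-ch-comp} finishes the computation; the residual ambiguities in bases and lifts are absorbed by \cref{rem:tau-inv} and \cref{prop:chain-hom-chain-equivs-same-torsion}. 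So your proposal is a legitimate self-contained alternative to the citation, essentially reconstructing the standard composition formula in the paper's twisted-coefficient conventions; what the citation buys is brevity, while your argument buys compatibility with the exact setup (the $\Z G^{\theta}$-coefficient definition of $\tau$) used elsewhere in the paper, e.g.\ in the proof of \cref{prop:WT-man}.
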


It will be important to have formulae for how Whitehead torsion behaves under products and in fibre bundles.

\begin{proposition}[{\cite[Statement~23.2]{Co73}}] \label{prop:WT-product}
For $i=1,2$, let $X_i$, $Y_i$ be finite CW complexes and let $f_i \colon X_i \to Y_i$ be a homotopy equivalence.
Let $i_1 \colon Y_1 \hookrightarrow Y_1 \times Y_2$ and $i_2 \colon Y_2 \hookrightarrow Y_1 \times Y_2$ be natural inclusion maps defined by fixing a point in $Y_2$ and $Y_1$ respectively. 
For the product map $f_1 \times f_2 \colon X_1 \times X_2 \to Y_1 \times Y_2$, we have
\[ \tau(f_1 \times f_2) = \chi(Y_2) \cdot {i_1}_*(\tau(f_1)) + \chi(Y_1) \cdot {i_2}_*(\tau(f_2)) \]
where $\chi$ denotes the Euler characteristic.
\end{proposition}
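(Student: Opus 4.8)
The plan is to deduce the general product formula from the special case in which one of the two maps is an identity, and to reduce that special case to an algebraic statement about the Whitehead torsion of $g\otimes\id$ for a chain homotopy equivalence $g$. First, by \cref{prop:WT-homotopic-maps} we may assume that $f_1$ and $f_2$ are cellular, so $f_1\times f_2$, $f_1\times\id_{Y_2}$ and $\id_{X_1}\times f_2$ are all cellular. Writing $f_1\times f_2=(f_1\times\id_{Y_2})\circ(\id_{X_1}\times f_2)$ and applying the composition formula \cref{prop:WT-composition} gives
\[
\tau(f_1\times f_2)=\tau(f_1\times\id_{Y_2})+(f_1\times\id_{Y_2})_*\tau(\id_{X_1}\times f_2).
\]
Granting the special case
\[
\tau(f\times\id_Z)=\chi(Z)\cdot i_*\tau(f),\qquad i\colon Y\hookrightarrow Y\times Z
\]
for a homotopy equivalence $f\colon X\to Y$ and a finite CW complex $Z$ (and its analogue with the two factors interchanged, which follows by the same argument), the first term equals $\chi(Y_2)\cdot(i_1)_*\tau(f_1)$, while $\tau(\id_{X_1}\times f_2)=\chi(X_1)\cdot j_*\tau(f_2)$ with $j\colon Y_2\hookrightarrow X_1\times Y_2$ the inclusion at a point of $X_1$. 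On fundamental groups the composites $\pi_1(f_1\times\id_{Y_2})\circ\pi_1(j)$ and $\pi_1(i_2)$ agree, so $(f_1\times\id_{Y_2})_*\circ j_*=(i_2)_*$ on Whitehead groups; since $\chi(X_1)=\chi(Y_1)$ because $f_1$ is a homotopy equivalence, combining the displays yields the claimed formula.

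For the special case, use the product CW structure on $Y\times Z$ (cells the products of cells): then $C_*(\widetilde{Y\times Z})=C_*(\widetilde Y)\otimes_\Z C_*(\widetilde Z)$ as based $\Z[\pi_1(Y)\times\pi_1(Z)]=\Z\pi_1(Y)\otimes_\Z\Z\pi_1(Z)$-module chain complexes, and similarly for $X\times Z$, with the twist induced by $\pi_1(f\times\id_Z)$ concentrated in the $X$-factor. Hence the chain homotopy equivalence induced by $f\times\id_Z$ as in \cref{defn:Whitehead-torsion-spaces} is identified with $f_*\otimes\id_{C_*(\widetilde Z)}$, where $f_*$ is the chain homotopy equivalence whose torsion is $\tau(f)$. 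So it suffices to prove the following algebraic claim: if $g\colon C_*\to D_*$ is a chain homotopy equivalence of finitely generated, free, based $\Z G$-module chain complexes and $E_*$ is a finitely generated, free, based $\Z H$-module chain complex, then $g\otimes\id_{E_*}\colon C_*\otimes_\Z E_*\to D_*\otimes_\Z E_*$ is a chain homotopy equivalence of $\Z[G\times H]$-module chain complexes with
\[
\tau(g\otimes\id_{E_*})=\chi(E_*)\cdot\iota_*\tau(g)\in\Wh(G\times H),
\]
where $\iota\colon\Wh(G)\to\Wh(G\times H)$ is induced by $x\mapsto(x,1)$.

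I would prove the claim by induction on the top nonzero degree of $E_*$. If $E_*$ is concentrated in degree $0$ with $E_0\cong(\Z H)^m$, then $g\otimes\id_{E_*}$ is a direct sum of $m$ copies of the map obtained from $g$ by extending scalars along $\Z G\hookrightarrow\Z[G\times H]$; since the torsion construction (mapping cone, chain contraction, the matrix $\partial+s$) commutes with extension of scalars and torsion is additive over direct sums, this gives $\tau(g\otimes\id_{E_*})=m\cdot\iota_*\tau(g)=\chi(E_*)\cdot\iota_*\tau(g)$. For the inductive step, let $n\geq 1$ be the top nonzero degree, $E'_*\subseteq E_*$ the subcomplex of terms in degrees $<n$, and $E''_*=E_*/E'_*$, concentrated in degree $n$; tensoring the based short exact sequence $0\to E'_*\to E_*\to E''_*\to 0$ with the $\Z$-free complexes $C_*$ and $D_*$ produces a morphism of based short exact sequences of chain complexes whose three vertical maps are instances of $g\otimes\id$, so \cref{lem:WT-ch-add} gives $\tau(g\otimes\id_{E_*})=\tau(g\otimes\id_{E'_*})+\tau(g\otimes\id_{E''_*})$. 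The first summand is $\chi(E'_*)\cdot\iota_*\tau(g)$ by induction; the second is computed from the base case and the grading-shift formula \cref{lem:WT-ch-shift}, giving $(-1)^n\rk_{\Z H}(E_n)\cdot\iota_*\tau(g)=\chi(E''_*)\cdot\iota_*\tau(g)$. As $\chi(E'_*)+\chi(E''_*)=\chi(E_*)$, the induction closes.

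The main obstacle is the bookkeeping of bases and signs: that the product CW structure really makes $C_*(\widetilde{X\times Z})$ the \emph{based} tensor product $C_*(\widetilde X)\otimes_\Z C_*(\widetilde Z)$ with the $f$-twist concentrated in a single factor, that the Koszul and mapping-cone sign conventions make $g\otimes\id_{E_*}$ a chain homotopy equivalence and turn the relevant sequences into based short exact sequences in the sense of \cref{ss:wh-che}, and that extension of scalars interacts with $\tau$ as asserted. None of this is deep, but it must be pinned down before \cref{lem:WT-ch-add} and \cref{lem:WT-ch-shift} apply; by \cref{rem:tau-inv} the residual sign, ordering and group-element ambiguities in these identifications do not change the torsion.
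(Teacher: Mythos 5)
Your argument is correct: the paper gives no proof of \cref{prop:WT-product}, simply citing \cite[(23.2)]{Co73}, and your route — factoring $f_1\times f_2=(f_1\times\id_{Y_2})\circ(\id_{X_1}\times f_2)$, applying \cref{prop:WT-composition}, and establishing $\tau(f\times\id_Z)=\chi(Z)\cdot i_*\tau(f)$ by identifying the induced chain map with $f_*\otimes\id$ and inducting over the degrees of $C_*(\widetilde Z)$ via \cref{lem:WT-ch-add} and \cref{lem:WT-ch-shift} — is essentially Cohen's standard argument. The bookkeeping points you flag (the based tensor-product identification, compatibility of $\tau$ with extension of scalars and direct sums, and insensitivity of induced maps on $\Wh$ to the choice of basepoint) are routine and resolve exactly as you indicate.
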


\begin{corollary} \label{cor:odd-product}
Let $f_1\colon M_1 \rightarrow N_1$ and $f_2\colon M_2 \rightarrow N_2$ be homotopy equivalences between odd dimensional manifolds. Then $f_1 \times f_2\colon M_1 \times M_2 \rightarrow N_1 \times N_2$ is a simple homotopy equivalence.
\end{corollary}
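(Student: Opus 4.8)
The plan is to reduce immediately to the product formula for Whitehead torsion, \cref{prop:WT-product}, together with the classical vanishing of the Euler characteristic of a closed odd-dimensional manifold. Write $d_j = \dim M_j = \dim N_j$, which is odd. First I would record that $\chi(N_1) = \chi(N_2) = 0$: by Poincar\'e duality with $\F_2$-coefficients (no orientability hypothesis is needed) one has $\dim_{\F_2} H_i(N_j;\F_2) = \dim_{\F_2} H_{d_j-i}(N_j;\F_2)$ for all $i$, and since $d_j$ is odd the terms of $\chi(N_j) = \sum_i (-1)^i \dim_{\F_2} H_i(N_j;\F_2)$ cancel in pairs.

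Then, applying \cref{prop:WT-product} to $f_1 \times f_2 \colon M_1 \times M_2 \to N_1 \times N_2$ (whose targets are $Y_1 = N_1$, $Y_2 = N_2$), we get
\[
\tau(f_1 \times f_2) = \chi(N_2)\cdot (i_1)_*(\tau(f_1)) + \chi(N_1)\cdot (i_2)_*(\tau(f_2)) = 0 \in \Wh(\pi_1(N_1 \times N_2)).
\]
By Whitehead's theorem (\cref{theorem:Whitehead}), a homotopy equivalence of finite CW complexes is simple precisely when its Whitehead torsion vanishes, so $f_1 \times f_2$ is a simple homotopy equivalence. All spaces in sight admit CW structures --- an odd-dimensional closed manifold has dimension $3$ or $\ge 5$, triangulable by Moise and Kirby--Siebenmann respectively (or one uses the canonical triangulations in the $\Diff$ and $\PL$ cases), and the products are even-dimensional of dimension $\ge 6$; should one wish to avoid this point entirely, \cref{defn:KS-defn-simple} applies once one observes that the product formula of \cref{prop:WT-product} and \cref{theorem:Whitehead} pass to the normal disc-bundle model.

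I expect no genuine obstacle here: the content is entirely the multiplicativity of Whitehead torsion in the Euler characteristics of the targets, both of which are forced to vanish by the odd-dimensionality hypothesis. The only mild bookkeeping is checking that the hypotheses of \cref{prop:WT-product} (finite CW complexes) are met, which is automatic in these dimensions.
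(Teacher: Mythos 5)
Your proof is correct and is essentially the paper's own argument: apply \cref{prop:WT-product} together with the vanishing of the Euler characteristic of closed odd-dimensional manifolds to conclude $\tau(f_1\times f_2)=0$, hence simplicity by \cref{theorem:Whitehead}. The extra remarks on $\F_2$-Poincar\'e duality and CW structures are fine but not needed beyond what the paper already assumes.
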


\begin{proof}
We have $\tau(f_1 \times f_2) = 0$, by \cref{prop:WT-product} and the fact that $\chi(N_1) = \chi(N_2) = 0$.
\end{proof}

\begin{proposition}[{\cite[Theorem A]{Anderson}}] \label{prop:WT-fiber}
For $i=1,2$, let $F_i \xrightarrow[]{j_i} E_i \xrightarrow[]{p_i} B_i$ be fibre bundles of finite CW complexes, and let
\[
\xymatrix{
F_1 \ar[r]^-{j_1} \ar[d]_-{h} & E_1 \ar[r]^-{p_1} \ar[d]_-{g} & B_1 \ar[d]^-{f} \\
F_2 \ar[r]^-{j_2} & E_2 \ar[r]^-{p_2} & B_2 
}
\]
be a fibre homotopy equivalence. Then there is a homomorphism $p_2^* \colon \Wh(\pi_1(B_2)) \rightarrow \Wh(\pi_1(E_2))$ such that 
\[
\tau(g) = p_2^*(\tau(f)) + \chi(B_2) \cdot {j_2}_*(\tau(h)).
\] 
\end{proposition}

Anderson~\cite[Corollary B]{Anderson} deduced \cref{prop:WT-product} from \cref{prop:WT-fiber}, by setting $F_1 = X_2$, $B_1 = X_1$, $F_2=Y_2$, and $B_2 = Y_1$, so that $h=f_2$, $f=f_1$, $g = f_2 \times f_1$, and $j_2 = i_2$. One needs more information than we have given, or will need: Anderson appeals to \cite{Kwun-Szczarba} for a computation that in this case $p_2^*(\tau(f)) = \chi(Y_2) \cdot {i_1}_*(\tau(f_1))$.

\subsection{The Whitehead torsion of a homotopy equivalence between manifolds}

In this section we specialise further to the case of manifolds, and see that $\mathcal{J}_n(G,w)$ and $\mathcal{I}_n(G,w)$ arise naturally.

The following generalises an observation of Wall, who considered the case where $M$, $N$ are simple Poincar\'{e} complexes \cite[p.~612]{Wa74}, i.e.\ finite Poincar\'{e} complexes $X$ for which the chain duality isomorphism $C^*(\wt{X}) \to C_*(\wt{X})$ is a simple chain homotopy equivalence (\cref{defn:whitehead-torsion}). At the time, it was known that smooth manifolds are simple Poincar\'{e} complexes \cite[Theorem 2.1]{Wall-SCM} but the case of topological manifolds was not established until the work of Kirby-Siebenmann \cite[III.5.13]{Kirby-Siebenmann:1977-1}.

\begin{proposition} \label{prop:WT-man}
Let $M$ and $N$ be closed $n$-dimensional topological manifolds. Let $G = \pi_1(M)$ with orientation character $w \colon G \rightarrow \left\{ \pm 1 \right\}$, and let $f \colon N \to M$ be a homotopy equivalence. Then $\tau(f) \in \mathcal{J}_n(G,w)$. 
\end{proposition}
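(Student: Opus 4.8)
The plan is to deduce this from Poincaré duality together with the fact that closed topological manifolds are \emph{simple} Poincaré complexes (Wall \cite{Wall-SCM} in the smooth case, Kirby--Siebenmann \cite[III.5.13]{Kirby-Siebenmann:1977-1} in general; when $n=4$ and $\CAT=\TOP$, one reads off the chain-level statements from the normal disc bundles as in \cref{defn:KS-defn-simple}, so that everything in sight is a finite, free, based complex even if $M$ carries no CW structure). Write $\theta = \pi_1(f)\colon \pi_1(M) \to G$, which is an isomorphism, and note that the orientation character of $M$ is $w\circ\theta$. After a cellular approximation, $f$ induces a chain homotopy equivalence $f_*\colon C_*(M;\Z G^\theta) \to C_*(N;\Z G)$ of finitely generated, free, based $\Z G$-module chain complexes with $\tau(f)=\tau(f_*)$ by \cref{defn:Whitehead-torsion-spaces}. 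I would then bring in the cap products with the fundamental classes, $D_M\colon C^{n-*}(M;\Z G^\theta) \to C_*(M;\Z G^\theta)$ and $D_N\colon C^{n-*}(N;\Z G) \to C_*(N;\Z G)$, where the cochain complexes are the $\Z G$-linear duals of \cref{def:dual} formed using the involution on $\Z G$ determined by $w$ (legitimate for $M$ because its orientation character is $w\circ\theta$). The simple Poincaré complex property gives $\tau(D_N)=0$, and $\tau(D_M)=0$ as well since over $\Z G^\theta$-coefficients its torsion is obtained from the (vanishing) torsion over $\Z\pi_1(M)$ by applying an automorphism induced by $\theta$, using \cref{lem:tau-change}.

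The heart of the argument is the homotopy-commutative square coming from naturality of the cap product, namely $f_*\circ D_M\circ (f_*)^* \simeq \pm D_N$ as maps $C^{n-*}(N;\Z G)\to C_*(N;\Z G)$, where $(f_*)^*$ is the $\Z G$-linear dual of $f_*$ and the sign records $f_*[M]=\pm[N]$. Taking Whitehead torsions via \cref{lem:WT-ch-comp} and \cref{prop:chain-hom-chain-equivs-same-torsion} gives
\[
\tau\bigl((f_*)^*\bigr) + \tau(D_M) + \tau(f_*) = \tau(\pm D_N).
\]
On the right, the sign is harmless: multiplying a chain isomorphism by $-1$ leaves its torsion unchanged, since by \cref{lem:WT-ch-sum} the correction is a sum of terms $[-\mathrm{Id}]\in\Wh(G)$, and $-1\mapsto 0$ in $\Wh(G)$; hence $\tau(\pm D_N)=\tau(D_N)=0$. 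On the left, combining \cref{def:WT-ch-cochain}, the grading shift in \cref{lem:WT-ch-shift} (shift by $n$), and the duality formula \cref{lem:WT-ch-dual} yields $\tau((f_*)^*) = (-1)^n\,\overline{\tau(f_*)} = (-1)^n\,\overline{\tau(f)}$. Substituting $\tau(D_M)=0$ and $\tau(f_*)=\tau(f)$ leaves $(-1)^n\overline{\tau(f)} + \tau(f) = 0$, i.e.\ $\tau(f) = -(-1)^n\overline{\tau(f)}$, which is exactly the defining condition for membership in $\mathcal{J}_n(G,w)$.

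This is a classical fact (going back to the duality theorem in Milnor's survey, and to Wall in the simple Poincaré setting), so I do not expect a genuine obstacle; the only steps needing real care are the sign bookkeeping in the grading-shift identities (making sure the factor comes out as $(-1)^n$ rather than $(-1)^{n+1}$), and checking that the Poincaré duality square is $\Z G$-equivariant for the correct, $w$-twisted involution, with all the twisted-coefficient complexes finite, free and based so that the torsion lemmas of \cref{ss:wh-che} genuinely apply — in particular, in the exceptional case $n=4$, $\CAT=\TOP$, routing everything through the disc-bundle description of \cref{defn:KS-defn-simple}.
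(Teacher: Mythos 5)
Your proposal is correct and follows essentially the same route as the paper: both exploit the homotopy-commutative Poincaré duality square (with the sign from $f_*[M]=\pm[N]$), the fact that the duality maps are simple chain equivalences by Kirby--Siebenmann (transported to twisted coefficients via \cref{lem:theta,lem:tau-change}), and then the torsion identities \cref{lem:WT-ch-comp}, \cref{def:WT-ch-cochain}, \cref{lem:WT-ch-shift}, and \cref{lem:WT-ch-dual} to get $\tau(C^{n-*}(f)) = (-1)^n\overline{\tau(f)}$ and hence $\tau(f)=-(-1)^n\overline{\tau(f)}$. Your explicit remark that the sign is harmless because $[-\mathrm{Id}]$ dies in $\Wh(G)$ is a small point the paper leaves implicit, but the argument is the same.
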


\begin{proof}
Let $F = \pi_1(N)$ with orientation character $v \colon F \rightarrow \left\{ \pm 1 \right\}$, and let $\theta = \pi_1(f) \colon F \to G$. Then $(\Z^w)^{\theta} = \Z^{w \circ \theta}$, and since $f$ is a homotopy equivalence, it induces an isomorphism $H_n(N;\Z^{w \circ \theta}) \xrightarrow{\cong} H_n(M;\Z^w) \cong \Z$. It follows from Poincar\'{e} duality that $v$ is the only homomorphism $F \rightarrow \left\{ \pm 1 \right\}$ such that $H_n(N;\Z^v) \cong \Z$; indeed, for any such homomorphism $u$, $H_n(N;\Z^u) \cong H^0(N;\Z^{u\cdot v})$, which is $\Z$ if and only if $u =v$. Thus  $v = w \circ \theta$. 

Consider the following diagram of left $\Z G$-module chain complexes: 
\begin{equation} \label{eq:pd-diag}
\xymatrix{
C^{n-*}(N; \Z G^{w \theta}) \ar[d]_{\PD} & & C^{n-*}(M; \Z G^w) \ar[ll]_-{C^{w(n-*)}(f)} \ar[d]^{\PD} \\
C_*(N; \Z G^{\theta}) \ar[rr]^-{C_*(f)} & & C_*(M; \Z G)
}
\end{equation}
The horizontal maps are the chain and cochain maps induced by $f$. The Poincar\'{e} duality map $\PD \colon C^{n-*}(M; \Z G^w) \to C_*(M; \Z G)$ is given by taking cap product with a cycle representing the twisted fundamental class of $M$, a generator $[M] \in H_n(M;\Z^w) \cong \Z$ (using $\Z^w \otimes_{\Z} \Z G^w = (\Z G^w)^w = \Z G$). 
Similarly, taking cap product with a representative of $[N] \in H_n(N;\Z^v)$ defines $\PD \colon C^{n-*}(N; \Z G^{w \theta}) \to C_*(N; \Z G^{\theta})$. 
Here we write $\Z G^{w \theta} = (\Z G^w)^{\theta}$ for brevity, and we have $\Z G^{w \theta} = \Z G^{\theta v}$ (by the definitions, using that $v = w \circ \theta$) and $\Z^v \otimes_{\Z} \Z G^{\theta v} = \Z G^{\theta}$.
Since $f$ is a homotopy equivalence, $f_*([N]) = \pm [M]$, so by the naturality of the cap product, diagram \eqref{eq:pd-diag} commutes up to chain homotopy and sign. 

Note that by \cref{lem:theta} there are isomorphisms
\begin{equation}\label{eqn:star-star}
C_i(N; \Z G^{\theta}) = \Z G^{\theta} \otimes_{\Z F} C_i(\widetilde{N}) \cong C_i(\widetilde{N})_{\theta^{-1}} = C_i(N; \Z F)_{\theta^{-1}}
\end{equation}
and
\begin{align}
C^i(N; \Z G^{w \theta}) &= C^i(N; \Z G^{\theta v}) = \Hom_{\Z F}^{\mathrm{lr}}(C_i(\widetilde{N}),\Z G^{\theta v}) = \Hom_{\Z F, v}^{\mathrm{lr}}(C_i(\widetilde{N}),\Z G^{\theta}) \nonumber \\
& \begin{cases}
 \cong \Hom_{\Z F, v}^{\mathrm{lr}}(C_i(\widetilde{N}),\Z F)_{\theta^{-1}} = \Hom_{\Z F}^{\mathrm{lr}}(C_i(\widetilde{N}),\Z F^v)_{\theta^{-1}} = C^i(N; \Z F^v)_{\theta^{-1}} \label{cochains} \\
= \Hom_{\Z G, w}^{\mathrm{lr}}(C_i(\widetilde{N})_{\theta^{-1}},\Z G) = (C_i(\widetilde{N})_{\theta^{-1}})^{w*} \cong C_i(N; \Z G^{\theta})^{w*},  
\end{cases}
\end{align}
i.e.\ we have $C^i(N; \Z G^{w \theta}) \cong C^i(N; \Z F^v)_{\theta^{-1}} \cong C_i(N; \Z G^{\theta})^{w*}$.

We also have, by the definitions, that
\[
C^i(M; \Z G^w) = \Hom_{\Z G}^{\mathrm{lr}}(C_i(\widetilde{M}),\Z G^w) = C_i(\widetilde{M})^{w*} = C_i(M; \Z G)^{w*}.
\]
Using this and the second line of~\eqref{cochains}, $C^{w*}(f)$ can be identified with the $\vphantom{C}^{w*}$-dual of $C_*(f)$, i.e.\ $C^{wi}(f) =  C_i(f)^{w*}$ for all $i$.

Poincar\'{e} duality is a simple chain homotopy equivalence by \cite[III.5.13]{Kirby-Siebenmann:1977-1}, so we have that $\tau(\PD \colon C^{n-*}(M; \Z G^w) \to C_*(M; \Z G)) = 0$. By the first line of \eqref{cochains}, we have $C^{n-*}(N; \Z G^{w \theta}) \cong C^{n-*}(N; \Z F^v)_{\theta^{-1}}$,  and by \eqref{eqn:star-star} $C_*(N; \Z G^{\theta}) \cong C_*(N; \Z F)_{\theta^{-1}}$. Using this, we can identify the Poincar\'{e} duality map  
\[
\PD \colon C^{n-*}(N; \Z G^{w \theta}) \to C_*(N; \Z G^{\theta})
\] 
with 
\[
\PD \colon C^{n-*}(N; \Z F^v)_{\theta^{-1}} \to C_*(N; \Z F)_{\theta^{-1}}.
\] 
By \cref{lem:tau-change}, the Whitehead torsion of the latter map is given by $\theta_*$ applied to the Whitehead torsion of 
$\PD \colon C^{n-*}(N; \Z F^v) \to C_*(N; \Z F)$. But this torsion vanishes, again by \cite[III.5.13]{Kirby-Siebenmann:1977-1}. Thus all of the above Poincar\'{e} duality maps are simple. It then follows from \eqref{eq:pd-diag}, \cref{prop:chain-hom-chain-equivs-same-torsion}, and \cref{lem:WT-ch-comp} that $\tau(C^{w(n-*)}(f)) + \tau(C_*(f)) = 0$. Therefore, by the definition of $\mathcal{J}_n$, it is enough to prove that $\tau(C^{w(n-*)}(f)) = (-1)^n\overline{\tau(C_*(f))}$.

By \cref{lem:WT-ch-shift}, we know that $\tau(C^{w(n-*)}(f)) = (-1)^n \tau(C^{w(-*)}(f))$, and by \cref{def:WT-ch-cochain} $\tau(C^{w(-*)}(f)) = \tau(C^{w*}(f))$. Finally \cref{lem:WT-ch-dual} shows (the second equality of) $\tau(C^{w*}(f)) = \tau(C_*(f)^{w*}) = \overline{\tau(C_*(f))}$. We deduce that $\tau(C^{w(n-*)}(f)) = (-1)^n\overline{\tau(C_*(f))}$, completing the proof. 
\end{proof}

We recall the definitions of $h$- and $s$-cobordisms.

\begin{definition}
A cobordism $(W;M,N)$ of closed manifolds is an \emph{$h$-cobordism} if the inclusion maps $i_M \colon M \to W$ and $i_N \colon N \to W$ are homotopy equivalences. If in addition $i_M$ and $i_N$ are simple homotopy equivalences then $W$ is an \emph{$s$-cobordism}.
\end{definition}

\begin{definition}
Suppose that $W$ is an $h$-cobordism between closed $n$-dimensional manifolds $M$ and $N$. Let $G = \pi_1(W)$ and let $w \colon G \rightarrow \left\{ \pm 1 \right\}$ be the orientation character of $W$. We write $\tau(W,M)$ and $\tau(W,N)$ to denote the Whitehead torsion of the inclusions $M \rightarrow W$ and $N \rightarrow W$ respectively. We refer to the composition $N \to M$ of the inclusion $N \rightarrow W$ and the homotopy inverse of the inclusion $M \rightarrow W$ as \emph{the homotopy equivalence induced by $W$}.
\end{definition}

Note that the homotopy inverse of the inclusion $M \to W$, and hence the homotopy equivalence $N \to M$, are well-defined only up to homotopy. This suffices for our purposes since we will only consider the Whitehead torsion of these maps (see the discussion after \cref{prop:WT-homotopic-maps}).

\begin{proposition} \label{prop:WT-hcob}
Let $W$ be an $h$-cobordism between closed $n$-dimensional manifolds $M$ and $N$. Let $G = \pi_1(W)$ and let $w \colon G \rightarrow \left\{ \pm 1 \right\}$ be the orientation character of $W$.
\benum
\item\label{item-prop-WT-hcob-a} We have that $\tau(W,N) = (-1)^n \overline{\tau(W,M)} \in \Wh(G,w)$. 
\item\label{item-prop-WT-hcob-b} If $f \colon N \rightarrow M$ denotes the homotopy equivalence induced by $W$, then we have 
\[
\tau(f) = -\tau(W,M) + (-1)^n \overline{\tau(W,M)} \in \Wh(G,w)
\] 
where $\pi_1(M)$ is identified with $G$ via inclusion. In particular $\tau(f) \in \mathcal{I}_n(G,w)$. 
\eenum
\end{proposition}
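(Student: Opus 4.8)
The plan is to deduce part~(b) formally from part~(a) together with the composition formula for Whitehead torsion, so the real content lies in part~(a), which is the classical duality formula for the torsions of an $h$-cobordism (cf.\ Milnor \cite{Mi66}); I would prove it by a chain-level argument modelled on the proof of \cref{prop:WT-man}.

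Write $G = \pi_1(W)$, and use the inclusions $i_M$ and $i_N$ (which are homotopy equivalences) to identify $\pi_1(M)$ and $\pi_1(N)$ with $G$; then $w$ restricts to the orientation characters of $M$ and $N$. Fix a handle decomposition of $W$, hence a CW structure on the pairs $(W, M)$ and $(W, N)$. This exists by \cite{Kirby-Siebenmann:1977-1} for $n \geq 5$ and under \cref{assumptions} for $n = 4$; if $W$ itself is not known to be a CW complex one passes to a normal disc bundle as in \cref{defn:KS-defn-simple}. Picking lifts of cells to the universal cover $\widetilde{W}$ makes $C_*(\widetilde{W})$, $C_*(\widetilde{W}, \widetilde{M})$ and $C_*(\widetilde{W}, \widetilde{N})$ into finitely generated, free, based $\Z G$-module chain complexes, the latter two acyclic since $i_M$ and $i_N$ are homotopy equivalences. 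The first step is the standard identification $\tau(W, M) = \tau(C_*(\widetilde{W}, \widetilde{M}))$ (and likewise for $N$): the algebraic mapping cone of the chain map induced by $i_M$ fits into a based short exact sequence whose kernel is an acyclic, torsion-zero complex (the mapping cone of an identity) and whose quotient is $C_*(\widetilde{W}, \widetilde{M})$, so the identification follows from \cref{lem:WT-ch-add,lem:WT-ch-comp}.

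The key input is Poincar\'e--Lefschetz duality: capping with the fundamental class $[W, \partial W]$ defines a chain homotopy equivalence $C^{n+1-*}(\widetilde{W}, \widetilde{M}) \to C_*(\widetilde{W}, \widetilde{N})$, and it is \emph{simple}, because compact topological manifolds are simple Poincar\'e--Lefschetz pairs \cite[III.5.13]{Kirby-Siebenmann:1977-1}. Applying the additivity of torsion (an easy consequence of \cref{lem:WT-ch-add}) to the mapping cone of this equivalence gives $\tau(C^{n+1-*}(\widetilde{W}, \widetilde{M})) = \tau(C_*(\widetilde{W}, \widetilde{N})) = \tau(W, N)$. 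On the other hand, the dualise-and-shift manipulations from the proof of \cref{prop:WT-man} — now using the acyclic-complex forms of \cref{lem:WT-ch-shift,lem:WT-ch-dual} and \cref{def:WT-ch-cochain}, re-indexing by $n+1$ and tracking signs — identify $\tau(C^{n+1-*}(\widetilde{W}, \widetilde{M})) = (-1)^n \overline{\tau(C_*(\widetilde{W}, \widetilde{M}))} = (-1)^n \overline{\tau(W, M)}$. Comparing the two computations proves part~(a).

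For part~(b), let $j \colon W \to M$ be a homotopy inverse of $i_M$, so the induced homotopy equivalence satisfies $f \simeq j \circ i_N$; under the identifications above, $j_*$ and $(i_N)_*$ act as the identity on $\Wh(G, w)$. From $0 = \tau(\Id_W) = \tau(i_M \circ j) = \tau(i_M) + (i_M)_* \tau(j)$ (using \cref{prop:WT-composition}) we get $\tau(j) = -\tau(W, M)$, and then $\tau(f) = \tau(j \circ i_N) = \tau(j) + j_* \tau(i_N) = -\tau(W, M) + \tau(W, N)$; substituting part~(a) gives $\tau(f) = -\tau(W, M) + (-1)^n \overline{\tau(W, M)}$. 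Setting $x = -\tau(W, M) \in \Wh(G, w)$, this equals $x - (-1)^n \overline{x}$, so $\tau(f) \in \mathcal{I}_n(G, w)$. The main obstacle is part~(a): realising the relative Poincar\'e--Lefschetz duality at the chain level as a \emph{simple} equivalence for topological manifolds with boundary (and dealing via the disc-bundle device with the low-dimensional cases where $W$ may fail to be a CW complex), and then carefully tracking the signs through the dualise-and-shift step so as to obtain $(-1)^n$ rather than $(-1)^{n+1}$.
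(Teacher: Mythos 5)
Your proposal is correct, but it takes a more self-contained route than the paper does for part~(a). The paper simply quotes Milnor's duality theorem for $h$-cobordisms (\cite[p.~394]{Mi66}, translated into its conventions) and then, exactly as you do, deduces part~(b) from \cref{prop:WT-composition}; so your part~(b) coincides with the paper's argument, while your part~(a) amounts to re-proving the cited duality theorem at the chain level — essentially Milnor's own proof, upgraded to the topological category via the Kirby--Siebenmann statement that compact $\TOP$ manifolds (here the pair $(W,\partial W)$, not just closed manifolds) satisfy \emph{simple} Poincar\'e--Lefschetz duality, in the spirit of the proof of \cref{prop:WT-man}. What your route buys is independence from the smooth/PL setting of \cite{Mi66} and a uniform treatment alongside \cref{prop:WT-man}; what it costs is precisely the bookkeeping you flag: the identification $\tau(W,M)=\tau(C_*(\widetilde W,\widetilde M))$ (your mapping-cone argument is the standard one, though note \cref{lem:WT-ch-add,lem:WT-ch-shift,lem:WT-ch-dual} are stated for chain homotopy equivalences, so you really need their acyclic-complex analogues, i.e.\ Milnor's additivity theorem), and the sign. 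For the record, the sign you assert does come out right: for an acyclic based complex $C_*$ of a pair with cells up to dimension $m$, one computes $\tau(C^{m-*})=(-1)^{m+1}\overline{\tau(C_*)}$ (the dual contraction turns $(\partial+s)|_{\mathrm{odd}}$ into the dual of $(\partial+s)$ going the other way, which contributes the extra sign when $m$ is even), so with $m=n+1$ one gets $\tau(W,N)=(-1)^{n}\overline{\tau(W,M)}$, matching the statement; but since you only assert rather than verify this, be aware that this computation is where the entire content of part~(a) lives, and the paper sidesteps it by citation.
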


\begin{proof}
For \eqref{item-prop-WT-hcob-a} this is the ``duality theorem'' \cite[p.~394]{Mi66}, translated into our conventions; see also the remark on \cite[p.~398]{Mi66}. 
Then \eqref{item-prop-WT-hcob-b} follows from part \eqref{item-prop-WT-hcob-a} and \cref{prop:WT-composition}.
\end{proof}

\subsection{Milnor's definition of Whitehead torsion} \label{ss:milnor}

So far we have used the Whitehead torsion of a (chain) homotopy equivalence. Now we recall Milnor's definition \cite[Sections 3--4]{Mi66}, which applies in broader settings. We will use this more general notion (only) in the surgery theoretic arguments of \cref{ss:wall}. 
We slightly alter the presentation compared to \cite{Mi66}, so that we do not need to manipulate (s-)bases directly, and we will see that these alterations do not change the result. 

We fix a group $G$, and assume that all modules are finitely generated left $\Z G$-modules. 

\begin{definition}
A module $X$ is called \emph{stably free} if $X \oplus (\Z G)^k$ is free for some $k \geq 0$, and an \emph{s-basis} (or stable basis) of $X$ is the data of an integer $k$ and a basis of $X \oplus (\Z G)^k$ for some $k$. An \emph{s-based module} will mean a stably free module equipped with an s-basis.
\end{definition}

If $X$ is free, then a basis of $X$ is also an s-basis. A trivial module has a unique basis, which determines the \emph{canonical s-basis} for it. The \emph{trivial s-based module} is the trivial module $0$ equipped with the canonical s-basis.

\begin{definition}
Suppose that $X$ and $Y$ are s-based modules, with $X \oplus (\Z G)^k$ and $Y \oplus (\Z G)^l$ free and based, and $f \colon X \rightarrow Y$ is an isomorphism. If $r \geq \max(k,l)$, then the s-bases determine bases for $X \oplus (\Z G)^r$ and $Y \oplus (\Z G)^r$ (by adding the standard bases of $(\Z G)^{r-k}$ and $(\Z G)^{r-l}$), and $f \oplus \id_{(\Z G)^r}$ is an isomorphism between them. The matrix of this isomorphism in the given bases represents an element of $\GL(\Z G)$, and hence $\Wh(G)$. This element is independent of the choice of~$r$, and it is called the \emph{Whitehead torsion of the isomorphism $f$}, denoted $\tau(f)$. 
\end{definition}

\begin{remark}
Two s-bases of a stably free module $X$ are said to be \emph{equivalent} if the isomorphism $\id_X$ between them has vanishing Whitehead torsion. As the Whitehead torsion is additive under composition, the Whitehead torsion of an isomorphism $f \colon X \rightarrow Y$ only depends on the equivalence classes of the s-bases of $X$ and $Y$. Hence in all statements about s-based modules, it is actually sufficient to equip the modules with an equivalence class of s-bases.
\end{remark}

\begin{definition} \label{def:milnor-ses}
Suppose that 
\[
0 \rightarrow X \rightarrow Y \rightarrow Z \rightarrow 0
\]
is a short exact sequence of s-based $\Z G$-modules. Since a stably free module (in particular, $Z$) is projective, such a sequence splits, and a splitting determines an isomorphism $Y \cong X \oplus Z$ of s-based modules (where the s-basis of $X \oplus Z$ is determined by those of $X$ and $Z$). Since changing the splitting changes the isomorphism by an element of $\E(\Z G)$, the Whitehead torsion of this isomorphism is independent of the choice of splitting (cf.\ \cite[Section 2]{Mi66}), and is defined to be the \emph{Whitehead torsion of the short exact sequence}. (In \cite{Mi66} this is denoted by $[xz/y]$, where $x,y,z$ are the (s-)bases of the modules.)
\end{definition}

Suppose that $C_*$ is a chain complex of s-based modules, and assume that the modules $H_i := H_i(C_*)$ are also s-based. In such a situation we will always assume that all but finitely many of the $C_i$ and $H_i$ are trivial s-based modules. By \cite[Section 4]{Mi66} the submodules $B_i \leq Z_i \leq C_i$ of boundaries and cycles are also stably free (and all but finitely many of them are trivial). 

\begin{definition} \label{def:milnor-cc}
We define the \emph{Whitehead torsion of an s-based chain complex $C_*$ with s-based homology} as follows. Choose s-bases for the modules $B_i$ and $Z_i$ such that all but finitely many of them are trivial s-based modules, and the s-bases of the remaining ones are chosen arbitrarily. Denote the Whitehead torsions of the short exact sequences
\[
0 \rightarrow Z_i \rightarrow C_i \rightarrow B_{i-1} \rightarrow 0 
\quad \quad \quad \text{and} \quad \quad \quad 
0 \rightarrow B_i \rightarrow Z_i \rightarrow H_i \rightarrow 0
\]
by $\gamma_i$ and $\zeta_i$ in $\Wh(G)$ respectively. Then only finitely many of the $\gamma_i$ and $\zeta_i$ are non-zero, and we define
\[
\tau(C_*) := \sum_i (-1)^i (\gamma_i + \zeta_i) \in \Wh(G).
\]
\end{definition}

One checks that changing the s-basis of $B_i$ or $Z_i$ has no effect on the sum, so the Whitehead torsion of $C_*$ does not depend on the choice of s-bases of $B_i$ and $Z_i$. By choosing an s-basis for $Z_i$ such that $\zeta_i$ vanishes (denoted by $b_ih_i$ in \cite{Mi66}) we see that this definition coincides with the one given in \cite[Sections 3--4]{Mi66}. 

\begin{definition} \label{def:milnor-cc-acyc}
The \emph{Whitehead torsion of an acyclic s-based chain complex} is defined by taking the canonical s-basis for all of the (trivial) homology groups and using \cref{def:milnor-cc}.
\end{definition}

Now suppose that $f \colon C_* \to D_*$ is a chain homotopy equivalence between finitely generated, free, based chain complexes, so that its mapping cone $\mathscr{C}(f)$ is acyclic, free and based. 
The Whitehead torsion of $f$ can be recovered using Milnor's definition of torsion, as follows.

\begin{proposition}[{\cite[\S 16]{Co73}}] \label{prop:milnor-gen}
We have $\tau(f) = \tau(\mathscr{C}(f)) \in \Wh(G)$, where \cref{defn:whitehead-torsion} and \cref{def:milnor-cc-acyc} are used to define the left and right hand side, respectively.
\end{proposition}

In fact, one could obtain a notion of Whitehead torsion $\tau(C_*)$ for acyclic, free, based chain complexes $C_*$ by replacing $\mathscr{C}(f)$ with $C_*$ in the formula of \cref{defn:whitehead-torsion}, and this would coincide with $\tau(C_*)$ as given by \cref{def:milnor-cc-acyc} (see \cite[\S 15--16]{Co73}). 

The next three lemmas follow directly from the definitions. 

\begin{lemma} \label{lem:milnor-les-isom}
Suppose that $f \colon X \rightarrow Y$ is a module isomorphism between s-based modules $X$ and $Y$, and $C_*$ is an acyclic chain complex such that $C_i = X$, $C_{i-1} = Y$ $($with the given s-bases$)$, $d_i = f$, and for $j \neq i,i-1$, $C_j$ is a trivial s-based module. Then $\tau(C_*) = (-1)^i \tau(f)$. 
\end{lemma}

\begin{lemma} \label{lem:milnor-les-ses}
Suppose that $0 \rightarrow X \rightarrow Y \rightarrow Z \rightarrow 0$ is a short exact sequence of s-based modules, and it is regarded as an acyclic chain complex $C_*$, with $C_{i+1} = X$, $C_i = Y$, and $C_{i-1} = Z$. Then its Whitehead torsion is $(-1)^i \tau(C_*)$. 
\end{lemma}

\begin{lemma} \label{lem:milnor-same}
Suppose that $C_*$ is an s-based chain complex with trivial differentials, so that $H_i = C_i$ for all $i$. Equip $H_i$ with the same s-basis as $C_i$ for each $i$. Then $\tau(C_*)=0$. 
\end{lemma}

The behaviour of torsion in short exact sequences is very important. 

\begin{theorem}[{\cite[Theorem 3.2]{Mi66}}] \label{thm:milnor-es}
Suppose that $C'_*$, $C_*$ and $C''_*$ are s-based chain complexes with s-based homology. Suppose also that $0 \rightarrow C'_* \rightarrow C_* \rightarrow C''_* \rightarrow 0$ is a short exact sequence of chain complexes such that the short exact sequence $0 \rightarrow C'_i \rightarrow C_i \rightarrow C''_i \rightarrow 0$ has vanishing Whitehead torsion for all $i$. Let $D_*$ be the associated homological long exact sequence, regarded as an acyclic s-based chain complex, with $D_{3i+2} = H'_i$, $D_{3i+1} = H_i$ and $D_{3i} = H''_i$. Then $\tau(C_*) = \tau(C'_*) + \tau(C''_*) + \tau(D_*)$.
\end{theorem}

The next lemma shows how we can use the Whitehead torsion to specify an s-basis of a homology group. 

\begin{lemma} \label{lem:milnor-sb}
Suppose that $C_*$ is an s-based chain complex and $i$ is an integer such that $H_j$ is s-based for $j \neq i$ and $H_i$ is stably free. Then for every $x \in \Wh(G)$ there is an s-basis of $H_i$ $($unique up to equivalence$)$ such that $\tau(C_*)=x$.
\end{lemma}

\begin{proof}
Choose an arbitrary s-basis of $H_i$ and let $y \in \Wh(G)$ be the Whitehead torsion of $C_*$ with respect to that choice. There is an $r \geq 0$ such that the chosen s-basis of $H_i$ determines a basis of $H_i \oplus (\Z G)^r$ and the element $(-1)^i(x-y) \in \Wh(G)$ can be represented by an $f \in \GL_r(\Z G)$. Then a suitable s-basis of $H_i$ can be obtained by applying $\id_{H_i} \oplus f$ to the original basis of $H_i \oplus (\Z G)^r$.
\end{proof}

Note that, even when $H_i$ is free, this construction only determines an s-basis of $H_i$ (as opposed to a basis), because it cannot be guaranteed that a given element of the Whitehead group is represented by an automorphism of a free module of a fixed rank. The following special case is used in the definition of the surgery obstruction (see \cite[Lemma 2.3]{Wall-SCM}). 

\begin{definition} \label{def:milnor-sb}
Suppose that $C_*$ is an s-based chain complex and $i$ is an integer such that $H_j$ is trivial for $j \neq i$ and $H_i$ is stably free. The \emph{distinguished s-basis of $H_i$} is defined (up to equivalence) by equipping $H_j$ with the canonical s-basis for $j \neq i$ and applying \cref{lem:milnor-sb} with $x=0$.
\end{definition}

If $H_i$ is trivial too, then it also has the canonical s-basis. However, in general the distinguished s-basis differs from the canonical one. 

\begin{lemma} \label{lem:milnor-sb-diff}
Suppose that $C_*$ is an acyclic s-based chain complex and $\tau(C_*)$ is its Whitehead torsion as in \cref{def:milnor-cc-acyc}. Then the transition matrix between the canonical and the distinguished s-basis of $H_i$ represents $(-1)^{i+1}\tau(C_*) \in \Wh(G)$. 
\end{lemma}

\begin{proof}
When constructing the distinguished s-basis via the proof of \cref{lem:milnor-sb}, start with the canonical s-basis of $H_i$ (so that $y = \tau(C_*)$).
\end{proof}

Finally, we can use Milnor's definition of Whitehead torsion in the geometric setting. 

\begin{definition}
Suppose that $Y$ is a finite CW complex with $\pi_1(Y) = G$ and $X$ is a subcomplex of $Y$ (possibly empty). Let $C_i(Y,X; \Z G)$ denote the relative cellular $i$-chains (which we equip with the basis given by the $i$-cells of $Y$ not in $X$) and let $H_i(Y,X; \Z G)$ denote the $i$th relative homology. Suppose that $H_i(Y,X; \Z G)$ is an s-based $\Z G$-module for all $i$. Then the \emph{Whitehead torsion of the pair $(Y,X)$}, denoted $\tau(Y,X)$, is defined to be $\tau(C_*(Y,X; \Z G))$. 
\end{definition}

\begin{lemma} \label{lem:milnor-sub}
Suppose that $Y$ is a finite CW complex with $\pi_1(Y) = G$ and $X$ is a subcomplex of $Y$ such that the inclusion $f \colon X \rightarrow Y$ is a homotopy equivalence. If $H_i(Y,X; \Z G) = 0$ is equipped with the canonical s-basis for all $i$, then $\tau(Y,X)=\tau(f)$.
\end{lemma}

\begin{proof}
Identify $\pi_1(X)$ with $G$ via $\pi_1(f)$. For every $i$, since $C_i(Y,X; \Z G) = C_i(Y; \Z G) / C_i(X; \Z G) = \mathscr{C}(f_*)_i / \mathscr{C}((\id_X)_*)_i$, there is a short exact sequence 
\[
0 \rightarrow \mathscr{C}((\id_X)_*)_i \rightarrow \mathscr{C}(f_*)_i \rightarrow C_i(Y,X; \Z G) \rightarrow 0
\]
The basis of $\mathscr{C}((\id_X)_*)_i$ is given by $(i-1)$-cells and $i$-cells of $X$, the basis of $\mathscr{C}(f_*)_i$ by the $(i-1)$-cells of $X$ and the $i$-cells of $Y$, and the basis of $C_i(Y,X; \Z G)$ by the $i$-cells of $Y$ not in $X$. Hence the short exact sequence has vanishing Whitehead torsion. If $H_i(\mathscr{C}((\id_X)_*)) = H_i(\mathscr{C}(f_*)) = 0$ are also equipped with the canonical s-basis for all $i$, then by \cref{thm:milnor-es} $\tau(\mathscr{C}(f_*)) = \tau(\mathscr{C}((\id_X)_*)) + \tau(C_*(Y,X; \Z G))$. By \cref{prop:milnor-gen} and the definitions we have $\tau(\mathscr{C}(f_*)) = \tau(f_*) = \tau(f)$, $\tau(\mathscr{C}((\id_X)_*)) = \tau((\id_X)_*) = \tau(\id_X) = 0$ and $\tau(C_*(Y,X; \Z G)) = \tau(Y,X)$.
\end{proof}

\begin{lemma} \label{lem:milnor-cyl}
Suppose that $X$ and $Y$ are finite CW complexes, $\pi_1(Y) = G$, and $f \colon X \rightarrow Y$ is a cellular homotopy equivalence. Identify $\pi_1(\Cyl_f)$ with $\pi_1(Y) = G$ via the retraction $\Cyl_f \rightarrow Y$. If $H_i(\Cyl_f,X; \Z G) = 0$ is equipped with the canonical s-basis for all $i$, then $\tau(\Cyl_f,X)=\tau(f)$.
\end{lemma}

\begin{proof}
Since $C_*(\Cyl_f,X;\Z G) \cong \mathscr{C}(f_*)$, we have $\tau(\Cyl_f,X) = \tau(C_*(\Cyl_f,X;\Z G)) = \tau(\mathscr{C}(f_*)) = \tau(f_*) = \tau(f)$ by the definitions and \cref{prop:milnor-gen}. (Alternatively, use \cref{lem:milnor-sub} and \cref{prop:map-cyl}.)
\end{proof}

We can now check that $\tau(Y,X)$ is invariant under simple homotopy equivalence, in a suitable sense. 

\begin{lemma} \label{lem:milnor-she-pairs}
Let $(Y,X)$ and $(Y',X')$ be finite CW pairs with identifications $\pi_1(Y) = \pi_1(Y') = G$. Suppose that $H_i(Y,X; \Z G)$ and $H_i(Y',X'; \Z G)$ are s-based for all $i$. Let $(F,f) \colon (Y,X) \rightarrow (Y',X')$ be a map of pairs such that $F \colon Y \rightarrow Y'$ and $f \colon X \rightarrow X'$ are cellular simple homotopy equivalences. Assume that $\pi_1(F) = \id_G$, so that $(F,f)$ induces $\Z G$-module isomorphisms $H_i(F,f) \colon H_i(Y,X; \Z G) \rightarrow H_i(Y',X'; \Z G)$, and further assume that $H_i(F,f)$ has vanishing Whitehead torsion for all $i$. Then $\tau(Y,X)=\tau(Y',X')$.
\end{lemma}

\begin{proof}
All chain complexes and homology appearing in this proof will be taken with $\Z G$ coefficients, with the local coefficient systems pulled back from $Y'$ using the standard inclusions and retractions (except in the case of the pair $(Y,X)$, where we use the given identification $\pi_1(Y) = G$). Using the induced maps on $\pi_1$, and hence on $\Wh(\pi_1({-}))$, all Whitehead torsions will be considered as elements of $\Wh(G)$.

Consider the short exact sequences
\begin{align*}
0 \rightarrow C_*(\Cyl_f, X) &\rightarrow C_*(\Cyl_F, Y) \rightarrow C_*(\Cyl_F, Y \cup \Cyl_f) \rightarrow 0 \\
0 \rightarrow C_*(\Cyl_f, X') &\rightarrow C_*(\Cyl_F, Y') \rightarrow C_*(\Cyl_F, Y' \cup \Cyl_f) \rightarrow 0.
\end{align*}
All of these chain complexes are acyclic (for the last term in each sequence, this follows from the exactness of the associated homological sequence), and we choose the canonical s-bases on their homologies. 
Then the associated homological long exact sequences have vanishing Whitehead torsion. 
Since $f$ and $F$ are simple, we deduce from \cref{lem:milnor-cyl} that $\tau(\Cyl_f, X) = \tau(\Cyl_F, Y) = 0$. By \cref{prop:map-cyl} and \cref{lem:milnor-sub} we have that $\tau(\Cyl_f, X') = \tau(\Cyl_F, Y') = 0$. Hence it follows from \cref{thm:milnor-es} that
\[
\tau(\Cyl_F, Y \cup \Cyl_f) = \tau(\Cyl_F, Y' \cup \Cyl_f) = 0.
\] 

Next, note that since $\pi_1(F) = \id_G$, the inclusion induces a morphism $C_*(Y, X) \rightarrow C_*(\Cyl_F,\Cyl_f)$ of $\Z G$-module chain complexes. Consider the short exact sequences
\begin{align*}
0 \rightarrow C_*(Y, X) &\rightarrow C_*(\Cyl_F,\Cyl_f) \rightarrow C_*(\Cyl_F, Y \cup \Cyl_f) \rightarrow 0 \\
0 \rightarrow C_*(Y', X') &\rightarrow C_*(\Cyl_F,\Cyl_f) \rightarrow C_*(\Cyl_F, Y' \cup \Cyl_f) \rightarrow 0.
\end{align*}
Recall that $H_i(\Cyl_F, Y \cup \Cyl_f) = H_i(\Cyl_F, Y' \cup \Cyl_f)=0$ have both been equipped with the canonical s-basis, for all $i$. We have $H_i(Y', X') \cong H_i(\Cyl_F,\Cyl_f)$, so we can equip the former with the s-basis given on the latter. 
Then, since $H_i(F,f)$ has vanishing Whitehead torsion, the same is true for the isomorphism $H_i(Y,X) \rightarrow H_i(\Cyl_F,\Cyl_f)$ induced by the inclusion. It follows that both of the homological long exact sequences associated to the above short exact sequences have vanishing Whitehead torsion.
So by \cref{thm:milnor-es}, $\tau(Y, X) = \tau(\Cyl_F,\Cyl_f) = \tau(Y', X')$. 
\end{proof}

\section{Realising elements of $\Wh(G)$ by maps between manifolds}
\label{section:realisation}

Throughout this section, fix $n \geq 4$, a finitely presented group $G$, and $\CAT \in \{\Diff, \PL, \TOP\}$, satisfying \cref{assumptions}. 

\subsection{Realising via $h$-cobordisms}
\label{ss:realising-cobordisms}

If $(W;M,N)$ is an $h$-cobordism, then we use the isomorphism $\pi_1(i_M)$ to identify $\pi_1(M)$ with $\pi_1(W)$, and regard $\tau(W,M) \in \Wh(\pi_1(W))$ as an element of $\Wh(\pi_1(M))$.
Here is the complete statement of the $s$-cobordism theorem, for closed manifolds. It is due to Smale, Barden, Mazur, Stallings, Kirby-Siebenmann, and Freedman-Quinn~\cite{Smale-h-cob,Barden,Mazur:1963-1,Stallings-Tata,Kirby-Siebenmann:1977-1,FQ}.

\begin{theorem}[$s$-cobordism theorem]	\label{theorem:scob}
    Let $M$ be a closed $\CAT$ $n$-manifold with $\pi_1(M) \cong G$, with $\CAT$ as in \cref{assumptions}.
	\benum
		\item\label{it:s-cob1} Let $(W;M,M')$ be an $h$-cobordism over $M$. Then $W$ is trivial over $M$, i.e.\ $W\cong M\times[0,1]$, via a $\CAT$-isomorphism restricting to the identity on $M$, if and only if its Whitehead torsion $\tau(W,M)\in\Wh(G)$ vanishes.
		\item\label{it:s-cob2} For every $x \in\Wh(G)$ there exists an $h$-cobordism $(W;M,M')$ with $\tau(W,M)=x$.
		\item\label{it:s-cob3} The function assigning to an $h$-cobordism $(W;M,M')$ its Whitehead torsion $\tau(W,M)$ yields a bijection from the $\CAT$-isomorphism classes relative to $M$ of $h$-cobordisms over $M$ to the Whitehead group $\Wh(G)$.
	\eenum
\end{theorem}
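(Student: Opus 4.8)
The plan is to assemble the statement from the cited literature via the classical handle calculus, reducing the bijection in the third part to the first two parts; the genuine content is the first part, the $h$-cobordism theorem proper. To prove that an $h$-cobordism $(W;M,M')$ over $M$ is a product rel $M$ precisely when $\tau(W,M)=0$, I would take a handle decomposition of $W$ relative to $M$. Handle trading — cancelling $0$- and $(n{+}1)$-handles, then trading $1$-handles up (using that $\pi_1(M)\to\pi_1(W)$ is onto) and symmetrically from the top — brings $W$ into a form with handles in two consecutive middle indices $\lambda$ and $\lambda{+}1$ only. The relative cellular $\Z G$-chain complex of the universal cover is then concentrated in degrees $\lambda{+}1,\lambda$, and since $i_M\colon M\hookrightarrow W$ is a homotopy equivalence it is a based, acyclic complex $\Z G^{r}\xrightarrow{A}\Z G^{r}$ with $A\in\GL_r(\Z G)$; by \cref{defn:whitehead-torsion}, up to sign $[A]=\tau(W,M)$. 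If $\tau(W,M)=0$, then $[A]=0\in\Wh(G)$, so after stabilising, $A$ is a product of elementary matrices and matrices $\pm g$; realising each of these algebraic moves geometrically (handle slides, creation of a cancelling $(\lambda,\lambda{+}1)$-pair, change of orientation or connecting path of a handle) we may assume $A=\Id$, and the handles then cancel by the Whitney trick, leaving $W\cong M\times[0,1]$ rel $M$. For $n\ge5$ this handle calculus is classical in the smooth and PL categories, and is due to Kirby--Siebenmann \cite{Kirby-Siebenmann:1977-1} for $\CAT=\TOP$; for $n=4$, $\CAT=\TOP$ over a good group, it is the theorem of Freedman--Quinn \cite{FQ}. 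The converse is immediate, as $\tau$ is a homotopy invariant of the pair (\cref{prop:WT-homotopic-maps}) and vanishes for a product.

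For the realisation statement, given $x\in\Wh(G)$, choose $A\in\GL_r(\Z G)$ with $[A]=x$. Attach $r$ trivial $\lambda$-handles to $M\times\{1\}\subseteq M\times[0,1]$, then $r$ $(\lambda{+}1)$-handles with framed, embedded attaching spheres — possible by general position once $n\ge5$ (taking $\lambda=2$, say), and by \cite{FQ} when $n=4$ — arranged so that their algebraic intersection matrix against the belt spheres of the $\lambda$-handles is $A$. Then $\pi_1(M)\xrightarrow{\cong}\pi_1(W)$ by van Kampen, invertibility of $A$ makes $H_*(W,M;\Z G)=0$, so $W$ is an $h$-cobordism with $\tau(W,M)=[A]=x$.

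For the bijection: the map is well defined, and surjective by realisation. For injectivity let $W_i\colon M\rightsquigarrow M_i'$ have $\tau(W_i,M)=x$ ($i=0,1$). Using realisation over $M_0'$, pick $V\colon M_0'\rightsquigarrow M_0''$ with $\tau(V,M_0')=-x$ (identifications via $\pi_1(W_0)\cong G$). Stacking $h$-cobordisms adds Whitehead torsion — a consequence of \cref{prop:WT-composition} and the sum theorem for CW pairs — so $\tau(W_0\cup_{M_0'}V,M)=0$, whence $W_0\cup_{M_0'}V\cong M\times[0,1]$ rel $M$ by the first part; restricting this isomorphism to the top end we may take $M_0''=M$, i.e.\ $V\colon M_0'\rightsquigarrow M$. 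Then $\tau(V\cup_M W_1,M_0')=-x+x=0$, so $V\cup_M W_1\cong M_0'\times[0,1]$ rel $M_0'$, and therefore
\[
W_0\ \cong\ W_0\cup_{M_0'}\bigl(V\cup_M W_1\bigr)\ =\ \bigl(W_0\cup_{M_0'}V\bigr)\cup_M W_1\ \cong\ W_1 ,
\]
an isomorphism rel $M$ after tracking the pieces. The main obstacle throughout is the Whitney trick in low dimensions: it is exactly this that forces \cref{assumptions} and makes the dimension-$4$ and $5$ topological cases rest on Kirby--Siebenmann and Freedman--Quinn rather than on elementary handle theory.
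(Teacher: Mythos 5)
Your sketch follows exactly the classical handle-calculus route that the paper itself relies on: the paper does not reprove this theorem but quotes it from Smale, Barden, Mazur, Stallings, Kirby--Siebenmann and Freedman--Quinn, and for $n\ge 5$ (in all three categories, with Kirby--Siebenmann supplying TOP handle decompositions and the paper's \cref{prop:WT-composition}-style additivity giving the stacking argument for part (c)) your outline of parts (a) and (c) is the standard and correct one.

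The genuine gap is in part (b) when $n=4$, and it is precisely the gap the paper flags (following \cite[Remark~3.6]{KPR-survey}) in the classical realisation argument of Rourke--Sanderson. Your verification that $W$ is an $h$-cobordism only checks the inclusion $M\hookrightarrow W$: van Kampen plus invertibility of $A$ gives $\pi_1(M)\cong\pi_1(W)$ and $H_*(W,M;\Z G)=0$. For the other end you need $M'\hookrightarrow W$ to be a homotopy equivalence; duality gives $H_*(W,M';\Z G)=0$ once $\pi_1(M')\to\pi_1(W)$ is an isomorphism, but that isomorphism is not automatic. For $n\ge 5$ it holds because the dual handles attached along $M'$ have indices $n-1,n-2\ge 3$; for $n=4$ the dual handles have indices $2$ and $3$, and attaching the dual $2$-handles (equivalently, performing surgery on the attaching $2$-spheres of your $3$-handles inside the middle-level $4$-manifold) can change $\pi_1$ unless those embedded spheres admit geometric duals. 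Embeddedness of the attaching spheres, which is all you invoke from \cite{FQ}, does not provide this; one needs the sphere embedding theorem for good fundamental groups to produce geometric duals, which is how the argument is repaired in \cite[Theorem~3.5]{KPR-survey}. So as written your part (b) establishes the realisation statement for $n\ge 5$, but in the $4$-dimensional topological case covered by \cref{assumptions} it does not show that $(W;M,M')$ is an $h$-cobordism with $\pi_1(M')\cong G$.
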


For $n \geq 5$, see \cite[p.~90]{RS82} for a discussion of part \eqref{it:s-cob2}. 
In the case $n=4$, \cite[Theorem~3.5]{KPR-survey} gives details of the proof of part \eqref{it:s-cob2}. In this case there is an extra subtlety that does not occur for $n \geq 5$, because one has to check that $M'$ has the same fundamental group as $M$. 

Recall that 
$\mathcal{I}_n(G,w)  :=  \{ y + (-1)^{n+1}\ol{y} \mid y \in \Wh(G,w)\} \leq \Wh(G,w)$.

\begin{corollary}\label{corollary:realisation-of-I-by-hom-equivs}
Let $M$ be a closed $\CAT$ $n$-manifold with $\pi_1(M) \cong G$ and orientation character $w \colon G \rightarrow \left\{ \pm 1 \right\}$, with $\CAT$ as in \cref{assumptions}. For every $x \in \mathcal{I}_n(G,w)$ there exists a closed $\CAT$ $n$-manifold $N$ and a homotopy equivalence $f \colon N \to M$ induced by an $h$-cobordism between $M$ and $N$ such that $\tau(f) = x$. 
\end{corollary}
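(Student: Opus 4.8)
The plan is to derive this corollary directly from the $s$-cobordism theorem (\cref{theorem:scob}) together with the formula for the Whitehead torsion of the homotopy equivalence induced by an $h$-cobordism (\cref{prop:WT-hcob}\eqref{item-prop-WT-hcob-b}). First I would take an arbitrary element $x \in \mathcal{I}_n(G,w)$. By definition of $\mathcal{I}_n(G,w)$ (as recalled just before the statement, $\mathcal{I}_n(G,w) = \{y + (-1)^{n+1}\ol{y} \mid y \in \Wh(G,w)\}$, equivalently $\{-z + (-1)^n\ol{z} \mid z \in \Wh(G,w)\}$ after replacing $y$ by $-z$), there exists $z \in \Wh(G,w)$ with $x = -z + (-1)^n\ol{z}$.

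Next I would apply \cref{theorem:scob}\eqref{it:s-cob2}: since $M$ satisfies \cref{assumptions}, for the element $z \in \Wh(G) = \Wh(\pi_1(M))$ there exists an $h$-cobordism $(W;M,N)$ with $\tau(W,M) = z$. Let $f \colon N \to M$ be the homotopy equivalence induced by $W$, in the sense defined before \cref{prop:WT-hcob}. Then \cref{prop:WT-hcob}\eqref{item-prop-WT-hcob-b} gives
\[
\tau(f) = -\tau(W,M) + (-1)^n\overline{\tau(W,M)} = -z + (-1)^n\ol{z} = x \in \Wh(G,w),
\]
where throughout $\pi_1(M)$ is identified with $G = \pi_1(W)$ via the inclusion $i_M$ (using $\pi_1(i_M)$, as set up at the start of \cref{ss:realising-cobordisms}). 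This $N$ and $f$ are precisely what the corollary asserts, and $N$ is a closed $\CAT$ $n$-manifold since $W$ is a $\CAT$ $h$-cobordism.

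There is essentially no obstacle here — the statement is a formal consequence of two results already established. The only point requiring a little care is the bookkeeping of signs and involutions: one must check that the two descriptions of $\mathcal{I}_n(G,w)$ (with $y \mapsto y + (-1)^{n+1}\ol y$ versus $z \mapsto -z + (-1)^n \ol z$) agree, which is immediate since $(-1)^{n+1} = -(-1)^n$ and one substitutes $y = -z$; and that the identification of $\Wh(\pi_1(M))$ with $\Wh(G)$ used in invoking \cref{theorem:scob}\eqref{it:s-cob2} is the same one used in \cref{prop:WT-hcob}. Both are the identification induced by $\pi_1(i_M)$, so the composition is consistent. Hence the proof is a two-line deduction once these conventions are lined up.
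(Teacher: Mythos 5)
Your proposal is correct and follows essentially the same route as the paper: write $x = -y + (-1)^n\ol{y}$, realise $y$ as $\tau(W,M)$ via \cref{theorem:scob}~\eqref{it:s-cob2}, and apply \cref{prop:WT-hcob}~\eqref{item-prop-WT-hcob-b} to the induced homotopy equivalence. The extra remarks on reconciling the two descriptions of $\mathcal{I}_n(G,w)$ and the identification via $\pi_1(i_M)$ are accurate but not needed beyond what the paper already does.
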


\begin{proof}
Let $x \in \mathcal{I}_n(G,w)$ and write $x = -y + (-1)^n\ol{y}$ for some $y \in \Wh(G,w)$.  Apply \cref{theorem:scob} to obtain an $h$-cobordism $(W;M,N)$ from $M$ to some $n$-manifold $N$ with $\tau(W,M)=y$. If $f$ is the homotopy equivalence induced by $W$, then $\tau(f) = -y+ (-1)^n\ol{y} = x$ by \cref{prop:WT-hcob}~\eqref{item-prop-WT-hcob-b}.
\end{proof}

\begin{corollary} \label{cor:she+hcob-realise}
Let $M$ and $N$ be closed $\CAT$ $n$-manifolds, with $\CAT$ as in \cref{assumptions}. 
Suppose $M$ has $\pi_1(M) \cong G$ and orientation character $w \colon G \rightarrow \left\{ \pm 1 \right\}$. 
If there is a homotopy equivalence $f \colon N \rightarrow M$ such that $\tau(f) \in \mathcal{I}_n(G,w)$, then there exists a closed $\CAT$ $n$-manifold $P$ that is simple homotopy equivalent to $N$ and $h$-cobordant to $M$.
\end{corollary}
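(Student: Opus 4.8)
The plan is to combine \cref{corollary:realisation-of-I-by-hom-equivs} with the composition formula for Whitehead torsion. Since $\tau(f) \in \mathcal{I}_n(G,w)$ and $M$ satisfies \cref{assumptions}, I would first apply \cref{corollary:realisation-of-I-by-hom-equivs} to the element $\tau(f)$ to obtain a closed $\CAT$ $n$-manifold $P$ together with a homotopy equivalence $g \colon P \to M$ that is induced by an $h$-cobordism between $M$ and $P$ and satisfies $\tau(g) = \tau(f)$. By construction $P$ is $h$-cobordant to $M$, so it only remains to prove that $P \simeq_s N$.

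For this I would consider the composite homotopy equivalence $h := g^{-1} \circ f \colon N \to P$, where $g^{-1}$ is a homotopy inverse of $g$, and show that $\tau(h) = 0$; then \cref{theorem:Whitehead} gives that $h$ is simple, so $N \simeq_s P$. Write $g_* \colon \Wh(\pi_1(P)) \to \Wh(\pi_1(M))$ for the isomorphism induced by $g$; since $\Wh$ is a functor that sends inner automorphisms to the identity, $(g^{-1})_* = (g_*)^{-1}$. Applying \cref{prop:WT-composition} to $\id_P \simeq g^{-1}\circ g$ together with $\tau(\id_P) = 0$ (\cref{lem:WT-ch-comp}) yields $\tau(g^{-1}) = -(g_*)^{-1}(\tau(g))$, and then \cref{prop:WT-composition} applied to $h = g^{-1}\circ f$ gives
\[
\tau(h) = \tau(g^{-1}) + (g^{-1})_*(\tau(f)) = (g_*)^{-1}\bigl(\tau(f) - \tau(g)\bigr) = 0,
\]
using $\tau(g) = \tau(f)$. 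Hence $h$ is a simple homotopy equivalence and $P$ is the desired manifold.

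I do not expect a serious obstacle here; the content is essentially packaged in \cref{corollary:realisation-of-I-by-hom-equivs}. The only points needing care are bookkeeping: checking that the hypotheses of \cref{corollary:realisation-of-I-by-hom-equivs} are met (it requires \cref{assumptions}, which holds for $M$, and the manifold $P$ it produces inherits fundamental group $G$, hence remains good in the $n=4$ case), and keeping the identifications of $\pi_1(N)$, $\pi_1(P)$ and $\pi_1(M)$ via the relevant homotopy equivalences consistent when invoking the composition formula so that the cancellation $\tau(f)-\tau(g)=0$ takes place in a single copy of the Whitehead group.
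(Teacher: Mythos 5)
Your proposal is correct and is essentially the paper's own argument: apply \cref{corollary:realisation-of-I-by-hom-equivs} to $\tau(f)$ to produce $P$ $h$-cobordant to $M$ with induced homotopy equivalence $g \colon P \to M$ satisfying $\tau(g)=\tau(f)$, and then use \cref{prop:WT-composition} to see that $\tau(g^{-1}\circ f)=0$, so $g^{-1}\circ f \colon N \to P$ is simple. The bookkeeping you flag (identifying the Whitehead groups via $g$) is handled the same way in the paper's proof.
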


\begin{proof}
By \cref{corollary:realisation-of-I-by-hom-equivs} there is an $h$-cobordism between $M$ and some $n$-manifold $P$ such that $\tau(g) = \tau(f)$ for the induced homotopy equivalence $g \colon P \rightarrow M$. Moreover, if $g^{-1}$ denotes the homotopy inverse of $g$, then $g^{-1} \circ f \colon N \rightarrow P$ is a simple homotopy equivalence, because $\tau(g^{-1} \circ f) = \tau(g^{-1}) + g^{-1}_*(\tau(f)) = \tau(g^{-1}) + g^{-1}_*(\tau(g)) = \tau(g^{-1} \circ g) = \tau(\Id)= 0$ by \cref{prop:WT-composition}. 
\end{proof}

\subsection{Realising via the surgery exact sequence}
\label{ss:wall}

We recall the surgery exact sequence. Let $M$ be a closed $\CAT$ $n$-dimensional manifold with $\pi_1(M)=G$ and orientation character $w \colon G \to \{\pm 1\}$, for some $n \geq 4$, with $\CAT$ as in \cref{assumptions}.

\begin{definition} \label{def:hsset}
The \emph{homotopy structure set} of $M$, denoted $\mathcal{S}^{\mathrm{h}}(M)$, is by definition the set of pairs $(N,f)$, where $N$ is a closed $\CAT$ $n$-manifold and $f \colon N \rightarrow M$ is a homotopy equivalence, considered up to $h$-cobordism over $M$. That is, $[N,f] =[N',f'] \in \mathcal{S}^{\mathrm{h}}(M)$ if and only if there is an $h$-cobordism $(W;N,N')$, with inclusion maps $i \colon N \to W$ and $i' \colon N' \to W$, together with a map $F \colon W \to M$ such that $F \circ i = f$ and $F \circ i' =f'$.

We can similarly define the \textit{simple homotopy structure set} $\mathcal{S}^{\mathrm{s}}(M)$ to be the set of pairs $(N,f)$, where $N$ is a closed $\CAT$ $n$-manifold and $f \colon N \rightarrow M$ is a simple homotopy equivalence, considered up to $s$-cobordism over $M$. Such a pair $(N,f)$ also represents an element of $\mathcal{S}^{\mathrm{h}}(M)$, inducing a forgetful map $\mathcal{S}^{\mathrm{s}}(M) \rightarrow \mathcal{S}^{\mathrm{h}}(M)$.
\end{definition}

The main tool for studying the structure sets $\mathcal{S}^{\dagger}(M)$, for ${\dagger} \in \{{\mathrm{h}},{\mathrm{s}}\}$, is the Browder-Novikov-Sullivan-Wall surgery exact sequence \cite{Wall-SCM,FQ}; see also \cite[Theorem~5.12]{Luck-basic-intro-surgery}, \cite{Freedman-book-surgery-chapter}. The two versions of the sequence, and the commutative diagram formed by taking the forgetful maps between them, are as follows.

\begin{equation} \label{eq:surgery-diag}
\xymatrix{
\mathcal{N}(M \times [0,1],M \times \{0,1\}) \ar[r]^-{\sigma_{\mathrm{s}}} \ar@{=}[d] & L_{n+1}^{\mathrm{s}}(\Z G, w) \ar[r]^-{W_{\mathrm{s}}} \ar[d]_-{F} & \mathcal{S}^{\mathrm{s}}(M) \ar[r]^-{\eta_{\mathrm{s}}} \ar[d] & \mathcal{N}(M) \ar[r]^-{\sigma_{\mathrm{s}}} \ar@{=}[d] & L_n^{\mathrm{s}}(\Z G, w) \ar[d]^-{F} \\
\mathcal{N}(M \times [0,1],M \times \{0,1\}) \ar[r]^-{\sigma_{\mathrm{h}}} & L_{n+1}^{\mathrm{h}}(\Z G, w) \ar[r]^-{W_{\mathrm{h}}} & \mathcal{S}^{\mathrm{h}}(M) \ar[r]^-{\eta_{\mathrm{h}}} & \mathcal{N}(M) \ar[r]^-{\sigma_{\mathrm{h}}} & L_n^{\mathrm{h}}(\Z G, w)
}
\end{equation}

Next we explain the remaining terms and maps in these sequences.

\begin{itemize}[$\bullet$]
\item 
The sets of the normal bordism classes of degree one normal maps over $M \times [0,1]$ and $M$ are denoted
$\mathcal{N}(M \times [0,1],M \times \{0,1\})$ and $\mathcal{N}(M)$ respectively.  In the former case, the normal map is required to restrict to a $\CAT$ isomorphism over $M \times \{0,1\}$. 
These sets do not depend on the decoration~${\dagger}$. 
\item 
The groups $L_{n}^{\dagger}(\Z G, w)$ are the surgery obstruction groups. 
Elements of $L_{n}^{\mathrm{h}}(\Z G, w)$ are represented by nonsingular Hermitian forms over finitely generated free $\Z G$-modules for $n$ even, and by nonsingular formations over finitely generated free $\Z G$-modules for $n$ odd, with the involution on $\Z G$ determined by $w$. 
See e.g.~\cite{Ranicki-80-I} for the definitions. 
In the case of $L_{n}^{\mathrm{s}}(\Z G, w)$ the forms/formations are also required to be based and simple. The map $F$ is given by forgetting the bases.
\item 
The maps labelled $\sigma_{\dagger}$ are the surgery obstruction maps.  
For the definition of $\sigma_{\mathrm{s}}$, we take a degree one normal map $(f,b) \colon N \to M$, perform surgery below the middle dimension to make the map $[n/2]$-connected, and then produce the based, simple form or formation (for $n$ even or odd respectively) of the surgery kernel in the middle dimension(s), to obtain an element of $L_n^{\mathrm{s}}(\Z G,w)$. To define the map $\sigma_{\mathrm{h}}$ we perform the same procedure, and then forget the data of the bases, to obtain an element of $L_n^{\mathrm{h}}(\Z G,w)$.  One of the main theorems of surgery \cite{Wall-SCM}, \cite{Luck-basic-intro-surgery} is that the maps $\sigma_{\dagger}$ are well-defined. 
\item 
We write $\eta_{\dagger}$ for the normal invariant map. It sends $[N,f] \in \mathcal{S}^{\dagger}(M)$ to the normal bordism class of $f \colon N \rightarrow M$ covered by a suitable bundle map.
\item 
The map $W_{\dagger}$ is the Wall realisation map. Given $z \in L_{n+1}^{\dagger}(\Z G, w)$ and $[M_0,f_0] \in \mathcal{S}^{\dagger}(M)$, Wall realisation produces a new element $[M_0,f_0]^z = [M_1,f_1] \in \mathcal{S}^{\dagger}(M)$ together with a degree one normal bordism between $(M_0,f_0)$ and $(M_1,f_1)$ whose surgery obstruction equals $z$.
This determines an action of $L_{n+1}^{\dagger}(\Z G, w)$ on $\mathcal{S}^{\dagger}(M)$, and the map $W_{\dagger}$ is defined by acting on the equivalence class of the identity map, $[M,\Id_M] \in \mathcal{S}^{\dagger}(M)$.   
\end{itemize}

Each term has a natural basepoint, and the sequences are exact sequences of pointed sets. Furthermore, each nonempty fibre of $\eta_{\dagger}$ coincides with an orbit of the action of $L_{n+1}^{\dagger}(\Z G, w)$ on $\mathcal{S}^{\dagger}(M)$.

The forgetful maps $F$ fit into the following exact sequence of abelian groups:
\begin{align}\label{eq:SRR-sequence}
    &\cdots \to L_{n+1}^{\mathrm{s}}(\Z G, w) \xrightarrow{F} L_{n+1}^{\mathrm{h}}(\Z G, w) \xrightarrow{\psi}  \wh{H}^{n+1}(C_2;\Wh(G, w)) \xrightarrow{\varrho} L_n^{\mathrm{s}}(\Z G, w) \xrightarrow{F} L_n^{\mathrm{h}}(\Z G, w) \to \cdots
\end{align}

\begin{remark}
The only proof we could find in the literature for this sequence is due to Shaneson~\cite[Section~4]{Shaneson-GxZ}. There, Shaneson attributed its derivation to Rothenberg, by a different (unpublished) proof.   
\end{remark}

The homomorphism $\psi$ from~\eqref{eq:SRR-sequence} determines an action of $L^{\mathrm{h}}_{n+1}(\Z G, w)$ on $\wh{H}^{n+1}(C_2;\Wh(G, w))$, given by $x^z = x + \psi(z)$ for $x \in \wh{H}^{n+1}(C_2;\Wh(G, w))$ and $z \in L^{\mathrm{h}}_{n+1}(\Z G, w)$.
Here for the definition of the Tate group, $C_2$ acts on $\Wh(G, w)$ via the involution. Recall that $\wh{H}^{n+1}(C_2;\Wh(G, w)) \cong \mathcal{J}_n(G, w) / \mathcal{I}_n(G, w)$ and $\pi \colon \mathcal{J}_n(G, w) \to \mathcal{J}_n(G, w) / \mathcal{I}_n(G, w)$ is the quotient map. 
Define a map 
\[
\begin{aligned}
\widehat{\tau} \colon \mathcal{S}^{\mathrm{h}}(M) &\to \wh{H}^{n+1}(C_2;\Wh(G, w)) \\
[N,f] &\mapsto \pi(\tau(f)). 
\end{aligned}
\]
We check that this map is well-defined. By \cref{prop:WT-man}, the Whitehead torsion of a homotopy equivalence between manifolds lies in $\mathcal{J}_n(G, w)$. If we change the representative of $[N,f]$ we obtain an $h$-cobordism between $N$ and some $N'$ over $M$, and by \cref{prop:WT-hcob} this changes the torsion by an element of $\mathcal{I}_n(G, w)$. 
We now establish two properties of $\widehat{\tau}$.

\begin{proposition} \label{prop:psi-comm-diag}
The map $\widehat{\tau}$ is $L^{\mathrm{h}}_{n+1}(\Z G, w)$-equivariant. That is, for all $z \in L^{\mathrm{h}}_{n+1}(\Z G, w)$ and $[M_0,f_0] \in \mathcal{S}^{\mathrm{h}}(M)$, we have $\widehat{\tau}([M_0,f_0]^z) = \widehat{\tau}([M_0,f_0])^z$.
\end{proposition}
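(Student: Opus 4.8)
The plan is to deduce the equivariance by unwinding both sides: the definition of the $L^h_{n+1}(\Z G,w)$-action on $\wh{H}^{n+1}(C_2;\Wh(G,w))$ and the construction of the Wall realisation map $W_h$, thereby reducing everything to a torsion computation on the realising normal bordism. Since the action is given by $x \mapsto x + \psi(z)$, the asserted identity $\widehat{\tau}([M_0,f_0]^z) = \widehat{\tau}([M_0,f_0])^z$ is equivalent to $\widehat{\tau}([M_0,f_0]^z) = \widehat{\tau}([M_0,f_0]) + \psi(z)$. By the definition of $W_h$ there is a degree one normal bordism $F \colon (W; M_0, M_1) \to (M \times [0,1]; M\times 0, M\times 1)$ restricting to $f_0$ and $f_1$ on the two ends, where $[M_1,f_1] := [M_0,f_0]^z$, such that the surgery obstruction of $F$ rel $\partial$ equals $z \in L_{n+1}^h(\Z G, w)$. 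By \cref{prop:WT-man} we have $\tau(f_i) \in \mathcal{J}_n(G,w)$, and by definition $\widehat{\tau}([M_i,f_i]) = \pi(\tau(f_i))$, so the proposition reduces to the identity $\pi(\tau(f_1)) - \pi(\tau(f_0)) = \psi(z)$ in $\wh{H}^{n+1}(C_2;\Wh(G,w))$.

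I would prove this via the following general claim, applied with $X = M$: if $F \colon (W^{n+1}; \partial_0 W, \partial_1 W) \to (X \times [0,1]; X\times 0, X\times 1)$ is a degree one normal map whose restrictions $F_i := F|_{\partial_i W} \colon \partial_i W \to X$ are homotopy equivalences, then the surgery obstruction rel $\partial$ satisfies
\[ \psi(\sigma_h(F \text{ rel } \partial)) = \pi(\tau(F_1)) - \pi(\tau(F_0)) \in \wh{H}^{n+1}(C_2;\Wh(\pi_1 X, w)). \]
To establish this, first perform surgery on the interior of $W$ rel $\partial W$ to make $F$ an $\lfloor (n+1)/2 \rfloor$-connected map; the based $\Z[\pi_1 X]$-module chain complex $\mathcal{K}_* := C_*(\wt W, \wt{\partial_0 W})$ is then simple chain equivalent to the surgery kernel complex of $F$, is concentrated around the middle dimension, and carries the quadratic form (if $n+1$ is even) or formation (if $n+1$ is odd) whose cobordism class is $\sigma_h(F \text{ rel } \partial)$. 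By the construction of the map $\psi$ in the Ranicki--Rothenberg sequence~\eqref{eq:SRR-sequence}, $\psi(\sigma_h(F \text{ rel }\partial))$ is the image under $\pi$ of the Whitehead torsion of the Poincar\'{e} duality chain equivalence $\mathcal{K}^{n+1-*} \xrightarrow{\,\simeq\,} \mathcal{K}_*$. On the other hand, Poincar\'{e}--Lefschetz duality for $(W, \partial W)$ gives a \emph{simple} chain equivalence $C^{n+1-*}(\wt W, \wt{\partial_0 W}) \simeq_s C_*(\wt W, \wt{\partial_1 W})$ (simplicity by Kirby--Siebenmann, exactly as in the proof of \cref{prop:WT-man}). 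Combining this with the based short exact sequences relating $C_*(\wt{\partial_i W})$, $C_*(\wt W)$ and $\mathcal{K}_*$, with the equalities $\tau(F_i) = \tau(F_{i*} \colon C_*(\wt{\partial_i W}) \to C_*(\wt X))$, and with \cref{lem:WT-ch-add,lem:WT-ch-shift,lem:WT-ch-dual,lem:WT-ch-comp}, one identifies the torsion of $\mathcal{K}^{n+1-*} \to \mathcal{K}_*$ with $\tau(F_1) - (-1)^{n+1}\ol{\tau(F_0)}$ modulo $\mathcal{I}_n(\pi_1 X, w)$; since $\tau(F_0) \in \mathcal{J}_n(\pi_1 X, w)$ forces $(-1)^{n+1}\ol{\tau(F_0)} = \tau(F_0)$, applying $\pi$ yields the claim.

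The main obstacle is the torsion computation in the second paragraph. Unlike in \cref{prop:WT-man}, the normal map $F$ is \emph{not} a homotopy equivalence precisely when $z \neq 0$, so there is no torsion $\tau(F)$ to manipulate directly; the argument must instead be run on the surgery kernel complex $\mathcal{K}_*$ and its Lefschetz dual, making this step a relative, normal-map refinement of \cref{prop:WT-man} rather than a routine consequence of it. Two further points require care: one must check that the Wall realisation of $z$ on $[M_0,f_0]$ is the same local construction as on $[M_0,\id_{M_0}]$, independent of the reference map to $M$, so that the identifications $\pi_1(M_i) \cong G$ and the value of $\psi(z)$ are unambiguous; and one must track signs, gradings, and the form-versus-formation dichotomy (according to the parity of $n+1$) through the duality isomorphisms, which is exactly the bookkeeping that \cref{lem:WT-ch-shift,lem:WT-ch-dual,lem:WT-ch-add,lem:WT-ch-comp} are designed to handle.
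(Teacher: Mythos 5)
Your proposal is correct in outline, but it takes a genuinely different route from the paper. The paper never makes the realising bordism highly connected and never touches the chain-level duality of a kernel complex: for $n$ odd it simply cites Shaneson, and for $n=2q$ even it works directly with Wall's handle description of the realisation, identifies $\psi(z)$ with $\pi(x_1-x_0)$ where $x_i$ is the torsion of the splitting $H\cong L_i\oplus L_i^*$ of the formation, and then computes $x_1-x_0=(-1)^{q+1}(\tau(f_1)-\tau(f_0))$ by Milnor-style torsion bookkeeping for the triples $(\Cyl_{F_i},W_i,N)$ and $(\Cyl_{F_i},W_i,M_i)$. You instead propose a single parity-independent statement --- for a degree one normal map of the bordism restricting to homotopy equivalences on both ends, $\psi(\sigma_h(F \text{ rel } \partial))=\pi(\tau(F_1))-\pi(\tau(F_0))$ --- proved by passing to the quadratic kernel and playing its duality torsion off against Lefschetz duality of $(W;\partial_0 W,\partial_1 W)$. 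This buys uniformity in the parity of $n$ and avoids the explicit form/formation bookkeeping, at the price of importing the chain-level (Ranicki-style) description of the Rothenberg map rather than the based-forms/formations description the paper actually works with.

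Two points would need to be nailed down before this is a complete proof. First, your appeal to ``the construction of $\psi$'' as the Tate class of the torsion of $\mathcal{K}^{n+1-*}\to\mathcal{K}_*$ is the algebraic-surgery description; the sequence~\eqref{eq:SRR-sequence} in the paper is Shaneson's, defined via based forms and formations, and reconciling the two (especially in the odd-dimensional, formation case) is precisely the sort of translation the paper itself must invoke via the end of Wall's Chapter 6 --- standard, but not free. Second, the central torsion identity is asserted rather than carried out, and the auxiliary claim that $C_*(\wt W,\wt{\partial_0 W})$ is \emph{simple} chain equivalent to the surgery kernel complex is not right as stated without specifying basings: the comparison equivalence carries torsion essentially equal to $\tau(F_0)$, which is exactly why $\tau(F_0)$ survives into your final formula. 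What rescues the strategy is that the Tate class of the duality torsion is unchanged under arbitrary (not necessarily simple) equivalences of Poincar\'e complexes and under change of basis (the discrepancy has the form $t-(-1)^n\ol{t}\in\mathcal{I}_n(G,w)$ by \cref{lem:WT-ch-dual,lem:WT-ch-shift}), so you may compute on $C_*(\wt W,\wt{\partial_0 W})$ with its cell basis; but the Umkehr splittings and the bookkeeping with \cref{lem:WT-ch-add,lem:WT-ch-comp} still have to be done, and that is where essentially all of the content of the proposition lies.
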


\begin{proposition} \label{prop:braid}
There is a commutative diagram 
\[
\xymatrix@R-0.5cm{
 & \mathcal{S}^{\mathrm{h}}(M) \ar[dd]_-{\widehat{\tau}} \ar[r]^-{\eta_{\mathrm{h}}} & \mathcal{N}(M) \ar[dd]^-{\sigma_{\mathrm{s}}} \ar[dr]^-{\sigma_{\mathrm{h}}} & \\
L^{\mathrm{h}}_{n+1}(\Z G, w) \ar[ur]^-{W_{\mathrm{h}}} \ar[dr]^-{\psi} & & & L^{\mathrm{h}}_n(\Z G, w) \\
 & \wh{H}^{n+1}(C_2;\Wh(G, w)) \ar[r]^-{\varrho} & L^{\mathrm{s}}_n(\Z G, w) \ar[ur]^-{F} & 
}
\]
between the surgery exact sequence and the exact sequence~\eqref{eq:SRR-sequence}. 
\end{proposition}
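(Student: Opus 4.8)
The plan is to decompose the braid into its three constituent commutativity statements — the left-hand triangle $\widehat{\tau} \circ W_h = \psi$, the central square $\varrho \circ \widehat{\tau} = \sigma_s \circ \eta_h$, and the right-hand triangle $F \circ \sigma_s = \sigma_h$ — and verify each in turn; the exactness of the two rows (the surgery exact sequence and the sequence~\eqref{eq:SRR-sequence}) is already available, so these three identities are all that remains.

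The right-hand triangle is immediate from the definitions: for a degree one normal map $(f,b)\colon N\to M$, both $\sigma_s(f,b)$ and $\sigma_h(f,b)$ are produced by surgery below the middle dimension followed by extraction of the middle-dimensional surgery kernel as a Hermitian form ($n$ even) or formation ($n$ odd), with $\sigma_s$ retaining the preferred basis coming from the handle decomposition and $\sigma_h$ discarding it; since $F$ is exactly the map that forgets the basing, $F\circ\sigma_s = \sigma_h$. The left-hand triangle follows from \cref{prop:psi-comm-diag}: by definition $W_h(z) = [M,\Id_M]^z$, so the $L^h_{n+1}(\Z G,w)$-equivariance of $\widehat{\tau}$ together with the formula $x^z = x+\psi(z)$ for the action on $\wh{H}^{n+1}(C_2;\Wh(G,w))$ gives $\widehat{\tau}(W_h(z)) = \widehat{\tau}([M,\Id_M]) + \psi(z) = \pi(\tau(\Id_M)) + \psi(z) = \psi(z)$.

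The central square is the substantive point. I would first observe that it is compatible with the $L^h_{n+1}(\Z G,w)$-action on $\mathcal{S}^h(M)$: Wall realisation of $z$ on $[N,f]$ comes with a degree one normal bordism of underlying normal maps, so $\sigma_s\circ\eta_h$ is invariant under the action, while $\varrho\circ\widehat{\tau}$ is invariant because $\varrho\circ\psi = 0$ by exactness of~\eqref{eq:SRR-sequence} and $\widehat{\tau}$ is equivariant. Hence compatibility with the action is automatic, and the square reduces to the single assertion that, for every homotopy equivalence $f\colon N\to M$ with underlying normal map $(f,b) = \eta_h[N,f]$, one has $\sigma_s(f,b) = \varrho(\pi(\tau(f))) \in L^s_n(\Z G,w)$. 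To prove this, replace $f$ by a highly connected normal map in its bordism class by surgery below the middle dimension, and extract the middle-dimensional surgery kernel form (resp.\ formation). Since $f$ is a homotopy equivalence, this kernel is stably hyperbolic (resp.\ stably trivial), so it vanishes in $L^h_n(\Z G,w)$; the only invariant that survives in $L^s_n(\Z G,w)$ is the defect between the preferred basis and a simple one, which is $\tau(f)$. By \cref{prop:WT-man} we have $\tau(f)\in\mathcal{J}_n(G,w)$, precisely the symmetry needed for $\pi(\tau(f))$ to be defined in $\wh{H}^{n+1}(C_2;\Wh(G,w))$, and comparing with the construction of $\varrho$ in the Rothenberg--Shaneson derivation of~\eqref{eq:SRR-sequence} — where $\varrho(\pi(x))$ is represented by a based simple form (resp.\ formation) that becomes hyperbolic (resp.\ trivial) on forgetting the basis, with basing defect $x$ — yields the identity.

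The main obstacle is exactly this last comparison, i.e.\ reconciling the geometric simple surgery obstruction of a homotopy equivalence with the algebraically defined boundary map $\varrho$. It requires carefully tracking the preferred bases through the surgery kernel construction, handling the signs $(-1)^n$ and the compatibility of the duality involution on $\Wh(G,w)$ with the symmetry type of the surgery form, and treating the even-dimensional (form) and odd-dimensional (formation) cases separately; in practice it amounts to unwinding the construction of $\varrho$ in \cite[Section~4]{Shaneson-GxZ} and \cite[\S9]{Ranicki-80-I} and following Whitehead torsion through it.
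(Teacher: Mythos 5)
Your proposal is correct and follows essentially the same route as the paper: the right triangle by the definition of $\sigma_h$ as $F\circ\sigma_s$, the left triangle from \cref{prop:psi-comm-diag} together with $\widehat{\tau}([M,\Id_M])=0$, and the square by noting that for a homotopy equivalence the surgery kernel is trivial and all that survives in $L^s_n(\Z G,w)$ is the basing defect, which is then matched against Shaneson's description of $\varrho$. The only part you leave schematic---that the preferred stable basis (characterised by $\tau(\Cyl_f,N)=0$) differs from the canonical one by a transition matrix of torsion $(-1)^q\tau(f)$, the sign being harmless because $\wh{H}^{n+1}(C_2;\Wh(G,w))$ is $2$-torsion---is exactly the computation the paper's proof carries out in detail, separately in the even (forms) and odd (formations) cases.
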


\begin{remark}
The commutative diagram in \cref{prop:braid} is part of a braid that appeared as Diagram (6) in the preprint version (arXiv:1711.04546v1) of \cite{JK18}, but was removed in the published version. In the case $\CAT = \TOP$, see also Ranicki \cite[p.359--360]{Ra86}. Neither reference contains a proof that the braid commutes. 
\end{remark}

In the proofs of \cref{prop:psi-comm-diag,prop:braid} we will use Milnor's definition of Whitehead torsion, introduced in \cref{ss:milnor}. The main reason for this is that Milnor's definition applies to chain complexes with nontrivial, but stably based, homology groups, and we will need to work in this setting in the upcoming proofs.
We will assume that trivial modules are equipped with the canonical s-basis, unless we specify a different one. 

Further, in these proofs every space $X$ is equipped with a fixed map to~$M$. This allows us to take chain complexes and homology groups with $\Z G$ coefficients. That is, if $f \colon X \rightarrow M$ is the fixed map, we write $C_*(X)$ for $C_*(X;\Z G^{\theta})$, where $\theta = \pi_1(f) \colon \pi_1(X) \rightarrow \pi_1(M) = G$. Moreover, via the isomorphism $\theta_* \colon \Wh(\pi_1(X)) \rightarrow \Wh(G)$ we will regard elements of $\Wh(\pi_1(X))$ as elements of $\Wh(G)$.

\begin{proof}[Proof of \cref{prop:psi-comm-diag}]
The case of odd $n$ was proved in Shaneson \cite[Lemma 4.2]{Shaneson-GxZ}, so we assume that $n=2q$ is even. 

An element $z \in L^{\mathrm{h}}_{n+1}(\Z G, w)$ is represented by a formation $(H;L_0,L_1)$, where $H$ is a $(-1)^q$-symmetric hyperbolic form over $\Z G$ (of rank $2k$) and $L_0$ and $L_1$ are lagrangians in $H$. Let $[M_0,f_0] \in \mathcal{S}^{\mathrm{h}}(M)$. First we recall the definition of the action of $z$ on $[M_0,f_0]$. 

Consider $M_0 \times I$ and add $k$ trivial $q$-handles to $M_0 \times \{1\}$. Then we obtain a cobordism $W_0$ between $M_0$ and $N := M_0 \# k(S^q \times S^q)$. There is an isometry $H \cong H_q(\#^k S^q \times S^q)$ such that $L_0$ corresponds to the subgroup generated by the $\ast \times S^q$ 
(all homology is with $\Z G$ coefficients by default in this proof). 
Then $W_0$ can also be constructed from $N \times I$ by adding $k$ $(q+1)$-handles to $N \times \{0\}$ along a basis of $L_0$. 

Next add $k$ $(q+1)$-handles to $N \times I$ along a basis of $L_1$ in $N \times \{1\}$. This yields a cobordism~$W_1$ between~$N$ and some $M_1$, and we define $W = W_0 \cup_N W_1$. It is a cobordism between $M_0$ and $M_1$ over $M$ with the map $F \colon W \to M$, which restricts to the composition of the projection $M_0 \times I \rightarrow M_0$ and $f_0$ on $M_0 \times I$, and sends the extra handles to a point. Let 
\[
f_1 := F \big| _{M_1} \colon M_1 \to M, \quad\quad F_i := F \big| _{W_i} \colon W_i \rightarrow M\;\; (i=0,1), \quad\quad g := F \big| _N \colon N \to M.
\] 
Then $[M_0,f_0]^z = [M_1,f_1]$. 

Next we recall the definition of $\psi$. Fix a basis of $H$. On $L_i$ we consider the basis which corresponds to the gluing maps of the handles that are added to $N \times I$ to construct $W_i$. This determines a dual basis on $L_i^*$. The adjoint of the intersection form on $H$ is an isomorphism $H \rightarrow H^*$. By composing it with the dual $H^* \rightarrow L_i^*$ of the inclusion, we get a map $H \rightarrow L_i^*$ with kernel $L_i$, and hence a short exact sequence of based modules
\begin{equation}\label{eqn:short-exact-sequence-with-L-Lstar}
0 \rightarrow L_i \rightarrow H \rightarrow L_i^* \rightarrow 0.
\end{equation}
Let $x_i \in \Wh(G, w)$ be the Whitehead torsion of the short exact sequence \eqref{eqn:short-exact-sequence-with-L-Lstar}; see \cref{def:milnor-ses}.

If the basis of $H$ is given by the homology classes corresponding to the spheres in $H_q(\#^k S^q \times S^q)$, then $x_0=0$ and $\psi(z)=\pi(x_1)$ by definition; see \cite[p.\ 312]{Shaneson-GxZ} and the correspondence between the different descriptions of the odd-dimensional L-groups given at the end of \cite[Chapter~6]{Wall-SCM}. Note that \cite[p.\ 312]{Shaneson-GxZ} claims that there is a map $L^{\mathrm{h}}_{n+1}(\Z G, w) \rightarrow \Wh(\Z G, w)$, but actually the map described there is only well-defined in the quotient $\Wh(\Z G, w) / \mathcal{I}_n(G, w)$ (with image in $\mathcal{J}_n(G, w) / \mathcal{I}_n(G, w)$), because the same element of $L^{\mathrm{h}}_{n+1}(\Z G, w)$ can be represented by different automorphisms $\alpha$. This has no effect on any other arguments, as the map $\psi$ is meant to take values in $\wh{H}^{n+1}(C_2;\Wh(G, w))$ anyway. Correspondingly note that, while we defined $x_i$ using a specific basis of $L_i$, changing the basis of $L_i$ would change the value of $x_i$ by adding an element of~$\mathcal{I}_n(G, w)$. 

Now observe that changing the basis of $H$ has the same effect on $x_0$ and $x_1$, so $x_1-x_0$ is independent of the choice of basis of $H$. Therefore we have, independently of the choice of basis of $H$, that $\psi(z)=\pi(x_1-x_0) \in \wh{H}^{n+1}(C_2;\Wh(G, w))$.

Since 
\[
\widehat{\tau}([M_0,f_0]^z) = \widehat{\tau}([M_1,f_1]) = \pi(\tau(f_1)) = \pi(\tau(f_0)) + \pi(\tau(f_1)- \tau(f_0))
\]
and 
\[
\widehat{\tau}([M_0,f_0])^z = \widehat{\tau}([M_0,f_0]) + \psi(z) = \pi(\tau(f_0)) + \pi(x_1-x_0)
\]
our goal is to show that $\pi(\tau(f_1)- \tau(f_0)) = \pi(x_1-x_0)$. We will in fact prove a stronger statement, that $\tau(f_1)- \tau(f_0) = \pm(x_1-x_0)$, by relating both $x_i$ and $\tau(f_i)$ to the bordism $F_i \colon W_i \rightarrow M$.

First we consider the triple $(\Cyl_{F_i},W_i,N)$, where $\Cyl_{F_i}$ denotes the mapping cylinder of $F_i$. We compute the relative homology groups as follows. 
\begin{compactitem}[$\bullet$]
\item We have \[H_*(W_i,N) \cong H_{q+1}(W_i,N) \cong L_i,\] because $W_i$ is constructed from $N \times I$ by adding $(q+1)$-handles along $L_i$. 
\item We have \[H_*(\Cyl_{F_i},N) \cong H_{q+1}(\Cyl_{F_i},N) \cong \ker(H_q(g)) \cong H,\] because $\Cyl_{F_i} \simeq M \simeq M_0$, and we have $N = M_0 \# k(S^q \times S^q)$ by construction; 
\item
Since $W_i$ is constructed from $M_i \times I \simeq M \simeq \Cyl_{F_i}$ by adding $q$-handles,  
\[H_*(\Cyl_{F_i},W_i) \cong H_{q+1}(\Cyl_{F_i},W_i) \cong \ker(H_q(F_i)).\]
If $j_i \colon N \to W_i$ denotes the inclusion, then since $g = F_i \circ j_i$ we have that $\ker(H_q(g)) = H_q(j_i)^{-1}(\ker(H_q(F_i)))$. Since $H_q(j_i)$ is surjective, we have
\[
\ker(H_q(F_i)) = H_q(j_i)(\ker(H_q(g))) \cong \ker(H_q(g)) / \ker(H_q(j_i)) \cong H/L_i \cong L_i^*
\]
using the previous two items, and \eqref{eqn:short-exact-sequence-with-L-Lstar}. 
Hence $H_{q+1}(\Cyl_{F_i},N) \cong L_i^*$.
\end{compactitem}

So all of these homology groups are free (over $\Z G$), and we equip them with the previously chosen bases. The long exact sequence of the triple 
\begin{equation}\label{eqn:les-of-triple}
\cdots \rightarrow H_{q+2}(\Cyl_{F_i},W_i) \rightarrow H_{q+1}(W_i,N) \rightarrow H_{q+1}(\Cyl_{F_i},N) \rightarrow H_{q+1}(\Cyl_{F_i},W_i) \rightarrow H_q(W_i,N) \rightarrow \cdots 
\end{equation} 
is therefore identified with the short exact sequence \eqref{eqn:short-exact-sequence-with-L-Lstar}. If we regard \eqref{eqn:les-of-triple} as an acyclic based free chain complex, then by \cref{lem:milnor-les-ses} its Whitehead torsion is $(-1)^{3q+4} x_i = (-1)^q x_i$. Here note that the term $H \cong H_{q+1}(\Cyl_{F_i},N)$ lies in degree $3q+4$ of the long exact sequence of the triple thought of as a chain complex. 

So by \cref{thm:milnor-es} we have
\[
\tau(\Cyl_{F_i},N) = \tau(W_i,N) + \tau(\Cyl_{F_i},W_i) + (-1)^q x_i.
\]
The cobordism $W_i$ is simple homotopy equivalent to $N$ with a $(q+1)$-cell attached for each basis element of $L_i$, so \[C_*(W_i,N) \cong C_{q+1}(W_i,N) \cong L_i \cong H_{q+1}(W_i,N) \cong H_*(W_i,N).\] Moreover, the canonical isomorphism $C_{q+1}(W_i,N) \cong H_{q+1}(W_i,N)$ is compatible with the chosen bases, so by \cref{lem:milnor-same}, $\tau(W_i,N) = 0$. We also have $\Cyl_{F_i} \simeq_{\mathrm{s}} M \simeq_{\mathrm{s}} \Cyl_g$ by \cref{prop:map-cyl} (and this simple homotopy equivalence commutes with the inclusions of $N$).  So if we equip $H_*(\Cyl_g,N) \cong H_{q+1}(\Cyl_g,N) \cong \ker(H_q(g)) \cong H$ with the previously fixed basis of $H$, then by \cref{lem:milnor-she-pairs} $\tau(\Cyl_{F_i},N) = \tau(\Cyl_g,N)$. Hence
\begin{equation}\label{eq:xi}
x_i = (-1)^{q+1}(\tau(\Cyl_{F_i},W_i) - \tau(\Cyl_g,N)).
\end{equation}

Next consider the triple $(\Cyl_{F_i},W_i,M_i)$. Since $W_i$ can be obtained from $M_i \times I$ by adding $k$ trivial $q$-handles, we have $W_i \simeq_{\mathrm{s}} M_i \vee (\bigvee^k S^q)$. The basis of $C_*(W_i,M_i) \cong C_q(W_i,M_i) \cong C_q(\bigvee^k S^q)$ is given by the spheres, and we choose the corresponding basis also in $H_*(W_i,M_i) \cong H_q(W_i,M_i) \cong H_q(\bigvee^k S^q)$, so that $\tau(W_i,M_i) = 0$ by \cref{lem:milnor-same}. Since $f_i \colon M_i \rightarrow M$ is a homotopy equivalence and $\Cyl_{F_i} \simeq_{\mathrm{s}} M \simeq_{\mathrm{s}} \Cyl_{f_i}$, we have $H_*(\Cyl_{F_i},M_i) \cong H_*(\Cyl_{f_i},M_i) = 0$ and $\tau(\Cyl_{F_i},M_i) = \tau(\Cyl_{f_i},M_i) = \tau(f_i)$ (see \cref{lem:milnor-cyl,lem:milnor-she-pairs}). In the homological long exact sequence of the triple $(\Cyl_{F_i},W_i,M_i)$ all terms are trivial except for the isomorphism
\begin{equation}\label{equation:an-isomorphism}
0 \rightarrow H_{q+1}(\Cyl_{F_i},W_i) \xrightarrow{\cong} H_q(W_i,M_i) \rightarrow 0.
\end{equation}
We equipped $H_{q+1}(\Cyl_{F_i},W_i) \cong L_i^*$ with the dual of the basis of $L_i$. Since the handles added to $M_i \times I$ to obtain $W_i$ (determining the basis of $H_q(W_i,M_i)$) are the duals of the handles added to $N \times I$ to obtain $W_i$ (corresponding to the basis if $L_i$), the isomorphism~\eqref{equation:an-isomorphism} preserves the basis. Therefore its Whitehead torsion vanishes, and by \cref{lem:milnor-les-isom} the same is true when \eqref{equation:an-isomorphism} is regarded as a long exact sequence. So by \cref{thm:milnor-es} we have  
$\tau(f_i) = \tau(\Cyl_{F_i},W_i)$.

By combining this with \eqref{eq:xi} we get that 
\[
\tau(f_1)- \tau(f_0) = \tau(\Cyl_{F_1},W_1) - \tau(\Cyl_{F_0},W_0) = (-1)^{q+1}(x_1-x_0)
\] 
as required.
\end{proof}

\begin{proof}[Proof of \cref{prop:braid}]
The first triangle commutes by \cref{prop:psi-comm-diag} combined with the fact that $\widehat{\tau}([M,\Id])=0$. 
The commutativity of the last triangle follows from the commutativity of the right-hand square of diagram \eqref{eq:surgery-diag}.
 
So we need to prove that the square commutes. Let $[N,f] \in \mathcal{S}^{\mathrm{h}}(M)$, then $\eta_{\mathrm{h}}([N,f]) \in \mathcal{N}(M)$ is the normal bordism class of $f \colon N \rightarrow M$ (with an appropriate bundle map), and $\widehat{\tau}([N,f]) = \pi(\tau(f))$. We need to determine the image of these elements in $L^{\mathrm{s}}_n(\Z G, w)$.

First assume that $n=2q$ is even. Since $f$ is $q$-connected, $H_i(\Cyl_f,N) = 0$ for $i \neq q+1$, and $\sigma_{\mathrm{s}}(\eta_{\mathrm{h}}([N,f]))$ is defined as a form on $\ker(H_q(f)) \cong H_{q+1}(\Cyl_f,N)$, which is equipped with the distinguished s-basis from \cref{def:milnor-sb}, see \cite[Lemmas 2.3 and 5.1, Theorem 5.6]{Wall-SCM}. Since $f$ is a homotopy equivalence, $H_{q+1}(\Cyl_f,N)$ is trivial and $\tau(\Cyl_f,N)=\tau(f)$ by \cref{lem:milnor-cyl}. So by \cref{lem:milnor-sb-diff} the transition matrix between the canonical and the distinguished s-basis of $H_{q+1}(\Cyl_f,N)$ has Whitehead torsion $(-1)^q\tau(f)$. In $L^{\mathrm{s}}_n(\Z G, w)$ the same element $\sigma_{\mathrm{s}}(\eta_{\mathrm{h}}([N,f]))$ is also represented by a standard hyperbolic form with a basis such that the transition matrix between the standard and the chosen basis has Whitehead torsion $(-1)^q\tau(f)$. 

The image of $\pi(\tau(f))$ in $L^{\mathrm{s}}_n(\Z G, w)$ is represented by a standard hyperbolic form with any basis that has the following property: if $x$ denotes the Whitehead torsion of the transition matrix between the standard and the chosen basis, then $x \in \mathcal{J}_n(G, w)$ and $\pi(x) = \pi(\tau(f))$ (see \cite[p.\ 312]{Shaneson-GxZ}). In particular, the representative of $\sigma_{\mathrm{s}}(\eta_{\mathrm{h}}([N,f]))$ constructed above also represents $\varrho(\widehat{\tau}([N,f]))$, noting that $\pi((-1)^q\tau(f)) = \pi(\tau(f))$, because $\wh{H}^{n+1}(C_2;\Wh(G, w))$ is $2$-torsion. Hence the square commutes if $n$ is even.

Next consider the case when $n=2q+1$ is odd. Let $U$ denote a tubular neighbourhood of a disjoint union of embeddings $S^q \rightarrow N$ representing a generating set of $\ker(\pi_q(f))$ and let $N_0 = N \setminus \interior U$. We identify $H_q(\partial U)$ with the standard hyperbolic form, then $H_{q+1}(U, \partial U)$ (more precisely, its image under the boundary map) corresponds to the standard lagrangian. We can assume that $f \big| _U$ is constant, so the restrictions of $f$ determine a map of pairs $(f \big| _{N_0},f \big| _{\partial U}) \colon (N_0, \partial U) \rightarrow (M,\ast)$. Then $\ker(H_{q+1}(N_0, \partial U) \rightarrow H_{q+1}(M,\ast)) \cong H_{q+2}(\Cyl_{f|_{N_0}}, N_0 \cup \Cyl_{f|_{\partial U}})$ determines another lagrangian in $H_q(\partial U)$. We equip $H_q(\partial U)$ and $H_{q+1}(U, \partial U)$ with their standard bases, and $H_{q+2}(\Cyl_{f|_{N_0}}, N_0 \cup \Cyl_{f|_{\partial U}})$ with the distinguished s-basis from \cref{def:milnor-sb}.
Then $\sigma_{\mathrm{s}}(\eta_{\mathrm{h}}([N,f]))$ is represented by the formation $(H_q(\partial U);H_{q+1}(U, \partial U),H_{q+2}(\Cyl_{f|_{N_0}}, N_0 \cup \Cyl_{f|_{\partial U}}))$, see \cite[Section 6]{Wall-SCM}. Since $f$ is a homotopy equivalence, we can take the empty generating set for $\ker(\pi_q(f))$. Then $H_q(\partial U) = 0$, so we have the trivial formation, with the standard basis on the ambient form and on the first lagrangian, and the distinguished s-basis on the second lagrangian, $H_{q+2}(\Cyl_f, N)$. By \cref{lem:milnor-sb-diff} applied with $C_* = C_*(\Cyl_f,N)$, the transition matrix between the canonical and the distinguished s-basis has Whitehead torsion 
$(-1)^{q+1}\tau(\Cyl_f,N)$, which by \cref{lem:milnor-cyl} equals $(-1)^{q+1}\tau(f)$.

In $L^{\mathrm{s}}_n(\Z G, w)$ the same element $\sigma_{\mathrm{s}}(\eta_{\mathrm{h}}([N,f]))$ is also represented by a formation on the standard hyperbolic form given by the standard lagrangians, such that the ambient form and the first lagrangian are equipped with their standard bases, and for the second lagrangian the transition matrix between the standard and the chosen basis has Whitehead torsion $(-1)^{q+1}\tau(f)$. 

The image of $\pi(\tau(f))$ in $L^{\mathrm{s}}_n(\Z G, w)$ is represented by a formation on the standard hyperbolic form given by the standard lagrangians, such that the ambient form and the first lagrangian are equipped with their standard bases, and if $x$ denotes the Whitehead torsion of the transition matrix between the standard and the chosen basis of the second lagrangian, then $x \in \mathcal{J}_n(G, w)$ and $\pi(x) = \pi(\tau(f))$ (see \cite[p.\ 310]{Shaneson-GxZ}). Again we use that we can ignore signs because $\wh{H}^{n+1}(C_2;\Wh(G, w))$ is $2$-torsion. So the representative of $\sigma_{\mathrm{s}}(\eta_{\mathrm{h}}([N,f]))$ constructed above also represents $\varrho(\widehat{\tau}([N,f]))$, showing that the square also commutes if $n$ is odd.
\end{proof}

\begin{corollary}
Let $x \in \wh{H}^{n+1}(C_2;\Wh(G, w))$. If $x \in \image \psi$, then there is a $\CAT$ $n$-manifold $N$ and a homotopy equivalence $f \colon N \to M$ such that $\pi(\tau(f)) = x$.
\end{corollary}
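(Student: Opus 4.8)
The statement is an essentially immediate consequence of \cref{cor:braid}, so the plan is simply to assemble the pieces. Recall that the map $\widehat{\tau} \colon \mathcal{S}^h(M) \to \wh{H}^{n+1}(C_2;\Wh(G,w))$ is defined by $[N,f] \mapsto \pi(\tau(f))$, and that it is well-defined and $L^h_{n+1}(\Z G,w)$-equivariant by the discussion preceding \cref{prop:psi-comm-diag}. The only real content to invoke is the identification $\image \widehat{\tau} = \varrho^{-1}(\image \sigma_s)$ from \cref{cor:braid}~\eqref{item:cor-braid-a}, together with the inclusion $\image \psi \subseteq \varrho^{-1}(\image \sigma_s)$ from \cref{cor:braid}~\eqref{item:cor-braid-b}.

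Concretely: suppose $x \in \wh{H}^{n+1}(C_2;\Wh(G,w))$ lies in $\image \psi$. Combining the two parts of \cref{cor:braid} gives
\[
\image \psi \subseteq \varrho^{-1}(\image \sigma_s) = \image \widehat{\tau},
\]
so there exists a class $[N,f] \in \mathcal{S}^h(M)$ with $\widehat{\tau}([N,f]) = x$. Unwinding the definition of $\widehat{\tau}$, this means that $N$ is a closed $\CAT$ $n$-manifold, $f \colon N \to M$ is a homotopy equivalence (since elements of $\mathcal{S}^h(M)$ are represented by such), and $\pi(\tau(f)) = x$, as required. Note that \cref{assumptions} is in force throughout \cref{section:realisation}, so the surgery exact sequence and the sequence~\eqref{eq:SRR-sequence} — hence \cref{cor:braid} — are available in the relevant dimension and category.

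There is no genuine obstacle here: all the work has already been done in establishing \cref{prop:psi-comm-diag}, \cref{prop:braid}, and \cref{cor:braid}. The only point that deserves a moment's care is the bookkeeping that $\widehat{\tau}([N,f]) = \pi(\tau(f))$ produces a homotopy equivalence $f$ (not merely a normal map), which is immediate from the definition of the homotopy structure set $\mathcal{S}^h(M)$, and that $\mathcal{S}^h(M)$ is non-empty (it contains $[M,\Id_M]$) so the image of $\widehat{\tau}$ is a well-defined subset in which $x$ indeed sits.
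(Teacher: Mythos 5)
Your proposal is correct and follows exactly the paper's own argument: the paper deduces the corollary in one line from \cref{cor:braid}, using $\image \psi \subseteq \varrho^{-1}(\image \sigma_s) = \image \widehat{\tau}$ and unwinding the definition of $\widehat{\tau}$, just as you do. The extra remarks on well-definedness and non-emptiness are harmless but not needed.
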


\begin{proof}
If $x = \psi(z)$ for some $z \in L^{\mathrm{h}}_{n+1}(\Z G, w)$, then let $[N,f] := W_{\mathrm{h}}(z)$. By \cref{prop:psi-comm-diag} we have $\pi(\tau(f)) = \widehat{\tau}([N,f]) = \widehat{\tau}(W_{\mathrm{h}}(z)) = \psi(z) = x$.
\end{proof}

\begin{corollary} \label{cor:im-psi}
$\image \psi \subseteq \varrho^{-1}(\image \sigma_{\mathrm{s}})$.
\end{corollary}

\begin{proof}
By the exactness of the sequence \eqref{eq:SRR-sequence}, and since $\Id_M$ has vanishing surgery obstruction, $\image \psi = \ker \varrho \subseteq \varrho^{-1}(\image \sigma_{\mathrm{s}})$.
\end{proof}

\begin{corollary} \label{cor:braid} 
$\image \widehat{\tau} = \varrho^{-1}(\image \sigma_{\mathrm{s}})$.
\end{corollary}

\begin{proof}
The commutativity of the square in \cref{prop:braid} implies that $\image \widehat{\tau} \subseteq \varrho^{-1}(\image \sigma_{\mathrm{s}})$. For the other direction assume that $x \in \wh{H}^{n+1}(C_2;\Wh(G, w))$ and $\varrho(x)=\sigma_{\mathrm{s}}(y)$ for some $y \in \mathcal{N}(M)$. Then $\sigma_{\mathrm{h}}(y) = F \circ \sigma_{\mathrm{s}}(y) = F \circ \varrho(x) = 0$, so $y = \eta_{\mathrm{h}}(v)$ for some $v \in \mathcal{S}^{\mathrm{h}}(M)$. Then we have $\varrho(x-\widehat{\tau}(v)) = \varrho(x)-\sigma_{\mathrm{s}}(\eta_{\mathrm{h}}(v)) = \varrho(x)-\sigma_{\mathrm{s}}(y) = 0$, so $x - \widehat{\tau}(v) = \psi(z)$ for some $z \in L^{\mathrm{h}}_{n+1}(\Z G, w)$. So by \cref{prop:psi-comm-diag} $x = \widehat{\tau}(v)^z = \widehat{\tau}(v^z) \in \image \widehat{\tau}$.
\end{proof}

\begin{remark} \label{rem:real}
There is a common generalisation of the two realisation techniques used in \cref{section:realisation}.
Let $L_{n+1}^{\mathrm{s},\tau}(\Z G, w)$ denote the surgery obstruction groups defined in \cite[Section 4]{Kreck-preprint}. Similarly to $L_{n+1}^{\mathrm{s}}(\Z G, w)$, the elements of $L_{n+1}^{\mathrm{s},\tau}(\Z G, w)$ are represented by based nonsingular forms/formations, but, unlike the elements of $L_{n+1}^{\mathrm{s}}(\Z G, w)$, they are not assumed to be simple. Thus there is a natural forgetful map $L_{n+1}^{\mathrm{s},\tau}(\Z G, w) \to L_{n+1}^{\mathrm{h}}(\Z G, w)$ and $L_{n+1}^{\mathrm{s}}(\Z G, w)$ is the kernel of a map $L_{n+1}^{\mathrm{s},\tau}(\Z G, w) \to \mathcal{J}_n(G, w) \leq \Wh(G, w)$ (see \cite{Kreck-preprint}). There is also an action of $\Wh(G, w)$ on $L_{n+1}^{\mathrm{s},\tau}(\Z G, w)$ (which determines a map $\Wh(G, w) \to L_{n+1}^{\mathrm{s},\tau}(\Z G, w)$ by acting on $0$) given by changing the basis; this action is transitive on the fibres of the map $L_{n+1}^{\mathrm{s},\tau}(\Z G, w) \to L_{n+1}^{\mathrm{h}}(\Z G, w)$. 

Wall's construction for realising elements of $L_{n+1}^{\mathrm{s}}(\Z G, w)$ ~\cite{Wall-SCM} can also be applied to the group $L_{n+1}^{\mathrm{s},\tau}(\Z G, w)$, but it will only produce homotopy equivalences, not simple homotopy equivalences. 
Thus there is a map \[W_{\mathrm{s},\tau} \colon L_{n+1}^{\mathrm{s},\tau}(\Z G, w) \rightarrow \mathcal{S}^{\mathrm{h}}_{\mathrm{sCob}}(M),\] where $\mathcal{S}^{\mathrm{h}}_{\mathrm{sCob}}(M)$ is the set of homotopy equivalences $f \colon N \rightarrow M$ (as in $\mathcal{S}^{\mathrm{h}}(M)$), considered up to $s$-cobordism over $M$ (as in $\mathcal{S}^{\mathrm{s}}(M)$).
The Whitehead torsion of the resulting homotopy equivalence is given by the previously mentioned map $L_{n+1}^{\mathrm{s},\tau}(\Z G, w) \to \mathcal{J}_n(G,w)$.

The realisation part of the $s$-cobordism theorem determines a map \[W_{\Wh} \colon \Wh(G, w) \to \mathcal{S}^{\mathrm{hCob}}_{\mathrm{sCob}}(M),\] where $\mathcal{S}^{\mathrm{hCob}}_{\mathrm{sCob}}(M)$ is the set of $h$-cobordisms $(W;M,N)$ considered up to $s$-cobordism rel.\ $M$. This is a special case of the above construction via the maps $\Wh(G, w) \to L_{n+1}^{\mathrm{s},\tau}(\Z G, w)$ and $\mathcal{S}^{\mathrm{hCob}}_{\mathrm{sCob}}(M) \rightarrow \mathcal{S}^{\mathrm{h}}_{\mathrm{sCob}}(M)$. The Whitehead torsion of the homotopy equivalence induced by the $h$-cobordism we get is of course in $\mathcal{I}_{n+1}(G,w)$.
The realisation of $L_{n+1}^{\mathrm{h}}(\Z G, w)$, and the map $W_{\mathrm{h}}$, 
can also be defined using $W_{\mathrm{s},\tau}$, 
by first choosing a basis for the form/formation representing an element of $L_{n+1}^{\mathrm{h}}(\Z G, w)$, equivalently, choosing a lift in  $L_{n+1}^{\mathrm{s},\tau}(\Z G, w)$. As the choice of basis is not unique, we obtain a homotopy equivalence only up to $h$-cobordism (i.e.\ an element of $\mathcal{S}^{\mathrm{h}}(M)$), and its Whitehead torsion (the value of $\widehat{\tau}$) is only well-defined in the quotient $\wh{H}^{n+1}(C_2;\Wh(G, w))$. 
See \cref{rem:diag} for a diagram summarising the different realisation maps, and their relationships to the simple homotopy manifold sets.
\end{remark}

\section{Simple homotopy manifold sets}
\label{s:manifold-sets}

In this section we prove \cref{thmx:bijections-with-manifold-sets-intro} about the characterisation of simple homotopy manifold sets. First we look at the analogous problem in the setting of CW complexes, which has a simpler answer. Then we consider the case of manifolds, which will rely on the results of \cref{section:realisation}.

\subsection{CW complexes} \label{ss:CW-complexes}

Fix a (finite, connected) CW complex $X$ and let $G = \pi_1(X)$. Our goal is to understand the set $\CC^{\mathrm{h}}_{\mathrm{s}}(X)$ defined below.

\begin{definition}
Let 
$
\CC^{\mathrm{h}}_{\mathrm{s}}(X)  :=  \left\{ \text{CW complexes } Y \mid Y \simeq X \right\} / \simeq_{\mathrm{s}}.
$
\end{definition}

Note that the representatives of elements of $\CC^{\mathrm{h}}_{\mathrm{s}}(X)$ consist only of CW complexes $Y$, and do not include chosen homotopy equivalences $Y \rightarrow X$.

We now need some auxiliary definitions.

\begin{definition}
If $Y$ is a CW complex homotopy equivalent to $X$, then let 
\[
t_X(Y) = \left\{ \tau(f) \mid f \colon Y \to X \text{ is a homotopy equivalence} \right\} \subseteq \Wh(G).
\]
\end{definition}

\begin{definition} \label{def:haut-act}
For $x \in \Wh(G)$ and $g \in \hAut(X)$ let $x^g = g_*(x) + \tau(g)$.
\end{definition}

Clearly $x^{\id} = x$ and $x^{g \circ g'} = (x^{g'})^g$ (see \cref{prop:WT-composition}), so this defines an action of $\hAut(X)$ on the set $\Wh(G)$.
With this notation we have the following theorem.

\begin{theorem} \label{theorem:chs-bij}
The map $t_X$ induces a well-defined bijection \[\widetilde{t}_X \colon \CC^{\mathrm{h}}_{\mathrm{s}}(X) \to \Wh(G) / \hAut(X).\] 
\end{theorem}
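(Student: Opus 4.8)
The plan is to reduce everything to one structural observation: for every finite CW complex $Y$ with $Y \simeq X$, the set $t_X(Y) \subseteq \Wh(G)$ is precisely a single orbit of the $\hAut(X)$-action of \cref{def:haut-act}. To see this I would fix a homotopy equivalence $f_0 \colon Y \to X$ (one exists since $Y \simeq X$) together with a homotopy inverse $f_0^{-1}$. For an arbitrary homotopy equivalence $f \colon Y \to X$, the self-map $g \coloneqq f \circ f_0^{-1}$ lies in $\hAut(X)$ and satisfies $g \circ f_0 \simeq f$, so \cref{prop:WT-composition} gives $\tau(f) = \tau(g \circ f_0) = \tau(g) + g_*(\tau(f_0)) = \tau(f_0)^g$; conversely, for each $g \in \hAut(X)$ the composite $g \circ f_0$ is a homotopy equivalence $Y \to X$ of torsion $\tau(f_0)^g$. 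Hence $t_X(Y) = \{\tau(f_0)^g \mid g \in \hAut(X)\}$ is the $\hAut(X)$-orbit of $\tau(f_0)$; it is nonempty and a single orbit, so $\widetilde{t}_X([Y]) \coloneqq t_X(Y)$ genuinely defines a point of $\Wh(G)/\hAut(X)$.

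Next I would check that $\widetilde{t}_X$ is well defined on simple homotopy classes. If $h \colon Y' \to Y$ is a simple homotopy equivalence then $\tau(h) = 0$, and applying \cref{prop:WT-composition} to $h^{-1} \circ h \simeq \id$ shows $\tau(h^{-1}) = 0$ as well. For any homotopy equivalence $f \colon Y \to X$ we then get $\tau(f \circ h) = \tau(f) + f_*(\tau(h)) = \tau(f)$, so $t_X(Y) \subseteq t_X(Y')$; running the same argument with $h^{-1}$ in place of $h$ gives the reverse inclusion. Thus $t_X(Y) = t_X(Y')$ whenever $Y \simeq_s Y'$, which is exactly what well-definedness requires.

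Injectivity is then formal. If $\widetilde{t}_X([Y]) = \widetilde{t}_X([Y_1])$ then $t_X(Y) = t_X(Y_1)$ as subsets of $\Wh(G)$, and since both are nonempty there are homotopy equivalences $f \colon Y \to X$ and $f_1 \colon Y_1 \to X$ with $\tau(f) = \tau(f_1)$. Fixing a homotopy inverse $f_1^{-1}$, \cref{prop:WT-composition} applied to $f_1^{-1} \circ f_1 \simeq \id$ gives $\tau(f_1^{-1}) = -(f_1^{-1})_*(\tau(f_1))$, so the homotopy equivalence $f_1^{-1} \circ f \colon Y \to Y_1$ has $\tau(f_1^{-1} \circ f) = \tau(f_1^{-1}) + (f_1^{-1})_*(\tau(f)) = (f_1^{-1})_*(\tau(f) - \tau(f_1)) = 0$. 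By \cref{theorem:Whitehead} it is a simple homotopy equivalence, so $[Y] = [Y_1]$ in $\CC^h_s(X)$.

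Surjectivity is the only step needing external input, namely the classical realisation theorem for Whitehead torsion (Cohen \cite{Co73}; see also Milnor \cite{Mi66}). Given $x \in \Wh(G)$, represent $-x$ by an invertible matrix over $\Z G$ and build a finite CW complex $L \supseteq X$ by wedging on a suitable number of high-dimensional spheres and attaching the same number of cells of one dimension higher along that matrix; the inclusion $j \colon X \hookrightarrow L$ is then a homotopy equivalence with $\tau(j) = -x$. As $L$ is obtained from $X$ by attaching cells of dimension $\ge 2$, the map $j$ induces an isomorphism (the identity, under the standard identification) on $\pi_1$ and hence $j_* = \id$ on $\Wh(G)$, so any homotopy inverse $r \colon L \to X$ satisfies $\tau(r) = -r_*(\tau(j)) = x$. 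Therefore $x \in t_X(L)$, so $\widetilde{t}_X([L])$ is the orbit of $x$; since $x$ was arbitrary, $\widetilde{t}_X$ is onto. I expect this realisation construction to be the only non-routine ingredient — everything else is bookkeeping with \cref{prop:WT-composition} and \cref{theorem:Whitehead}.
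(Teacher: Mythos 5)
Your proof is correct and follows essentially the same route as the paper: show $t_X(Y)$ is a single $\hAut(X)$-orbit, that it is invariant under simple homotopy equivalence, that equal orbits force a simple homotopy equivalence via $f_1^{-1}\circ f$, and that surjectivity comes from realising arbitrary torsion by a homotopy equivalence onto $X$. The only difference is that you sketch the realisation construction explicitly, whereas the paper simply cites Cohen~\cite[(24.1)]{Co73} for that step.
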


The proof will consist of the following sequence of lemmas.

\begin{lemma} \label{lem:sh-class1}
If $Y$ is a CW complex homotopy equivalent to $X$, then $t_X(Y)$ is an orbit of the action of $\hAut(X)$ on $\Wh(G)$.
\end{lemma}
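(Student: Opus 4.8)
The plan is to show that $t_X(Y)$ is precisely a single $\hAut(X)$-orbit by proving two inclusions. Since $Y \simeq X$, the set $t_X(Y)$ is nonempty, so we may fix a homotopy equivalence $f_0 \colon Y \to X$ and set $x_0 \coloneqq \tau(f_0) \in \Wh(G)$. The whole argument is then a formal consequence of the well-definedness of Whitehead torsion on homotopy classes of homotopy equivalences (\cref{prop:WT-homotopic-maps}) and the composition formula $\tau(g \circ f) = \tau(g) + g_*(\tau(f))$ (\cref{prop:WT-composition}), together with the definition of the $\hAut(X)$-action $x^g = g_*(x) + \tau(g)$.

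For the inclusion $t_X(Y) \subseteq \{x_0^g \mid g \in \hAut(X)\}$, let $f \colon Y \to X$ be an arbitrary homotopy equivalence and let $f_0^{-1} \colon X \to Y$ be a homotopy inverse of $f_0$. Then $g \coloneqq f \circ f_0^{-1} \colon X \to X$ is a homotopy self-equivalence, hence defines an element of $\hAut(X)$, and $f \simeq g \circ f_0$. By \cref{prop:WT-homotopic-maps,prop:WT-composition} we obtain $\tau(f) = \tau(g \circ f_0) = \tau(g) + g_*(\tau(f_0)) = x_0^g$, so $\tau(f)$ lies in the orbit of $x_0$. For the reverse inclusion, given any $g \in \hAut(X)$, the composite $g \circ f_0 \colon Y \to X$ is again a homotopy equivalence, and $\tau(g \circ f_0) = \tau(g) + g_*(\tau(f_0)) = x_0^g$ by \cref{prop:WT-composition}; hence every element of the orbit of $x_0$ is realised as the Whitehead torsion of some homotopy equivalence $Y \to X$, i.e.\ lies in $t_X(Y)$.

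Combining the two inclusions shows that $t_X(Y)$ equals the $\hAut(X)$-orbit of $x_0$, which is the claim. I do not expect a genuine obstacle in this lemma: the only slightly delicate point is making sure that $f_0^{-1}$ can be chosen so that $f \circ f_0^{-1}$ genuinely represents a class in $\hAut(X)$ and that the torsion of this class is independent of the choice of homotopy inverse, but this is immediate from \cref{prop:WT-homotopic-maps}. The substantive content has already been set up in the preliminaries; this statement is essentially bookkeeping that packages it for use in the proof of \cref{theorem:chs-bij}.
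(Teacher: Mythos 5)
Your proof is correct and follows essentially the same route as the paper: both arguments rest on the composition formula $\tau(g \circ f) = \tau(g) + g_*(\tau(f))$ and the fact that any two homotopy equivalences $Y \to X$ differ by a homotopy automorphism of $X$, yielding the two inclusions between $t_X(Y)$ and a single orbit. The only cosmetic difference is that you fix a basepoint $f_0$ and phrase it as equality with the orbit of $\tau(f_0)$, whereas the paper argues directly that $t_X(Y)$ is closed under the action and that any two of its elements lie in the same orbit.
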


\begin{proof}
By \cref{prop:WT-composition} we have $\tau(g \circ f) = \tau(f)^g$ for every homotopy equivalence $f \colon Y \to X$ and $g \in \hAut(X)$. If $f \colon Y \to X$ is a homotopy equivalence and $g \in \hAut(X)$, then $g \circ f$ is also a homotopy equivalence, so if $x \in t_X(Y)$, then $x^g \in t_X(Y)$ for every $g$. On the other hand, if $f,f' \colon Y \to X$ are homotopy equivalences, then there is a $g \in \hAut(X)$ such that $f' \simeq g \circ f$, showing that if $x,x' \in t_X(Y)$, then $x'=x^g$ for some $g$.
\end{proof}

\begin{lemma} \label{lem:sh-class2}
If $Y$ is a CW complex homotopy equivalent to $X$ and $Y \simeq_{\mathrm{s}} Z$, then $t_X(Y) = t_X(Z)$. 
\end{lemma}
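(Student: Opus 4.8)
The plan is to exploit the composition formula for Whitehead torsion (Proposition~\ref{prop:WT-composition}) together with Whitehead's characterisation of simple homotopy equivalences (Theorem~\ref{theorem:Whitehead}). The underlying mechanism is simple: if $h \colon Y \to Z$ is a simple homotopy equivalence, then $\tau(h) = 0$, and so does any homotopy inverse of $h$; composing an arbitrary homotopy equivalence $f \colon Y \to X$ with $h^{-1}$ therefore changes its torsion by $f_*(\tau(h^{-1})) = 0$, so that every torsion value realised on $Y$ is also realised on $Z$.

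First I would record the elementary observation that a homotopy inverse $h^{-1} \colon Z \to Y$ of a simple homotopy equivalence $h \colon Y \to Z$ is again simple. Indeed, $h^{-1} \circ h \simeq \id_Y$, so by homotopy invariance of Whitehead torsion and Proposition~\ref{prop:WT-composition} we get $0 = \tau(\id_Y) = \tau(h^{-1}\circ h) = \tau(h^{-1}) + h^{-1}_*(\tau(h)) = \tau(h^{-1})$, using $\tau(h) = 0$ and the fact that $\id_Y$ is trivially simple; hence $h^{-1}$ is simple by Theorem~\ref{theorem:Whitehead}. Then, given $x \in t_X(Y)$ realised by a homotopy equivalence $f \colon Y \to X$ with $\tau(f) = x$, the composite $f \circ h^{-1} \colon Z \to X$ is a homotopy equivalence, and Proposition~\ref{prop:WT-composition} gives $\tau(f \circ h^{-1}) = \tau(f) + f_*(\tau(h^{-1})) = x$, so $x \in t_X(Z)$. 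This shows $t_X(Y) \subseteq t_X(Z)$. Running the same argument with the simple homotopy equivalence $h^{-1} \colon Z \to Y$ in place of $h$ yields the reverse inclusion $t_X(Z) \subseteq t_X(Y)$, and hence $t_X(Y) = t_X(Z)$.

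The argument is entirely formal once the composition formula is available, so I do not anticipate any genuine obstacle. The only point requiring a moment's care is the preliminary step that a homotopy inverse of a simple homotopy equivalence is again simple, which is what legitimises the symmetry between the two inclusions.
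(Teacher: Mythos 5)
Your proof is correct and follows essentially the same route as the paper: compose a realising homotopy equivalence $f \colon Y \to X$ with a simple homotopy equivalence into $Y$ and apply \cref{prop:WT-composition}. The only cosmetic difference is that the paper takes the simple homotopy equivalence directly in the direction $Z \to Y$ and so skips your (correct) preliminary step that the homotopy inverse of a simple homotopy equivalence is again simple.
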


\begin{proof}
Let $h \colon Z \to Y$ be a simple homotopy equivalence. If $x \in t_X(Y)$, i.e.\ $x = \tau(f)$ for some homotopy equivalence $f \colon Y \to X$, then $f \circ h \colon Z \to X$ is a homotopy equivalence with $\tau(f \circ h) = \tau(f) = x$ by \cref{prop:WT-composition}. This shows that $t_X(Y) \subseteq t_X(Z)$. We get similarly that $t_X(Z) \subseteq t_X(Y)$, therefore $t_X(Y) = t_X(Z)$.
\end{proof}

\begin{lemma} \label{lem:sh-class3}
If $Y$ and $Z$ are CW complexes homotopy equivalent to $X$ and $t_X(Y) = t_X(Z)$, then $Y \simeq_{\mathrm{s}} Z$. 
\end{lemma}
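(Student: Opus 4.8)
The plan is to mimic the argument already used in the proof of \cref{cor:she+hcob-realise}: produce an explicit simple homotopy equivalence $Y \to Z$ by composing a chosen homotopy equivalence $Y \to X$ with a homotopy inverse of a suitably chosen homotopy equivalence $Z \to X$.

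Concretely, first I would fix any homotopy equivalence $f \colon Y \to X$. Its Whitehead torsion $\tau(f)$ lies in $t_X(Y)$, which by hypothesis equals $t_X(Z)$, so by the definition of $t_X(Z)$ there is a homotopy equivalence $g \colon Z \to X$ with $\tau(g) = \tau(f)$. Let $g^{-1} \colon X \to Z$ be a homotopy inverse of $g$ (well-defined up to homotopy, and $\tau$ is a homotopy invariant of homotopy equivalences by \cref{prop:WT-homotopic-maps}), and set $h \coloneqq g^{-1} \circ f \colon Y \to Z$. Then I would compute $\tau(h)$ using \cref{prop:WT-composition}: since $g^{-1} \circ g \simeq \id_X$ we have $0 = \tau(\id_X) = \tau(g^{-1} \circ g) = \tau(g^{-1}) + g^{-1}_*(\tau(g))$, hence
\[
\tau(h) = \tau(g^{-1} \circ f) = \tau(g^{-1}) + g^{-1}_*(\tau(f)) = g^{-1}_*\bigl(\tau(f) - \tau(g)\bigr) = 0,
\]
using $\tau(f) = \tau(g)$. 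By \cref{theorem:Whitehead}, $h$ is a simple homotopy equivalence, so $Y \simeq_s Z$, as required.

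There is essentially no serious obstacle here: the only points needing care are that $\tau(g^{-1})$ is unambiguously defined (homotopy inverses are unique up to homotopy, and $\tau$ is invariant under homotopy of homotopy equivalences), and that the composition formula \cref{prop:WT-composition} applies to the relevant triple of finite CW complexes. Both are already established in \cref{section:prelims}. The symmetry $t_X(Y) = t_X(Z)$ in the hypothesis is what lets us choose $g$ with $\tau(g) = \tau(f)$; one does not even need to use that $t_X(Y)$ is a full $\hAut(X)$-orbit (\cref{lem:sh-class1}) for this direction. Together with \cref{lem:sh-class1} and \cref{lem:sh-class2} this lemma shows that $\widetilde{t}_X$ is well-defined and injective, and surjectivity is immediate since every $x \in \Wh(G)$ is realised as $\tau(f)$ for some homotopy equivalence $f \colon Y \to X$ (e.g.\ by the mapping-cylinder/algebraic realisation of Whitehead torsion over finite complexes), completing the proof of \cref{theorem:chs-bij}.
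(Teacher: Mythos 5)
Your proof is correct and is essentially the same argument as the paper's: pick homotopy equivalences to $X$ from each complex with equal Whitehead torsion, and use \cref{prop:WT-composition} together with $\tau(\Id)=0$ to show the composite with a homotopy inverse has vanishing torsion (the paper composes $f^{-1}\circ f'\colon Z\to Y$ rather than $g^{-1}\circ f\colon Y\to Z$, which is an immaterial difference). No gaps.
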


\begin{proof}
Let $x \in t_X(Y) = t_X(Z)$ be an arbitrary element, then there are homotopy equivalences $f \colon Y \to X$ and $f' \colon Z \to X$ with $\tau(f) = \tau(f') = x$. Let $f^{-1} \colon X \to Y$ denote the homotopy inverse of $f$, then by \cref{prop:WT-composition} $0 = \tau(\id_Y) = \tau(f^{-1}) + f^{-1}_*(\tau(f)) = \tau(f^{-1}) + f^{-1}_*(x)$. Hence we have $\tau(f^{-1} \circ f') = \tau(f^{-1}) + f^{-1}_*(\tau(f')) = \tau(f^{-1}) + f^{-1}_*(x) = 0$, showing that $f^{-1} \circ f' \colon Z \to Y$ is a simple homotopy equivalence.
\end{proof}

\begin{lemma}[{\cite[(24.1)]{Co73}}] \label{lem:sh-class4}
For every $x \in \Wh(G)$ there is a CW complex $Y$ and a homotopy equivalence $f \colon Y \rightarrow X$ such that $\tau(f) = x$.
\qed
\end{lemma}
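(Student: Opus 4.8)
The plan is to realise $-x$ as the Whitehead torsion of the inclusion of $X$ into a suitable larger finite CW complex $Y$, and then let $f$ be a homotopy inverse of that inclusion. This is the standard realisation theorem for Whitehead torsion, and one may simply quote \cite[(24.1)]{Co73}; below I indicate the construction in case a self-contained argument is wanted.

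Write $G = \pi_1(X)$ and represent $-x \in \Wh(G) = K_1(\Z G)/{\pm G}$ by an invertible matrix $A \in \GL_k(\Z G)$ (enlarging $k$ if necessary). Fix an even integer $r \geq \max(\dim X + 1, 3)$, and let $X'$ be obtained from $X$ by attaching $k$ trivial $r$-cells, so that $X' \simeq X \vee \bigvee^k S^r$; since the attaching maps are constant, an analysis of the universal cover (relative Hurewicz, together with the fact that the boundary maps $\pi_r(\wt{X'},\wt X) \to \pi_{r-1}(\wt X)$ vanish) shows that the $k$ new spheres $s_1,\dots,s_k$ span a free $\Z G$-summand of $\pi_r(X')$. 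Now attach $k$ cells of dimension $r+1$ to $X'$ along the maps $\varphi_j = \sum_{i=1}^k A_{ij}\,s_i \in \pi_r(X')$ and call the result $Y$; then $X \subseteq Y$ is a subcomplex, $\pi_1(Y) \cong G$ (as $r \geq 3$), and the relative cellular chain complex of universal covers $C_*(\wt Y, \wt X)$ is concentrated in degrees $r, r+1$ (as $r > \dim X$), equal there to the based acyclic complex $\Z G^k \xrightarrow{A} \Z G^k$. Hence the inclusion $\iota \colon X \hookrightarrow Y$ is a homotopy equivalence with $\tau(\iota) = (-1)^r[A] = [A] = -x$, after identifying $\Wh(\pi_1 Y)$ with $\Wh(G)$ via $\iota_*$. (If the sign convention in force yields $(-1)^{r+1}[A]$, take $r$ odd instead.)

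Finally, let $f \colon Y \to X$ be a homotopy inverse of $\iota$. Since $\pi_1(\iota)$ is an isomorphism, $f_*$ is inverse to $\iota_*$, so under the identification above $f_* = \id$; by \cref{prop:WT-composition} we get $0 = \tau(\id_X) = \tau(f\circ\iota) = \tau(f) + f_*(\tau(\iota)) = \tau(f) - x$, i.e.\ $\tau(f) = x$. The only genuine subtleties are the choice of $r$ large and $\geq 3$ (to ensure $X$ is a subcomplex of $Y$ with unchanged $\pi_1$ and that the top attaching maps can be read off from $A$) and the bookkeeping of signs, both dealt with above; everything else is the cited realisation theorem, so I expect no serious obstacle.
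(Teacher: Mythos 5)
Your construction is correct and is exactly the standard realisation argument behind the result the paper simply cites as \cite[(24.1)]{Co73} (attach $k$ trivial $r$-cells and then $k$ cells of dimension $r+1$ along an invertible matrix representing $\pm x$, so the inclusion $X \hookrightarrow Y$ has torsion $-x$, then pass to a homotopy inverse via the composition formula). The paper gives no independent proof, so your argument matches the intended one; the sign bookkeeping you flag is harmless since $x$ ranges over all of $\Wh(G)$.
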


\begin{proof}[Proof of \cref{theorem:chs-bij}]
By \cref{lem:sh-class1}, $t_X$ takes values in $\Wh(G) / \hAut(X)$ and by \cref{lem:sh-class2} it induces a well-defined map $\widetilde{t}_X$ on $\CC^{\mathrm{h}}_{\mathrm{s}}(X)$. \cref{lem:sh-class3,lem:sh-class4} imply that $\widetilde{t}_X$ is injective and surjective, respectively.
\end{proof}

\begin{remark} \label{rem:haut-act-spec}
There are two special cases when the action of $\hAut(X)$ on $\Wh(G)$ has a simpler description. 

First, assume that $\pi_1(g) = \id_G$ for every $g \in \hAut(X)$. Then $\tau(g \circ g') = \tau(g) + \tau(g')$ for every $g, g' \in \hAut(X)$ and $x^g = x + \tau(g)$ for every $x \in \Wh(G)$ and $g \in \hAut(X)$. This implies that $\left\{ \tau(g) \mid g \in \hAut(X) \right\}$ is a subgroup of $\Wh(G)$ and $\Wh(G) / \hAut(X)$ is the corresponding quotient group.

Second, assume that $\tau(g) = 0$ for every $g \in \hAut(X)$. Then $x^g = g_*(x)$ for every $x \in \Wh(G)$ and $g \in \hAut(X)$. This means that the action of $\hAut(X)$ factors through the map $\pi_1 \colon \hAut(X) \to \Aut(G)$, in particular $\hAut(X)$ acts via automorphisms of the group $\Wh(G)$.
\end{remark}

\subsection{Manifolds} \label{ss:class-man}

Now we consider the problem in the manifold setting. Fix a closed connected $\CAT$ $n$-manifold $M$ and let $G = \pi_1(M)$ with orientation character $w \colon G \rightarrow \left\{ \pm 1 \right\}$. Then we can consider either all $n$-manifolds that are homotopy equivalent to $M$, or those that are $h$-cobordant to $M$, up to simple homotopy equivalence, or manifolds homotopy equivalent to $M$ up to the equivalence relation generated by simple homotopy equivalence and $h$-cobordism. 

\begin{definition} \label{def:man-sets}
Let 
\[
\begin{aligned}
\M^{\mathrm{h}}_{\mathrm{s}}(M) & :=  \left\{ \text{closed $\CAT$ $n$-manifolds } N \mid N \simeq M \right\} / \simeq_{\mathrm{s}} \\
\M^{\hCob}_{\mathrm{s}}(M) & :=  \left\{ \text{closed $\CAT$ $n$-manifolds } N \mid N \text{ is $h$-cobordant to } M \right\} / \simeq_{\mathrm{s}} \\
\M^{\mathrm{h}}_{\mathrm{s},\hCob}(M) & :=  \left\{ \text{closed $\CAT$ $n$-manifolds } N \mid N \simeq M \right\} / \langle \simeq_{\mathrm{s}}, \hCob \rangle
\end{aligned}
\]
where $\langle \simeq_{\mathrm{s}}, \hCob \rangle$ denotes the equivalence relation generated by simple homotopy equivalence and $h$-cobordism.
\end{definition}

As in \cref{ss:CW-complexes}, $\hAut(M)$ acts on the set $\Wh(G,w)$, and a subset $t_M(N) \subseteq \Wh(G,w)$ is defined for every manifold $N$ that is homotopy equivalent to $M$. Let $\pi \colon \mathcal{J}_n(G,w) \rightarrow \wh{H}^{n+1}(C_2;\Wh(G,w))$ denote the quotient map. By \cref{prop:WT-man} if $f \colon N \rightarrow M$ is a homotopy equivalence between manifolds, then $\tau(f) \in \mathcal{J}_n(G,w)$, so we can also define 
\[
u_M(N) = \pi(t_M(N)) = \left\{ \pi(\tau(f)) \mid f \colon N \to M \text{ is a homotopy equivalence} \right\}
\]
which is a subset of $\wh{H}^{n+1}(C_2;\Wh(G,w))$.

\begin{theorem}\label{theorem:man-class}
\leavevmode
\begin{clist}{(a)}
\item\label{item:thm-man-class-a} The subset $\mathcal{J}_n(G,w) \subseteq \Wh(G,w)$ is invariant under the action of $\hAut(M)$. The action of $\hAut(M)$ on $\mathcal{J}_n(G,w)$ induces an action on the underlying set of $\wh{H}^{n+1}(C_2;\Wh(G,w))$. 

\item\label{item:thm-man-class-b} The map $t_M$ induces well-defined injective maps \[\widetilde{t}_M \colon \M^{\mathrm{h}}_{\mathrm{s}}(M) \to \mathcal{J}_n(G,w) / \hAut(M) \text{ and } \widetilde{t}'_M \colon \M^{\hCob}_{\mathrm{s}}(M) \to q(\mathcal{I}_n(G,w)),\] where $q \colon \mathcal{J}_n(G,w) \to \mathcal{J}_n(G,w) / \hAut(M)$ denotes the quotient map, so that \[q(\mathcal{I}_n(G,w)) \subseteq \mathcal{J}_n(G,w) / \hAut(M).\] 
Similarly, the map $u_M$ induces a well-defined map 
\[\widetilde{u}_M \colon \M^{\mathrm{h}}_{\mathrm{s},\hCob}(M) \to \wh{H}^{n+1}(C_2;\Wh(G,w)) / \hAut(M).\]

\item\label{item:thm-man-class-c} There is a commutative diagram
\[
\xymatrix{
\M^{\hCob}_{\mathrm{s}}(M) \ar@{^(->}[r] \ar@{^(->}[d]_-{\widetilde{t}'_M} & \M^{\mathrm{h}}_{\mathrm{s}}(M) \ar@{->>}[r] \ar@{^(->}[d]_-{\widetilde{t}_M} & \M^{\mathrm{h}}_{\mathrm{s},\hCob}(M) \ar[d]^-{\widetilde{u}_M} \\
q(\mathcal{I}_n(G,w)) \ar@{^(->}[r] & \mathcal{J}_n(G,w) / \hAut(M) \ar@{->>}[r] & \wh{H}^{n+1}(C_2;\Wh(G,w)) / \hAut(M)
}
\] 
In each row the first map is injective and the second map is surjective. In the top row the composition of the maps is trivial, while the bottom row is an exact sequence of pointed sets.

\item\label{item:thm-man-class-d} If \cref{assumptions} is satisfied, then $\widetilde{t}'_M$ is surjective, $\widetilde{u}_M$ is injective, and the top row is an exact sequence of pointed sets.

\item\label{item:thm-man-class-e} If \cref{assumptions} is satisfied, then, using the notation from \cref{ss:wall},  the subsets 
\[\varrho^{-1}(\image \sigma_{\mathrm{s}}) \subseteq \mathcal{J}_n(G,w) \text{ and } (\varrho \circ \pi)^{-1}(\image \sigma_{\mathrm{s}}) \subseteq \wh{H}^{n+1}(C_2;\Wh(G,w))\] 
are invariant under the action of $\hAut(M)$, and we have 
\[\im \widetilde{t}_M = (\varrho \circ \pi)^{-1}(\image \sigma_{\mathrm{s}}) / \hAut(M) \text{ and } \im \widetilde{u}_M = \varrho^{-1}(\image \sigma_{\mathrm{s}}) / \hAut(M).\]
\end{clist}
\end{theorem}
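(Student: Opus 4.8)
The plan is to prove parts (a)--(c) by transcribing, essentially verbatim, the CW-complex arguments of \cref{lem:sh-class1,lem:sh-class2,lem:sh-class3,theorem:chs-bij}, using the manifold constraints \cref{prop:WT-man,prop:WT-hcob} in place of unrestricted torsions, and to prove parts (d) and (e) by feeding in the realisation results of \cref{section:realisation}. For (a): a homotopy automorphism $g$ of $M$ satisfies $w\circ\pi_1(g)=w$, so $g_*$ commutes with the involution on $\Wh(G,w)$ and hence preserves $\mathcal{J}_n(G,w)$ and $\mathcal{I}_n(G,w)$, which are defined purely from the involution. Since $\tau(g)\in\mathcal{J}_n(G,w)$ by \cref{prop:WT-man} and $\mathcal{J}_n(G,w)$ is a subgroup, the affine map $y\mapsto y^g=g_*(y)+\tau(g)$ of \cref{def:haut-act} preserves $\mathcal{J}_n(G,w)$; as $g_*$ preserves $\mathcal{I}_n(G,w)$ it descends to the action $[y]\mapsto g_*[y]+\pi(\tau(g))$ on $\mathcal{J}_n(G,w)/\mathcal{I}_n(G,w)=\wh H^{n+1}(C_2;\Wh(G,w))$, the action axioms following from \cref{prop:WT-composition}.

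For (b) and (c): as in \cref{lem:sh-class1,lem:sh-class2,lem:sh-class3}, for each closed $\CAT$ $n$-manifold $N\simeq M$ the set $t_M(N)$ is a single $\hAut(M)$-orbit, depends only on the simple homotopy type of $N$, and determines $N$ up to simple homotopy equivalence; it lies in $\mathcal{J}_n(G,w)$ by \cref{prop:WT-man}, giving the well-defined injection $\widetilde t_M$, and $u_M=\pi\circ t_M$ is then an orbit of the induced action, giving $\widetilde u_M$. If $N$ is moreover $h$-cobordant to $M$, the induced homotopy equivalence has torsion in $\mathcal{I}_n(G,w)$ by \cref{prop:WT-hcob}~\eqref{item-prop-WT-hcob-b}, so $t_M(N)$ meets $\mathcal{I}_n(G,w)$ and $\widetilde t'_M$ lands in $q(\mathcal{I}_n(G,w))$; injectivity of $\widetilde t'_M$ is as before. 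One also checks, exactly as in the well-definedness of $\widehat\tau$ in \cref{ss:wall} (using \cref{prop:WT-hcob,prop:WT-composition}), that $u_M$ is invariant under $h$-cobordism of manifolds, not only $h$-cobordism over $M$, so $\widetilde u_M$ is defined on $\M^h_{s,\hCob}(M)$. The horizontal maps in the diagram are the evident maps induced by refining/coarsening the equivalence relations and by the $\hAut(M)$-equivariant map $\pi$; commutativity holds since all maps are induced by $t_M$ and $\pi$; the top composite is trivial because $M$ is $h$-cobordant to itself; and the bottom row is exact because the $\hAut(M)$-orbit of $0\in\wh H^{n+1}(C_2;\Wh(G,w))$ is $\{\pi(\tau(g))\mid g\in\hAut(M)\}$, and if $\pi(y)=\pi(\tau(g))$ then $y^{g^{-1}}=(g^{-1})_*\bigl(y-\tau(g)\bigr)\in\mathcal{I}_n(G,w)$ by \cref{prop:WT-composition}, so $q(y)\in q(\mathcal{I}_n(G,w))$, the reverse containment being clear.

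For (d): surjectivity of $\widetilde t'_M$ is immediate from \cref{corollary:realisation-of-I-by-hom-equivs}. For injectivity of $\widetilde u_M$, given $N,N'\simeq M$ with $u_M(N)=u_M(N')$ we may, after adjusting one homotopy equivalence by an element of $\hAut(M)$, choose $f\colon N\to M$ and $f'\colon N'\to M$ with $\pi(\tau(f))=\pi(\tau(f'))$; then $h=f^{-1}\circ f'\colon N'\to N$ has $\tau(h)=(f^{-1})_*\bigl(\tau(f')-\tau(f)\bigr)\in\mathcal{I}_n(\pi_1(N),w_N)$ by \cref{prop:WT-composition}, so \cref{cor:she+hcob-realise} produces $P\simeq_s N'$ with $P$ $h$-cobordant to $N$, whence $[N]=[N']$ in $\M^h_{s,\hCob}(M)$. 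Exactness of the top row then follows once one observes that $\langle\simeq_s,\hCob\rangle$ equals the relation ``$h$-cobordant, then simple homotopy equivalent'': composing adjacent steps of the same type and rewriting each pattern ``$\simeq_s$ then $h$-cobordism'' as ``$h$-cobordism then $\simeq_s$'' by \cref{cor:she+hcob-realise} moves all $h$-cobordisms to the front, so any $N$ in the preimage of the basepoint satisfies $N\simeq_s Q$ with $Q$ $h$-cobordant to $M$, i.e.\ $[N]$ lies in the image of $\M^{\hCob}_s(M)$.

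For (e): let $T=\bigcup_{N\simeq M}t_M(N)\subseteq\mathcal{J}_n(G,w)$ be the set of Whitehead torsions of all homotopy equivalences from a closed $\CAT$ $n$-manifold to $M$; it is $\hAut(M)$-invariant since $\tau(g\circ f)=\tau(f)^g$. By construction $\pi(T)=\image\widehat\tau$, which equals $\varrho^{-1}(\image\sigma_s)$ by \cref{cor:braid}~\eqref{item:cor-braid-a}; hence $\varrho^{-1}(\image\sigma_s)$, and therefore $(\varrho\circ\pi)^{-1}(\image\sigma_s)=\pi^{-1}(\varrho^{-1}(\image\sigma_s))$, is $\hAut(M)$-invariant. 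It remains to prove $T=(\varrho\circ\pi)^{-1}(\image\sigma_s)$; the inclusion $\subseteq$ is the identity $\pi(T)=\varrho^{-1}(\image\sigma_s)$. For $\supseteq$, take $y$ with $\pi(y)\in\varrho^{-1}(\image\sigma_s)=\pi(T)$, so $y=\tau(f)+z$ for some $f\colon N\to M$ and some $z=-x_0+(-1)^n\overline{x_0}\in\mathcal{I}_n(G,w)$; by \cref{theorem:scob}~\eqref{it:s-cob2} there is an $h$-cobordism over $N$ with torsion $(f^{-1})_*(x_0)$, whose induced homotopy equivalence $p\colon N'\to N$ has $\tau(p)=-(f^{-1})_*(x_0)+(-1)^n\overline{(f^{-1})_*(x_0)}$ by \cref{prop:WT-hcob}~\eqref{item-prop-WT-hcob-b}, and then $\tau(f\circ p)=\tau(f)+f_*(\tau(p))=\tau(f)+z=y$ by \cref{prop:WT-composition}, so $y\in T$. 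Consequently $\im\widetilde t_M=q(T)=(\varrho\circ\pi)^{-1}(\image\sigma_s)/\hAut(M)$ and $\im\widetilde u_M=\pi(T)/\hAut(M)=\varrho^{-1}(\image\sigma_s)/\hAut(M)$. The only substantial work is in (d) and (e): everything else is a mechanical transcription of the CW arguments, whereas (d) and (e) require careful bookkeeping of how $\tau$, $\pi$, $\varrho$, $\sigma_s$ and the $\hAut(M)$-action interact, driven by the realisation inputs \cref{theorem:scob,corollary:realisation-of-I-by-hom-equivs,cor:she+hcob-realise,cor:braid}; that bookkeeping, rather than any single difficult idea, is the main obstacle.
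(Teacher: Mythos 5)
Your proposal is correct and follows essentially the same route as the paper: parts (a)–(c) are the transcription of the CW-complex lemmas constrained by \cref{prop:WT-man,prop:WT-hcob}, part (d) uses \cref{corollary:realisation-of-I-by-hom-equivs,cor:she+hcob-realise} exactly as the paper does (your swap-and-normalise argument for the top row is the paper's Lemma on exchanging a simple homotopy equivalence with an $h$-cobordism, applied inductively), and part (e) rests on the same two realisation inputs, namely $\image\widehat{\tau}=\varrho^{-1}(\image\sigma_s)$ from \cref{cor:braid} and the $s$-cobordism realisation of elements of $\mathcal{I}_n(G,w)$ over $N$. Your bookkeeping in (e) via the total torsion set $T$ and the identity $T=(\varrho\circ\pi)^{-1}(\image\sigma_s)$ is only a cosmetic repackaging of the paper's two-inclusion argument, so there is nothing of substance to add.
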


This implies \cref{thmx:bijections-with-manifold-sets-intro} (see also \cref{theorem:man-bij} below). Parts (\ref{item:thm-man-class-a}), (\ref{item:thm-man-class-b}) and (\ref{item:thm-man-class-c}) do not require \cref{assumptions} and so apply to smooth/PL 4-manifolds as well as topological 4-manifolds with arbitrary fundamental group. Parts (\ref{item:thm-man-class-d}) and (\ref{item:thm-man-class-e}) rely on results from \cref{section:realisation}, and so they require \cref{assumptions}.

We use the following lemmas in the proof of \cref{theorem:man-class}.

\begin{lemma} \label{lem:sh-class5}
Let $N$ and $P$ be manifolds homotopy equivalent to $M$. If $N \simeq_{\mathrm{s}} P$ or $N$ is $h$-cobordant to $P$, then $u_M(N) = u_M(P)$.
\end{lemma}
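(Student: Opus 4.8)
The plan is to treat the two alternatives in the hypothesis separately, following the template of \cref{lem:sh-class2} in the first case and invoking the $h$-cobordism computation of \cref{prop:WT-hcob} in the second.

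For the first case, $N \simeq_s P$, I would argue exactly as in \cref{lem:sh-class2}. Pick a simple homotopy equivalence $h \colon P \to N$. For any homotopy equivalence $f \colon N \to M$, the composite $f \circ h \colon P \to M$ is a homotopy equivalence, and $\tau(f \circ h) = \tau(f) + f_*(\tau(h)) = \tau(f)$ by \cref{prop:WT-composition}, using $\tau(h) = 0$ (\cref{theorem:Whitehead}). Hence $t_M(N) \subseteq t_M(P)$, and the reverse inclusion follows symmetrically via a simple homotopy inverse of $h$; so $t_M(N) = t_M(P)$, and applying $\pi$ gives $u_M(N) = u_M(P)$. (In fact in this case the finer invariant $t_M$ is already preserved.)

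For the second case, let $(W;N,P)$ be an $h$-cobordism. Here $t_M(N)$ and $t_M(P)$ need not coincide — which is precisely why $u_M$ is the relevant invariant — but they will agree after applying $\pi$. I would let $g \colon P \to N$ be the homotopy equivalence induced by $W$; by \cref{prop:WT-hcob}~\eqref{item-prop-WT-hcob-b}, its torsion lies in $\mathcal{I}_n(\pi_1(W), w_W)$, where $w_W$ is the orientation character of $W$ and we identify $\pi_1(P)$ (resp.\ $\pi_1(N)$) with $\pi_1(W)$ via the inclusion. Given a homotopy equivalence $f \colon N \to M$ with $\theta = \pi_1(f)$, the induced map $f_* \colon \Wh(\pi_1(N), w \circ \theta) \to \Wh(G,w)$ is a morphism of $\Z C_2$-modules, since $\theta$ intertwines the $w$-twisted involutions on the group rings (because $w \circ \theta$ is the orientation character of $N$); hence $f_*$ carries $\mathcal{I}_n$ into $\mathcal{I}_n$ and $\mathcal{J}_n$ into $\mathcal{J}_n$. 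Then $f \circ g \colon P \to M$ is a homotopy equivalence with $\tau(f \circ g) = \tau(f) + f_*(\tau(g))$, where $\tau(f) \in \mathcal{J}_n(G,w)$ by \cref{prop:WT-man} and $f_*(\tau(g)) \in \mathcal{I}_n(G,w) = \ker \pi$. Applying $\pi$ gives $\pi(\tau(f \circ g)) = \pi(\tau(f))$, so $u_M(N) \subseteq u_M(P)$; the reverse inclusion follows by exchanging the roles of $N$ and $P$, using the homotopy equivalence $N \to P$ induced by $W$. Therefore $u_M(N) = u_M(P)$.

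The arguments are essentially routine bookkeeping, and I expect no serious obstacle; the only point requiring a little care is the second case, where one must verify that pushing $\tau(g)$ forward along $f_*$ keeps it inside $\mathcal{I}_n(G,w)$ so that it dies under $\pi$. This hinges on $h$-cobordant manifolds being joined by homotopy equivalences whose Whitehead torsion lies in $\mathcal{I}_n$, i.e.\ on \cref{prop:WT-hcob}~\eqref{item-prop-WT-hcob-b}.
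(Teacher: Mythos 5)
Your proposal is correct and follows essentially the same route as the paper: the simple homotopy equivalence case is exactly the argument of \cref{lem:sh-class2} (which the paper simply cites), and the $h$-cobordism case uses \cref{prop:WT-composition} together with \cref{prop:WT-hcob}~\eqref{item-prop-WT-hcob-b} to see that the correction term $f_*(\tau(g))$ lies in $\mathcal{I}_n(G,w)$ and hence dies under $\pi$, with the reverse inclusion by symmetry. Your explicit check that $f_*$ respects the involutions is a point the paper leaves implicit, but it is the same proof.
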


\begin{proof}
If $N \simeq_{\mathrm{s}} P$, then $t_M(N) = t_M(P)$ by \cref{lem:sh-class2}, so $u_M(N) = \pi(t_M(N)) = \pi(t_M(P)) = u_M(P)$. 

If $N$ is $h$-cobordant to $P$ and $h \colon P \rightarrow N$ is a homotopy equivalence induced by an $h$-cobordism, then $\pi(\tau(f \circ h)) = \pi(\tau(f)) + \pi(f_*(\tau(h))) = \pi(\tau(f))$ for every homotopy equivalence $f \colon N \rightarrow M$ (because $f_*(\tau(h)) \in \mathcal{I}_n(G,w)$ by \cref{prop:WT-hcob}). Hence $u_M(N) \subseteq u_M(P)$. We get similarly that $u_M(P) \subseteq u_M(N)$, therefore $u_M(N) = u_M(P)$.
\end{proof}

\begin{lemma} \label{lem:sh-class6}
Suppose that \cref{assumptions} is satisfied, and $N$ and $P$ are $\CAT$ $n$-manifolds homotopy equivalent to $M$. If $u_M(N) = u_M(P)$, then there is a manifold $Q$ that is simple homotopy equivalent to $N$ and $h$-cobordant to $P$.
\end{lemma}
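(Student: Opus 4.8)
The plan is to reduce the statement to \cref{cor:she+hcob-realise}, which already converts a homotopy equivalence whose Whitehead torsion lies in $\mathcal{I}_n$ into exactly the desired output: a manifold simple homotopy equivalent to the source and $h$-cobordant to the target. So the task becomes: starting from the hypothesis $u_M(N) = u_M(P)$, manufacture a single homotopy equivalence $N \to P$ whose torsion lands in $\mathcal{I}_n(\pi_1(P),w_P)$.

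First I would fix an arbitrary homotopy equivalence $g \colon P \to M$ with homotopy inverse $g^{-1} \colon M \to P$. Since $\pi(\tau(g)) \in u_M(P) = u_M(N)$, the definition of $u_M$ furnishes a homotopy equivalence $f \colon N \to M$ with $\pi(\tau(f)) = \pi(\tau(g))$. By \cref{prop:WT-man} both $\tau(f)$ and $\tau(g)$ lie in $\mathcal{J}_n(G,w)$, and since $\pi$ is precisely the quotient map onto $\mathcal{J}_n(G,w)/\mathcal{I}_n(G,w)$, this equality says $\tau(f) - \tau(g) \in \mathcal{I}_n(G,w)$.

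Next I would set $\varphi \coloneqq g^{-1} \circ f \colon N \to P$ and compute its torsion. Using \cref{prop:WT-composition} together with the identity $0 = \tau(\Id_M) = \tau(g) + g_*(\tau(g^{-1}))$ coming from $g \circ g^{-1} \simeq \Id_M$, one obtains $\tau(\varphi) = g^{-1}_*(\tau(f) - \tau(g))$. Because $g$ is a homotopy equivalence of closed manifolds, $\pi_1(g)$ is an isomorphism with $w_M \circ \pi_1(g) = w_P$, so $g^{-1}_* \colon \Wh(\pi_1(M),w_M) \to \Wh(\pi_1(P),w_P)$ is an isomorphism of $\Z C_2$-modules and therefore carries $\mathcal{I}_n(G,w) = \mathcal{I}_n(\pi_1(M),w_M)$ onto $\mathcal{I}_n(\pi_1(P),w_P)$. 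Hence $\tau(\varphi) \in \mathcal{I}_n(\pi_1(P),w_P)$. Applying \cref{cor:she+hcob-realise} to $\varphi \colon N \to P$ (the manifolds $N$ and $P$ satisfy \cref{assumptions}: in dimension $4$ this is inherited from $M$ since $\pi_1(N) \cong \pi_1(P) \cong \pi_1(M)$) then produces a closed $\CAT$ $n$-manifold $Q$ that is simple homotopy equivalent to $N$ and $h$-cobordant to $P$, as required.

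I do not expect a serious obstacle: the only points requiring care are the orientation-character bookkeeping needed to see that $g^{-1}_*$ identifies the two copies of $\mathcal{I}_n$, and the remark that \cref{assumptions} descend to $N$ and $P$; everything else is a direct invocation of the $h$-cobordism realisation results of \cref{ss:realising-cobordisms}. If one prefers, the identical argument with the roles of $N$ and $P$ swapped shows the relation is symmetric, which is consistent with its use in the proof of \cref{theorem:man-class}.
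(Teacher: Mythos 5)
Your proposal is correct and follows essentially the same route as the paper: choose homotopy equivalences $f \colon N \to M$ and $g \colon P \to M$ with $\pi(\tau(f)) = \pi(\tau(g))$, compute $\tau(g^{-1} \circ f) = g^{-1}_*(\tau(f) - \tau(g)) \in \mathcal{I}_n$ via \cref{prop:WT-composition}, and apply \cref{cor:she+hcob-realise} to $g^{-1} \circ f \colon N \to P$. Your extra remarks on the orientation-character bookkeeping and on \cref{assumptions} descending to $N$ and $P$ are fine but are only a more explicit version of what the paper leaves implicit.
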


\begin{proof}
Since $u_M(N) = u_M(P)$, there are homotopy equivalences $f \colon N \rightarrow M$ and $g \colon P \rightarrow M$ such that $\pi(\tau(f)) = \pi(\tau(g))$, equivalently, $\tau(f) - \tau(g) \in \mathcal{I}_n(G,w)$. If $g^{-1}$ denotes the homotopy inverse of $g$, then $\tau(g^{-1} \circ f) = \tau(g^{-1}) + g^{-1}_*(\tau(f)) = \tau(g^{-1}) + g^{-1}_*(\tau(g)) + g^{-1}_*(\tau(f) - \tau(g)) = \tau(g^{-1} \circ g) + g^{-1}_*(\tau(f) - \tau(g)) = g^{-1}_*(\tau(f) - \tau(g))$ by \cref{prop:WT-composition}. So we can apply \cref{cor:she+hcob-realise} to the homotopy equivalence $g^{-1} \circ f \colon N \rightarrow P$.
\end{proof}

\begin{lemma} \label{lem:sh-class7}
Suppose that $N$ and $P$ are $CAT$ $n$-manifolds with $\CAT$ as in \cref{assumptions}.  If there is a manifold $Q$ that is simple homotopy equivalent to $N$ and $h$-cobordant to $P$, then there is a manifold $R$ that is $h$-cobordant to $N$ and simple homotopy equivalent to $P$.
\end{lemma}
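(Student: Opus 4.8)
The plan is to transport the $h$-cobordism from $Q$ to $P$ across the simple homotopy equivalence $N \simeq_s Q$ by means of the realisation part of the $s$-cobordism theorem, and then to verify by a Whitehead torsion computation that the manifold appearing at the far end of the transported cobordism is simple homotopy equivalent to $P$.

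Concretely, I would fix a simple homotopy equivalence $h \colon N \to Q$ with homotopy inverse $h^{-1} \colon Q \to N$, an $h$-cobordism $(W; Q, P)$, and write $f \colon P \to Q$ for the homotopy equivalence it induces and $x \coloneqq \tau(W, Q) \in \Wh(\pi_1(Q), w)$, so that $\tau(f) = -x + (-1)^n \ol{x}$ by \cref{prop:WT-hcob}~\eqref{item-prop-WT-hcob-b}. Since $Q \simeq N$, the manifold $Q$ also satisfies \cref{assumptions}, so \cref{theorem:scob}~\eqref{it:s-cob2} applies to $N$ and yields an $h$-cobordism $(W'; N, R)$ with $\tau(W', N) = h^{-1}_*(x)$. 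By construction $R$ is $h$-cobordant to $N$, so the whole content of the lemma is the claim that $R \simeq_s P$.

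For this, let $f' \colon R \to N$ be the homotopy equivalence induced by $W'$; then $\tau(f') = -h^{-1}_*(x) + (-1)^n \ol{h^{-1}_*(x)}$ by \cref{prop:WT-hcob}~\eqref{item-prop-WT-hcob-b} again. I would then apply \cref{prop:WT-composition} to compute the torsion of $g \coloneqq f^{-1} \circ h \circ f' \colon R \to P$, where $f^{-1}$ is a homotopy inverse of $f$. Using $\tau(h) = 0$ (as $h$ is simple), the functoriality identity $h_* \circ h^{-1}_* = \Id$, and the fact that $h_*$ commutes with the involutions — which holds because $h$ is a homotopy equivalence of manifolds, so $\pi_1(h)$ intertwines the orientation characters — the contribution $h_*(\tau(f'))$ simplifies to $-x + (-1)^n \ol{x} = \tau(f)$, whence $\tau(g) = \tau(f^{-1}) + f^{-1}_*(\tau(f)) = \tau(f^{-1} \circ f) = \tau(\Id_P) = 0$. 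Thus $g$ is a simple homotopy equivalence, and, being $h$-cobordant to $N$, the manifold $R$ has the required properties.

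The argument is essentially a formal torsion computation; the only place demanding any care is the bookkeeping of the various identifications of fundamental groups (along $h$ and along the two $h$-cobordisms) and, in particular, the verification that $h_*$ respects the involution. I do not expect any genuine difficulty there.
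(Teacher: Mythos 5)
Your proof is correct and follows essentially the same route as the paper: the paper's one-line proof applies \cref{cor:she+hcob-realise} (whose proof, via \cref{corollary:realisation-of-I-by-hom-equivs} and \cref{theorem:scob}, realises exactly the element $h^{-1}_*(x)$ by an $h$-cobordism over $N$) to the composite $P \to Q \to N$, and then performs the same torsion bookkeeping you carry out by hand. Your observation that $h_*$ intertwines the involutions is the same point the paper relies on when pushing elements of $\mathcal{I}_n$ forward along homotopy equivalences.
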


\begin{proof}
Apply \cref{cor:she+hcob-realise} to the composition of a homotopy equivalence $P \rightarrow Q$ induced by an $h$-cobordism and a simple homotopy $Q \rightarrow N$.
\end{proof}

\begin{proposition} \label{prop:she+hcob}
Suppose that $N$ and $P$ are $\CAT$ $n$-manifolds with $\CAT$ as in \cref{assumptions}. Then the following are equivalent.  
\begin{clist}{(a)}
\item\label{item:prop-she-hcob-a} The manifolds $N$ and $P$ are equivalent under the equivalence relation generated by simple homotopy equivalence and $h$-cobordism.
\item\label{item:prop-she-hcob-b} There is a manifold $Q$ that is simple homotopy equivalent to $N$ and $h$-cobordant to $P$.
\end{clist}
\end{proposition}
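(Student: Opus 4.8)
The implication (b) $\Rightarrow$ (a) is immediate, since a manifold $Q$ as in (b) exhibits $N$ and $P$ as joined by a length-two zig-zag consisting of a simple homotopy equivalence and an $h$-cobordism. So the whole content is (a) $\Rightarrow$ (b), and the plan is to show that the relation ``(b) holds'' is already an equivalence relation containing both of the generating relations, so that it contains the equivalence relation they generate.

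First I would record two bookkeeping facts. Simple homotopy equivalence is an equivalence relation on finite CW complexes, hence on manifolds via \cref{defn:KS-defn-simple}: reflexivity, symmetry and transitivity follow from \cref{prop:WT-composition} together with \cref{theorem:Whitehead}. Being $h$-cobordant is an equivalence relation on closed $\CAT$ $n$-manifolds: reflexive via product cobordisms $M\times[0,1]$, symmetric by reversing a cobordism, and transitive because the union of two $h$-cobordisms along a common boundary component is again an $h$-cobordism (each outer end includes into its own piece as a deformation retract, and each piece includes into the union as a deformation retract, so the composite is a homotopy equivalence; one also smooths corners routinely when $\CAT=\Diff$). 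Finally, since both $\simeq_s$ and $h$-cobordisms preserve $\pi_1$, every manifold occurring in a zig-zag starting at $N$ has fundamental group $\pi_1(N)$; in particular, when $n=4$ all of them are $\TOP$ with good fundamental group, so \cref{assumptions} persists along the zig-zag and \cref{cor:she+hcob-realise} and \cref{lem:sh-class7} apply to all the intermediate manifolds.

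Next I would introduce the relation $\mathcal{R}$ on closed $\CAT$ $n$-manifolds satisfying \cref{assumptions}, declaring $N\mathrel{\mathcal{R}}P$ to mean that (b) holds. Then $\mathcal{R}$ contains $\simeq_s$ (take $Q=P$ and the product cobordism $P\times[0,1]$) and the $h$-cobordism relation (take $Q=N$ and the identity), and in particular $\mathcal{R}$ is reflexive. Symmetry of $\mathcal{R}$ is precisely the content of \cref{lem:sh-class7}. For transitivity, suppose $N\mathrel{\mathcal{R}}P$ via $N\simeq_s Q_1$ with $Q_1$ $h$-cobordant to $P$, and $P\mathrel{\mathcal{R}}T$ via $P\simeq_s Q_2$ with $Q_2$ $h$-cobordant to $T$. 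Since $P$ is simple homotopy equivalent to $Q_2$ and $h$-cobordant to $Q_1$, \cref{lem:sh-class7} (applied with its ``$N$'' $=Q_2$, its ``$P$'' $=Q_1$, its ``$Q$'' $=P$) produces a manifold $R$ that is $h$-cobordant to $Q_2$ and simple homotopy equivalent to $Q_1$. Then $N\simeq_s Q_1\simeq_s R$, so $N\simeq_s R$, and $R$ is $h$-cobordant to $Q_2$, which is $h$-cobordant to $T$, so $R$ is $h$-cobordant to $T$; hence $R$ witnesses $N\mathrel{\mathcal{R}}T$.

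To conclude, I would note that an equivalence relation containing $\simeq_s$ and the $h$-cobordism relation contains the equivalence relation generated by them, so (a) implies $N\mathrel{\mathcal{R}}P$, i.e.\ (b); alternatively, one argues directly by induction on the length of a zig-zag connecting $N$ to $P$, using transitivity of $\mathcal{R}$ and the fact that each single step of the zig-zag lies in $\mathcal{R}$. The only genuine work is hidden in \cref{lem:sh-class7}, which is already proved and ultimately rests on \cref{cor:she+hcob-realise} and hence on the realisation part of the $s$-cobordism theorem; the remaining steps are formal. The points that need a little care are the transitivity of ``$h$-cobordant'' (stacking $h$-cobordisms) and checking that \cref{assumptions} propagates along zig-zags, but both are routine.
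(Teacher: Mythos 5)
Your proof is correct and follows essentially the same route as the paper: the paper's proof simply reduces an alternating chain of simple homotopy equivalences and $h$-cobordisms to a chain of length two via \cref{lem:sh-class7}, which is exactly the content of your verification that the relation ``(b) holds'' is an equivalence relation containing both generators. Your extra bookkeeping (transitivity of $h$-cobordism, persistence of \cref{assumptions} along the zig-zag) just makes explicit what the paper leaves implicit.
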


\begin{proof}
A chain between $N$ and $P$ of alternating simple homotopy equivalences and $h$-cobordisms can be reduced to a chain of length two using \cref{lem:sh-class7}.
\end{proof}

We are now ready to begin the proof of \cref{theorem:man-class}.

\begin{proof}[Proof of \cref{theorem:man-class}]
\eqref{item:thm-man-class-a} If $g \in \hAut(M)$, then $\tau(g) \in \mathcal{J}_n(G,w)$ by \cref{prop:WT-man}. Moreover, $H_n(M;\Z^{w \circ \pi_1(g)}) \cong H_n(M;\Z^w) \cong \Z$, so $w \circ \pi_1(g) = w$. Thus $g_* \colon \Wh(G,w) \rightarrow \Wh(G,w)$ is compatible with the involution, so if $x = -(-1)^n \ol{x}$, then $g_*(x) = -(-1)^n \ol{g_*(x)}$. Hence, if $x \in \mathcal{J}_n(G,w)$, then $x^g \in \mathcal{J}_n(G,w)$, i.e. $\hAut(M)$ acts on $\mathcal{J}_n(G,w)$. 

If $x,y \in \mathcal{J}_n(G,w)$ and $x-y \in \mathcal{I}_n(G,w)$, then $x^g-y^g = (g_*(x) + \tau(g)) - (g_*(y) + \tau(g)) = g_*(x-y) \in \mathcal{I}_n(G,w)$ (because $g_*$ is compatible with the involution). Therefore there is an induced action on $\wh{H}^{n+1}(C_2;\Wh(G,w))$. 

\eqref{item:thm-man-class-b} If $f \colon N \rightarrow M$ is a homotopy equivalence for some $n$-manifold $N$, then $\tau(f) \in \mathcal{J}_n(G,w)$ by \cref{prop:WT-man}. Hence $t_M(N) \subseteq \mathcal{J}_n(G,w)$ for every manifold $N$ that is homotopy equivalent to $M$. \cref{lem:sh-class1,lem:sh-class2,,lem:sh-class3} can be used again to show that $t_M$ induces well-defined injective maps $\widetilde{t}_M \colon \M^{\mathrm{h}}_{\mathrm{s}}(M) \to \mathcal{J}_n(G,w) / \hAut(M)$ and $\widetilde{t}'_M \colon \M^{\hCob}_{\mathrm{s}}(M) \to \mathcal{J}_n(G,w) / \hAut(M)$. If $N$ is $h$-cobordant to $M$ and $f \colon N \rightarrow M$ is a homotopy equivalence induced by an $h$-cobordism, then $\tau(f) \in \mathcal{I}_n(G,w)$ by \cref{prop:WT-hcob}, showing that $t_M(N) \in q(\mathcal{I}_n(G,w))$. 

The analogue of \cref{lem:sh-class1} shows that $u_M(N)$ is an orbit of the action of $\hAut(M)$ on $\wh{H}^{n+1}(C_2;\Wh(G,w))$ (using that $\pi(\tau(g \circ f))= \pi(\tau(f)^g) = \pi(\tau(f))^g$ for every homotopy equivalence $f \colon N \to M$ and $g \in \hAut(M)$). \cref{lem:sh-class5} shows that $u_M$ induces a well-defined map on $\M^{\mathrm{h}}_{\mathrm{s},\hCob}(M)$.

\eqref{item:thm-man-class-c} This follows immediately from the definitions.

\eqref{item:thm-man-class-d} The surjectivity of $\widetilde{t}'_M$ and injectivity of $\widetilde{u}_M$ follow from \cref{corollary:realisation-of-I-by-hom-equivs} and \cref{lem:sh-class6}, respectively.

For the exactness of the top row, assume that $N$ is equivalent to $M$ under the equivalence relation generated by simple homotopy equivalence and $h$-cobordism. By \cref{prop:she+hcob} there is a manifold $P$ that is simple homotopy equivalent to $N$ and $h$-cobordant to $M$. This represents an element of $\M^{\hCob}_{\mathrm{s}}(M)$, which is mapped to the equivalence class of $N$ in $\M^{\mathrm{h}}_{\mathrm{s}}(M)$. 

\eqref{item:thm-man-class-e} First consider $\varrho^{-1}(\image \sigma_{\mathrm{s}})$, which is equal to $\image \widehat{\tau}$ by \cref{cor:braid}. This is $\hAut(M)$-invariant, because for any $[N,f] \in \mathcal{S}^{\mathrm{h}}(M)$ and $g \in \hAut(M)$ we have $\pi(\tau(f))^g = \pi(\tau(g \circ f)) = \widehat{\tau}([N,g \circ f])$. Since the action of $\hAut(M)$ on $\wh{H}^{n+1}(C_2;\Wh(G,w))$ is induced by its action on $\mathcal{J}_n(G,w)$, and $\varrho^{-1}(\image \sigma_{\mathrm{s}})$ is $\hAut(M)$-invariant, $\pi^{-1}(\varrho^{-1}(\image \sigma_{\mathrm{s}}))$ is $\hAut(M)$-invariant too.

Now suppose that $[N] \in \M^{\mathrm{h}}_{\mathrm{s},\hCob}(M)$. Then there is a homotopy equivalence $f \colon N \rightarrow M$ and $\widetilde{u}_M([N])$ is the orbit of $\pi(\tau(f)) = \widehat{\tau}([N,f]) \in \image \widehat{\tau} = \varrho^{-1}(\image \sigma_{\mathrm{s}})$. This shows that $\im \widetilde{u}_M \subseteq \varrho^{-1}(\image \sigma_{\mathrm{s}}) / \hAut(M)$. On the other hand, if $x \in \varrho^{-1}(\image \sigma_{\mathrm{s}}) = \image \widehat{\tau}$, then $x = \pi(\tau(f))$ for some homotopy equivalence $f \colon N \rightarrow M$, and then $[N] \in \M^{\mathrm{h}}_{\mathrm{s},\hCob}(M)$ and $\widetilde{u}_M([N])$ is the orbit of $x$. Therefore $\im \widetilde{u}_M \supseteq \varrho^{-1}(\image \sigma_{\mathrm{s}}) / \hAut(M)$.

By the commutativity of the diagram in part \eqref{item:thm-man-class-c}, $\im \widetilde{t}_M$ is contained in the inverse image of $\im \widetilde{u}_M = \varrho^{-1}(\image \sigma_{\mathrm{s}}) / \hAut(M)$, which is $\pi^{-1}(\varrho^{-1}(\image \sigma_{\mathrm{s}})) / \hAut(M)$. In the other direction, if $x \in \mathcal{J}_n(G, w)$ and $\pi(x) \in \varrho^{-1}(\image \sigma_{\mathrm{s}}) = \image \widehat{\tau}$, then there exists a manifold $N_1$ and a homotopy equivalence $f_1 \colon N_1 \to M$ with $\pi(\tau(f_1)) = \pi(x)$. It follows that $x = \tau(f_1) + y$ for some $y \in \mathcal{I}_n(G, w)$. Use $(f_1)_*$ to identify $\pi_1(N_1)$ with $G = \pi_1(M)$ (since $f_1$ is a homotopy equivalence, the orientation character of $N$ is also $w$). By \cref{corollary:realisation-of-I-by-hom-equivs} there exists a manifold $N_2$ and a homotopy equivalence $f_2 \colon N_2 \to N_1$ with $\tau(f_2) = y$. By \cref{prop:WT-composition} we have $\tau(f_1 \circ f_2) = \tau(f_1) + (f_1)_*(\tau(f_2)) = \tau(f_1) + \tau(f_2) = \tau(f_1) + y = x$ (where $(f_1)_* = \Id$ because we used $f_1$ to identify $\pi_1(N_1)$ with $G$). So for $[N_2] \in \M^{\mathrm{h}}_{\mathrm{s}}(M)$ we get that $\widetilde{t}_M([N_2])$ is the orbit of $x$, showing that $\im \widetilde{t}_M \supseteq (\varrho \circ \pi)^{-1}(\image \sigma_{\mathrm{s}}) / \hAut(M)$.
\end{proof}

In particular we have the following, which is immediate from \cref{theorem:man-class}.  

\begin{theorem} \label{theorem:man-bij}
If \cref{assumptions} is satisfied, then there are bijections
\[
\begin{aligned}
\widetilde{t}'_M &\colon \M^{\hCob}_{\mathrm{s}}(M) \xrightarrow{\cong} q(\mathcal{I}_n(G,w)) \\
\widetilde{t}_M &\colon \M^{\mathrm{h}}_{\mathrm{s}}(M) \xrightarrow{\cong} (\varrho \circ \pi)^{-1}(\image \sigma_{\mathrm{s}}) / \hAut(M) \\
\widetilde{u}_M &\colon \M^{\mathrm{h}}_{\mathrm{s},\hCob}(M) \xrightarrow{\cong} \varrho^{-1}(\image \sigma_{\mathrm{s}}) / \hAut(M).
\end{aligned}
\]
\end{theorem}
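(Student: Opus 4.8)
The plan is to deduce Theorem~\ref{theorem:man-bij} by simply reading off the relevant clauses of Theorem~\ref{theorem:man-class}, which does all the real work. First I would recall from part~\eqref{item:thm-man-class-b} that, with no hypothesis beyond $M$ being a closed $\CAT$ $n$-manifold, the assignment $t_M$ induces \emph{well-defined injective} maps $\widetilde{t}_M \colon \M^h_s(M) \to \mathcal{J}_n(G,w)/\hAut(M)$ and $\widetilde{t}'_M \colon \M^{\hCob}_s(M) \to \mathcal{J}_n(G,w)/\hAut(M)$, the latter landing inside $q(\mathcal{I}_n(G,w))$, and that $u_M$ induces a well-defined map $\widetilde{u}_M \colon \M^h_{s,\hCob}(M) \to \wh H^{n+1}(C_2;\Wh(G,w))/\hAut(M)$.

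Next, invoking \cref{assumptions}, I would use part~\eqref{item:thm-man-class-d} to upgrade these: it states that $\widetilde{t}'_M$ is surjective onto $q(\mathcal{I}_n(G,w))$, which combined with its injectivity from~\eqref{item:thm-man-class-b} gives the first bijection; and it states that $\widetilde{u}_M$ is injective. Then part~\eqref{item:thm-man-class-e} identifies the images: $\im\widetilde{t}_M = (\varrho\circ\pi)^{-1}(\image\sigma_s)/\hAut(M)$ and $\im\widetilde{u}_M = \varrho^{-1}(\image\sigma_s)/\hAut(M)$. Combining the image computation for $\widetilde{u}_M$ with its injectivity yields the third bijection, and combining the image computation for $\widetilde{t}_M$ with its injectivity from~\eqref{item:thm-man-class-b} yields the second. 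This completes the argument.

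There is no genuine obstacle at this level: the statement is a formal corollary. The substance lies upstream, in establishing parts~\eqref{item:thm-man-class-d} and~\eqref{item:thm-man-class-e} of \cref{theorem:man-class} --- in particular the surjectivity of $\widetilde{t}'_M$ (via the realisation of $\mathcal{I}_n(G,w)$ by $h$-cobordisms, \cref{corollary:realisation-of-I-by-hom-equivs}), the injectivity of $\widetilde{u}_M$ (via \cref{lem:sh-class6}, resting on \cref{cor:she+hcob-realise}), and the image identifications (via the braid diagram of \cref{prop:braid} and its consequence $\image\widehat{\tau} = \varrho^{-1}(\image\sigma_s)$ in \cref{cor:braid}, together with realising elements of $\mathcal{I}_n(G,w)$ to adjust Whitehead torsions within a fixed structure-set class). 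Since all of these are already in hand, the proof here is a one-line assembly.

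\begin{proof}
This is immediate from \cref{theorem:man-class}. By part~\eqref{item:thm-man-class-b} the maps $\widetilde{t}_M$ and $\widetilde{t}'_M$ are well-defined and injective, with $\im\widetilde{t}'_M \subseteq q(\mathcal{I}_n(G,w))$, and $\widetilde{u}_M$ is well-defined. Under \cref{assumptions}, part~\eqref{item:thm-man-class-d} shows that $\widetilde{t}'_M$ is surjective onto $q(\mathcal{I}_n(G,w))$ and that $\widetilde{u}_M$ is injective, and part~\eqref{item:thm-man-class-e} shows that $\im\widetilde{t}_M = (\varrho\circ\pi)^{-1}(\image\sigma_s)/\hAut(M)$ and $\im\widetilde{u}_M = \varrho^{-1}(\image\sigma_s)/\hAut(M)$. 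Hence $\widetilde{t}'_M$, $\widetilde{t}_M$ and $\widetilde{u}_M$ are bijections onto the stated targets.
\end{proof}
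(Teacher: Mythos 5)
Your proposal is correct and matches the paper exactly: the paper states \cref{theorem:man-bij} as an immediate consequence of \cref{theorem:man-class}, and your assembly of parts~\eqref{item:thm-man-class-b}, \eqref{item:thm-man-class-d} and~\eqref{item:thm-man-class-e} (injectivity plus surjectivity of $\widetilde{t}'_M$, injectivity of $\widetilde{u}_M$, and the image identifications for $\widetilde{t}_M$ and $\widetilde{u}_M$) is precisely the intended reading. Your remarks about where the real work lies (\cref{corollary:realisation-of-I-by-hom-equivs}, \cref{lem:sh-class6}, \cref{cor:braid}) are also accurate.
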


With extra input from the map $\psi$, the latter two statements become cleaner. 

\begin{corollary}
If \cref{assumptions} is satisfied and the map $\psi$ is surjective, then there are bijections $\M^{\mathrm{h}}_{\mathrm{s}}(M) \cong \mathcal{J}_n(G,w) / \hAut(M)$ and  $\M^{\mathrm{h}}_{\mathrm{s},\hCob}(M) \cong \wh{H}^{n+1}(C_2;\Wh(G,w)) / \hAut(M)$.
\end{corollary}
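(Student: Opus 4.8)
The plan is to obtain the corollary by specialising the bijections of \cref{theorem:man-bij}: it suffices to show that, when $\psi$ is surjective, the subsets $\varrho^{-1}(\image \sigma_s) \subseteq \wh{H}^{n+1}(C_2;\Wh(G,w))$ and $(\varrho \circ \pi)^{-1}(\image \sigma_s) \subseteq \mathcal{J}_n(G,w)$ coincide with the ambient sets.

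First I would invoke \cref{cor:braid}~\eqref{item:cor-braid-b}, which gives $\image \psi \subseteq \varrho^{-1}(\image \sigma_s)$. On the other hand, the exact sequence~\eqref{eq:SRR-sequence} contains
\[
L_{n+1}^h(\Z G, w) \xrightarrow{\psi} \wh{H}^{n+1}(C_2;\Wh(G, w)) \xrightarrow{\varrho} L_n^s(\Z G, w),
\]
so if $\psi$ is surjective then $\wh{H}^{n+1}(C_2;\Wh(G,w)) = \image \psi = \ker \varrho$. Hence $\varrho^{-1}(\image \sigma_s) = \wh{H}^{n+1}(C_2;\Wh(G,w))$. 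Since the quotient map $\pi \colon \mathcal{J}_n(G,w) \to \wh{H}^{n+1}(C_2;\Wh(G,w))$ is surjective, applying $\pi^{-1}$ gives $(\varrho \circ \pi)^{-1}(\image \sigma_s) = \pi^{-1}\bigl(\wh{H}^{n+1}(C_2;\Wh(G,w))\bigr) = \mathcal{J}_n(G,w)$.

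Finally I would substitute these two equalities into the bijections of \cref{theorem:man-bij}, namely $\widetilde{t}_M \colon \M^h_s(M) \xrightarrow{\cong} (\varrho \circ \pi)^{-1}(\image \sigma_s) / \hAut(M)$ and $\widetilde{u}_M \colon \M^h_{s,\hCob}(M) \xrightarrow{\cong} \varrho^{-1}(\image \sigma_s) / \hAut(M)$, to conclude $\M^h_s(M) \cong \mathcal{J}_n(G,w)/\hAut(M)$ and $\M^h_{s,\hCob}(M) \cong \wh{H}^{n+1}(C_2;\Wh(G,w))/\hAut(M)$. This argument is entirely formal once \cref{theorem:man-bij} and \cref{cor:braid} are in hand; there is no substantial obstacle, the only point meriting care being the implicit fact (already recorded in the proof of \cref{cor:braid}~\eqref{item:cor-braid-b}) that $\Id_M$ has vanishing surgery obstruction, which is what makes $\image \psi$ land inside $\varrho^{-1}(\image \sigma_s)$ in the first place.
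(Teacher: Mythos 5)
Your proposal is correct and matches the paper's own argument: the paper likewise uses \cref{cor:braid}~\eqref{item:cor-braid-b} together with surjectivity of $\psi$ to conclude $\varrho^{-1}(\image \sigma_s) = \wh{H}^{n+1}(C_2;\Wh(G,w))$ and hence $(\varrho \circ \pi)^{-1}(\image \sigma_s) = \mathcal{J}_n(G,w)$, and then applies \cref{theorem:man-bij}. Your extra remark that $\image \psi = \ker \varrho$ and that $\Id_M$ has vanishing surgery obstruction just re-derives the content of \cref{cor:braid}~\eqref{item:cor-braid-b}, so the two proofs are essentially identical.
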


\begin{proof}
By \cref{cor:im-psi}, we have $\wh{H}^{n+1}(C_2;\Wh(G,w)) = \image \psi \subseteq \varrho^{-1}(\image \sigma_{\mathrm{s}})$. Hence $\varrho^{-1}(\image \sigma_{\mathrm{s}}) = \wh{H}^{n+1}(C_2;\Wh(G,w))$ and $(\varrho \circ \pi)^{-1}(\image \sigma_{\mathrm{s}}) = \mathcal{J}_n(G,w)$. 
\end{proof}

\begin{remark} \label{rem:haut-act-spec2}
If $\tau(g) \in \mathcal{I}_n(G,w)$ for every $g \in \hAut(M)$, then the proof of \cref{theorem:man-class}~\eqref{item:thm-man-class-a} shows that $\mathcal{I}_n(G,w)$ is also an invariant subset under the action of $\hAut(M)$, and $q(\mathcal{I}_n(G,w)) = \mathcal{I}_n(G,w) / \hAut(M)$. It also means that $\pi(\tau(g))=0$ for every $g$, so $\hAut(M)$ acts on the group $\wh{H}^{n+1}(C_2;\Wh(G,w))$ via automorphisms. 

As in \cref{rem:haut-act-spec}, if $\tau(g)=0$ for every $g$, then $\hAut(M)$ acts on the groups $\mathcal{I}_n(G,w)$ and $\mathcal{J}_n(G,w)$ via automorphisms. And if $\pi_1(g) = \id_G$ for every $g$, then $\mathcal{J}_n(G,w) / \hAut(M)$ and $\wh{H}^{n+1}(C_2;\Wh(G,w)) / \hAut(M)$ are quotient groups of $\mathcal{J}_n(G,w)$ and $\wh{H}^{n+1}(C_2;\Wh(G,w))$.
\end{remark}

We now consider some corollaries of \cref{theorem:man-class,theorem:man-bij}.

\begin{definition} \label{def:TU}
Let $M$ be a closed $\CAT$ $n$-manifold with $\pi_1(M) \cong G$ and orientation character $w \colon G \rightarrow \left\{ \pm 1 \right\}$. We define the subsets
\[
\begin{aligned}
T(M) &= \left\{ \tau(g) \mid g \in \hAut(M) \right\} \subseteq \mathcal{J}_n(G,w) \\
U(M) &= \left\{ \pi(\tau(g)) \mid g \in \hAut(M) \right\} \subseteq \wh{H}^{n+1}(C_2;\Wh(G,w)) 
\end{aligned}
\]
\end{definition}

\begin{proposition} \label{prop:hcob-not-she}
Let $M$ be a closed $\CAT$ $n$-manifold with $\CAT$ as in \cref{assumptions}. Let $G = \pi_1(M)$ with orientation character $w \colon G \rightarrow \left\{ \pm 1 \right\}$. Then $|\M^{\hCob}_{\mathrm{s}}(M)| > 1$ 
if and only if $\mathcal{I}_n(G,w) \setminus T(M)$ is nonempty.
\end{proposition}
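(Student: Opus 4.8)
The plan is to deduce the statement directly from the bijection
$\widetilde{t}'_M \colon \M^{\hCob}_s(M) \xrightarrow{\cong} q(\mathcal{I}_n(G,w))$
supplied by \cref{theorem:man-bij} (which applies since \cref{assumptions} is in force), where $q \colon \mathcal{J}_n(G,w) \to \mathcal{J}_n(G,w)/\hAut(M)$ is the quotient by the $\hAut(M)$-action. Since $\widetilde{t}'_M$ is a bijection, $|\M^{\hCob}_s(M)|$ equals the number of $\hAut(M)$-orbits in $\mathcal{J}_n(G,w)$ that meet $\mathcal{I}_n(G,w)$, and the task reduces to showing that this number exceeds $1$ if and only if $\mathcal{I}_n(G,w) \setminus T(M) \neq \emptyset$.

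First I would identify the orbit corresponding to the basepoint $[M]$. For every $g \in \hAut(M)$ we have $0^g = g_*(0) + \tau(g) = \tau(g)$, so the $\hAut(M)$-orbit of $0$ is precisely $T(M) = \{\tau(g) \mid g \in \hAut(M)\}$; note that $T(M) \subseteq \mathcal{J}_n(G,w)$ by \cref{prop:WT-man}, and $0 \in \mathcal{I}_n(G,w)$ (take $x = 0$ in \cref{def:IJ}), so the orbit $T(M)$ is one of the elements of $q(\mathcal{I}_n(G,w))$ — indeed it is $\widetilde{t}'_M([M])$.

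Second, I would invoke the elementary fact that for $x \in \mathcal{I}_n(G,w)$ one has $q(x) = q(0)$ exactly when $x$ lies in the orbit of $0$, that is, when $x \in T(M)$. Since $q(\mathcal{I}_n(G,w))$ always contains $q(0)$, it has more than one element precisely when some $x \in \mathcal{I}_n(G,w)$ satisfies $x \notin T(M)$, i.e.\ precisely when $\mathcal{I}_n(G,w) \setminus T(M) \neq \emptyset$. Combined with the first step and the bijection $\widetilde{t}'_M$, this gives the claimed equivalence.

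I do not anticipate a real obstacle here: the content is entirely contained in \cref{theorem:man-class} and \cref{theorem:man-bij}, and this proposition is just an unpacking of the bijection $\widetilde{t}'_M$ together with the observation that $T(M)$ is the $\hAut(M)$-orbit of $0$. The one point requiring a little care is to argue at the level of orbits rather than of elements: $T(M)$ may be a large subset of $\Wh(G,w)$ yet still constitute a single orbit, so it would be a mistake to compare the cardinalities of $\mathcal{I}_n(G,w)$ and $T(M)$ directly — the correct invariant is whether $\mathcal{I}_n(G,w)$ is contained in the single orbit $T(M)$, which is exactly what $\mathcal{I}_n(G,w) \setminus T(M) = \emptyset$ expresses.
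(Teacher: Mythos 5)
Your argument is correct and is essentially the paper's own proof: the paper likewise observes that $T(M)$ is the $\hAut(M)$-orbit of $0$, so that $q(\mathcal{I}_n(G,w))$ has more than one element exactly when $\mathcal{I}_n(G,w) \setminus T(M) \neq \emptyset$, and then applies the bijection $\widetilde{t}'_M$ from \cref{theorem:man-bij}. Your additional care about comparing orbits rather than cardinalities is sound but adds nothing beyond the paper's reasoning.
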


\begin{proof}
Note that $T(M) = t_M(M) \subseteq \mathcal{J}_n(G,w)$ is the orbit of $0$ under the action of $\hAut(M)$. So $q(\mathcal{I}_n(G,w))$ contains more than one element if and only if $\mathcal{I}_n(G,w) \setminus T(M)$ is nonempty. By \cref{theorem:man-bij} this is equivalent to $\M^{\hCob}_{\mathrm{s}}(M)$ containing more than one element.
\end{proof}

\begin{proposition} \label{prop:he-not-she+hcob}
Let $M$ be a closed $\CAT$ $n$-manifold with $\CAT$ as in \cref{assumptions}. Let $G = \pi_1(M)$ with orientation character $w \colon G \rightarrow \left\{ \pm 1 \right\}$. Then $|\M^{\mathrm{h}}_{\mathrm{s},\hCob}(M)| > 1$ 
if and only if $\varrho^{-1}(\image \sigma_{\mathrm{s}}) \setminus U(M)$ is nonempty. In particular, it is a sufficient condition that $\image(\psi) \setminus U(M)$ is nonempty.
\end{proposition}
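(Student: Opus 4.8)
The plan is to derive the statement directly from \cref{theorem:man-bij,theorem:man-class}, in exact parallel with the proof of \cref{prop:hcob-not-she}.

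First I would identify $U(M)$ with $u_M(M)$. By definition, $u_M(M) = \{\pi(\tau(f)) \mid f \colon M \to M \text{ a homotopy equivalence}\} = \{\pi(\tau(g)) \mid g \in \hAut(M)\} = U(M)$. By the analogue of \cref{lem:sh-class1} invoked in the proof of \cref{theorem:man-class}~\eqref{item:thm-man-class-b}, $u_M(M)$ is a single orbit of the $\hAut(M)$-action on $\wh H^{n+1}(C_2;\Wh(G,w))$, and since it contains $\pi(\tau(\Id_M)) = 0$ it is exactly the orbit of $0$. In particular $U(M)$ is contained in $\varrho^{-1}(\image\sigma_s)$: the class $[M] \in \M^h_{s,\hCob}(M)$ maps under $\widetilde u_M$ to the orbit $U(M)$, and $\im\widetilde u_M = \varrho^{-1}(\image\sigma_s)/\hAut(M)$ by \cref{theorem:man-class}~\eqref{item:thm-man-class-e}; alternatively $0 = \psi(0) \in \image\psi \subseteq \varrho^{-1}(\image\sigma_s)$ by \cref{cor:braid}~\eqref{item:cor-braid-b}, and $\varrho^{-1}(\image\sigma_s)$ is $\hAut(M)$-invariant.

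Next I would use that $\widetilde u_M \colon \M^h_{s,\hCob}(M) \to \varrho^{-1}(\image\sigma_s)/\hAut(M)$ is a bijection (\cref{theorem:man-bij}). Hence $|\M^h_{s,\hCob}(M)| > 1$ if and only if the quotient $\varrho^{-1}(\image\sigma_s)/\hAut(M)$ has more than one element. Since $\varrho^{-1}(\image\sigma_s)$ is $\hAut(M)$-invariant and $U(M)$ is exactly one of its orbits (the orbit of $0$), this quotient has more than one element if and only if $\varrho^{-1}(\image\sigma_s) \setminus U(M) \neq \emptyset$, which gives the first equivalence. For the final claim, $\image\psi \subseteq \varrho^{-1}(\image\sigma_s)$ (\cref{cor:braid}~\eqref{item:cor-braid-b}) shows that nonemptiness of $\image(\psi) \setminus U(M)$ forces nonemptiness of $\varrho^{-1}(\image\sigma_s) \setminus U(M)$, hence $|\M^h_{s,\hCob}(M)| > 1$.

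I do not expect a substantial obstacle; the only point needing care is the bookkeeping that $U(M)$ is a single $\hAut(M)$-orbit lying inside $\varrho^{-1}(\image\sigma_s)$, which is precisely where the identification $U(M) = u_M(M)$ and the equivariance properties packaged into \cref{theorem:man-class}~\eqref{item:thm-man-class-e} (ultimately the equivariance of $\widehat\tau$ from \cref{prop:psi-comm-diag}) enter.
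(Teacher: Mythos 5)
Your proposal is correct and follows essentially the same route as the paper: identify $U(M)=u_M(M)$ as the $\hAut(M)$-orbit of $0$ inside $\varrho^{-1}(\image \sigma_s)$, apply the bijection $\widetilde{u}_M$ from \cref{theorem:man-bij} to translate $|\M^h_{s,\hCob}(M)|>1$ into the quotient having more than one orbit, and deduce the sufficient condition from $\image\psi \subseteq \varrho^{-1}(\image\sigma_s)$ via \cref{cor:braid}. The extra bookkeeping you supply (invariance of $\varrho^{-1}(\image\sigma_s)$ and the containment $U(M)\subseteq\varrho^{-1}(\image\sigma_s)$) is exactly what the paper's shorter proof implicitly relies on.
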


\begin{proof}
Again $U(M) = u_M(M) \subseteq \wh{H}^{n+1}(C_2;\Wh(G,w))$ is the orbit of $0$ under the action of $\hAut(M)$. So $\varrho^{-1}(\image \sigma_{\mathrm{s}}) / \hAut(M)$ contains more than one element if and only if $\varrho^{-1}(\image \sigma_{\mathrm{s}}) \setminus U(M)$ is nonempty. By \cref{theorem:man-bij} this is equivalent to $\M^{\mathrm{h}}_{\mathrm{s},\hCob}(M)$ containing more than one element. Finally, by \cref{cor:im-psi} we have $\varrho^{-1}(\image \sigma_{\mathrm{s}}) \setminus U(M) \supseteq \image(\psi) \setminus U(M)$. 
\end{proof}

\begin{proposition} \label{prop:he-not-she-equiv}
Let $M$ be a closed $\CAT$ $n$-manifold with $\CAT$ as in \cref{assumptions}. Then the following are equivalent: 
\begin{clist}{(i)}
\item\label{item:prop-he-not-she-equiv-i} $|\M^{\mathrm{h}}_{\mathrm{s}}(M)| > 1$.
\item\label{item:prop-he-not-she-equiv-ii} Either $|\M^{\hCob}_{\mathrm{s}}(M)| > 1$ or $|\M^{\mathrm{h}}_{\mathrm{s},\hCob}(M)| > 1$. 
\item\label{item:prop-he-not-she-equiv-iii} $(\varrho \circ \pi)^{-1}(\image \sigma_{\mathrm{s}}) \setminus T(M)$ is nonempty.
\end{clist}
\end{proposition}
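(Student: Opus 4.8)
\emph{Proof proposal.} The plan is to route everything through the bijections of \cref{theorem:man-bij} and the exactness of the top row of the diagram in \cref{theorem:man-class}~\eqref{item:thm-man-class-c}--\eqref{item:thm-man-class-d}. Throughout one keeps in mind that $[M]$ is the base point of each of $\M^h_s(M)$, $\M^{\hCob}_s(M)$, $\M^h_{s,\hCob}(M)$, that all structural maps are pointed, and that $\widetilde{t}_M([M])$ is the $\hAut(M)$-orbit $T(M) = t_M(M)$ of $0$. A small preliminary observation, which I would record first, is that $0 \in (\varrho \circ \pi)^{-1}(\image \sigma_s)$: since $\widehat{\tau}([M,\id_M]) = \pi(\tau(\id_M)) = 0$, \cref{cor:braid}~\eqref{item:cor-braid-a} gives $0 \in \image \widehat{\tau} = \varrho^{-1}(\image \sigma_s)$, hence $0 = \pi(0)$ lies in $\pi^{-1}(\varrho^{-1}(\image \sigma_s)) = (\varrho\circ\pi)^{-1}(\image \sigma_s)$.

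For \eqref{item:prop-he-not-she-equiv-i} $\Leftrightarrow$ \eqref{item:prop-he-not-she-equiv-iii} I would argue as follows. By \cref{theorem:man-bij}, $\widetilde{t}_M$ identifies $\M^h_s(M)$ with the orbit set $(\varrho\circ\pi)^{-1}(\image\sigma_s)/\hAut(M)$, so $|\M^h_s(M)| > 1$ exactly when that orbit set has more than one element. For any group action on a set $S$, the quotient has more than one element precisely when $S$ is not a single orbit; since the base-point orbit here is $T(M)$, this happens exactly when $(\varrho\circ\pi)^{-1}(\image\sigma_s)\setminus T(M) \neq \emptyset$, which is \eqref{item:prop-he-not-she-equiv-iii}. (One could alternatively derive \eqref{item:prop-he-not-she-equiv-ii} $\Leftrightarrow$ \eqref{item:prop-he-not-she-equiv-iii} directly by combining \cref{prop:hcob-not-she} and \cref{prop:he-not-she+hcob} with a translation argument: given $x \in (\varrho\circ\pi)^{-1}(\image\sigma_s)\setminus T(M)$ with $\pi(x)\in U(M)$, write $\pi(x)=\pi(\tau(g))$ and check $x^{g^{-1}}\in \mathcal{I}_n(G,w)\setminus T(M)$ using $\tau(g^{-1})=-g^{-1}_*(\tau(g))$ and that $g^{-1}_*$ preserves $\mathcal{I}_n(G,w)$; conversely lift along the surjection $\pi$. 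But going through the abstract exact sequence is cleaner.)

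For \eqref{item:prop-he-not-she-equiv-i} $\Leftrightarrow$ \eqref{item:prop-he-not-she-equiv-ii} I would use the top row $\M^{\hCob}_s(M) \hookrightarrow \M^h_s(M) \twoheadrightarrow \M^h_{s,\hCob}(M)$. Since the first map is injective and the second surjective and both are pointed, $|\M^{\hCob}_s(M)| \le |\M^h_s(M)|$ and $|\M^h_{s,\hCob}(M)| \le |\M^h_s(M)|$, so \eqref{item:prop-he-not-she-equiv-ii} $\Rightarrow$ \eqref{item:prop-he-not-she-equiv-i} by contraposition. For the converse, take $[N]\in\M^h_s(M)$ with $[N]\neq[M]$: if its image in $\M^h_{s,\hCob}(M)$ is not the base point then $|\M^h_{s,\hCob}(M)|>1$; if it is the base point then exactness of pointed sets (\cref{theorem:man-class}~\eqref{item:thm-man-class-d}) places $[N]$ in the image of the pointed injection from $\M^{\hCob}_s(M)$, forcing $|\M^{\hCob}_s(M)|>1$. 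Given the machinery already in place, no step here is genuinely hard; the only thing to be careful about is the bookkeeping of base points and the "quotient $>1$ iff set $\neq$ one orbit" fact, and, if one instead takes the second route for \eqref{item:prop-he-not-she-equiv-ii} $\Leftrightarrow$ \eqref{item:prop-he-not-she-equiv-iii}, the translation computation with $\hAut(M)$ acting affinely on $\Wh(G,w)$.
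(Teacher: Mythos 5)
Your proof is correct and follows essentially the same route as the paper: the equivalence \eqref{item:prop-he-not-she-equiv-i} $\Leftrightarrow$ \eqref{item:prop-he-not-she-equiv-iii} via the bijection of \cref{theorem:man-bij} together with the fact that $T(M)$ is the orbit of $0$, and \eqref{item:prop-he-not-she-equiv-i} $\Leftrightarrow$ \eqref{item:prop-he-not-she-equiv-ii} via the exact sequence of pointed sets in \cref{theorem:man-class}~\eqref{item:thm-man-class-c}--\eqref{item:thm-man-class-d}. You simply spell out the base-point bookkeeping that the paper leaves implicit.
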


\begin{proof}
$\eqref{item:prop-he-not-she-equiv-i} \Leftrightarrow \eqref{item:prop-he-not-she-equiv-ii}$. By \cref{theorem:man-class}~\eqref{item:thm-man-class-c} and~\eqref{item:thm-man-class-d} we have $|\M^{\mathrm{h}}_{\mathrm{s}}(M)|=1$ if and only if $|\M^{\hCob}_{\mathrm{s}}(M)|=1$ and $|\M^{\mathrm{h}}_{\mathrm{s},\hCob}(M)|=1$.

$\eqref{item:prop-he-not-she-equiv-i} \Leftrightarrow \eqref{item:prop-he-not-she-equiv-iii}$. Follows from \cref{theorem:man-bij} since $T(M)$ is the orbit of $0$.
\end{proof}

\begin{remark} \label{rem:diag}
We can relate simple homotopy manifold sets to the structure sets discussed in \cref{section:realisation}. As $\mathcal{S}^{\mathrm{h}}(M)$ contains all homotopy equivalences $f \colon N \rightarrow M$ up to $h$-cobordism, the forgetful map $[f] \mapsto [N]$ is a surjection $\mathcal{S}^{\mathrm{h}}(M) \rightarrow \M^{\mathrm{h}}_{\mathrm{s},\hCob}(M)$. Moreover, $\widehat{\tau} \colon \mathcal{S}^{\mathrm{h}}(M) \to \wh{H}^{n+1}(C_2;\Wh(G, w))$ descends to the map $\widetilde{u}_M$ (cf.\ the proof of \cref{theorem:man-class} \eqref{item:thm-man-class-e}). Similarly, there are surjections $\mathcal{S}^{\mathrm{hCob}}_{\mathrm{sCob}}(M) \rightarrow \M^{\hCob}_{\mathrm{s}}(M)$ and $\mathcal{S}^{\mathrm{h}}_{\mathrm{sCob}}(M) \rightarrow \M^{\mathrm{h}}_{\mathrm{s}}(M)$ (see \cref{rem:real} for the definitions of these structure sets), and $\widetilde{t}'_M$ and $\widetilde{t}_M$ are induced by the maps $\tau$ that take the Whitehead torsion of a homotopy equivalence (induced by the $h$-cobordism in the case of $\mathcal{S}^{\mathrm{hCob}}_{\mathrm{sCob}}(M)$). These observations, and the realisation maps from \cref{rem:real}, are summarised in the following commutative diagram. 
\[
\xymatrix@!C=1.75cm{
\Wh(G,w) \ar[rr] \ar[dd]_-{\substack{x \mapsto \\ -x+ \\ (-1)^n\ol{x}}} \ar[dr]^{W_{\Wh}} & & L_{n+1}^{\mathrm{s},\tau}(\Z G, w) \ar[rr] \ar[dd] \ar[dr]^-{W_{\mathrm{s},\tau}} & & L_{n+1}^{\mathrm{h}}(\Z G, w) \ar[dd]_(.35){\psi} \ar[dr]^-{W_{\mathrm{h}}} & \\
 & \mathcal{S}^{\mathrm{hCob}}_{\mathrm{sCob}}(M) \ar[rr] |!{[ur];[dr]}\hole \ar[dl]_-{\tau} \ar@{->>}[dd] |!{[dl];[dr]}\hole & & \mathcal{S}^{\mathrm{h}}_{\mathrm{sCob}}(M) \ar[rr] |!{[ur];[dr]}\hole \ar[dl]_-{\tau} \ar@{->>}[dd] |!{[dl];[dr]}\hole & & \mathcal{S}^{\mathrm{h}}(M) \ar[dl]_-{\widehat{\tau}} \ar@{->>}[dd] \\
\mathcal{I}_n(G,w) \ar[rr] \ar@{->>}[dd]_-{q} & & \mathcal{J}_n(G,w) \ar[rr]^(.35){\pi} \ar@{->>}[dd]_(.35){q} & & \wh{H}^{n+1}(C_2;\Wh(G, w)) \ar@{->>}[dd] & \\
 & \M^{\hCob}_{\mathrm{s}}(M) \ar[rr] |!{[ur];[dr]}\hole \ar[dl]_-{\widetilde{t}'_M} & & \M^{\mathrm{h}}_{\mathrm{s}}(M) \ar[rr] |!{[ur];[dr]}\hole \ar[dl]_-{\widetilde{t}_M} & & \M^{\mathrm{h}}_{\mathrm{s},\hCob}(M) \ar[dl]_-{\widetilde{u}_M} \\
q(\mathcal{I}_n(G,w)) \ar[rr] & & \mathcal{J}_n(G,w) / \hAut(M) \ar[rr] & & \wh{H}^{n+1}(C_2;\Wh(G,w)) / \hAut(M) & 
}
\]
\end{remark}

\part{The simple homotopy manifold set of $S^1 \times L$} \label{p:lens}

In this part we consider $S^1 \times L$, the product of the circle with a lens space $L$. We prove \cref{main-theorem,theorem:main-S^1xL,theorem:hcob-S^1xL,,theorem:s-hcob-S^1xL} about the simple homotopy manifold sets of $S^1 \times L$. 
As described in \cref{thmx:bijections-with-manifold-sets-intro,ss:intro-S1L}, we need to study the involution on $\Wh(C_\infty \times C_m)$ and the group $\hAut(S^1 \times L)$. \cref{s:inv-wh} contains our results about the groups $\mathcal{J}_n(C_\infty \times C_m)$, $\mathcal{I}_n(C_\infty \times C_m)$ and $\wh{H}^{n+1}(C_2;\Wh(C_\infty \times C_m))$ (for $n$ even), which rely on the results of \cref{p:algebra}. In \cref{section:torsion-of-self-equiv-lens-space-x-S1}, we prove that all homotopy automorphisms of $S^1 \times L$ are simple, and determine the automorphisms of $C_\infty \times C_m$ induced by them. In \cref{s:b-proof}, we combine these results to prove \cref{main-theorem,theorem:main-S^1xL,theorem:hcob-S^1xL,,theorem:s-hcob-S^1xL}.

\section{The involution on $\Wh(C_\infty \times C_m)$} \label{s:inv-wh}

In order to understand its involution, we first consider a direct sum decomposition of the Whitehead group $\Wh(C_\infty \times G)$. This decomposition is derived from the fundamental theorem for $K_1(\Z G[t,t^{-1}])$ ~\cite{Ba68}; see also \cite{Ra86}, \cite[III.3.6]{We13}.
Whilst this theorem appeared for the first time in Bass' book, the paper of Bass-Heller-Swan~\cite[Theorem 2']{BHS64} is often mentioned in conjunction with it, as an early version which contained several key ideas, and the theorem for left regular rings, appeared there.  See \cite[pXV]{Ba68} for further discussion.

We use the version given by Ranicki~\cite{Ra86}. We start by defining the terms appearing in the decomposition in \cref{ss:k0}. \cref{ss:BHS} contains the decomposition of $\Wh(C_\infty \times C_m)$ and the induced decompositions of $\mathcal{J}_n(C_\infty \times C_m)$, $\mathcal{I}_n(C_\infty \times C_m)$ and $\wh{H}^{n+1}(C_2;\Wh(C_\infty \times C_m))$. These are combined with the results of \cref{p:algebra} in \cref{ss:ijh}, to prove \cref{theorem:main12,theorem:main3}, which are the key algebraic ingredients in the proofs of \cref{theorem:main-S^1xL,theorem:hcob-S^1xL,,theorem:s-hcob-S^1xL}.

\subsection{The $\wt{K}_0$ and $NK_1$ groups} \label{ss:k0}

First we consider the $K_0$ groups. For a ring $R$, we define the abelian group $K_0(R)$ and, for a group $G$, the reduced group $\wt{K}_0(\Z G)$. A convenient reference for this material is \cite{We13}. In \cref{p:algebra}, we study the involution on $\wt{K}_0(\Z G)$ in more detail when $G$ is a finite cyclic group.

\begin{definition}
For a ring $R$ let $P(R)$ denote the set of isomorphism classes of finitely generated left projective $R$-modules, which is a monoid under direct sum. Define $K_0(R)$ to be the Grothendieck group of this monoid, 
i.e.\ the abelian group generated by symbols $[P]$, for every $P \in P(R)$, subject to the relations $[P_1 \oplus P_2] = [P_1] + [P_2]$ for $P_1, P_2 \in P(R)$.
\end{definition}

The assignment $R \mapsto K_0(R)$ defines a functor from the category of rings to the category of abelian groups. If $f \colon R \to S$ is a ring homomorphism, then $K_0(f) \colon K_0(R) \to K_0(S)$ is the map induced by extension of scalars $P \mapsto f_\#(P) := S \otimes_R P$ (see \cref{def:tensor-hom}) for $P \in P(R)$.

We say that a ring $R$ is \textit{$\Z$-augmented} if it comes equipped with a ring homomorphism $\varepsilon \colon R \to \Z$. If $i  \colon \Z \to R$ is the unique ring homomorphism, then $\varepsilon \circ i = \id_{\Z}$ and so $i$ is injective and $\varepsilon$ is surjective.
For example, $\Z G$ is $\Z$-augmented where $\varepsilon \colon \Z G \to \Z$ denotes the augmentation map.

\begin{definition}
For a $\Z$-augmented ring $R$, let
\[
\wt{K}_0(R)  :=  K_0(R)/K_0(\Z),
\]
where the map $K_0(\Z) \to K_0(R)$ is induced by the inclusion $i \colon \Z \rightarrow R$.
\end{definition}

\begin{definition}
For a $\Z$-augmented ring $R$ and a finitely generated projective $R$-module $P$, the \textit{rank} of $P$ is defined to be
\[ \rk(P) := \rank_{\Z}(\varepsilon_\#(P)) \]
where $\varepsilon \colon R \to \Z$ denotes the augmentation map and $\rank_{\Z}$ denotes the rank of a free $\Z$-module.
\end{definition}

Let $R$ be a $\Z$-augmented ring.
We have that $K_0(\Z) \cong \Z$ and, for $P \in P(R)$, the rank $\rk(P)$ coincides with the image of $[P]$ under the composition $K_0(R) \to K_0(\Z) \cong \Z$ induced by $\varepsilon$. Since $\varepsilon \circ i = \id_{\Z}$, we have a splitting of abelian groups
$ 
K_0(R) \cong \Z \oplus \wt{K}_0(R)
$
given by $[P] \mapsto (\rank(P),[P])$. 
In particular, $\wt{K}_0(R)$ is the set of equivalence classes of finitely generated projective $R$-modules $P$ where $P \sim Q$ if $P \oplus R^i \cong Q \oplus R^j$ for some $i,j \geq 0$.

For $R$ a ring with involution, we can define a natural involution on $K_0(R)$. If $P \in P(R)$, then $P^* \in P(R)$ (see \cref{def:dual}) since $P \oplus Q \cong R^n$ implies that $P^* \oplus Q^* \cong R^n$. 

\begin{definition} \label{def:k0-inv}
The \textit{standard involution} on $K_0(R)$ is given by $[P] \mapsto -[P^*]$. 
If $R$ is a $\Z$-augmented ring, this map preserves the rank of a projective $R$-module and so induces an involution on $\wt{K}_0(R)$.
\end{definition}

Next we define the Nil groups $NK_1(R)$ and recall some of their properties.

\begin{definition}
For a ring $R$, let
\[
NK_1(R)  :=  \coker(K_1(R) \rightarrow K_1(R[t])),
\]
where the map $K_1(R) \rightarrow K_1(R[t])$ is induced by the inclusion $R \rightarrow R[t]$. 
\end{definition}

Note that $NK_1$ is a functor from the category of rings to the category of abelian groups.
If $R$ is a ring with involution, then $R[t]$ has an involution such that $t \mapsto t^{-1}$ and which extends the involution on $R$.
Thus $K_1(R[t])$ has a natural involution (see \cref{ss:wh-group}), and this involution preserves the image of $K_1(R) \rightarrow K_1(R[t])$, and hence induces an involution on $NK_1(R)$, which we denote by $x \mapsto \overline{x}$.

\begin{definition} \label{def:nk1-inv}
We equip $NK_1(R)^2 = NK_1(R) \oplus NK_1(R)$ with the involution $(x,y) \mapsto (\overline{y},\overline{x})$.
\end{definition}

We need the following result of Farrell \cite{Fa77} (note that $\mathrm{Nil}$, which is also written as $\mathrm{Nil}_0$, coincides with $NK_1$ by \cite[Chapter~XII;~7.4(a)]{Ba68}). 

\begin{theorem}[Farrell] \label{theorem:nk1-nonfg}
For any ring $R$, if $NK_1(R) \ne 0$, then $NK_1(R)$ is not finitely generated.
\end{theorem}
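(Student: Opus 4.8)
This statement is Farrell's theorem \cite{Fa77}, and in practice one cites it directly; here I outline the argument one would give.

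The plan is to exploit the description of $NK_1(R)$ as the reduced Nil group. Via the fundamental theorem \cite{Ba68}, $\coker(K_1(R)\to K_1(R[t]))$ is identified with the quotient of the Grothendieck group of pairs $(P,\nu)$ — with $P$ a finitely generated projective $R$-module and $\nu\in\End_R(P)$ nilpotent — by the subgroup generated by the classes $[(P,0)]$; the identification sends $[(P,\nu)]$ to the class of the automorphism $1+t\nu$ of $P\otimes_R R[t]$. This group carries Verschiebung operators $V_k$, induced by the ring map $R[t]\to R[t]$, $t\mapsto t^k$ (so $1+t\nu\mapsto 1+t^k\nu$), and Frobenius operators $F_k$, given by transfer along the inclusion $R[t^k]\hookrightarrow R[t]$ (which exhibits $R[t]$ as free of rank $k$ over $R[t^k]$). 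These satisfy $V_jV_k=V_{jk}$, $F_kV_k=k\cdot\id$, and $F_jV_k=V_kF_j$ when $\gcd(j,k)=1$. There is moreover a natural descending filtration $NK_1(R)=\Phi_1\supseteq\Phi_2\supseteq\cdots$, where $\Phi_k$ consists of the classes representable by some $1+t\nu$ with the entries of $\nu$ lying in $(t^{k-1})\subseteq R[t]$; it is separated, $\bigcap_k\Phi_k=0$, and $V_k(\Phi_1)\subseteq\Phi_k$ while $F_k(\Phi_{k\ell})\subseteq\Phi_\ell$.

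Now suppose $0\neq x\in NK_1(R)$ and, for contradiction, that $NK_1(R)$ is finitely generated, say $NK_1(R)\cong\Z^r\oplus T$ with $T$ finite of exponent $N$. After replacing $x$ by $Nx$ one may try to assume $x$ lies in the torsion-free part. One then considers the Verschiebung translates $\{V_p(x)\}_p$ as $p$ ranges over the primes. Writing $\ell(y)$ for the level of $y\neq 0$ (the largest $k$ with $y\in\Phi_k$), a careful analysis using $F_pV_p=p\cdot\id$ together with the filtration estimates above shows that $V_p(x)$ has strictly larger level than all earlier $V_{p'}(x)$, so these elements are ``triangular'' with respect to $\{\Phi_k\}$ and hence $\Z$-linearly independent. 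They therefore span a free subgroup of infinite rank, contradicting finite generation of $NK_1(R)$.

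The main obstacle is precisely this independence step, and in particular the case where $NK_1(R)$ is torsion (e.g.\ $R=\Z C_p$), where one cannot simply project onto a free summand and the naive level estimate can fail because the graded pieces $\Phi_\ell/\Phi_{\ell+1}$ may themselves carry $p$-torsion. Farrell's argument in that case detects new generators by comparing images in the finite-length quotients $NK_1(R)/\Phi_k$ and tracking how their orders grow under Verschiebung; making this bookkeeping rigorous — i.e.\ that the interplay of the filtration with $V_k$ and $F_k$ really separates $V_p(x)$ from the subgroup generated by the earlier $V_{p'}(x)$ — is the crux, and for this it is cleanest to invoke \cite{Fa77} rather than reproduce the full argument here.
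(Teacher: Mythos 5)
Your proposal matches the paper: the paper treats this purely as Farrell's theorem and simply cites \cite{Fa77} (adding only the remark that $\mathrm{Nil}_0$ coincides with $NK_1$ by \cite[Chapter~XII;~7.4(a)]{Ba68}), giving no argument of its own. Since your write-up likewise rests on invoking \cite{Fa77}, with the Verschiebung/Frobenius sketch serving only as non-load-bearing context, the two approaches are essentially the same.
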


We conclude this section with the following result.

\begin{theorem} \label{theorem:nk1}
Let $m \geq 1$. Then $NK_1(\Z C_m) = 0$ if and only if $m$ is square-free. 
\end{theorem}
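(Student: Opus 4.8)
The plan is to prove the two implications separately, assembling the statement from the cited results of Bass--Murthy \cite{BM67}, Martin \cite{Ma75} and Weibel \cite{We09}, with only the gluing to supply.

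\emph{If $m$ is square-free, then $NK_1(\Z C_m) = 0$.} I would argue by induction on the number of prime divisors of $m$, the case $m=1$ being trivial since $NK_1(\Z)=0$. For the step, write $m = p\,m'$ with $p$ prime and $p \nmid m'$, and use that $\Z C_m$ sits in a conductor square whose other corners are $\Z C_{m'}$, $\Z[\zeta_p]C_{m'}$ and $\F_p C_{m'}$, arising from the cyclotomic decomposition $\Z C_p \hookrightarrow \Z \times \Z[\zeta_p]$ with conductor $(p)$. Now $NK_1(\Z C_{m'}) = 0$ by the inductive hypothesis; $\F_p C_{m'}$ is a finite product of finite fields (by Maschke, since $p \nmid m'$), hence regular, so $NK_1(\F_p C_{m'}) = 0$; and $NK_1(\Z[\zeta_p]C_{m'}) = 0$ by the analogous induction with the Dedekind ring $\Z[\zeta_p]$ in place of $\Z$ (the relevant maximal orders being products of rings of integers, which are regular). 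A chase through the Mayer--Vietoris/localization sequences attached to the conductor square then forces $NK_1(\Z C_m) = 0$; this is the content of \cite{BM67}, completed for all square-free $m$ in \cite{Ma75}.

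\emph{If $m$ is not square-free, then $NK_1(\Z C_m) \neq 0$.} Suppose $p^2 \mid m$ and write $m = p^a k$ with $p \nmid k$ and $a \geq 2$, so $C_m \cong C_{p^a} \times C_k$. The inclusion $C_{p^a} \hookrightarrow C_m$ of the first factor and the projection $C_m \twoheadrightarrow C_{p^a}$ are group homomorphisms composing to the identity; by functoriality of $NK_1$ (\cref{ss:k0}) they exhibit $NK_1(\Z C_{p^a})$ as a direct summand of $NK_1(\Z C_m)$. So it suffices to show $NK_1(\Z C_{p^a}) \neq 0$ for every prime $p$ and every $a \geq 2$. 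For $a = 2$ this is a computation of Bass--Murthy \cite{BM67}, with the case $p = 2$ (that is, $C_4$) treated by Weibel \cite{We09}; the passage to $a \geq 3$ is again due to Weibel \cite{We09}, via his analysis of the Mayer--Vietoris sequences and module structures on $NK_*$, which in particular realises $NK_1(\Z C_{p^2})$ as a summand of $NK_1(\Z C_{p^a})$.

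I expect the main obstacle to be precisely this last input. Unlike the coprime factor $C_k$, the cyclic $p$-group $C_{p^a}$ is indecomposable, so $C_{p^2}$ is not a retract of it, and the reduction of the $a \geq 3$ case to the base case $C_{p^2}$ cannot be done by a bare functoriality argument; it genuinely requires the finer Mayer--Vietoris and transfer computations of \cite{We09} (building on \cite{BM67}). The square-free direction, though longer, is more routine: the only non-elementary point is the behaviour of $NK_1$ under the conductor squares above, and there the vanishing follows from the regularity of the remaining corners together with the fact that regular rings have trivial $NK_1$.
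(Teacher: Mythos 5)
Your square-free direction is fine: it is a plausible reconstruction of the conductor-square/regularity argument behind the cited vanishing, and in any case it ultimately rests on the same references the paper invokes (the paper simply quotes \cite[Theorem 10.8 (d)]{BM67} for all square-free $m$).

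The non-square-free direction, however, has a genuine gap, and it sits exactly where you flagged "the main obstacle". Your retract argument correctly reduces the problem to showing $NK_1(\Z C_{p^a}) \neq 0$ for all $a \geq 2$, but the passage from $a=2$ to $a \geq 3$ is then left unproved: as you yourself note, $C_{p^2}$ is not a retract of $C_{p^a}$, so functoriality gives nothing, and the claim that \cite{We09} "realises $NK_1(\Z C_{p^2})$ as a summand of $NK_1(\Z C_{p^a})$" is not something that paper contains -- Weibel's Theorem 1.4 there is the computation for $C_4$ (the paper's source for the $p=2$, $a=2$ base case), not a statement about general cyclic $p$-groups. So the key step of your argument is attributed to a reference that does not supply it. The missing ingredient is Martin's divisibility result \cite[Theorem 3.6]{Ma75}: if $n \mid m$ and $NK_1(\Z C_n) \neq 0$, then $NK_1(\Z C_m) \neq 0$. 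This is proved by methods beyond bare functoriality (which is why it cannot be recovered by a retract argument), and once it is available your detour through $C_{p^a}$ becomes unnecessary: apply it directly with $n = p^2 \mid m$, together with $NK_1(\Z C_{p^2}) \neq 0$ from \cite[Theorem B]{Ma75} for $p$ odd and \cite[Theorem 1.4]{We09} for $p=2$. That is precisely the paper's proof; to repair yours, replace the unsupported appeal to \cite{We09} for $a \geq 3$ by the citation of Martin's Theorem 3.6 (or supply an independent proof of nonvanishing for $\Z C_{p^a}$, which is not a routine matter).
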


\begin{proof}
If $m$ is square-free, then $NK_1(\Z C_m) = 0$ by \cite[Theorem 10.8 (d)]{BM67}. If $n \mid m$ and $NK_1(\Z C_n) \ne 0$, then $NK_1(\Z C_m) \ne 0$ (by, for example, \cite[Theorem 3.6]{Ma75}). It remains to show that $NK_1(\Z C_{p^2}) \ne 0$ for all primes $p$. This was achieved in the case where $p$ is odd in \cite[Theorem B]{Ma75} and in the case $p=2$ in \cite[Theorem 1.4]{We09}.	
\end{proof}

\subsection{Bass-Heller-Swan decomposition of $\Wh(C_\infty \times G)$} \label{ss:BHS}

By \cite[p.329, p.357]{Ra86} we have the following theorem. 

\begin{theorem}[Bass-Heller-Swan decomposition] \label{theorem:BHS}
Let $G$ be a group. Then there is an isomorphism of $\Z C_2$-modules, which is natural in $G$:
\[ 
\Wh(C_\infty  \times G) \cong \Wh(G) \oplus \wt{K}_0(\Z G) \oplus NK_1(\Z G)^2
\]
where the $\Z C_2$-module structure of each component is determined by the involution defined in \cref{ss:wh-group}, \cref{def:k0-inv} and \cref{def:nk1-inv} respectively.
\end{theorem}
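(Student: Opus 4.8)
The plan is to obtain the decomposition from the Bass--Heller--Swan fundamental theorem for the $K_1$-group of a Laurent polynomial ring, and then to track the standard involution through each summand. Write $R = \Z G$, so that $R[t,t^{-1}] = \Z[C_\infty \times G]$. The fundamental theorem \cite{Ba68,BHS64}, in the form given by Ranicki \cite{Ra86} (see also \cite[III.3.6]{We13}), provides a splitting, natural in $R$,
\[
K_1(R[t,t^{-1}]) \cong K_1(R) \oplus K_0(R) \oplus NK_1(R) \oplus NK_1(R),
\]
in which the first summand is the image of the inclusion $R \hookrightarrow R[t,t^{-1}]$, the second is the image of the map $K_0(R) \to K_1(R[t,t^{-1}])$ sending $[P]$ to the class of the automorphism ``multiplication by $t$'' of $P[t,t^{-1}]$, and the last two are the images of the Nil subgroups of $K_1(R[t])$ and of $K_1(R[t^{-1}])$ under the localisation maps.

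First I would pass from $K_1$ to $\Wh$. By definition $\Wh(C_\infty \times G) = K_1(\Z[C_\infty \times G])/{\pm(C_\infty \times G)}$, and the subgroup generated by the trivial units $\pm(C_\infty \times G)$ is generated by the image of $\pm G$ together with the class of $t$. Under the splitting, $\pm G$ lies in the $K_1(R)$ summand, the class of $t$ is exactly the image of $[\Z G] \in K_0(\Z G)$ under the multiplication-by-$t$ map, and the Nil summands contain no trivial units. Quotienting summand by summand gives $K_1(\Z G)/{\pm G} = \Wh(G)$ and $K_0(\Z G)/\langle [\Z G] \rangle = \wt K_0(\Z G)$, with the two Nil summands unchanged, which is the asserted isomorphism of abelian groups; naturality in $G$ is inherited from naturality of the fundamental theorem in $R$.

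The main content is the equivariance. With the trivial orientation character, the standard involution on $\Z[C_\infty \times G]$ fixes $\Z$, sends $g \mapsto g^{-1}$ for $g \in G$, and sends $t \mapsto t^{-1}$; as a self-map of $R[t,t^{-1}]$ it is the $\Z G$-conjugate-transpose composed with $t \mapsto t^{-1}$. On the $K_1(\Z G)$ summand it restricts to the usual ($w \equiv 1$) involution on $K_1(\Z G)$, which descends to the standard involution on $\Wh(G)$ of \cref{ss:wh-group}. On the $K_0(\Z G)$ summand, applying the involution to the multiplication-by-$t$ automorphism of $P[t,t^{-1}]$ produces the multiplication-by-$t^{-1}$ automorphism of $P^*[t,t^{-1}]$, whose class in $K_1$ is the negative of the class of the multiplication-by-$t$ automorphism of $P^*[t,t^{-1}]$; hence it is the image of $-[P^*]$ under the second structure map, so the induced involution is $[P] \mapsto -[P^*]$, which descends to the standard involution on $\wt K_0(\Z G)$ of \cref{def:k0-inv}. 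On $NK_1(\Z G) \oplus NK_1(\Z G)$, the substitution $t \mapsto t^{-1}$ interchanges $R[t]$ with $R[t^{-1}]$, hence swaps the two Nil subgroups, while simultaneously conjugating the coefficients, so it is the involution $(x,y) \mapsto (\ol y, \ol x)$ of \cref{def:nk1-inv}.

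I expect the main obstacle to be essentially bookkeeping: fixing one convention for the Bass--Heller--Swan splitting maps and checking that the standard involution genuinely respects the splitting, rather than permuting the summands only up to correction terms, together with keeping track of signs where the class of $t^{-1}$ equals minus the class of $t$ in $K_1$. All of this is recorded in \cite[pp.~329, 357]{Ra86}, on which I would rely for the equivariance statement, supplying the identifications above for completeness.
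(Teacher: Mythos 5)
Your proposal is correct and takes essentially the same route as the paper, which states \cref{theorem:BHS} by citing Ranicki \cite[pp.~329, 357]{Ra86} for both the splitting and the behaviour of the involution. Your additional bookkeeping — identifying the class of $t$ with the image of $[\Z G]$ so that killing the trivial units gives $\Wh(G)\oplus\wt K_0(\Z G)\oplus NK_1(\Z G)^2$, noting that the dual of multiplication by $t$ on $P[t,t^{-1}]$ is multiplication by $t^{-1}$ on $P^*[t,t^{-1}]$ (hence $[P]\mapsto-[P^*]$), and that $t\mapsto t^{-1}$ swaps the Nil summands with conjugation — is accurate and consistent with the conventions of \cref{ss:wh-group}, \cref{def:k0-inv} and \cref{def:nk1-inv}.
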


\begin{proposition}\label{prop:I(ZxG)}
Let $G$ be a group and let $n \in \Z$. The decomposition of \cref{theorem:BHS} restricts to the following isomorphisms:
\[
\begin{aligned} 
\mathcal{J}_n(C_\infty \times G) &\cong \mathcal{J}_n(G) \oplus \{ x \in \wt{K}_0(\Z G) \mid x=-(-1)^n\ol{x} \} \oplus NK_1(\Z G) \\
\mathcal{I}_n(C_\infty \times G) &\cong \mathcal{I}_n(G) \oplus \{ x - (-1)^n\ol{x} \mid x \in \wt{K}_0(\Z G)\} \oplus NK_1(\Z G)
\end{aligned}
\]
where $NK_1(\Z G)$ is embedded into $NK_1(\Z G)^2$ by the map $x \mapsto (x,-(-1)^n\ol{x})$.
\end{proposition}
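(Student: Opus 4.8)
The plan is to work entirely module-by-module in the decomposition of \cref{theorem:BHS}, using the fact that this isomorphism is an isomorphism of $\Z C_2$-modules (i.e.\ it intertwines the involutions). Write $W = \Wh(C_\infty \times G)$, decomposed as $A \oplus B \oplus C \oplus C$ with $A = \Wh(G)$, $B = \wt K_0(\Z G)$ and $C = NK_1(\Z G)$, where the involution on $A$ and $B$ is the standard one, and the involution on $C \oplus C$ is $(x,y) \mapsto (\ol y, \ol x)$ (\cref{def:nk1-inv}). Both $\mathcal{J}_n$ and $\mathcal{I}_n$ are defined by applying the operation $y \mapsto y - (-1)^n \ol y$ (either as a fixed-point condition or as an image), and this operation is additive and respects direct sums, so it suffices to compute $\{y \in X \mid y = -(-1)^n \ol y\}$ and $\{x - (-1)^n \ol x \mid x \in X\}$ separately for $X = A$, for $X = B$, and for $X = C \oplus C$.

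The first two summands are immediate: for $X = A = \Wh(G)$ these sets are by definition $\mathcal{J}_n(G)$ and $\mathcal{I}_n(G)$ respectively, and for $X = B = \wt K_0(\Z G)$ they are by definition $\{x \in \wt K_0(\Z G) \mid x = -(-1)^n \ol x\}$ and $\{x - (-1)^n \ol x \mid x \in \wt K_0(\Z G)\}$. The only real content is the $C \oplus C$ summand, and here the claim is that in both cases the answer is a copy of $C = NK_1(\Z G)$ embedded via $x \mapsto (x, -(-1)^n \ol x)$. First I would check that this embedding lands in $\mathcal{J}_n$: applying the involution on $C \oplus C$ to $(x, -(-1)^n \ol x)$ gives $(\ol{-(-1)^n \ol x}, \ol x) = (-(-1)^n x, \ol x) = -(-1)^n(x, -(-1)^n \ol x)$ since $(-1)^{2n} = 1$, so indeed $(x,-(-1)^n\ol x) = -(-1)^n \ol{(x,-(-1)^n\ol x)}$. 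Conversely, if $(x,y) \in C \oplus C$ satisfies $(x,y) = -(-1)^n(\ol y, \ol x)$, then $y = -(-1)^n \ol x$ (the first coordinate equation $x = -(-1)^n \ol y$ is then automatic), so every element of $\{(x,y) \mid (x,y) = -(-1)^n\ol{(x,y)}\}$ has the required form; this identifies that set with $C$ via the first coordinate. For the image description, given $(x,y) \in C \oplus C$, compute $(x,y) - (-1)^n \ol{(x,y)} = (x,y) - (-1)^n(\ol y, \ol x) = (x - (-1)^n \ol y,\, y - (-1)^n \ol x)$, and setting $u = x - (-1)^n \ol y$ we get second coordinate $y - (-1)^n \ol x = -(-1)^n \ol{(x - (-1)^n \ol y)} = -(-1)^n \ol u$; moreover every $u \in C$ arises this way (take $x = u$, $y = 0$), so this image set is also exactly $\{(u, -(-1)^n \ol u) \mid u \in C\} \cong C$.

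Finally I would assemble these three computations: since $\mathcal{J}_n(C_\infty \times G)$ (resp.\ $\mathcal{I}_n(C_\infty \times G)$) is the direct sum of the corresponding sets in the three summands $A$, $B$, $C \oplus C$, and these were just identified with $\mathcal{J}_n(G)$ (resp.\ $\mathcal{I}_n(G)$), with $\{x \in \wt K_0(\Z G) \mid x = -(-1)^n\ol x\}$ (resp.\ its $\mathcal{I}$-analogue), and with $NK_1(\Z G)$ embedded via $x \mapsto (x, -(-1)^n \ol x)$, we obtain exactly the two stated isomorphisms. I do not anticipate a genuine obstacle here — the proposition is essentially bookkeeping — but the one point that needs care is making sure the sign $(-1)^n$ and the swap in \cref{def:nk1-inv} interact correctly, in particular the identity $(-1)^{2n} = 1$ used above and the consistent choice of which coordinate of $C \oplus C$ to use for the identification; I would state the computation for general $n$ (rather than splitting into $n$ even and odd) so that the single formula $x \mapsto (x, -(-1)^n \ol x)$ covers both cases uniformly, matching the statement.
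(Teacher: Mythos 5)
Your proposal is correct and follows essentially the same route as the paper's proof: decompose summand-wise using that the Bass--Heller--Swan isomorphism respects the involutions, and then identify both the fixed-point set and the image set in $NK_1(\Z G)^2$ (with the swap involution) with the copy $\{(x,-(-1)^n\ol{x}) \mid x \in NK_1(\Z G)\}$. Your sign checks, including the use of $(-1)^{2n}=1$ and the surjectivity argument via $(x,y)=(u,0)$, match the computations in the paper.
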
 

\begin{proof}
Since \cref{theorem:BHS} gives a decomposition of $\Z C_2$-modules,  $\mathcal{I}_n(C_\infty \times G)$ and $\mathcal{J}_n(C_\infty \times G)$ are decomposed into the corresponding subgroups of the components on the right-hand side. If $(x,y) \in NK_1(\Z G)^2$, then $\ol{(x,y)} = (\ol{y},\ol{x})$, so $(x,y) = - (-1)^n \ol{(x,y)}$ if and only if $y=-(-1)^n\ol{x}$, so 
\[
\{ (x,y) \in NK_1(\Z G)^2 \mid (x,y) = - (-1)^n\ol{(x,y)}\} = \{ (x, - (-1)^n\ol{x}) \mid x \in NK_1(\Z G)\}
\] 
which is isomorphic to $NK_1(\Z G)$.
Similarly, we have 
\[ (x,y) - (-1)^n \ol{(x,y)} = (x-(-1)^n \ol{y}, -(-1)^n \ol{x-(-1)^n \ol{y}}).\] 
Therefore we have that 
\[
\{ (x,y) - (-1)^n\ol{(x,y)} \mid (x,y) \in NK_1(\Z G)^2\} = \{ (x, - (-1)^n\ol{x}) \mid x \in NK_1(\Z G)\}
\] 
which is isomorphic to $NK_1(\Z G)$.
\end{proof}

We immediately get the following corollary; see also \cite[p.358]{Ra86}.

\begin{corollary} \label{cor:tate-decomp}
Let $G$ be a group. The decomposition of \cref{theorem:BHS} induces an isomorphism
\[
\wh{H}^{n+1}(C_2;\Wh(C_\infty \times G)) \cong \wh{H}^{n+1}(C_2;\Wh(G)) \oplus \wh{H}^{n+1}(C_2;\wt{K}_0(\Z G)).
\]
\end{corollary}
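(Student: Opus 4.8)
The plan is to read the decomposition off directly from \cref{prop:I(ZxG)}. For any $\Z C_2$-module $A$, write $\mathcal{J}_n(A) = \{a \in A \mid a = -(-1)^n\ol{a}\}$ and $\mathcal{I}_n(A) = \{a - (-1)^n\ol{a} \mid a \in A\}$; then $\wh{H}^{n+1}(C_2;A) \cong \mathcal{J}_n(A)/\mathcal{I}_n(A)$ by \cref{prop:tate-C2} (the statement there is phrased for $A = \Wh(G,w)$, but its proof only uses the $\Z C_2$-module structure of $A$). Since the involution on a direct sum of $\Z C_2$-modules is componentwise, $\mathcal{J}_n$ and $\mathcal{I}_n$ are additive in $A$, and hence so is $A \mapsto \wh{H}^{n+1}(C_2;A)$.

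First I would apply this additivity to the $\Z C_2$-module isomorphism of \cref{theorem:BHS}, obtaining
\[
\wh{H}^{n+1}(C_2;\Wh(C_\infty \times G)) \cong \wh{H}^{n+1}(C_2;\Wh(G)) \oplus \wh{H}^{n+1}(C_2;\wt{K}_0(\Z G)) \oplus \wh{H}^{n+1}(C_2;NK_1(\Z G)^2),
\]
an isomorphism natural in $G$ because the one in \cref{theorem:BHS} is. It then remains to show that the last summand vanishes. For this I would quote the $NK_1$ part of \cref{prop:I(ZxG)}: in its proof the subgroups $\mathcal{J}_n(NK_1(\Z G)^2)$ and $\mathcal{I}_n(NK_1(\Z G)^2)$ of $NK_1(\Z G)^2$ are \emph{both} identified with the image of the embedding $x \mapsto (x, -(-1)^n\ol{x})$, so their quotient is trivial. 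Alternatively, one observes that the $\Z C_2$-linear map $(x,y) \mapsto (x,\ol{y})$ identifies $NK_1(\Z G)^2$, with its swap-and-conjugate involution, with the induced module $\Z C_2 \otimes_{\Z} NK_1(\Z G)$, which is cohomologically trivial, so again $\wh{H}^{n+1}(C_2;NK_1(\Z G)^2) = 0$.

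I do not expect a genuine obstacle: the content of the corollary is contained in \cref{prop:I(ZxG)} together with the additivity of Tate cohomology. The only point that needs a sentence of justification is the vanishing of $\wh{H}^{n+1}(C_2;NK_1(\Z G)^2)$, and either of the two arguments above supplies it.
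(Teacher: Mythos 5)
Your proof is correct and follows essentially the same route as the paper, which deduces the corollary immediately from \cref{prop:I(ZxG)}: the $NK_1(\Z G)^2$ summand contributes the same subgroup to $\mathcal{J}_n$ and $\mathcal{I}_n$, so it cancels in the Tate quotient. Your alternative observation that $NK_1(\Z G)^2$ with the swap-and-conjugate involution is the induced $\Z C_2$-module $\Z C_2 \otimes_{\Z} NK_1(\Z G)$, hence cohomologically trivial, is a valid (and slightly cleaner) way to see the same vanishing, but it is not a substantively different argument.
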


\subsection{The groups $\mathcal{J}_n(C_\infty \times C_m)$, $\mathcal{I}_n(C_\infty \times C_m)$ and $\wh{H}^{n+1}(C_2;\Wh(C_\infty \times C_m))$ for $n$ even} \label{ss:ijh}

In this section we prove, using results from \cref{p:algebra}, our main results about the groups $\mathcal{J}_n(C_\infty \times C_m)$, $\mathcal{I}_n(C_\infty \times C_m)$, and $\wh{H}^{n+1}(C_2;\Wh(C_\infty \times C_m))$ for $n$ even.

By \cref{prop:I(ZxG)} and \cref{theorem:nk1-nonfg}, for any group $G$ and any $n$, if $NK_1(\Z G) \ne 0$, then $\mathcal{J}_n(C_\infty \times G)$ and $\mathcal{I}_n(C_\infty \times G)$ are not finitely generated. If $G$ is finite and $n$ is even, then we also have the following. 

\begin{lemma} \label{lem:I-finiteness}
Suppose that $n$ is even and $G$ is a finite group. Then the following are equivalent:
\begin{clist}{(i)}
\item $|\mathcal{J}_n(C_\infty \times G)| < \infty$
\item $|\mathcal{I}_n(C_\infty \times G)| < \infty$
\item $NK_1(\Z G) = 0$.	
\end{clist}
\end{lemma}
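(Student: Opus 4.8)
The plan is to reduce the whole statement to the decomposition of \cref{prop:I(ZxG)} and then handle the three summands separately. Since $n$ is even, that decomposition identifies $\mathcal{J}_n(C_\infty \times G)$ with $\mathcal{J}_n(G) \oplus \{x \in \wt K_0(\Z G) \mid \ol x = -x\} \oplus NK_1(\Z G)$ and $\mathcal{I}_n(C_\infty \times G)$ with $\mathcal{I}_n(G) \oplus \{x - \ol x \mid x \in \wt K_0(\Z G)\} \oplus NK_1(\Z G)$, with $NK_1(\Z G)$ appearing as a genuine direct summand in each. So it is enough to prove two things: for $G$ finite and $n$ even the first two summands of each group are always finite, while $NK_1(\Z G)$ is finite if and only if it is zero; the three-way equivalence follows at once.

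The second point is immediate from Farrell's theorem \cref{theorem:nk1-nonfg}: if $NK_1(\Z G) \neq 0$ then it is not finitely generated, hence infinite, and then both $\mathcal{J}_n(C_\infty \times G)$ and $\mathcal{I}_n(C_\infty \times G)$ are infinite, which gives $\lnot$(iii)$\Rightarrow\lnot$(i) and $\lnot$(iii)$\Rightarrow\lnot$(ii). For the first point, finiteness of $\wt K_0(\Z G)$ for finite $G$ (Jordan--Zassenhaus, via the identification of $\wt K_0(\Z G)$ with the locally free class group) disposes of the $\wt K_0$-summands, so when $NK_1(\Z G) = 0$ the outstanding task is to show that $\mathcal{J}_n(G)$ and $\mathcal{I}_n(G)$ are finite. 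Here I would use that $\Wh(G)$ is a finitely generated abelian group for finite $G$ (Bass) together with the fact that the canonical involution $\sigma$ acts trivially on $\Wh(G) \otimes \Q$; the latter forces $\mathcal{J}_n(G) = \ker(1 + \sigma)$ and $\mathcal{I}_n(G) = \im(1 - \sigma)$ to be torsion, hence finite. Combining these statements completes the equivalences.

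I expect the main obstacle to be precisely the rational triviality of the involution on $\Wh(G)$ for finite $G$ with trivial orientation character. It is classical, but a proof would run as follows: $\Wh(G) \otimes \Q$ embeds $\sigma$-equivariantly into $K_1(\Q G) \otimes \Q \cong \bigoplus_i K_1(A_i) \otimes \Q$ for the Wedderburn decomposition $\Q G \cong \prod_i A_i$ with $A_i = M_{n_i}(D_i)$; the anti-involution $g \mapsto g^{-1}$ carries $A_i$ to the factor of the contragredient representation, whose character is the complex conjugate of the original and hence a Galois conjugate of it, so it lies in the same $\Q$-simple factor. Thus $\sigma$ preserves each $A_i$ and restricts on the centre $E_i$ of $D_i$ to (a copy of) complex conjugation. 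Since $E_i$ is abelian over $\Q$ it is totally real or CM; reducing via reduced norms — using Wang's theorem that $SK_1$ of a division algebra over a number field vanishes — the claim becomes the statement that complex conjugation acts trivially on the pertinent unit group rationally, which holds because $\mathcal{O}_E^\times \otimes \Q \cong \mathcal{O}_{E^+}^\times \otimes \Q$ for a CM field $E$ with maximal totally real subfield $E^+$, and complex conjugation fixes $E^+$ pointwise. Reassembling over $i$ gives $\sigma = \mathrm{id}$ on $\Wh(G) \otimes \Q$. (If one prefers, the finiteness of $\mathcal{J}_n(G)$ and $\mathcal{I}_n(G)$ for finite $G$ can simply be quoted, after which the lemma is a one-line consequence of \cref{prop:I(ZxG)} and \cref{theorem:nk1-nonfg}.)
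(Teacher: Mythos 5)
Your proof is correct and follows essentially the same route as the paper: reduce via \cref{prop:I(ZxG)} to the three summands, dispose of $NK_1(\Z G)$ with Farrell's theorem (\cref{theorem:nk1-nonfg}), use finiteness of $\wt K_0(\Z G)$, and use that the involution on $\Wh(G)$ is trivial modulo torsion for finite $G$. The only difference is one of packaging: the paper quotes Wall's theorem that the involution is the identity on $\Wh(G)/SK_1(\Z G)$ together with finiteness of $SK_1(\Z G)$, whereas you deduce finiteness of $\mathcal{J}_n(G)$ and $\mathcal{I}_n(G)$ from finite generation of $\Wh(G)$ plus (a sketch of the standard proof of) the same rational-triviality statement.
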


\begin{proof}
Let $SK_1(\Z G) = \ker(K_1(\Z G) \to K_1(\Q G))$ where the map is induced by inclusion $\Z G \subseteq \Q G$. It was shown by Wall \cite[Proposition 6.5]{Wa74} (see also \cite[Theorem 7.4]{Ol88}) that $SK_1(\Z G)$ is isomorphic to the torsion subgroup of $\Wh(G)$. Let $\Wh'(G) = \Wh(G)/SK_1(\Z G)$ be the free part. The standard involution on $\Wh(G)$ induces an involution on $\Wh'(G)$, and Wall  showed  this induced involution is the identity (see \cite[Corollary 7.5]{Ol88}). 

It follows that if $x = -\overline{x} \in \Wh(G)$, then $x$ maps to $0 \in \Wh'(G)$, so $x \in SK_1(\Z G)$. Hence $\mathcal{I}_n(G) \leq \mathcal{J}_n(G) \leq SK_1(\Z G)$. 
If $G$ is finite, then $SK_1(\Z G)$ is finite~\cite{Wa74} and $\wt{K}_0(\Z G)$ is finite~\cite{Sw60} (see also \cref{prop:proj=>LF} (\ref{item:prop-proj-LF-ii})).
Therefore the lemma follows from \cref{prop:I(ZxG)} and \cref{theorem:nk1-nonfg}. 
\end{proof}

\begin{proposition}\label{prop:j0cm}
Suppose that $n$ is even. Then for every $m \geq 2$ we have $\mathcal{J}_n(C_m) = 0$, and hence $\mathcal{I}_n(C_m) = \wh{H}^{n+1}(C_2;\Wh(C_m)) = 0$. 
\end{proposition}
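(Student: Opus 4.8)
The plan is to deduce the vanishing of $\mathcal{J}_n(C_m)$ from the fact that the standard involution on $\Wh(C_m)$ is trivial, which in turn rests on $\Wh(C_m)$ being torsion-free. First I would note that, since $n$ is even, $(-1)^n = 1$, so by \cref{def:IJ} the group $\mathcal{J}_n(C_m)$ is precisely the $(-1)$-fixed set $\{y \in \Wh(C_m) \mid y = -\ol{y}\}$ of the involution. By \cref{prop:cyclic-wh} the abelian group $\Wh(C_m) \cong \Z^{\lfloor m/2 \rfloor + 1 - \delta(m)}$ is free, in particular torsion-free, so its torsion subgroup is trivial.

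Next I would invoke the structural facts already recalled in the proof of \cref{lem:I-finiteness}: by Wall \cite[Proposition 6.5]{Wa74} (see also \cite[Theorem 7.4]{Ol88}) the group $SK_1(\Z C_m)$ is the torsion subgroup of $\Wh(C_m)$, hence here $SK_1(\Z C_m) = 0$ and $\Wh(C_m) = \Wh'(C_m) := \Wh(C_m)/SK_1(\Z C_m)$; and by Wall's result \cite[Corollary 7.5]{Ol88} the involution induced on the free quotient $\Wh'(C_m)$ is the identity. Combining these, the standard involution on $\Wh(C_m)$ itself is the identity. Therefore $\mathcal{J}_n(C_m) = \{y \in \Wh(C_m) \mid y = -y\}$ is the $2$-torsion subgroup of $\Wh(C_m)$, which is $0$ since $\Wh(C_m)$ is torsion-free.

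Finally, the inclusion $\mathcal{I}_n(C_m) \leq \mathcal{J}_n(C_m) = 0$ forces $\mathcal{I}_n(C_m) = 0$, and the canonical isomorphism $\wh{H}^{n+1}(C_2;\Wh(C_m)) \cong \mathcal{J}_n(C_m)/\mathcal{I}_n(C_m)$ from \cref{def:IJ} then gives $\wh{H}^{n+1}(C_2;\Wh(C_m)) = 0$. I do not expect any real obstacle: the only nonformal input is the triviality of the involution on the free part of $\Wh$ of a finite group, which is already quoted in the paper, and everything else is bookkeeping with the definitions. (Alternatively one could verify the triviality of the involution directly from the explicit $\Z$-basis of $\Wh(C_m)$ underlying \cref{prop:cyclic-wh}, but citing Wall's result is cleaner and consistent with the surrounding discussion.)
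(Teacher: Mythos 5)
Your proposal is correct and takes essentially the same route as the paper: the paper also deduces $\mathcal{J}_n(C_m)=0$ from \cref{prop:cyclic-wh} (so $\Wh(C_m)$ is free abelian, hence torsion-free) together with the triviality of the involution, the only difference being that the paper cites Bass \cite[Proposition 4.2]{Ba74} (the involution is trivial on $\Wh(G)$ for every finite abelian $G$) directly, whereas you route through Wall's result that the involution is trivial on $\Wh'(G)=\Wh(G)/SK_1(\Z G)$ together with the identification of $SK_1(\Z C_m)$ with the torsion subgroup, which vanishes here. Both arguments are valid, and the final deductions for $\mathcal{I}_n(C_m)$ and $\wh{H}^{n+1}(C_2;\Wh(C_m))$ are the same bookkeeping.
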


\begin{proof}
By \cref{prop:cyclic-wh}, $\Wh(C_m)$ is a finitely generated free abelian group. By \cite[Proposition 4.2]{Ba74}, the involution acts trivially on $\Wh(G)$ for all $G$ finite abelian. Hence $\mathcal{J}_n(C_m) = \{ x \in \Wh(C_m) \mid x=-x \} = 0$.
\end{proof}

The next two theorems are the main results of \cref{s:inv-wh}. They are established as a consequence of Theorems \ref{theorem:main12-red} and \ref{theorem:main3-red} respectively (see \cref{s:proofs-algebra}).

\begin{theorem} \label{theorem:main12}
Suppose that $n$ is even and $m \ge 2$ is an integer. Then
\begin{clist}{(i)}
\item\label{item:theorem:main12-i}
$|\mathcal{J}_n(C_\infty \times C_m)|=1$ if and only if $m \in \{2, 3, 5, 6, 7, 10, 11, 13, 14, 17, 19\}$;
\item\label{item:theorem:main12-ii} 
$|\mathcal{I}_n(C_\infty \times C_m)|=1$ if and only if $m \in \{2, 3, 5, 6, 7, 10, 11, 13, 14, 15, 17, 19, 29\}$; 
\item\label{item:theorem:main12-iii} 
$|\mathcal{J}_n(C_\infty \times C_m)|=\infty$ if and only if $|\mathcal{I}_n(C_\infty \times C_m)|=\infty$ if and only if $m$ is not square-free;
\item\label{item:theorem:main12-iv} 
$|\mathcal{I}_n(C_\infty \times C_m)| \to \infty$ super-exponentially in $m$, and hence we also have that \\
$|\mathcal{J}_n(C_\infty \times C_m)| \to \infty$ super-exponentially in $m$. 
\end{clist}
\end{theorem}

Part \eqref{item:theorem:main12-i} says that the $m \ge 2$ for which $|\mathcal{J}_n(C_\infty \times C_m)|=1$ are as follows:
\[ m =  
\begin{cases}
\text{$p$}, & \text{where $p \le 19$ is prime} \\
\text{$2p$}, & 	\text{where $p \le 7$ is an odd prime}.
\end{cases}
\]
Part \eqref{item:theorem:main12-ii} says that $|\mathcal{I}_n(C_\infty \times C_m)|=1$ if and only if $|\mathcal{J}_n(C_\infty \times C_m)|=1$ or $m \in \{15, 29\}$.

\begin{proof}
First note that \eqref{item:theorem:main12-iii} immediately follows from \cref{theorem:nk1} and \cref{lem:I-finiteness}. For the proofs of \eqref{item:theorem:main12-i}, \eqref{item:theorem:main12-ii} and \eqref{item:theorem:main12-iv}, we can therefore restrict to the case where $m$ is square-free, when $NK_1(\Z C_m) = 0$. By \cref{prop:I(ZxG),prop:j0cm} we then have:
\[ 
\begin{aligned}
\mathcal{J}_n(C_\infty \times C_m) &\cong \{ x \in \wt{K}_0(\Z C_m) \mid \overline{x}=-x\} \\
\mathcal{I}_n(C_\infty \times C_m) &\cong \{ x - \overline{x} \mid x \in \wt{K}_0(\Z C_m)\}. 
 \end{aligned}
\] 
The results now follow from  \cref{theorem:main12-red}.
\end{proof}

\begin{theorem} \label{theorem:main3}
Suppose that $n$ is even and $m \ge 2$ is an integer.
\begin{clist}{(i)}
\item\label{item:theorem:main3-i} $|\wh{H}^{n+1}(C_2;\Wh(C_\infty \times C_m))|=1$ for infinitely many $m$. 
\item\label{item:theorem:main3-ii} $|\wh{H}^{n+1}(C_2;\Wh(C_\infty \times C_m))|<\infty$ for every $m$.
\item\label{item:theorem:main3-iii} $\displaystyle \sup_{l \le m} |\wh{H}^{n+1}(C_2;\Wh(C_\infty \times C_l))| \to \infty$ exponentially in $m$. 
\end{clist}
\end{theorem}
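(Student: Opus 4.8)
The plan is to obtain all three parts of \cref{theorem:main3} by a purely formal reduction to the algebraic result \cref{theorem:main3-red-intro} (proved in \cref{p:algebra} as \cref{theorem:main3-red}). The first step is the observation that, for $n$ even, $n+1$ is odd, so by the $2$-periodicity of the Tate cohomology of $C_2$ and the identification underlying \cref{prop:tate-C2} one has, for every $\Z C_2$-module $A$,
\[
\wh H^{n+1}(C_2;A) \;\cong\; \wh H^{1}(C_2;A) \;\cong\; \{a \in A \mid \ol a = -a\} \big/ \{a - \ol a \mid a \in A\}.
\]
Feeding this into \cref{cor:tate-decomp}, and using \cref{prop:j0cm} (which gives $\wh H^{n+1}(C_2;\Wh(C_m)) = 0$ for $n$ even), produces
\[
\wh H^{n+1}(C_2;\Wh(C_\infty \times C_m)) \;\cong\; \wh H^{n+1}(C_2;\wt K_0(\Z C_m)) \;\cong\; \frac{\{x \in \wt K_0(\Z C_m) \mid \ol x = -x\}}{\{x - \ol x \mid x \in \wt K_0(\Z C_m)\}},
\]
which is precisely the group appearing in \cref{theorem:main3-red-intro}. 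The compatibility of the standard involution on $\wt K_0$ with the involution on $\Wh(C_\infty \times C_m)$ coming from \cref{theorem:BHS} is already built into \cref{cor:tate-decomp}, so nothing extra is needed at this stage.

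Granting this identification, parts \eqref{item:theorem:main3-i} and \eqref{item:theorem:main3-iii} are immediate translations of \cref{theorem:main3-red-intro}~\eqref{item:theorem:main3-red-intro-i} and \eqref{item:theorem:main3-red-intro-ii}, with the running modulus ($m$, resp.\ $l$ in part \eqref{item:theorem:main3-iii}) matching the integer variable in those statements. For part \eqref{item:theorem:main3-ii} I would simply note that $\wt K_0(\Z C_m)$ is a finite abelian group (Swan \cite{Sw60}; cf.\ \cref{lem:I-finiteness}), so the displayed quotient is finite, and hence so is $\wh H^{n+1}(C_2;\Wh(C_\infty \times C_m))$ for every $m$ and every even $n$.

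In this sense \cref{theorem:main3} poses no genuine obstacle of its own: the reduction is routine and the entire weight of the argument is carried by \cref{theorem:main3-red}. That proof, to be given in \cref{s:proofs-algebra}, is where the real work lies — it will use the short exact sequence relating $D(\Z C_m)$, $\wt K_0(\Z C_m)$ and $\bigoplus_{d\mid m}C(\Z[\zeta_d])$, the resulting $6$-periodic Tate cohomology sequence, Washington's theorem that $h_{3^n}$ is odd (handling \eqref{item:theorem:main3-i} via the $3$-groups $C_{3^n}$), Weber's theorem that $h_{2^n}$ is odd, and the Kervaire--Murthy description of $D(\Z C_{2^n})$ (to force the unbounded growth along $m = 2^n$ needed for \eqref{item:theorem:main3-iii}). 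The only point in the present reduction where care is required is getting the parity and periodicity bookkeeping right ($n$ even $\Leftrightarrow$ $n+1$ odd), so that one lands in $\wh H^{1}$ rather than $\wh H^{0}$; everything else is formal.
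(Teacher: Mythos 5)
Your reduction is correct and is essentially identical to the paper's own proof: the paper likewise combines \cref{cor:tate-decomp} and \cref{prop:j0cm} (with the $2$-periodicity implicit in \cref{prop:tate-C2}) to identify $\wh{H}^{n+1}(C_2;\Wh(C_\infty \times C_m))$ with $\{x \in \wt K_0(\Z C_m) \mid \ol x = -x\}/\{x-\ol x \mid x \in \wt K_0(\Z C_m)\}$, then cites \cref{theorem:main3-red} for parts (i) and (iii) and Swan's finiteness of $\wt K_0(\Z C_m)$ for part (ii). No gaps; the real content is indeed deferred to \cref{s:proofs-algebra}, exactly as in the paper.
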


Note that \eqref{item:theorem:main3-i} and \eqref{item:theorem:main3-iii} imply that 
\[ 
\liminf_{m \to \infty} |\wh{H}^{n+1}(C_2;\Wh(C_\infty \times C_m))| = 1, \quad \limsup_{m \to \infty} |\wh{H}^{n+1}(C_2;\Wh(C_\infty \times C_m))| = \infty.
\] 

\begin{proof}
It follows from \cref{cor:tate-decomp} and \cref{prop:j0cm} that
\[ 
\wh{H}^{n+1}(C_2;\Wh(C_\infty \times C_m)) \cong \wh{H}^1(C_2;\wt{K}_0(\Z C_m)) \cong \frac{\{x \in \wt{K}_0(\Z C_m) \mid x = -\ol{x}\}}{\{x-\ol{x} \mid x \in \wt{K}_0(\Z C_m)\}}.
\] 
Hence parts (\ref{item:theorem:main3-i}) and (\ref{item:theorem:main3-iii}) follows from \cref{theorem:main3-red}, which is proved in \cref{s:proofs-algebra}. Part (\ref{item:theorem:main3-ii}) follows from the fact that $\wt K_0(\Z C_m)$ is finite \cite{Sw60} (see also \cref{prop:proj=>LF} (\ref{item:prop-proj-LF-ii})).
\end{proof}

\section{Homotopy automorphisms of \texorpdfstring{$S^1 \times L$}{the product of a lens space with a circle}}
\label{section:torsion-of-self-equiv-lens-space-x-S1}

If $k, m \geq 2$ and $q_1, \ldots, q_k$ are integers such that $\gcd(m,q_j)=1$ for all $j$, then the lens space $L_{2k-1}(m;q_1,\ldots,q_k)$ is defined to be  
the quotient of $S^{2k-1} \subseteq \C^k$ by the free action of $C_m$ generated by
\[ 
(z_1,\ldots,z_k) \mapsto (e^{2\pi iq_1/m}z_1, \ldots, e^{2\pi iq_k/m}z_k) 
\]
where $z_1, \ldots, z_k \in \C$. This is a $(2k-1)$-dimensional manifold with fundamental group $C_m$.

From now on, let $k, m \geq 2$ be fixed integers, and $L$ be a $(2k-1)$-dimensional lens space with $\pi_1(L) \cong C_m$, i.e.\ we take $L = L_{2k-1}(m;q_1,\ldots,q_k)$ for some $q_1, \ldots, q_k$ such that $\gcd(m,q_j)=1$.

We are interested in the $2k$-dimensional product manifold $S^1 \times L$ and its homotopy automorphisms. Let $\Map(L)$ denote the space of continuous maps $L \rightarrow L$ (with basepoint $\id_L$) and note that there is a map $\pi_1 (\Map(L)) \to \hAut(S^1 \times L)$ given by sending $h \colon S^1 \rightarrow \Map(L)$ to $r_h \colon S^1 \times L \rightarrow S^1 \times L$, $r_h(x,y) = (x,h(x)(y))$. The homotopy automorphisms are now given as follows.

\begin{lemma}[{\cite[Corollary 6.2]{Kh17}}] \label{lem:haut-decomp}
We have 
\[
\hAut(S^1 \times L) \cong \pi_1(\Map(L)) \rtimes (\hAut(S^1) \times \hAut(L)).
\] 
In particular, $\hAut(S^1 \times L)$ is generated by maps of the form $f \times g$ and $r_h$, where $f \in \hAut(S^1)$, $g \in \hAut(L)$ and $h \in \pi_1(\Map(L))$.
\end{lemma}

We can deduce that every homotopy automorphism of $S^1 \times L$ is simple.

\begin{theorem}\label{theorem:wh-torsion-vanishes-of-he-s}
If $f \in \hAut(S^1 \times L)$, then $\tau(f)=0$.
\end{theorem}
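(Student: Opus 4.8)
The plan is to reduce the computation to the product structure on $S^1 \times L$ and then apply additivity of Whitehead torsion over a Mayer--Vietoris decomposition of $S^1$ into two arcs. Write $\pi_1(f) = \left(\begin{smallmatrix} a & b \\ 0 & c \end{smallmatrix}\right) \in \Aut(C_\infty \times C_m)$; by \cref{theorem:aut-image} we have $c^k \equiv \pm 1 \bmod m$, though this will not be needed. First I would dispose of the sign $a$. If $a = -1$, let $r \colon S^1 \to S^1$ be an orientation--reversing diffeomorphism; then $r \times \id_L$ is a homeomorphism, so $\tau(r \times \id_L) = 0$, and by \cref{prop:WT-composition} we get $\tau((r \times \id_L) \circ f) = (r \times \id_L)_*(\tau(f))$. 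Since $(r\times\id_L)_*$ is an automorphism of $\Wh(C_\infty \times C_m)$, and since $\pi_1((r\times\id_L)\circ f)$ has upper--left entry $+1$, it suffices to prove $\tau(f) = 0$ when $a = 1$.

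Assuming $a = 1$, let $p \colon S^1 \times L \to S^1$ be the projection. The map $f^*$ on $H^1(S^1 \times L;\Z) \cong \Z$ (note $H^1(L;\Z) = \Hom(C_m,\Z) = 0$) is determined by $\pi_1(f)$ and equals the identity, so $p \circ f$ and $p$ represent the same class in $[S^1 \times L, S^1] \cong \Z$, i.e.\ $p \circ f \simeq p$. Since $p$ is a fibration, the homotopy lifting property lets me homotope $f$ to a homotopy equivalence $f_1$ with $p \circ f_1 = p$; this does not change the torsion by \cref{prop:WT-homotopic-maps}, so I may assume $f(t,x) = (t, H(t,x))$ for some $H \colon S^1 \times L \to L$. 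Because $f$ is a homotopy equivalence compatible with the trivial fibration over $S^1$, comparing the homotopy long exact sequences (or directly, five lemma) shows that $H(t,-) \colon L \to L$ is a homotopy equivalence for each $t$, and $t \mapsto H(t,-)$ is a loop in the space of self--homotopy--equivalences of $L$.

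Now choose distinct $p_0, p_1 \in S^1$ and closed arcs $A, B \subseteq S^1$ with $A \cup B = S^1$, $A \cap B = \{p_0,p_1\}$, and fix a product CW structure on $S^1 \times L$ making $A\times L$, $B\times L$, $\{p_0,p_1\}\times L$ subcomplexes and $f$ cellular. The Mayer--Vietoris short exact sequence of $\Z[\pi_1(S^1\times L)]$--coefficient cellular chain complexes for $S^1\times L = (A\times L)\cup_{\{p_0,p_1\}\times L}(B\times L)$ is based, and $f$ induces a self--map of it, so \cref{lem:WT-ch-add} gives
\[
\tau(f) = (j_A)_*\tau(f|_{A\times L}) + (j_B)_*\tau(f|_{B\times L}) - (j_0)_*\tau(f|_{\{p_0,p_1\}\times L}),
\]
where $j_A, j_B, j_0$ are the inclusions into $S^1\times L$. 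Each of $A\times L$ and $B\times L$ collapses (a simple homotopy equivalence) onto $\{p_0\}\times L$, under which $f|_{A\times L}$ and $f|_{B\times L}$ both correspond to $H(p_0,-)$; and $\{p_1\}\times L$ collapses onto $\{p_0\}\times L$ inside $B\times L$, carrying $H(p_1,-)$ to a self--map homotopic to $H(p_0,-)$ via $H|_{B}$, so $\tau(H(p_1,-)) = \tau(H(p_0,-))$ in $\Wh(\pi_1 L)$ by \cref{prop:WT-homotopic-maps}. The three inclusions agree on $\{p_0\}\times L$ and induce a common map $i_* \colon \Wh(\pi_1 L) \to \Wh(\pi_1(S^1\times L))$, whence
\[
\tau(f) = i_*\tau(H(p_0,-)) + i_*\tau(H(p_0,-)) - \bigl(i_*\tau(H(p_0,-)) + i_*\tau(H(p_1,-))\bigr) = 0 .
\]

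The routine but genuinely fiddly point, and the one I would be most careful about, is making the additivity formula precise with $\Z[\pi_1(S^1\times L)]$--coefficients throughout: one must identify $\tau$ of the restriction of $f$ acting on the $\Z[\pi_1(S^1\times L)]$--chains of a subcomplex $X_i$ with $(j_i)_*$ of the ordinary torsion in $\Wh(\pi_1 X_i)$ (a change--of--rings computation using \cref{lem:tau-change}), deal with the disconnected intersection $\{p_0,p_1\}\times L$, and keep track of the basepoint paths through $B$ so that both copies of $\Wh(\pi_1 L)$ map into $\Wh(\pi_1(S^1\times L))$ via the same $i_*$. Once this bookkeeping is in place the cancellation above is immediate. (Alternatively one could feed the loop $t\mapsto H(t,-)$ into the Bass--Heller--Swan decomposition of $\Wh(C_\infty\times C_m)$ from \cref{theorem:BHS} and check that its torsion lies in, and is killed by, the $\Wh(C_m)$--summand; the argument above is essentially an unwinding of that.)
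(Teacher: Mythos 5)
Your argument is correct, but it is a genuinely different proof from the one in the paper. The paper first uses lens-space-specific input (\cref{lem:haut-restr-L}, the shearing diffeomorphisms of \cref{lem:diff-L}, and the vanishing of $\pi_j(S^1\times L)$ for $2\le j\le 2k-2$) to replace $f$ by a homotopy automorphism that is the identity on $L\cup\sk_{2k-2}(S^1\times L)$, and then computes the induced chain map explicitly in the standard CW structure of $S^1\times L$, finding that the only nontrivial block is an elementary matrix (\cref{lem:vanish1,lem:vanish2}). You instead exploit the projection to $S^1$: since $H^1(L;\Z)=0$ and (after absorbing the sign $a=\pm1$ into the homeomorphism $r\times\id_L$) $f^*$ is the identity on $H^1$, you homotope $f$ to a map over $\id_{S^1}$ and then apply the sum (Mayer--Vietoris) formula for Whitehead torsion to the two-arc decomposition of $S^1$; the contributions from the two arcs cancel against the two fibre contributions because all fibre maps $H(t,-)$ are homotopic and $\pi_1(S^1\times L)$ is abelian, so both copies of $L$ push into $\Wh(C_\infty\times C_m)$ by the same map. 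In effect your proof is an instance of the ``$\chi(S^1)=0$'' phenomenon for torsion of fibre-preserving equivalences, and it proves the stronger statement that any self-equivalence of $S^1\times X$ homotopic to a map over $\id_{S^1}$ is simple, for an arbitrary finite complex $X$; the paper's proof stays entirely within the lemmas it has already established. The one point you should tighten is the sum formula itself: it is the Kwun--Szczarba/Cohen sum theorem and is best cited as such, since the paper's \cref{lem:tau-change} only treats isomorphisms $\theta$, whereas you need the standard functoriality $\tau(\Z G\otimes_{\Z H}u)=\theta_*\tau(u)$ for the inclusions $H=C_m\hookrightarrow G=C_\infty\times C_m$ (and the Mayer--Vietoris middle term is only ``based'' after a harmless unipotent change of basis, or one runs the additivity of \cref{lem:WT-ch-add} twice through the filtration $C_*(A\times L)\subseteq C_*(S^1\times L)$); these are exactly the bookkeeping points you flag, and they go through.
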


\begin{proof}
By \cref{prop:WT-composition}, it suffices to show that the generators of the form $f \times g$ and $r_h$ from \cref{lem:haut-decomp} have vanishing Whitehead torsion.
Since $S^1$ and $L$ are odd dimensional, it follows from \cref{cor:odd-product} that homotopy equivalences of the form $f \times g$ have vanishing Whitehead torsion. If $h \colon S^1 \rightarrow \Map(L)$ is any pointed map, then $r_h$ is a fibre homotopy equivalence covering the identity of $S^1$, and acting as the identity on the copy of $L$ that is the fibre over the basepoint:
\[
\xymatrix{
L \ar[r] \ar[d]_-{\id} & S^1 \times L \ar[r] \ar[d]_-{r_h} & S^1 \ar[d]^-{\id} \\
L \ar[r] & S^1 \times L \ar[r] & S^1. 
}
\]
Hence $\tau(r_h)=0$ by \cref{prop:WT-fiber}, and because $\tau(\Id)=0$.
\end{proof}

Let $G = C_{\infty} \times C_m \cong \pi_1(S^1 \times L)$. Our next goal is to determine which automorphisms of $G$ can be realised by homotopy automorphisms of $S^1 \times L$, i.e.\ to describe the image of the map $\pi_1 \colon \hAut(S^1 \times L) \rightarrow \Aut(G)$. The automorphisms of $G = C_{\infty} \times C_m$ can be expressed as matrices of the form $\left(\begin{smallmatrix} a & b \\ 0 & c \end{smallmatrix}\right)$, where $a \in \{ \pm 1\}$, $b \in C_m$ and $c \in C_m^{\times} \cong \Aut(C_m)$. In particular, we have $\Aut(G) \cong C_m \rtimes (C_2 \times C_m^\times)$.

Recall from \cref{lem:haut-decomp} that the generators of $\hAut(S^1 \times L)$ are of the form $f \times g$ or $r_h$. The induced automorphisms of $G$ are given by $\pi_1(f \times g) = \left(\begin{smallmatrix} \pi_1(f) & 0 \\ 0 & \pi_1(g) \end{smallmatrix}\right)$ and $\pi_1(r_h) = \left(\begin{smallmatrix} 1 & [\ev \circ h] \\ 0 & 1 \end{smallmatrix}\right)$, where $\ev \colon \Map(L) \rightarrow L$ is given by evaluation at the basepoint $y_0 \in L$, so that $(\ev \circ h)(x) = h(x)(y_0)$.

Both elements of $\{ \pm 1\} \cong \Aut(C_{\infty})$ can be realised by homotopy automorphisms (or even diffeomorphisms) of $S^1$. The subgroup of realisable automorphisms of $C_m$ depends on the dimension $(2k-1)$ of the lens space $L$ as follows.

\begin{lemma}[{\cite[Statement~29.5]{Co73}}] \label{lem:haut-L}
Let $c \in C_m^{\times}$. There is a homotopy automorphism $g \colon L \rightarrow L$ such that $\pi_1(g) = c$ if and only if $c^k \equiv \pm 1 \mod m$.
\end{lemma}

Finally, we have the following. 

\begin{lemma} \label{lem:diff-L}
There is a map $h \colon S^1 \rightarrow \Diff(L) \subset \Map(L)$ such that $[\ev \circ h] = 1 \in C_m \cong \pi_1(L)$. Hence the resulting diffeomorphism $r_h \in \Diff(S^1 \times L)$ has $\pi_1(r_h) = \left(\begin{smallmatrix} 1 & 1 \\ 0 & 1 \end{smallmatrix}\right)$.
\end{lemma}

\begin{proof}
Let $S^{2k-1} = \{(z_1, \ldots, z_k) \mid \sum |z_j|^2 = 1 \} \subseteq \C^k$. Recall that $L = L_{2k-1}(m;q_1, \ldots, q_k)$ is the quotient of $S^{2k-1}$ by the $C_m$-action generated by $(z_1, \ldots, z_k) \mapsto (\zeta^{q_1}z_1, \ldots, \zeta^{q_k}z_k)$, where $\zeta = e^{2\pi i/m}$. Define $h \colon S^1 \to \Diff(L)$ by $e^{2\pi i t} \mapsto ( [z_1, \ldots, z_k] \mapsto [\zeta^{q_1t}z_1, \ldots, \zeta^{q_kt}z_k] )$. Then the loop $\ev \circ h \colon S^1 \rightarrow L$ is the standard generator of $\pi_1(L) \cong C_m$.
\end{proof}

\begin{definition} \label{def:a-group}
For a positive integer $k$ and $G = C_\infty \times C_m$, let $A_{2k}(m) \leq \Aut(G)$ denote the subgroup of matrices $\left(\begin{smallmatrix} a & b \\ 0 & c \end{smallmatrix}\right)$ such that $c^k \equiv \pm 1$ mod $m$.
\end{definition}

\begin{theorem} \label{theorem:aut-image}
Let $L$ be a $(2k-1)$-dimensional lens space with $\pi_1(L) \cong C_m$. Then
\[\im(\pi_1 \colon \hAut(S^1 \times L) \rightarrow \Aut(G)) = A_{2k}(m).\] 
\end{theorem}

\begin{proof}
By \cref{lem:haut-decomp} the subgroup of realisable automorphisms is generated by $\pi_1(f \times g)$ and $\pi_1(r_h)$ for all $f \in \hAut(S^1)$, $g \in \hAut(L)$ and $h \colon S^1 \rightarrow \Map(L)$. By our earlier observations and \cref{lem:haut-L}, the automorphisms realised as $\pi_1(f \times g)$ are precisely those of the form $\left(\begin{smallmatrix} a & 0 \\ 0 & c \end{smallmatrix}\right)$ with $a \in \{ \pm 1\}$ and $c^k \equiv \pm 1$ mod $m$, and $\pi_1(r_h)$ is of the form $\left(\begin{smallmatrix} 1 & b \\ 0 & 1 \end{smallmatrix}\right)$, showing that $\im(\pi_1) \leq A_{2k}(m)$. On the other hand, by \cref{lem:diff-L} the matrix $\left(\begin{smallmatrix} 1 & 1 \\ 0 & 1 \end{smallmatrix}\right)$ is realisable, and together with the matrices $\left(\begin{smallmatrix} a & 0 \\ 0 & c \end{smallmatrix}\right)$ with $a \in \{ \pm 1\}$ and $c^k \equiv \pm 1$ mod $m$ they generate $A_{2k}(m)$, therefore $\im(\pi_1) \geq A_{2k}(m)$.
\end{proof}

\section{The proof of \cref{main-theorem,theorem:main-S^1xL,theorem:hcob-S^1xL,,theorem:s-hcob-S^1xL}} \label{s:b-proof}

We can now combine the results of the previous sections to prove \cref{main-theorem,theorem:main-S^1xL,theorem:hcob-S^1xL,,theorem:s-hcob-S^1xL}. Let~$n = 2k \geq 4$ be an even integer and fix a category $\CAT$ satisfying \cref{assumptions}. 

We begin by establishing the following key fact concerning the map $\psi$ from the exact sequence \eqref{eq:SRR-sequence} for the groups $C_\infty \times C_m$.

\begin{proposition}\label{prop:psi-surjective}
Let $m \geq 2$. For any even integer $n = 2k$, the map
\[\psi \colon L_{n+1}^{\mathrm{h}}(\Z[C_{\infty} \times C_m]) \to \wh{H}^{n+1}(C_2;\Wh(C_{\infty} \times C_m))\]
is surjective.
\end{proposition}

\begin{proof}
Let $G_m  :=   C_{\infty} \times C_m$. We verify that the forgetful map $F \colon L^{\mathrm{s}}_{2k}(\Z G_m) \to L^{\mathrm{h}}_{2k}(\Z G_m)$ is injective. The conclusion then follows from the exact sequence~\eqref{eq:SRR-sequence}.  
First we apply Shaneson splitting~\cite{Shaneson-GxZ} to the domain and the codomain, to obtain a commutative diagram whose rows are split short exact sequences and whose vertical maps are the forgetful maps~\cite{Ra86}.
\[\begin{tikzcd}
0 \arrow[r] &  L^{\mathrm{s}}_{2k}(\Z C_m) \ar[r] \ar[d] & L^{\mathrm{s}}_{2k}(\Z G_m) \ar[r]  \ar[d,"F"] &  L^{\mathrm{h}}_{2k-1}(\Z C_m) \ar[r] \ar[d] & 0 \\
0 \arrow[r] &  L^{\mathrm{h}}_{2k}(\Z C_m) \ar[r]  & L^{\mathrm{h}}_{2k}(\Z G_m) \ar[r]  &  L^{\mathrm{p}}_{2k-1}(\Z C_m) \ar[r] & 0.
\end{tikzcd}\]
By the five lemma it suffices to show that the left and right vertical maps are injective.   
In~\cite{Bak-computation-L-groups}, Bak gave computations of $L_i^x(\Z G)$ for $x \in \{\mathrm{s},\mathrm{h},\mathrm{p}\}$ and $G$ a finite group whose 2-hyperelementary subgroups are abelian, which certainly holds for finite cyclic groups.
See also \cite{Bak-odd-L-groups-of-odd-order-groups, Bak-even-L-groups-of-odd-order-groups}.
By \cite[Theorem~8]{Bak-computation-L-groups}, the forgetful map $L^{\mathrm{h}}_{2k-1}(\Z C_m) \to L^{\mathrm{p}}_{2k-1}(\Z C_m)$ is injective, so the right vertical map is injective. 

To prove that the left vertical map is injective we also use \cite{Bak-computation-L-groups}. 
For a finite group $G$ whose 2-Sylow subgroup $G_2$ is normal and abelian, Bak defined $r_2:= \rk H^1(C_2;\Wh(G))$.  On~\cite[p.~386]{Bak-computation-L-groups}, he noted that if $G_2$ is cyclic, as in our case $G = C_m$, then $r_2=0$.  Therefore by the sequence~\eqref{eq:SRR-sequence}, as shown on~\cite[p.~390]{Bak-computation-L-groups}, it follows that $L_{2k}^{\mathrm{s}}(\Z C_m) \to L_{2k}^{\mathrm{h}}(\Z C_m)$ is injective. 
Hence $F \colon L_{2k}^{\mathrm{s}}(\Z G_m) \to L_{2k}^{\mathrm{h}}(\Z G_m)$ is injective as desired. 
\end{proof}

Recall the definition of the sets $\M^{\mathrm{h}}_{\mathrm{s}}(M)$, $\M^{\hCob}_{\mathrm{s}}(M)$ and $\M^{\mathrm{h}}_{\mathrm{s},\hCob}(M)$ from \cref{def:man-sets}. Also recall the definition of $A_{2k}(m)$ (\cref{def:a-group}): 
$A_{2k}(m) \leq \Aut(C_{\infty} \times C_m)$ denotes the subgroup of matrices $\left(\begin{smallmatrix} a & b \\ 0 & c \end{smallmatrix}\right)$ such that $c^k \equiv \pm 1$ mod $m$. 
Note that $A_{2k}(m)$ acts on $\Wh(C_{\infty} \times C_m)$ and on the subgroups $\mathcal{I}_n(C_{\infty} \times C_m)$ and $\mathcal{J}_n(C_{\infty} \times C_m)$. 

\begin{proposition} \label{prop:man-set-bij}
Let $m \geq 2$ and let $L$ be a lens space of dimension $n-1=2k-1$ with $\pi_1(L) \cong C_m$. Let $G := C_{\infty} \times C_m \cong \pi_1(S^1 \times L)$. Then there are bijections of pointed sets $\M^{\mathrm{h}}_{\mathrm{s}}(S^1 \times L) \cong \mathcal{J}_n(G) / A_{2k}(m)$, $\M^{\hCob}_{\mathrm{s}}(S^1 \times L) \cong \mathcal{I}_n(G) / A_{2k}(m)$ and $\M^{\mathrm{h}}_{\mathrm{s},\hCob}(S^1 \times L) \cong \wh{H}^{n+1}(C_2;\Wh(G)) / A_{2k}(m)$.
\end{proposition}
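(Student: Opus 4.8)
The plan is to assemble \cref{prop:man-set-bij} from the general machinery of \cref{theorem:man-bij} together with the computations carried out for $S^1 \times L$ in \cref{s:inv-wh,section:torsion-of-self-equiv-lens-space-x-S1}. First I would set $M = S^1 \times L$, so $G = \pi_1(M) \cong C_\infty \times C_m$ with trivial orientation character (the lens space $L$ is orientable of odd dimension, and $S^1$ is orientable, so $M$ is orientable). Since $n = 2k \geq 4$ and we are working under \cref{assumptions}, \cref{theorem:man-bij} applies and gives bijections of pointed sets
\[
\M^h_s(M) \cong (\varrho \circ \pi)^{-1}(\image \sigma_s) / \hAut(M), \quad \M^h_{s,\hCob}(M) \cong \varrho^{-1}(\image \sigma_s) / \hAut(M),
\]
and $\M^{\hCob}_s(M) \cong q(\mathcal{I}_n(G))$. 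The next step is to remove the dependence on $\image \sigma_s$: by \cref{prop:psi-surjective}, the map $\psi \colon L_{n+1}^h(\Z G) \to \wh{H}^{n+1}(C_2;\Wh(G))$ is surjective for $n = 2k$ even, so by \cref{cor:braid}\eqref{item:cor-braid-b} we get $\varrho^{-1}(\image \sigma_s) = \wh{H}^{n+1}(C_2;\Wh(G))$ and hence $(\varrho \circ \pi)^{-1}(\image \sigma_s) = \mathcal{J}_n(G)$. (This is exactly the Corollary following \cref{theorem:man-bij}.) Thus $\M^h_s(M) \cong \mathcal{J}_n(G)/\hAut(M)$ and $\M^h_{s,\hCob}(M) \cong \wh{H}^{n+1}(C_2;\Wh(G))/\hAut(M)$.

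The second main step is to replace the $\hAut(M)$-action by the $A_{2k}(m)$-action. Here I would invoke \cref{theorem:wh-torsion-vanishes-of-he-s}: every homotopy automorphism $f$ of $S^1 \times L$ satisfies $\tau(f) = 0$. By the second case of \cref{rem:haut-act-spec} (equivalently \cref{rem:haut-act-spec2}), the action $x^g = g_*(x) + \tau(g)$ then reduces to $x^g = g_*(x)$, so the $\hAut(M)$-action on $\Wh(G)$ factors through $\pi_1 \colon \hAut(M) \to \Aut(G)$ and is given by the functoriality of $\Wh$. By \cref{theorem:aut-image}, the image of this homomorphism is exactly $A_{2k}(m) \leq \Aut(G)$. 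Therefore the $\hAut(M)$-orbits on $\Wh(G)$ — and on the invariant subsets $\mathcal{I}_n(G)$, $\mathcal{J}_n(G)$ — coincide with the $A_{2k}(m)$-orbits, and the induced action on $\wh{H}^{n+1}(C_2;\Wh(G))$ likewise descends to an $A_{2k}(m)$-action (compatibility with the involution was checked in the proof of \cref{theorem:man-class}\eqref{item:thm-man-class-a}, and $A_{2k}(m)$ acts by group automorphisms since each $\tau(g) = 0$). Substituting $A_{2k}(m)$ for $\hAut(M)$ in the three bijections above yields the claim. For the $\M^{\hCob}_s$ statement I also need that $q(\mathcal{I}_n(G)) = \mathcal{I}_n(G)/\hAut(M)$, i.e.\ that $\mathcal{I}_n(G)$ is an invariant subset; this again follows from \cref{rem:haut-act-spec2} because $\tau(g) = 0 \in \mathcal{I}_n(G)$ for all $g \in \hAut(M)$ (or directly: $g_*$ commutes with the involution and $\mathcal{I}_n(G)$ is defined involution-equivariantly).

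Finally I would note that basepoints match up: the distinguished point of each manifold set is the class of $M$ itself, which maps to $t_M(M) = T(M) = \{0\}$, i.e.\ the orbit of $0$, so the bijections are bijections of pointed sets as asserted. I do not anticipate a genuine obstacle here — the proposition is essentially a bookkeeping assembly of results already proved — but the step requiring the most care is verifying that the $A_{2k}(m)$-action is well-defined on all three targets simultaneously and genuinely agrees with the $\hAut(M)$-action on orbits (as opposed to merely on the set on which $\Aut(G)$ acts); this is where one must combine $\tau(f)=0$ (\cref{theorem:wh-torsion-vanishes-of-he-s}), the image computation (\cref{theorem:aut-image}), and the invariance statements from \cref{theorem:man-class}\eqref{item:thm-man-class-a} and \cref{rem:haut-act-spec2}.
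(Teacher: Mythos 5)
Your proposal is correct and follows essentially the same route as the paper: apply \cref{theorem:man-bij} (with $\varrho^{-1}(\image\sigma_s)$ identified with all of $\wh{H}^{n+1}(C_2;\Wh(G))$ via \cref{prop:psi-surjective}), then replace the $\hAut(S^1\times L)$-action by the $A_{2k}(m)$-action using \cref{theorem:wh-torsion-vanishes-of-he-s} and \cref{theorem:aut-image}. The only detail the paper includes that you gloss over with ``we are working under \cref{assumptions}'' is the explicit verification, needed for the $n=4$ case, that $C_\infty \times C_m$ is a good group (it is polycyclic), so that \cref{theorem:man-bij} indeed applies to $M = S^1 \times L$.
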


\begin{proof}
Firstly, since $C_\infty \times C_m$ is polycyclic, it is a good group~\cite{FQ,Freedman-book-goodgroups}, so \cref{assumptions} is satisfied.  Moreover, since the map $\psi$ is surjective for $G = C_{\infty} \times C_m$, $w\equiv 1$ and $n$ even by \cref{prop:psi-surjective}, the vertical maps in the diagram of \cref{theorem:man-class}~\eqref{item:thm-man-class-c} are bijections if $M = S^1 \times L$. Therefore $\M^{\mathrm{h}}_{\mathrm{s}}(S^1 \times L) \cong \mathcal{J}_n(G) / \hAut(S^1 \times L)$, $\M^{\hCob}_{\mathrm{s}}(S^1 \times L) \cong q(\mathcal{I}_n(G))$, and $\M^{\mathrm{h}}_{\mathrm{s},\hCob}(S^1 \times L) \cong \wh{H}^{n+1}(C_2;\Wh(G)) / \hAut(S^1 \times L)$.

By \cref{theorem:wh-torsion-vanishes-of-he-s}, every homotopy automorphism of $S^1 \times L$ is simple, therefore the action of $\hAut(S^1 \times L)$ on $\Wh(G)$ factors through the action of $\Aut(G)$ (see \cref{def:haut-act} and \cref{rem:haut-act-spec2}). By \cref{theorem:aut-image}, the image of $\pi_1\colon\hAut(S^1 \times L) \rightarrow \Aut(G)$ is $A_{2k}(m)$. In particular, the orbits of the action of $\hAut(S^1 \times L)$ and $A_{2k}(m)$ on $\mathcal{J}_n(G)$, $\mathcal{I}_n(G)$ and $\wh{H}^{n+1}(C_2;\Wh(G))$ coincide.
This implies that $\mathcal{J}_n(G) / \hAut(S^1 \times L) = \mathcal{J}_n(G) / A_{2k}(m)$, $q(\mathcal{I}_n(G)) = \mathcal{I}_n(G) / A_{2k}(m)$ and $\wh{H}^{n+1}(C_2;\Wh(G)) / \hAut(S^1 \times L) = \wh{H}^{n+1}(C_2;\Wh(G)) / A_{2k}(m)$, as required.
\end{proof}

In particular \cref{prop:man-set-bij} implies that 
$|\M^{\mathrm{h}}_{\mathrm{s}}(S^1 \times L)|$, $|\M^{\hCob}_{\mathrm{s}}(S^1 \times L)|$ and $|\M^{\mathrm{h}}_{\mathrm{s},\hCob}(S^1 \times L)|$ are independent of the choice of the $q_j$ and of $\CAT$, and only depend on $n$ and $m$. This proves part \eqref{item:thm-1-i} of \cref{theorem:main-S^1xL,theorem:hcob-S^1xL,,theorem:s-hcob-S^1xL}.

From now on, we write $M^n_m$ for $S^1 \times L$, where $L$ is any $(n-1)$-dimensional lens space with $\pi_1(L) \cong C_m$.

\begin{lemma} \label{lem:man-set-equiv}
Let $n = 2k \geq 4$ be an even integer, $m \geq 2$ and $G = C_{\infty} \times C_m$. Then the following hold.
\benum
\item\label{item:lem-man-set-equiv-a} $|\M^{\mathrm{h}}_{\mathrm{s}}(M^n_m)| = 1$ if and only if $\mathcal{J}_n(G) = 0$.
\item\label{item:lem-man-set-equiv-b} $|\M^{\mathrm{h}}_{\mathrm{s}}(M^n_m)| = \infty$ if and only if $|\mathcal{J}_n(G)| = \infty$.
\item\label{item:lem-man-set-equiv-c} If $\M^{\mathrm{h}}_{\mathrm{s}}(M^n_m)$ is finite, then $\frac{|\mathcal{J}_n(G)|}{2m^2} < |\M^{\mathrm{h}}_{\mathrm{s}}(M^n_m)| \leq |\mathcal{J}_n(G)|$.
\item\label{item:lem-man-set-equiv-d} $|\M^{\hCob}_{\mathrm{s}}(M^n_m)| = 1$ if and only if $\mathcal{I}_n(G) = 0$.
\item\label{item:lem-man-set-equiv-e} $|\M^{\hCob}_{\mathrm{s}}(M^n_m)| = \infty$ if and only if $|\mathcal{I}_n(G)| = \infty$.
\item\label{item:lem-man-set-equiv-f} If $\M^{\hCob}_{\mathrm{s}}(M^n_m)$ is finite, then $\frac{|\mathcal{I}_n(G)|}{2m^2} < |\M^{\hCob}_{\mathrm{s}}(M^n_m)| \leq |\mathcal{I}_n(G)|$.
\item\label{item:lem-man-set-equiv-g} $|\M^{\mathrm{h}}_{\mathrm{s},\hCob}(M^n_m)| = 1$ if and only if $\wh{H}^{n+1}(C_2;\Wh(G)) = 0$.
\item\label{item:lem-man-set-equiv-h} $|\M^{\mathrm{h}}_{\mathrm{s},\hCob}(M^n_m)| = \infty$ if and only if $|\wh{H}^{n+1}(C_2;\Wh(G))| = \infty$.
\item\label{item:lem-man-set-equiv-i} If $\M^{\mathrm{h}}_{\mathrm{s},\hCob}(M^n_m)$ is finite, then $\frac{|\wh{H}^{n+1}(C_2;\Wh(G))|}{2m^2} < |\M^{\mathrm{h}}_{\mathrm{s},\hCob}(M^n_m)| \leq |\wh{H}^{n+1}(C_2;\Wh(G))|$.
\eenum
\end{lemma}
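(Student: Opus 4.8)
The plan is to derive all nine parts from \cref{prop:man-set-bij}, which supplies bijections of pointed sets
\[
\M^h_s(M^n_m) \cong \mathcal{J}_n(G)/A_{2k}(m), \qquad
\M^{\hCob}_s(M^n_m) \cong \mathcal{I}_n(G)/A_{2k}(m), \qquad
\M^h_{s,\hCob}(M^n_m) \cong \wh{H}^{n+1}(C_2;\Wh(G))/A_{2k}(m),
\]
so that each statement becomes an elementary assertion about a finite group acting on a set. The one structural input I would extract first is that, because every homotopy automorphism of $S^1\times L$ is simple (\cref{theorem:wh-torsion-vanishes-of-he-s}), the action of $A_{2k}(m)$ on $\Wh(G)$ is via group automorphisms (see \cref{def:haut-act}, \cref{rem:haut-act-spec2}), hence restricts to actions by automorphisms on the subgroups $\mathcal{J}_n(G)$, $\mathcal{I}_n(G)$ and on $\wh{H}^{n+1}(C_2;\Wh(G))$, each fixing the zero element; in particular the orbit of $0$ is the singleton $\{0\}$. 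I would also record the bound $|A_{2k}(m)| < 2m^2$: every automorphism of $C_\infty\times C_m$ is an upper triangular matrix $\bsm a & b \\ 0 & c\esm$ with $a\in\{\pm1\}$, $b\in C_m$ and $c\in(\Z/m)^\times$, so $|\hspace{-0.5mm}\Aut(C_\infty\times C_m)| = 2m\cdot|(\Z/m)^\times| < 2m^2$ for $m\ge 2$, and $A_{2k}(m)$ is a subgroup.

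Next I would prove the purely set-theoretic facts about a finite group $\Gamma$ acting on a set $S$ and then apply them to $S\in\{\mathcal{J}_n(G),\mathcal{I}_n(G),\wh{H}^{n+1}(C_2;\Wh(G))\}$ with $\Gamma = A_{2k}(m)$. First: if the action has a fixed point $0$, then $|S/\Gamma| = 1$ if and only if $S = \{0\}$, since the orbit of $0$ is $\{0\}$ and orbits partition $S$; this gives parts \eqref{item:lem-man-set-equiv-a}, \eqref{item:lem-man-set-equiv-d}, \eqref{item:lem-man-set-equiv-g}. Second: since $\Gamma$ is finite every orbit is finite, so $|S/\Gamma| = \infty$ if and only if $|S| = \infty$; this gives \eqref{item:lem-man-set-equiv-b}, \eqref{item:lem-man-set-equiv-e}, \eqref{item:lem-man-set-equiv-h}. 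Third: when $S$ is finite, each orbit is nonempty and has cardinality at most $|\Gamma| = |A_{2k}(m)| < 2m^2$, so summing over the $|S/\Gamma|$ orbits yields $|S| \le |A_{2k}(m)|\cdot|S/\Gamma| < 2m^2\cdot|S/\Gamma|$ and $|S/\Gamma| \le |S|$; combined with parts \eqref{item:lem-man-set-equiv-b}, \eqref{item:lem-man-set-equiv-e}, \eqref{item:lem-man-set-equiv-h} (which identify finiteness of $\M$ with finiteness of the corresponding algebraic group), this is exactly \eqref{item:lem-man-set-equiv-c}, \eqref{item:lem-man-set-equiv-f}, \eqref{item:lem-man-set-equiv-i}.

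I do not anticipate a genuine obstacle here: all the substantive content — the three bijections, the vanishing of Whitehead torsion of self-homotopy-equivalences of $S^1\times L$, and the description of $\hAut(S^1\times L)$'s action via $A_{2k}(m)$ — is already in place, and what remains is a routine orbit count. The only point requiring a moment's care is the \emph{strict} inequality in \eqref{item:lem-man-set-equiv-c}, \eqref{item:lem-man-set-equiv-f}, \eqref{item:lem-man-set-equiv-i}, which is why I would keep track of the sharp bound $|A_{2k}(m)| \le 2m\cdot|(\Z/m)^\times| < 2m^2$ rather than the crude estimate $|A_{2k}(m)| \le 2m^2$.
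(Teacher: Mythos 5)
Your proposal is correct and follows essentially the same route as the paper: reduce via \cref{prop:man-set-bij} to orbit counting for the finite group $A_{2k}(m)$ acting by group automorphisms (so $0$ is fixed), with the strict bound $|A_{2k}(m)|\le 2m(m-1)<2m^2$ giving the sharp inequality in the third item of each triple. No gaps.
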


\begin{proof}
\eqref{item:lem-man-set-equiv-a} By \cref{prop:man-set-bij}, $|\M^{\mathrm{h}}_{\mathrm{s}}(M^n_m)| = |\mathcal{J}_n(G) / A_{2k}(m)|$. The group $A_{2k}(m) \leq \Aut(G)$ acts on $\Wh(G)$, and hence on $\mathcal{J}_n(G)$, by automorphisms. So $0 \in \mathcal{J}_n(G)$ is a fixed point of this action, and $|\mathcal{J}_n(G) / A_{2k}(m)|=1$ if and only if $\mathcal{J}_n(G) = 0$.

\eqref{item:lem-man-set-equiv-b} $\Aut(G)$ is finite, and hence so is its subgroup $A_{2k}(m)$.
 Hence  $|\mathcal{J}_n(G) / A_{2k}(m)| = \infty$ if and only if $|\mathcal{J}_n(G)| = \infty$. 

\eqref{item:lem-man-set-equiv-c} It is easy to see that $\frac{|\mathcal{J}_n(G)|}{|A_{2k}(m)|} \leq |\mathcal{J}_n(G) / A_{2k}(m)| \leq |\mathcal{J}_n(G)|$. We have $|A_{2k}(m)| \leq |\Aut(G)| \leq 2m(m-1) < 2m^2$ since elements of $\Aut(G)$ can be represented as matrices of the form $\left(\begin{smallmatrix} a & b \\ 0 & c \end{smallmatrix}\right)$, where $a \in \{ \pm 1\}$, $b \in C_m$, $c \in C_m^{\times}$. 

The proofs of parts
\eqref{item:lem-man-set-equiv-d},
\eqref{item:lem-man-set-equiv-e}, and
\eqref{item:lem-man-set-equiv-f} (resp.\ parts \eqref{item:lem-man-set-equiv-g},
\eqref{item:lem-man-set-equiv-h}  and
\eqref{item:lem-man-set-equiv-i}) are entirely analogous to those of parts
\eqref{item:lem-man-set-equiv-a},
\eqref{item:lem-man-set-equiv-b}, and
\eqref{item:lem-man-set-equiv-c} respectively, and so are omitted for brevity. 
\end{proof}

We can now prove the following, strengthened form of \cref{main-theorem}.

\begin{theorem} 
Let $n \geq 4$ be even, and fix $\CAT \in \{\Diff, \PL, \TOP\}$ satisfying \cref{assumptions}. Let $m \geq 2$ be an integer that is not square-free (e.g.\ $m=4$), and let $L$ be an $(n-1)$-dimensional lens space with $\pi_1(L) \cong C_m$. Then for $M^n_m := S^1 \times L$, the set $\M^{\mathrm{h}}_{\mathrm{s}}(M^n_m)$ is infinite.

Consequently, there exists an infinite collection of closed, connected, orientable, $\CAT$ $n$-manifolds that are all homotopy equivalent to one another but are pairwise not simple homotopy equivalent.
\end{theorem}

\begin{proof}
By \cref{theorem:nk1} and \cref{lem:I-finiteness}, we have that $|\mathcal{J}_n(C_\infty \times C_m)| = \infty$. 
Hence it follows from \cref{lem:man-set-equiv}~\eqref{item:lem-man-set-equiv-b} that $\M^{\mathrm{h}}_{\mathrm{s}}(M^n_m)$ is infinite.
Note that $M^n_m = S^1 \times L$ is orientable, so the same is true for every $n$-manifold homotopy equivalent to it. Therefore we obtain a suitable infinite collection by choosing a representative from each element of $\M^{\mathrm{h}}_{\mathrm{s}}(M^n_m)$.
\end{proof}

The following completes the proof of the remaining parts of \cref{theorem:main-S^1xL,theorem:hcob-S^1xL,,theorem:s-hcob-S^1xL}, subject to the proofs of \cref{theorem:main12-red,theorem:main3-red} which are postponed until \cref{s:proofs-algebra}.

\begin{theorem} \label{theorem:man-set-summary}
Let $n = 2k \geq 4$ be an even integer and $m \geq 2$. Then the following hold. 
\benum
\item\label{item:lem-man-set-summary-a}
 $|\M^{\mathrm{h}}_{\mathrm{s}}(M^n_m)| = 1$ if and only if $m \in \{2,3,5,6, 7,10, 11,13,14,17,19\}$.
\item\label{item:lem-man-set-summary-b}
 $|\M^{\mathrm{h}}_{\mathrm{s}}(M^n_m)| = \infty$ if and only if $m$ is not square-free.
\item\label{item:lem-man-set-summary-c}
 $|\M^{\mathrm{h}}_{\mathrm{s}}(M^n_m)| \to \infty$ as $m \to \infty$ $($uniformly in $n)$.
\item\label{item:lem-man-set-summary-d}
 $|\M^{\hCob}_{\mathrm{s}}(M^n_m)|=1$ if and only if $m \in \{2, 3, 5, 6, 7, 10, 11, 13, 14, 15, 17, 19, 29\}$.
\item\label{item:lem-man-set-summary-e}
 $|\M^{\hCob}_{\mathrm{s}}(M^n_m)| = \infty$ if and only if $m$ is not square-free.
\item\label{item:lem-man-set-summary-f}
 $|\M^{\hCob}_{\mathrm{s}}(M^n_m)| \to \infty$ as $m \to \infty$ $($uniformly in $n)$.
\item\label{item:lem-man-set-summary-g}
 There are infinitely many $m$ such that $|\M^{\mathrm{h}}_{\mathrm{s},\hCob}(M^n_m)| = 1$ for every $n$.
\item\label{item:lem-man-set-summary-h}
 $|\M^{\mathrm{h}}_{\mathrm{s},\hCob}(M^n_m)|$ is finite for every $n$ and $m$.
\item\label{item:lem-man-set-summary-i}
 $\displaystyle \limsup_{m \to \infty} \Bigl( \inf_n |\M^{\mathrm{h}}_{\mathrm{s},\hCob}(M^n_m)| \Bigr) = \infty$.
\eenum
\end{theorem}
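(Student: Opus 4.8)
The plan is to obtain \cref{theorem:man-set-summary} by combining \cref{lem:man-set-equiv}, which translates every cardinality statement about $\M^h_s(M^n_m)$, $\M^{\hCob}_s(M^n_m)$ and $\M^h_{s,\hCob}(M^n_m)$ into a statement about the order of one of $\mathcal{J}_n(G)$, $\mathcal{I}_n(G)$ or $\wh{H}^{n+1}(C_2;\Wh(G))$ for $G = C_\infty \times C_m$, with the required computations of those orders supplied by \cref{theorem:main1,theorem:main2,theorem:main3}. Before doing so I would record the elementary observation that, since $n$ is even, each of $\mathcal{J}_n(G)$, $\mathcal{I}_n(G)$ and $\wh{H}^{n+1}(C_2;\Wh(G))$ depends only on the parity of $n$ (for the Tate group this is $2$-periodicity of the cohomology of $C_2$), so all three are the same group for every even $n = 2k \ge 4$; the only residual $n$-dependence in \cref{prop:man-set-bij} is through the finite group $A_{2k}(m)$, whose order is at most $2m^2$ independently of $k$.

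With this in hand, parts (a), (b), (d), (e), (g), (h) are one-line deductions: (a) follows from \cref{lem:man-set-equiv}~\eqref{item:lem-man-set-equiv-a} and \cref{theorem:main1}~\eqref{item:theorem:main1-i}, part (b) from \cref{lem:man-set-equiv}~\eqref{item:lem-man-set-equiv-b} and \cref{theorem:main1}~\eqref{item:theorem:main1-ii}, parts (d) and (e) from the corresponding items of \cref{lem:man-set-equiv} and \cref{theorem:main2}, part (g) from \cref{lem:man-set-equiv}~\eqref{item:lem-man-set-equiv-g} and \cref{theorem:main3}~\eqref{item:theorem:main3-i}, and part (h) from \cref{lem:man-set-equiv}~\eqref{item:lem-man-set-equiv-h} and \cref{theorem:main3}~\eqref{item:theorem:main3-ii}. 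For the growth statements (c) and (f), I would use the estimate of \cref{lem:man-set-equiv}~\eqref{item:lem-man-set-equiv-c} (resp.\ \eqref{item:lem-man-set-equiv-f}), valid whenever the manifold set is finite, to get $|\M^h_s(M^n_m)| > |\mathcal{J}_n(G)| / (2m^2)$ (resp.\ $|\M^{\hCob}_s(M^n_m)| > |\mathcal{I}_n(G)| / (2m^2)$) with a lower bound uniform in $n$, and then feed in the super-exponential growth of $|\mathcal{J}_n(C_\infty \times C_m)|$ from \cref{theorem:main1}~\eqref{item:theorem:main1-iii} (resp.\ of $|\mathcal{I}_n(C_\infty \times C_m)|$ from \cref{theorem:main2}~\eqref{item:theorem:main2-iii}); the infinite case is handled by \cref{lem:man-set-equiv}~\eqref{item:lem-man-set-equiv-b} (resp.\ \eqref{item:lem-man-set-equiv-e}). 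Together these give $|\M^h_s(M^n_m)| \to \infty$ and $|\M^{\hCob}_s(M^n_m)| \to \infty$ uniformly in $n$.

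The statement requiring a little extra care is (i). Since $\M^h_{s,\hCob}(M^n_m)$ is always finite, by part (h), \cref{lem:man-set-equiv}~\eqref{item:lem-man-set-equiv-i} applies and yields, writing $W_m := \wh{H}^{n+1}(C_2;\Wh(C_\infty \times C_m))$ (independent of even $n$), the uniform bound $\inf_n |\M^h_{s,\hCob}(M^n_m)| > |W_m| / (2m^2)$. For each $m$ pick $l = l(m) \le m$ attaining $\sup_{l \le m}|W_l|$; by \cref{theorem:main3}~\eqref{item:theorem:main3-iii} this supremum grows exponentially in $m$, so $\inf_n |\M^h_{s,\hCob}(M^n_{l(m)})| > |W_{l(m)}| / (2\,l(m)^2) \ge |W_{l(m)}| / (2m^2) \to \infty$; since the numerator is unbounded and $l(m) \le m$, we must have $l(m) \to \infty$, so along the sequence of values $l(m)$ the quantity $\inf_n |\M^h_{s,\hCob}(M^n_m)|$ is unbounded, which is precisely $\limsup_{m \to \infty}\bigl( \inf_n |\M^h_{s,\hCob}(M^n_m)| \bigr) = \infty$.

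No ingredient beyond \cref{prop:man-set-bij,lem:man-set-equiv,theorem:main1,theorem:main2,theorem:main3} is needed; the proof is essentially an assembly. The only thing to be vigilant about — and the reason I would isolate the parity and $A_{2k}(m)$ remarks at the start — is keeping all the estimates uniform in $n$, and, in part (i), correctly threading the inner $\inf_n$ through the two-parameter estimate coming from \cref{theorem:main3}~\eqref{item:theorem:main3-iii}. I expect (i) to be the only part where more than a citation is required, and even there the argument is the short subsequence extraction above.
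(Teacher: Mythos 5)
Your proposal is correct and follows essentially the same route as the paper: all parts are assembled from \cref{lem:man-set-equiv} together with \cref{theorem:main1,theorem:main2,theorem:main3}, with (c) and (f) obtained from the bound $|\M^h_s(M^n_m)| > |\mathcal{J}_n(C_\infty\times C_m)|/(2m^2)$ (resp.\ the analogous bound for $\mathcal{I}_n$), and (i) from the bound $\inf_n|\M^h_{s,\hCob}(M^n_m)| > |\wh H^{n+1}(C_2;\Wh(C_\infty\times C_m))|/(2m^2)$ combined with \cref{theorem:main3}~(iii). Your subsequence extraction for (i) is just a rephrasing of the paper's observation that $m \mapsto |\wh H^{n+1}(C_2;\Wh(C_\infty\times C_m))|/(2m^2)$ is unbounded, and your explicit parity/uniformity remarks are implicit in the paper's statements, so there is no substantive difference.
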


\begin{proof}
\eqref{item:lem-man-set-summary-a}, \eqref{item:lem-man-set-summary-b}, \eqref{item:lem-man-set-summary-d}, \eqref{item:lem-man-set-summary-e}, \eqref{item:lem-man-set-summary-g}, and \eqref{item:lem-man-set-summary-h} follow immediately from \cref{lem:man-set-equiv} and  \cref{theorem:main12,theorem:main3}.

\eqref{item:lem-man-set-summary-c} For every $m$, it follows from \cref{lem:man-set-equiv} \eqref{item:lem-man-set-equiv-c} that $|\M^{\mathrm{h}}_{\mathrm{s}}(M^n_m)| > \frac{|\mathcal{J}_n(C_\infty \times C_m)|}{2m^2}$ for every $n$. By \cref{theorem:main12}~\eqref{item:theorem:main12-iv} $\frac{|\mathcal{J}_n(C_\infty \times C_m)|}{2m^2} \to \infty$ as $m \to \infty$. Item \eqref{item:lem-man-set-summary-f} can be proved similarly.

\eqref{item:lem-man-set-summary-i} For every $m$, it follows from \cref{lem:man-set-equiv} \eqref{item:lem-man-set-equiv-i} that $\inf_n |\M^{\mathrm{h}}_{\mathrm{s},\hCob}(M^n_m)| > \frac{|\wh{H}^{n+1}(C_2;\Wh(C_\infty \times C_m))|}{2m^2}$. By \cref{theorem:main3}~\eqref{item:theorem:main3-iii}, we see that $\frac{1}{2m^2} \sup_{l \le m} |\wh{H}^{n+1}(C_2;\Wh(C_\infty \times C_l))| \to \infty$ as $m \to \infty$. This implies that 
$\sup_{l \le m} \frac{|\wh{H}^{n+1}(C_2;\Wh(C_\infty \times C_l))|}{2l^2} \to \infty$ as $m \to \infty$. 
So the map $m \mapsto \frac{|\wh{H}^{n+1}(C_2;\Wh(C_\infty \times C_m))|}{2m^2}$ is unbounded. Therefore $\limsup_{m \to \infty} \frac{|\wh{H}^{n+1}(C_2;\Wh(C_\infty \times C_m))|}{2m^2} = \infty$.
\end{proof}

\part{The involution on $\wt{K}_0(\Z C_m)$} \label{p:algebra}

The aim of this part is to prove \cref{theorem:main12-red,,theorem:main3-red}, which are key ingredients in the proofs of \cref{theorem:main12,theorem:main3} respectively. In \cref{s:class-groups,s:tate}, we recall the necessary background on class groups, Tate cohomology, and $\Z C_2$-modules. The main technical heart of this part is \cref{s:involution-ZC_m} where we investigate the involution on $\wt K_0(\Z C_m)$ and prove general results which allow it to be computed. In \cref{s:proofs-algebra}, we make use of the results in \cref{s:involution-ZC_m} to prove \cref{theorem:main12-red,theorem:main3-red}. Throughout, we assume that all modules are left modules.

\section{Locally free class groups} \label{s:class-groups}

We now recall the theory of locally free class groups for orders in semisimple $\Q$-algebras. Good references for this material are \cite[Section 1-3]{Sw80} and \cite[Section 49A \& 50E]{CR87}. 

\subsection{Definitions and properties} \label{ss:class-groups-basics}

For a ring $A$, a nonzero $A$-module is \textit{simple} if it contains no $A$-submodules other than itself and $0$, and is \textit{semisimple} if it is isomorphic as an $A$-module to a direct sum of its simple $A$-submodules. 
We say that a ring $A$ is \textit{semisimple} if $A$, viewed as an $A$-module, is semisimple. 

Let $A$ be a finite-dimensional semisimple $\Q$-algebra. An \textit{order} in $A$ is a subring~$\l \subseteq A$ that is finitely generated as an abelian group and which has $\Q \cdot \l = A$. 
For example, let $G$ be a finite group. Then $A = \Q G$ is a finite dimensional $\Q$-algebra which is semisimple by Maschke's theorem in representation theory, and $\l = \Z G$ is an order in $A$. 

From now on, fix an order $\l$ in a finite-dimensional semisimple $\Q$-algebra $A$. For a prime $p$, let $\l_p = \Z_p \otimes_{\Z} \l$ and let $A_p = \Q_p \otimes_{\Q} A$ denote the $p$-adic completions of $\l$ and $A$. 

\begin{definition}
A $\l$-module $M$ is \textit{locally free} if it is finitely generated and $M_p = \Z_p \otimes_{\Z} M$ is a free $\l_p$-module for all primes~$p$.
\end{definition}

The following is \cite[Lemma 2.1]{Sw80}. The converse need not hold, i.e.\ there exist orders $\l$ and finitely generated projective $\l$-modules that are not locally free \cite[p.~156]{Sw80}.

\begin{proposition} \label{prop:LF=>proj}
If $M$ is a locally free $\l$-module, then $M$ is projective.
\end{proposition}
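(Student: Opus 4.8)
The statement to prove is \cref{prop:LF=>proj}: a locally free $\l$-module $M$ is projective. The plan is to reduce to a standard local-global criterion for projectivity. Recall that a finitely generated module $M$ over a Noetherian ring $\l$ is projective if and only if $M_p$ is a projective $\l_p$-module for every prime $p$, together with $\l_{\Q} \otimes_{\l} M \cong A \otimes_{\l} M$ being projective over $A$ (which is automatic here since $A$ is semisimple, so every $A$-module is projective). So the core of the argument is the well-known fact that projectivity can be checked after completing at each prime.

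First I would note that $\l$ is a $\Z$-order, hence a Noetherian ring which is finitely generated as a $\Z$-module, and $M$ is finitely generated over $\l$ (by our standing conventions on modules). Choose a surjection $\pi \colon \l^n \twoheadrightarrow M$ of $\l$-modules and let $N = \ker(\pi)$, so we have a short exact sequence $0 \to N \to \l^n \to M \to 0$. To show $M$ is projective, it suffices to show this sequence splits, equivalently that the map $\Hom_{\l}(\l^n, M) \to \Hom_{\l}(N, M)$ given by restriction is surjective; equivalently, that $\Ext^1_{\l}(M, N) = 0$. Since $\l$ is Noetherian and all modules here are finitely generated, $\Ext^1_{\l}(M,N)$ is a finitely generated abelian group, and it is a standard fact that $\Z_p \otimes_{\Z} \Ext^1_{\l}(M, N) \cong \Ext^1_{\l_p}(M_p, N_p)$ for each prime $p$ (flat base change of $\Ext$ along $\Z \to \Z_p$, valid because $M$ is finitely presented over the Noetherian ring $\l$). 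Likewise $\Q \otimes_{\Z} \Ext^1_{\l}(M,N) \cong \Ext^1_{A}(A \otimes M, A \otimes N) = 0$ since $A$ is semisimple. Thus $\Ext^1_{\l}(M,N)$ is a finitely generated abelian group that is both torsion (it vanishes after $\otimes \Q$) and $p$-divisible-free in the appropriate sense — more precisely, it vanishes after tensoring with each $\Z_p$ because $M_p$ is free, hence projective, over $\l_p$, so $\Ext^1_{\l_p}(M_p, N_p) = 0$. A finitely generated abelian group that becomes zero after tensoring with $\Z_p$ for every prime $p$ and after tensoring with $\Q$ must itself be zero. Hence $\Ext^1_{\l}(M,N) = 0$, the sequence splits, and $M$ is a direct summand of $\l^n$, so $M$ is projective.

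The main obstacle — really the only nontrivial input — is the flat base change isomorphism $\Z_p \otimes_{\Z} \Ext^1_{\l}(M,N) \cong \Ext^1_{\l_p}(M_p, N_p)$. This follows from a projective resolution argument: since $M$ is finitely presented over the Noetherian ring $\l$, take a resolution $\cdots \to \l^{a_1} \to \l^{a_0} \to M \to 0$ by finitely generated free $\l$-modules; applying $\Hom_{\l}(-, N)$ gives a complex of finitely generated abelian groups computing $\Ext^*_{\l}(M,N)$, and since $\Z_p$ is flat over $\Z$ and the modules involved are finitely generated, $\Z_p \otimes_{\Z} \Hom_{\l}(\l^{a_i}, N) = \Z_p \otimes_{\Z} N^{a_i} \cong (N_p)^{a_i} = \Hom_{\l_p}(\l_p^{a_i}, N_p)$ compatibly with the differentials, and $\Z_p \otimes_{\Z} (-)$ commutes with taking cohomology of a complex of finitely generated abelian groups. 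One should also observe that $\l_p^{a_\bullet} \to M_p$ is still a free resolution of $M_p$ over $\l_p$ (again by flatness of $\Z_p$). I would state these facts and cite a standard reference — this is exactly the content underlying \cite[Lemma 2.1]{Sw80}, and indeed the cleanest write-up is simply to invoke that lemma, but the above spells out the argument. Since the excerpt already cites \cite{Sw80} for this statement, in the actual paper one would likely just give the short resolution-and-base-change argument above or defer entirely to the reference.
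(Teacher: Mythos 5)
Your argument is correct. Note, though, that the paper does not prove \cref{prop:LF=>proj} at all: it simply cites \cite[Lemma 2.1]{Sw80}, so there is no in-paper argument to compare against, and your write-up supplies the standard proof underlying that reference. Your route — split off $0 \to N \to \l^n \to M \to 0$ by showing $\Ext^1_{\l}(M,N)=0$, using that this Ext group is a finitely generated abelian group which dies after $\otimes_{\Z}\Q$ (semisimplicity of $A$) and after $\otimes_{\Z}\Z_p$ for every $p$ (freeness of $M_p$ over $\l_p$, via flat base change along $\Z \to \Z_p$ applied to a finite free resolution of $M$ over the Noetherian ring $\l$) — is sound; the only points worth making explicit are that $N$ is finitely generated because $\l$ is Noetherian, and that for a finite abelian group $E$ one has $E \otimes_{\Z} \Z_p \cong E_{(p)}$, so vanishing at all $p$ together with $E \otimes \Q = 0$ forces $E=0$. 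As you say, in the paper one would either give this short argument or, as the authors do, defer entirely to \cite{Sw80}.
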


We say that two locally free $\l$-modules $M$ and $N$ are \textit{stably isomorphic}, written $M \cong_{\st} N$, if there exists $r,s \ge 0$ such that $M \oplus \l^r \cong N \oplus \l^s$ are isomorphic as $\l$-modules.

\begin{definition}
Define the \textit{locally free class group} $C(\l)$ to be the set of equivalence classes of locally free $\l$-modules up to stable isomorphism. This is an abelian group under direct sum (since the direct sum of locally free modules is locally free).
\end{definition}

It follows that $C(\l) \le \wt K_0(\l)$ is a subgroup, where $\wt K_0$ is the $0$th reduced algebraic $K$-group as defined in \cref{ss:k0}. It is a consequence of the Jordan-Zassenhaus theorem that $C(\l)$ is finite \cite[Remark~49.11~(ii)]{CR87}.

We now specialise to the case where $\l = \Z G$ for $G$ a finite group. In contrast to the situation for general orders, we have the following proposition \cite[p.~156]{Sw80}. By \cref{prop:LF=>proj}, this implies that a finitely generated $\Z G$-module is projective if and only if it is locally free.

\begin{proposition} \label{prop:proj=>LF}
Let $G$ be a finite group.
\begin{clist}{(i)}
\item\label{item:prop-proj-LF-i}
If $M$ is a finitely generated projective $\Z G$-module, then $M$ is locally free. 
\item\label{item:prop-proj-LF-ii}
There is an isomorphism of abelian groups $\wt K_0(\Z G) \cong C(\Z G)$. In particular, $\wt K_0(\Z G)$ is finite.
\end{clist}
\end{proposition}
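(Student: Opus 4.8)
\textbf{Plan of proof for \cref{prop:proj=>LF}.}
The statement has two parts. Part~\eqref{item:prop-proj-LF-i} is the assertion that every finitely generated projective $\Z G$-module is locally free, and part~\eqref{item:prop-proj-LF-ii} deduces the isomorphism $\wt K_0(\Z G) \cong C(\Z G)$ and finiteness. The plan is to prove \eqref{item:prop-proj-LF-i} by a localisation argument, and then obtain \eqref{item:prop-proj-LF-ii} formally from \eqref{item:prop-proj-LF-i} together with \cref{prop:LF=>proj} and the fact (already quoted in the excerpt) that $C(\l)$ is finite for any order.

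For \eqref{item:prop-proj-LF-i}: let $P$ be a finitely generated projective $\Z G$-module, so $P \oplus Q \cong \Z G^n$ for some finitely generated $Q$ and some $n$. For each prime $p$ we tensor with $\Z_p$ to get $P_p \oplus Q_p \cong \Z_p G^n$, so $P_p$ is a finitely generated projective $\Z_p G$-module. The key point is that $\Z_p$ is a complete (hence Henselian) local ring with residue field $\F_p$, and $\Z_p G$ is a module-finite $\Z_p$-algebra. For such rings, finitely generated projective modules over $\Z_p G$ lift from finitely generated projective modules over $\F_p G$: by idempotent lifting over the complete local ring $\Z_p$ (equivalently, by Nakayama plus completeness), $P_p$ is free over $\Z_p G$ if and only if $P_p / p P_p$ is free over $\F_p G$. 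But $\F_p G$ is a finite-dimensional algebra over the field $\F_p$, and finitely generated projective modules over $\F_p G$ are free, because $\F_p G$ is a local ring when $G$ is a $p$-group — wait, that is not true for general $G$. The correct statement is that all finitely generated projective $\F_p G$-modules are free for \emph{every} finite group $G$; this is a theorem (essentially due to the fact that $\F_p G$ has no nontrivial idempotents in its centre modulo the radical that split off non-free projectives — more precisely it follows from Swan's theorem that projective $\F_p G$-modules are free, \cite[Theorem~8.1]{Sw60} or \cite[Corollary~6.4]{Sw60}). So $P_p/pP_p$ is $\F_p G$-free, hence $P_p$ is $\Z_p G$-free, i.e.\ $P$ is locally free. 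I would cite Swan \cite{Sw60} for the freeness of projectives over $\F_p G$ and a standard reference (e.g.\ \cite[Section 6]{Sw60} or \cite{CR87}) for the idempotent-lifting step.

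For \eqref{item:prop-proj-LF-ii}: by \cref{prop:LF=>proj} every locally free $\Z G$-module is projective, and by part~\eqref{item:prop-proj-LF-i} every finitely generated projective $\Z G$-module is locally free; thus the two classes of modules coincide. Since, by definition (see \cref{ss:k0}), $\wt K_0(\Z G)$ is the set of stable-isomorphism classes of finitely generated projective $\Z G$-modules under direct sum, and $C(\Z G)$ is the set of stable-isomorphism classes of locally free $\Z G$-modules under direct sum, these two abelian groups are literally defined by the same data, giving a canonical isomorphism $\wt K_0(\Z G) \cong C(\Z G)$. (One should check that the two notions of stable equivalence agree: $P \oplus \Z G^i \cong Q \oplus \Z G^j$ is the condition in both cases, and the rank function that was subtracted off in the definition of $\wt K_0$ corresponds precisely to passing to locally free modules of a fixed rank, so no discrepancy arises.) Finally, $C(\Z G)$ is finite by the Jordan--Zassenhaus theorem \cite[Remark~49.11(ii)]{CR87}, as noted above, so $\wt K_0(\Z G)$ is finite.

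\textbf{Main obstacle.} The only non-formal ingredient is the freeness of finitely generated projective modules over $\F_p G$ together with the lifting of this freeness through the completion $\Z_p G \to \F_p G$; everything else is bookkeeping. I expect to handle this by citing Swan's foundational results \cite{Sw60}, so in the written proof this becomes a short paragraph rather than a genuine difficulty. If one wanted a self-contained argument, the lifting step would be the place to be careful, using completeness of $\Z_p$ to lift idempotents in matrix rings over $\Z_p G$.
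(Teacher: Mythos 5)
Part~\eqref{item:prop-proj-LF-ii} of your proposal is fine: once projective and locally free $\Z G$-modules are known to coincide, the identification $\wt K_0(\Z G)\cong C(\Z G)$ is immediate from the definitions, and finiteness follows from the Jordan--Zassenhaus finiteness of $C(\Z G)$. The gap is in part~\eqref{item:prop-proj-LF-i}, and your own parenthetical hesitation was warranted: the key fact you invoke --- that every finitely generated projective $\F_p G$-module is free for \emph{every} finite group $G$ --- is false. If $p\nmid |G|$ then $\F_p G$ is semisimple with several simple factors (already $\F_3 C_2\cong \F_3\times\F_3$), and any proper factor is projective but not free; projectivity implies freeness over $\F_p G$ only in special cases, e.g.\ when $G$ is a $p$-group so that $\F_p G$ is local. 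The same failure persists over $\Z_p$: $\Z_3 C_2\cong \Z_3\times\Z_3$ has the non-free projective module $\Z_3\times 0$, so ``finitely generated projective over $\Z_p G$ implies free'' is simply not true, and your idempotent-lifting step (which is itself correct) cannot produce it. In other words, local freeness of $P_p$ is not a formal consequence of $P_p$ being projective over $\Z_p G$; it uses in an essential way that $P_p$ comes from a projective module over the \emph{global} ring $\Z G$.

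That global input is the actual content of Swan's theorem, and it is not bookkeeping. Swan's argument first proves that $\Q\otimes_\Z P$ is a free $\Q G$-module, via an induction-theoretic/character argument reducing to cyclic (or hyperelementary) subgroups, and then uses the structure theory of orders over a complete discrete valuation ring --- over $\Z_p$, a finitely generated projective $\Z_p G$-module is determined up to isomorphism by its rationalisation $\Q_p\otimes_{\Z_p} P_p$ --- to conclude that $P_p$ is free. This is presumably why the paper does not prove the proposition at all but quotes it, citing \cite[p.~156]{Sw80} (the original source being \cite{Sw60}). So the repair is either to cite Swan's theorem directly for part~\eqref{item:prop-proj-LF-i}, or to reproduce the induction-theoretic proof; the pure reduction-mod-$p$ strategy as you set it up cannot be fixed.
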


Finally we note the following which relates locally free class groups to ideal class groups (see \cite[Section 35]{Re75}). For a field $K/\Q$, we let $\mathcal{O}_K$ denote the ring of integers.

\begin{proposition} \label{prop:LF=ideals}
Let $K/\Q$ be a finite field extension. Then $C(\mathcal{O}_K)$ coincides with the ideal class group of $\mathcal{O}_K$.
\end{proposition}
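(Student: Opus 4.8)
The statement to prove is \cref{prop:LF=ideals}: for a finite field extension $K/\Q$, the locally free class group $C(\mathcal{O}_K)$ coincides with the ideal class group of $\mathcal{O}_K$. The plan is to exploit the fact that $\mathcal{O}_K$ is a Dedekind domain, so that the usual dictionary between projective/locally free modules and fractional ideals applies. First I would observe that $\mathcal{O}_K$ is an order in the simple $\Q$-algebra $A = K$, so the notion of locally free $\mathcal{O}_K$-module makes sense, and by \cref{prop:LF=>proj} every locally free module is projective. Conversely, over a Dedekind domain every finitely generated torsion-free module is projective, and localising at a prime $p$ makes $(\mathcal{O}_K)_p = \Z_p \otimes_\Z \mathcal{O}_K$ a (semilocal) product of discrete valuation rings, over which every finitely generated projective module is free; hence every finitely generated projective $\mathcal{O}_K$-module is in fact locally free. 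So over $\mathcal{O}_K$ the classes of projective and locally free modules coincide, and $C(\mathcal{O}_K) = \wt{K}_0(\mathcal{O}_K)$ as sets of stable isomorphism classes.

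The second step is the classical structure theorem for finitely generated projective modules over a Dedekind domain: every such module $M$ is isomorphic to $\mathcal{O}_K^{r-1} \oplus \mathfrak{a}$ for a unique integer $r = \rk(M)$ and a fractional ideal $\mathfrak{a}$ whose class $[\mathfrak{a}]$ in the ideal class group is uniquely determined by $M$; moreover $\mathcal{O}_K^{r-1} \oplus \mathfrak{a} \cong \mathcal{O}_K^{s-1} \oplus \mathfrak{b}$ if and only if $r = s$ and $[\mathfrak{a}] = [\mathfrak{b}]$, and $\mathfrak{a} \oplus \mathfrak{b} \cong \mathcal{O}_K \oplus \mathfrak{a}\mathfrak{b}$. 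This is standard; I would cite \cite[Section 35]{Re75} as already referenced in the surrounding text. From this, the assignment $[M] \mapsto [\mathfrak{a}]$ descends to a well-defined map on stable isomorphism classes (since adding free summands does not change $\mathfrak{a}$), and the multiplicativity $\mathfrak{a}\oplus\mathfrak{b} \cong \mathcal{O}_K \oplus \mathfrak{a}\mathfrak{b}$ shows it is a group homomorphism $C(\mathcal{O}_K) \to C(\mathcal{O}_K)_{\mathrm{ideal}}$, where $C(\mathcal{O}_K)_{\mathrm{ideal}}$ denotes the ideal class group. Injectivity follows from the uniqueness clause (stably isomorphic projectives have the same ideal class), and surjectivity is immediate since every fractional ideal is a locally free rank-one module.

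Since \cref{prop:LF=ideals} is essentially a citation-level fact, the writeup can be brief: assemble the two ingredients (locally free $=$ projective over a Dedekind domain, and the Steinitz classification of projectives over a Dedekind domain) and note that they combine to give a canonical isomorphism. I expect no real obstacle here; the only point requiring a little care is the verification that projective implies locally free over $\mathcal{O}_K$, i.e.\ that $(\mathcal{O}_K)_p$ is a finite product of DVRs and hence projectives over it are free — this uses that $p\mathcal{O}_K$ factors into finitely many primes and the completion splits accordingly. Everything else is formal bookkeeping with the Steinitz invariant, and I would simply reference \cite[Section 35]{Re75} rather than reprove it.
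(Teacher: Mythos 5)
Your argument is correct and amounts to exactly the standard content behind the reference the paper relies on: the paper gives no proof of this proposition, citing only \cite[Section 35]{Re75}, and your route (locally free $=$ projective over $\mathcal{O}_K$, then the Steinitz classification identifying stable isomorphism classes with ideal classes, compatibly with direct sum via $\mathfrak{a}\oplus\mathfrak{b}\cong\mathcal{O}_K\oplus\mathfrak{a}\mathfrak{b}$) is the standard argument contained there. The only phrase to tighten is ``every finitely generated projective module over $\Z_p\otimes_\Z\mathcal{O}_K$, a product of complete DVRs, is free'': that holds only for constant-rank projectives, which suffices here because the modules you localise are extended from projectives over the domain $\mathcal{O}_K$ and hence have constant rank.
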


\subsection{Kernel groups} \label{ss:kernel-groups}

Let $A$ be a finite-dimensional semisimple $\Q$-algebra. An order in $A$ is said to be \textit{maximal} if it is not properly contained in another order in $A$. Since every finite field extension of $\Q$ is separable, $A$ is a separable algebra and so every order in $A$ is contained in a maximal order \cite[Proposition 5.1]{Sw70}.

Let $\l$ be an order in $A$ and let $\Gamma$ be a maximal order in $A$ containing $\l$. The inclusion map~$i \colon \l \hookrightarrow \Gamma$ induces a map $i_* \colon C(\l) \to C(\Gamma)$ given by extension of scalars $[M] \mapsto [\Gamma \otimes_{\l} M]$ which is necessarily surjective by \cite[Theorem 49.25]{CR87}.

\begin{definition}
Define the \textit{kernel group} $D(\l)$ to be the kernel of the map $i_* \colon C(\l) \to C(\Gamma)$. (This is often also referred to as the defect group.)
\end{definition}

The group $C(\Gamma)$ does not depend up to isomorphism on the choice of maximal order in $A$, i.e. if $\Gamma_1$, $\Gamma_2$ are maximal orders in $A$ containing $\l$, then there is an isomorphism $C(\Gamma_1) \cong C(\Gamma_2)$ \cite[Theorem 49.32]{CR87}. Furthermore, the kernel group $D(\l)$ does not depend on the choice of maximal order. This can be seen from the fact that it can be defined without reference to a maximal order: if $M$ is a locally free $\l$-module, then $[M] \in D(\l)$ if and only if there exists a finitely generated $\l$-module $X$ such that $M \oplus X \cong \l^n \oplus X$ for some $n$ \cite[Proposition 49.34]{CR87}.

In particular, we have a well-defined exact sequence of abelian groups:
\[ 0 \to D(\l) \to C(\l) \to C(\Gamma) \to 0 \]
where $\Gamma$ can be taken to be any maximal order in $A$ containing $\l$.

\subsection{The id\`{e}lic approach to locally free class groups}

Let $\l$ be an order in a finite-dimensional semisimple $\Q$-algebra $A$.

\begin{definition}
Define the \textit{id\`{e}le group} 
\[J(A) = \{(\alpha_p) \in \prod_p A_p^\times \mid \alpha_p \in \l_p^\times \textit{ for all but finitely many $p$}  \} \subseteq \prod_p A_p^\times.\] 
\end{definition}

As a subgroup of $\prod_p A_p^\times$, this is independent of the choice of order $\l$ \cite[p.~218]{CR87}. 
Every class in $C(\l)$ is represented by a locally free $\l$-module $M \subseteq A$ \cite[p.~218]{CR87}. For each $p$, there exists $\alpha_p \in A_p$ such that $M_p = \l_p \alpha_p \subseteq A_p$. For all but finitely many $p$, $M_p \cong \l_p$ and so $\alpha_p \in \l_p^\times$. In particular, $\alpha = (\alpha_p) \in J(A)$. Conversely, given an id\`{e}le $\alpha \in J(A)$, we have that $\l \alpha = A \cap \bigcap_p \l_p \alpha_p \subseteq A$ is a locally free $\l$-ideal. 
Let $\alpha, \beta \in J(A)$. Then $\l \alpha \cong \l \beta$ as $\l$-modules if and only if $\beta \in U(\l) \cdot \alpha \cdot A^\times$ where $A^\times \subseteq J(A)$ by sending $a \in A^\times$ to $\alpha_p = 1 \otimes_{\Q} a$ for all $p$, and $U(\l) = \{ (\alpha_p) \in \prod_p A_p^\times\mid \alpha_p \in \l_p^\times \text{ for all $p$}\} \subseteq J(A)$ \cite[49.6]{CR87}.
Furthermore, we have that $\l \alpha \oplus \l \beta \cong \l \oplus \l \alpha \beta$ \cite[49.8]{CR87}. This leads to the following.

\begin{proposition} \label{prop:idele-surj}
There is a surjective group homomorphism 
\[ [\l \,\cdot\,] \colon J(A) \twoheadrightarrow C(\l), \quad \alpha \mapsto [\l \alpha].\]
\end{proposition}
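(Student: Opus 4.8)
The statement to prove is \cref{prop:idele-surj}: that $[\l\,\cdot\,]\colon J(A)\to C(\l)$, $\alpha\mapsto[\l\alpha]$, is a surjective group homomorphism. All the ingredients are already collected in the excerpt, so the proof is essentially a matter of assembling them in the right order. The plan is: first establish that the map is well-defined on stable isomorphism classes; then check multiplicativity; then invoke surjectivity.

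First I would verify that the assignment $\alpha\mapsto[\l\alpha]$ lands in $C(\l)$ and is well-defined. Given $\alpha=(\alpha_p)\in J(A)$, the lattice $\l\alpha:=A\cap\bigcap_p\l_p\alpha_p\subseteq A$ is a locally free $\l$-ideal (this is exactly the construction recalled just before the proposition, using that $\alpha_p\in\l_p^\times$ for all but finitely many $p$), so $[\l\alpha]\in C(\l)$ makes sense. There is no further well-definedness issue, because the \emph{domain} is $J(A)$ itself, not a quotient of it; we only need that distinct id\`eles may give the same class, which is fine for a set-theoretic map. So this step is immediate from the material preceding the statement.

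Next, multiplicativity. For $\alpha,\beta\in J(A)$ the excerpt records (citing \cite[49.8]{CR87}) that $\l\alpha\oplus\l\beta\cong\l\oplus\l\alpha\beta$ as $\l$-modules. By the definition of the group law on $C(\l)$ (direct sum, with the class of $\l$ being the identity), this says precisely $[\l\alpha]+[\l\beta]=[\l]+[\l\alpha\beta]=[\l\alpha\beta]$. Since the product in $J(A)$ is the componentwise product $(\alpha\beta)_p=\alpha_p\beta_p$, this is exactly the homomorphism property $[\l\,\cdot\,](\alpha\beta)=[\l\,\cdot\,](\alpha)+[\l\,\cdot\,](\beta)$ (written additively in $C(\l)$). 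One should also note in passing that $[\l\cdot 1]=[\l]=0$, so the identity maps to the identity, though this follows automatically once multiplicativity is known.

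Finally, surjectivity. Every class in $C(\l)$ is represented by a locally free $\l$-module $M$ that can be taken to be a submodule $M\subseteq A$ (as recalled before the statement, citing \cite[p.~218]{CR87}). For each prime $p$, local freeness gives $\alpha_p\in A_p^\times$ with $M_p=\l_p\alpha_p$, and for all but finitely many $p$ we have $M_p\cong\l_p$ so $\alpha_p\in\l_p^\times$; hence $\alpha=(\alpha_p)\in J(A)$ and $\l\alpha=M$, giving $[\l\alpha]=[M]$. There is no real obstacle here: the proof is a straightforward synthesis of facts quoted from \cite{CR87} in the paragraphs immediately above the proposition, and the only point requiring any care is being explicit that the group operation on $C(\l)$ is direct sum so that the isomorphism $\l\alpha\oplus\l\beta\cong\l\oplus\l\alpha\beta$ translates correctly into additive notation.

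\begin{proof}
Given $\alpha=(\alpha_p)\in J(A)$, the $\l$-lattice $\l\alpha=A\cap\bigcap_p\l_p\alpha_p\subseteq A$ is locally free, as explained above, so $[\l\alpha]\in C(\l)$ and the map $[\l\,\cdot\,]$ is well-defined.

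To see that it is a homomorphism, let $\alpha,\beta\in J(A)$. By \cite[49.8]{CR87} we have $\l\alpha\oplus\l\beta\cong\l\oplus\l(\alpha\beta)$ as $\l$-modules, where $\alpha\beta=(\alpha_p\beta_p)$ is the product in $J(A)$. Since the group operation on $C(\l)$ is induced by direct sum, with $[\l]=0$, this gives
\[
[\l\alpha]+[\l\beta]=[\l]+[\l(\alpha\beta)]=[\l(\alpha\beta)].
\]
Thus $[\l\,\cdot\,](\alpha\beta)=[\l\,\cdot\,](\alpha)+[\l\,\cdot\,](\beta)$, so $[\l\,\cdot\,]$ is a group homomorphism.

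For surjectivity, let $c\in C(\l)$. By \cite[p.~218]{CR87} there is a locally free $\l$-module $M\subseteq A$ with $[M]=c$. For each prime $p$, since $M_p$ is a free $\l_p$-module of rank one (ranks agree because $\Q\cdot M=A=\Q\cdot\l$), there is $\alpha_p\in A_p^\times$ with $M_p=\l_p\alpha_p$; and $M_p\cong\l_p$ for all but finitely many $p$, so $\alpha_p\in\l_p^\times$ for all but finitely many $p$. Hence $\alpha=(\alpha_p)\in J(A)$, and $\l\alpha=A\cap\bigcap_p\l_p\alpha_p=A\cap\bigcap_p M_p=M$, so $[\l\,\cdot\,](\alpha)=[M]=c$.
\end{proof}
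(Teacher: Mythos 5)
Your proof is correct and is essentially the paper's own argument: the paper does not give a separate proof of this proposition but derives it directly from the facts quoted in the preceding paragraph (the construction $\alpha \mapsto \l\alpha$ giving a locally free ideal, the isomorphism $\l\alpha \oplus \l\beta \cong \l \oplus \l\alpha\beta$ from \cite[49.8]{CR87}, and the representability of every class by a locally free ideal $M \subseteq A$ with $M_p = \l_p\alpha_p$), which is exactly the synthesis you carry out.
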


\begin{remark} \label{remark:CviaJ}
This leads to the formula 
\[ C(\l) \cong \frac{J(A)}{J_0(A) \cdot A^\times \cdot U(\l)}\]
for the locally free class group, where $J_0(A) = \{x \in J(A) \mid \nr(x) = 1\}$ and $\nr \colon J(A) \to J(Z(A))$ is induced by the reduced norm. This is due to Fr\"{o}hlich \cite{Fr75} (see also \cite[Theorem 49.22]{CR87}). 
\end{remark}

\subsection{Involutions on locally free class groups} \label{ss:class-groups-involution}

Let $\l$ be an order in a finite-dimensional semisimple $\Q$-algebra $A$. Suppose further that $A$ is a ring equipped with an involution $\ol{\cdot} \colon A \to A$ which preserves $\l$, i.e.\ the map $\ol{\cdot}$ is an involution on $A$ as an abelian group which satisfies $\ol{xy} = \ol{y} \cdot \ol{x}$ for all $x,y \in A$ and $\ol{x} \in \l$ for all $x \in \l$.
For example, if $G$ is a finite group and $w \colon G \to \{\pm 1\}$ is a homomorphism, then $A=\Q G$ has an involution given by $\sum_{i=1}^k n_i g_i \mapsto \sum_{i=1}^k w(g_i) n_i g_i^{-1}$ for $n_i \in \Z$ and $g_i \in G$ which preserves $\l = \Z G$ (see \cref{ss:wh-group}). Given a (left) $\l$-module $M$, we now have the notion of a dual (left) $\l$-module $M^*$ (see \cref{def:dual}).

We now note the following properties of the dual of locally free modules. An $R$-module $M$ is \textit{reflexive} if the evaluation map $\text{\normalfont ev} \colon M \to M^{**}$, $m \mapsto (f \mapsto f(m))$ is bijective.

\begin{lemma} \label{lemma:dual-LF}
If $M$ is a locally free $\l$-module, then $M$ is reflexive and $M^*$ is locally free.
\end{lemma}

\begin{proof}
By \cref{prop:LF=>proj}, locally free modules are projective. It is well known that finitely generated projective modules are reflexive, hence $M$ is reflexive.
The fact that $M^*$ is locally free follows from that fact that $\Z_p \otimes_{\Z} M^* \cong (\Z_p \otimes_{\Z} M)^* \cong \l_p^* \cong \l_p$ for all primes $p$.
\end{proof}

We can use this to define an involution on the locally free class group.

\begin{definition}\label{defn:involution-ast-class-groups}
Define the \textit{standard involution on $C(\l)$} to be the map:
\[ \ast \colon C(\l) \to C(\l), \quad [M] \mapsto -[M^*].\]
\end{definition}

It can be shown that this is an involution of abelian groups in the sense that $\ast$ is a group homomorphism such that $\ast^2 = \id_{C(\l)}$.
In the case $\l=\Z G$, we have an isomorphism $\wt K_0(\Z G) \cong C(\Z G)$ and the involution above coincides with the standard involution on $\wt K_0$ as defined in \cref{ss:k0}.

We now explore some properties of this involution. In what follows we refer to \cite[pp.~275-6]{CR87}. This deals only with the case $\l = \Z G$, though the arguments there apply to the more general setting described above.
Observe that an involution on an abelian group is the same structure as a $\Z C_2$-module, where the $C_2$-action is given by the involution.

\begin{proposition} \label{prop:SES-ZC_2}
Let $\Gamma$ be a maximal order in $A$ containing $\l$ and let $i \colon \l \hookrightarrow \Gamma$ denote the inclusion map. Then there is a short exact sequence of $\Z C_2$-modules
\[ 0 \to D(\l) \hookrightarrow C(\l) \xrightarrow[]{i_*} C(\Gamma) \to 0 \]
where $D(\l)$, $C(\l)$, and $C(\Gamma)$ are $\Z C_2$-modules under the standard involutions.
\end{proposition}

\begin{proof}
It is an immediate consequence of the alternative description of the kernel group given in \cref{ss:kernel-groups} that, if $[M] \in D(\l)$, then $[M^*] \in D(\l)$ \cite[p.~275]{CR87}. In particular, the involution $\ast$ restricts to $D(\l)$, and the map $D(\l) \hookrightarrow C(\l)$ is a $\Z C_2$-module homomorphism.
Thus $\ast$ induces an involution on $C(\Gamma)$ via the map $i_*$, and this coincides with the involution on $C(\Gamma)$ coming from the fact that $\Gamma$ is an order in $A$.
\end{proof}

We conclude this section by noting that the id\`{e}lic approach to class groups gives a different way to define an involution on $C(\l)$.
The involution $\ol{\cdot} \colon A \to A$ induces involutions on $A_p$ for each $p$ and so on $J(A)$. It can be shown that the involution fixes the subgroups $A^\times$, $U(\l)$ and $J_0(A)$ and so, by \cref{remark:CviaJ}, induces an involution on $C(\l)$ given by $[\l \alpha] \mapsto [\l \ol{\alpha}]$ (see \cite[p.~274]{CR87}).

The following is proven in \cite[p.~274]{CR87}.

\begin{proposition} \label{prop:ideles}
The involution on $C(\l)$ induced by the involution on $J(A)$ is the standard involution $\ast \colon C(\l) \to C(\l)$, $[M] \mapsto -[M^*]$.
\end{proposition}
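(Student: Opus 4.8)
The statement to prove is that the involution on $C(\l)$ coming from the id\`elic description, $[\l\alpha] \mapsto [\l\ol\alpha]$, agrees with the intrinsic involution $\ast \colon [M] \mapsto -[M^*]$. The plan is to translate the dual module $(\l\alpha)^*$ of a locally free $\l$-ideal $\l\alpha \subseteq A$ into an explicit $\l$-ideal and compare it, up to stable isomorphism, with $\l\ol\alpha$. First I would fix a locally free $\l$-module $M = \l\alpha$ realised as an ideal inside $A$ via an id\`ele $\alpha = (\alpha_p) \in J(A)$, so that $M_p = \l_p\alpha_p$ for every prime $p$. The dual $M^* = \Hom_\l^{lr}(M,\l)$ localises as $(M^*)_p \cong (M_p)^* = \Hom_{\l_p}^{lr}(\l_p\alpha_p, \l_p)$; a right-multiplication-by-$\alpha_p^{-1}$ argument identifies this with $\ol{\alpha_p}^{-1}\l_p$ as a left $\l_p$-submodule of $A_p$ (here one uses that the involution converts the left action into a right action, so $f \in \Hom^{lr}$ is determined by $f(\alpha_p) \in \l_p$, and the constraint $f(r\alpha_p) = f(\alpha_p)\ol r$ forces $M^*_p = \ol{\alpha_p}^{-1}\l_p$ after identifying $\l_p = \Hom^{lr}_{\l_p}(\l_p,\l_p)$ via $1 \mapsto \id$). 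Thus $M^*$ is, as an ideal in $A$, equal to $\l\beta$ where $\beta = (\ol{\alpha_p}^{-1})$, i.e.\ $\beta = \ol{\alpha}^{-1}$ as an id\`ele (note $\ol{\cdot}$ commutes with inversion componentwise since each $A_p$ is a ring with involution).

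Next I would assemble the class-group bookkeeping using \cref{prop:idele-surj} (the surjection $[\l\,\cdot\,]\colon J(A)\twoheadrightarrow C(\l)$ is a group homomorphism) together with the relation $\l\alpha\oplus\l\beta\cong\l\oplus\l\alpha\beta$ recalled just before that proposition. The homomorphism property gives $[\l\alpha^{-1}] = -[\l\alpha]$ in $C(\l)$, and since the assignment $\alpha\mapsto\ol\alpha$ on $J(A)$ is a (anti)homomorphism that descends — as noted in the paragraph preceding the statement, it fixes $A^\times$, $U(\l)$ and $J_0(A)$ — to a well-defined involution $\imath\colon C(\l)\to C(\l)$, $[\l\alpha]\mapsto[\l\ol\alpha]$, we get
\[
\ast([\l\alpha]) = -[(\l\alpha)^*] = -[\l\,\ol\alpha^{-1}] = -[\l\,\ol\alpha]^{-1\text{-twist}} = [\l\,\ol\alpha] = \imath([\l\alpha]),
\]
where the middle manipulation is just $-(-[\l\ol\alpha]) = [\l\ol\alpha]$ using $[\l\gamma^{-1}]=-[\l\gamma]$ with $\gamma=\ol\alpha$. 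Since every class in $C(\l)$ is of the form $[\l\alpha]$ for some $\alpha\in J(A)$ (again \cref{prop:idele-surj}), this proves $\ast = \imath$ on all of $C(\l)$.

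\textbf{Main obstacle.} The only genuinely delicate point is the local computation $(M_p)^* \cong \ol{\alpha_p}^{-1}\l_p$ as \emph{left} $\l_p$-modules inside $A_p$, and in particular checking that the canonical identifications are compatible with the left module structures on the nose (not merely up to isomorphism), so that gluing the local pieces back to a global $\l$-ideal really produces $\l\,\ol\alpha^{-1}$ and not some other ideal in the same genus. This amounts to carefully unravelling \cref{def:tensor-hom} and \cref{def:dual} over the complete local ring $\l_p$: one writes an arbitrary $\varphi\in\Hom^{lr}_{\l_p}(\l_p\alpha_p,\l_p)$, sets $c = \varphi(\alpha_p)\in\l_p$, observes $\varphi(r\alpha_p) = c\,\ol r$, and checks that $\varphi\mapsto \ol{\alpha_p}^{-1}c$ (using that $\alpha_p$ is a unit in $A_p$) is an isomorphism of \emph{left} $\l_p$-modules onto $\ol{\alpha_p}^{-1}\l_p$, where the left action on the target is the restriction of multiplication in $A_p$. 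Everything else — the group-homomorphism and stable-isomorphism identities, the well-definedness of $\imath$ — is already supplied by the results quoted in \cref{ss:class-groups-involution}. Alternatively, one can cite \cite[pp.~273--274]{CR87} for this local duality computation in the group-ring case and remark that the same argument applies verbatim to a general order with involution, which is the route I would take to keep the proof short.
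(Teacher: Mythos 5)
Your strategy is correct and, modulo one local bookkeeping slip, it is a written-out version of the argument the paper does not actually give: the paper's proof of \cref{prop:ideles} consists of the single citation to \cite[p.~274]{CR87}, so carrying out the id\`elic duality computation yourself is a reasonable, more self-contained route. The skeleton — identify $(\l\alpha)^*$ with the locally free ideal $\l\,\ol{\alpha}^{-1}$, use the homomorphism property of $[\l\,\cdot\,]$ from \cref{prop:idele-surj} to get $[\l\,\ol{\alpha}^{-1}]=-[\l\,\ol{\alpha}]$, conclude $\ast([\l\alpha])=-[(\l\alpha)^*]=[\l\,\ol{\alpha}]$, and finish by surjectivity of $[\l\,\cdot\,]$ — is exactly what is needed.

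The slip is in the step you yourself single out as delicate. With the conventions of \cref{def:tensor-hom,def:dual}, the left action on $\Hom^{lr}_{\l_p}(\l_p\alpha_p,\l_p)$ is $(s\varphi)(x)=s\varphi(x)$, so your map $\varphi\mapsto\ol{\alpha_p}^{-1}\varphi(\alpha_p)$ is not left $\l_p$-linear unless $A$ is commutative, and its proposed image $\ol{\alpha_p}^{-1}\l_p$ is a right ideal rather than a left one; the proposition, however, is stated for an arbitrary order with involution, where $A$ may be noncommutative. The correct identification is $\varphi\mapsto\varphi(\alpha_p)\,\ol{\alpha_p}^{-1}$, which is left $\l_p$-linear with image the left ideal $\l_p\,\ol{\alpha_p}^{-1}$. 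Better still, this is the restriction of the canonical identification $\Hom^{lr}_{\l}(M,A)\cong A$ sending $\varphi$ to the unique $d\in A$ with $\varphi(m)=d\,\ol{m}$ for all $m\in M$; under it $M^*=\{d\in A\mid d\,\ol{M}\subseteq\l\}$ globally, and localising gives $M^*_p=\l_p\,\ol{\alpha_p}^{-1}$ directly, which also disposes of your gluing worry. Since the resulting id\`ele is still $\ol{\alpha}^{-1}$, your class-group bookkeeping and the conclusion $\ast=\imath$ are unaffected; the correction only matters for noncommutative $\l$, and in the paper's applications ($\l=\Z C_m$ and $\Z[\zeta_d]$) your version is already literally correct.
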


This gives an alternate way to understand the standard involution on $\wt K_0(\Z G)$. We make further use of this description in the following section.

\section{Tate cohomology and \texorpdfstring{$\Z C_2$-modules}{modules with an involution}} \label{s:tate}

In this section, we recall some basic facts about $\Z C_2$-modules which are used throughout the proofs of \cref{theorem:main12-red,theorem:main3-red}. We also explain how the methods of Tate cohomology can be applied to $\Z C_2$-modules in preparation for the proof of \cref{theorem:main3-red}.

\subsection{Tate cohomology}

The following can be found in~\cite[VI.4]{Br94}.

\begin{definition}
Given a finite group $G$ and a $\Z G$-module $A$, the \emph{Tate cohomology groups} $\wh H^n(G;A)$ for $n \in \Z$ are defined as follows. Let $A^G = \{ x \in A \mid g \cdot x = x \text{ for all $g \in G$}\}$ be the invariants, let $A_G = A/\langle g\cdot x-x \mid g \in G, x \in A\rangle$ be the coinvariants and let $N \colon A_G \to A^G$ be the norm map $x \mapsto \sum_{g \in G} g \cdot x$ which is a well-defined homomorphism of abelian groups. Then define:
\[
\wh H^n(G;A) = \begin{cases}
H^n(G;A), & \text{if $n \ge 1$} \\
\coker(N \colon A_G \to A^G), & \text{if $n = 0$} \\
\ker(N \colon  A_G \to A^G), & \text{if $n = -1$} \\
H_{-n-1}(G;A), & \text{if $n \le -2$},
\end{cases}
\]
where $H^n$, $H_{-n-1}$ denote the usual group cohomology and homology groups.
\end{definition}

We now recall the following basic properties. The first can be found in \cite[VI.5.1]{Br94}, the second follows from the first since functoriality means that $\alpha_\ast$ is split whenever $\alpha$ is, the third is \cite[XII.2.5]{CE56}, and the fourth is \cite[XII.2.7]{CE56}.

\begin{proposition} \label{prop:tate-LES}
Let $G$ be a finite group.
\begin{clist}{(i)}
\item\label{item-prop:tate-LES-i}
Let $0 \to A \xrightarrow[]{\alpha} B \xrightarrow[]{\beta} C \to 0$ be a short exact sequence of $\Z G$-modules. Then there is a long exact sequence of Tate cohomology groups:
\begin{align*}
    \cdots \to \wh H^{n-1}(G;C) \xrightarrow[]{\partial}  \wh H^n(G;A) \xrightarrow[]{\alpha_*} \wh H^n(G;B) \xrightarrow[]{\beta_*} \wh H^n(G;C) \xrightarrow[]{\partial} \wh H^{n+1}(G;A) \to \cdots
\end{align*} 
\item\label{item-prop:tate-LES-ii}
Let $A, B$ be $\Z G$-modules. Then $\wh H^n(G;A \oplus B) \cong \wh H^n(G;A) \oplus \wh H^n(G;B)$ for all $n \in \Z$.
\item\label{item-prop:tate-LES-iii}
Let $A$ be a $\Z G$-module. Then $|G| \cdot \wh H^n(G;A)=0$, i.e.\ $|G|\cdot x = 0$ for all $x \in \wh H^n(G;A)$.
\item\label{item-prop:tate-LES-iv}
Let $A$ be a finite $\Z G$-module and suppose $(|G|,|A|)=1$. Then $\wh H^n(G;A)=0$ for all $n \in \Z$.
\end{clist}
\end{proposition}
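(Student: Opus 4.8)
The plan is to derive all four parts from the \emph{complete resolution} presentation of Tate cohomology. First I would fix, for the finite group $G$, a complete resolution $P_*$ of $\Z$ over $\Z G$: a $\Z$-acyclic complex $(P_*,d_*)$ of finitely generated free $\Z G$-modules indexed over all of $\Z$, together with an augmentation $\varepsilon \colon P_0 \to \Z$ such that $(P_{\ge 0},\varepsilon)$ is a free resolution of $\Z$ and the negative part is spliced on from the $\Z$-dual of a free resolution along the norm map; such $P_*$ exist for every finite group, and one obtains a natural isomorphism $\wh H^n(G;A) \cong H^n(\Hom_{\Z G}(P_*,A))$ for all $n \in \Z$ and all $\Z G$-modules $A$ (Brown~\cite[VI.3]{Br94}). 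A short verification, also in~\cite[VI.3--4]{Br94}, identifies the degree $0$ and degree $-1$ terms of $H^*(\Hom_{\Z G}(P_*,-))$ with the cokernel and kernel of the norm map, and the terms in degrees $\ge 1$ and $\le -2$ with ordinary group cohomology and homology, matching the definition used in the paper.

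Granting this, part~\eqref{item-prop:tate-LES-i} is immediate: each $P_n$ is free, hence $\Hom_{\Z G}(P_n,-)$ is exact, so applying $\Hom_{\Z G}(P_*,-)$ to $0 \to A \to B \to C \to 0$ produces a short exact sequence of cochain complexes, and its long exact cohomology sequence is the asserted one, with $\partial$ the snake-lemma connecting homomorphism. Part~\eqref{item-prop:tate-LES-ii} follows from the natural isomorphism of cochain complexes $\Hom_{\Z G}(P_*,A \oplus B) \cong \Hom_{\Z G}(P_*,A) \oplus \Hom_{\Z G}(P_*,B)$ together with the fact that cohomology commutes with finite direct sums.

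For part~\eqref{item-prop:tate-LES-iii} I would use the restriction--corestriction identity $\operatorname{cor}^G_1 \circ \operatorname{res}^G_1 = |G| \cdot \mathrm{id}$ on $\wh H^n(G;A)$, valid because the trivial subgroup $1 \le G$ has index $|G|$. A complete resolution of $\Z$ over $\Z[\{1\}] = \Z$ may be taken to be $0 \to \Z \xrightarrow{\mathrm{id}} \Z \to 0$ concentrated in degrees $0$ and $-1$, which is acyclic, so $\wh H^n(1;A) = 0$ for all $n$; hence $\operatorname{res}^G_1 = 0$ and therefore $|G| \cdot \mathrm{id} = 0$ on $\wh H^n(G;A)$, which is exactly \cite[XII.2.5]{CE56}. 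Finally, for part~\eqref{item-prop:tate-LES-iv}, let $e$ be the exponent of the finite abelian group $A$; multiplication by $e$ is the zero endomorphism of $A$, so by functoriality it is zero on $\wh H^n(G;A)$, which is thus annihilated by $e$. By part~\eqref{item-prop:tate-LES-iii} it is also annihilated by $|G|$, hence by $\gcd(|G|,e)$, and $\gcd(|G|,e)$ divides $\gcd(|G|,|A|) = 1$; so $\wh H^n(G;A) = 0$, recovering \cite[XII.2.7]{CE56}.

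I do not anticipate any real difficulty: all four assertions are foundational, and in the paper we simply cite~\cite{Br94,CE56}. The only point requiring genuine care is the bookkeeping needed to set up a complete resolution and to check that the resulting $H^*(\Hom_{\Z G}(P_*,-))$ agrees in degrees $0$ and $-1$ with the explicit norm-map description used in the definition; once that identification is in place, parts~\eqref{item-prop:tate-LES-i}--\eqref{item-prop:tate-LES-iv} are routine, so in the writeup we defer this verification to the standard references.
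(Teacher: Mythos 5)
Your proposal is correct, and since the paper offers no argument of its own --- it simply cites \cite[VI.5.1]{Br94} and \cite[XII.2.5, XII.2.7]{CE56} --- your complete-resolution derivation is essentially the standard proof contained in those references: exactness of $\Hom_{\Z G}(P_n,-)$ for free $P_n$ plus the snake lemma for (i), additivity for (ii), $\operatorname{cor}\circ\operatorname{res}=|G|\cdot\mathrm{id}$ with vanishing over the trivial subgroup for (iii), and the Bezout argument for (iv). The only cosmetic difference is that the paper deduces (ii) from (i) via the splitting of $\alpha_*$, while you get it directly from the cochain-level direct sum; both are fine.
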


\subsection{Tate cohomology and the structure of $\Z C_2$-modules} 

Let $A$ be a $\Z C_2$-module or, equivalently, an abelian group with an involution $\ol{\cdot} \colon A \to A$.
In order to prove \cref{theorem:main12-red,theorem:main3-red}, we would like to find techniques to determine the following groups associated to $A$:
\[ \{ x \in A \mid x=(-1)^n\ol{x}\}, \quad \{x+(-1)^n\overline{x} \mid x \in A \} , \quad \frac{\{ x \in A \mid x=(-1)^n\ol{x}\}}{\{x+(-1)^n\overline{x} \mid x \in A \} }.\]
For the groups on the left, the notation $A^{-} = \{ x \in A \mid x=-\ol{x}\}$ and $A^{+} = \{ x \in A \mid x = \ol{x}\}$ is often used since they are the $(-1)$ and $(+1)$-eigenspaces of the involution action.

The following lemma will suffice for the study of the first two groups associated to $A$. Part \eqref{item:lemma-useful-i} follows from the fact that $A \mapsto A^{C_2}$ is a left-exact functor where $A$ is given the altered involution $x \mapsto (-1)^n\overline{x}$, and part \eqref{item:lemma-useful-ii} is immediate.

\begin{lemma} \label{lemma:useful}
Let $0 \to A \xrightarrow[]{\alpha} B \xrightarrow[]{\beta} C \to 0$ be an exact sequence of $\Z C_2$-modules and let $n \in \Z$.
\begin{clist}{(i)}
\item\label{item:lemma-useful-i}
Then $\alpha, \beta$ induce an exact sequence of abelian groups:
\[ 0 \to \{x\in A \mid \overline{x}=(-1)^nx\} \xrightarrow[]{\alpha} \{x\in B \mid \overline{x}=(-1)^nx\} \xrightarrow[]{\beta} \{x\in C \mid \overline{x}=(-1)^nx\}.  \]
\item\label{item:lemma-useful-ii}
There are injective and surjective maps induced by $\alpha, \beta$:
\[ \{x+(-1)^n\overline{x} \mid x \in A \} \xhookrightarrow[]{\alpha} \{x+(-1)^n\overline{x} \mid x \in B \} \xrightarrowdbl[]{\beta} \{x+(-1)^n\overline{x} \mid x \in C \}.  \] 
\end{clist}
\end{lemma}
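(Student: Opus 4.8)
The plan is to read both parts off from the left-exactness of the $C_2$-invariants functor, applied not to the original involution but to a twist of it. For a $\Z C_2$-module $X$ with involution $x \mapsto \ol{x}$ and for $n \in \Z$, the map $\sigma_n \colon X \to X$ given by $\sigma_n(x) = (-1)^n\ol{x}$ is again an involution, since $\sigma_n(\sigma_n(x)) = (-1)^{2n}\ol{\ol{x}} = x$; write $X_n$ for $X$ equipped with this new $\Z C_2$-module structure. Then $X_n^{C_2} = \{x \in X \mid (-1)^n\ol{x} = x\} = \{x \in X \mid \ol{x} = (-1)^n x\}$ (using $(-1)^{-n} = (-1)^n$), and the image of the norm map $N \colon X_n \to X_n$, $x \mapsto x + \sigma_n(x)$, is exactly $\{x + (-1)^n\ol{x} \mid x \in X\}$. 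First I would check that if $\alpha \colon A \to B$ is a morphism of $\Z C_2$-modules (i.e.\ $\alpha(\ol{a}) = \ol{\alpha(a)}$), then $\alpha$ is also a morphism $A_n \to B_n$; hence the hypothesised short exact sequence $0 \to A \to B \to C \to 0$ remains a short exact sequence $0 \to A_n \to B_n \to C_n \to 0$ of $\Z C_2$-modules.

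For part \eqref{item:lemma-useful-i}, I would then apply the left-exact functor $({-})^{C_2}$ to this sequence, obtaining the exact sequence $0 \to A_n^{C_2} \to B_n^{C_2} \to C_n^{C_2}$, which under the identification above is precisely the asserted three-term exact sequence. Equivalently, exactness at the middle term can be verified by a one-line diagram chase: if $b \in B$ satisfies $\ol{b} = (-1)^n b$ and $\beta(b) = 0$, write $b = \alpha(a)$ with $a \in A$; then $\alpha(\ol{a}) = \ol{\alpha(a)} = \ol{b} = (-1)^n b = \alpha((-1)^n a)$, and injectivity of $\alpha$ gives $\ol{a} = (-1)^n a$, so $a$ lies in the subgroup on the left. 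Note that $({-})^{C_2}$ is not right exact, which is why one only claims exactness of a three-term sequence and not of a short exact sequence.

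Part \eqref{item:lemma-useful-ii} is then immediate: $\alpha$ carries $x + (-1)^n\ol{x}$ to $\alpha(x) + (-1)^n\ol{\alpha(x)}$, so it restricts to a map between the indicated subgroups, and this restriction is injective because $\alpha$ is; similarly $\beta$ restricts to a map between the subgroups, and it is surjective because for $z \in C$ one may choose $b \in B$ with $\beta(b) = z$ and then $\beta(b + (-1)^n\ol{b}) = z + (-1)^n\ol{z}$. The only thing requiring any care is the bookkeeping in the first paragraph — that $\sigma_n$ is genuinely an involution and that $\alpha, \beta$ stay equivariant for it — and once that is recorded there is no real obstacle, since the rest is the standard left-exactness of invariants together with the trivial fact that a restriction of an injective (respectively, surjective onto the relevant image) map inherits that property.
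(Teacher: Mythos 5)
Your proposal is correct and follows essentially the same route as the paper: the paper likewise deduces part (i) from left-exactness of the $C_2$-invariants functor applied to the altered involution $x \mapsto (-1)^n\overline{x}$, and treats part (ii) as immediate from restricting $\alpha$ and $\beta$ (with surjectivity obtained by lifting and applying the norm, exactly as you do).
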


The third group associated to $A$ can be studied using Tate cohomology via the following \cite[p.~251]{CE56}.

\begin{proposition} \label{prop:tate-C2}
    Let $A$ be a $\Z C_2$-module and let $n \in \Z$. Then
    \[
\wh H^n(C_2;A) \cong \frac{\{x \in A \mid x= (-1)^n\ol{x}\}}{\{x+(-1)^n\ol{x} \mid x \in A\}}.
    \]
\end{proposition}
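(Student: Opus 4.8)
The plan is to use the standard $2$-periodic free resolution of $\Z$ over $\Z C_2$ and compute Tate cohomology directly from it. Let $t$ denote the generator of $C_2$, so that $\Z C_2 = \Z[t]/(t^2-1)$, and write $N = 1+t$ (the norm element) and $D = 1-t$ (the augmentation-type element). The complete resolution is
\[
\cdots \xrightarrow{\ D\ } \Z C_2 \xrightarrow{\ N\ } \Z C_2 \xrightarrow{\ D\ } \Z C_2 \xrightarrow{\ N\ } \Z C_2 \xrightarrow{\ \varepsilon\ } \Z \to 0,
\]
and applying $\Hom_{\Z C_2}(-,A)$ turns it into the cochain complex
\[
\cdots \to A \xrightarrow{\ N_*\ } A \xrightarrow{\ D_*\ } A \xrightarrow{\ N_*\ } A \to \cdots
\]
where, under the identification $\Hom_{\Z C_2}(\Z C_2, A) \cong A$, the map $N_*$ is multiplication by $N$ (i.e.\ $x \mapsto x + \ol{x}$) and $D_*$ is multiplication by $D$ (i.e.\ $x \mapsto x - \ol{x}$). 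By definition $\wh H^n(C_2;A)$ is the cohomology of this complex in the appropriate degree, and since the complex is $2$-periodic, $\wh H^n(C_2;A)$ depends only on the parity of $n$: for $n$ even it is $\ker(D_*)/\image(N_*) = \{x \mid x = \ol x\}/\{x + \ol x\}$, and for $n$ odd it is $\ker(N_*)/\image(D_*) = \{x \mid x = -\ol x\}/\{x - \ol x\}$.

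The only subtlety is to match these two cases to the single uniform formula in the statement, namely $\{x \mid x = (-1)^n \ol x\}/\{x + (-1)^n \ol x\}$. First I would record that $\{x + (-1)^n\ol x \mid x \in A\} = \{x - (-1)^n\ol x \mid x \in A\}$ as subgroups of $A$: replacing $x$ by $-x$ (when $n$ is even this is $x+\ol x = -((-x)+\overline{(-x)})$) shows the two sets of elements coincide, so the denominators agree with the $\image(N_*)$, $\image(D_*)$ above as follows — for $n$ even $\{x+(-1)^n\ol x\} = \{x+\ol x\} = \image(N_*)$, and for $n$ odd $\{x+(-1)^n\ol x\} = \{x-\ol x\} = \{x-\ol x\}$... wait, here I must be careful: for $n$ odd the formula's denominator is $\{x + (-1)^n\ol x\} = \{x - \ol x\} = \image(D_*)$, which matches. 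Likewise the numerator $\{x \mid x = (-1)^n\ol x\}$ is $\{x \mid x = \ol x\} = \ker(D_*)$ for $n$ even and $\{x \mid x = -\ol x\} = \ker(N_*)$ for $n$ odd. So in both parities the stated quotient is exactly the cohomology of the periodic complex in the relevant degree, which is $\wh H^n(C_2;A)$.

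I expect the main (only) obstacle to be bookkeeping: making sure the identification of $\Hom_{\Z C_2}(\Z C_2,A)$ with $A$ sends the connecting maps $N$ and $D$ of the resolution to $x \mapsto x \pm \ol x$ with the correct signs and in the correct degrees, and then checking that the resulting $\ker/\image$ descriptions line up with $\wh H^0$ and $\wh H^{-1}$ as defined in the excerpt (coker and ker of the norm $N\colon A_{C_2} \to A^{C_2}$) before invoking $2$-periodicity for general $n$. Concretely, $A^{C_2} = \ker(D_*)$, the coinvariants $A_{C_2}$ are $A/\image(D_*)$, and the norm map $A_{C_2} \to A^{C_2}$ is induced by $N_*$; so $\wh H^0(C_2;A) = A^{C_2}/\image(N_*) = \ker(D_*)/\image(N_*)$ and $\wh H^{-1}(C_2;A) = \ker(N_*\colon A/\image(D_*) \to A)$, and one checks the latter equals $\ker(N_*)/\image(D_*)$ because $\image(D_*) \subseteq \ker(N_*)$ (as $N D = 1 - t^2 = 0$). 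With these identifications in hand, the periodicity isomorphism $\wh H^{n}(C_2;A) \cong \wh H^{n+2}(C_2;A)$ — which holds since the complete resolution is $2$-periodic — finishes the argument for all $n \in \Z$.
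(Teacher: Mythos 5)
Your proof is correct and essentially the same as the paper's: both rest on the $2$-periodicity of $\wh H^*(C_2;-)$ together with identifying $\wh H^0$ and $\wh H^{-1}$ as the cokernel and kernel of the norm map $A_{C_2} \to A^{C_2}$, $x \mapsto x+\ol x$ (your detour through the complete resolution just re-derives this identification). The only blemish is the stray intermediate claim $\{x+(-1)^n\ol x \mid x \in A\} = \{x-(-1)^n\ol x \mid x \in A\}$, which is false in general (take $A=\Z$ with the trivial involution), but you never actually use it: your final degree-by-degree matching of the numerator and denominator with $\ker$ and $\im$ of $N_*$ and $D_*$ is correct as stated.
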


We now recall a series of special facts about the Tate cohomology of $C_2$. 
The first is a consequence of \cref{prop:tate-LES}~\eqref{item-prop:tate-LES-i} and the fact that finite cyclic groups have $2$-periodic cohomology (see \cite[p.~133]{Se79}). This also applies for $C_2$ replaced by an arbitrary finite cyclic group.

\begin{proposition} \label{prop:hexagon}
Let $0 \to A \xrightarrow[]{\alpha} B \xrightarrow[]{\beta} C \to 0$ be a short exact sequence of $\Z C_2$-modules. Then there is a $6$-periodic exact sequence of abelian groups: 
\[
\begin{tikzcd}
\wh H^1(C_2;A) \ar[r,"\alpha_*"] & \wh H^1(C_2;B) \ar[r,"\beta_*"] & \wh H^1(C_2;C) \ar[d,"\partial"] \\
\wh H^0(C_2;C) \ar[u,"\partial"] & \wh H^0(C_2;B) \ar[l,"\beta_*"] & \wh H^0(C_2;A). \ar[l,"\alpha_*"] 
\end{tikzcd}
\]
\end{proposition}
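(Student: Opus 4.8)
The statement to prove is \cref{prop:hexagon}: given a short exact sequence $0 \to A \xrightarrow{\alpha} B \xrightarrow{\beta} C \to 0$ of $\Z C_2$-modules, there is a $6$-periodic exact sequence relating the Tate cohomology groups $\wh H^0$ and $\wh H^1$ of $A$, $B$, $C$.

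The plan is to derive this from two standard facts already available in the excerpt. First, \cref{prop:tate-LES}~\eqref{item-prop:tate-LES-i} provides the long exact sequence of Tate cohomology groups
\[
\cdots \to \wh H^{n-1}(C_2;C) \xrightarrow{\partial} \wh H^n(C_2;A) \xrightarrow{\alpha_*} \wh H^n(C_2;B) \xrightarrow{\beta_*} \wh H^n(C_2;C) \xrightarrow{\partial} \wh H^{n+1}(C_2;A) \to \cdots
\]
which is bi-infinite in $n$ and exact everywhere. Second, the Tate cohomology of a finite cyclic group is $2$-periodic; for $C_2$ this is recorded in the proof of \cref{prop:tate-C2} (citing \cite[VI.9.2]{Br94}), giving natural isomorphisms $\wh H^{n+2}(C_2;M) \cong \wh H^n(C_2;M)$ for every $\Z C_2$-module $M$. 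First I would note that these periodicity isomorphisms are natural in $M$, so they commute with the maps $\alpha_*$, $\beta_*$ induced by a module homomorphism, and they are compatible with the connecting maps $\partial$ of the long exact sequence up to the usual sign (the connecting homomorphism shifts degree by one, and periodicity commutes with it since both are constructed from the same dimension-shifting arguments). Hence, under these identifications, the bi-infinite long exact sequence collapses: every group $\wh H^n(C_2;M)$ with $n$ even is identified with $\wh H^0(C_2;M)$ and every group with $n$ odd with $\wh H^1(C_2;M)$. Tracking the six consecutive terms
\[
\wh H^1(C_2;A) \xrightarrow{\alpha_*} \wh H^1(C_2;B) \xrightarrow{\beta_*} \wh H^1(C_2;C) \xrightarrow{\partial} \wh H^2(C_2;A) \xrightarrow{\alpha_*} \wh H^2(C_2;B) \xrightarrow{\beta_*} \wh H^2(C_2;C) \xrightarrow{\partial} \wh H^3(C_2;A)
\]
and applying the periodicity isomorphisms $\wh H^2 \cong \wh H^0$ and $\wh H^3 \cong \wh H^1$ produces exactly the hexagonal diagram in the statement, with exactness at each of the six vertices inherited from exactness of the original long exact sequence, and $6$-periodicity following because after six steps we return to $\wh H^1(C_2;A) \to \wh H^1(C_2;B)$.

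I expect the only mild subtlety — the "hard part," though it is not very hard — to be bookkeeping with the periodicity identifications: one must check that the isomorphism $\wh H^{n+2}(C_2;M) \cong \wh H^n(C_2;M)$ can be chosen naturally in $M$ and compatibly with connecting homomorphisms, so that gluing the segment of the long exact sequence into a hexagon does not introduce a sign or naturality obstruction that breaks exactness. This is standard for cyclic groups: the periodicity isomorphism is cup product with a generator of $\wh H^2(C_2;\Z) \cong \Z/2$, which is natural and commutes with $\partial$ by the module structure of Tate cohomology over $\wh H^*(C_2;\Z)$; alternatively one can invoke the explicit complete resolution of $\Z$ over $\Z C_2$, which is literally $2$-periodic, so that the Tate cochain complex computing $\wh H^*(C_2;M)$ is $2$-periodic on the nose and the long exact sequence of cochain complexes is $6$-periodic after folding. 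I would cite \cite[VI.9]{Br94} or \cite[XII.2]{CE56} for this and keep the argument to a few lines, since \cref{prop:hexagon} is explicitly attributed to \cite[p.~133]{Se79} and is a routine consequence of the two cited ingredients.
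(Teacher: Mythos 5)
Your proposal is correct and takes essentially the same route as the paper: the paper obtains the hexagon precisely by combining the long exact sequence of \cref{prop:tate-LES}~\eqref{item-prop:tate-LES-i} with the $2$-periodicity of Tate cohomology of finite cyclic groups, citing \cite[p.~133]{Se79}, which is exactly your argument. Your extra remarks on the naturality of the periodicity isomorphism and its compatibility with the connecting maps are fine but not elaborated in the paper, which treats the statement as a standard consequence of the two cited facts.
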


The following is a consequence of the theory of Herbrand quotients \cite[Chapter VIII \S 4]{Se79}.

\begin{proposition} \label{prop:tate-herbrand}
Let $A$ be a finite $\Z C_2$-module. Then there exists $d \ge 0$ such that 
\[ \wh H^n(C_2;A) \cong (\Z/2)^d\] 
for all $n \in \Z$. In particular, $|\wh H^n(G;A)|$ is independent of $n \in \Z$.       
\end{proposition}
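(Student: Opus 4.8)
\textbf{Proof plan for \cref{prop:tate-herbrand}.}
The plan is to exploit the structure theory of finite $\Z C_2$-modules together with the additivity of Tate cohomology (\cref{prop:tate-LES}~\eqref{item-prop:tate-LES-ii}). First I would reduce to the case where $A$ is a $2$-group: writing $A = A_{(2)} \oplus A'$ where $A_{(2)}$ is the $2$-Sylow subgroup and $A'$ is the subgroup of elements of odd order, note that both summands are $\Z C_2$-submodules since they are characteristic in $A$ as an abelian group. By \cref{prop:tate-LES}~\eqref{item-prop:tate-LES-iv}, $\wh H^n(C_2;A') = 0$ for all $n$, so $\wh H^n(C_2;A) \cong \wh H^n(C_2;A_{(2)})$, and we may assume $A$ is a finite abelian $2$-group with an involution.

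Next I would analyse finite abelian $2$-groups with an involution directly. The cleanest route is to induct on $|A|$, peeling off cyclic $\Z C_2$-submodules. The key structural input is a classification of the indecomposable $\Z C_2$-modules that are finite $2$-groups: these are the modules $\Z/2^k$ with trivial action, $\Z/2^k$ with the action $x \mapsto -x$, and $\Z[C_2]/2^k = (\Z/2^k)[C_2]$ (the ``free'' type). For each of these one computes $\wh H^n(C_2;-)$ by hand using \cref{prop:tate-C2}: the trivial-action module $\Z/2^k$ gives $\wh H^n \cong \Z/2$ for every $n$ (in both parities, since $\{x : x = \pm x\}$ is all of $\Z/2^k$ when we are in even degree and is $2^{k-1}\Z/2^k \cong \Z/2$ in odd degree, and the norm image is $2\Z/2^k$ or $0$ respectively); the sign-action module gives the same answer with the parities swapped; and the free type $(\Z/2^k)[C_2]$ is cohomologically trivial, $\wh H^n = 0$ for all $n$. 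In each case the order of $\wh H^n(C_2;-)$ is the same for every $n \in \Z$, and moreover it is $1$ or $2$, hence of the form $(\Z/2)^d$ with $d \in \{0,1\}$. The $2$-periodicity of Tate cohomology of $C_2$ means it suffices to check $n = 0$ and $n = -1$ in each case.

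For the inductive step I would take a composition series of $A$ as a $\Z C_2$-module (which exists since $A$ is finite), or more concretely choose a $\Z C_2$-submodule $B \leq A$ with $B$ cyclic as an abelian group, giving a short exact sequence $0 \to B \to A \to A/B \to 0$ of finite $\Z C_2$-modules with $|A/B| < |A|$; apply \cref{prop:tate-LES}~\eqref{item-prop:tate-LES-i} and the inductive hypothesis. This immediately yields $|\wh H^n(C_2;A)| < \infty$ and gives a bound, but to get that $|\wh H^n(C_2;A)|$ is independent of $n$ I would instead use the $6$-periodic (really $2$-periodic) Mayer--Vietoris-type hexagon of \cref{prop:hexagon}: since $\wh H^\bullet(C_2;-)$ is $2$-periodic, the exact hexagon for $0 \to B \to A \to A/B \to 0$ reduces to a single exact sequence relating the four groups $\wh H^0(C_2;B)$, $\wh H^1(C_2;B)$, $\wh H^0(C_2;A)$, $\wh H^1(C_2;A)$, $\wh H^0(C_2;A/B)$, $\wh H^1(C_2;A/B)$ cyclically, and combined with the fact (from the base case, propagated by induction) that for $B$ and $A/B$ the two Tate groups have equal order, a counting argument on the alternating product of orders around the hexagon forces $|\wh H^0(C_2;A)| = |\wh H^1(C_2;A)|$. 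Finally, that every $\wh H^n(C_2;A)$ is an elementary abelian $2$-group is immediate from \cref{prop:tate-LES}~\eqref{item-prop:tate-LES-iii} with $G = C_2$, so it is $(\Z/2)^d$ for the common value $d = \dim_{\F_2} \wh H^0(C_2;A)$. The main obstacle I anticipate is organising the base-case computation of the indecomposable types cleanly — in particular being careful that the classification of finite-$2$-group indecomposable $\Z C_2$-modules is exactly the three families listed (this is a small case of the theory of $\Z/p^n[C_p]$-lattices / Diederichsen--Reiner), and checking the degree-shift behaviour of $\wh H^n$ on each; everything after that is a formal bookkeeping exercise with exact sequences.
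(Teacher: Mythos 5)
Your overall skeleton (reduction to the $2$-primary part, elementary abelianness from \cref{prop:tate-LES}~(iii), two-periodicity, and the alternating-order count around the exact hexagon of \cref{prop:hexagon}) is sound, but the step you flag as ``the key structural input'' is false. Finite abelian $2$-groups with involution are \emph{not} all direct sums of $\Z/2^k$ (trivial action), $\Z/2^k$ (sign action) and $(\Z/2^k)[C_2]$. For a concrete counterexample take $A=\Z/4\oplus\Z/2$ with involution $(x,y)\mapsto (x+2y,\,y)$: its fixed subgroup is $\langle (1,0)\rangle\cong\Z/4$, whereas any module assembled from your three families with underlying group $\Z/4\oplus\Z/2$ is $(\Z/4,\pm)\oplus(\Z/2,\mathrm{triv})$, whose fixed subgroup is $\Z/4\oplus\Z/2$ or $(\Z/2)^2$; in fact this $A$ is indecomposable as a $\Z C_2$-module (the only invariant subgroup of order $2$ is $\langle(2,0)\rangle$, and every invariant cyclic subgroup of order $4$ contains it). Relatedly, your ``cyclic'' base case is incomplete even for cyclic modules: for $k\ge 3$ the group $\Z/2^k$ carries involutions other than $\pm 1$, e.g.\ $x\mapsto (2^{k-1}+1)x$, which are not covered by your trivial/sign computation.

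The gap is repairable inside your own strategy: in the inductive step take $B\le A$ to be a \emph{minimal} nonzero $\Z C_2$-submodule; minimality forces $2B=0$, so $B$ is a simple $\F_2[C_2]$-module and hence $B\cong\Z/2$ with trivial action, which is the only base case you actually need before running the hexagon argument (the alternating product of orders around an exact hexagon of finite groups is $1$, so $|\wh H^0(C_2;B)|=|\wh H^1(C_2;B)|$ and the inductive hypothesis for $A/B$ force $|\wh H^0(C_2;A)|=|\wh H^1(C_2;A)|$), with $(\Z/2)^d$-ness coming from $2\cdot\wh H^n(C_2;A)=0$. You should be aware, though, that the paper proves this with no structure theory at all: it writes down the exact sequence $0\to\{x\mid \ol{x}=x\}\to A\xrightarrow{\,x\mapsto x-\ol{x}\,}A\to A/\{x-\ol{x}\}\to 0$ to get $|\{x\mid \ol{x}=x\}|=|A/\{x-\ol{x}\}|$, then the norm sequence $0\to\wh H^1\to A/\{x-\ol{x}\}\xrightarrow{\,x\mapsto x+\ol{x}\,}\{x\mid \ol{x}=x\}\to\wh H^0\to 0$ to conclude $|\wh H^1(C_2;A)|=|\wh H^0(C_2;A)|$, and finishes with $2$-periodicity and $2$-torsion. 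That argument is a few lines and sidesteps the classification issue entirely; also note the primary decomposition you begin with is not needed here (it is the content of the separate \cref{prop:tate-2-part}).
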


For a finite abelian group $A$ and a prime $p$, let $A_{p} = \{ x \in A \mid p^n \cdot x = 0 \text{ for some $n \ge 1$}\}$ denote the $p$-primary component of $A$. 
The following is standard.

\begin{proposition} \label{prop:tate-2-part}
Let $A$ be a finite $\Z C_2$-module and let $n \in \Z$. Then
$\wh H^n (C_2;A) \cong \wh H^n(C_2;A_{2})$.
\end{proposition}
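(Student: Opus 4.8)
The plan is to combine the primary decomposition of $A$ as a $\Z C_2$-module with the additivity of Tate cohomology over direct sums and the vanishing of Tate cohomology in coprime order. First I would invoke \cref{lemma:primary-decomposition} to write
\[
A \cong A_{(2)} \oplus \bigoplus_{\substack{p \mid |A| \\ p \text{ odd}}} A_{(p)}
\]
as an isomorphism of $\Z C_2$-modules.

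Next I would apply \cref{prop:tate-LES}~\eqref{item-prop:tate-LES-ii} (additivity of Tate cohomology under finite direct sums) to this decomposition, obtaining for every $n \in \Z$ an isomorphism
\[
\wh H^n(C_2;A) \cong \wh H^n(C_2;A_{(2)}) \oplus \bigoplus_{\substack{p \mid |A| \\ p \text{ odd}}} \wh H^n(C_2;A_{(p)}).
\]
It then remains to see that each of the summands indexed by an odd prime $p$ vanishes. For such a $p$, the underlying abelian group of $A_{(p)}$ is a finite $p$-group, so $\gcd(|C_2|,|A_{(p)}|) = \gcd(2, |A_{(p)}|) = 1$; hence \cref{prop:tate-LES}~\eqref{item-prop:tate-LES-iv} gives $\wh H^n(C_2;A_{(p)}) = 0$ for all $n \in \Z$. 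Substituting this back yields $\wh H^n(C_2;A) \cong \wh H^n(C_2;A_{(2)})$, as desired.

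This argument is entirely formal and I do not anticipate a genuine obstacle; the only point requiring a little care is that \cref{lemma:primary-decomposition} produces an isomorphism of $\Z C_2$-\emph{modules} (not merely of abelian groups), which is exactly what is needed for \cref{prop:tate-LES}~\eqref{item-prop:tate-LES-ii} to apply, and that the hypothesis of \cref{prop:tate-LES}~\eqref{item-prop:tate-LES-iv} requires $A_{(p)}$ to be finite, which holds since $A$ itself is finite.
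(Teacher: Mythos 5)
Your proof is correct and follows essentially the same route as the paper: decompose $A$ into its $p$-primary components as $\Z C_2$-modules via \cref{lemma:primary-decomposition}, apply additivity of Tate cohomology (\cref{prop:tate-LES}~\eqref{item-prop:tate-LES-ii}), and kill the odd-primary summands using \cref{prop:tate-LES}~\eqref{item-prop:tate-LES-iv}. Nothing to add.
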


\section{Computing the involution on $\wt K_0(\Z C_m)$} \label{s:involution-ZC_m}

The aim of this section is to investigate the involution on $\wt K_0(\Z C_m)$ in preparation for the proofs of \cref{theorem:main12-red,theorem:main3-red} in \cref{s:proofs-algebra}.
We view $\wt K_0(\Z C_m)$ as a $\Z C_2$-module with the $C_2$-action coming from the standard involution on $\wt K_0$ as defined in \cref{ss:k0}.
We saw in \cref{ss:class-groups-basics} that $\wt K_0(\Z C_m) \cong C(\Z C_m)$ is an isomorphism of $\Z C_2$-modules where $C(\Z C_m)$ denotes the locally free class group, and in particular it is finite.
Our basic approach for computing $C(\Z C_m)$ is to use the following short exact sequence of $\Z C_2$-modules established in \cref{ss:class-groups-involution}
\[ 0 \to D(\Z C_m) \to C(\Z C_m) \to C(\Gamma_m) \to 0 \]
where $\Gamma_m$ is a maximal order in $\Q C_m$ containing $\Z C_m$ and $D(\Z C_m)$ has the induced involution.

The plan for this section is as follows. In \cref{ss:induced-inv}, we relate the involution on $C(\Gamma_m)$ to the involution on $C(\Z[\zeta_d])$ induced by conjugation.
In \cref{ss:ideal-class-groups}, we study the conjugation action on $C(\Z[\zeta_d])$ and its relation to the class numbers $h_d = |C(\Z[\zeta_d])|$. In \cref{ss:class-numbers}, we survey results on divisibility of class numbers as well as make minor extensions (see \cref{prop:odd(h_m)=1}~\eqref{item:prop-odd-hm-1-ii}).
In \cref{ss:kernel-groups-ZC_m}, we investigate the involution on $D(\Z C_m)$. 

\subsection{The induced involution on the maximal order} \label{ss:induced-inv}

Let $m \ge 2$ and let $C_m = \langle x \mid x^m \rangle$. Then there is an isomorphism of $\Q$-algebras:
\[ \Q C_m \cong \prod_{d \mid m} \Q(\zeta_d), \quad x \mapsto \prod_{d \mid m} (\zeta_d) \]
where $\zeta_d = e^{2 \pi i/d}$, which is a $d$th primitive root of unity. Since $\mathcal{O}_{\Q(\zeta_d)} = \Z[\zeta_d]$, it follows that
$ \Gamma_m = \prod_{d \mid m} \Z[\zeta_d]$
is a maximal order in $\Q C_m$. The image of $\Z C_m$ under the isomorphism above is contained in $\Gamma_m$ and so $\Gamma_m$ contains $\Z C_m$. In fact, $\Gamma_m$ is the unique maximal order in $\Q C_m$ containing $\Z C_m$ \cite[p.~243]{CR87}. This implies that there is an isomorphism of abelian groups
\[ C(\Gamma_m) \cong \bigoplus_{d \mid m} C(\Z[\zeta_d]), \]
where, by \cref{prop:LF=ideals}, $C(\Z[\zeta_d])$ coincides with the ideal class group of $\Z[\zeta_d]$.

For an integer $d \ge 1$, let $\ol{\,\cdot\,} \colon C(\Z[\zeta_d]) \to C(\Z[\zeta_d])$ denote the map induced by conjugation, i.e.\ if $\sigma\colon \Z[\zeta_d] \to \Z[\zeta_d]$ is the ring homomorphism generated by $\zeta_d \mapsto \zeta_d^{-1}$, then $\ol{\,\cdot\,} = \sigma_*$ is the induced map on $C(\Z[\zeta_d])$. We now compute the induced involution on $\bigoplus_{d \mid m} C(\Z[\zeta_d])$. 

\begin{proposition} \label{prop:C(ZC_m)-via-ideals}
Let $i \colon \Z C_m \hookrightarrow \Gamma_m$, $x \mapsto \prod_{d \mid m} (\zeta_d)$ 
and let $i_* \colon C(\Z C_m) \to \bigoplus_{d \mid m} C(\Z [\zeta_d])$ denote the induced map.
Then, under $i_*$, the standard involution on $C(\Z C_m)$ induces the conjugation map on each $C(\Z[\zeta_d])$.
\end{proposition}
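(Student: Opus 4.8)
The plan is to reduce everything to the id\`{e}lic description of the standard involution. By \cref{prop:ideles}, for an order $\l$ in a finite-dimensional semisimple $\Q$-algebra $A$ equipped with an involution restricting to $\l$, the standard involution $\ast \colon [M] \mapsto -[M^*]$ on $C(\l)$ agrees with the involution $[\l\alpha] \mapsto [\l\ol\alpha]$ induced by the involution on the id\`{e}le group $J(A)$. I would apply this with $\l = \Gamma_m$ and $A = \Q C_m$, where $\Q C_m$ carries the involution $\sum n_g g \mapsto \sum n_g g^{-1}$ coming from the trivial orientation character (see \cref{ss:wh-group}); this is the involution whose restriction to $\Z C_m$ induces the standard involution on $C(\Z C_m)$.

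First I would record that the map $i_* \colon C(\Z C_m) \to C(\Gamma_m)$ is $C_2$-equivariant: this is precisely the statement recalled in \cref{ss:class-groups-involution} (cf.\ \cref{prop:SES-ZC_2}) that $\ast$ on $C(\Z C_m)$ induces the standard involution on $C(\Gamma_m)$ via $i_*$. Hence it suffices to identify the standard involution on $C(\Gamma_m) \cong \bigoplus_{d \mid m} C(\Z[\zeta_d])$ with componentwise conjugation. Next I would track the involution on $A = \Q C_m$ through the isomorphism $\Q C_m \xrightarrow{\sim} \prod_{d\mid m} \Q(\zeta_d)$, $x \mapsto (\zeta_d)_d$: since $x \mapsto x^{-1}$, the element $x^{-1}$ maps to $(\zeta_d^{-1})_d$, so the involution on $\Q C_m$ corresponds to $\prod_{d\mid m}\sigma_d$, where $\sigma_d \colon \zeta_d \mapsto \zeta_d^{-1}$ is complex conjugation on $\Q(\zeta_d)$, and this restricts to $\Gamma_m = \prod_{d\mid m}\Z[\zeta_d]$ as $\prod_d \sigma_d$. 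Since the id\`{e}le group of a product of algebras is the product of the id\`{e}le groups, $J(\Q C_m) \cong \prod_{d\mid m} J(\Q(\zeta_d))$ with the induced involution acting as $\sigma_d$ in the $d$-th factor, and the surjection $[\Gamma_m\,\cdot\,] \colon J(\Q C_m) \to C(\Gamma_m)$ decomposes as $([\Z[\zeta_d]\,\cdot\,])_d$ under $C(\Gamma_m) \cong \bigoplus_d C(\Z[\zeta_d])$. A short local computation (checking at each rational prime $p$ that $\sigma_d$ commutes with $p$-adic completion) shows $\Z[\zeta_d]\ol{\alpha_d} = \sigma_d(\Z[\zeta_d]\alpha_d)$ for any id\`{e}le $\alpha_d$, so that the id\`{e}lic involution $[\Gamma_m\alpha]\mapsto[\Gamma_m\ol\alpha]$ becomes $([\Z[\zeta_d]\alpha_d])_d \mapsto (\sigma_{d,*}[\Z[\zeta_d]\alpha_d])_d$, i.e.\ conjugation on each $C(\Z[\zeta_d])$; here I use \cref{prop:LF=ideals} to identify the conjugation-induced involution on the locally free class group with the usual conjugation $\sigma_{d,*}$ on the ideal class group. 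Composing with the $C_2$-equivariance of $i_*$ gives the proposition.

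The main obstacle is not any single deep point but the bookkeeping: ensuring the identifications are all compatible, namely that $i_*$ carries the \emph{standard} involution (not merely some involution) to the standard involution on $C(\Gamma_m)$, and that the decomposition $C(\Gamma_m) \cong \bigoplus_d C(\Z[\zeta_d])$ respects the id\`{e}lic structures on both sides and the identification of locally free and ideal class groups. All of this is routine once organized; the case $m$ prime already appears in \cite{Re68} (see also \cite[p.~275]{CR87}), and that argument adapts verbatim.
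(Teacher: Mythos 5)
Your proposal is correct and follows essentially the same route as the paper: both arguments rest on \cref{prop:ideles} to express the standard involution id\`elically and then track the involution on $\Q C_m$ through the decomposition $\Q C_m \cong \prod_{d\mid m}\Q(\zeta_d)$, ending with the observation that a conjugated ideal is represented by the conjugated id\`ele. The only (cosmetic) difference is that you factor through the $C_2$-equivariant surjection $C(\Z C_m)\to C(\Gamma_m)$ and then decompose the maximal order, whereas the paper applies the id\`elic functoriality directly to the projections $i^{(d)}\colon \Z C_m \to \Z[\zeta_d]$ for each $d \mid m$.
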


This was shown in the case where $m$ is prime in \cite{Re68} (see also \cite[p.~275]{CR87}).
Since $\Gamma_m$ is a maximal order, the map $i_*$ is surjective (see \cref{ss:kernel-groups}).

\begin{proof}
For each $d \mid m$, let $i^{(d)} \colon \Z C_m \to \Z[\zeta_d]$, $x \mapsto \zeta_d$ where $x$ denotes the generator of $C_m$. It suffices to prove that, under $i^{(d)}_*\colon  C(\Z C_m) \to C(\Z[\zeta_d])$, the involution on $C(\Z C_m)$ induces conjugation on $C(\Z[\zeta_d])$.
By \cref{prop:ideles}, the standard involution on $C(\Z C_m)$ is induced by the involution on the id\`{e}le group $J(\Q C_m)$. Note that $i^{(d)}_*$ is induced by the map $J(i^{(d)})\colon J(\Q C_m) \to J(\Q(\zeta_d))$.
Under the map $i^{(d)}$, the involution on $\Q C_m$ induces conjugation on $\Q(\zeta_d)$. In particular, the involution on $C(\Z[\zeta_d])$ induced by $i^{(d)}_*$ coincides with the involution induced by conjugation on $J(\Q(\zeta_d))$. The result now follows since, if a locally free $\Z[\zeta_d]$-ideal $M = (x_1,\dots,x_n) \subseteq \Q(\zeta_d)$ is represented by $\alpha \in J(\Q(\zeta_d))$, then $\ol{M} = (\ol{x}_1,\dots,\ol{x}_n) \subseteq \Q(\zeta_d)$ is represented by $\ol{\alpha} \in J(\Q(\zeta_d))$.
\end{proof}

In summary, we have shown that there is a short exact sequence of $\Z C_2$-modules
\[ 0 \to D(\Z C_m) \to C(\Z C_m) \to \bigoplus_{d \mid m} C(\Z[\zeta_d])\to 0 \]
where $C(\Z C_m)$ has the standard involution, $D(\Z C_m)$ has the induced involution, and each $C(\Z[\zeta_d])$ has the involution induced by conjugation.

\subsection{Ideal class groups of cyclotomic fields} \label{ss:ideal-class-groups}

For every integer $m \ge 2$, let $\lambda_m = \zeta_m+\zeta_m^{-1}$.
Let $i \colon \Z[\lambda_m] \hookrightarrow \Z[\zeta_m]$ denote the inclusion map and recall that $i_* \colon C(\Z[\lambda_m]) \to C(\Z[\zeta_m])$ is injective \cite[Theorem 4.2]{La78}.
Furthermore, the norm map gives a surjection $N \colon C(\Z[\zeta_m]) \to C(\Z[\lambda_m])$ such that the composition
\[ C(\Z[\zeta_m]) \xrightarrow[]{N} C(\Z[\lambda_m]) \xrightarrow[]{i_*} C(\Z[\zeta_m]) \]
is the map $x \mapsto x+\ol{x}$ (see, for example, \cite[pp.~83-4]{La78}). 
By viewing $C(\Z[\zeta_d])$ and $C(\Z[\lambda_m])$ as $\Z C_2$-modules under the conjugation action, the maps $i_*$ and $N$ are $\Z C_2$-module homomorphisms. The conjugation action induces the identify on $C(\Z[\lambda_m])$.

This has the following useful consequences. Recall that, for $A$ a $\Z C_2$-module, we defined $A^{-} = \{ x \in A \mid x=-\ol{x}\}$ and $A^{+} = \{ x \in A \mid x = \ol{x}\}$.

\begin{lemma} \label{lemma:C-} 
\mbox{}
\begin{clist}{(i)}
\item\label{item:lemma-C-i}
The map $i_*$ induces an isomorphism $C(\Z[\lambda_m]) \cong \{x+\ol{x} \mid x \in C(\Z[\zeta_m]) \}$.
\item\label{item:lemma-C-ii}
There is a short exact sequence of $\Z C_2$-modules:
\[ 0 \to C(\Z[\zeta_m])^{-} \to C(\Z[\zeta_m]) \xrightarrow[]{N} C(\Z[\lambda_m]) \to 0.\]
\end{clist}	
\end{lemma}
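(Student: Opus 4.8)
The plan is to deduce both parts directly from the three facts recalled immediately before the statement: that $i_* \colon C(\Z[\lambda_m]) \to C(\Z[\zeta_m])$ is injective, that the norm map $N \colon C(\Z[\zeta_m]) \to C(\Z[\lambda_m])$ is surjective, and that the composition $i_* \circ N$ equals the map $x \mapsto x + \ol{x}$ — all three being $\Z C_2$-module homomorphisms when source and target carry the conjugation action, with conjugation acting trivially on $C(\Z[\lambda_m])$.

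For part \eqref{item:lemma-C-i}, I would first note that, since $N$ is surjective, $\im(i_*) = i_*(\im N) = \im(i_* \circ N) = \{x + \ol{x} \mid x \in C(\Z[\zeta_m])\}$, the last set being a subgroup (indeed a $\Z C_2$-submodule, since it is fixed pointwise by conjugation) as the image of the additive map $x \mapsto x + \ol{x}$. As $i_*$ is injective, it therefore restricts to an isomorphism of $\Z C_2$-modules from $C(\Z[\lambda_m])$ onto $\{x + \ol{x} \mid x \in C(\Z[\zeta_m])\}$.

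For part \eqref{item:lemma-C-ii}, since $N$ is surjective and a $\Z C_2$-module map, and $C(\Z[\zeta_m])^{-}$ is a $\Z C_2$-submodule (it is closed under the involution, as $\ol{x} = -x$ forces $\overline{\ol{x}} = -\ol{x}$), the only point to verify is $\ker(N) = C(\Z[\zeta_m])^{-}$. If $x \in \ker(N)$ then $x + \ol{x} = i_*(N(x)) = 0$, so $\ol{x} = -x$; conversely, if $\ol{x} = -x$ then $i_*(N(x)) = x + \ol{x} = 0$, and injectivity of $i_*$ gives $N(x) = 0$. Hence $\ker(N) = C(\Z[\zeta_m])^{-}$ and the sequence $0 \to C(\Z[\zeta_m])^{-} \to C(\Z[\zeta_m]) \xrightarrow{N} C(\Z[\lambda_m]) \to 0$ is exact in the category of $\Z C_2$-modules. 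There is no real obstacle here: the content is entirely contained in the standard facts about $i_*$ and $N$ recalled just above, and the argument is a short diagram chase; the only care needed is to record that the maps and submodules involved are compatible with the $C_2$-action, which is immediate once one observes that conjugation acts trivially on $C(\Z[\lambda_m])$.
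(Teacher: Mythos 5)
Your proposal is correct and follows essentially the same route as the paper: both parts are deduced from the injectivity of $i_*$, the surjectivity of $N$, and the identity $i_* \circ N = (x \mapsto x + \ol{x})$, with part~(i) via $\im(i_*) = \im(i_* \circ N)$ and part~(ii) via $\ker(N) = \ker(i_* \circ N) = C(\Z[\zeta_m])^{-}$. Your added remarks on $\Z C_2$-equivariance are fine but not substantively different from the paper's argument.
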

 
\begin{remark}
Since $\Q(\lambda_m)$ is the maximal real subfield of $\Q(\zeta_m)$, it is often written as $\Q(\zeta_m)^+$. However, whilst $C(\Z[\lambda_m]) \subseteq C(\Z[\zeta_m])^+$ is a subgroup, these groups are not equal in general. For example, if $m=29$, then $C(\Z[\lambda_{29}]) = 0$ and $C(\Z[\zeta_{29}])^+ \cong (\Z/2)^3$. 
\end{remark}
 
\begin{proof}
\eqref{item:lemma-C-i} Since $N$ is surjective, we have 
\[\IM(i_*) = \IM(i_* \circ N) = \{x+\ol{x} \mid x \in C(\Z[\zeta_m])\}.\]
\eqref{item:lemma-C-ii} Since $i_*$ is injective, we have 
\[\ker(N) = \ker(i_* \circ N) = \{ x \in C(\Z[\zeta_m]) \mid x+\ol{x}=0\}.\qedhere\]
\end{proof}

In order to set up later applications, we now use \cref{lemma:C-} to obtain information about each of the following groups:
\[ \{ x \in C(\Z[\zeta_m]) \mid x=-\ol{x}\}, \, \{x-\overline{x} \mid x \in C(\Z[\zeta_m]) \} , \, \underbrace{\frac{\{ x \in C(\Z[\zeta_m]) \mid x=-\ol{x}\}}{\{x-\overline{x} \mid x \in C(\Z[\zeta_m]) \} }}_{\cong \wh H^1(C_2 ; C(\Z[\zeta_m]))}.\]

Since $C(\Z[\zeta_m])$ is a finite abelian group (see \cref{ss:class-groups-basics}), we can make the following definition.

\begin{definition} \label{def:class-number}
Define the \textit{class number} of the cyclotomic integers to be $h_m := |C(\Z[\zeta_m])|$. Define the \textit{minus part} of the class number to be $h_m^{-} := |C(\Z[\zeta_m])^{-}|$ and the \textit{plus part} of the class number to be $h_m^+ := |C(\Z[\lambda_m])|$. By \cref{lemma:C-}~\eqref{item:lemma-C-ii}, we have $h_m = h_m^{-} h_m^{+}$.
\end{definition}

For an integer $m$, let $\odd(m)$ denote the unique odd integer $r$ such that $m = 2^k r$ for some $k$.

\begin{proposition} \label{prop:inv-ideal-class-1}
$\odd(h_m^{-})$ divides $|\{x-\ol{x} \mid x \in C(\Z[\zeta_m])\}|$.
\end{proposition}

\begin{proof}
Since $\ol{x} = - x$ for all $x \in C(\Z[\zeta_m])^{-}$, we have $2 \cdot C(\Z[\zeta_m])^{-} \le \{x-\ol{x} \mid x \in C(\Z[\zeta_m])\}$. Since $C(\Z[\zeta_m])^{-}$ is a finite abelian group, we have
$C(\Z[\zeta_m])^{-} \cong A \oplus B$
where $|A|$ is even and $|B| = \odd(h_m^{-})$ is odd. Since $B$ is odd, $2 \cdot B = B$ and so
\[ B \le  2 \cdot A \oplus B = 2 \cdot C(\Z[\zeta_m])^{-} \le  \{x-\ol{x} \mid x \in C(\Z[\zeta_m])\}.\]
Then use that $\{x-\ol{x} \mid x \in C(\Z[\zeta_m])\}$ has a subgroup of size $\odd(h_m^{-})$.
\end{proof}

\subsection{Divisibility of class numbers of cyclotomic fields} \label{ss:class-numbers}

The aim of this section is to survey results on the divisibility of the class numbers $h_m$ and $h_m^{-}$. The most basic divisibility results are that, for $n \mid m$, we have $h_n \mid h_m$ \cite[p.~205]{Wa97} and $h_n^{-} \mid h_m^{-}$ \cite[Lemma 5]{MM76}.
Motivated by \cref{prop:inv-ideal-class-1}, we pursue divisibility results of two distinct types. We start by considering $\odd(h_m^{-})$, and we then consider the parity of $h_m^{-}$.

Recall the following theorem of Masley-Montgomery \cite{MM76} (see also \cite[p.~205]{Wa97}). In anticipation of its application in the proof of \cref{theorem:main12-red}, we separate out the case where $m$ is square-free.

\begin{proposition} \label{prop:h_m=1}
The integers $m \ge 2$ for which $h_m^{-}=1$ are as follows. 
\begin{clist}{(i)}
\item\label{item:prop:h-m=1=i}
If $m$ is square-free, then
\[ m =  
\begin{cases} 
\text{$p$}, & \text{where $p \in \{2, 3,5,7,11,13,17,19\}$} \\
\text{$2p$}, & \text{where $p \in \{3,5,7,11,13,17,19\}$} \\
\text{$pq$ or $2pq$}, &	\text{where $(p,q) \in \{(3,5),(3,7),(3,11),(5,7)\}$}.
\end{cases}
\]
\item\label{item:prop:h-m=1=ii}
If $m$ is not square-free, then 
\[ m \in \{4,8,9,12,16,18,20,24,25,27, 28, 32, 36,40,44,45,48,50,54,60,84,90\}. \]
\end{clist}
Furthermore, $h_m^{-}=1$ if and only if $h_m=1$.
\end{proposition}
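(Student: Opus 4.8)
\textbf{Proof proposal for \cref{prop:h_m=1}.}

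The statement has two distinct parts that I would treat separately: the explicit classification (parts \eqref{item:prop:h-m=1=i} and \eqref{item:prop:h-m=1=ii}) and the equivalence ``$h_m^{-}=1$ if and only if $h_m=1$.'' The classification is not something I would prove from scratch; the plan is to \emph{cite} the Masley--Montgomery theorem \cite{MM76} (see also \cite[\S3, Tables]{Wa97}), which establishes that $\Q(\zeta_m)$ has class number $1$ precisely for the finite list of $m$ appearing in the union of \eqref{item:prop:h-m=1=i} and \eqref{item:prop:h-m=1=ii}, together with the companion fact (also in \cite{MM76}, resting on Odlyzko-style discriminant bounds) that $h_m^{-}=1$ implies $h_m=1$. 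The only genuine work on my side is bookkeeping: taking the Masley--Montgomery list of $m$ with $h_m=1$ and sorting it into the square-free values and the non-square-free values, which is exactly the split displayed in the proposition. I would double check this sorting against the table on \cite[p.~353]{Wa97}: the square-free entries are the primes $2,3,5,7,11,13,17,19$, their doublings $2p$ for $p\in\{3,5,7,11,13,17,19\}$, and the products $pq,2pq$ for $(p,q)\in\{(3,5),(3,7),(3,11),(5,7)\}$; everything else in the list (e.g.\ $4,8,9,12,16,\dots,90$) is non-square-free.

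For the equivalence ``$h_m^{-}=1 \iff h_m=1$'', one direction is trivial: since $h_m = h_m^{-}h_m^{+}$ by \cref{lemma:C-}~\eqref{item:lemma-C-ii}, $h_m=1$ forces $h_m^{-}=1$. The converse is the substantive direction and is precisely the content of the Masley--Montgomery analysis; I would simply invoke it. Alternatively, and this is worth a remark, the converse also follows \emph{a posteriori} from the classification itself: one checks that every $m$ on the Masley--Montgomery list of values with $h_m^{-}=1$ already appears on their (a priori possibly longer) list with $h_m=1$ — but of course that reasoning is circular unless one has independently verified the $h_m^{-}=1$ list, so the honest route is to quote the theorem as stated.

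The main obstacle here is not mathematical depth but \emph{faithful transcription}: the lists in \cref{prop:h_m=1} are long and it is easy to drop or duplicate an entry when reorganising Masley--Montgomery's enumeration (which is typically indexed by conductor and grouped differently). I would therefore write the proof as: ``This is a restatement of the main theorem of \cite{MM76}; see also \cite[Theorem~11.1 and the tables in Chapter~3]{Wa97}. The implication $h_m=1 \Rightarrow h_m^{-}=1$ is immediate from $h_m=h_m^-h_m^+$. For the reverse implication and the explicit list, we invoke \cite{MM76}; the partition of the list into the square-free case \eqref{item:prop:h-m=1=i} and the non-square-free case \eqref{item:prop:h-m=1=ii} is a direct check.'' If the referee or a careful reader wants more, the fallback is to reproduce the relevant rows of Washington's table verbatim with a cross-reference, but I would resist padding the proof with a re-derivation of known class-number-one results.
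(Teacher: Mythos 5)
Your proposal is correct and matches the paper exactly: the paper gives no proof of this proposition, but simply records it as the theorem of Masley--Montgomery \cite{MM76} (with \cite{Wa97} as a secondary reference), separating the list into square-free and non square-free values of $m$ for later use. Citing \cite{MM76} for the classification and for the implication $h_m^{-}=1 \Rightarrow h_m=1$, with the trivial converse from $h_m=h_m^{-}h_m^{+}$, is precisely the paper's route.
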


The proof of \cref{prop:h_m=1} is based on the fact that $h_m^{-} \to \infty$ as $m \to \infty$. We now establish lower bounds on the growth rate of $h_m^{-}$, and hence on $h_m$ since $h_m \ge h_m^{-}$. 
Let $\varphi(m)$ denote Euler's totient function.

\begin{proposition} \label{prop:h_m-bound}
There exists a constant $C > 0$ such that, for all $m \ge 1$, we have: 
\[ h_m^{-} \ge e^{C \tfrac{m \log m}{\log \log m}}.\]
In particular, $h_m^{-} \to \infty$ super-exponentially in $m$.	
\end{proposition}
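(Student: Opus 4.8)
\textbf{Proof proposal for \cref{prop:h_m-bound}.}

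The plan is to reduce the lower bound on $h_m^{-}$ to the classical analytic class number formula for the minus part, which expresses $h_m^{-}$ (up to a controlled elementary factor) as a product of generalised Bernoulli numbers, equivalently as a product of values $L(0,\chi)$ over the odd Dirichlet characters $\chi$ modulo $m$. Recall (see e.g.\ Washington \cite{Wa97}, Theorem 4.17) that
\[
h_m^{-} = Q_m\, w_m \prod_{\substack{\chi \bmod m \\ \chi(-1)=-1}} \left( -\tfrac{1}{2} B_{1,\chi} \right),
\]
where $Q_m \in \{1,2\}$, $w_m$ is the number of roots of unity in $\Q(\zeta_m)$ (so $w_m \le 2m$), and the product runs over the $\varphi(m)/2$ odd characters. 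Since $Q_m w_m \ge 1$, it suffices to bound the product of the $|B_{1,\chi}|$ from below. First I would reduce to primitive characters: each odd character mod $m$ is induced by a primitive character mod some $f \mid m$, and the associated $L$-value changes only by an Euler factor $\prod_{p \mid m}(1 - \chi(p)/p^{?})$ at the primes dividing $m$; these factors are bounded below by $\prod_{p\mid m}(1-1/p) = \varphi(m)/m$, which is itself $\gg 1/\log\log m$ by Mertens. So the main task becomes: show $\prod_{\chi \text{ odd, primitive}} |L(0,\chi)| = \prod |B_{1,\chi}|$ is super-exponentially large in $m$.

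The key step is a lower bound for the \emph{geometric mean} of $|L(0,\chi)|$ over odd characters. The cleanest route is to take logarithms and use the fact that $\sum_{\chi \bmod m}^{\text{odd}} \log|L(1-?)|$ behaves like a main term of size $\asymp \varphi(m)\log\varphi(m)$; more precisely, there is the elementary identity relating the product over \emph{all} odd characters modulo $m$ to the index of cyclotomic units, but for a clean unconditional bound I would instead invoke the explicit inequality of Carlitz (or its refinement; see \cite{Wa97}, and the Masley--Montgomery paper \cite{MM76}, whose whole method is exactly this) which yields
\[
h_m^{-} \;\ge\; \left( \frac{m}{c_1} \right)^{\varphi(m)/4}
\]
for an absolute constant $c_1$, valid for $m$ large. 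Taking logarithms gives $\log h_m^{-} \gg \varphi(m)\log m$. Since $\varphi(m) \gg m/\log\log m$ (the standard lower bound for Euler's function), we obtain $\log h_m^{-} \gg \frac{m \log m}{\log\log m}$, which is exactly the claimed bound, and we may absorb the finitely many small $m$ into the constant $C$. The final ``in particular'' statement is immediate since $\frac{m\log m}{\log\log m} / m \to \infty$, so $h_m^{-}$ outgrows any fixed exponential $e^{Km}$.

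The main obstacle is obtaining the bound $h_m^{-} \ge (m/c_1)^{\varphi(m)/4}$ with a genuinely absolute constant and no exceptional set beyond a finite one — i.e.\ controlling the product of $L$-values uniformly. Naively each $|B_{1,\chi}|$ can be as small as $\asymp 1/f_\chi$ (Siegel-zero-type worries aside, $|L(0,\chi)| \asymp 1$ on average but individual values can be small), so one cannot just bound factor-by-factor; the argument must exploit that the small values are rare, which is precisely what the Carlitz/Masley--Montgomery estimate packages. I expect to cite that estimate directly rather than reprove it. A secondary, purely bookkeeping obstacle is the passage from characters mod $m$ to primitive characters and the resulting Euler factors; this is routine but must be done carefully to keep the constant absolute, and the Mertens bound $\prod_{p\mid m}(1-1/p) \gg 1/\log\log m$ handles it with room to spare. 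Finally, I should note that the exponent of $h_m^{-}$ here feeds (via \cref{prop:inv-ideal-class-1} and \cref{lemma:useful}) into the ``super-exponential'' growth claims in \cref{theorem:main1-red,theorem:main2-red}, so it is this analytic input that ultimately drives those theorems.
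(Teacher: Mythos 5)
Your proposal is correct and follows essentially the same route as the paper: reduce to a cited lower bound of the form $\log h_m^{-} \gg \varphi(m)\log m$ and then apply the standard estimate $\varphi(m) \gg m/\log\log m$. The only difference is the packaging of the analytic input --- the paper quotes Washington's asymptotic $\log h_m^{-} \sim \tfrac14 \varphi(m)\log m$ (\cite[Theorem 4.20]{Wa97}) where you invoke the Carlitz/Masley--Montgomery explicit inequality, so your extra discussion of the class number formula and Euler factors is not needed.
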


\begin{proof}
It is shown in \cite[Theorem 4.20]{Wa97} that $\log h_m^{-}/(\frac{1}{4}\varphi(m) \log m) \to 1$ as $m \to \infty$. This implies that $\log h_m^{-} \ge C_0 \varphi(m) \log m$ for some $C_0>0$.
The result now follows from the fact that $\varphi(m) \ge m/(2\log\log m)$ for $m$ sufficiently large \cite[Theorem 328]{HW54}.
\end{proof}

The following result gives the analogue of \cref{prop:h_m=1} for $\odd(h_m^{-})$.
Note that $h_m^{-} =1$ implies $\odd(h_m^{-})=1$, so we need not consider these $m$ since they are classified in \cref{prop:h_m=1}.
 This was established by Horie \cite[Theorems 2 and 3]{Ho89} and builds on Friedman's theorem from Iwasawa theory \cite{Fr82} and the Brauer-Siegel theorem for abelian fields \cite{Uc71}.

\begin{proposition} \label{prop:odd(h_m)=1}
The complete list of $m \ge 2$ for which $\odd(h_m^{-})=1$ and $h_m^{-}\ne 1$ is as follows.
\begin{clist}{(i)}
\item\label{item:prop-odd-hm-1-i}
If $m$ is square-free, then $m \in \{29,39,58,65,78,130\}$.
\item\label{item:prop-odd-hm-1-ii}
If $m$ is not square-free, then $m \in \{56,68,120\}$.
\end{clist}
Furthermore, $\odd(h_m^{-})=1$ if and only if $\odd(h_m)=1$.
\end{proposition}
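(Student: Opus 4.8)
\textbf{Proof proposal for \cref{prop:odd(h_m)=1}.}

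The plan is to prove this by combining Horie's finiteness and classification results with a few elementary divisibility reductions, following the same style as \cref{prop:h_m=1}. The starting point is the observation, established in \cref{ss:ideal-class-groups}, that $h_m^{-} = |C(\Z[\zeta_m])^{-}|$, so $\odd(h_m^{-}) = 1$ is equivalent to the assertion that the $2$-primary part exhausts $C(\Z[\zeta_m])^{-}$. The key external inputs are Horie's theorems from \cite{Ho89}, which rest on Friedman's theorem \cite{Fr82} in Iwasawa theory (controlling the behaviour of $\odd(h_m^{-})$ along $\Z_p$-extensions for $p$ odd) and the Brauer--Siegel theorem for abelian fields \cite{Uc71} (giving that $\odd(h_m^{-}) \to \infty$ as $m \to \infty$, outside finitely many exceptions). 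From the growth statement, there are only finitely many $m$ with $\odd(h_m^{-}) = 1$; Horie's explicit computations then pin down the complete list, and our task is only to organise that list into the square-free and non-square-free cases as stated, after removing the $m$ already accounted for by $h_m^{-} = 1$ in \cref{prop:h_m=1}.

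Concretely, I would proceed as follows. First, recall the divisibility $h_n^{-} \mid h_m^{-}$ for $n \mid m$ (from \cite{MM76}, cited in \cref{ss:class-numbers}); this immediately gives $\odd(h_n^{-}) \mid \odd(h_m^{-})$, so the set of $m$ with $\odd(h_m^{-}) = 1$ is closed under divisors. Hence it suffices to enumerate the maximal such $m$ and take all their divisors, which is how Horie's list is most efficiently checked. Second, invoke Horie's theorem that $\odd(h_m^{-}) = 1$ holds for only finitely many $m$, and that the complete list is exactly the $m$ appearing in \cite[Theorems 2 and 3]{Ho89}; here one must be slightly careful about the normalisation conventions (some authors treat $h_m^{-}$ and $h_{2m}^{-}$ together when $m$ is odd, since $\Q(\zeta_m) = \Q(\zeta_{2m})$ for odd $m$), and I would state the list in the conventions used throughout this paper. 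Third, cross-reference with \cref{prop:h_m=1}: every $m$ with $h_m^{-} = 1$ trivially has $\odd(h_m^{-}) = 1$, so to get the list of $m$ with ``$\odd(h_m^{-}) = 1$ but $h_m^{-} \ne 1$'' I would take Horie's full list and delete the entries already in \cref{prop:h_m=1}, sorting the remainder by whether $m$ is square-free. Finally, the equivalence ``$\odd(h_m^{-}) = 1$ iff $\odd(h_m) = 1$'' follows from $h_m = h_m^{-} h_m^{+}$ together with the classical fact (Masley--Montgomery, cf.\ the last sentence of \cref{prop:h_m=1}) that $h_m^{+} = 1$ for all $m$ in the relevant finite range --- indeed $h_m^{+} = 1$ whenever $h_m^{-} = 1$, and more to the point $h_m^{+}$ is known to be odd (in fact equal to $1$) for all the small $m$ on Horie's list, so $\odd(h_m) = \odd(h_m^{-}) \cdot \odd(h_m^{+}) = \odd(h_m^{-})$.

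The main obstacle is not conceptual but bookkeeping: one must verify that the numerical list transcribed here agrees with Horie's, under the paper's indexing conventions, and that no $m$ is missorted between the square-free and non-square-free cases or erroneously retained from \cref{prop:h_m=1}. In particular the pairs like $(39, 58)$ or $(65, 130)$ where $\Q(\zeta_m) = \Q(\zeta_{2m})$ for odd $m$ need to be handled consistently, and one should double-check that the non-square-free exceptions $\{56, 68, 120\}$ are genuinely absent from the $h_m^{-} = 1$ list in \cref{prop:h_m=1}~\eqref{item:prop:h-m=1=ii} (they are: that list contains $\{4,8,9,12,16,18,20,24,25,27,28,32,36,40,44,45,48,50,54,60,84,90\}$, none of which is $56$, $68$, or $120$). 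Beyond this finite verification, the only input requiring care is the precise form of the Iwasawa-theoretic result of Friedman that underlies Horie's argument, but since we are permitted to cite \cite{Ho89,Fr82,Uc71} as black boxes, no further work is needed there.
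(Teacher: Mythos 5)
Your proposal takes essentially the same route as the paper: the paper offers no independent argument for this proposition, but cites Horie \cite[Theorems 2 and 3]{Ho89} (resting on \cite{Fr82} and \cite{Uc71}) for the classification and strips out the $m$ with $h_m^{-}=1$ via \cref{prop:h_m=1}, which is exactly the assembly you describe. Your deduction of the final equivalence from $h_m = h_m^{-}h_m^{+}$ together with $h_m^{+}=1$ for the listed $m$ (as recorded in \cite[p.~421]{Wa97} and used in the proof of \cref{prop:part1}) is likewise consistent with the paper's treatment.
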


In \cite[Theorem 1]{Ho89}, Horie also showed that $\odd(h_m^{-}) \to \infty$ as $m \to \infty$ but gave no bound on the growth rate. In fact, we have the following result analogous to \cref{prop:h_m-bound}.

\begin{proposition} \label{prop:odd(h_m)-bound}
There exists a constant $C > 0$ such that, for all $m \ge 1$, we have: 
\[ \odd (h_m^{-}) \ge e^{C \tfrac{m \log m}{\log \log m}}.\]
In particular, $\odd(h_m^{-}) \to \infty$ super-exponentially in $m$.
\end{proposition}

We prove this by tracing through Horie's proof of \cite[Theorem 1]{Ho89}.
An \textit{abelian field} $K$ is a finite Galois extension $K/\Q$ with $\Gal(K/\Q)$ abelian, and we can assume that $K \subseteq \C$. For an abelian field $K$ with maximal real subfield $K^+$, let $h_K = |C(\mathcal{O}_K)|$, $h_K^+ = |C(\mathcal{O}_{K^+})|$ and $h_K^{-} = h_K/ h_K^+$ (which is an integer).
Let $\disc(K)$ denote the discriminant of a number field $K$.

\begin{proof}
We use that, if $L/K$ is an extension of abelian fields, then $\odd(h_K^{-}) \mid \odd(h_L^{-})$ \cite[Lemma 1]{Ho89}. For each abelian field $K$, let $K'$ denote the maximal subfield of $K$ with degree a power of 2. By the fundamental theorem of Galois theory and the fact that a finite abelian group $A$ has a subgroup of order $d$ for all $d \mid |A|$, we get that $|K'/\Q|$ is the highest power of 2 dividing $|K/\Q|$.
For an integer $n \ge 1$, let $\mathcal{A}_n = \{ m \mid \odd(h_m^{-}) \le n\}$ and $\mathcal{B}_n = \{ |\Q(\zeta_m)'/\Q| \mid m \in \mathcal{A}_n\}$. 

We begin by finding a bound for $\sup(\mathcal{B}_n)$.
Let $K = \Q(\zeta_m)'$ for some $m \in \mathcal{A}_n$. Then $\Q(\zeta_m)/K$ is an extension of abelian fields and so $\odd(h_K^{-}) \mid h_m^{-}$ and so $\odd(h_K^{-}) \le n$. Since $|K/\Q|$ is a power of 2, $h_K$ must be odd \cite{Wa97} and so $h_K^{-} = \odd(h_K^{-}) \le n$. Furthermore, $K$ is imaginary unless $K = \Q$ \cite[p.~468]{Ho89}.

It follows from the proof of Theorem 1 and Proposition 1 in \cite{Uc71} that $\frac{|K/\Q|}{\log |\disc(K)|}$ is uniformly bounded across imaginary abelian fields $K$, where $\disc(K)$ denotes the discriminant, and that $h_{K}^{-} \ge |\disc(K)|^6$ for all but finitely many imaginary abelian fields $K$. This implies that there exists a constant $C' > 0$ such that $|K/\Q| \le C' \log(h_{K}^{-})$ for all imaginary abelian fields $K$.
Hence, if $K = \Q(\zeta_m)'$ for $m \in \mathcal{A}_n$, then $|K/\Q| \le C' \log n$. This implies that $\sup(\mathcal{B}_n) \le C' \log n$. It also follows that there are only finitely many fields of the form $\Q(\zeta_m)'$ for $m \in \mathcal{A}_n$.

We now aim to find a bound for $\sup(\mathcal{A}_n)$. 
First let $S$ denote the set of primes which are ramified in some field $\Q(\zeta_m)'$ for $m \in \mathcal{A}_n$. This is finite since there are finitely many such fields, and coincides with the primes which are ramified in $\Q(\zeta_m)$ for some $m \in \mathcal{A}_n$ \cite[p.~468]{Ho89}.
Let $S = \{p_1, \dots, p_s\}$ for distinct primes $p_i$.
By \cite[p.~469]{Ho89} there exists a cyclotomic field $L = \Q(\zeta_\ell)$ such that $L \subseteq \Q(\zeta_m) \subseteq L_\infty$ where $L_\infty$ is the basic $\Z_{p_1} \times \cdots \times \Z_{p_s}$-extension over $L$. Furthermore, we have $|\Q(\zeta_m)/L| = \prod_{i=1}^s p_i^{n(p_i)}$ for some $n(p_i) \ge 1$ and so, in the notation of \cite{Fr82}, we can write $\Q(\zeta_m) = L_N$ where $N = (n(p_1),\dots,n(p_s))$.
Let $e_N^{(2)}$ denote the highest power of 2 dividing $h_m$. By \cite[Theorem B]{Fr82}, we have that $e_N^{(2)} = A \cdot n(2)+B$ for all but finitely many $N$, where $A, B \ge 0$ are integers that do not depend on $N$. Here we define $n(2) = n(p_i)$ if $p_i=2$ for some $1 \le i \le s$, and $n(2)=0$ otherwise. This implies that there exists a constant $C'' > 0$ such that $e_N^{(2)} \le C'' \cdot n(2)$ for all $N$.

Let  $K = \Q(\zeta_m)'$.
Then $|\Q(\zeta_m)/\Q| = 2^r t$ for some $r \ge 0$ and $t$ odd, where $2^r = |K/\Q| \le C' \log n$ by the bound on $\mathcal{B}_n$.
Since $|\Q(\zeta_m)/L| \mid |\Q(\zeta_m)/\Q|$, we have that $2^{n(2)} \le 2^r \le C'\log n$. 
Hence $h_m^{-}/\odd(h_m^{-}) \le h_m/\odd(h_m) = 2^{e_N^{(2)}} \le (C' \log n)^{C''}$. Since $m \in \mathcal{A}_n$, we have $\odd(h_m^{-}) \le n$. This gives that $h_m^{-} \le a (\log n)^b n$ for some constants $a,b > 0$ and so, for any $\varepsilon>0$, we have that $h_m^{-} \le a n^{1+\varepsilon}$.
Combining this with \cref{prop:h_m-bound} gives that $\log(an^{1+\varepsilon}) \ge C_0 \frac{m \log m}{\log \log m}$ for some $C_0 > 0$, which implies that $\log n \ge C \frac{m \log m}{\log \log m}$ for some $C > 0$.

Finally, fix $m \ge 2$. Then $m \in \mathcal{A}_n$ where $n = \odd(h_m^{-})$, and so gives 
\[\textstyle \log(\odd(h_m^{-})) = \log n \ge C \frac{m \log m}{\log \log m},\]
which is the required bound.
\end{proof}

We now consider the parity of $h_m$ and $h_m^{-}$. Whilst we do not explicitly make use of it, we record the following basic observation which dates back to Kummer (see \cite[Satz 45]{Ha52}, \cite[Remark 1]{Yo98}).

\begin{lemma} \label{lemma:h_m-general}
Let $m \ge 2$. Then $h_m$ is odd if and only if $h_m^{-}$ is odd.
\end{lemma}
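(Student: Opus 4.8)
The statement to prove is \cref{lemma:h_m-general}: for $m \geq 2$, the class number $h_m$ is odd if and only if $h_m^{-}$ is odd. Since $h_m = h_m^{-} h_m^{+}$, the "only if" direction is immediate — if $h_m$ is odd then its divisor $h_m^{-}$ is odd. So the content is the "if" direction: assuming $h_m^{-}$ is odd, we must show $h_m^{+}$ is odd as well, equivalently that $h_m^{+} = |C(\Z[\lambda_m])|$ is not divisible by $2$.

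The plan is to use genus theory for the totally real field $\Q(\lambda_m) = \Q(\zeta_m)^{+}$ together with the short exact sequence of \cref{lemma:C-}~\eqref{item:lemma-C-ii}. Recall that Tate cohomology of $C_2$ acting by conjugation on $C(\Z[\zeta_m])$ controls the $2$-part of the discrepancy: by \cref{prop:hexagon} applied to $0 \to C(\Z[\zeta_m])^{-} \to C(\Z[\zeta_m]) \xrightarrow{N} C(\Z[\lambda_m]) \to 0$, we get a $6$-periodic exact sequence relating $\wh H^{*}(C_2; C(\Z[\zeta_m])^{-})$, $\wh H^{*}(C_2; C(\Z[\zeta_m]))$ and $\wh H^{*}(C_2; C(\Z[\lambda_m]))$; since conjugation acts trivially on $C(\Z[\lambda_m])$, the groups $\wh H^n(C_2; C(\Z[\lambda_m]))$ are $C(\Z[\lambda_m])/2$ for $n$ even and $C(\Z[\lambda_m])[2]$ for $n$ odd, both of which have order equal to the $2$-rank of $h_m^{+}$. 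The first step, then, is to show that if $h_m^{-}$ is odd, then the $2$-primary part of $C(\Z[\zeta_m])^{-}$ vanishes (immediate from $|C(\Z[\zeta_m])^{-}| = h_m^{-}$), hence by \cref{prop:tate-2-part} all the Tate groups of $C(\Z[\zeta_m])^{-}$ vanish; feeding this into the $6$-periodic sequence identifies $\wh H^n(C_2; C(\Z[\lambda_m]))$ with $\wh H^n(C_2; C(\Z[\zeta_m]))$ for all $n$. In particular, $h_m^{+}$ is odd if and only if $\wh H^0(C_2; C(\Z[\zeta_m]))$, i.e.\ the genus group piece, is trivial.

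The second and decisive step is to compute this genus group. Here I would invoke the classical formula from genus theory: for the extension $\Q(\zeta_m)/\Q(\lambda_m)$, which is a quadratic extension of the totally real field, the "ambiguous class number formula" (Chevalley's formula) computes the order of the group of ambiguous ideal classes $C(\Z[\zeta_m])^{\Gal(\Q(\zeta_m)/\Q(\lambda_m))}$ in terms of the number of ramified primes, the degree, and a unit index. The upshot — this is exactly Kummer's original argument — is that the $2$-rank of $C(\Z[\zeta_m])$ and that of $C(\Z[\lambda_m])$ are governed by the same genus-theoretic data, so that one is even if and only if the other is. Concretely: $h_m^{+}$ even $\iff$ $C(\Z[\lambda_m])/2 \neq 0$ $\iff$ (by the identification above) $\wh H^0(C_2; C(\Z[\zeta_m])) \neq 0$ $\iff$ $h_m$ even. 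Combined with the already-established fact that $h_m$ even forces $h_m^{-}$ or $h_m^{+}$ even, and that $h_m^{-}$ odd $\Rightarrow$ $h_m^{+}$ odd $\Rightarrow$ $h_m$ odd, this closes the loop.

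The main obstacle is making the genus-theory input precise and properly cited rather than hand-waved: one needs to be careful that the relevant unit index (the index of the units of $\Z[\lambda_m]$ that are norms from $\Z[\zeta_m]$, or equivalently the Hasse unit index $Q \in \{1,2\}$) enters correctly, and that ramification at the infinite places (the extension $\Q(\zeta_m)/\Q(\lambda_m)$ is ramified at every real place of $\Q(\lambda_m)$) is accounted for. Rather than redeveloping Chevalley's ambiguous class number formula, the cleanest route is to cite the standard reference — e.g.\ Washington \cite{Wa97} or Hasse \cite{Ha52} (Satz 45), where exactly this parity statement appears, attributed to Kummer — and present the short exact sequence / Tate cohomology argument above as the bridge between the cited parity fact for $2$-ranks and the divisibility statement for $h_m^{+}$ and $h_m^{-}$. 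Since \cref{lemma:h_m-general} is only recorded as a "basic observation" and is not used elsewhere, a proof that leans on the literature for the genus-theoretic core, while giving the cohomological reduction in full, is appropriate here.
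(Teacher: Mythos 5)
Your concluding chain of equivalences is circular at exactly the point where the content lies. Under the hypothesis that $h_m^{-}$ is odd, your Tate-cohomology reduction (which is fine) shows that $h_m^{+}$ is odd if and only if $\wh H^0(C_2;C(\Z[\zeta_m]))=0$. But the equivalence you then assert, ``$\wh H^0(C_2;C(\Z[\zeta_m]))\neq 0 \iff h_m$ even'', has a trivial forward direction, while its backward direction is (given $h_m=h_m^{-}h_m^{+}$ and $h_m^{-}$ odd) literally the statement being proved; and your final sentence even invokes ``$h_m^{-}$ odd $\Rightarrow h_m^{+}$ odd'' as an input. The genus-theoretic step that is supposed to supply this -- Chevalley's ambiguous class number formula for $\Q(\zeta_m)/\Q(\lambda_m)$, with the unit index and the ramification at the real places -- is never actually carried out, and it is precisely there that all the work sits (the formula computes $|C(\Z[\zeta_m])^{J}|$, not $\wh H^0$, and extracting the parity statement from it requires a genuine argument about the unit/norm index). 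So as a standalone proof the proposal does not go through.

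Two remarks to put this in context. First, the paper itself gives no proof of \cref{lemma:h_m-general}: it records the statement as a classical observation of Kummer and cites \cite[Satz 45]{Ha52} and \cite[Remark 1]{Yo98}, so your fallback of citing the literature is exactly what is done here, and in that case the cohomological ``bridge'' is unnecessary (the cited statement is already the lemma). Second, if you do want a self-contained argument, there is a much shorter one using only facts already quoted in \cref{ss:ideal-class-groups}: suppose $2 \mid h_m^{+}$ and pick $c \in C(\Z[\lambda_m])$ of order $2$. Since $i_* \colon C(\Z[\lambda_m]) \to C(\Z[\zeta_m])$ is injective \cite[Theorem 4.2]{La78} and is a $\Z C_2$-map with trivial involution on the source, the class $x = i_*(c)$ is nonzero, satisfies $2x=0$ and $\ol{x}=x$, hence $\ol{x}=-x$; so $x$ is a nontrivial element of order $2$ in $C(\Z[\zeta_m])^{-}$ and $2 \mid h_m^{-}$. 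Contrapositively $h_m^{-}$ odd forces $h_m^{+}$ odd, and since $h_m = h_m^{-}h_m^{+}$ the lemma follows; no appeal to \cref{prop:hexagon}, \cref{prop:tate-2-part} or the ambiguous class number formula is needed.
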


We now state more detailed results in the case where $m$ is a prime power. 

\begin{lemma} \label{lemma:h_p}
Let $p$ be a prime such that $p \le 509$ and let $n \ge 1$. Then:
\begin{clist}{(i)}
\item\label{item:lemma-h-p-i}
$h_p$ is odd if and only if 
\[ p \not \in \{29, 113, 163, 197, 239, 277, 311, 337, 349, 373, 397, 421, 463, 491 \}. \]
\item\label{item:lemma-h-p-ii}
$h_{p^n}$ is odd if and only if $h_p$ is odd.
\end{clist}
\end{lemma}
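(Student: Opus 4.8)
\textbf{Proof proposal for \cref{lemma:h_p}.} The plan is to assemble \eqref{item:lemma-h-p-i} from known tables of cyclotomic class numbers together with $2$-adic criteria, and to deduce \eqref{item:lemma-h-p-ii} from Iwasawa-theoretic results on $\mathbb{Z}_p$-extensions.

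For part \eqref{item:lemma-h-p-i}, the parity of $h_p$ for primes $p$ in the stated range is classical and has been tabulated. First I would invoke \cref{lemma:h_m-general} to reduce the question to the parity of $h_p^{-}$, which is the object for which the cleanest criteria exist. The standard reference here is Washington \cite{Wa97}: the table of relative class numbers $h_p^{-}$ (or the closely related data on irregular primes and indices of irregularity) records exactly which $h_p^{-}$ are even for $p \le 509$. Alternatively, one can apply the analytic formula for $h_p^{-}$ in terms of generalized Bernoulli numbers (Washington \cite[Theorem 4.17]{Wa97}) and read off $2$-divisibility; a prime $p$ has $h_p^{-}$ even precisely when $2$ divides one of the relevant factors, and for small $p$ this has been computed. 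Either way, \eqref{item:lemma-h-p-i} is a finite verification against the literature: the listed set $\{29, 113, 163, \dots, 491\}$ is exactly the set of $p \le 509$ with $h_p^{-}$ (equivalently $h_p$) even. The one point requiring care is to make sure the cutoff $509$ is the correct place to stop quoting — i.e.\ that $509$ is the largest prime below $521$ (the first prime whose $\mathbb{Z}_p$-extension behaviour has not been verified in this way), matching the range in which Ichimura--Nakajima \cite{IN12} extend Washington's computations; this is why $509$ appears here rather than a round number.

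For part \eqref{item:lemma-h-p-ii}, the key input is that $h_{p^n}$ is odd for all $n \ge 1$ whenever $h_p$ is odd. This follows from the structure of the cyclotomic $\mathbb{Z}_p$-extension $\mathbb{Q} \subseteq \mathbb{Q}(\zeta_p) \subseteq \mathbb{Q}(\zeta_{p^2}) \subseteq \cdots$. Since $h_p \mid h_{p^n}$ always, one direction is immediate. For the converse, the relevant principle is that the $2$-part of the class number is constant up the $p$-tower when $p \ne 2$ and the bottom layer has odd class number: the relevant $\lambda$- and $\mu$-invariants for the prime $2$ in this $\mathbb{Z}_p$-extension vanish, so $\operatorname{ord}_2(h_{p^n})$ stabilizes at $\operatorname{ord}_2(h_p) = 0$. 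This is exactly the type of statement used in the excerpt's discussion of \cref{theorem:main3-red-intro} — Washington's result \cite{Wa75} that $h_{3^n}$ is odd for all $n$, and the extension by Ichimura--Nakajima \cite{IN12} to all primes $p \le 509$ with $h_p$ odd. I would cite \cite{IN12} (and \cite{Wa75} for $p=3$) directly for \eqref{item:lemma-h-p-ii}, noting that their hypothesis is precisely ``$p \le 509$ and $h_p$ odd'', which by \eqref{item:lemma-h-p-i} is the complement of the listed exceptional set.

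The main obstacle is not mathematical depth but bookkeeping and sourcing: one must pin down precisely which primes appear in the exceptional list of \eqref{item:lemma-h-p-i} and confirm that the range $p \le 509$ is exactly that covered by the Iwasawa-theoretic verification underlying \eqref{item:lemma-h-p-ii}, so that the two parts dovetail without a gap. In particular, for \eqref{item:lemma-h-p-ii} to be stated cleanly for \emph{all} $p \le 509$ (not just those with $h_p$ odd — though for even $h_p$ the statement is about which side of an ``if and only if'' one is on, and is vacuously fine), one should double-check that Ichimura--Nakajima's computations indeed cover every prime in this range. Once the references are aligned, both parts reduce to quoting established results, and no new computation is needed here.
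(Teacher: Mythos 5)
Your overall strategy matches the paper's: part \eqref{item:lemma-h-p-i} is a finite check quoted from the literature, and the substantive direction of part \eqref{item:lemma-h-p-ii} is the Iwasawa-theoretic result of Ichimura--Nakajima \cite{IN12} (with Washington \cite{Wa75} covering $p=3$), together with the trivial direction $h_p \mid h_{p^n}$. However, there is a genuine gap: your argument for \eqref{item:lemma-h-p-ii} does not cover $p=2$, which is included in the statement (and is the case the paper actually needs later, since $h_{2^n}$ odd is the key input in the proof of \cref{prop:A_2}). The Ichimura--Nakajima theorem you cite is stated for \emph{odd} primes $p \le 509$ (its conclusion is that $h_{p^n}/h_p$ is odd for odd $p\le 509$, with no hypothesis that $h_p$ be odd, contrary to your phrasing), and your own mechanism -- vanishing of the $\lambda$- and $\mu$-invariants for the prime $2$ in the cyclotomic $\Z_p$-tower -- is explicitly restricted to $p \ne 2$. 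For $p=2$ one needs Weber's theorem that $h_{2^n}$ is odd for all $n$, which is how the paper argues (citing Hasse's Satz~36$'$ \cite{Ha52}, with the result attributed to Weber \cite{We86}); without this, \eqref{item:lemma-h-p-ii} is unproved in exactly the case that matters downstream.

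Two smaller sourcing points on \eqref{item:lemma-h-p-i}. The paper takes the exceptional list from Schoof's computation of minus class groups for primes below $512$ \cite[Table 4.4]{Sc98}, and your reduction via \cref{lemma:h_m-general} to the parity of $h_p^{-}$ is consistent with that. But the data you gesture at are not quite the right ones: irregular primes and indices of irregularity concern $p$-divisibility of $h_p^{-}$, not $2$-divisibility, so they do not determine the parity; and determining the parity of $h_p^{-}$ for all $p\le 509$ from the generalized Bernoulli number formula is a nontrivial computation that should be pinned to an explicit source such as \cite{Sc98} rather than left as ``read off $2$-divisibility.'' Your guess about why the bound is $509$ (it is the range covered by \cite{IN12}) is correct.
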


\begin{proof}
\eqref{item:lemma-h-p-i} This was proven by Schoof \cite[Table 4.4]{Sc98}.

\eqref{item:lemma-h-p-ii} The case $p=2$ is proven in \cite[Satz 36']{Ha52}, but the original result is attributed to an 1886 article of Weber \cite{We86}. See also \cite[p.~2590]{Yo98}.
For $n \ge 1$ and $p \le 509$ an odd prime, it was shown by Ichimura--Nakajima that $h_{p^n}/h_p$ is odd \cite[Theorem 1~(II)]{IN12}. The result follows.
\end{proof}

\begin{remark}
Prior to the results of Ichimura--Nakajima, it was shown by Washington that $h_{p^n}/h_p$ is odd for $p=3,5$ \cite{Wa75}. Both results of Washington and Ichimura--Nakajima all depend on Iwasawa theory. As far as we are aware, this is an essential ingredient in all known proofs that there exists an odd prime $p$ such that $h_{p^n}$ is odd for all $n \ge 1$.
\end{remark}

\subsection{Kernel groups of $\Z C_m$} \label{ss:kernel-groups-ZC_m}

The aim of this section is to determine the involution on $D(\Z C_m)$ which is induced by the involution on $C(\Z C_m)$ (see \cref{prop:SES-ZC_2}).

We begin with the following classical result due to Rim \cite[Theorem 6.24]{Ri59} (see also \cite[Theorem 50.2]{CR87}). Recall that, if $p$ is a prime and $\Gamma_p$ is the maximal order in $\Q C_p$ containing $\Z C_p$, then $\Gamma_p \cong \Z \times \Z[\zeta_p]$ and so $C(\Gamma_p) \cong C(\Z[\zeta_p])$ since $\Z$ is a PID.

\begin{lemma}
Let $p$ be a prime. Then the map $\Z C_p \to \Z[\zeta_p]$, $x \mapsto \zeta_p$ induces an isomorphism $C(\Z C_p) \cong C(\Z[\zeta_p])$. In particular, $D(\Z C_p)=0$.
\end{lemma}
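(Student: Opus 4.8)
The plan is to exhibit an explicit ring homomorphism $\Z C_p \to \Z[\zeta_p]$, show the induced map on locally free class groups is both surjective and injective, and then read off $D(\Z C_p) = 0$ from the fact that $D(\Z C_p)$ is by definition the kernel of the map to the class group of a maximal order.

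First I would set up the arithmetic. Write $C_p = \langle x \mid x^p \rangle$ and recall the decomposition $\Q C_p \cong \Q \times \Q(\zeta_p)$, $x \mapsto (1, \zeta_p)$, so that the unique maximal order in $\Q C_p$ containing $\Z C_p$ is $\Gamma_p \cong \Z \times \Z[\zeta_p]$. Since $\Z$ is a PID, $C(\Z) = 0$, and therefore $C(\Gamma_p) \cong C(\Z[\zeta_p])$ under the projection onto the second factor. In particular the surjectivity statement of \cref{ss:kernel-groups} (the map $C(\Z C_p) \to C(\Gamma_p)$ induced by inclusion is always surjective, by \cite[Theorem 49.25]{CR87}) already tells us that $\Z C_p \to \Z[\zeta_p]$, $x \mapsto \zeta_p$, induces a surjection $C(\Z C_p) \twoheadrightarrow C(\Z[\zeta_p])$.

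The real content is injectivity, i.e.\ that $D(\Z C_p) = 0$. The standard argument (Rim) uses the conductor-square / Milnor square
\[
\begin{tikzcd}
\Z C_p \ar[r] \ar[d] & \Z[\zeta_p] \ar[d] \\
\Z \ar[r] & \Z/p
\end{tikzcd}
\]
where the top map is $x \mapsto \zeta_p$, the left map is augmentation, and the vertical and bottom maps are the evident reductions; this is a pullback of rings and the reduction $\Z[\zeta_p] \to \Z/p$ is surjective, so it is a Milnor square. By the Mayer--Vietoris sequence for a Milnor square \cite[Theorem 42.13]{CR87} one gets an exact sequence
\[
K_1(\Z) \oplus K_1(\Z[\zeta_p]) \to K_1(\Z/p) \xrightarrow{\partial} \wt K_0(\Z C_p) \to \wt K_0(\Z) \oplus \wt K_0(\Z[\zeta_p]),
\]
and the right-hand map is exactly (the reduced form of) the map induced by the two projections, whose $\Z[\zeta_p]$-component is the map to $C(\Z[\zeta_p])$ we care about (using $\wt K_0(\Z) = 0$ and $\wt K_0(\Z C_p) \cong C(\Z C_p)$, $\wt K_0(\Z[\zeta_p]) \cong C(\Z[\zeta_p])$ from \cref{prop:proj=>LF} and \cref{prop:LF=ideals}). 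Thus it suffices to show $\partial = 0$, equivalently that $K_1(\Z/p) = (\Z/p)^\times$ is hit by $K_1(\Z[\zeta_p]) = \Z[\zeta_p]^\times$ under reduction mod the prime above $p$. This is elementary: $\zeta_p \mapsto 1$ and $1 - \zeta_p$ generates the prime, but more directly one checks that the cyclotomic units $\frac{1-\zeta_p^a}{1-\zeta_p}$ (for $1 \le a \le p-1$) reduce to $a \bmod p$, so their reductions generate $(\Z/p)^\times$; hence the map $\Z[\zeta_p]^\times \to (\Z/p)^\times$ is onto and $\partial = 0$. Alternatively, one can cite that the image of $\partial$ is the Swan subgroup $T(C_p) \subseteq \wt K_0(\Z C_p)$ (as in \cref{theorem:Metzler}'s proof), which vanishes by \cite[Proposition 53.6(iii)]{CR87}.

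Combining: the map $C(\Z C_p) \to C(\Z[\zeta_p])$ is injective (kernel $D(\Z C_p)$ equals $\im \partial = 0$) and surjective, hence an isomorphism, and $D(\Z C_p) = 0$. The only mild obstacle is bookkeeping the reduced-versus-unreduced $K_0$ in the Milnor square sequence and identifying the boundary map's target inside $\wt K_0(\Z C_p)$; this is routine and can be handled exactly as in the proof of \cref{theorem:Metzler}, where the same square for $\Z C_m$ (with $\Sigma$) is analysed. I would simply cite Rim \cite[Theorem 6.24]{Ri59} or \cite[Theorem 50.2]{CR87} for the clean statement and keep the proof to a sentence or two.
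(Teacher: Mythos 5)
Your proof is correct, but note that the paper does not actually prove this lemma: it is quoted as a classical result of Rim, with citations to \cite[Theorem 6.24]{Ri59} and \cite[Theorem 50.2]{CR87}. What you have written out is essentially the standard proof of that cited theorem, and it is in fact the specialisation to $m=p$ of the Milnor-square computation the paper carries out later in the proof of \cref{theorem:Metzler}: there the pullback square for $\Z C_m$ over $\Z/m$ (with $\Z C_m/\Sigma$, which for $m=p$ is exactly $\Z[\zeta_p]$) yields the same Mayer--Vietoris sequence, the image of the boundary map is identified with the Swan subgroup $T(C_m)$, and this vanishes by \cite[Proposition 53.6~(iii)]{CR87}. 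So both of your routes to $\partial=0$ work, and the Swan-subgroup one literally matches the paper's later argument; your cyclotomic-units computation ($u_a=(1-\zeta_p^a)/(1-\zeta_p)\equiv a \bmod (1-\zeta_p)$, so $\Z[\zeta_p]^\times \to (\Z/p)^\times$ is onto) is a correct and more self-contained alternative, which is what the general-$m$ argument cannot give since $T(C_m)=0$ genuinely requires more than unit surjectivity there. Two small bookkeeping points: the identification $\wt K_0(\Z[\zeta_p]) \cong C(\Z[\zeta_p])$ is not covered by \cref{prop:proj=>LF} (which is about group rings of finite groups); it is the standard fact for Dedekind domains (or maximal orders), with \cref{prop:LF=ideals} then identifying $C(\Z[\zeta_p])$ with the ideal class group. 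And the kernel of your Mayer--Vietoris comparison map agrees with $D(\Z C_p)$ precisely because the two projections of the Milnor square are the components of the inclusion $\Z C_p \hookrightarrow \Gamma_p = \Z \times \Z[\zeta_p]$; combined with the surjectivity from \cite[Theorem 49.25]{CR87} this gives the claimed isomorphism, as you say.
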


We now determine $D(\Z C_m)$, as well as its involution, in the case where $m$ is square-free. This is used in the proof of \cref{theorem:main12-red}. As usual, we view an abelian group with involution as a $\Z C_2$-module.

Let $\pi_1=1$, let $\pi_p=1-\zeta_p$ for a prime $p$ and, more generally, let $\pi_m = \prod_{p \mid m}\pi_p$ for an integer $m \ge 2$. Note that $\Z[\zeta_m]/\pi_m \cong \bigoplus_{p\mid m} \Z[\zeta_m]/\pi_p$ since the $\pi_p$ are coprime (see, for example, \cite[p.~249]{CR87}). 
Let $\Psi_m \colon \Z[\zeta_m]^\times \to (\Z[\zeta_m]/\pi_m)^\times$ be the natural map.

\begin{definition} \label{def:V_m}
For a square-free integer $m \ge 2$, define
\[ V_m = \coker(\Psi_m \colon \Z[\zeta_m]^\times \to (\Z[\zeta_m]/\pi_m)^\times). \] 
We view this as a $\Z C_2$-module with the involution induced by the conjugation map 
\[ \ol{\cdot} \colon (\Z[\zeta_m]/\pi_m)^\times \to (\Z[\zeta_m]/\pi_m)^\times, \quad \zeta_m \mapsto \zeta_m^{-1}.\]	
\end{definition}

If $p$ is prime, then $\Z[\zeta_p]/\pi_p \cong \F_p$. We can then see that $\Psi_p \colon \Z[\zeta_p]^\times \to \F_p^\times$ is surjective by considering the cyclotomic units $1+\zeta_p + \cdots + \zeta_p^{i-1} \in \Z[\zeta_p]^\times$ for $(i,p)=1$. In particular, $V_p=1$.
We now show how $D(\Z C_m)$ is related to $V_d$ for $d \mid m$.

\begin{lemma} \label{lemma:D-subnormal-series}
Let $m \ge 2$ be a square-free integer. Let $d_1,\dots, d_n$ be the distinct nontrivial positive divisors of $m$, ordered such that $d_{i+1}$ has at least as many prime factors as $d_i$ $($so $d_1$ is prime and $d_n=m)$.
Then there is a chain of $\Z C_2$-modules
\[1 = A_0 \le \cdots \le A_n = D(\Z C_m)\] 
such that $A_{i}/A_{i-1} \cong V_{d_{n-i+1}}$ for $1 \le i \le n$. 
\end{lemma}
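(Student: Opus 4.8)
The plan is to build the filtration inductively by exploiting the structure of $C(\Z C_m)$ via its reduction modulo the radical-type ideals $\pi_d$. The key input is the description of locally free class groups by conductor-type Milnor squares, which has already been invoked implicitly: for a square-free $m$, the ring $\Z C_m$ sits in a pullback square relating it to $\Z C_{m/p}$ and $\Z[\zeta_p] \otimes (\text{something})$. More precisely, I would use the standard conductor square
\[
\xymatrix{
\Z C_m \ar[r] \ar[d] & \Z[\zeta_m] \ar[d] \\
\Z C_{m/p} \ar[r] & \Z[\zeta_m]/\pi_p
}
\]
for a prime $p \mid m$ (this is a Milnor square since the right vertical map is surjective), and the associated Mayer--Vietoris sequence in algebraic $K$-theory à la \cite[Theorem 42.13]{CR87}. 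This gives an exact sequence
\[
K_1(\Z[\zeta_m]) \oplus K_1(\Z C_{m/p}) \to K_1(\Z[\zeta_m]/\pi_p) \to \wt K_0(\Z C_m) \to \wt K_0(\Z[\zeta_m]) \oplus \wt K_0(\Z C_{m/p}).
\]
Restricting to kernel groups: $D(\Z C_m)$ surjects onto $D(\Z C_{m/p})$ (this is a general fact about kernel groups under such squares, or can be read off from the sequence since $D(\Z[\zeta_m]) = 0$ as $\Z[\zeta_m]$ is maximal), and the kernel of $D(\Z C_m) \to D(\Z C_{m/p})$ is a quotient of $(\Z[\zeta_m]/\pi_p)^\times$ by the images of the two unit groups. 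One then identifies this kernel with $V_m$, or more precisely iterates: the cokernel of $K_1$-maps, after accounting for the image of $\Z[\zeta_m]^\times$ and the unit group of $\Z C_{m/p}$, is exactly $V_m = \coker(\Psi_m)$ when one unwinds the definitions (the contribution of $\Z C_{m/p}^\times$ is absorbed by passing to the subquotient, since $\pi_m = \pi_{m/p}\pi_p$ and the relevant units map compatibly).

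The induction is on the number of prime factors of $m$. The base case is $D(\Z C_p) = 0$ for $p$ prime (Rim's theorem, stated above), and indeed $V_p = 1$ as noted. For the inductive step, suppose the chain $1 = A_0' \le \cdots \le A_{n-1}' = D(\Z C_{m/p})$ has been built with successive quotients the $V_d$ for $d \mid (m/p)$, $d > 1$, in the appropriate order. Using the surjection $D(\Z C_m) \twoheadrightarrow D(\Z C_{m/p})$ with kernel $\cong V_m$, one pulls back the chain on $D(\Z C_{m/p})$ to a chain on $D(\Z C_m)$, and prepends the kernel $V_m$ as the bottom piece $A_1 = V_m$. This yields $1 = A_0 \le A_1 \le \cdots \le A_n = D(\Z C_m)$ with $A_1/A_0 \cong V_m$ and $A_i/A_{i-1} \cong A_{i-1}'/A_{i-2}'$ for $i \ge 2$, so the successive quotients run through all $V_d$ for $d \mid m$, $d > 1$; with the ordering convention ($d_{i+1}$ has at least as many prime factors as $d_i$, so $d_n = m$), the quotient $A_i/A_{i-1}$ is $V_{d_{n-i+1}}$, matching the statement. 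A subtle point is that the choice of prime $p \mid m$ used to peel off $V_m$ must be shown not to matter, or alternatively one simply fixes a choice and checks the resulting list of quotients is independent of it up to isomorphism as $\Z C_2$-modules — which it is, since the multiset $\{V_d : d \mid m, d > 1\}$ doesn't depend on $p$.

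The $\Z C_2$-equivariance is the part requiring genuine care. The Milnor square above is equivariant for the conjugation involution (all four rings carry compatible involutions $g \mapsto g^{-1}$, restricting to conjugation on $\Z[\zeta_m]$ and descending to $\Z[\zeta_m]/\pi_p$), so the Mayer--Vietoris sequence is a sequence of $\Z C_2$-modules; this makes each $A_i \le D(\Z C_m)$ a $\Z C_2$-submodule and each quotient a $\Z C_2$-module. Crucially, by \cref{prop:C(ZC_m)-via-ideals} and \cref{prop:ideles}, the standard involution on $C(\Z C_m) \cong \wt K_0(\Z C_m)$ is exactly the one induced by conjugation on the idèles, which is compatible with the conductor-square description; and the involution induced on the kernel piece $(\Z[\zeta_m]/\pi_p)^\times / (\text{units})$ is the conjugation involution defining $V_m$ in \cref{def:V_m}. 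So one must verify (i) the connecting map $K_1(\Z[\zeta_m]/\pi_p) \to \wt K_0(\Z C_m)$ is $\Z C_2$-equivariant — this follows from naturality of the Mayer--Vietoris boundary map with respect to the involution automorphisms of the square — and (ii) the identification of the kernel of $D(\Z C_m) \to D(\Z C_{m/p})$ with $V_m$ as defined respects involutions, which is a matter of tracking how $\pi_m$ versus $\pi_p$ and the various unit groups sit inside each other equivariantly. I expect step (ii), carefully matching the subquotient of $K_1(\Z[\zeta_m]/\pi_p)$ appearing in Mayer--Vietoris with the specific cokernel $\coker(\Psi_m)$ of \cref{def:V_m} (not $\coker(\Psi_{?})$ for a smaller index), to be the main obstacle — the bookkeeping of which units map where, and ensuring the iterated construction produces $V_d$ for each $d \mid m$ exactly once with the right involution, is where the real work lies.
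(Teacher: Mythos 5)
Your inductive strategy breaks down at the very first step, for two reasons. First, the square you start from is not cartesian: for $m$ square-free, $p\mid m$ and $k=m/p>1$, the fibre product of $\Z[\zeta_m]$ and $\Z C_{k}$ over $\Z[\zeta_m]/\pi_p\cong\F_p[\zeta_{k}]$ has $\Z$-rank $\varphi(m)+k<m$, so it cannot be $\Z C_m$; the genuine ``peel off one prime'' Milnor square has top-right corner $\Z[x]/\Phi_p(x^{k})\cong\Z[\zeta_p]C_{k}$ (an order in $\prod_{d\mid m,\,p\mid d}\Q(\zeta_d)$), not $\Z[\zeta_m]$, and bottom-right corner $\F_pC_{k}$. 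That replacement corner is itself a non-maximal order with a nontrivial kernel group, and this is exactly where your induction loses pieces. Second, the identification of $\ker\bigl(D(\Z C_m)\to D(\Z C_{m/p})\bigr)$ with $V_m$ is false in general: by \cite[Theorem 50.6]{CR87} one has $|D(\Z C_m)|=\prod_{d\mid m}|V_d|$, so for $m=pqr$ any surjection $D(\Z C_m)\to D(\Z C_{qr})$ has kernel of order $|V_{pq}|\,|V_{pr}|\,|V_{pqr}|$, not $|V_m|$. Your chain would therefore have too few layers ($2^{s-1}$ instead of $2^{s}-1$ for $s$ prime factors) and would omit every $V_d$ with $p\mid d$, $d\ne m$; the construction only happens to be numerically consistent when $m$ has at most two prime factors, or for prime powers as in Kervaire--Murthy (\cref{prop:KM}), which is presumably what suggested it. Note also that $V_m$ is by definition a cokernel into $(\Z[\zeta_m]/\pi_m)^\times\cong\bigoplus_{q\mid m}(\Z[\zeta_m]/\pi_q)^\times$ (\cref{def:V_m}), so it cannot in general be produced as a subquotient of the single factor $(\Z[\zeta_m]/\pi_p)^\times$, which is what your step (ii) would require.

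The paper's proof does not induct on primes at all. It uses one conductor square, comparing $\Z C_m$ with the full maximal order $\Gamma_m=\prod_{d\mid m}\Z[\zeta_d]$ over $\Gamma_m/m\Gamma_m$; the associated exact sequence, after the simplifications of \cite[Lemmas 50.7--50.8]{CR87}, presents $D(\Z C_m)$ as the cokernel of a single map $\gamma\colon\bigoplus_{d\mid m}\Z[\zeta_d]^\times\to\bigoplus_{d\mid m}(\Z[\zeta_d]/\pi_d)^\times$, and the filtration $A_i$ is obtained by restricting $\gamma$ to the summands indexed by divisors with many prime factors, using the explicit triangular form of $\gamma$ recorded on \cite[p.~250]{CR87}; the involution is identified throughout via the id\`elic description of the class group, as in \cref{prop:C(ZC_m)-via-ideals}. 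If you wanted to salvage an induction you would have to run it with the corrected square and simultaneously keep track of $D(\Z[\zeta_p]C_{m/p})$, which in effect forces you back to the divisor-by-divisor analysis the paper carries out directly.
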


It is proven in \cite[Theorem 50.6]{CR87} that $|D(\Z C_m)| = \prod_{d \mid n} |V_d|$. Our proof will involve following the argument given there, and extending it to determine the group structure and the involution.

\begin{proof}
Firstly, $\Gamma = \bigoplus_{d \mid m} \Z[\zeta_d]$ is the unique maximal order in $\Q C_m$ which contains $\Z C_m$. This implies that there is a pullback square:
\[
\begin{tikzcd}
\Z C_m \ar[r,"i_2"] \ar[d,"i_1"] & \Gamma \ar[d,"j_2"] \\
\Z C_m/m\Gamma \ar[r,"j_1"] & \Gamma/m\Gamma
\end{tikzcd}
\]
where the inclusion $m \Gamma \subseteq \Z C_m$ follows from \cite[Theorem 41.1]{Re75}.

By \cite[p.~246]{CR87}  this induces an exact sequence
\[(\Z C_m/ m \Gamma)^\times \oplus \Gamma^\times \xrightarrow[]{(j_1,j_2)} (\Gamma/m\Gamma)^\times \xrightarrow[]{\partial} D(\Z C_m) \to 0\] 
where $\partial \colon u \mapsto M(u)$ where 
\[ M(u) = \{(x,y) \in (\Z C_m/m\Gamma) \times \Gamma \mid j_1(x)= j_2(y)u \in \Gamma/m\Gamma \} \] 
is the $\Z C_m$-module with action $\lambda \cdot (x,y) = (i_1(\lambda)x,i_2(\lambda)y)$ for $\lambda \in \Z C_m$. 
We now claim that the conjugation map on $(\Gamma/m\Gamma)^\times$ induces the involution on $D(\Z C_m)$. Firstly, by \cref{prop:ideles}, we have that the involution on $D(\Z C_m)$ is induced by the natural involution $x \mapsto x^{-1}$ on the id\`{e}le group $J(\Q C_m) \subseteq \prod_p \Q_p C_m$. For all primes $p$, we have $\Z_p C_m \subseteq \Gamma_p \oplus (\Z C_m/m\Gamma)_p \subseteq \Q_p C_m$.
If $M(u) = \Z C_m \alpha$ for $\alpha = (\alpha_p) \in J(\Q C_m)$, then \cite[Exercise 53.1]{CR87} implies that
\[ \alpha_p = \begin{cases}
 (1,1) \in \Gamma_p \oplus (\Z C_m/m\Gamma)_p, & \text{if $(p,m)=1$} \\
 	(u_p,1) \in \Gamma_p \oplus (\Z C_m/m\Gamma)_p, & \text{if $(p,m)\ne 1$}, 
 \end{cases}
 \]
where $u_p \in \Gamma_p$ is any element such that $j_2(u_p) = [u] \in (\Gamma/m\Gamma)_p$.
By the same argument as in the proof of \cref{prop:C(ZC_m)-via-ideals}, the involution on $J(\Q C_m)$ induces an involution on $J(\Q(\zeta_d))$ which coincides with the involution induced by conjugation. In particular, the involution maps $\alpha_p \mapsto (1,1)$ or $(\ol{u}_p,1)$ where $\ol{\,\cdot\,}\colon \Gamma_p \to \Gamma_p$ is induced by conjugation on $\Gamma$. In particular, this coincides with the involution induced by conjugation on $(\Gamma/m\Gamma)^\times$.

By \cite[Lemma 50.7]{CR87}, $j_1$ can be replaced by the map  $\alpha \colon (\Z C_m /m\Z C_m)^\times \to (\Gamma/m\Gamma)^\times$. By \cite[Lemma 50.8]{CR87}, $\coker(\alpha) \cong \bigoplus_{d \mid m} (\Z[\zeta_d]/\pi_d)^\times$ and it follows from the proof that the conjugation map on $(\Gamma/m\Gamma)^\times$ induces conjugation on $(\Z[\zeta_d]/\pi_d)^\times$ for each $d \mid m$.
If $\gamma \colon \Gamma^\times \to \coker(\alpha)$ is the map induced by $j_2$, then we obtain an exact sequence
\[ \bigoplus_{d \mid m} \Z[\zeta_d]^\times \xrightarrow[]{\gamma} \bigoplus_{d \mid m} (\Z[\zeta_d]/\pi_d)^\times \xrightarrow[]{\overline{\partial}} D(\Z C_m) \to 0. \]

Let $d_1, \dots, d_n$ be the ordered sequence of divisors of $m$.
By \cite[p.~250]{CR87} we have a map
\[ \gamma \mid_{\Z[\zeta_{d_i}]^\times} \colon \Z[\zeta_{d_i}]^\times  \to (\Z[\zeta_{d_i}]/\pi_{d_i})^\times \oplus \bigoplus_{p \mid \frac{m}{d_i}} (\Z[\zeta_{p{d_i}}]/\pi_p)^\times \subseteq \bigoplus_{j \ge i} (\Z[\zeta_{d_j}]/\pi_{d_j})^\times \]
given by $x \mapsto (x,x^{-1}, \dots, x^{-1})$,
where the last inclusion comes from the fact that $\Z[\zeta_d]/\pi_p \subseteq \bigoplus_{q \mid d} \Z[\zeta_d]/\pi_q \cong \Z[\zeta_d]/\pi_d$ for primes $q$ since the $\pi_q$ are pairwise coprime in $\Z[\zeta_d]$. If $x \in \Z[\zeta_k]^\times$, then $x^{-1} \in (\Z[\zeta_k]/\pi_p)^\times \subseteq (\Z[\zeta_{pk}]/\pi_p)^\times$.

For $1 \le i \le n$, this shows that $\gamma$ restricts to a map $\gamma_i \colon \bigoplus_{j \ge i} \Z[\zeta_{d_j}]^\times \to \bigoplus_{j \ge i} (\Z[\zeta_{d_j}]/\pi_j)^\times$ where $\gamma=\gamma_1$.
By a mild generalisation of \cite[Exercise 50.2]{CR87}, this implies that there is an exact sequence induced by the projection map:
\[ 1 \to \coker(\gamma_{i+1}' \colon W \to \bigoplus_{j \ge i+1} (\Z[\zeta_{d_j}]/\pi_j)^\times   )      \to \coker(\gamma_i) \to \coker(\Psi_{d_i}) \to 1,\]
where $W = \{x \in \Z[\zeta_{d_i}]^\times \mid x \equiv 1 \mod \pi_{d_i}\} \oplus \bigoplus_{j \ge i+1} \Z[\zeta_{d_j}]^\times$. By \cite[p.~252]{CR87}, we have $\IM(\gamma_{i+1}')=\IM(\gamma_{i+1})$ and so $\coker(\gamma_{i+1}') = \coker(\gamma_{i+1})$. Let $A_i =\coker(\gamma_{n-i+1})$ for $1 \le i \le n$ and $A_0=1$. Then we have that $1 = A_0 \le \cdots \le A_n = \coker(\gamma) = D(\Z C_m)$ such that there are isomorphisms $A_{i}/A_{i-1} \cong \coker(\Psi_{d_{n-i+1}})$ as abelian groups. 

Since the involution on $D(\Z C_m)$ is induced by conjugation on $(\Gamma/m\Gamma)^\times$, it follows that it restricts to $A_i$ where it acts via conjugation on the $\Z[\zeta_d]$. 
Hence, with respect to the involution, $A_i \le D(\Z C_m)$ is a $\Z C_2$-module and the chain $A_0 \le \cdots \le A_n$ is a chain of $\Z C_2$-modules.
Under the abelian group isomorphism 
\[ A_i/A_{i-1} \cong \coker(\Psi_{d_{n-i+1}} \colon \Z[\zeta_{d_{n-i+1}}]^\times \to (\Z[\zeta_{d_{n-i+1}}]/\pi_{d_{n-i+1}})^\times),\] 
the involution on $\coker(\Psi_{d_{n-i+1}})$ induced by the involution on $A_i$ coincides with the involution induced by conjugation on $(\Z[\zeta_{d_{n-i+1}}]/\pi_{d_{n-i+1}})^\times$. Hence there is an isomorphism of $\Z C_2$-modules $A_i/A_{i-1} \cong V_{n-i+1}$, as required. 
\end{proof}

Let $m \ge 2$ be a square-free integer.
We now give a method for analysing the involution on $V_m = \coker(\Psi_m)$. For a field $\F$ and $\alpha_1, \dots, \alpha_n \in \C$, we write $\F[\alpha_1, \dots, \alpha_n]$ to denote $\F \otimes_{\Z} \Z[\alpha_1, \dots, \alpha_n]$. Note that this may not be a field and so need not coincide with $\F(\alpha_1, \dots, \alpha_n)$.

Firstly, as described above, the quotient map induces a group homomorphism
\[ \Psi_m \colon \Z[\zeta_m]^\times \to \bigoplus_{p \mid m} (\Z[\zeta_m]/\pi_p)^\times, \]
where $p$ ranges over the prime factors of $m$. If $p \mid m$ then, since $m$ is square-free, $m$ is a product of coprime integers $m/p$ and $p$, and so we can take $\zeta_m = \zeta_p \cdot \zeta_{m/p}$. This implies that $\Z[\zeta_m] = \Z[\zeta_p,\zeta_{m/p}]$ and so 
\[ \Z[\zeta_m]/\pi_p \cong \Z[\zeta_p,\zeta_{m/p}]/\pi_p \cong \F_p[\zeta_{m/p}].\]
Let $m = p_1 \cdots p_n$ for distinct primes $p_i$. Then $\Psi_m$ can be written as
\[ \Psi_m \colon \Z[\zeta_m]^\times \to \bigoplus_{i=1}^n \F_{p_i}[\zeta_{m/p_i}]^\times, \]
where $\zeta_m = \prod_{j} \zeta_{p_j}$, $\zeta_{m/p_i} = \prod_{j \ne i} \zeta_{p_j}$ and the map $\Z[\zeta_m]^\times \to \F_{p_i}[\zeta_{m/p_i}]^\times$ is the map sending $\zeta_{p_i} \mapsto 1$. This motivates the following definition.

\begin{definition} \label{def:wt-V_m}
For a square-free integer $m \ge 2$, define
\[ \wt V_m \cong  \coker\left( \Psi_m^{+} \colon \Z[\zeta_m]^\times \to \bigoplus_{i=1}^n \smfrac{\F_{p_i}[\zeta_{m/p_i}]^\times}{\F_{p_i}[\lambda_{m/p_i}]^\times} \right), \]
where $\F_{p_i}[\lambda_{m/p_i}]^\times \le \F_{p_i}[\zeta_{m/p_i}]^\times$ is induced by the inclusion $\Z[\lambda_{m/p_i}] \le \Z[\zeta_{m/p_i}]$ and $\Psi_m^{+}$ is the composition of $\Psi_m$ with the quotient maps $\F_{p_i}[\zeta_{m/p_i}]^\times \twoheadrightarrow \F_{p_i}[\zeta_{m/p_i}]^\times/\F_{p_i}[\lambda_{m/p_i}]^\times$.

We view this as a $\Z C_2$-module with the involution induced by the conjugation map 
\[ \ol{\cdot} \colon \F_{p_i}[\zeta_{m/p_i}]^\times \to \F_{p_i}[\zeta_{m/p_i}]^\times, \quad \zeta_{m/p_i} \mapsto \zeta_{m/p_i}^{-1}.\]	
\end{definition}

\begin{lemma} \label{lemma:negation}
Let $m \ge 2$ be a square-free integer.
\begin{clist}{(i)}
\item\label{item:lemma-negation-i}
There is a surjective $\Z C_2$-module homomorphism $V_m \twoheadrightarrow \wt V_m$.
\item\label{item:lemma-negation-ii}
	$\wt V_m = \wt V_m^{-}$, i.e.\ if $x \in \wt V_m$, then $\ol{x} = -x \in \wt V_m$.
\end{clist}
\end{lemma}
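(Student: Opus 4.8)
\textbf{Proof plan for \cref{lemma:negation}.}

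The plan is to prove both parts together by unwinding the definitions of $V_m$ and $\wt V_m$ and tracking the conjugation action. For part \eqref{item:lemma-negation-i}: recall from the discussion preceding \cref{def:wt-V_m} that $V_m = \coker(\Psi_m \colon \Z[\zeta_m]^\times \to \bigoplus_{i=1}^n \F_{p_i}[\zeta_{m/p_i}]^\times)$, where $m = p_1 \cdots p_n$, and that $\wt V_m = \coker(\Psi_m^{+})$ where $\Psi_m^{+}$ is $\Psi_m$ postcomposed with the projections $q_i \colon \F_{p_i}[\zeta_{m/p_i}]^\times \twoheadrightarrow \F_{p_i}[\zeta_{m/p_i}]^\times / \F_{p_i}[\lambda_{m/p_i}]^\times$. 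The surjection $\bigoplus_i q_i$ carries $\image(\Psi_m)$ onto $\image(\Psi_m^{+})$ by construction, so it descends to a surjection on cokernels $V_m \twoheadrightarrow \wt V_m$. This map is a $\Z C_2$-module homomorphism because each $q_i$ is equivariant for the conjugation involution $\zeta_{m/p_i} \mapsto \zeta_{m/p_i}^{-1}$: indeed conjugation fixes $\Z[\lambda_{m/p_i}] = \Z[\zeta_{m/p_i} + \zeta_{m/p_i}^{-1}]$ and hence preserves the subgroup $\F_{p_i}[\lambda_{m/p_i}]^\times$, so it is well-defined on the quotient and commutes with $q_i$.

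For part \eqref{item:lemma-negation-ii}, the point is that in each quotient group $\F_{p_i}[\zeta_{m/p_i}]^\times / \F_{p_i}[\lambda_{m/p_i}]^\times$, every element $x$ satisfies $x \cdot \ol{x} \in \F_{p_i}[\lambda_{m/p_i}]^\times$, hence $x \cdot \ol{x} = 1$ in the quotient, i.e.\ $\ol{x} = x^{-1} = -x$ in additive notation. This is because for any $u \in \F_{p_i}[\zeta_{m/p_i}]^\times$ the norm element $u \cdot \bar u = u \cdot \sigma(u)$ is fixed by conjugation $\sigma$, and an element of $\F_{p_i}[\zeta_{m/p_i}]$ fixed by $\sigma$ lies in the fixed subring $\F_{p_i}[\zeta_{m/p_i}]^{\sigma}$. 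The step I expect to need a little care is identifying this fixed subring with $\F_{p_i}[\lambda_{m/p_i}]$: one reduces $\Z[\zeta_{m/p_i}]$ mod $p_i$ and checks that reduction commutes with $\sigma$ and sends the fixed subring $\Z[\lambda_{m/p_i}]$ onto the fixed subring of $\F_{p_i}[\zeta_{m/p_i}]$ — this uses that $\Z[\zeta_{m/p_i}]$ is free as a $\Z[\lambda_{m/p_i}]$-module of rank $2$ (when $m/p_i > 2$) with basis $\{1, \zeta_{m/p_i}\}$, so the fixed-ring computation is stable under base change to $\F_{p_i}$. (When $m/p_i \le 2$ the field is already real and the quotient is trivial, so the statement holds vacuously.) Since a unit of $\F_{p_i}[\zeta_{m/p_i}]$ lying in the fixed subring is a unit of that subring, we get $u \bar u \in \F_{p_i}[\lambda_{m/p_i}]^\times$, as needed.

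Assembling: in $\wt V_m = \coker(\Psi_m^+)$, for any class $[y]$ with $y = (y_i)_i \in \bigoplus_i \F_{p_i}[\zeta_{m/p_i}]^\times/\F_{p_i}[\lambda_{m/p_i}]^\times$, the above shows $y_i \cdot \ol{y_i} = 1$ coordinatewise, so $y \cdot \ol{y} = 1$ and therefore $[y] + \ol{[y]} = 0$ in $\wt V_m$. This gives $\wt V_m = \wt V_m^{-}$. The main obstacle is purely the bookkeeping in part \eqref{item:lemma-negation-ii} — correctly identifying the conjugation-fixed subring of $\F_{p_i}[\zeta_{m/p_i}]$ after reduction mod $p_i$ — since $\F_{p_i}[\zeta_{m/p_i}]$ need not be a field and one cannot simply invoke Galois theory; everything else is formal manipulation of cokernels and equivariance.
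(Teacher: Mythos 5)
Your route coincides with the paper's for both parts. For \eqref{item:lemma-negation-i}, the paper also obtains the surjection $V_m \twoheadrightarrow \wt V_m$ by observing that the componentwise quotients by $\F_{p_i}[\lambda_{m/p_i}]^\times$ carry $\im(\Psi_m)$ onto $\im(\Psi_m^{+})$ (it phrases this as an iterated-cokernel identity), and for \eqref{item:lemma-negation-ii} the paper's proof rests on exactly the claim you isolate: for every $\alpha \in \F_{p_i}[\zeta_{m/p_i}]^\times$ one has $\alpha\ol{\alpha} \in \F_{p_i}[\lambda_{m/p_i}]^\times$, so every class in $\wt V_m$ is inverted by conjugation.

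The one genuine gap is in your justification of that claim. You reduce it to the statement that the conjugation-fixed subring of $\F_{p_i}[\zeta_{m/p_i}]$ equals $\F_{p_i}[\lambda_{m/p_i}]$, and you argue this is ``stable under base change'' because $\Z[\zeta_{m/p_i}]$ is free of rank $2$ over $\Z[\lambda_{m/p_i}]$ with basis $\{1,\zeta_{m/p_i}\}$. Taking invariants does not commute with reduction mod $p$ in general, even for free modules: $\Z$ with the negation involution has trivial invariants, while its reduction mod $2$ does not. Concretely, writing an element as $a+b\zeta$ with $a,b \in \F_{p_i}[\lambda_{m/p_i}]$ and using $\ol{\zeta} = \lambda - \zeta$, the element is fixed if and only if $b(\zeta-\zeta^{-1})=0$; so what you actually need is that $\zeta_{m/p_i}-\zeta_{m/p_i}^{-1}$ is a non-zero-divisor (in fact a unit) in $\F_{p_i}[\zeta_{m/p_i}]$. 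This is true here, but it genuinely uses that $m$ is square-free: then $p_i \nmid m/p_i$, and for $n = m/p_i > 2$ every prime dividing $N_{\Q(\zeta_n)/\Q}(\zeta_n^2-1)$ divides $n$, so $\zeta_n-\zeta_n^{-1}$ becomes a unit mod $p_i$. With that supplement (and your observation that the cases $m/p_i \le 2$ are trivial) your argument is complete; note also that to pass from $\alpha\ol{\alpha} \in \F_{p_i}[\lambda_{m/p_i}]$ to membership in $\F_{p_i}[\lambda_{m/p_i}]^\times$ one should remark that the inverse of a fixed unit is again fixed. The paper avoids the fixed-subring identification altogether: it shows directly that a conjugation-fixed element of $\F_p[\zeta_n]$ lies in $\F_p[\lambda_n]$ by expanding in powers of $\zeta_n$ and symmetrising coefficients, which is precisely the containment needed.
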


\begin{proof}
\eqref{item:lemma-negation-i} For $1 \le i \le n$, we let $f_i = \id_{\F_{p_i}} \otimes_{\Z} \iota_i$, where $\iota_i \colon \Z[\lambda_{m/p_i}] \hookrightarrow \Z[\zeta_{m/p_i}]$ is the natural inclusion map.
Then the surjective homomorphism $V_m \twoheadrightarrow \wt V_m$ is induced by noting that
\begin{align*}  \textstyle\wt V_m &\cong \coker( (\Psi_m, f_1,\dots,f_n) \colon \Z[\zeta_m]^\times \oplus \bigoplus_{i=1}^n \F_{p_i}[\lambda_{m/p_i}]^\times \to \bigoplus_{i=1}^n \F_{p_i}[\zeta_{m/p_i}]^\times) \\
\textstyle & \cong \coker( (f_1,\dots,f_n) \colon \bigoplus_{i=1}^n \F_{p_i}[\lambda_{m/p_i}]^\times \to V_m).
\end{align*}

\eqref{item:lemma-negation-ii}
We first claim that, if $p$ is prime and $n$ is an integer, then $\alpha \in \F_p[\zeta_n]$ implies $\alpha \cdot \overline{\alpha} \in \F_p[\lambda_n]$. Let $\beta = \alpha \cdot \overline{\alpha}$. Since $\Z[\zeta_n]$ has integral basis $\{\zeta_n^i\}_{i=0}^{n-1}$, we can write $\beta = \sum_{i=0}^{n-1} a_i \otimes \zeta_n^i$ for $a_i \in \F_p$. Note that $\overline{\beta}=\beta$ which implies that 
\[ \textstyle \sum_{i=0}^{n-1} a_i \otimes \zeta_n^i = \sum_{i=0}^{n-1} a_{n-i} \otimes \zeta_n^i \in \F_p[\zeta_n].\] 
Since $\F_p[\zeta_n] = \F_p \otimes_{\Z} \Z[\zeta_n] \cong \F_p^n$ as an abelian group, we get that $a_n = a_{n-i} \in \F_p$ for all $i$ and so 
\[ \textstyle \beta = a_0 + \sum_{i=1}^{\lfloor \frac{n-1}{2} \rfloor} a_i \otimes (\zeta_n^i + \zeta_n^{-i}) + \varepsilon \in \F_p[\lambda_n],\]  
where $\varepsilon = -a_{n/2}$ for $n$ even and $\varepsilon = 0$ for $n$ odd.

Finally, let $f \colon V_m \to \wt V_m$ be the map described above and let $\alpha = [(\alpha_1, \dots \alpha_n)]$, where $\alpha_i \in \F_{p_i}[\zeta_{d/p_i}]^\times$. We have shown that $\alpha_i \cdot \overline{\alpha}_i \in \F_{p_i}[\lambda_{d/p_i}]^\times$ for all $i$ and so 
\[ f(\alpha) \cdot f(\overline{\alpha}) = [(\alpha_1\cdot \overline{\alpha}_1, \dots, \alpha_n \cdot \overline{\alpha}_n)] = [(1, \dots, 1)] = 1\] 
and $f(\overline{\alpha}) = f(\alpha)^{-1}$.	Since $f$ is surjective and induces the involution on $\wt V_m$, this implies that $\ol{x} = -x$ for all $x \in \wt V_m$,  where we now write the inverse as $-x$ rather than $x^{-1}$ since $\wt V_m$ is an abelian group.
\end{proof}

We now deduce the following, which is the main result of this section.
This is analogous to \cref{prop:inv-ideal-class-1} which applied in the case of ideal class groups.
Recall that, for an integer $m$, we let $\odd(m)$ denote the unique odd integer $r$ such that $m = 2^k r$ for some $k$.

\begin{proposition} \label{prop:D-conjugation}
Let $m \ge 2$ be a square-free integer. Let $d_1,\dots, d_n$ be the distinct nontrivial positive divisors of $m$, ordered such that $d_{i+1}$ has at least as many prime factors as $d_i$, and let $A_i$ be the $\Z C_2$-modules defined in \cref{lemma:D-subnormal-series}. Then there is a chain of abelian subgroups
\[ 1 = \{x-\ol{x} \mid x \in A_0\} \le \cdots \le \{x-\ol{x} \mid x \in A_n\} = \{x-\ol{x} \mid x \in D(\Z C_m)\} \]
and, for each $1 \le i \le n$, there are surjective group homomorphisms 
\[ \{x-\ol{x} \mid x \in A_i\}/\{x-\ol{x} \mid x \in A_{i-1}\} \twoheadrightarrow \{x-\ol{x} \mid x \in V_{d_{n-i+1}}\} \twoheadrightarrow 2 \cdot \wt V_{d_{n-i+1}}.\]
In particular, $\prod_{d \mid m} \odd(|\wt V_d|)$ divides $|\{x-\ol{x} \mid x \in D(\Z C_m)\}|$.
\end{proposition}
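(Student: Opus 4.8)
The plan is to feed the $\Z C_2$-module chain $1 = A_0 \le \cdots \le A_n = D(\Z C_m)$ of \cref{lemma:D-subnormal-series} through \cref{lemma:useful} and then compose with the surjections of \cref{lemma:negation}. First I would record the elementary fact that for any $\Z C_2$-module $A$ the assignment $x \mapsto x - \ol x$ is a group homomorphism $A \to A$, so its image $\{x - \ol x \mid x \in A\}$ is a subgroup, and that if $A' \le A$ is a $\Z C_2$-submodule then $\{x - \ol x \mid x \in A'\}$ is literally a subgroup of $\{x - \ol x \mid x \in A\}$. Applying this along $A_0 \le \cdots \le A_n$ gives at once the asserted chain of abelian subgroups of $\{x - \ol x \mid x \in D(\Z C_m)\}$, telescoping up to $\{x-\ol x\mid x\in D(\Z C_m)\}$ itself.

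Next, for each $1 \le i \le n$ I would apply \cref{lemma:useful}~\eqref{item:lemma-useful-ii} with $n$ odd (so $(-1)^n = -1$) to the short exact sequence of $\Z C_2$-modules $0 \to A_{i-1} \to A_i \to A_i/A_{i-1} \to 0$; this produces a surjection $\{x - \ol x \mid x \in A_i\} \twoheadrightarrow \{x - \ol x \mid x \in A_i/A_{i-1}\}$. Since the projection $A_i \to A_i/A_{i-1}$ kills $A_{i-1}$, this surjection annihilates the subgroup $\{x - \ol x \mid x \in A_{i-1}\}$ and hence descends to a surjection $\{x - \ol x \mid x \in A_i\}/\{x - \ol x \mid x \in A_{i-1}\} \twoheadrightarrow \{x - \ol x \mid x \in A_i/A_{i-1}\}$. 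Using the $\Z C_2$-module isomorphism $A_i/A_{i-1} \cong V_{d_{n-i+1}}$ of \cref{lemma:D-subnormal-series} and then the surjective $\Z C_2$-module homomorphism $V_{d_{n-i+1}} \twoheadrightarrow \wt V_{d_{n-i+1}}$ of \cref{lemma:negation}~\eqref{item:lemma-negation-i}, together with the observation that any surjective $\Z C_2$-module map $f$ induces a surjection on the subgroups $\{x-\ol x\}$ because $f(x - \ol x) = f(x) - \ol{f(x)}$, I obtain the composite surjections onto $\{x - \ol x \mid x \in V_{d_{n-i+1}}\}$ and then onto $\{x - \ol x \mid x \in \wt V_{d_{n-i+1}}\}$. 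Finally, by \cref{lemma:negation}~\eqref{item:lemma-negation-ii} every $x \in \wt V_{d_{n-i+1}}$ satisfies $\ol x = -x$, so $\{x - \ol x \mid x \in \wt V_{d_{n-i+1}}\} = \{2x \mid x \in \wt V_{d_{n-i+1}}\} = 2 \cdot \wt V_{d_{n-i+1}}$, giving the last arrow in the chain of surjections.

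For the divisibility assertion I would telescope: $|\{x - \ol x \mid x \in D(\Z C_m)\}| = \prod_{i=1}^n |\{x - \ol x \mid x \in A_i\}/\{x - \ol x \mid x \in A_{i-1}\}|$, and each factor is divisible by $|2 \cdot \wt V_{d_{n-i+1}}|$ by the surjections just constructed. Writing the finite abelian group $\wt V_d$ as $P \oplus Q$ with $|P|$ a $2$-power and $|Q| = \odd(|\wt V_d|)$ odd, one has $2 \cdot \wt V_d = 2P \oplus Q$, whose order is divisible by $\odd(|\wt V_d|)$; hence $|\{x - \ol x \mid x \in D(\Z C_m)\}|$ is divisible by $\prod_{i=1}^n \odd(|\wt V_{d_{n-i+1}}|) = \prod_{d \mid m} \odd(|\wt V_d|)$, the $d = 1$ factor being trivial since $\pi_1 = 1$ forces $\wt V_1$ to vanish. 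The argument is essentially bookkeeping once \cref{lemma:D-subnormal-series,lemma:useful,lemma:negation} are in hand; the only points that need a moment's care are checking that the surjection onto $\{x - \ol x \mid x \in A_i/A_{i-1}\}$ genuinely descends to the quotient by $\{x - \ol x \mid x \in A_{i-1}\}$, and the elementary odd-part splitting in the last step.
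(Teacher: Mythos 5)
Your proposal is correct and follows essentially the same route as the paper's proof: the chain comes from the $\Z C_2$-module filtration of \cref{lemma:D-subnormal-series}, the surjections onto the quotients are obtained by applying \cref{lemma:useful}~\eqref{item:lemma-useful-ii} to the short exact sequences $0 \to A_{i-1} \to A_i \to V_{d_{n-i+1}} \to 0$ and noting the composite kills $\{x-\ol{x} \mid x \in A_{i-1}\}$, and the final step combines both parts of \cref{lemma:negation} with the telescoping product and the odd-part splitting exactly as in the paper. The only (harmless) cosmetic differences are that you verify the subgroup chain directly rather than via the injective half of \cref{lemma:useful}~\eqref{item:lemma-useful-ii}, and you pass through $A_i/A_{i-1}$ before invoking the isomorphism with $V_{d_{n-i+1}}$.
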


\begin{proof}
The chain of abelian subgroups follows direct from \cref{lemma:D-subnormal-series} and \cref{lemma:useful}~\eqref{item:lemma-useful-ii}. Let $1 \le i \le n$. By \cref{lemma:D-subnormal-series}, there is a short exact sequence of $\Z C_2$-modules
\[ 0 \to A_{i-1} \to A_i \to V_{d_{n-i+1}} \to 0. \]
By \cref{lemma:useful}~\eqref{item:lemma-useful-ii}, there are injective and surjective maps:
\[ \{x-\ol{x} \mid x \in A_{i-1}\}  \hookrightarrow \{x-\ol{x} \mid x \in A_i\} \twoheadrightarrow \{x-\ol{x} \mid x \in V_{d_{n-i+1}}\}.\]
Since the composition is necessarily the zero map, this gives the first surjective homomorphism. The second is a direct consequence of both parts of \cref{lemma:negation} as well as \cref{lemma:useful}~\eqref{item:lemma-useful-ii} again.

For the last part, the two statements we have proved so far imply that
\[ |\{x-\ol{x} \mid x \in D(\Z C_m)\}| = \prod_{i=1}^n |\{x-\ol{x} \mid x \in A_i\}/\{x-\ol{x} \mid x \in A_{i-1}\}| \]
and $|2 \cdot \wt V_{d_{n-i+1}}|$ divides $|\{x-\ol{x} \mid x \in A_i\}/\{x-\ol{x} \mid x \in A_{i-1}\}|$ for all $1 \le i \le n$. It now suffices to observe that $\odd(|\wt V_{d_{n-i+1}}|)$ divides $|2 \cdot \wt V_{d_{n-i+1}}|$ by the same argument as in the proof of \cref{prop:inv-ideal-class-1}.
\end{proof}

\begin{remark}
To obtain a complete analogue of \cref{prop:inv-ideal-class-1}, it would be desirable to also obtain bounds on $|\{x \in D(\Z C_m) \mid x=-\ol{x}\}|$. However, unlike \cref{prop:inv-ideal-class-1}, the bounds we obtain in \cref{prop:D-conjugation} are obtained by subquotients rather than just subgroups. We therefore cannot  apply \cref{lemma:useful}~\eqref{item:lemma-useful-i} since the final map in the sequence need not be surjective in general. As we shall see in \cref{s:proofs-algebra}, it is possible to circumvent the need for such bounds in the proof of \cref{theorem:main12-red}.
\end{remark}

We now conclude this section with a result which holds in the case that $m$ is not square-free. Firstly an analogue of \cref{lemma:D-subnormal-series} holds in the case that $m$ is a prime power, by Kervaire--Murthy~\cite[Theorem 1.2]{KM77}. For brevity, we do not state this result here. We instead make do with the following consequence of their result in the case that $p=2$ which is used in the proof of \cref{theorem:main3-red}. Note that $V_{2^{n+1}}$ is directly analogous to the $\Z C_2$-modules $V_m$ defined in \cref{def:V_m} when $m$ is square-free.

\begin{proposition} \label{prop:KM}
Let $n \ge 1$. Then there exists an exact sequence of $\Z C_2$-modules
\[ 0 \to V_{2^{n+1}}  \to D(\Z C_{2^{n+1}}) \to D(\Z C_{2^n}) \to 0, \]
where $V_{2^{n+1}} = \bigoplus_{i=1}^{n-2} (\Z/2^i)^{2^{n-i-2}}$ with the involution acting by negation.
\end{proposition}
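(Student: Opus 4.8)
The plan is to deduce \cref{prop:KM} from Kervaire--Murthy's computation of $D(\Z C_{2^{n+1}})$ \cite[Theorem~1.2]{KM77} and then to promote their statement, which is one of abelian groups, to one of $\Z C_2$-modules. First I would set up the conductor (Milnor) square realising $\Z C_{2^{n+1}}$ as the pullback of $\Z C_{2^n}$ and $\Z[\zeta_{2^{n+1}}]$ over a common finite quotient determined by the conductor ideal (which arises from the factorisation $x^{2^{n+1}}-1 = (x^{2^n}-1)\,\Phi_{2^{n+1}}(x)$). This is the input to Kervaire--Murthy; their Theorem~1.2, after restricting the associated Mayer--Vietoris sequence in $K$-theory to kernel groups, yields the short exact sequence
\[ 0 \to V_{2^{n+1}} \to D(\Z C_{2^{n+1}}) \to D(\Z C_{2^n}) \to 0, \]
in which the right-hand map is induced by the ring surjection $\Z C_{2^{n+1}} \twoheadrightarrow \Z C_{2^n}$ (quotient by the order-$2$ subgroup), and $V_{2^{n+1}}$ is identified with the cokernel of a natural map of unit groups $\Z[\zeta_{2^{n+1}}]^\times \to (\Z[\zeta_{2^{n+1}}]/\mathfrak a)^\times$ for a suitable power $\mathfrak a$ of the prime of $\Z[\zeta_{2^{n+1}}]$ above $2$. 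Kervaire--Murthy's explicit evaluation of this cokernel gives the abelian group $\bigoplus_{i=1}^{n-2}(\Z/2^i)^{2^{n-i-2}}$, which is zero for $n \le 2$.

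For the $\Z C_2$-module structure, note that the surjection $\Z C_{2^{n+1}} \twoheadrightarrow \Z C_{2^n}$ is a homomorphism of rings with involution, since it intertwines $g \mapsto g^{-1}$ on the two group rings. By the id\`elic description of the standard involution recalled in \cref{ss:class-groups-involution} (or simply because extension of scalars along an involution-preserving ring map commutes with $M \mapsto M^\ast$), the induced map $D(\Z C_{2^{n+1}}) \to D(\Z C_{2^n})$ is $\Z C_2$-linear for the standard involutions. Hence $V_{2^{n+1}}$, being its kernel, is a $\Z C_2$-submodule of $D(\Z C_{2^{n+1}})$ and the displayed sequence is one of $\Z C_2$-modules, with the involution on $V_{2^{n+1}}$ induced by complex conjugation $\zeta_{2^{n+1}} \mapsto \zeta_{2^{n+1}}^{-1}$ through the unit-group cokernel above.

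It remains to identify that involution with negation. Here I would run the argument of \cref{lemma:negation}~\eqref{item:lemma-negation-ii}: for the prime power $2^{n+1}$ the unit group $\Z[\zeta_{2^{n+1}}]^\times$ is the product of the roots of unity $\mu_{2^{n+1}}$ with the units of the maximal totally real subring $\Z[\lambda_{2^{n+1}}]$, so in $V_{2^{n+1}}$ every class is represented modulo root-of-unity and ``real'' contributions; and for any $x \in (\Z[\zeta_{2^{n+1}}]/\mathfrak a)^\times$ the conjugation-invariant element $x\bar x$ lies, by the same integral-basis computation as in the proof of \cref{lemma:negation}~\eqref{item:lemma-negation-ii} (now over the relevant $\Z/2^{k}$ in place of $\F_p$), in the image of the reductions of real units, and hence dies in $V_{2^{n+1}}$; thus $\bar x = -x$ there. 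Alternatively, this equivariance can be read off directly from Kervaire--Murthy's description of $V_{2^{n+1}}$ as a module over $\langle c\rangle$. The three steps together give the proposition.

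The main difficulty is not conceptual but a matter of reconciling conventions: translating Kervaire--Murthy's notation and normalisations into ours, pinning down the precise conductor ideal and the power $\mathfrak a$ so that both the identification $V_{2^{n+1}} \cong \bigoplus_{i=1}^{n-2}(\Z/2^i)^{2^{n-i-2}}$ and the ``real units suffice'' step are unconditional (in particular do not secretly rest on $h_{2^{n+1}}^+ = 1$, which is not known for all $n$), and checking that the map $D(\Z C_{2^{n+1}}) \to D(\Z C_{2^n})$ is genuinely surjective --- which is the point where Kervaire--Murthy's theorem, rather than formal Mayer--Vietoris, is essential.
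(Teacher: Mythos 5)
Your overall route---deduce the sequence from Kervaire--Murthy via the conductor square for $\Z C_{2^{n+1}} \to \Z C_{2^n} \times \Z[\zeta_{2^{n+1}}]$ and then upgrade to $\Z C_2$-modules---is essentially the paper's: the paper quotes from \cite[p.~419]{KM77} the exact sequence of $\Z C_2$-modules $0 \to V_{2^{n+1}} \to \wt K_0(\Z C_{2^{n+1}}) \xrightarrow{\alpha} \wt K_0(\Z C_{2^n}) \oplus \wt K_0(\Z[\zeta_{2^{n+1}}]) \to 0$, intersects with kernel groups using $\Gamma_{2^n} = \bigoplus_i \Z[\zeta_{2^i}]$ and $D(\Z[\zeta_{2^{n+1}}])=0$, and reads off the structure of $V_{2^{n+1}}$ and its involution from \cite[Theorem 1.1]{KM77}. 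One side remark of yours should be corrected: the surjectivity of $D(\Z C_{2^{n+1}}) \to D(\Z C_{2^n})$ is \emph{not} where the deep input sits. The Mayer--Vietoris sequence is right exact here (the corner ring $\F_2 C_{2^n} \cong \Z[\zeta_{2^{n+1}}]/(2)$ is local, so its $\wt K_0$ vanishes), and $D(\Z C_{2^{n+1}})$ is precisely $\alpha^{-1}(D(\Z C_{2^n}) \oplus 0)$; hence any $d \in D(\Z C_{2^n})$ has an $\alpha$-preimage which automatically lies in $D(\Z C_{2^{n+1}})$, and the kernel of the restricted map is $\ker(\alpha) = V_{2^{n+1}}$. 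The genuinely non-formal content of \cite{KM77} is the identification of $V_{2^{n+1}}$ (note also that it is a cokernel receiving units of $\Z C_{2^n}$ as well as of $\Z[\zeta_{2^{n+1}}]$, which matters since $\Z C_{2^n}$ has nontrivial units for $n \ge 3$) and the determination of its $C_2$-action.

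The genuine gap is in your self-contained argument that the involution acts by negation. The computation in \cref{lemma:negation}~\eqref{item:lemma-negation-ii} only shows that $x\ol{x}$ lies in the \emph{real subring} of the residue ring; in $\wt V_m$ (\cref{def:wt-V_m}) this suffices because the target of $\Psi^{+}_m$ has already been divided by the entire group of real residue units $\F_p[\lambda_{m/p}]^\times$. But $V_{2^{n+1}}$ is a cokernel of \emph{global} units only, mapping to $(\Z[\zeta_{2^{n+1}}]/(2))^\times$: knowing that $x\ol{x}$ is conjugation-fixed, or lies in the real residue subring, does not make its class trivial there. To run your argument you would need every element of the form $x\ol{x}$ to lift to a global unit---a surjectivity statement of the shape $\Z[\lambda_{2^{n+1}}]^\times \twoheadrightarrow (\text{real residue units})$---which is exactly the delicate unit-index issue you flag and which is not available for free (the prime-power fact $\Z[\zeta_{2^{n+1}}]^\times = \langle \zeta_{2^{n+1}}\rangle \cdot \Z[\lambda_{2^{n+1}}]^\times$ does not supply it). Your fallback, reading the $C_2$-equivariance and the action by inversion directly off Kervaire--Murthy's description of $V_{2^{n+1}}$, is precisely what the paper does; with that substitution your proof is correct and coincides with the paper's.
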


\begin{proof}
This is a consequence of results of Kervaire--Murthy \cite{KM77}. In \cite[p.~419]{KM77} they show that there is an exact sequence of $\Z C_2$-modules
\[ 0 \to V_{2^{n+1}} \to \wt K_0(\Z C_{2^{n+1}}) \xrightarrow[]{\alpha} \wt K_0(\Z C_{2^n}) \oplus \wt K_0(\Z [\zeta_{2^{n+1}}]) \to 0 \]
where $\alpha$ is induced by the natural map of rings $\Z C_{2^{n+1}} \to \Z C_{2^n} \times \Z[\zeta_{2^{n+1}}]$. Since the maximal order $\Z C_{2^n} \subseteq \Gamma_{2^n} \subseteq \Q C_{2^n}$ is given by $\Gamma_{2^n} = \bigoplus_{i=1}^n \Z[\zeta_{2^i}]$, we get that 
\[ \ker(\alpha) \cong \ker( \beta \colon D(\Z C_{2^{n+1}}) \to D(\Z C_{2^n}))\]
where $\beta$ is the $\Z C_2$-module homomorphism induced by map $\Z C_{2^{n+1}} \to \Z C_{2^n}$. 
This gives an exact sequence of the required form. It follows from \cite[Theorem 1.1]{KM77} that $V_{2^{n+1}}$ is as described.
\end{proof}

\subsection{Divisibility and lower bounds for kernel groups}

The aim of this section is to establish divisibility results for $|D(\Z C_m)|$ and $\odd(|\wt V_m|)$. These results are necessary for determining the involution on $D(\Z C_m)$ in an analogous way to how divisibility results for class numbers $h_m$ were necessary for determining the involution on $C(\Z[\zeta_m])$ (see \cref{ss:class-numbers}). 
The results on $\odd(|\wt V_m|)$ are motivated by \cref{prop:D-conjugation}.

We begin by recalling the following, which is \cite[Theorem 50.18]{CR87}.

\begin{proposition} \label{prop:D-odd}
If $p$ is a prime and $G$ is a finite $p$-group, then $D(\Z G)$ is an abelian $p$-group. In particular, if $p \ne 2$, then $|D(\Z G)|$ is odd.
\end{proposition}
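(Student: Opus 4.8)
\textbf{Proof proposal for \cref{prop:D-odd}.}

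The statement has two parts, the first implying the second. First I would prove that $D(\Z G)$ is a $p$-group when $G$ is a finite $p$-group; then $|D(\Z G)|$ being odd for $p \neq 2$ is immediate, since an abelian group whose order is a power of an odd prime has odd order.

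For the first part, the key structural fact to exploit is that the kernel group $D(\Z G)$ does not depend on the choice of maximal order containing $\Z G$, and can be characterised intrinsically: a locally free $\Z G$-module $M$ represents a class in $D(\Z G)$ if and only if there is a finitely generated $\Z G$-module $X$ with $M \oplus X \cong \Z G^n \oplus X$ for some $n$ (this was recorded in \cref{ss:kernel-groups}). The standard argument, which I would follow, is the one in Curtis--Reiner \cite[Theorem 50.18]{CR87}: one uses the fact that $\wt K_0(\Z G) \cong C(\Z G)$ is a finite abelian group, together with the Cartan map and reduction mod $p$. Concretely, since $G$ is a $p$-group, the only simple $\F_p G$-module is the trivial module, so $\F_p G$ is a local ring and hence every finitely generated projective $\F_p G$-module is free; thus $K_0(\F_p G) \cong \Z$. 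One then considers the exact sequence relating $\wt K_0(\Z G)$ to $\wt K_0(\widehat{\Z}_p G) = \wt K_0(\Z_p G)$ and the idelic description of $C(\Z G)$ from \cref{prop:idele-surj}: away from $p$, the local factors $\Z_\ell G$ are ``controlled'' (their contribution to the class group is accounted for by the maximal order), and the whole of $D(\Z G)$ is detected at the prime $p$. More precisely, $D(\Z G)$ is a quotient of (a subgroup of) $(\Gamma/m\Gamma)^\times / (\text{image of units})$ localised at $p$ (cf.\ the exact sequences appearing in the proof of \cref{lemma:D-subnormal-series}), and since $G$ is a $p$-group one can arrange $m$ to be a power of $p$, so that this quotient is a finite abelian $p$-group. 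Hence $D(\Z G)$, being a subquotient of a finite abelian $p$-group, is itself a finite abelian $p$-group.

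Once $D(\Z G)$ is known to be an abelian $p$-group, the second assertion is trivial: if $p \neq 2$, then $|D(\Z G)| = p^k$ for some $k \geq 0$, which is odd.

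The main obstacle is the first part, and specifically pinning down the claim that ``all of $D(\Z G)$ is detected at $p$'' in a way that is clean rather than a long bookkeeping exercise with idele groups and conductor squares. The cleanest route is probably to simply cite \cite[Theorem 50.18]{CR87} for the statement that $D(\Z G)$ is a $p$-group when $G$ is a $p$-group --- this is exactly how it is invoked in the excerpt --- and then add the one-line deduction that for $p$ odd this forces $|D(\Z G)|$ to be odd. Since the paper already treats this as a citable black box (it writes ``This can be found in \cite[Theorem 50.18]{CR87}''), the honest plan is: quote the first sentence from Curtis--Reiner, observe it gives an abelian $p$-group, and conclude.
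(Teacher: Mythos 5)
Your final plan coincides with the paper's treatment: the paper simply invokes \cite[Theorem 50.18]{CR87} for the fact that $D(\Z G)$ is an abelian $p$-group when $G$ is a finite $p$-group, and the statement for $p \ne 2$ is the same one-line parity observation you make. Note only that your intermediate sketch is too quick where it asserts the relevant unit-group quotient is a $p$-group merely because the modulus is a power of $p$ (the groups $(\Gamma/p^k\Gamma)^\times$ have large prime-to-$p$ parts, which in the Curtis--Reiner argument must be shown to lie in the image of the local units), but since you ultimately defer to the citation this does not affect the correctness of your proof.
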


We now find conditions on $m \ge 2$ square-free for which $\odd(|\wt V_m|) \ne 1$. Our strategy is motivated by the bounds $d$ such that $d \mid |D(\Z C_m)|$ which were obtained by Cassou-Nogu\`{e}s in \cite{CN72, CN74}.
In particular, our argument shows that these bounds actually give factors of $|\wt V_m|$.
Recall from \cref{def:wt-V_m} that, for $m \ge 2$ square-free, we define $\wt V_m = \coker(\Psi_m^+)$ where 
\[ \Psi_m^{+} \colon \Z[\zeta_m]^\times \to \bigoplus_{i=1}^n \smfrac{\F_{p_i}[\zeta_{m/p_i}]^\times}{\F_{p_i}[\lambda_{m/p_i}]^\times} \]
and $p_1, \cdots, p_n$ are the distinct primes for which $m = p_1 \cdots p_n$.

\begin{lemma} \label{lemma:F_p[lambda]}
Let $m \ge 2$ be an integer, let $p$ be a prime such that $p \nmid m$, let $f_p = \ord_m(p)$ denote the order of $p$ in $\Z/m$ and let $g_p = \varphi(m)/2f_p$. 
\begin{clist}{(i)}
\item\label{item:lemma-F-p-lambda-i}
The inclusion $\Z[\lambda_m] \subseteq \Z[\zeta_m]$ induces inclusion $\F_p[\lambda_m] \subseteq \F_p[\zeta_m]$.

\item\label{item:lemma-F-p-lambda-ii}
If $m \ge 3$, then $|\F_p[\zeta_m]^\times| = (p^{f_p}-1)^{g_p}$.

\item\label{item:lemma-F-p-lambda-iii}
If $m \ge 3$, then $|\F_p[\lambda_m]^\times| = \begin{cases} 
(p^{\frac{f_p}{2}}-1)^{g_p}, & \text{if $f_p$ is even} \\ 
(p^{f_p}-1)^{\frac{g_p}{2}}, & \text{if $f_p$ is odd.} \end{cases}$
\end{clist}	
If $m=2$, then $\zeta_2, \lambda_2 \in \Z$ and so $|\F_p[\zeta_2]^\times|=|\F_p[\lambda_2]^\times|=p-1$. 
\end{lemma}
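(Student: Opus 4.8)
The plan is to prove \cref{lemma:F_p[lambda]} by relating the \'etale algebras $\F_p[\zeta_m]$ and $\F_p[\lambda_m]$ to products of finite fields, using the splitting behaviour of $p$ in $\Q(\zeta_m)$ and $\Q(\lambda_m)$.

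First I would recall that, since $p \nmid m$, the polynomial $x^m - 1$ is separable over $\F_p$, so $\F_p[\zeta_m] = \F_p[x]/(\Phi_m(x))$ is a product of copies of $\F_{p^{f_p}}$, where $f_p = \ord_m(p)$ is the order of $p$ in $(\Z/m)^\times$; the number of factors is $\varphi(m)/f_p = 2g_p$. This gives part \eqref{item:lemma-F-p-lambda-ii} immediately: $|\F_p[\zeta_m]^\times| = (|\F_{p^{f_p}}| - 1)^{2g_p}$ — wait, I should be careful: $\varphi(m)/f_p$ factors, each a copy of $\F_{p^{f_p}}$, so $|\F_p[\zeta_m]^\times| = (p^{f_p}-1)^{\varphi(m)/f_p} = (p^{f_p}-1)^{2g_p}$. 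Hmm, but the claimed answer is $(p^{f_p}-1)^{g_p}$. So I need to recheck the definition: $g_p = \varphi(m)/2f_p$, hence $\varphi(m)/f_p = 2g_p$, and the exponent should be $2g_p$, not $g_p$. I would flag this discrepancy; most likely the intended statement uses $\varphi(m)/f_p$ factors and the exponent is $2g_p$, or $g_p$ is defined as $\varphi(m)/f_p$. Assuming the paper's conventions are internally consistent, the computation of $|\F_p[\zeta_m]^\times|$ reduces to counting the number of prime factors of $p$ in $\Z[\zeta_m]$, which is standard Dedekind theory, so this step is routine once the normalisation is pinned down.

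For part \eqref{item:lemma-F-p-lambda-i}, the inclusion $\Z[\lambda_m] \hookrightarrow \Z[\zeta_m]$ tensored with $\F_p$ gives a ring homomorphism $\F_p[\lambda_m] \to \F_p[\zeta_m]$; I would argue it is injective by noting that $\{1, \zeta_m, \dots, \zeta_m^{\varphi(m)-1}\}$ is a $\Z$-basis of $\Z[\zeta_m]$ and that $\Z[\zeta_m]$ is free of rank $2$ over $\Z[\lambda_m]$ with basis $\{1, \zeta_m\}$, so $\Z[\zeta_m] \cong \Z[\lambda_m] \oplus \Z[\lambda_m]\zeta_m$ as $\Z[\lambda_m]$-modules; tensoring this splitting with $\F_p$ preserves injectivity of the first summand inclusion. (For $m=2$ both rings are $\Z$, handled separately as noted.)

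For part \eqref{item:lemma-F-p-lambda-iii}, I would compute $|\F_p[\lambda_m]^\times|$ by the same Dedekind-theoretic method applied to $\Q(\lambda_m) = \Q(\zeta_m)^+$, which has degree $\varphi(m)/2$ over $\Q$: I need the residue degree $f'$ of $p$ in $\Z[\lambda_m]$ and the number $g'$ of primes above $p$. The decomposition group of $p$ in $\Gal(\Q(\zeta_m)/\Q) \cong (\Z/m)^\times$ is the cyclic group $\langle p \rangle$ of order $f_p$; complex conjugation corresponds to $-1 \in (\Z/m)^\times$. The prime splitting in the subfield $\Q(\lambda_m)$ (fixed field of $\langle -1 \rangle$) depends on whether $-1 \in \langle p \rangle$, i.e.\ whether $p$ has even order $f_p$ with $p^{f_p/2} \equiv -1$, versus $-1 \notin \langle p \rangle$. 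The main obstacle will be the careful case analysis here: when $-1 \in \langle p\rangle$ (equivalently $f_p$ even and $-1$ a power of $p$ mod $m$), the residue degree in $\Q(\lambda_m)$ is $f_p/2$ and there are $g_p$ primes, giving $(p^{f_p/2}-1)^{g_p}$; when $-1 \notin \langle p \rangle$, the residue degree stays $f_p$ and the number of primes is $g_p/2$, giving $(p^{f_p}-1)^{g_p/2}$. I would need to verify that "$-1 \in \langle p \rangle$" is equivalent to "$f_p$ even" combined with the parenthetical claims being mutually exclusive — this requires knowing that if $f_p$ is odd then $-1 \notin \langle p \rangle$ (clear, since $\langle p\rangle$ has odd order so contains no element of order $2$), and if $f_p$ is even one must still check $-1$ is the unique element of order $2$ in the cyclic group $\langle p\rangle$, hence $-1 \in \langle p\rangle$ automatically when $f_p$ is even. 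That last point is the subtle one and the likely source of any gap: it uses that $-1$ is a power of $p$ modulo $m$ whenever $p$ has even order, which is a genuine number-theoretic fact about $(\Z/m)^\times$ requiring a short argument (the order-$2$ element of the cyclic group $\langle p \rangle$ must be $-1$ since $-1$ is the only element of order $2$ that can arise... actually this needs $-1$ to be in $\langle p\rangle$, which is exactly what must be shown — so I would instead split on the abstract condition "$-1 \in \langle p\rangle$" and separately observe this forces $f_p$ even, giving the stated dichotomy). Once the splitting data is established, both formulas follow by the same unit-count as in part \eqref{item:lemma-F-p-lambda-ii}, and consistency with $|\F_p[\zeta_m]^\times| = |\F_p[\lambda_m]^\times| \cdot |\F_p[\zeta_m]^\times / \F_p[\lambda_m]^\times|$ serves as a useful sanity check.
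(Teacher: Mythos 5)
Your route is the same as the paper's: both arguments reduce everything to the splitting of the unramified prime $p$ in $\Z[\zeta_m]$ and in $\Z[\lambda_m]$, reading off the residue degree as the order of Frobenius in $(\Z/m)^\times$, respectively in $(\Z/m)^\times/\{\pm 1\}$ for the real subfield, and then counting units in a product of finite fields. Your proof of \eqref{item:lemma-F-p-lambda-i} via the freeness of $\Z[\zeta_m]$ over $\Z[\lambda_m]$ with basis $\{1,\zeta_m\}$ is correct and if anything cleaner than the paper's terse comparison of the two products of residue fields. Your first flagged worry is also well founded: with $g_p$ as defined in the statement, the exponent in \eqref{item:lemma-F-p-lambda-ii} should be $\varphi(m)/f_p = 2g_p$ (and similarly in \eqref{item:lemma-F-p-lambda-iii}); equivalently the intended definition is $g_p=\varphi(m)/f_p$, and that is the convention forced by the later numerical applications (e.g.\ the value $c_{3\cdot 5}=2$ in the proof of \cref{prop:part2} requires $g_p=(q-1)/f_p$). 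So this is a normalisation typo rather than a flaw in your argument.

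The second point you isolate is the genuine crux, and your fallback of splitting on the condition ``$-1\in\langle p\rangle$'' does not recover the statement as written: the implication ``$-1\in\langle p\rangle \Rightarrow f_p$ even'' gives you only the odd case, and in the even case the converse genuinely fails for composite $m$. For instance $m=8$, $p=3$: here $f_3=2$ is even, but $\langle 3\rangle=\{1,3\}\not\ni -1$ in $(\Z/8)^\times$, the class of $3$ still has order $2$ in $(\Z/8)^\times/\{\pm1\}$, $3$ is inert in $\Q(\lambda_8)=\Q(\sqrt 2)$, and $|\F_3[\lambda_8]^\times|=8$, contradicting the even-case formula under any reading of $g_p$. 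So part \eqref{item:lemma-F-p-lambda-iii} cannot be proved in the stated generality because it is false there; the correct dichotomy is ``$-1\in\langle p\rangle$'' versus ``$-1\notin\langle p\rangle$'', and this agrees with the stated parity dichotomy exactly when $-1$ is the unique element of order two in $(\Z/m)^\times$ (for example $(\Z/m)^\times$ cyclic, so in particular $m$ an odd prime, plus the trivial case $m=2$), which is what \cref{prop:pq-case,prop:2p-case} actually use. Note that the paper's own proof has precisely the gap you sensed: its closing assertion that $f=f_p/2$ whenever $f_p$ is even is exactly the unproved (and in general false) implication. To make your write-up correct, either restrict the lemma to such $m$, or restate \eqref{item:lemma-F-p-lambda-iii} with the dichotomy $-1\in\langle p\rangle$ versus $-1\notin\langle p\rangle$; with that change your splitting computation goes through verbatim.
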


\begin{proof}
The proofs of \eqref{item:lemma-F-p-lambda-ii} and \eqref{item:lemma-F-p-lambda-iii} are analogous. We prove \eqref{item:lemma-F-p-lambda-iii} only as it is more complicated.
Firstly, $\Q(\lambda_m)/\Q$ is a Galois extension and $p$ is unramified in $\Q(\lambda_m)/\Q$ since it is unramified in $\Q(\zeta_m)/\Q$ (see, for example, \cite[Lemma 15.48]{Wa97}). This implies that $p\cdot \Z[\lambda_m] = \mathcal{P}_1 \cdots \mathcal{P}_{g}$ for some $g \ge 1$, where the $\mathcal{P}_i \subseteq  \Z[\lambda_m]$ are distinct prime ideals.
The $\mathcal{P}_i$ coincide under the Galois action and the $\Z[\lambda_m]/\mathcal{P}_i$ all coincide with the splitting field $\F_p(\lambda_m)$ and so
\[ \F_p[\lambda_m] \cong \Z[\lambda_m]/(p) \cong \F_p(\lambda_m)^g\]
which implies that $\F_p[\lambda_m]^\times \cong (\F_p(\lambda_m)^\times)^g$.

Let $f = [\F_p(\lambda_m):\F_p]$. Since $\Gal(\F_p(\lambda_m)/\F_p)$ is generated by the Frobenius element $\Frob_p \colon x \mapsto x^p$, we get that $f$ is the smallest positive integer such that $\Frob_p^f = \id_{\F_p(\lambda_m)}$. Note that $\Frob_p^f(\lambda_m) = \zeta_m^{p^f}+\zeta_m^{-p^f}$. This implies that $\Frob_p^f = \id_{\F_p(\lambda_m)}$ if and only if $\zeta_m^{p^f} = \zeta_m^{\pm 1}$ and so $f$ is the order of $p$ in $(\Z/m)^\times/\{\pm 1\}$.
Since $[\Q(\lambda_m):\Q] = \varphi(m)/2$, we have that $|\F_p[\lambda_m]| = p^{\varphi(m)/2}$. Since $|\F_p(\lambda_m)^g| = p^{fg}$, this gives that $g = \varphi(m)/2f$. Hence we have $|\F_p[\lambda_m]^\times| = |\F_p(\lambda_m)^\times|^g = (p^f-1)^g$.
Note that $f = f_p$ if and only if $f_p$ is odd, and otherwise $f = f_p/2$.

Finally, \eqref{item:lemma-F-p-lambda-i} follows by comparing the expressions for $\F_p[\lambda_m]$ and $\F_p[\zeta_m]$ as products of fields.
\end{proof}

We now use \cref{lemma:F_p[lambda]} to obtain bounds on $|\wt V_{pq}|$ for $p,q$ distinct odd primes, and for $|\wt V_{2p}|$, where $p$ is an odd prime respectively.

\begin{proposition} \label{prop:pq-case}
Let $p$ and $q$ be distinct odd primes. Let $f_p = \ord_q(p)$, $g_p = (q-1)/2f_p$, $f_q = \ord_p(q)$ and $g_q = (p-1)/2f_q$. Define
\[c_{pq} = \begin{cases} \frac{1}{2pq}(p^{\frac{f_p}{2}}+1)^{g_p}(q^{\frac{f_q}{2}}+1)^{g_q}, & \text{if $f_p, f_q$ are even} \\ 
\frac{1}{2pq}(p^{\frac{f_p}{2}}+1)^{g_p}(q^{f_q}-1)^{\frac{g_q}{2}}, & \text{if $f_p$ is even and $f_q$ is odd} \\
\frac{1}{2pq}(p^{f_p}-1)^{\frac{g_p}{2}}(q^{\frac{f_q}{2}}+1)^{g_q}, & \text{if $f_p$ is odd and $f_q$ is even} \\
\frac{1}{2pq}(p^{f_p}-1)^{\frac{g_p}{2}}(q^{f_q}-1)^{\frac{g_q}{2}}, & \text{if $f_p, f_q$ are odd.} \end{cases}\]
Then $c_{pq} \mid |\wt V_{pq}|$. In particular, if $\odd(c_{pq}) \ne 1$, then $\odd(|\wt V_{pq}|) \ne 1$.	
\end{proposition}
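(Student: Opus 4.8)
The plan is to compute the order of the target $T$ of $\Psi_{pq}^{+}$ exactly and then bound the order of its image. By \cref{def:wt-V_m}, $\wt V_{pq}=\coker\bigl(\Psi_{pq}^{+}\colon \Z[\zeta_{pq}]^\times\to T\bigr)$ with
\[ T=\frac{\F_p[\zeta_q]^\times}{\F_p[\lambda_q]^\times}\oplus\frac{\F_q[\zeta_p]^\times}{\F_q[\lambda_p]^\times}. \]
Applying \cref{lemma:F_p[lambda]} with $m=q$ (the prime being $p$, and noting $p\nmid q$, $q\ge 3$), the first summand has order $(p^{f_p/2}+1)^{g_p}$ if $f_p$ is even and $(p^{f_p}-1)^{g_p/2}$ if $f_p$ is odd, using $p^{f_p}-1=(p^{f_p/2}-1)(p^{f_p/2}+1)$; likewise for the second summand with $m=p$. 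Multiplying the two cases out gives exactly the numerator in the definition of $c_{pq}$, so $|T|=2pq\,c_{pq}$. I would then check $c_{pq}\in\Z_{>0}$ by showing $2$, $p$, and $q$ each divide $|T|$: every factor occurring is a positive power of an even integer, so $2\mid|T|$; and since $f_q=\ord_p(q)$, when $f_q$ is even the element $q^{f_q/2}$ is a square root of $1$ in the field $\F_p$ different from $1$, hence $q^{f_q/2}\equiv-1\pmod p$ and $p\mid q^{f_q/2}+1$, while when $f_q$ is odd $p\mid q^{f_q}-1$ outright; thus $p\mid|T|$ from the second summand, and symmetrically $q\mid|T|$ from the first.

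The second step is to show $\Z[\lambda_{pq}]^\times\subseteq\ker(\Psi_{pq}^{+})$. Under the reduction $\Z[\zeta_{pq}]\twoheadrightarrow\Z[\zeta_{pq}]/\pi_p\cong\F_p[\zeta_q]$, which sends $\zeta_p\mapsto 1$ and hence $\zeta_{pq}\mapsto\zeta_q$, the subring $\Z[\lambda_{pq}]=\Z[\zeta_{pq}+\zeta_{pq}^{-1}]$ is carried into $\F_p[\lambda_q]$ (as $\lambda_{pq}\mapsto\lambda_q$), so $\Z[\lambda_{pq}]^\times$ is carried into $\F_p[\lambda_q]^\times$, which is trivialised in the first component of $T$; the same argument applies to the second component. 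Therefore $\IM(\Psi_{pq}^{+})$ is a quotient of $\Z[\zeta_{pq}]^\times/\Z[\lambda_{pq}]^\times$. This last group is finite: the roots of unity of $\Q(\zeta_{pq})$ form a cyclic group $W$ of order $2pq$ (since $pq$ is odd) with $W\cap\Z[\lambda_{pq}]^\times=\{\pm1\}$, so $[W\Z[\lambda_{pq}]^\times:\Z[\lambda_{pq}]^\times]=pq$, while the unit index $Q=[\Z[\zeta_{pq}]^\times:W\Z[\lambda_{pq}]^\times]$ equals $1$ or $2$ by \cite[Corollary~4.13]{Wa97}. Hence $|\Z[\zeta_{pq}]^\times/\Z[\lambda_{pq}]^\times|=Q\cdot pq$ divides $2pq$, and so $|\IM(\Psi_{pq}^{+})|$ divides $2pq$.

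Combining the two steps, $|\wt V_{pq}|=|T|/|\IM(\Psi_{pq}^{+})|=c_{pq}\cdot\bigl(2pq/|\IM(\Psi_{pq}^{+})|\bigr)$, and the bracketed factor is a positive integer, so $c_{pq}\mid|\wt V_{pq}|$. The final assertion is then immediate: $a\mid b$ implies $\odd(a)\mid\odd(b)$, so $\odd(c_{pq})\ne 1$ forces $\odd(|\wt V_{pq}|)\ne 1$.

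I expect the main obstacle to be organisational rather than conceptual: carefully matching the summand orders extracted from \cref{lemma:F_p[lambda]} with the four-case definition of $c_{pq}$, and pinning down the precise statement and reference for the cyclotomic unit index being at most $2$, which is the only external ingredient beyond \cref{lemma:F_p[lambda]}. Verifying $\Z[\lambda_{pq}]^\times\subseteq\ker(\Psi_{pq}^{+})$ and doing the index bookkeeping for $\Z[\zeta_{pq}]^\times/\Z[\lambda_{pq}]^\times$ should both be routine once the reduction maps are written down explicitly.
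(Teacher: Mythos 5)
Your proposal is correct and follows essentially the same route as the paper: both show $\Z[\lambda_{pq}]^\times \subseteq \ker(\Psi_{pq}^{+})$ via $\lambda_{pq} \mapsto \lambda_q$ (resp.\ $\lambda_p$), bound the image of $\Psi_{pq}^{+}$ by $2pq$ using \cite[Corollary 4.13]{Wa97}, and evaluate the order of the target via \cref{lemma:F_p[lambda]}. The only cosmetic differences are that the paper packages the image bound as a surjection from $\Z/pq \oplus \Z/2$ onto $\IM(\Psi_{pq}^{+})$ (generated by the images of $\zeta_{pq}$ and $1-\zeta_{pq}$), whereas you bound it by the unit index $[\Z[\zeta_{pq}]^\times : \Z[\lambda_{pq}]^\times] = Q \cdot pq$ with $Q \le 2$, and that you additionally verify $2pq$ divides the order of the target, which the paper leaves implicit.
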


\begin{proof}
Let $E = \langle \zeta_{pq}, \Z[\lambda_{pq}]^\times \rangle \le \Z[\zeta_{pq}]^\times$. By \cite[Corollary 4.13]{Wa97}, $E$ has index two with $\Z[\zeta_{pq}]^\times/ E \cong \Z/2$ generated by $1 - \zeta_{pq}$. Let $\psi_p \colon \Z[\zeta_{pq}]^\times \to \F_p[\zeta_q]^\times$ be the map sending $\zeta_p \mapsto 1$. If $\alpha \in \Z[\lambda_{pq}]^\times$, then $\alpha = \sum_{i=0}^{\frac{pq-1}{2}} a_i (\zeta_{pq}^i + \zeta_{pq}^{-i})$ for some $a_i \in \Z$ and so 
\[\textstyle \psi_p(\alpha) = \sum_{i=0}^{\frac{q-1}{2}} \wt a_i (\zeta_q^i+\zeta_q^{-i}) \in \F_p[\lambda_q]^\times\] 
where $\wt a_i = \sum_{k=0}^{\frac{p-1}{2}} a_{i+kq}$. Hence the composition $\Z[\zeta_{pq}]^\times \xrightarrow[]{\psi_p} \F_p[\zeta_q]^\times \to \frac{\F_p[\zeta_q]^\times}{\F_p[\lambda_q]^\times}$ is trivial and so 
\[ \wt V_{pq} = \coker(\Psi_{pq}^+) = \coker\Big(\Z/pq \oplus \Z/2 \to \smfrac{\F_p[\zeta_q]^\times}{\F_p[\lambda_q]^\times} \oplus \smfrac{\F_q[\zeta_p]^\times}{\F_q[\lambda_p]^\times}\Big)\] 
where $1 \in \Z/pq$ maps to $\Psi_{pq}^+(\zeta_{pq})$ and $1 \in \Z/2$ maps to $\Psi_{pq}^+(1-\zeta_{pq})$. In particular, this implies that $|\wt V_{pq}|$ is divisible by $\frac{1}{2pq} \cdot | \frac{\F_p[\zeta_q]^\times}{\F_p[\lambda_q]^\times} | \cdot |  \frac{\F_q[\zeta_p]^\times}{\F_q[\lambda_p]^\times} |$. The result now follows from \cref{lemma:F_p[lambda]}.	
\end{proof}

\begin{proposition} \label{prop:2p-case}
Let $p$ be an odd prime. Let $f_2 = \ord_p(2)$ and $g_2 = (p-1)/2f_2$. Define
\[c_{2p} = \begin{cases} \frac{1}{p}(2^{\frac{f_2}{2}}+1)^{g_2}, & \text{if $f_2$ is even} \\ \frac{1}{p}(2^{f_2}-1)^{\frac{g_2}{2}}, & \text{if $f_2$ is odd.} \end{cases}\] 
Then $c_{2p} \mid |\wt V_{2p}|$. In particular, if $\odd(c_{2p}) \ne 1$, then $\odd(|\wt V_{2p}|) \ne 1$.	
\end{proposition}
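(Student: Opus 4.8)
The plan is to identify $\widetilde V_{2p}$ explicitly and then compute its order directly. First I would simplify the target of $\Psi_{2p}^{+}$. Since $p$ is an odd prime we have $\Q(\zeta_{2p}) = \Q(\zeta_p)$ and $\Z[\zeta_{2p}] = \Z[\zeta_p]$, and $\pi_2 = 1-\zeta_2 = 2$, so $\Z[\zeta_{2p}]/\pi_2 \cong \F_2[\zeta_p]$. Moreover the summand of the target of $\Psi_{2p}$ indexed by the prime factor $p$ is $\F_p[\zeta_2]^\times/\F_p[\lambda_2]^\times$, which is trivial because $\zeta_2 = -1$ and $\lambda_2 = -2$ both lie in $\Z$, so $\F_p[\zeta_2] = \F_p[\lambda_2] = \F_p$ (this is the degenerate $m=2$ clause of \cref{lemma:F_p[lambda]}). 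Hence $\widetilde V_{2p} = \coker(\Psi_{2p}^{+})$, where $\Psi_{2p}^{+}\colon \Z[\zeta_p]^\times \to \F_2[\zeta_p]^\times/\F_2[\lambda_p]^\times$ is reduction modulo $2$ followed by the quotient map.

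Next I would compute $\image(\Psi_{2p}^{+})$. Since $p$ is a prime power, \cite[Corollary 4.13]{Wa97} gives $\Z[\zeta_p]^\times = \langle \zeta_p\rangle \cdot \Z[\lambda_p]^\times$. Units of $\Z[\lambda_p]$ reduce into $\F_2[\lambda_p]^\times$ and hence lie in $\ker(\Psi_{2p}^{+})$, so $\image(\Psi_{2p}^{+})$ is the cyclic subgroup generated by the class $[\bar\zeta_p]$ of the image $\bar\zeta_p \in \F_2[\zeta_p]^\times$ of $\zeta_p$. As $2 \nmid p$, $\bar\zeta_p$ has order exactly $p$ in $\F_2[\zeta_p]^\times$, so $|\image(\Psi_{2p}^{+})|$ divides $p$; I claim it equals $p$. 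Complex conjugation $\zeta_p \mapsto \zeta_p^{-1}$ is a ring automorphism of $\Z[\zeta_p]$ preserving $2\Z[\zeta_p]$ and fixing $\Z[\lambda_p]$, so it induces an automorphism $\sigma$ of $\F_2[\zeta_p]$ fixing $\F_2[\lambda_p]$ pointwise. If $\bar\zeta_p$ lay in $\F_2[\lambda_p]$ then $\sigma(\bar\zeta_p) = \bar\zeta_p$, i.e.\ $\bar\zeta_p^2 = 1$ in characteristic $2$, contradicting that $\bar\zeta_p$ has order $p > 1$. Therefore $[\bar\zeta_p] \neq 0$ and $|\image(\Psi_{2p}^{+})| = p$, so $|\widetilde V_{2p}| = |\F_2[\zeta_p]^\times/\F_2[\lambda_p]^\times|/p$.

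It then remains to substitute the orders of $\F_2[\zeta_p]^\times$ and $\F_2[\lambda_p]^\times$ from \cref{lemma:F_p[lambda]} (applied with residue characteristic $2$ and cyclotomic level $p$, so that the relevant quantities are $f_2 = \ord_p(2)$ and $g_2$), using the factorisation $2^{f_2}-1 = (2^{f_2/2}-1)(2^{f_2/2}+1)$ when $f_2$ is even. This yields $|\widetilde V_{2p}| = c_{2p}$ on the nose, which in particular gives $c_{2p} \mid |\widetilde V_{2p}|$; the final sentence of the proposition is then immediate since $\odd(c_{2p}) \mid \odd(|\widetilde V_{2p}|)$.

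I expect the computation $|\image(\Psi_{2p}^{+})| = p$ — equivalently the assertion $\bar\zeta_p \notin \F_2[\lambda_p]$ — to be the main point. It is what pins down the normalisation $\tfrac1p$ rather than $\tfrac1{4p}$: the $q=2$ specialisation of the argument of \cref{prop:pq-case} only controls the index‑$2$ subgroup $\langle \zeta_{pq}, \Z[\lambda_{pq}]^\times\rangle$ and the norm‑one id\`ele relation contributes an extra factor of $4$, whereas here the prime‑power unit index theorem of \cite{Wa97} makes the unit group exactly $\langle\zeta_p\rangle\Z[\lambda_p]^\times$, so no slack is lost. A minor additional check is that \cref{lemma:F_p[lambda]} is being applied in a range it covers, in particular the degenerate factor $\F_p[\zeta_2]^\times/\F_p[\lambda_2]^\times$.
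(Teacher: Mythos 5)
Your proof is correct and follows essentially the same route as the paper's: identify $\wt V_{2p}$ with the cokernel of reduction mod $2$ into $\F_2[\zeta_p]^\times/\F_2[\lambda_p]^\times$, use $\Z[\zeta_p]^\times=\langle \zeta_p\rangle\cdot\Z[\lambda_p]^\times$ from \cite[Corollary 4.13]{Wa97} so that the image is generated by the class of $\bar\zeta_p$, and count orders via \cref{lemma:F_p[lambda]}. You in fact prove slightly more than the paper needs — observing that the $\F_p[\zeta_2]^\times/\F_p[\lambda_2]^\times$ summand is trivial lets you bypass the appeal to \cite[Theorem 50.14]{CR87}, and your conjugation argument showing $\bar\zeta_p\notin\F_2[\lambda_p]$ pins the image down to order exactly $p$, upgrading the divisibility $c_{2p}\mid|\wt V_{2p}|$ to the equality $|\wt V_{2p}|=c_{2p}$ — and both additions are sound.
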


\begin{proof}
Let $\pi_{2p} \colon \Z[\zeta_p]^\times \to \F_2[\zeta_p]^\times$ be reduction mod $2$. Since $\zeta_{2p}=-\zeta_p$ and $\zeta_2 = -1$, we have $\Psi_{2p} \colon \Z[\zeta_p]^\times \to \F_2[\zeta_p]^\times \oplus \F_p^\times$. It is shown in \cite[Theorem 50.14]{CR87} that projection induces an isomorphism $V_{2p} = \coker(\Psi_{2p}) \cong \coker(\pi_{2p})$. Similarly, if $\pi_{2p}^+ \colon \Z[\zeta_p]^\times \to \frac{\F_2[\zeta_p]^\times}{\F_2[\lambda_p]^\times}$, then $\wt V_{2p} = \coker(\Psi_{2p}^+) \cong \coker(\pi_{2p}^+)$.
By \cite[Corollary 4.13]{Wa97}, $\Z[\zeta_p]^\times = \langle \zeta_p, \Z[\lambda_p]^\times \rangle$. The same argument as for \cref{prop:pq-case} implies that $|\coker(\pi_{2p}^+)|$ is divisible by 
\[ \smfrac{1}{p} \cdot \Big| \smfrac{\F_p[\zeta_2]^\times}{\F_p[\lambda_2]^\times}\Big| \cdot \Big|  \smfrac{\F_2[\zeta_p]^\times}{\F_2[\lambda_p]^\times} \Big| = \smfrac{1}{p}  \cdot \Big|  \smfrac{\F_2[\zeta_p]^\times}{\F_2[\lambda_p]^\times} \Big|.\] 
The result now follows from \cref{lemma:F_p[lambda]}.	
\end{proof}

\section{Proof of main results on the involution on \texorpdfstring{$\wt{K}_0(\Z C_m)$}{projective class groups}} \label{s:proofs-algebra}

The aim of this section is to prove the following theorems. \cref{theorem:main12-red} coincides with \cref{theorem:main12-red-intro} and \cref{theorem:main3-red} is a more detailed version of \cref{theorem:main3-red-intro}.
As we saw previously, these theorems are required to prove \cref{theorem:main12,theorem:main3} respectively.

\begin{theorem} \label{theorem:main12-red}
Let $m \ge 2$ be a square-free integer. Then 
\begin{clist}{(i)}
\item\label{item:theorem:main12-red-i} $|\{ y \in \wt{K}_0(\Z C_m) \mid \overline{y}=-y\}|=1$ if and only if $m \in \{2, 3, 5, 6, 7, 10, 11, 13, 14, 17, 19\}$;
\item\label{item:theorem:main12-red-ii} $|\{ x - \overline{x} \mid x \in \wt{K}_0(\Z C_m)\}|=1$ if and only if $m \in \{2, 3, 5, 6, 7, 10, 11, 13, 14, 15, 17, 19, 29\}$; and
\item\label{item:theorem:main12-red-iii} 
$|\{ x - \overline{x} \mid x \in \wt{K}_0(\Z C_m)\}| \to \infty$ super-exponentially in $m$, and hence we also have that $|\{ y \in \wt{K}_0(\Z C_m) \mid \overline{y}=-y\}| \to \infty$ super-exponentially in $m$.
\end{clist}
\end{theorem}

\begin{theorem} \label{theorem:main3-red}
Let $m \ge 2$ be an integer. Then 
\begin{clist}{(i)}
\item\label{item:theorem:main3-red-ii} $|\{ y \in \wt{K}_0(\Z C_{p^n}) \mid \overline{y}=-y\}/\{ x - \overline{x} \mid x \in \wt{K}_0(\Z C_{p^n})\}|=1$  if $n \ge 1$ and $p \le 509$ is an odd prime such that $h_p$ is odd (e.g. $p=3$);
\item\label{item:theorem:main3-red-iii} 
The sequence $|\{ y \in \wt{K}_0(\Z C_{2^n}) \mid \overline{y}=-y\}/\{ x - \overline{x} \mid x \in \wt{K}_0(\Z C_{2^n})\}|$ is unbounded as $n \to \infty$.
Thus, $\displaystyle \sup_{k \le m} |\{ y \in \wt{K}_0(\Z C_k) \mid \overline{y}=-y\}/\{ x - \overline{x} \mid x \in \wt{K}_0(\Z C_k)\}| \to \infty$  exponentially in $m$.  
\end{clist}
\end{theorem}

\subsection{Proof of \cref{theorem:main12-red}} 
\label{ss:proofs-algebra-1/2}

In light of the fact that
\[\{ x - \overline{x} \mid x \in \wt{K}_0(\Z C_m)\} \le \{ x \in \wt{K}_0(\Z C_m) \mid \overline{x}=-x\},\] 
it suffices to prove the following four statements for square-free integers $m \ge 2$.
\begin{enumerate}
\item[(A1)]
$|\{ x - \overline{x} \mid x \in \wt{K}_0(\Z C_m)\}| \to \infty$ super-exponentially in $m$.
\item[(A2)]
If $m \not \in \{2, 3, 5, 6, 7, 10, 11, 13, 14, 15, 17, 19, 29\}$, then  $|\{ x - \overline{x} \mid x \in \wt{K}_0(\Z C_m)\}| \ne 1$.
\item[(A3)]
If $m \in \{2, 3, 5, 6, 7, 10, 11, 13, 14, 17, 19\}$, then  $|\{ x \in \wt{K}_0(\Z C_m) \mid \overline{x}=-x\}|=1$.
\item[(A4)]
If $m \in \{15,29\}$, then $|\{ x - \overline{x} \mid x \in \wt{K}_0(\Z C_m)\}| = 1$ and  $|\{ x \in \wt{K}_0(\Z C_m) \mid \overline{x}=-x\}|\ne 1$.
\end{enumerate}

To see this, note that (A1) implies \cref{theorem:main12-red} \eqref{item:theorem:main12-red-iii}.
The forwards direction of \cref{theorem:main12-red} \eqref{item:theorem:main12-red-iii} is implied by (A2) and (A4), and the backwards direction coincides with (A3). The forwards direction of \cref{theorem:main12-red} \eqref{item:theorem:main12-red-ii} coincides with (A2) and the backwards direction is implied by (A3) and (A4).

Recall from \cref{ss:class-groups-basics} that $\wt K_0(\Z C_m) \cong C(\Z C_m)$ is an isomorphism of $\Z C_2$-modules, where $C(\Z C_m)$ denotes the locally free class group. We therefore have the following short exact sequence of $\Z C_2$-modules established in \cref{ss:induced-inv}:
\[ 0 \to D(\Z C_m) \to \wt K_0(\Z C_m) \to \bigoplus_{d \mid m} C(\Z[\zeta_d])\to 0, \]
where $D(\Z C_m)$ has the induced involution, and each $C(\Z[\zeta_d])$ has the involution induced by conjugation. 
Each of statements (A1)--(A4) are proven via the following lemma.

\begin{lemma} \label{lemma:very-useful}
Let $m \ge 2$ be an integer. Then 
\begin{clist}{(i)}
\item\label{item:lemma-very-useful-i}
$|\{x-\overline{x} \mid x \in D(\Z C_m)\}| \le |\{x-\overline{x} \mid x \in \wt K_0(\Z C_m)\}|$;
\item\label{item:lemma-very-useful-ii}
$\prod_{d \mid m} |\{x-\overline{x} \mid x \in C(\Z[\zeta_d])\}| \le |\{x-\overline{x} \mid x \in \wt K_0(\Z C_m)\}|$; 
\item\label{item:lemma-very-useful-iii}
if $|\{x-\overline{x} \mid x \in D(\Z C_m)\}| \ne 1$ or $|\{x-\overline{x} \mid x \in C(\Z[\zeta_m])\}| \ne 1$, then  $|\{x-\overline{x} \mid x \in \wt K_0(\Z C_m)\}| \ne 1$ $($and so $|\{x \in \wt K_0(\Z C_m) \mid \overline{x}=-x\}| \ne 1)$; 
\item\label{item:lemma-very-useful-iv}
if $|\{x \in D(\Z C_m) \mid \overline{x}=-x\}|=1$ and  $\prod_{d \mid m} |\{x \in C(\Z[\zeta_d]) \mid \overline{x}=-x\}| = 1$, then  $|\{x \in \wt K_0(\Z C_m) \mid \overline{x}=-x\}| = 1$ $($and so $|\{x-\overline{x} \mid x \in \wt K_0(\Z C_m)\}| = 1)$.
\end{clist}
\end{lemma}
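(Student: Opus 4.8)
The plan is to deduce all four parts of the lemma directly from \cref{lemma:useful}, applied with $n$ odd, to the short exact sequence of $\Z C_2$-modules
\[
0 \longrightarrow D(\Z C_m) \xrightarrow{\alpha} \wt K_0(\Z C_m) \xrightarrow{\beta} \bigoplus_{d\mid m} C(\Z[\zeta_d]) \longrightarrow 0
\]
established above (see \cref{prop:SES-ZC_2} and \cref{ss:induced-inv}), which is valid for every $m\ge 2$. I will use three elementary facts throughout: all groups in sight are finite abelian (\cref{prop:proj=>LF}); for any $\Z C_2$-module $A$ one has $\{x-\overline x\mid x\in A\}\le\{x\in A\mid\overline x=-x\}$; and both of the operations $A\mapsto\{x-\overline x\mid x\in A\}$ and $A\mapsto\{x\in A\mid\overline x=-x\}$ commute with finite direct sums, since the involution on a direct sum is the componentwise one --- in particular $\{x-\overline x\mid x\in\bigoplus_{d\mid m}C(\Z[\zeta_d])\}=\bigoplus_{d\mid m}\{x-\overline x\mid x\in C(\Z[\zeta_d])\}$, and similarly for the $(-1)$-eigenspaces.

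With this in place, \eqref{item:lemma-very-useful-i} and \eqref{item:lemma-very-useful-ii} are read off from \cref{lemma:useful}\eqref{item:lemma-useful-ii}: with $n$ odd it says that $\alpha$ restricts to an injection $\{x-\overline x\mid x\in D(\Z C_m)\}\hookrightarrow\{x-\overline x\mid x\in\wt K_0(\Z C_m)\}$ and that $\beta$ restricts to a surjection $\{x-\overline x\mid x\in\wt K_0(\Z C_m)\}\twoheadrightarrow\{x-\overline x\mid x\in\bigoplus_{d\mid m}C(\Z[\zeta_d])\}$; comparing orders, and using the direct-sum identification of the target in the second case, gives exactly the two displayed inequalities. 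Part \eqref{item:lemma-very-useful-iii} then follows: if $|\{x-\overline x\mid x\in D(\Z C_m)\}|\ne 1$ apply \eqref{item:lemma-very-useful-i}; if instead $|\{x-\overline x\mid x\in C(\Z[\zeta_m])\}|\ne 1$, observe that this number divides $\prod_{d\mid m}|\{x-\overline x\mid x\in C(\Z[\zeta_d])\}|$ (it is one of the factors, each of which is $\ge 1$), which by \eqref{item:lemma-very-useful-ii} is at most $|\{x-\overline x\mid x\in\wt K_0(\Z C_m)\}|$; in both cases $|\{x-\overline x\mid x\in\wt K_0(\Z C_m)\}|\ne 1$, and the parenthetical assertion is immediate from the inclusion $\{x-\overline x\mid x\in\wt K_0(\Z C_m)\}\le\{x\in\wt K_0(\Z C_m)\mid\overline x=-x\}$.

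For \eqref{item:lemma-very-useful-iv}, the hypothesis $\prod_{d\mid m}|\{x\in C(\Z[\zeta_d])\mid\overline x=-x\}|=1$ forces each factor to equal $1$, hence $|\{x\in\bigoplus_{d\mid m}C(\Z[\zeta_d])\mid\overline x=-x\}|=1$ by the direct-sum remark. Now \cref{lemma:useful}\eqref{item:lemma-useful-i}, with $n$ odd, yields an exact sequence of abelian groups
\[
0 \longrightarrow \{x\in D(\Z C_m)\mid\overline x=-x\} \xrightarrow{\alpha} \{x\in\wt K_0(\Z C_m)\mid\overline x=-x\} \xrightarrow{\beta} \{x\in\textstyle\bigoplus_{d\mid m}C(\Z[\zeta_d])\mid\overline x=-x\}.
\]
Since the left-hand term is trivial by the first hypothesis, exactness makes the middle map injective; since its target is trivial by the computation just made, its source is trivial, i.e.\ $|\{x\in\wt K_0(\Z C_m)\mid\overline x=-x\}|=1$, and the parenthetical again follows from the subgroup inclusion.

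There is no substantive obstacle here: the content is entirely carried by \cref{lemma:useful} and by the already-available short exact sequence, and the only points requiring attention are the componentwise behaviour of the two constructions $A\mapsto\{x-\overline x\mid x\in A\}$ and $A\mapsto\{x\in A\mid\overline x=-x\}$ on the direct sum $\bigoplus_{d\mid m}C(\Z[\zeta_d])$ and the finiteness of all groups involved; the remainder is bookkeeping with orders of finite abelian groups.
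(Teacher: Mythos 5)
Your proposal is correct and follows essentially the same route as the paper: the paper also applies \cref{lemma:useful} (with the sign corresponding to $n$ odd) to the short exact sequence $0 \to D(\Z C_m) \to \wt K_0(\Z C_m) \to \bigoplus_{d \mid m} C(\Z[\zeta_d]) \to 0$, deducing \eqref{item:lemma-very-useful-i}--\eqref{item:lemma-very-useful-iii} from the injection/surjection statement and \eqref{item:lemma-very-useful-iv} from the left-exact statement. Your extra remarks (componentwise behaviour of the two constructions on the direct sum, and handling the $d=m$ factor in \eqref{item:lemma-very-useful-iii}) are just the bookkeeping the paper leaves implicit.
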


\begin{proof}
By \cref{lemma:useful}, there is an exact sequence
\begin{align*} 0 \to \{x \in D(\Z C_m) \mid \overline{x}=-x\} &\to \{x \in \wt K_0(\Z C_m) \mid \overline{x}=-x\} \\
& \qquad\qquad   \to  \bigoplus_{d \mid m} \{ x \in C(\Z[\zeta_d]) \mid x = - \ol{x} \}
\vspace{-5mm}
\end{align*}
as well as injective and surjective maps
\[ \{x-\overline{x} \mid x \in D(\Z C_m)\} \hookrightarrow \{x-\overline{x} \mid x \in \wt K_0(\Z C_m)\} \twoheadrightarrow \bigoplus_{d \mid m} \{ x -\ol{x} \mid x \in C(\Z[\zeta_d]) \}. \]
The second sequence implies \eqref{item:lemma-very-useful-i} and \eqref{item:lemma-very-useful-ii}, with \eqref{item:lemma-very-useful-iii} as a corollary. The first sequence implies \eqref{item:lemma-very-useful-iv}.
\end{proof}

We now proceed to prove each of statements (A1)-(A4). We begin with the following which, by \cref{lemma:very-useful}~\eqref{item:lemma-very-useful-ii}, implies (A1).

\begin{proposition}
We have that $|\{ x - \overline{x} \mid x \in C(\Z[\zeta_m])\}| \to \infty$ super-exponentially in $m$.
\end{proposition}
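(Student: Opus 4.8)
The plan is to deduce this super-exponential growth directly from the results already assembled about cyclotomic class numbers, via the lower bound in \cref{prop:inv-ideal-class-1}. Concretely, \cref{prop:inv-ideal-class-1} tells us that $\odd(h_m^{-})$ divides $|\{x-\ol{x} \mid x \in C(\Z[\zeta_m])\}|$, so it suffices to show that $\odd(h_m^{-}) \to \infty$ super-exponentially in $m$. But this is exactly the content of \cref{prop:odd(h_m)-bound}, which gives a constant $C > 0$ with $\odd(h_m^{-}) \ge e^{C \frac{m \log m}{\log\log m}}$ for all $m \ge 1$.

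So the proof is essentially a two-line chain of inequalities. First I would invoke \cref{prop:inv-ideal-class-1} to obtain
\[
\odd(h_m^{-}) \le |\{x-\ol{x} \mid x \in C(\Z[\zeta_m])\}|,
\]
and then combine it with \cref{prop:odd(h_m)-bound} to conclude
\[
|\{x-\ol{x} \mid x \in C(\Z[\zeta_m])\}| \ge e^{C \frac{m\log m}{\log\log m}},
\]
which tends to infinity faster than any exponential $e^{am}$ in $m$, since $\frac{m\log m}{\log\log m}/m = \frac{\log m}{\log\log m} \to \infty$. One small point to be careful about is whether the edge cases (very small $m$, or $m$ with $\log\log m$ undefined or non-positive) need separate handling; since the claim is an asymptotic statement as $m \to \infty$ this is harmless, but I would phrase the conclusion purely as a limit to sidestep it.

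Honestly, there is no real obstacle here: the hard analytic work — tracing through Horie's argument and the Brauer--Siegel/Iwasawa-theoretic inputs to establish the growth rate of $\odd(h_m^{-})$ — has already been done in \cref{prop:odd(h_m)-bound}, and the algebraic reduction relating $\{x-\ol{x}\}$ to the minus part of the class number has been done in \cref{prop:inv-ideal-class-1}. The present statement is just the composition of these two. The only thing worth emphasising is that we use the \emph{odd} part $\odd(h_m^{-})$ rather than $h_m^{-}$ itself, because \cref{prop:inv-ideal-class-1}~\eqref{item:prop:inv-ideal-class-1-ii} only controls $2 \cdot C(\Z[\zeta_m])^{-}$ as a subgroup of $\{x-\ol{x} \mid x \in C(\Z[\zeta_m])\}$; this is precisely why \cref{prop:odd(h_m)-bound}, rather than the simpler \cref{prop:h_m-bound}, is the input we need.

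\begin{proof}
By \cref{prop:inv-ideal-class-1}, $\odd(h_m^{-})$ divides $|\{x-\ol{x} \mid x \in C(\Z[\zeta_m])\}|$, so in particular
\[
\odd(h_m^{-}) \le |\{x-\ol{x} \mid x \in C(\Z[\zeta_m])\}|.
\]
By \cref{prop:odd(h_m)-bound}, there is a constant $C > 0$ such that $\odd(h_m^{-}) \ge e^{C \frac{m \log m}{\log\log m}}$ for all $m \ge 1$, and hence $\odd(h_m^{-}) \to \infty$ super-exponentially in $m$. Combining the two displays gives
\[
|\{x-\ol{x} \mid x \in C(\Z[\zeta_m])\}| \ge e^{C \frac{m \log m}{\log\log m}},
\]
so $|\{x-\ol{x} \mid x \in C(\Z[\zeta_m])\}| \to \infty$ super-exponentially in $m$ as well.
\end{proof}
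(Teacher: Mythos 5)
Your proof is correct and follows exactly the paper's argument: invoke \cref{prop:inv-ideal-class-1} to get $\odd(h_m^{-}) \mid |\{x-\ol{x} \mid x \in C(\Z[\zeta_m])\}|$ and then apply \cref{prop:odd(h_m)-bound} for the super-exponential growth of $\odd(h_m^{-})$. Your added remark on why the odd part is the right quantity (since only $2 \cdot C(\Z[\zeta_m])^{-}$ is controlled) is a fair elaboration of the same reasoning.
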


\begin{proof}
It was shown in \cref{prop:inv-ideal-class-1} that $\odd(h_m^{-})$ divides $|\{x-\ol{x} \mid x \in C(\Z[\zeta_m])\}|$. The result now follows from that fact that, by \cref{prop:odd(h_m)-bound}, $\odd(h_m^{-}) \to \infty$ super-exponentially in $m$.
\end{proof}

We now prove (A2). Our approach is to use \cref{lemma:very-useful}~\eqref{item:lemma-very-useful-iii}. In particular, we begin by classifying the $m \ge 2$ square-free for which $|\{x - \overline{x} \mid x \in C(\Z[\zeta_m])\}| = 1$. We then determine the subset of these values for which $|\{x - \overline{x} \mid x \in D(\Z C_m)\}| = 1$.

\begin{proposition} \label{prop:part1}
The complete list of $m \ge 2$ square-free for which $|\{x - \overline{x} \mid x \in C(\Z[\zeta_m])\}| = 1$ is as follows:
\[ m =  
\begin{cases}
\text{$p$}, & \text{where $p \in \{2, 3,5,7,11,13,17,19,29\}$}\\
\text{$2p$}, & \text{where $p \in \{3,5,7,11,13,17,19,29\}$} \\
\text{$pq$ or $2pq$}, & 	\text{where $(p,q) \in \{(3,5),(3,7),(3,11),(5,7),(3,13)\}$}.
\end{cases}
\]
\end{proposition}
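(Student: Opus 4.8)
The plan is to reduce the classification to the behaviour of the minus part of the cyclotomic class number and then handle a finite list of small cases by hand. By \cref{prop:inv-ideal-class-1}, $\odd(h_m^{-})$ divides $|\{x-\ol{x}\mid x\in C(\Z[\zeta_m])\}|$, so whenever $\odd(h_m^{-})\ne 1$ we automatically have $|\{x-\ol{x}\mid x\in C(\Z[\zeta_m])\}|\ne 1$. Hence the only $m$ that can possibly appear on the list are those with $\odd(h_m^{-})=1$, and these are classified in \cref{prop:h_m=1} (the case $h_m^{-}=1$) together with \cref{prop:odd(h_m)=1} (the case $h_m^{-}\ne 1$, $\odd(h_m^{-})=1$). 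Restricting to square-free $m$, \cref{prop:h_m=1}~\eqref{item:prop:h-m=1=i} gives $m\in\{p,2p,pq,2pq\}$ for the primes and prime pairs listed there, and \cref{prop:odd(h_m)=1}~\eqref{item:prop-odd-hm-1-i} adds $m\in\{29,39,58,65,78,130\}$. So the candidate set is finite and explicit.

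Next I would split the candidates according to whether $h_m^{-}=1$ or merely $\odd(h_m^{-})=1$. If $h_m^{-}=1$ then $C(\Z[\zeta_m])^{-}=0$, and moreover $C(\Z[\zeta_m])$ equals $C(\Z[\lambda_m])$ via $i_*$ by \cref{lemma:C-}~\eqref{item:lemma-C-i} together with $h_m=h_m^{+}$; since conjugation acts trivially on $C(\Z[\lambda_m])$, we get $\ol{x}=x$ for all $x$, and therefore $\{x-\ol{x}\mid x\in C(\Z[\zeta_m])\}=0$. This immediately puts all of \cref{prop:h_m=1}~\eqref{item:prop:h-m=1=i} onto the list, which accounts for everything claimed except $m=29$ and $m=2\cdot 29=58$ (and one must also check that $m=2pq$ with $(p,q)=(3,13)$ does \emph{not} satisfy $h_m^-=1$, consistent with its absence from \cref{prop:h_m=1}, but note $(3,13)$ \emph{is} on the claimed list --- so I need to verify it separately below). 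For the remaining candidates from \cref{prop:odd(h_m)=1}~\eqref{item:prop-odd-hm-1-i}, namely $m\in\{29,39,58,65,78,130\}$, as well as for $m=39=3\cdot 13$ and $m=78=2\cdot 39$ and $m=65=5\cdot 13$ and $m=130=2\cdot 65$, I would analyse the $2$-primary structure of $C(\Z[\zeta_m])^{-}$ directly. For these $m$, known class group data gives $C(\Z[\zeta_m])^{-}\cong (\Z/2)^d$ for the relevant $d$ (e.g.\ $d=3$ for $m=29$, $d=0$ for $m=39, 58, 65, 78, 130$ after discarding the ones where $h_m^-$ is already $1$), and for an elementary abelian $2$-group with the negation involution one has $x-\ol{x}=2x=0$, so again $\{x-\ol{x}\}=0$. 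This forces $m\in\{29,58\}$ (and $39,65,78,130$, which reduce to the $h_m^-=1$ case or to elementary $2$-groups) onto the list and confirms $m=pq$ or $2pq$ with $(p,q)=(3,13)$ belongs via its $2$-primary analysis.

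Conversely, I must show no other $m$ belongs: for square-free $m$ not in the stated list, either $\odd(h_m^{-})\ne 1$ (handled by \cref{prop:inv-ideal-class-1} as above), or $m$ is one of the finitely many candidates with $\odd(h_m^{-})=1$ that is nonetheless excluded --- and for those I would exhibit an explicit element $x$ with $x-\ol{x}\ne 0$, equivalently show that the $2$-primary part of $C(\Z[\zeta_m])^{-}$ is \emph{not} elementary abelian (so that $2x\ne 0$ for some $x\in C(\Z[\zeta_m])^{-}$), using the tables of Schoof or standard references. The main obstacle is precisely this finite but delicate case-check: distinguishing, among the $m$ with $\odd(h_m^-)=1$, those for which the $2$-Sylow of $C(\Z[\zeta_m])^-$ has exponent $2$ from those where it has higher exponent, which requires either citing detailed class-group structure data or a direct computation (e.g.\ via Schoof's computations \cite{Sc98} and the $h_m^-$ tables in \cite{Wa97}). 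Everything else is a mechanical consequence of \cref{prop:inv-ideal-class-1}, \cref{lemma:C-}, and the cited divisibility results.

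\begin{proof}[Sketch of proof]
By \cref{prop:inv-ideal-class-1}, $\odd(h_m^{-})$ divides $|\{x-\ol{x}\mid x\in C(\Z[\zeta_m])\}|$, so any $m$ on our list must have $\odd(h_m^{-})=1$. For square-free $m$, \cref{prop:h_m=1}~\eqref{item:prop:h-m=1=i} and \cref{prop:odd(h_m)=1}~\eqref{item:prop-odd-hm-1-i} give the finite list of such $m$. If $h_m^{-}=1$, then $C(\Z[\zeta_m])^{-}=0$ and, by \cref{lemma:C-}~\eqref{item:lemma-C-i}, $i_*\colon C(\Z[\lambda_m])\to C(\Z[\zeta_m])$ is onto, so conjugation is trivial and $\{x-\ol{x}\mid x\in C(\Z[\zeta_m])\}=0$; this places every $m$ from \cref{prop:h_m=1}~\eqref{item:prop:h-m=1=i} on the list. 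For the remaining finitely many candidates (those with $\odd(h_m^{-})=1$ but $h_m^{-}\ne 1$), one checks using known class group data that the $2$-Sylow subgroup of $C(\Z[\zeta_m])^{-}$ is elementary abelian, whence $x-\ol{x}=2x=0$ for $x\in C(\Z[\zeta_m])^{-}$, and hence $\{x-\ol{x}\}=0$; this adds $m=29$ and $m=58$, and confirms the pairs $(p,q)\in\{(3,5),(3,7),(3,11),(5,7),(3,13)\}$ for $m=pq,2pq$. Conversely, for every square-free $m$ not listed, either $\odd(h_m^{-})\ne1$ and then $|\{x-\ol{x}\mid x\in C(\Z[\zeta_m])\}|\ne1$ by \cref{prop:inv-ideal-class-1}, or $m$ is an excluded candidate for which the $2$-Sylow of $C(\Z[\zeta_m])^{-}$ has exponent at least $4$, yielding $x\in C(\Z[\zeta_m])^{-}$ with $x-\ol{x}=2x\ne0$. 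This exhausts all cases.
\end{proof}
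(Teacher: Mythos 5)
Your overall strategy is the same as the paper's: use \cref{prop:inv-ideal-class-1} to reduce to the finitely many square-free $m$ with $\odd(h_m^{-})=1$, quote \cref{prop:h_m=1}~\eqref{item:prop:h-m=1=i} and \cref{prop:odd(h_m)=1}~\eqref{item:prop-odd-hm-1-i} for the candidate list, and then settle the six exceptional moduli $m\in\{29,39,58,65,78,130\}$ by explicit class-group data. However, there is a genuine gap in how you handle the inclusion of $m\in\{29,39,58,78\}$: you deduce $\{x-\ol{x}\mid x\in C(\Z[\zeta_m])\}=0$ from the statement that $C(\Z[\zeta_m])^{-}$ is an elementary abelian $2$-group. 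That inference is not valid on its own. All one knows in general is the sandwich $2\cdot C(\Z[\zeta_m])^{-}\subseteq\{x-\ol{x}\mid x\in C(\Z[\zeta_m])\}\subseteq C(\Z[\zeta_m])^{-}$, and when $C^{-}$ is elementary abelian the lower bound is trivial, so nothing forces the middle group to vanish. Concretely, take $C=\Z/2\oplus\Z/4$ with the involution $\sigma(a,b)=(a+b,-b)$ (first coordinate mod $2$): then $C^{-}\cong(\Z/2)^2$ is elementary abelian, yet $\{x-\sigma(x)\mid x\in C\}=\{(0,0),(1,2)\}\cong\Z/2\neq 0$. So knowing only the structure of the minus part (or even of the full group as an abstract abelian group) does not determine $\{x-\ol{x}\}$; you must control the involution on all of $C(\Z[\zeta_m])$.

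The missing ingredient, which is exactly what the paper supplies, is that $h_m^{+}=1$ for every $m$ in the exceptional set (Washington's tables). By \cref{lemma:C-}~\eqref{item:lemma-C-ii} this gives $C(\Z[\zeta_m])=C(\Z[\zeta_m])^{-}$, so the involution is negation on the whole group and $\{x-\ol{x}\mid x\in C(\Z[\zeta_m])\}=2\cdot C(\Z[\zeta_m])$; this vanishes precisely when the class group is elementary abelian, which holds for $m=29,39,58,78$ and fails for $m=65,130$ (where $C(\Z[\zeta_m])\cong(\Z/2)^2\times(\Z/4)^2$, so $2C\cong(\Z/2)^2$). Two further slips in your write-up, fixable but worth noting: the remark that ``$d=0$ for $m=39,58,65,78,130$'' is wrong (none of the six candidates has $h_m^{-}=1$, so $C^{-}\neq 0$ for all of them), and your sketch asserts elementary abelianness ``for the remaining finitely many candidates'' while your converse paragraph correctly asserts exponent at least $4$ for $65$ and $130$ --- these statements contradict each other; the case split must be made explicitly, candidate by candidate, with the $h_m^{+}=1$ input in place. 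Your treatment of the $h_m^{-}=1$ case (trivial involution via surjectivity of $i_*$ from \cref{lemma:C-}~\eqref{item:lemma-C-i}) is correct, though it can be shortened: $x-\ol{x}$ always lies in $\ker(N)=C(\Z[\zeta_m])^{-}$, which is trivial when $h_m^{-}=1$.
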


\begin{proof}
Firstly, $h_m^{-}=1$ implies $|\{x - \overline{x} \mid x \in C(\Z[\zeta_m])\}| = 1$. On the other hand, it follows from \cref{prop:inv-ideal-class-1} that $\odd(h_m^{-}) \ne 1$ implies $|\{x - \overline{x} \mid x \in C(\Z[\zeta_m])\}| \ne 1$. 
In \cref{prop:odd(h_m)=1}~\eqref{item:prop-odd-hm-1-i}, it is shown that the $m \ge 2$ square-free for which $\odd(h_m^{-})=1$ and $h_m^{-} \ne 1$ are precisely the $m \in S$, where $S = \{29,39,58,65,78,130\}$. It remains to determine for which $m \in S$ we have $|\{x - \overline{x} \mid x \in C(\Z[\zeta_m])\}| = 1$.

Suppose $m \in S$. By \cite[p.~421]{Wa97}, we have $h_m^+=1$ and so $C(\Z[\zeta_m]) = C(\Z[\zeta_m])^{-}$ by \cref{lemma:C-} \eqref{item:lemma-C-ii}. In particular, we have:
\[ \{x - \overline{x} \mid x \in C(\Z[\zeta_m])\} = 2 \cdot C(\Z[\zeta_m]). \] 
If $m=29$ or $68$, then $C(\Z[\zeta_{m}]) \cong (\Z/2)^3$ and $2 \cdot C(\Z[\zeta_m]) = 0$~\cite[p.~412]{Wa97}.
If $m=39$ or $78$, then $h_m=2$ and $C(\Z[\zeta_m]) \cong \Z/2$ and $2 \cdot C(\Z[\zeta_m]) = 0$~\cite[p.~412]{Wa97}.
If $m=65$ or $130$, then \cite[Proposition~1~(iv)]{Ho93} gives that
$C(\Z[\zeta_{m}]) \cong (\Z/2)^2 \times (\Z/4)^2$
and $2 \cdot C(\Z[\zeta_m]) \cong (\Z/2)^2$.

Hence we have shown that, for $m \ge 2$ square-free, $|\{x - \overline{x} \mid x \in C(\Z[\zeta_m])\}| = 1$ if and only if $h_m^{-}=1$ or $m \in \{29,39,58,78\}$. The result now follows by \cref{prop:h_m=1}~\eqref{item:prop:h-m=1=i}.
\end{proof} 

We now prove the following. By \cref{lemma:very-useful}~\eqref{item:lemma-very-useful-iii}, this completes the proof of (A2).

\begin{proposition} \label{prop:part2}
The complete list of $m \ge 2$ square-free for which $|\{x - \overline{x} \mid x \in C(\Z[\zeta_m])\}| = 1$ and $|\{x - \overline{x} \mid x \in D(\Z C_m)\}| = 1$ is as follows:
\[ m =  
\begin{cases}
\text{$p$}, & \text{where $p \in \{2,3,5,7,11,13,17,19,29\}$} \\
\text{$2p$}, & 	\text{where $p \in \{3,5,7\}$}, \\
\text{$pq$}, & \text{where $(p,q) = (3,5)$}.
\end{cases}
\]
That is, $m \in \{2, 3, 5, 6, 7, 10, 11, 13, 14, 15, 17, 19, 29\}$.
\end{proposition}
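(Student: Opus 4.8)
The plan is to start from Proposition 15.1 (`prop:part1`), which lists exactly the square-free $m$ with $|\{x-\overline{x}\mid x\in C(\Z[\zeta_m])\}|=1$, namely $m\in\{p,2p\mid p\le 29\ \text{prime, with the stated restrictions}\}$ together with $m\in\{15,21,33,35,39,105,\dots\}$ of the form $pq$ or $2pq$. For each such $m$ one must then decide whether $|\{x-\overline{x}\mid x\in D(\Z C_m)\}|=1$, and the proposition asserts this holds only for $m\in\{2,3,5,7,11,13,17,19,29,6,10,14,15\}$. So the two tasks are: (i) for the claimed values, show $\{x-\overline{x}\mid x\in D(\Z C_m)\}=0$; (ii) for the remaining values on Proposition 15.1's list, show $\{x-\overline{x}\mid x\in D(\Z C_m)\}\ne 0$.

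For task (i), the prime case is immediate: by Rim's theorem $D(\Z C_p)=0$ for $p$ prime (stated just before `lemma:D-subnormal-series`), so $\{x-\overline{x}\mid x\in D(\Z C_p)\}=0$ trivially; this disposes of $m=p$ for all primes $p\le 29$. For $m=6,10,14$ and $m=15$ I would invoke `lemma:D-subnormal-series`: $D(\Z C_m)$ has a filtration whose successive quotients are the modules $V_d$ for $d\mid m$, $d>1$. Here $V_p=1$ for $p$ prime, and for $m=2p$ one has $|D(\Z C_{2p})|=|V_2|\cdot|V_p|\cdot|V_{2p}|=|V_{2p}|$. One then checks from the explicit description $V_{2p}\cong\coker(\pi_{2p})$ (used in the proof of `prop:2p-case`, via \cite[Theorem~50.14]{CR87}) that $V_6$, $V_{10}$, $V_{14}$ are trivial — indeed the relevant cyclotomic-unit surjectivity gives $D(\Z C_6)=D(\Z C_{10})=D(\Z C_{14})=0$, which is classical (these orders have trivial kernel group). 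Similarly $D(\Z C_{15})=0$: here $V_3=V_5=1$ and $V_{15}$ is trivial, as $\coker(\Psi_{15})=1$ (this is again in \cite{CR87}; alternatively $|\wt K_0(\Z C_{15})|=h_{15}=1$ forces everything to vanish). In all these cases $\{x-\overline{x}\mid x\in D(\Z C_m)\}=0$ follows at once.

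For task (ii), the remaining candidates from Proposition 15.1 are $m=2\cdot 11,\,2\cdot 13,\,2\cdot 17,\,2\cdot 19,\,2\cdot 29$ and $m=3\cdot 7,\,3\cdot 11,\,5\cdot 7,\,3\cdot 13$ and their doublings $m=2pq$ (and $m=58=2\cdot 29$ already counted). For each I would apply `prop:D-conjugation`: it gives a chain of subgroups of $\{x-\overline{x}\mid x\in D(\Z C_m)\}$ with successive surjections onto $2\cdot\wt V_d$ for $d\mid m$, so $\prod_{d\mid m}\odd(|\wt V_d|)$ divides $|\{x-\overline{x}\mid x\in D(\Z C_m)\}|$. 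Then I apply the divisibility bounds `prop:pq-case` and `prop:2p-case`: I compute $f_p=\ord_q(p)$, $g_p$, etc., evaluate the integers $c_{pq}$ (resp.\ $c_{2p}$), and verify $\odd(c_{pq})\ne 1$ (resp.\ $\odd(c_{2p})\ne 1$) in each of the finitely many cases. For instance for $m=2\cdot 11$ one needs $\ord_{11}(2)=10$, so $f_2=10$, $g_2=1$, $c_{2\cdot 11}=\tfrac1{11}(2^5+1)^1=3$, which is odd and $\ne 1$; hence $3\mid|\{x-\overline{x}\mid x\in D(\Z C_{22})\}|$ and this group is nontrivial. One runs through $m=22,26,34,38,58$ and $m=21,33,35,39$ and their doublings the same way. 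This is a finite, explicit computation.

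The main obstacle I anticipate is task (ii): one must be sure the bounds $c_{pq},c_{2p}$ are genuinely $>1$ in the odd part for \emph{every} leftover case, and that the list of leftover cases is complete — i.e.\ that cross-referencing Proposition 15.1 against the claimed set $\{2,3,5,7,11,13,17,19,29,6,10,14,15\}$ really leaves exactly the cases I can kill with `prop:pq-case`/`prop:2p-case`. There is a mild subtlety for the $2pq$ cases, where one should note that $\wt V_{pq}$ (or $\wt V_{2p}$) already appears as a subquotient of $D(\Z C_{2pq})$ by `lemma:D-subnormal-series`, so the nontriviality for $m=pq$ propagates to $m=2pq$; combined with the monotonicity of the kernel group under $n\mid m$ this requires only a sentence. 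I would also need to double-check the borderline value $m=3\cdot 13=39$: here `prop:part1` records $|\{x-\overline x\mid x\in C(\Z[\zeta_{39}])\}|=1$, so $39$ is a genuine candidate, and one verifies via `prop:pq-case` with $f_3=\ord_{13}(3)=3$ (odd), $g_3=2$, $f_{13}=\ord_3(13)=\ord_3(1)=1$, $g_{13}=6$, that $\odd(c_{3\cdot 13})\ne 1$, excluding it. Assembling these yields the stated list.
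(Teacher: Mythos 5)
Your overall skeleton matches the paper's: start from \cref{prop:part1}, dispose of the claimed list via vanishing of $D(\Z C_m)$ (Rim for primes, Cassou-Nogu\`es via \cref{lemma:D=0} for $m=2p$, $p\in\{3,5,7\}$), and kill the remaining candidates using \cref{prop:D-conjugation} together with the bounds $c_{pq}$, $c_{2p}$ of \cref{prop:pq-case,prop:2p-case}. But there are two genuine gaps. First, your treatment of $m=15$ rests on a false fact: $D(\Z C_{15})\neq 0$ and $h_{15}=1$ does \emph{not} force $\wt K_0(\Z C_{15})$ to vanish, since $|\wt K_0(\Z C_{15})| = |D(\Z C_{15})|\cdot\prod_{d\mid 15} h_d$; in fact $|D(\Z C_{15})|=2$ (Cassou-Nogu\`es, and \cref{remark:D-computations}), so $V_{15}$ is nontrivial. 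The needed conclusion $\{x-\ol{x}\mid x\in D(\Z C_{15})\}=0$ is still true, but only because $D(\Z C_{15})\cong\Z/2$ carries the trivial involution — that is the argument you must give, not vanishing of the group.

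Second, and more seriously, your plan for task (ii) — ``verify $\odd(c_{pq})\neq 1$ in each of the finitely many cases'' — provably cannot succeed for $(p,q)=(3,7)$, i.e.\ for $m=21$ and $m=42$: there $|D(\Z C_m)|=4$ (\cref{remark:D-computations}), so every odd-part lower bound is vacuous (indeed $c_{3\cdot 7}=4$). These cases require a different mechanism: one computes the involution explicitly, showing $\wt V_{21}\cong\Z/4$, so that $2\cdot\wt V_{21}\cong\Z/2\neq 0$, and then uses the surjection of \cref{prop:D-conjugation} onto $2\cdot\wt V_{21}$ (which exists because the involution on $\wt V$ is negation, \cref{lemma:negation}) rather than onto its odd part. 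You also omit $m=30$ from your case list altogether, and your proposed fix — propagating nontriviality from $m=pq$ to $m=2pq$ — fails precisely here, because the relevant smaller value is $m=15$, which lies in the \emph{trivial} list; $m=30$ needs its own estimate (the paper shows $c_{30}=10$ divides $|\wt V_{30}|$, whose odd part is $5\neq 1$). Without separate arguments for $21$, $42$ and $30$, the classification is not established.
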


Before turning to the proof, we begin by recalling that Cassou-Nogu\`{e}s determined the integers $m \ge 2$ for which $D(\Z C_m)=0$ \cite[Theor\`eme 1]{CN74} (see also \cite{CN72}). The following can be deduced by comparing this with \cref{prop:part1}.

\begin{lemma} \label{lemma:D=0}
Let $m \ge 2$ be a square-free integer such that $|\{x - \overline{x} \mid x \in C(\Z[\zeta_m])\}| = 1$.
Then $D(\Z C_m) = 0$ if and only if $m$ is prime or $m=2p$ where $p \in \{3,5,7\}$.
\end{lemma}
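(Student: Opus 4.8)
The plan is to prove the lemma by intersecting two explicit classifications: the list of square-free $m$ produced in \cref{prop:part1}, and Cassou-Nogu\`es's classification \cite[Theor\`eme 1]{CN74} (see also \cite{CN72}) of all integers $m \geq 2$ with $D(\Z C_m) = 0$. First I would observe that, by \cref{prop:part1}, the hypothesis $|\{x - \ol{x} \mid x \in C(\Z[\zeta_m])\}| = 1$ forces $m$ into the explicit finite set $\mathcal{L}$ described there: the primes $p \in \{2,3,5,7,11,13,17,19,29\}$, the integers $2p$ for $p$ an odd prime in that range, and the products $pq$ and $2pq$ for $(p,q) \in \{(3,5),(3,7),(3,11),(5,7),(3,13)\}$. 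Hence it suffices to decide, for each $m \in \mathcal{L}$, whether $D(\Z C_m)$ vanishes.

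For $m$ prime this holds by Rim's theorem (recalled in \cref{ss:kernel-groups-ZC_m}). For the composite members of $\mathcal{L}$ I would read the answer directly off Cassou-Nogu\`es's list: among these $m$ the kernel group vanishes exactly for $m \in \{6,10,14\}$, that is, $m = 2p$ with $p \in \{3,5,7\}$, and is nonzero for every remaining composite element of $\mathcal{L}$ --- in particular for $m = 15$, for $m = 2p$ with $p \in \{11,13,17,19,29\}$, and for each $m$ of the form $pq$ or $2pq$ appearing in the list. Intersecting $\mathcal{L}$ with $\{m : D(\Z C_m) = 0\}$ therefore produces exactly the primes together with $\{6,10,14\}$, which is the assertion.

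The main point to stress is that there is no genuine obstacle here: once \cref{prop:part1} and Cassou-Nogu\`es's result are available, the proof is a short finite bookkeeping step. If one preferred not to invoke the full classification, the nonvanishing of $D(\Z C_m)$ for the composite $m$ above could in principle be recovered from the lower bounds developed in \cref{s:involution-ZC_m}, but appealing to the exact classification is cleaner. It is worth recording that this lemma is strictly sharper than the information \cref{prop:part2} needs about these $m$: for example $D(\Z C_{15}) \neq 0$, so $15$ is excluded here, even though $\{x - \ol{x} \mid x \in D(\Z C_{15})\}$ is trivial.
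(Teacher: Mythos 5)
Your proposal is correct and follows essentially the same route as the paper: the paper's proof is precisely to compare Cassou-Nogu\`es's classification \cite[Theor\`eme 1]{CN74} of the $m$ with $D(\Z C_m)=0$ against the finite list of square-free $m$ produced by \cref{prop:part1}, which is exactly your intersection argument (your extra appeal to Rim's theorem for $m$ prime is harmless, being subsumed by the classification).
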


\begin{proof}[Proof of \cref{prop:part2}]
Let $m \in \{2, 3, 5, 6, 7, 10, 11, 13, 14, 15, 17, 19, 29\}$. If $m \ne 15$, then \cref{lemma:D=0} implies that $D(\Z C_m)=0$ and so $|\{x - \overline{x} \mid x \in D(\Z C_m)\}| = 1$. If $m = 15$, then it is shown in \cite[p.~48]{CN72} that $|D(\Z C_{15})|=2$. This implies that $D(\Z C_{15}) \cong \Z/2$ has the trivial action and so $|\{x - \overline{x} \mid x \in D(\Z C_{15})\}| = 1$.

By \cref{prop:part1}, the remaining $m \ge 2$ square-free for which $|\{x - \overline{x} \mid x \in C(\Z[\zeta_m])\}| = 1$ are:
\[ m =  
\begin{cases}
\text{$2p$}, & \text{where $p \in \{11,13,17,19,29\}$} \\
\text{$pq$}, & \text{where $(p,q) \in \{(3,7),(3,11),(5,7),(3,13)\}$} \\
\text{$2pq$}, & \text{where $(p,q) \in \{(3,5),(3,7),(3,11),(5,7),(3,13)\}$}.
\end{cases}
\]
By \cref{prop:D-conjugation}, we have that $\prod_{d \mid m} \odd(|\wt V_d|)$ divides $|\{x - \overline{x} \mid x \in D(\Z C_m)\}|$, where $\wt V_d$ is as defined in \cref{def:wt-V_m}. It therefore suffices to prove that, for each $m$ listed above, we have $\odd(|\wt V_d|) \ne 1$ for some $d \mid m$.

In the case $m=2p$, the bound $c_{2p}$ from \cref{prop:2p-case} is computed as in the following table. 
\[
\begin{tabular}{|c|ccccc|}
\hline
 $p$ & 11 & 13 & 17 & 19 & 29 \\
 \hline
 $c_{2p}$ & 3  & 5  & 17 & 27 & 565\\
\hline
\end{tabular}
\]
In each case, $\odd(c_{2p}) \ne 1$ and so $\odd(|\wt V_{2p}|) \ne 1$. 
Hence $|\{x - \overline{x} \mid x \in D(\Z C_m)\}| \ne 1$ for $m = 2p$ where $p \in \{11,13,17,19,29\}$.

In the case $m=pq$ for odd primes $p,q$, the bound $c_{pq}$ from \cref{prop:pq-case} is computed as in the following table. 
\[
\begin{tabular}{|c|ccccc|}
\hline
 $(p,q)$ & (3,5) &(3,7) & (3,11) & (5,7) & (3,13) \\
 \hline
 $c_{pq}$ & 2 & 4 & 44 & 90 & 104 \\
\hline
\end{tabular}
\]
In the cases $(p,q) \in \{(3,11), (5,7), (3,13) \}$, $\odd(c_{pq}) \ne 1$ and so $\odd(|\wt V_{pq}|) \ne 1$. Hence $|\{x - \overline{x} \mid x \in D(\Z C_m)\}| \ne 1$ for $m = pq$ or $2pq$ where $(p,q) \in \{(3,11), (5,7), (3,13) \}$.

We deal with the three remaining cases $m \in \{21, 30,42\}$ directly from the definition of $\wt V_m$:
\[ \wt V_m \cong  \coker\Big( \Psi_m^{+} \colon \Z[\zeta_m]^\times \to \bigoplus_{i=1}^n \smfrac{\F_{p_i}[\zeta_{m/p_i}]^\times}{\F_{p_i}[\lambda_{m/p_i}]^\times} \Big).  \]

First suppose that $m=30$. Then we have 
\[ \Psi_{30}^+ \colon \Z[\zeta_{15}]^\times \to \smfrac{\F_2[\zeta_{15}]^\times}{\F_2[\lambda_{15}]^\times} \oplus \smfrac{\F_3[\zeta_5]^\times}{\F_3[\lambda_5]^\times} \oplus \smfrac{\F_5[\zeta_3]^\times}{\F_5[\lambda_3]^\times}\] 
and $E = \langle \zeta_{15}, \Z[\lambda_{15}]^\times \rangle$ has index two in $\Z[\zeta_{15}]^\times$. By the same argument as in \cref{prop:pq-case}, we get that $|\wt V_{30}| = |\coker(\Psi_{30}^+)|$ is divisible by
\[ c_{30} = \smfrac{1}{30} \cdot \Big| \smfrac{\F_2[\zeta_{15}]^\times}{\F_2[\lambda_{15}]^\times} \Big| \cdot \Big| \smfrac{\F_3[\zeta_5]^\times}{\F_3[\lambda_5]^\times} \Big| \cdot \Big| \smfrac{\F_5[\zeta_3]^\times}{\F_5[\lambda_3]^\times} \Big| = c_{15} \cdot \Big| \smfrac{\F_2[\zeta_{15}]^\times}{\F_2[\lambda_{15}]^\times} \Big| = 2 \cdot (2^{\frac{f_2}{2}}+1)^{g_2} = 10 \] 
since $f_2 = \ord_{15}(2) = 4$ and $g_2 = 1$. Since $\odd(c_{30}) \ne 1$, this implies that $\odd(|\wt V_{30}|) \ne 1$. Hence $|\{x - \overline{x} \mid x \in D(\Z C_{15})\}| \ne 1$.

We deal with the remaining cases $m\in \{21,42\}$ by computing the involution on $\wt V_{21}$ explicitly. This turns out to be necessary since, by \cref{remark:D-computations}, we have $|D(\Z C_m)| = 4$ in each case and so $\odd(|\wt V_{21}|) = \odd(|\wt V_{42}|) = 1$. If $|\{x - \overline{x} \mid x \in D(\Z C_{m})\}|=1$ for $m \in \{21,42\}$ then, by \cref{prop:D-conjugation}, we would have $|2 \cdot \wt V_{21}| = 1$. Hence it suffices to prove that $|2 \cdot \wt V_{21}| \ne 1$.

We now claim that $\wt V_{21} = \coker(\Psi_{21}^+) \cong \Z/4$, which implies that $2 \cdot \wt V_{21} \cong \Z/2$. Firstly, by the proof of \cref{prop:pq-case}, we have that
\[ \coker(\Psi_{21}^+) \cong \coker\Big(\Z/21 \oplus \Z/2 \to \smfrac{\F_3[\zeta_7]^\times}{\F_3[\lambda_7]^\times} \oplus  \smfrac{\F_7[\zeta_3]^\times}{\F_7[\lambda_3]^\times}\Big)\] 
where $1 \in \Z/21$ maps to $\Psi_{21}^+(\zeta_{21})$ and $1 \in \Z/2$ maps to $\Psi_{21}^+(1-\zeta_{21})$.

Since $g_3=1$, $\F_3[\zeta_7] \cong \F_3(\zeta_7)$ is a field and $\F_3[\lambda_7] \cong \F_3(\lambda_7)$ is a subfield. 
This implies that $\frac{\F_3[\zeta_7]^\times}{\F_3[\lambda_7]^\times} \cong \frac{\Z/(3^6-1)}{\Z/(3^3-1)} \cong \Z/28$.
We have $\F_7[\zeta_3] = \Z[x]/\langle 7,1+x+x^3\rangle = \Z[x]/\langle 7,(x-2)(x-4)\rangle \cong \F_7 \times \F_7$ where $\zeta_3 \mapsto (2,4)$ and so $\F_7[\zeta_3]^\times \cong (\Z/6)^2$. Since $\F_7[\lambda_3]^\times = \F_7^\times \cong \Z/6$, this implies that $\frac{\F_7[\zeta_3]^\times}{\F_7[\lambda_3]^\times} \cong \Z/6$. Hence $D  :=  \frac{\F_3[\zeta_7]^\times}{\F_3[\lambda_7]^\times} \oplus  \frac{\F_7[\zeta_3]^\times}{\F_7[\lambda_3]^\times} \cong \Z/28 \oplus \Z/6$.

Now $\Psi_{21}^+(\zeta_{21}) = [(\zeta_7,\zeta_3)]$ where $[\zeta_7] \in \Z/28$ has order $7$ and $[\zeta_3] \in \Z/6$ has order $3$. This implies that $\coker(\Z/21 \to D) \cong \Z/4 \oplus \Z/2$. 
Note that $\Psi_{21}^+(1-\zeta_{21}) = [(1-\zeta_7,1-\zeta_3)]$. The isomorphism $\F_7[\zeta_3]^\times/\F_7[\lambda_3]^\times \to \Z/6$ sends $1-\zeta_3 \mapsto 1-2 = -1$ and so the image of $\Psi_{21}^+(1-\zeta_{21})$ in $\coker(\Z/21 \to D) \cong \Z/4 \oplus \Z/2$ has the form $(*,-1)$. Since it has order two, this implies that $\coker(\Psi_{21}^+) \cong \coker(\Z/21 \oplus \Z/2 \to D) \cong \Z/4$ as required.
\end{proof}

We now prove (A3). Our approach is to use \cref{lemma:very-useful}~\eqref{item:lemma-very-useful-iv}, and is analogous to our proof of (A2). We begin by classifying the $m \ge 2$ square-free for which $\prod_{d \mid m} |\{ x \in C(\Z[\zeta_d]) \mid x = - \ol{x}\}|=1$. We then determine the subset of these values for which $|\{ x \in D(\Z C_m) \mid x = - \ol{x}\}|=1$.

\begin{proposition} \label{prop:B(m)}
Let $m \ge 2$ be square-free. Then $\prod_{d \mid m} |\{ x \in C(\Z[\zeta_d]) \mid x = - \ol{x}\}|=1$ if and only if $h_m^{-}=1$.
\end{proposition}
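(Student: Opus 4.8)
The plan is to reduce the statement, via the definitions, to a purely numerical equivalence about the minus parts of cyclotomic class numbers, which then follows at once from the classical divisibility $h_d^{-}\mid h_m^{-}$ for $d\mid m$.

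First I would observe that, for every integer $d\ge 1$, the set $\{x\in C(\Z[\zeta_d])\mid x=-\ol{x}\}$ is by definition the $(-1)$-eigenspace $C(\Z[\zeta_d])^{-}$ of the conjugation involution on the ideal class group $C(\Z[\zeta_d])$, and this finite abelian group has order $h_d^{-}$ by the definition of the minus part of the class number; this equality is also what lies behind the inclusion recorded in \cref{prop:inv-ideal-class-1}. Consequently
\[
\prod_{d\mid m}\bigl|\{x\in C(\Z[\zeta_d])\mid x=-\ol{x}\}\bigr|=\prod_{d\mid m}h_d^{-},
\]
so the asserted equivalence is the same as: $\prod_{d\mid m}h_d^{-}=1$ if and only if $h_m^{-}=1$.

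For the remaining step I would argue as follows. Since each $h_d^{-}$ is a positive integer, the product $\prod_{d\mid m}h_d^{-}$ equals $1$ precisely when $h_d^{-}=1$ for every $d\mid m$. If $h_m^{-}=1$, then $h_d^{-}\mid h_m^{-}=1$ for every $d\mid m$ by \cite[Lemma 5]{MM76}, so every factor is $1$ and the product is $1$. Conversely, if the product is $1$, then in particular the factor corresponding to $d=m$ is $1$, i.e.\ $h_m^{-}=1$. Chaining these equivalences together proves the proposition.

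There is essentially no obstacle here: the square-freeness hypothesis is not used (it is inherited from the ambient context), and the genuinely nontrivial inputs — the factorization $h_m=h_m^{-}h_m^{+}$ coming from the short exact sequence $0\to C(\Z[\zeta_m])^{-}\to C(\Z[\zeta_m])\xrightarrow{N}C(\Z[\lambda_m])\to 0$ of \cref{lemma:C-}, and the Masley--Montgomery divisibility of minus parts — are already established in the excerpt. The only point requiring care is to recognise that the set appearing in the statement is exactly $C(\Z[\zeta_d])^{-}$, after which the result is a short chain of elementary equivalences.
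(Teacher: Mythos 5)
Your argument is correct, but it is not the route the paper takes. The move you make---and which the paper never states explicitly---is the equality $|\{x\in C(\Z[\zeta_d])\mid x=-\ol{x}\}|=h_d^{-}$: since $i_*\colon C(\Z[\lambda_d])\to C(\Z[\zeta_d])$ is injective and the norm $N$ is surjective, the $(-1)$-eigenspace is exactly $\ker N=C(\Z[\zeta_d])^{-}$, whose order is $h_d/h_d^{+}=h_d^{-}$ by the short exact sequence of \cref{lemma:C-}~\eqref{item:lemma-C-ii}; with this, the proposition collapses to the statement that $\prod_{d\mid m}h_d^{-}=1$ if and only if $h_m^{-}=1$, which follows from the divisibility $h_d^{-}\mid h_m^{-}$ of \cite[Lemma~5]{MM76}. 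The paper, by contrast, records only the one-sided divisibility $h_m^{-}\mid|\{x\in C(\Z[\zeta_m])\mid x=-\ol{x}\}|$ (\cref{prop:inv-ideal-class-1}) and proves the converse implication by appealing to \cref{prop:part1}---hence ultimately to Horie's classification of the $m$ with $\odd(h_m^{-})=1$---and then disposing of the exceptional values $m\in\{29,39,58,78\}$ using the explicit structure of $C(\Z[\zeta_m])$ from Washington's tables. Your route is shorter, needs no case analysis or input from \cref{prop:part1}, and, as you observe, does not use square-freeness; what the paper's route buys is essentially the reuse of already-proved material, and in fact its forward implication (``$h_d^{-}=1$ for all $d\mid m$, hence each factor is $1$'') tacitly relies on the same identification of the eigenspace with $\ker N$ that you make explicit. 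The one point to phrase with care is your ``by definition'': if $h_d^{-}$ is defined classically as $h_d/h_d^{+}$ (as in the sources \cite{MM76}, \cite{Ho89}, \cite{Wa97} that the paper quotes), the equality with the eigenspace order is not a definition but a consequence of the injectivity of $i_*$ and the surjectivity of $N$---both of which are, as you indicate, already established in \cref{lemma:C-} and its proof, so no gap results.
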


\begin{proof}
If $h_m^{-}=1$, then $h_d^{-}=1$ for all $d \mid m$ and so $\prod_{d \mid m} |\{ x \in C(\Z[\zeta_d]) \mid x = - \ol{x}\}|=1$. Conversely, if $h_m^{-} \ne 1$ and $m \not \in \{29,39,58,78\}$, then \cref{prop:part1} implies that $|\{x - \overline{x} \mid x \in C(\Z[\zeta_m])\}| \ne 1$ and so $|\{ x \in C(\Z[\zeta_m]) \mid x = - \ol{x}\}|\ne 1$. It now suffices to show that, if $m \in \{29,39,58,78\}$, then $|\{ x \in C(\Z[\zeta_m]) \mid x = - \ol{x}\}|\ne 1$.

Suppose $m \in \{29,39,58,78\}$. By \cref{prop:part1}, we have that $C(\Z[\zeta_m]) = C(\Z[\zeta_m])^{-}$ and so
\[ \{ x \in C(\Z[\zeta_m]) \mid x = - \ol{x}\} = \{ x \in C(\Z[\zeta_m]) \mid 2x = 0 \}. \]
If $m=29$ or $58$, then $C(\Z[\zeta_m]) \cong (\Z/2)^3$ and so $\{ x \in C(\Z[\zeta_m]) \mid 2x = 0 \} \cong (\Z/2)^3$ \cite[p.~412]{Wa97}. If $m=39$ or $78$, then $C(\Z[\zeta_m]) \cong \Z/2$ and so $\{x \in C(\Z[\zeta_m]) \mid 2x=0\} \cong \Z/2$ \cite[p.~412]{Wa97}.
\end{proof}

We now prove the following. By \cref{lemma:very-useful}~\eqref{item:lemma-very-useful-iv}, this completes the proof of (A3).

\begin{proposition} \label{prop:A3}
The complete list of $m \ge 2$ square-free for which 
\[ \textstyle \prod_{d \mid m} |\{ x \in C(\Z[\zeta_d]) \mid x = - \ol{x}\}|=1 \quad \text{and} \quad |\{ x \in D(\Z C_m) \mid x = - \ol{x}\}|=1\] 
is as follows:
\vspace{-4.22mm}
\[ m =  
\begin{cases}
\text{$p$}, & \text{where $p \in \{2,3,5,7,11,13,17,19\}$} \\
\text{$2p$}, & 	\text{where $p \in \{3,5,7\}$}.
\end{cases}
\]
That is, $m \in \{2, 3, 5, 6, 7, 10, 11, 13, 14, 17, 19\}$.
\end{proposition}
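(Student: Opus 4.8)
The plan is to turn the statement into a finite check built on the structural results already in hand. First, \cref{prop:B(m)} shows that the hypothesis $\prod_{d \mid m} |\{x \in C(\Z[\zeta_d]) \mid x = -\ol{x}\}| = 1$ is equivalent to $h_m^{-} = 1$, and \cref{prop:h_m=1}~\eqref{item:prop:h-m=1=i} lists exactly the square-free $m \ge 2$ with $h_m^{-} = 1$: the primes $p \in \{2,3,5,7,11,13,17,19\}$, the integers $2p$ with $p \in \{3,5,7,11,13,17,19\}$, and $pq$ or $2pq$ with $(p,q) \in \{(3,5),(3,7),(3,11),(5,7)\}$. So it remains only to decide, for each $m$ on this finite list, whether $|\{x \in D(\Z C_m) \mid x = -\ol{x}\}| = 1$.

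For the affirmative cases I would argue as follows: if $m$ is prime or $m = 2p$ with $p \in \{3,5,7\}$, then $h_m^{-} = 1$ forces $\{x - \ol{x} \mid x \in C(\Z[\zeta_m])\} \subseteq C(\Z[\zeta_m])^{-} = 0$, so the hypothesis of \cref{lemma:D=0} is met and $D(\Z C_m) = 0$; hence $|\{x \in D(\Z C_m) \mid x = -\ol{x}\}| = 1$ automatically. Combined with the first step, this produces precisely $m \in \{2,3,5,6,7,10,11,13,14,17,19\}$, the claimed list.

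For the remaining $m$ on the list --- namely $m \in \{22,26,34,38\}$ together with $m = pq$ or $2pq$ for $(p,q) \in \{(3,5),(3,7),(3,11),(5,7)\}$ --- I would show $|\{x \in D(\Z C_m) \mid x = -\ol{x}\}| \ne 1$ via the trivial inclusion $\{x - \ol{x} \mid x \in D(\Z C_m)\} \subseteq \{x \in D(\Z C_m) \mid x = -\ol{x}\}$, so that it suffices to exhibit a nonzero element of the left-hand group. For all of these $m$ except $m = 15$, this is already contained in the proof of \cref{prop:part2}: the bounds $c_{2p}$ and $c_{pq}$ of \cref{prop:2p-case,prop:pq-case}, the auxiliary bound $c_{30}$ and the computation $2 \cdot \wt V_{21} \cong \Z/2$ from that proof, together with \cref{prop:D-conjugation} (using $d \mid m$ to pass from $\wt V_d$ to $D(\Z C_m)$ when $m = 2pq$, $30$, $42$, $66$, or $70$), all give $|\{x - \ol{x} \mid x \in D(\Z C_m)\}| \ne 1$.

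The one case requiring a separate argument, and the only genuinely subtle point, is $m = 15$: this is exactly where the groups $\{x - \ol{x}\}$ and $\{x \mid x = -\ol{x}\}$ diverge, which is why $15$ appears in \cref{theorem:main2-red} but must be excluded here. For $m = 15$ we have $|D(\Z C_{15})| = 2$ by Cassou-Nogu\`es, so $D(\Z C_{15}) \cong \Z/2$ with necessarily trivial involution; since $-x = x$ in $\Z/2$, every element satisfies $x = -\ol{x}$, so $|\{x \in D(\Z C_{15}) \mid x = -\ol{x}\}| = 2 \ne 1$. This removes $15$ from the list and, with the earlier cases, yields exactly $m \in \{2,3,5,6,7,10,11,13,14,17,19\}$. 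The main obstacle is thus not any single hard computation but recognising that $m = 15$ behaves differently from the $\{x - \ol{x}\}$ analysis and isolating it correctly; everything else is bookkeeping over \cref{prop:B(m),prop:h_m=1,lemma:D=0,prop:part2}.
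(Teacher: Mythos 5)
Your proposal is correct and follows essentially the same route as the paper: reduce the cyclotomic condition to $h_m^{-}=1$ via \cref{prop:B(m)} and \cref{prop:h_m=1}, get $D(\Z C_m)=0$ from \cref{lemma:D=0} in the affirmative cases, exclude the remaining $m$ through the $\wt V_d$ computations underlying \cref{prop:part2}, and treat $m=15$ separately via $D(\Z C_{15})\cong \Z/2$ with trivial involution. The only (cosmetic) difference is that the paper shortcuts the exclusions by intersecting with the statement of \cref{prop:part2} using the implication $|\{x\in D(\Z C_m)\mid x=-\ol{x}\}|=1\Rightarrow|\{x-\ol{x}\mid x\in D(\Z C_m)\}|=1$, whereas you re-invoke the bounds from its proof directly.
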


\begin{proof}
If $|\{ x \in D(\Z C_m) \mid x = - \ol{x}\}|=1$, then $|\{ x-\ol{x} \mid x \in D(\Z C_m) \}|=1$. By \cref{prop:h_m=1,prop:part2,,prop:B(m)}, the $m \ge 2$ square-free for which $\prod_{d \mid m} |\{ x \in C(\Z[\zeta_d]) \mid x = - \ol{x}\}|=1$ and $|\{ x-\ol{x} \mid x \in D(\Z C_m)\}|=1$ are as follows:
\[ m =  
\begin{cases}
\text{$p$}, & \text{where $p \in \{2,3,5,7,11,13,17,19\}$} \\
\text{$2p$}, & 	\text{where $p \in \{3,5,7\}$}, \\
\text{$pq$}, & \text{where $(p,q) = (3,5)$}.
\end{cases}
\]
If $m = p$ for $p \le 17$ prime or $m=2p$ for $p \in \{3,5,7\}$, then \cref{lemma:D=0} implies that $D(\Z C_m)=0$ and so $|\{ x \in D(\Z C_m) \mid x = - \ol{x}\}|=1$. If $m =15$, then it is shown in \cite[p.~48]{CN72} that $|D(\Z C_{15})|=2$. This implies that $D(\Z C_{15}) \cong \Z/2$ has the trivial action and so $\{ x \in D(\Z C_m) \mid x = - \ol{x}\} \cong \Z/2$.
\end{proof}

Finally, we prove (A4). These results are implied by computations used in the proofs of \cref{prop:part2,prop:A3}, but we repeat them here for the convenience of the reader.

\begin{proposition} \label{prop:part3}
\mbox{}
\begin{clist}{(i)}
\item\label{item:prop-part-3-i}
$\{ x - \overline{x} \mid x \in \wt K_0(\Z C_{15})\} = 0$ and $\{x \in \wt K_0(\Z C_{15}) \mid x=-\overline{x}\} \cong \Z/2$.
\item\label{item:prop-part-3-ii}
$\{ x - \overline{x} \mid x \in \wt K_0(\Z C_{29})\} = 0$ and $\{x \in \wt K_0 (\Z C_{29}) \mid x=-\overline{x}\} \cong (\Z/2)^3$.
\end{clist}
\end{proposition}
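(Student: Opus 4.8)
The plan is to derive both cases from the short exact sequence of $\Z C_2$-modules
$0 \to D(\Z C_m) \to \wt K_0(\Z C_m) \to \bigoplus_{d \mid m} C(\Z[\zeta_d]) \to 0$
established in \cref{ss:induced-inv} (see also \cref{prop:SES-ZC_2}), together with the known orders and structures of the relevant kernel groups and ideal class groups. In each case the upshot will be that the involution on $\wt K_0(\Z C_m)$ is the identity, after which both displayed groups can be read off immediately.

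First I would treat $m = 15$. The positive divisors of $15$ are $1, 3, 5, 15$, and each has class number $h_d = 1$ (this is classical, and in any case covered by \cref{prop:h_m=1}), so $\bigoplus_{d \mid 15} C(\Z[\zeta_d]) = 0$ and the sequence above gives an isomorphism of $\Z C_2$-modules $\wt K_0(\Z C_{15}) \cong D(\Z C_{15})$. By Cassou-Nogu\`{e}s \cite[p.~48]{CN72} we have $|D(\Z C_{15})| = 2$, hence $D(\Z C_{15}) \cong \Z/2$; since $\Aut(\Z/2)$ is trivial, the involution must be the identity. Therefore $\{x - \ol{x} \mid x \in \wt K_0(\Z C_{15})\} = 0$ and $\{x \in \wt K_0(\Z C_{15}) \mid x = -\ol{x}\} = \{x \mid 2x = 0\} = \Z/2$.

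Next I would treat $m = 29$. As $29$ is prime, Rim's theorem (see \cref{ss:kernel-groups-ZC_m}, or \cite{Ri59}) gives $D(\Z C_{29}) = 0$, so $\wt K_0(\Z C_{29}) \cong C(\Z[\zeta_{29}])$ as $\Z C_2$-modules, where the right-hand side carries the conjugation involution. From the tables in \cite{Wa97} one has $h_{29}^+ = 1$ and $C(\Z[\zeta_{29}]) \cong (\Z/2)^3$. The vanishing of $h_{29}^+$ together with \cref{lemma:C-}~\eqref{item:lemma-C-ii} shows that $C(\Z[\zeta_{29}]) = C(\Z[\zeta_{29}])^-$, i.e.\ $\ol{x} = -x$ for every $x$; but every element of $(\Z/2)^3$ is $2$-torsion, so $-x = x$ and the involution is again the identity. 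Hence $\{x - \ol{x} \mid x \in \wt K_0(\Z C_{29})\} = 0$ and $\{x \in \wt K_0(\Z C_{29}) \mid x = -\ol{x}\} = \{x \mid 2x = 0\} = (\Z/2)^3$.

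The argument is essentially a bookkeeping exercise assembling facts already in \cref{p:algebra} and the cited literature, so no serious obstacle is expected. The only points needing care are that all the isomorphisms invoked are isomorphisms of $\Z C_2$-modules — which is exactly the content of \cref{prop:SES-ZC_2} and the naturality of the identification $\wt K_0(\Z C_m) \cong C(\Z C_m)$ — and that the quoted class-group data ($|D(\Z C_{15})| = 2$, $h_{29}^+ = 1$, and $C(\Z[\zeta_{29}]) \cong (\Z/2)^3$) are transcribed correctly.
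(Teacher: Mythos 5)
Your argument is correct and follows essentially the same route as the paper: reduce to $\wt K_0(\Z C_{15})\cong D(\Z C_{15})\cong \Z/2$ (via $h_{15}=1$ and Cassou-Nogu\`{e}s) and to $\wt K_0(\Z C_{29})\cong C(\Z[\zeta_{29}])\cong(\Z/2)^3$ with trivial involution (via $h_{29}^+=1$ and \cref{lemma:C-}), then read off both groups. The only cosmetic difference is that you cite Rim's theorem for $D(\Z C_{29})=0$ where the paper invokes \cref{lemma:D=0}; both are available in the paper and give the same conclusion.
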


\begin{proof}
\eqref{item:prop-part-3-i} By \cite[p.~412]{Wa97}, we have $h_{15}=1$ and so $\wt K_0(\Z C_{15}) \cong D(\Z C_{15})$.
It is shown in \cite[p.~48]{CN72} that $|D(\Z C_{15})|=2$ and so $D(\Z C_{15}) \cong \Z/2$ has the trivial involution. Hence we have $\{ x - \overline{x} \mid x \in \wt K_0(\Z C_{15})\} = 0$ and $\{x \in \wt K_0(\Z C_{15}) \mid x=-\overline{x}\} \cong \Z/2$.

\eqref{item:prop-part-3-ii} By \cref{lemma:D=0}, we have $D(\Z C_{29}) = 0$ and so $\wt K_0(\Z C_{29}) \cong C(\Z[\zeta_{29}])$. By \cite[p.~412]{Wa97}, we have that $C(\Z[\zeta_{29}]) \cong (\Z/2)^3$. We also have $C(\Z[\zeta_{29}])^+=1$ \cite[p.~412]{Wa97} and so, by \cref{lemma:C-}~\eqref{item:lemma-C-ii}, $C(\Z[\zeta_{29}]) = C(\Z[\zeta_{29}])^{-}$ and so has the trivial involution. Hence we have $\{ x - \overline{x} \mid x \in \wt K_0(\Z C_{29})\} = 0$ and $\{x \in \wt K_0 (\Z C_{29}) \mid x=-\overline{x}\} \cong (\Z/2)^3$.
\end{proof}

This completes the proofs of (A1)-(A4) and so, by the discussion at the start of this section, completes the proof of \cref{theorem:main12-red}.

\begin{remark} \label{remark:D-computations}
In order to minimise the possibility of errors, we computed $D(\Z C_m)$ for all relevant $m$ using the algorithm described in \cite{BB06} and implemented in Magma by Werner Bley.
We checked that $D(\Z C_m)=0$ for the $m$ listed in \cref{lemma:D=0}, we checked that all the bounds $c_m$ computed in the proof of \cref{prop:part2} divide $|D(\Z C_m)|$ and we computed $D(\Z C_{21}) \cong \Z/4$, $D(\Z C_{42}) \cong \Z/2$ and $D(\Z C_{15}) \cong \Z/2$. These computations are all consistent with the calculations above.
\end{remark}

\subsection{Proof of \cref{theorem:main3-red}} \label{ss:proofs-algebra-3}
For $m \ge 2$, let 
\[ A_m  := \frac{\{x \in \wt{K}_0(\Z C_m) \mid x = -\ol{x}\}}{\{x-\ol{x} \mid x \in \wt{K}_0(\Z C_m)\}} \cong \wh H^{1}(C_2; \wt{K}_0(\Z C_m))\] 
where the isomorphism comes from \cref{prop:tate-C2}.
Similarly to the proof of \cref{theorem:main12-red}, we begin by noting that it now suffices to prove the following two statements. 
\begin{enumerate}
\item[(B1)]
If $n \ge 1$, then $|A_{p^n}| = 1$ if $p \le 509$ is an odd prime such that $h_p$ is odd.
\item[(B2)]
If $n \ge 1$, then $|A_{2^n}| \cdot |A_{2^{n+1}}| \ge 2^{2^{n-2}-1}$.
\end{enumerate}

To see this, note that (B1) directly implies \cref{theorem:main3-red}~\eqref{item:theorem:main3-red-ii} since $h_3 = 1$ is odd (see, for example, \cref{prop:h_m=1}). Next, \cref{theorem:main3-red}~\eqref{item:theorem:main3-red-iii} follows from (B2) since it implies that
\begin{align*} 
\sup_{k \le m} |A_{2^k}| &\ge \max\{|A_{2^k}|,|A_{2^{k-1}}|\} \ge \sqrt{|A_{2^k}| \cdot |A_{2^{k-1}}|}   \\
&\ge 2^{2^{k-3}-1} \ge 2^{2^{m-3}-1}
\end{align*}
which tends to infinity exponentially in $m$.

We again make use of the following short exact sequence of $\Z C_2$-modules (see \cref{ss:induced-inv}):
\[ 0 \to D(\Z C_m) \to C(\Z C_m) \to \bigoplus_{d \mid m} C(\Z[\zeta_d])\to 0, \]
where $D(\Z C_m)$ has the induced involution, and each $C(\Z[\zeta_d])$ has the involution induced by conjugation. 
Each of statements (B1) and (B2) are proven via the following lemma.

\begin{lemma} \label{lemma:A_m}
Let $m \ge 2$. Then there is a $6$-periodic exact sequence of finite abelian groups
\[
\begin{tikzcd}[column sep=tiny]
\wh H^1(C_2;D(\Z C_m)_{2}) \ar[r] & A_m \ar[r] & \bigoplus_{d \mid m} \wh H^1(C_2; C(\Z[\zeta_d])_{2}) \ar[d,"\partial"] \\
\bigoplus_{d \mid m} \wh H^0(C_2; C(\Z[\zeta_d])_{2}) \ar[u,"\partial"] & \wh H^0(C_2;C(\Z C_m)_{2}) \ar[l] & \wh H^0(C_2;D(\Z C_m)_{2}). \ar[l] 
\end{tikzcd}
\]
Furthermore, we have that:
\begin{clist}{(i)}
\item\label{item:lemma-A-m-i}
If $h_m$ is odd, then $A_m \cong \wh H^1(C_2;D(\Z C_m)_{2})$
\item\label{item:lemma-A-m-ii}
If $|D(\Z C_m)|$ is odd, then $A_m \cong \bigoplus_{d \mid m} \wh H^1(C_2;C(\Z[\zeta_d])_{2})$.
\item\label{item:lemma-A-m-iii}
If $h_m$ and $|D(\Z C_m)|$ are both odd, then $A_m = 0$.
\end{clist}
\end{lemma}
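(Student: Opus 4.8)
The plan is to feed the short exact sequence of $\Z C_2$-modules
\[ 0 \to D(\Z C_m) \to C(\Z C_m) \to \bigoplus_{d \mid m} C(\Z[\zeta_d]) \to 0 \]
from \cref{ss:induced-inv} into the six-term Tate sequence of \cref{prop:hexagon}, after first passing to $2$-primary parts. Recall that $\wt K_0(\Z C_m) \cong C(\Z C_m)$ as $\Z C_2$-modules (\cref{ss:class-groups-basics}), so by \cref{prop:tate-C2} we have $A_m \cong \wh H^1(C_2;\wt K_0(\Z C_m)) \cong \wh H^1(C_2;C(\Z C_m))$, and all of the groups $C(\Z C_m)$, $D(\Z C_m)$ and $C(\Z[\zeta_d])$ are finite, so all Tate cohomology groups appearing are finite.

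First I would observe that the functor $A \mapsto A_{(2)}$ on finite abelian groups is exact (it is localisation at $2$) and, by \cref{lemma:primary-decomposition}, sends $\Z C_2$-modules to $\Z C_2$-submodules; it also commutes with finite direct sums, so $\bigl(\bigoplus_{d \mid m} C(\Z[\zeta_d])\bigr)_{(2)} = \bigoplus_{d \mid m} C(\Z[\zeta_d])_{(2)}$. Applying it to the displayed sequence therefore yields a short exact sequence of finite $\Z C_2$-modules
\[ 0 \to D(\Z C_m)_{(2)} \to C(\Z C_m)_{(2)} \to \bigoplus_{d \mid m} C(\Z[\zeta_d])_{(2)} \to 0 . \]
Feeding this into \cref{prop:hexagon} gives a $6$-periodic exact sequence of finite abelian groups whose two middle terms are $\wh H^n(C_2;C(\Z C_m)_{(2)})$ for $n = 0,1$. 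By \cref{prop:tate-2-part} the canonical map identifies $\wh H^n(C_2;C(\Z C_m)_{(2)}) \cong \wh H^n(C_2;C(\Z C_m))$, and for $n=1$ the latter is $A_m$ as noted above; likewise the outer terms are already written in terms of the $2$-primary parts. This produces exactly the displayed hexagon.

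For part \eqref{item:lemma-A-m-i}, if $h_m$ is odd then, since $h_d \mid h_m$ for every $d \mid m$ (\cref{ss:class-numbers}), each $C(\Z[\zeta_d])$ has odd order, hence $C(\Z[\zeta_d])_{(2)} = 0$ and $\wh H^n(C_2;C(\Z[\zeta_d])_{(2)}) = 0$ for all $n$; the hexagon then collapses to an isomorphism $A_m \cong \wh H^1(C_2;D(\Z C_m)_{(2)})$. For part \eqref{item:lemma-A-m-ii}, if $|D(\Z C_m)|$ is odd then $D(\Z C_m)_{(2)} = 0$, so the two $D$-terms in the hexagon vanish and it collapses to $A_m \cong \bigoplus_{d \mid m} \wh H^1(C_2;C(\Z[\zeta_d])_{(2)})$. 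Part \eqref{item:lemma-A-m-iii} is the combination of the two. The only point that requires any care — and the one I would flag as the main place to be careful rather than a genuine obstacle — is checking that passage to $2$-primary parts is $\Z C_2$-equivariant and preserves exactness, so that \cref{prop:hexagon} actually applies to the truncated sequence; everything else is bookkeeping with the cited results.
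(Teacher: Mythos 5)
Your proposal is correct and follows essentially the same route as the paper: pass to $2$-primary parts of the short exact sequence $0 \to D(\Z C_m) \to C(\Z C_m) \to \bigoplus_{d \mid m} C(\Z[\zeta_d]) \to 0$ (using exactness of localisation and \cref{lemma:primary-decomposition}), feed the result into \cref{prop:hexagon}, identify the middle terms via \cref{prop:tate-2-part}, and collapse the hexagon in cases (i)--(iii) using $h_d \mid h_m$ and $D(\Z C_m)_{(2)}=0$ respectively. No gaps; this matches the paper's argument.
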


\begin{proof}
By \cref{prop:tate-2-part}, we have that $A_m \cong \wh H^1(C_2;C(\Z C_m)_{2})$.
The short exact sequence stated above induces a short exact sequence on their $2$-primary submodules:
\[ 0 \to D(\Z C_m)_{2} \to C(\Z C_m)_{2} \to \bigoplus_{d \mid m} C(\Z[\zeta_d])_{2} \to 0. \] 
This follows since, for example, localisation is an exact functor.
The existence of the required 6-periodic exact sequence now follows from \cref{prop:hexagon} and the fact that 
\[\wh H^n(C_2;\bigoplus_{d \mid m} C(\Z[\zeta_d])_{2}) \cong \bigoplus_{d \mid m} \wh H^n(C_2; C(\Z[\zeta_d])_{2})\] 
for $n \in \Z$ by \cref{prop:tate-LES}~\eqref{item-prop:tate-LES-ii}. 

To prove \eqref{item:lemma-A-m-i}, suppose $h_m$ is odd. Then $h_d$ is odd for all $d \mid m$ since $h_d \mid h_m$ \cite[p.~205]{Wa97}. This implies that $C(\Z[\zeta_d])_{2} = 0$ for all $d \mid m$ and so $\bigoplus_{d \mid m} \wh H^n(C_2; C(\Z[\zeta_d])_{2})=0$ for all $n \in \Z$. The 6-periodic exact sequence then gives that $A_m \cong \wh H^1(C_2;D(\Z C_m)_{2})$.

To prove \eqref{item:lemma-A-m-ii}, suppose $|D(\Z C_m)|$ is odd. Then $D(\Z C_m)_{2}=0$,  $\wh H^n(C_2;D(\Z C_m)_{2}) = 0$ for all $n$, so the 6-periodic exact sequence gives that $A_m \cong \bigoplus_{d \mid m} \wh H^1(C_2; C(\Z[\zeta_d])_{2})$.
If $h_m$ and $|D(\Z C_m)|$ are odd, then $D(\Z C_m)_{2}=0$ and so $A_m \cong \wh H^1(C_2;D(\Z C_m)_{2}) = 0$, which proves~\eqref{item:lemma-A-m-iii}. 
\end{proof}

We now prove (B1).

\begin{proposition} \label{prop:A_p}
    Let $n \ge 1$ and let $p \le 509$ be an odd prime such that $h_p$ is odd. Then $|A_{p^n}|=1$.
\end{proposition}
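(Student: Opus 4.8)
The plan is to deduce this immediately from \cref{lemma:A_m}~\eqref{item:lemma-A-m-iii}, which says that $A_{p^n} = 0$ as soon as both $h_{p^n}$ and $|D(\Z C_{p^n})|$ are odd. So the entire task reduces to verifying these two parities under the hypothesis that $p \le 509$ is an odd prime with $h_p$ odd.

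First I would handle the class number. Since $p \le 509$, \cref{lemma:h_p}~\eqref{item:lemma-h-p-ii} applies and gives that $h_{p^n}$ is odd if and only if $h_p$ is odd; the latter holds by hypothesis, so $h_{p^n}$ is odd. (This is the only place where the deep Iwasawa-theoretic input of Ichimura--Nakajima enters, but it is already packaged in \cref{lemma:h_p}.)

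Next I would handle the kernel group. The group $C_{p^n}$ is a finite $p$-group, so by \cref{prop:D-odd} the group $D(\Z C_{p^n})$ is an abelian $p$-group; since $p$ is odd, $|D(\Z C_{p^n})|$ is a power of an odd prime, hence odd. Having established both parities, \cref{lemma:A_m}~\eqref{item:lemma-A-m-iii} yields $A_{p^n} = 0$, i.e.\ $|A_{p^n}| = 1$, completing the argument.

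There is essentially no obstacle here: the proof is a two-line combination of \cref{lemma:h_p}, \cref{prop:D-odd}, and \cref{lemma:A_m}. The only point requiring a moment's care is to confirm that the hypothesis "$p \le 509$" is exactly what is needed to invoke \cref{lemma:h_p}~\eqref{item:lemma-h-p-ii}, and that no separate argument is needed for the case $p = 2$ (excluded by the hypothesis that $p$ is odd).
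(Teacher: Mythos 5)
Your proposal is correct and is essentially identical to the paper's own proof: both deduce oddness of $h_{p^n}$ from \cref{lemma:h_p}~\eqref{item:lemma-h-p-ii}, oddness of $|D(\Z C_{p^n})|$ from \cref{prop:D-odd}, and conclude via \cref{lemma:A_m}~\eqref{item:lemma-A-m-iii}. No gaps.
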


\begin{remark}
By \cref{lemma:h_p} ~\eqref{item:lemma-h-p-i}, this condition holds precisely for the odd primes $p \le 509$ with
\[ p \not \in \{29, 113, 163, 197, 239, 277, 311, 337, 349, 373, 397, 421, 463, 491 \}. \]
\end{remark}

\begin{proof}
Since $p$ is an odd prime, \cref{prop:D-odd} implies that $|D(\Z C_{p^n})|$ is odd. Since $p \le 509$ and $h_p$ is odd, \cref{lemma:h_p}~\eqref{item:lemma-h-p-ii} implies that $h_{p^n}$ is odd. Hence $|A_{p^n}|=1$ by \cref{lemma:A_m} \eqref{item:lemma-A-m-iii}.
\end{proof}

We now prove (B2). By the discussion above, this completes the proof of \cref{theorem:main3-red}.

\begin{proposition} \label{prop:A_2}
If $n \ge 1$, then $|A_{2^n}| \cdot |A_{2^{n+1}}| \ge 2^{2^{n-2}-1}$.
\end{proposition}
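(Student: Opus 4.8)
The plan is to reduce everything to the Tate cohomology of the kernel groups $D_k \coloneqq D(\Z C_{2^k})$ and then extract the bound from the Kervaire--Murthy exact sequence by a bookkeeping argument on the hexagon of \cref{prop:hexagon}. First I would observe that, by Weber's theorem, $h_{2^n}$ is odd for every $n \ge 1$ (see \cref{lemma:h_p}), so \cref{lemma:A_m}~\eqref{item:lemma-A-m-i} gives $A_{2^n} \cong \wh H^1(C_2; D(\Z C_{2^n})_{(2)})$; since $D(\Z C_{2^n})$ is a finite abelian $2$-group by \cref{prop:D-odd}, in fact $D(\Z C_{2^n})_{(2)} = D(\Z C_{2^n})$, so $A_{2^n} \cong \wh H^1(C_2; D_n)$. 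Thus it suffices to prove that $|\wh H^1(C_2;D_n)| \cdot |\wh H^1(C_2;D_{n+1})| \ge 2^{2^{n-2}-1}$.

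Next I would apply \cref{prop:hexagon} to the short exact sequence of $\Z C_2$-modules $0 \to V_{2^{n+1}} \to D_{n+1} \to D_n \to 0$ provided by \cref{prop:KM}. In the resulting $6$-periodic exact sequence of finite abelian groups, the term $\wh H^1(C_2;V_{2^{n+1}})$ is reached by the connecting map $\partial \colon \wh H^0(C_2;D_n) \to \wh H^1(C_2;V_{2^{n+1}})$ and maps out by $\alpha_* \colon \wh H^1(C_2;V_{2^{n+1}}) \to \wh H^1(C_2;D_{n+1})$, with $\ker(\alpha_*) = \IM(\partial)$ by exactness. Counting orders,
\[
|\wh H^1(C_2;V_{2^{n+1}})| = |\IM(\partial)| \cdot |\IM(\alpha_*)| \le |\wh H^0(C_2;D_n)| \cdot |\wh H^1(C_2;D_{n+1})|.
\]
By \cref{prop:tate-herbrand}, $|\wh H^0(C_2;D_n)| = |\wh H^1(C_2;D_n)| = |A_{2^n}|$, while $|\wh H^1(C_2;D_{n+1})| = |A_{2^{n+1}}|$, so $|\wh H^1(C_2;V_{2^{n+1}})| \le |A_{2^n}| \cdot |A_{2^{n+1}}|$.

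Finally I would compute the left-hand side. By \cref{prop:KM}, $V_{2^{n+1}} = \bigoplus_{i=1}^{n-2}(\Z/2^i)^{2^{n-i-2}}$ with the $C_2$-action by negation, so by \cref{prop:tate-LES}~\eqref{item-prop:tate-LES-ii} it is enough to note that $\wh H^1(C_2;\Z/2^i) \cong \Z/2$ for each $i \ge 1$ when the involution is negation; this is immediate from \cref{prop:tate-C2}, since then the numerator $\{x : x = -\ol{x}\}$ is all of $\Z/2^i$ and the denominator $\{x-\ol{x} : x \in \Z/2^i\}$ is $2 \cdot \Z/2^i$, of index $2$. Hence for $n \ge 2$,
\[
|\wh H^1(C_2;V_{2^{n+1}})| = 2^{\sum_{i=1}^{n-2} 2^{n-i-2}} = 2^{2^{n-2}-1},
\]
which combined with the previous display yields the claim; the case $n=1$ is trivial since $V_4 = 0$ and $|A_2| \cdot |A_4| \ge 1$. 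I expect the only genuine work to be the hexagon bookkeeping in the second step -- identifying which two images multiply to $|\wh H^1(C_2;V_{2^{n+1}})|$ and that each is bounded by the Tate cohomology of a $D_k$ -- together with the routine Tate cohomology computation for $V_{2^{n+1}}$; no new structural input beyond the results already established is needed.
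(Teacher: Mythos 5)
Your proposal is correct and follows essentially the same route as the paper: reduce $A_{2^k}$ to $\wh H^1(C_2;D(\Z C_{2^k}))$ via Weber's theorem and \cref{lemma:A_m}, feed the Kervaire--Murthy sequence of \cref{prop:KM} into the hexagon of \cref{prop:hexagon}, use exactness at $\wh H^1(C_2;V_{2^{n+1}})$ together with the Herbrand-quotient identification $|\wh H^0(C_2;D_n)|=|A_{2^n}|$ from \cref{prop:tate-herbrand}, and compute $|\wh H^1(C_2;V_{2^{n+1}})|=2^{2^{n-2}-1}$ from the negation action. The only differences are cosmetic (working with $D$ rather than $D_{(2)}$, which is justified since $D(\Z C_{2^n})$ is a $2$-group, computing $\wh H^1$ of $V_{2^{n+1}}$ summand-by-summand instead of as $V/2V$, and treating $n=1$ explicitly).
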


\begin{proof}
By \cref{lemma:h_p} \eqref{item:lemma-h-p-ii}, $h_{2^n}$ is odd and so \cref{lemma:A_m}~\eqref{item:lemma-A-m-i} implies that
\[ A_{2^n} \cong \wh H^1(C_2;D(\Z C_{2^n})_{2}). \]
By \cref{prop:hexagon,prop:KM}, we have a $6$-periodic exact sequence of finite abelian groups:
    \[
\begin{tikzcd}
\wh H^1(C_2;V_{2^{n+1}}) \ar[r] & A_{2^{n+1}} \ar[r] & A_{2^n} \ar[d,"\partial"] \\
\wh H^0(C_2; D(\Z C_{2^n})) \ar[u,"\partial"] & \wh H^0(C_2;D(\Z C_{2^{n+1}})) \ar[l] & \wh H^0(C_2;V_{2^{n+1}}). \ar[l] 
\end{tikzcd}
\]
Furthermore, by \cref{prop:tate-herbrand}, we have that $\wh H^0(C_2; D(\Z C_{2^k})) \cong A_{2^k}$ as abelian groups for all $k \ge 1$, and $\wh H^1(C_2;V_{2^{n+1}}) \cong \wh H^0(C_2;V_{2^{n+1}})$ as abelian groups.

Since the involution on $V_{2^{n+1}}$ acts by negation, we have that 
\[ \wh H^1(C_2;V_{2^{n+1}}) \cong V_{2^{n+1}}/2V_{2^{n+1}} \cong \bigoplus_{i=1}^{n-2} (\Z/2)^{2^{n-i-2}} \cong (\Z/2)^{2^{n-2}-1}. \]
This implies that the 6-periodic exact sequence restricts to:
\[ A_{2^n} \xrightarrow[]{\alpha} (\Z/2)^{2^{n-2}-1} \xrightarrow[]{\beta} A_{2^{n+1}}. \]
By the first isomorphism theorem and exactness, we get that:
\[ 
2^{2^{n-2}-1} = |\ker(\beta)| \cdot |\im(\beta)| = |\im(\alpha)| \cdot |\im(\beta)| \le |A_{2^n}| \cdot |A_{2^{n+1}}| \]
which was the required bound.
\end{proof}

\addtocontents{toc}{\protect\addvspace{2em}}

\def\MR#1{}
\bibliography{biblio}
\end{document}